\documentclass[11pt, leqno]{amsart} 
\usepackage{amssymb,amscd,amsfonts,amsbsy}
\usepackage{latexsym}
\usepackage{exscale}
\usepackage{amsmath,amsthm,amsfonts}
\usepackage{mathrsfs}
\usepackage{xcolor} 
\usepackage[colorlinks=true,linkcolor=blue,citecolor=red,urlcolor=red, 
]{hyperref} 
\usepackage{esint} 
\usepackage{stmaryrd}
\usepackage{pifont}

\usepackage[utf8]{inputenc}

\parskip=3pt

\setlength{\oddsidemargin}{0in}
\setlength{\evensidemargin}{0in}
\setlength{\evensidemargin}{0in}
\setlength{\textwidth}{6.2in}
\setlength{\textheight}{9in}
\setlength{\topmargin}{-0.50in}
\calclayout

\allowdisplaybreaks

\makeatletter
\@namedef{subjclassname@2020}{%
  \textup{2020} Mathematics Subject Classification}
\makeatother

\theoremstyle{plain}
\newtheorem{theorem}[equation]{Theorem}
\newtheorem{lemma}[equation]{Lemma}

\newtheorem{proposition}[equation]{Proposition}

\theoremstyle{definition}
\newtheorem{definition}[equation]{Definition}

\numberwithin{equation}{section} 

\def\C{\mathbb{C}}
\def\N{\mathbb{N}}
\def\D{\mathcal{D}}

\def\F{\mathscr{F}}
\def\S{\mathcal{S}}
\def\K{\mathcal{K}}
\def\K{\mathcal{K}}
\def\Q{\mathcal{Q}} 

\def\I{\mathbb{I}}
\def\R{\mathbb{R}}
\def\Rn{\mathbb{R}^n}

\def\Z{\mathbb{Z}}
\def\w{\omega}
\def\b{\mathbf{b}}

\def\d{\operatorname{d}}
\def\rd{\operatorname{rd}}
\def\supp{\operatorname{supp}}
\def\BMO{\operatorname{BMO}}

\def\CMO{\operatorname{CMO}}

\def\loc{\operatorname{loc}}

\def\ch{\operatorname{ch}}

\DeclareMathOperator*{\esssup}{ess\,sup}
\DeclareMathOperator*{\essinf}{ess\,inf}

\renewcommand{\emptyset}{\text{\textup{\O}}}

\begin{document}

\author{Mingming Cao}
\address{Mingming Cao\\
Instituto de Ciencias Matem\'aticas CSIC-UAM-UC3M-UCM\\
Con\-se\-jo Superior de Investigaciones Cient{\'\i}ficas\\
C/ Nicol\'as Cabrera, 13-15\\
E-28049 Ma\-drid, Spain} \email{mingming.cao@icmat.es}

\author{Honghai Liu}
\address{Honghai Liu\\
School of Mathematics and Information Science\\
Henan Polytechnic University\\
Jiaozuo 454000\\
People's Republic of China} \email{hhliu@hpu.edu.cn}

\author{Zengyan Si}
\address{Zengyan Si\\
School of Mathematics and Information Science\\
Henan Polytechnic University\\
Jiaozuo 454000\\
People's Republic of China} \email{zengyan@hpu.edu.cn}

\author{K\^{o}z\^{o} Yabuta}
\address{K\^{o}z\^{o} Yabuta\\
Research Center for Mathematics and Data Science\\
Kwansei Gakuin University\\
Gakuen 2-1, Sanda 669-1337\\
Japan}\email{kyabuta3@kwansei.ac.jp}

\thanks{The first author acknowledges financial support from Spanish Ministry of Science and Innovation through the Ram\'{o}n y Cajal  2021 (RYC2021-032600-I), through the ``Severo Ochoa Programme for Centres of Excellence in R\&D'' (CEX2019-000904-S), and through PID2019-107914GB-I00, and from the Spanish National Research Council through the ``Ayuda extraordinaria a Centros de Excelencia Severo Ochoa'' (20205CEX001). The third author is supported  by  Natural Science Foundation of Henan (No. 202300410184)}

\year=2024 \month=07 \day=30

\date{\today}

\subjclass[2020]{42B20, 42B25, 42B35}


\keywords{Bilinear $T1$ theorem, 
Compactness, 
Endpoint estimates, 
Extrapolation, 
Interpolation, 
Dyadic analysis, 
Kolmogorov--Riesz theorem, 
Bilinear paraproducts, 
Bilinear pseudo-differential operators, 
Bilinear commutators}

\begin{abstract}
In this paper we solve a long standing problem about the bilinear $T1$ theorem to characterize the (weighted) compactness of bilinear Calder\'{o}n--Zygmund operators. Let $T$ be a bilinear operator associated with a standard bilinear Calder\'{o}n--Zygmund kernel. We prove that $T$ can be extended to a compact bilinear operator from $L^{p_1}(w_1^{p_1}) \times L^{p_2}(w_2^{p_2})$ to $L^p(w^p)$ for all exponents $\frac1p = \frac{1}{p_1} + \frac{1}{p_2}>0$ with $p_1, p_2 \in (1, \infty]$ and for all weights $(w_1, w_2) \in A_{(p_1, p_2)}$ if and only if the following hypotheses hold: (H1) $T$ is associated with a compact bilinear Calder\'{o}n--Zygmund kernel, (H2) $T$ satisfies the weak compactness property, and (H3) $T(1,1), T^{*1}(1,1), T^{*2}(1,1) \in \mathrm{CMO}(\mathbb{R}^n)$. This is also equivalent to the endpoint compactness: (1) $T$ is compact from $L^1(w_1) \times L^1(w_2)$ to $L^{\frac12, \infty}(w^{\frac12})$ for all $(w_1, w_2) \in A_{(1, 1)}$, or (2) $T$ is compact from $L^{\infty}(w_1^{\infty}) \times L^{\infty}(w_2^{\infty})$ to $\mathrm{CMO}_{\lambda}(w^{\infty})$ for all $(w_1, w_2) \in A_{(\infty, \infty)}$. Besides, any of these properties is equivalent to the fact that $T$ admits a compact bilinear dyadic representation. 

Our main approaches consist of the following new ingredients: (i) a resulting representation of a compact bilinear Calder\'{o}n--Zygmund operator as an average of some compact bilinear dyadic shifts and paraproducts; (ii) extrapolation of endpoint compactness for bilinear operators; and (iii) compactness criterion in weighted Lorentz spaces. Finally, to illustrate the applicability of our result, we demonstrate the hypotheses (H1)--(H3) through examples including bilinear continuous/dyadic paraproducts, bilinear pseudo-differential operators, and bilinear commutators.  
\end{abstract}

\title{A characterization of compactness via bilinear $T1$ theorem} 

\maketitle

\tableofcontents

\section{Introduction}
The classical Calder\'{o}n--Zygmund theory originated in the initial work of Calder\'{o}n and Zygmund in the 1950s \cite{CZ52}, which was motivated by the connections with potential theory and elliptic partial differential equations. Over the past seven decades, this theory and its branches have proved to be a powerful tool in many fields, such as real and complex analysis, operator theory, approximation theory, partial differential equations, and geometric measure theory, see \cite{DS91, DS93, HMM, HMU, HMMM, M5, M4, Stein70, Stein93}. For additional related research, see \cite{GR, J83, M90, MC}.

The multilinear Calder\'{o}n--Zygmund theory, pioneered by Coifman and Meyer \cite{CM75, CM78}, was inspired by the natural appearance in the study of certain singular integral operators, for example, the Calder\'{o}n commutator, paraproducts, and pseudodifferential operators. Subsequently, this topic was further developed by several authors, including Christ and Journ\'{e} \cite{CJ}, Lacey and Thiele \cite{LT97, LT99}, Kenig and Stein \cite{KS}, and Grafakos and Torres \cite{GT1, GT2}. It is worth mentioning that the remarkable work \cite{GT1} provides a systematic treatment to general multilinear Calder\'{o}n--Zygmund operators. In the weighted theory, genuinely multilinear weighted estimates were first established by Lerner et al. using a natural class of multiple weights in their groundbreaking paper \cite{LOPTT}. Beyond that, it has witnessed the fundamental importance of $L^p$ estimates for multilinear singular integrals in pure and applied analysis ---for example, the fractional Leibniz rules used in dispersive equations \cite{GO, KP}, 
paraproducts employed to Navier--Stokes equations \cite{M96}, and certain multilinear transforms applied to the stability of the absolutely continuous spectrum of Schr\"{o}dinger operators \cite{CK}.

The boundedness of operators on various function spaces is a central theme of the (multilinear) Calder\'{o}n--Zygmund theory. It is therefore natural to ask what conditions to guarantee the boundedness of $T$ from $L^p$ to $L^q$. A necessary and sufficient condition for this to happen is given by the celebrated $T1$  theorem due to David and Journ\'{e} \cite{DJ}. More precisely, 
\begin{align*}
&\text{a singular integral $T$ associated with a Calder\'{o}n--Zygmund}
\\ 
&\text{kernel is bounded on $L^2(\Rn)$ if and only if it satisfies the}
\\
&\text{weak boundedness property and $T1, T^*1 \in  \BMO(\Rn)$}.  
\end{align*}
In addition, the $L^2$ boundedness is essential to the further analysis, such as achieving $L^p$ boundedness, the weak type $(1, 1)$ estimate, and weighted norm inequalities. This formulation of the $T1$ theorem was extended to the multilinear setting by Christ and Journ\'{e} \cite{CJ}, while an alternative expression of the multilinear $T1$ theorem was provided by Grafakos and Torres \cite{GT1}. The latter is very effective in obtaining some continuity results for multilinear translation invariant operators and multilinear pseudodifferential operators. Although further developments were made in \cite{Hart14, Hart15}, more modern bilinear dyadic-probabilistic methods were exploited in \cite{LMOV} on Euclidean spaces and \cite{MV} on nonhomogeneous spaces.  

Recently, the study of compactness has attracted a lot of attention. In terms of commutators $[b, T]$, one would like to use the compactness of $[b, T]$ to characterize the fact $b \in \CMO(\Rn)$. This is the case of  bilinear Riesz potential \cite{CT}. The weighted compactness was investigated in \cite{BDMT, TYY} for bilinear Calder\'{o}n--Zygmund operators, where the latter weakened the $\CMO$ condition by a new vanishing Lipschitz-type condition. Besides, for non-degenerate Calder\'{o}n--Zygmund operators, a  characterization in the off-diagonal case was obtained in \cite{HLTY, HOS}. All compactness results aforementioned are traced back to the work of Uchiyama \cite{Uch}. Furthermore, extrapolation of compactness introduced in \cite{HL23} provides a new approach to establish weighted compactness, and shortly after, the multilinear extension was presented in \cite{CIRXY, COY, HL22}, which have a wide range of applications. On the contrary, the existing literature exploring the compactness of singular integrals is exceedingly scarce. Based on new $T1$ type assumptions, Villarroya \cite{Vil} first characterized the compactness for a large class of singular integral operators, which later was extended to the endpoint case in \cite{OV, PPV}.

However, the $T1$ theorem to deduce compactness of multilinear singular integrals has been an open problem for almost ten years. The purpose of this work is to solve this problem by establishing a characterization of (weighted) compactness of bilinear singular integrals via bilinear $T1$ theorem. Before stating the precise result, let us illustrate some difficulties brought by the bilinearity with some examples and facts, and elucidate why such a compactness question has been open. 

First, in general, bilinear Calder\'{o}n--Zygmund operators (cf. Definition \ref{def:CZO}) are not compact. A   typical example is the bilinear Riesz transform $\mathcal{R}_j$ defined by 
\begin{align*}
\mathcal{R}_j (f, g)(x) 
:= \mathrm{p.v. } \iint_{\R^{2n}} 
\frac{(x_j-y_j) + (x_j-z_j)}{(|x-y|^2 + |x-z|^2)^{\frac{2n+1}{2}}} f(y) g(z)\, dy \, dz, 
\quad j=1, \ldots, n, 
\end{align*}
where $x_j$ is the $j$-th coordinate of $x$. Given a bilinear operator $T$, if we define its adjoints via 
\begin{align*}
\langle T(f_1, f_2), g \rangle 
= \langle T^{*1}(g, f_2), f_1 \rangle 
= \langle T^{*2}(f_1, g), f_2 \rangle, 
\end{align*} 
then by the translation invariance property of $\mathcal{R}_j$ and the antisymmetry of its kernel,  
\begin{align*}
\mathcal{R}_j(1, 1) = 0 
= \langle \mathcal{R}_j(\mathbf{1}_Q, \mathbf{1}_Q), \mathbf{1}_Q \rangle 
= \langle \mathcal{R}_j^{*1}(\mathbf{1}_Q, \mathbf{1}_Q), \mathbf{1}_Q \rangle 
= \langle \mathcal{R}_j^{*2}(\mathbf{1}_Q, \mathbf{1}_Q), \mathbf{1}_Q \rangle,  
\end{align*}
but Lemma \ref{lem:trans} implies that 
\begin{align*}
\textstyle\text{$\mathcal{R}_j$ is not compact from $L^{p_1} \times L^{p_2}$ to $L^p$ 
for all $\frac1p = \frac{1}{p_1} + \frac{1}{p_2}$ with $p_1, p_2 \in (1, \infty)$}. 
\end{align*}
This means that the assumption of bilinear Calder\'{o}n--Zygmund kernels is not sufficient to achieve  compactness, which is not only the main reason why there is very limited literature concerning the compactness of singular integrals, but also indicates that one has to strengthen the assumption on kernels in order to obtain compactness of bilinear Calder\'{o}n--Zygmund operators. 

Second, in the bilinear setting, the natural range for compactness is $L^{p_1} \times L^{p_2} \to L^p$ for $\frac1p = \frac{1}{p_1} + \frac{1}{p_2}$ with $p_1, p_2 \in (1, \infty)$, and reaching the quasi-Banach range $p < 1$ (even for boundedness) can often turn out to be a crucial challenge. From the technical perspective, there is no existing approach to  this problem: even in the Banach range $p \ge 1$, the convoluted geometrical structure among cubes $I_1, I_2, I_3$ make it impossible to establish good estimates for the pair $\langle T(\psi_{I_1}, \psi_{I_2}), \psi_{I_3} \rangle$ in terms of the space and frequency localization of bump functions using wavelet analysis as in \cite{Vil}, and for the pair $\langle T(h_{I_1}, h_{I_2}), h_{I_3} \rangle$ in terms of the relative size and distance of cubes using dyadic analysis as in \cite{CYY}. Not merely that, the modern tools including dyadic representation theorems and sparse domination are missing even in the linear case. 

Third, in terms of the weighted $L^{p_1}(w_1^{p_1}) \times L^{p_2}(w_2^{p_2}) \to L^p(w^p)$ compactness, this problem involves a genuinely bilinear nature because the right class of weights is $A_{(p_1, p_2)}$ introduced in \cite{LOPTT}, which is strictly richer than $A_{p_1} \times A_{p_2}$. In the context of the natural multiple weights class $A_{(p_1, p_2)}$, one has to work with component weights that are linked another, and each individual weight may be not locally integrable but their product should behave well. This has happened to weighted compactness of commutators in \cite{BDMT, CT}, which only treats product weights in $A_p \times A_p$ with $p, p_1, p_2 \in (1, \infty)$, and thus does not quite embody  the bilinear nature of the problem.

In this paper we will overcome these difficulties and give a bilinear compact $T1$ theorem which is not only valid for the natural multiple weights class $A_{(p_1, p_2)}$ but also enables us to work with the endpoint exponents $p_i=1$ or $p_i=\infty$.

\subsection{The main theorem}
Now let us proceed to formulate some basic definitions and present our theorem.  
Let $\mathscr{X}$, $\mathscr{Y}$, $\mathscr{Z}$ be quasi-normed spaces and let $T: \mathscr{X} \times \mathscr{Y} \to \mathscr{Z}$ be a bilinear operator. We say that $T$ is \emph{bounded} from $\mathscr{X} \times \mathscr{Y}$ to $\mathscr{Z}$ if   
\begin{align*}
\|T\|_{\mathscr{X} \times \mathscr{Y} \to \mathscr{Z}} 
:= \inf\big\{C_0>0: \|T(f, g)\|_{\mathscr{Z}} 
\le C_0 \|f\|_{\mathscr{X}} \|g\|_{\mathscr{Y}},  \, \, 
\forall \, (f, g) \in \mathscr{X} \times \mathscr{Y} \big\} < \infty. 
\end{align*}
We say that $T$ is \emph{compact} from $\mathscr{X} \times \mathscr{Y}$ to $\mathscr{Z}$ if for all bounded sets $A \times B \subset \mathscr{X} \times \mathscr{Y}$, the set $T(A, B)$ is \emph{precompact} in $\mathscr{Z}$, i.e. $\overline{T(A, B)}$ is a compact subset of $\mathscr{Z}$. Equivalently, $T$ is compact if for all bounded sequences $\{(f_k, g_k)\} \subset \mathscr{X} \times \mathscr{Y}$, the sequence $\{T(f_k, g_k)\}$ has a convergent subsequence in $\mathscr{Z}$. See \cite{BT} for more properties about the compactness of bilinear operators.

To state the main theorem conveniently, we make the following hypotheses: 
\begin{list}{(\theenumi)}{\usecounter{enumi}\leftmargin=1cm \labelwidth=1cm \itemsep=0.2cm \topsep=0.2cm \renewcommand{\theenumi}{H\arabic{enumi}}}
 
\item\label{H1} $T$ is associated with a compact bilinear Calder\'{o}n--Zygmund kernel (cf. Definition \ref{def:CCZK}), 

\item\label{H2} $T$ satisfies the weak compactness property (cf. Definition \ref{def:WCP}), 

\item\label{H3} $T(1,1), T^{*1}(1,1), T^{*2}(1,1) \in \CMO(\Rn)$ (cf. Definition \ref{def:BMO}). 

\end{list}
More definitions and notation are given in Section \ref{sec:pre}.

Our main result is a compact bilinear $T1$ theorem as follows. 

\begin{theorem}\label{thm:cpt} 
Let $0<\lambda<1/2$. Let $T: \S(\Rn) \times \S(\Rn) \to \S'(\Rn)$ be a bilinear operator associated with a standard bilinear Calder\'{o}n--Zygmund kernel. Then the following are equivalent: 
\begin{list}{}{\usecounter{enumi}\leftmargin=1cm \labelwidth=1cm \itemsep=0.2cm \topsep=.2cm \renewcommand{\theenumi}{\alph{enumi}}}
			
\item[\textup{(a)\phantom{$'$}}] $T$ satisfies the hypotheses \eqref{H1}, \eqref{H2}, and \eqref{H3}. 

\item[\textup{(b)\phantom{$'$}}] $T$ admits a compact bilinear dyadic representation (cf. Definition \ref{def:repre}). 

\item[\textup{(c)\phantom{$'$}}] $T$ is a compact operator from $L^{p_1}(\Rn) \times L^{p_2}(\Rn)$ to $L^p(\Rn)$ for all $p_1, p_2 \in (1, \infty]$, where $\frac1p = \frac{1}{p_1} + \frac{1}{p_2}>0$. 

\item[\textup{(c)$'$}] $T$ is a compact operator from $L^{p_1}(w_1^{p_1}) \times L^{p_2}(w_2^{p_2})$ to $L^p(w^p)$ for all $p_1, p_2 \in (1, \infty]$ and for all $(w_1, w_2) \in A_{(p_1, p_2)}$, where $\frac1p = \frac{1}{p_1} + \frac{1}{p_2}>0$ and $w=w_1 w_2$. 

\item[\textup{(d)\phantom{$'$}}] $T$ is a compact operator from $L^1(\Rn) \times L^1(\Rn)$ to $L^{\frac12, \infty}(\Rn)$. 

\item[\textup{(d)$'$}] $T$ is a compact operator from $L^1(w_1) \times L^1(w_2)$ to $L^{\frac12, \infty}(w^{\frac12})$ for all $(w_1, w_2) \in A_{(1, 1)}$, where $w=w_1 w_2$. 

\item[\textup{(e)\phantom{$'$}}] $T$ is a compact operator from $L^{\infty}(\Rn) \times L^{\infty}(\Rn)$ to $\CMO(\Rn)$.  

\item[\textup{(e)$'$}] $T$ is a compact operator from $L^{\infty}(w_1^{\infty}) \times L^{\infty}(w_2^{\infty})$ to $\CMO_{\lambda}(w^{\infty})$ for all $(w_1, w_2) \in A_{(\infty, \infty)}$, where $w=w_1 w_2$. 

\end{list}
\end{theorem}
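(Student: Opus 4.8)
## Proof strategy for Theorem \ref{thm:cpt}

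The plan is to prove the chain of implications $(a) \Rightarrow (b) \Rightarrow (c') \Rightarrow (c)$, together with $(c) \Rightarrow (d)$ and $(c) \Rightarrow (e)$, and then close the circle via the endpoint-to-diagonal extrapolation results, finally recovering $(a)$ from any weighted statement by testing on indicators and bumps. The heart of the matter is the implication $(a)\Rightarrow(b)$: a \emph{compact} bilinear dyadic representation theorem. Following the dyadic-probabilistic scheme of \cite{LMOV} in the bilinear Banach range but upgrading it to the compact category, I would expand $T$ against randomized Haar systems $\{h_I^{(\eta)}\}$ over the three entries, split the resulting sum according to the relative sizes of the cubes $I_1,I_2,I_3$, and take expectations over the random dyadic grids. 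The key point is to show that each piece converges, after averaging, to a \emph{finite} linear combination of bilinear dyadic shifts $S^{i,j,k}$ and bilinear dyadic paraproducts $\Pi_b$ with the \emph{compactness-adapted} decay: the new hypothesis \eqref{H1} (the compact Calder\'on--Zygmund kernel of Definition \ref{def:CCZK}) forces the shift coefficients to carry an extra factor $F(I_1,I_2,I_3)\to 0$ as the cubes degenerate (shrink to zero, escape to infinity, or become eccentric), while \eqref{H3} together with \eqref{H2} guarantees that the paraproduct symbols lie in $\mathrm{CMO}$ rather than merely $\mathrm{BMO}$, and that the ``diagonal'' error terms vanish. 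This is exactly the bilinear analogue of the mechanism in \cite{CYY}, and it is where the three difficulties flagged in the introduction --- the tri-cube geometry, the quasi-Banach range, and the genuinely multiple-weight setting --- all have to be confronted simultaneously.

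Once the representation $(b)$ is in hand, the implication $(b)\Rightarrow(c')$ reduces to proving that an individual compact bilinear dyadic shift, and an individual compact bilinear dyadic paraproduct, is compact from $L^{p_1}(w_1^{p_1})\times L^{p_2}(w_2^{p_2})$ to $L^p(w^p)$ for every $(w_1,w_2)\in A_{(p_1,p_2)}$ and every $p_1,p_2\in(1,\infty]$ with $\tfrac1p=\tfrac1{p_1}+\tfrac1{p_2}>0$, with operator-norm bounds that sum over the representation. Here the strategy is to split each dyadic model operator as a ``truncated'' finite part plus a tail, show the finite part is compact by the weighted Kolmogorov--Riesz criterion (ingredient (iii) of the introduction), and control the tail in operator norm using the extra decay $F$; this is the bilinear, weighted refinement of the argument sketched around \cite{CYY} in the linear case. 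The full quasi-Banach range $p<1$ is reached precisely because the $A_{(p_1,p_2)}$ weighted bounds for bilinear dyadic shifts and paraproducts are available there --- this is what lets us avoid the wavelet/bump approach of \cite{Vil}, which does not survive $p<1$. Then $(c')\Rightarrow(c)$ is the trivial specialization $w_1=w_2=1$.

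For the endpoint statements, $(c)\Rightarrow(d)$ and $(c)\Rightarrow(e)$ are obtained by interpolation/extrapolation of compactness: having compactness on an open set of $(p_1,p_2)$ in the Banach range, together with the a priori boundedness at the endpoints coming from the (classical) bilinear $T1$ theorem under \eqref{H1}--\eqref{H3}, the bilinear compact interpolation machinery (ingredient (ii), as developed in \cite{CIRXY,COY,HL22}) yields compactness at $p_i=1$ into $L^{1/2,\infty}$ and, dually, at $p_i=\infty$ into $\mathrm{CMO}$; the weighted endpoint versions $(d')$ and $(e')$ follow from the weighted analogue of the same extrapolation together with the fact that $A_{(1,1)}$ and $A_{(\infty,\infty)}$ weights can be used as the starting data. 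Conversely, to close the loop one shows that \emph{any} of $(c),(c'),(d),(d'),(e),(e')$ implies $(a)$: boundedness on any one such space gives the classical bilinear $T1$ hypotheses (weak boundedness and $T1,T^{*1}1,T^{*2}1\in\mathrm{BMO}$), and then compactness is used to upgrade these --- testing $T$ on normalized bumps adapted to degenerating triples of cubes forces the kernel to satisfy the vanishing conditions of Definition \ref{def:CCZK} (this is the bilinear version of Uchiyama's \cite{Uch} and Villarroya's \cite{Vil} reasoning, and of Lemma \ref{lem:trans} which already ruled out non-compactness for $\mathcal R_j$), testing on $\mathbf 1_Q$ upgrades the weak boundedness property to weak compactness \eqref{H2}, and testing against $\mathrm{CMO}$-detecting families upgrades the $\mathrm{BMO}$ membership of $T(1,1),T^{*1}(1,1),T^{*2}(1,1)$ to $\mathrm{CMO}$, giving \eqref{H3}.

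The main obstacle, as anticipated in the introduction, is the compact dyadic representation in step $(a)\Rightarrow(b)$: one must produce, for the general bilinear Calder\'on--Zygmund operator satisfying \eqref{H1}--\eqref{H3}, an \emph{honest} averaging identity whose summands are compact dyadic shifts/paraproducts with a quantitative degeneracy factor $F(I_1,I_2,I_3)$ that is simultaneously (i) strong enough to give operator-norm tails that are summable over the representation in the full range including $p<1$ and all $A_{(p_1,p_2)}$ weights, and (ii) weak enough to actually be extracted from the hypothesis \eqref{H1}. Getting the bookkeeping of the tri-cube configurations to interact correctly with the multiple-weight estimates --- where the individual $w_i$ need not be locally integrable --- is the delicate part, and it is the step for which, to our knowledge, no prior template exists even in the linear quasi-Banach or the bilinear Banach setting.
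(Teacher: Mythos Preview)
Your outline of $(a)\Rightarrow(b)\Rightarrow(c')\Rightarrow(c)\Rightarrow(a)$ matches the paper's approach closely, including the mechanism by which the compact kernel hypothesis \eqref{H1} produces the extra decay factor on the shift coefficients and \eqref{H3} places the paraproduct symbols in $\CMO$. The step $(b)\Rightarrow(c')$ via weighted compactness of the individual dyadic model operators, reduced by extrapolation (Theorem~\ref{thm:EP-Lp}) to an unweighted Kolmogorov--Riesz verification, is also what the paper does.

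The gap is in your treatment of the endpoint statements. You propose to obtain $(c)\Rightarrow(d)$ and $(c)\Rightarrow(e)$ by ``interpolation/extrapolation of compactness'' from the open range to the endpoints, citing \cite{CIRXY,COY,HL22}. But those results (and the paper's own Theorems~\ref{thm:EP-Lp}, \ref{thm:EP-Lpinfty}, \ref{thm:EP-Linfty}) go in the \emph{opposite} direction: they take compactness at a single point (possibly an endpoint) together with weighted boundedness everywhere, and return compactness on the open range $r_1,r_2\in(1,\infty]$. None of them produces compactness at $p_i=1$ into $L^{1/2,\infty}$ or at $p_i=\infty$ into $\CMO$ starting from interior compactness; compact interpolation in the Cwikel/Cobos--Fern\'andez-Cabrera--Mart\'inez style only yields the interior of the scale. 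Moreover, Theorem~\ref{thm:PNT-cpt}\,\eqref{list:PN2} shows that the projection characterization of compactness actually \emph{fails} for $L^{1/2,\infty}$, which is a strong indication that the endpoint cannot be reached by soft arguments from the interior.

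The paper therefore does \emph{not} prove $(c)\Rightarrow(d)$ or $(c)\Rightarrow(e)$. Instead it proves $(a)\Rightarrow(d')$ directly (Theorems~\ref{thm:T110} and \ref{thm:HLW}) by a Calder\'on--Zygmund decomposition at height $\varepsilon^{-1}$ combined with the Kolmogorov--Riesz criterion in $L^{1/2,\infty}$ (Theorem~\ref{thm:RKLpq} with $q=\infty$, used only in the sufficient direction), and $(a)\Rightarrow(e')$ directly via a pointwise estimate for $M_\lambda^{\#}\big(P_N^{\perp}T(f_1,f_2)\big)$ (Section~\ref{sec:Linfty}). These are substantial arguments with no analogue in your sketch. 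Conversely, the return implications are \emph{not} $(d)\Rightarrow(a)$ and $(e)\Rightarrow(a)$ by direct bump testing as you propose---the proofs of Theorems~\ref{thm:CCZK}--\ref{thm:T1CMO} use $L^{p_1}\times L^{p_2}\to L^p$ compactness with $p,p_1,p_2\in(1,\infty)$ and the $L^p$--$L^{p'}$ duality in an essential way---but rather $(d)\Rightarrow(c')$ and $(e)\Rightarrow(c')$ via the new endpoint-to-interior extrapolation results (Theorems~\ref{thm:EP-Lpinfty} and \ref{thm:EP-Linfty}), after which $(c')\Rightarrow(c)\Rightarrow(a)$ closes the loop.
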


To prove Theorem \ref{thm:cpt}, we will adopt the following strategy: 
\begin{align*}
& \text{(a)} 
\Longrightarrow \text{(b)} 
\Longrightarrow \text{(c)}' 
\Longrightarrow \text{(c)} 
\Longrightarrow \text{(a)}, 
\\
& \text{(a)} 
\Longrightarrow \text{(d)}' 
\Longrightarrow \text{(d)} 
\Longrightarrow \text{(c)}', 
\quad\text{and}
\\
& \text{(a)} 
\Longrightarrow \text{(e)}' 
\Longrightarrow \text{(e)} 
\Longrightarrow \text{(c)}'.
\end{align*}
Since the implications $\text{(c)}' \Longrightarrow \text{(c)}$, $\text{(d)}' \Longrightarrow \text{(d)}$, and $\text{(e)}' \Longrightarrow \text{(e)}$ are trivial, it suffices to show other implications. 

The remainder of this section elaborates on the novel ideas, the obstacles, and the methods in our proof, which are conducive to understand this lengthy article. Moreover, at the end of this section, as applications,  we present some examples of bilinear operators implicitly in the literature, which satisfy the hypotheses \eqref{H1}, \eqref{H2}, and \eqref{H3}.

\subsection{Compact bilinear dyadic representation}
Let us focus on the proof of ${\rm (a)} \Longrightarrow {\rm (b)}$. The definition of dyadic grids and Haar functions is postponed until Sections \ref{sec:dyadic} and \ref{sec:Haar}. Given $i, j, k \in \N$ and a dyadic grid $\D$, we define the \emph{compact bilinear dyadic shift} as 
\begin{align*}
\mathbf{S}_{\D}^{i, j, k} (f_1, f_2)
:= \sum_{Q \in \D} A_Q^{i, j, k} (f_1, f_2)
\end{align*}
with 
\begin{align*}
A_Q^{i, j, k} (f_1, f_2) 
:= \sum_{\substack{I \in \D_i(Q) \\ J \in \D_j(Q) \\ K \in \D_k(Q)}}  
a_{I, J, K, Q} \langle f_1, \widetilde{h}_I \rangle 
\langle f_2, \widetilde{h}_J \rangle \, h_K, 
\end{align*}
where $\D_i(Q) := \{I \in \D: I \subset Q, \ell(I) = 2^{-i} \ell(Q)\}$, $(\widetilde{h}_I, \widetilde{h}_J) \in \big\{(h_I, h_J), (h_I, h^0_J), (h^0_I, h_J) \big\}$, and the coefficients $a_{I, J, K, Q}$ satisfy  
\begin{align*}
|a_{I, J, K, Q}| 
\le \mathbf{F}(Q) \frac{|I|^{\frac12} |J|^{\frac12} |K|^{\frac12}}{|Q|^2} 
\end{align*}
with  
\begin{align*}
\mathbf{F}(Q) \le 1 
\quad\text{and}\quad 
\lim_{N \to \infty} F_N 
:= \lim_{N \to \infty} \sup_{Q \not\in \Q(N)} 
\mathbf{F}(Q) = 0. 
\end{align*}

Given a sequence of complex numbers $\b := \{b_I\}_{I \in \D}$ such that 
\begin{align*}
\|\b\|_{\BMO_{\D}}
:= \sup_{Q \in \D} \bigg(\frac{1}{|Q|} \sum_{I \in \D: \, I \subset Q} |b_I|^2 \bigg)^{\frac12}
\le 1,  
\end{align*}
the \emph{bilinear dyadic paraproduct} is defined by  
\begin{align*}
\Pi_{\b}(f_1, f_2) 
&:= \sum_{I \in \D} b_I \, \langle f_1 \rangle_I \langle f_2 \rangle_I \,  h_I.
\end{align*}
We say that $\b := \{b_I\}_{I \in \D} \in \CMO(\Rn)$ if $\|\b\|_{\BMO_{\D}} \le 1$ and 
\begin{align*}
\lim_{N \to \infty} \sup_{\D} \sup_{Q \in \D} \frac{1}{|Q|} 
\sum_{I \notin \D(N): \, I \subset Q} |b_I|^2 
= 0. 
\end{align*}

\begin{definition}\label{def:repre}
Given a bilinear operator $T$, we say that $T$ admits a \emph{compact bilinear dyadic representation} if there exists a constant $C_0 \in (0,  \infty)$ so that for all compactly supported and bounded functions $f_1$, $f_2$, and $f_3$,  
\begin{align*}
\big\langle T(f_1, f_2), f_3 \big\rangle 
&= C_0 \, \mathbb{E}_{\w} \sum_{k=0}^{\infty} \sum_{i=0}^k 2^{-k\delta/2} 
\big\langle \mathbb{S}_{\D_{\w}}^{i, k} (f_1, f_2), f_3 \big\rangle 
+ C_0 \, \mathbb{E}_{\w} \big\langle \Pi_{\b^0_{\w}} (f_1, f_2), f_3 \big\rangle 
\\
&\quad+ C_0 \, \mathbb{E}_{\w} \big\langle \Pi_{\b^1_{\w}}^{*1} (f_1, f_2), f_3 \big\rangle 
 + C_0 \, \mathbb{E}_{\w} \big\langle \Pi_{\b^2_{\w}}^{*2} (f_1, f_2), f_3 \big\rangle,   
\end{align*}
where $\mathbb{S}_{\D_{\w}}^{i, k}$ is a finite sum of cancellative shifts $\mathbf{S}_{\D_{\w}}^{i, i, k}$, $\mathbf{S}_{\D_{\w}}^{i, i+1, k}$, and adjoints of such operators. In addition, $\Pi_{\b_{\w}}$ is the bilinear dyadic paraproduct with $\b_{\w} := \{b_I\}_{I \in \D_{\w}} \in \CMO(\Rn)$.
\end{definition}

We, for the first time, introduce a compact dyadic representation in order to investigate the compactness of singular integrals, even in the linear case. It gives the exact dyadic structure behind compact bilinear Calder\'{o}n--Zygmund operators by representing them as an average of some compact bilinear dyadic shifts and paraproducts, which allows us to reduce initial problems into corresponding dyadic problems. 
More importantly, the dyadic-probabilistic method indeed overcomes the drawbacks of the technique of \cite{CYY, Vil} in the bilinear setting, which explains why we here develop a compact dyadic representation. Such a general representation theorem was proved by Hyt\"{o}nen in \cite{Hyt12, Hyt17} to settle the well-knowns $A_2$ conjecture for general Calder\'{o}n--Zygmund operators. Its usefulness and powerfulness have been shown in the last decade. For example, a representation theorem holds also in the multi-parameter setting \cite{HLMV, Mar12} and the bilinear case \cite{LMOV, LMV}, and can be applied to obtain weighted estimates for maximal truncations of Calder\'{o}n--Zygmund operators (cf. \cite{HLMORSU}) and for commutators (cf. \cite{HLW, HPW, OPS}). Furthermore, we will establish a compact dyadic representation in the multi-parameter situation to study weighted compactness of multi-parameter singular integrals in our forthcoming papers \cite{CCLLYZ, CY}.

\subsection{Compactness criterions in Lorentz spaces}  
Before moving on to the proof of weighted compactness, we present some compactness criterions. To establish the endpoint compactness of $L^1 \times L^1 \to L^{\frac12, \infty}$, it is natural to study compactness criterions in Lorentz spaces $L^{p, q}(\Rn)$. The characterization of precompactness in Lebesgue spaces  was first discovered by Kolmogorov \cite{Kolm}, and then extended by Riesz \cite{Riesz} and Tsuji \cite{Tsu}. Such result has shown its own utility by application to compactness of commutators (cf. \cite{BDMT, CIRXY, COY, CY, CYY, CT}). Inspired by these work, we will establish Kolmogorov-Riesz theorems in (weighted) Lorentz spaces. 

Let $\tau_h$ be the translation operator, i.e., $\tau_h f(x) := f(x-h)$ for all $x, h \in \Rn$.

\begin{theorem}\label{thm:RKLpq}
Let $0<p, q<\infty$ and $\K \subset L^{p, q}(\Rn)$. Then $\K$ is precompact in $L^{p, q}(\Rn)$ if and only if the following are satisfied:
\begin{list}{\rm (\theenumi)}{\usecounter{enumi}\leftmargin=1cm \labelwidth=1cm \itemsep=0.2cm \topsep=0.2cm \renewcommand{\theenumi}{\alph{enumi}}}
 
\item\label{RKLpq-1} ${\displaystyle \sup_{f \in \K} \|f\|_{L^{p, q}} < \infty}$, 

\item\label{RKLpq-2} ${\displaystyle \lim_{A \to \infty} \sup_{f \in \K} 
\|f \mathbf{1}_{B(0, A)^c}\|_{L^{p, q}}=0}$, 

\item\label{RKLpq-3} $\displaystyle \lim_{|h| \to 0} \sup_{f \in \K} 
\|\tau_h f - f \|_{L^{p, q}}=0$. 
\end{list}  
Moreover, in the case $q=\infty$, the conditions \eqref{RKLpq-1}, \eqref{RKLpq-2}, and \eqref{RKLpq-3} are sufficient, but \eqref{RKLpq-2} and \eqref{RKLpq-3} are not necessary.  
\end{theorem}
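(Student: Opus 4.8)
\textbf{Proof strategy for Theorem \ref{thm:RKLpq}.} The plan is to reduce the Lorentz-space statement to the classical Kolmogorov--Riesz theorem in $L^p(\Rn)$ (for $0<p<\infty$) by an interpolation/embedding argument, together with a direct diagonal-sequence argument when the interpolation trick is not available. For the \emph{necessity} direction in the range $0<p,q<\infty$, suppose $\K$ is precompact in $L^{p,q}(\Rn)$. Then \eqref{RKLpq-1} is immediate since precompact sets are bounded. For \eqref{RKLpq-2} and \eqref{RKLpq-3}, the key observation is that both $f\mapsto f\mathbf 1_{B(0,A)^c}$ and $f\mapsto \tau_hf-f$ are uniformly bounded linear maps on $L^{p,q}$ (the translation being an isometry), so it suffices to check that they tend to $0$ in operator-norm-against-a-precompact-set sense; this follows from the standard fact that on a precompact set, a uniformly bounded family of operators converging to $0$ pointwise converges to $0$ uniformly. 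Pointwise convergence $\|f\mathbf 1_{B(0,A)^c}\|_{L^{p,q}}\to 0$ as $A\to\infty$ and $\|\tau_hf-f\|_{L^{p,q}}\to 0$ as $|h|\to 0$ for each \emph{fixed} $f\in L^{p,q}$ is proved by density of nice functions (bounded, compactly supported) in $L^{p,q}$ when $q<\infty$, plus the uniform boundedness just mentioned — this is exactly where $q<\infty$ enters, since such functions are not dense in $L^{p,\infty}$.

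\textbf{Sufficiency.} For the \emph{sufficiency} direction, assume \eqref{RKLpq-1}--\eqref{RKLpq-3}; I want to extract from an arbitrary sequence $\{f_k\}\subset\K$ a subsequence converging in $L^{p,q}$. The cleanest route is a two-step embedding sandwich: fix auxiliary exponents $p_0<p<p_1$. One has the continuous embeddings $L^{p_0}(\Rn)\cap L^{p_1}(\Rn)\hookrightarrow L^{p,q}(\Rn)\hookrightarrow L^{p_0}(\Rn)+L^{p_1}(\Rn)$, or more usefully the quantitative bound $\|f\|_{L^{p,q}}\lesssim \|f\|_{L^{p_0}}^{\theta}\|f\|_{L^{p_1}}^{1-\theta}$ on functions lying in both spaces, with $\frac1p=\frac{\theta}{p_0}+\frac{1-\theta}{p_1}$. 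The difficulty is that elements of $L^{p,q}$ need not lie in $L^{p_0}\cap L^{p_1}$, so one first truncates: write $f=f\mathbf 1_{\{|f|\le t\}}+f\mathbf 1_{\{|f|>t\}}=:f^{(t)}_{\le}+f^{(t)}_{>}$. A Chebyshev/distribution-function computation using only \eqref{RKLpq-1} shows that $\|f^{(t)}_{>}\|_{L^{p,q}}$ and $\|f^{(t')}_{\le}-f^{(t)}_{\le}\|$-type tails are small uniformly in $f\in\K$ as $t\to\infty$ (for the low part) and as $t\to 0$ (no, rather we also split off the very small values), so for the purpose of building a Cauchy subsequence it suffices to handle the bounded pieces $f\mathbf 1_{\{\varepsilon<|f|\le t\}}$, which do lie in every $L^{p_0}\cap L^{p_1}$. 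For those pieces, conditions \eqref{RKLpq-2} and \eqref{RKLpq-3} transfer (via the same distributional estimates) to the corresponding conditions in, say, $L^{p_0}(\Rn)$, so the classical Kolmogorov--Riesz theorem in $L^{p_0}$ yields an $L^{p_0}$-convergent subsequence; combined with the uniform $L^{p_1}$ bound (from \eqref{RKLpq-1}) and the interpolation inequality, this subsequence is $L^{p,q}$-Cauchy on the truncated pieces, and a diagonal argument over $\varepsilon\to 0$, $t\to\infty$ finishes the extraction.

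\textbf{The $q=\infty$ case.} Finally, for $q=\infty$: sufficiency of \eqref{RKLpq-1}--\eqref{RKLpq-3} is false, and the counterexample should be a single fixed $f\in L^{p,\infty}\setminus\overline{\{\text{bounded compactly supported}\}}$ — e.g. $f(x)=|x|^{-n/p}$ suitably modified — whose singleton (hence trivially bounded and ``precompact'' as a finite set) nevertheless satisfies none or only some of \eqref{RKLpq-2}, \eqref{RKLpq-3}; the honest statement to record is that for this $f$ the singleton $\{f\}$ \emph{is} precompact yet \eqref{RKLpq-2} and/or \eqref{RKLpq-3} fail, showing they are not necessary. One exhibits, say, $\|\tau_h f-f\|_{L^{p,\infty}}\not\to 0$ for $f=|x|^{-n/p}$ by testing the distribution function near the origin, and $\|f\mathbf 1_{B(0,A)^c}\|_{L^{p,\infty}}\not\to 0$ directly from scale invariance of $|x|^{-n/p}$ in $L^{p,\infty}$.

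\textbf{Main obstacle.} I expect the main technical obstacle to be the two-sided truncation bookkeeping in the sufficiency proof: one must verify that the passage from $f$ to its bounded, bounded-away-from-zero truncation $f\mathbf 1_{\{\varepsilon<|f|\le t\}}$ is uniform over $\K$ in $L^{p,q}$ quasi-norm (so that a diagonal argument closes), and that conditions \eqref{RKLpq-2}--\eqref{RKLpq-3} are stable under this truncation — the latter is slightly delicate because truncation does not commute with translation, so $\tau_h(f\mathbf 1_{\{\varepsilon<|f|\le t\}})\neq (\tau_hf)\mathbf 1_{\{\varepsilon<|\tau_hf|\le t\}}$ and one needs a continuity estimate for the truncation map in $L^{p,q}$, which again relies on $q<\infty$. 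All of the distribution-function estimates are elementary but need to be assembled carefully in the quasi-Banach regime $p<1$ and/or $q<1$, where only a quasi-triangle inequality (with the Aoki--Rolewicz exponent) is available; I would handle this by working throughout with $\|f\|_{L^{p,q}}^{r}$ for a suitable $r\le 1$ making the quasi-norm $r$-subadditive, so that all the ``small tails add up'' arguments go through verbatim.
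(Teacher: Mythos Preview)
Your necessity argument is sound and close in spirit to the paper's. There are, however, two genuine problems with the rest of the proposal.

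\textbf{The truncation step in the sufficiency proof is wrong as stated.} You claim that a Chebyshev computation using only condition \eqref{RKLpq-1} shows $\|f\mathbf 1_{\{|f|>t\}}\|_{L^{p,q}}\to 0$ uniformly over $\K$ as $t\to\infty$. This is false: take $f_n=n\,\mathbf 1_{[0,n^{-p}]}$ on $\R$. Then $d_{f_n}(s)=n^{-p}$ for $0<s<n$, so $\|f_n\|_{L^{p,q}}^q\simeq p/q$ uniformly, yet for any fixed $t$ one has $\|f_n\mathbf 1_{\{|f_n|>t\}}\|_{L^{p,q}}=\|f_n\|_{L^{p,q}}$ for all $n>t$. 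Boundedness in $L^{p,q}$ simply does not control tails of the distribution function uniformly. Without this uniform approximation, the diagonal argument over $\varepsilon,t$ does not close, and your reduction to $L^{p_0}$ collapses. (The family $\{f_n\}$ happens to violate \eqref{RKLpq-3}, so it is not a counterexample to the theorem---but it shows that your proposed mechanism, which invokes only \eqref{RKLpq-1} at this step, cannot work. You would need to bring \eqref{RKLpq-2} and \eqref{RKLpq-3} into the approximation step itself, e.g.\ by mollifying and restricting to a large ball rather than truncating level sets; this is a different argument from the one you outlined.)

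\textbf{You have misread the $q=\infty$ assertion.} The theorem says that for $q=\infty$ the conditions \eqref{RKLpq-1}--\eqref{RKLpq-3} \emph{are} sufficient; it is only their \emph{necessity} that fails. Your counterexample $|x|^{-n/p}$ correctly shows non-necessity (a singleton is trivially precompact, yet \eqref{RKLpq-2} and \eqref{RKLpq-3} fail for it), but you give no proof of sufficiency when $q=\infty$, and your sentence ``sufficiency of (a)--(c) is false'' is the opposite of what must be shown.

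For comparison, the paper's sufficiency argument avoids truncation entirely. It first reduces to nonnegative $\K$, then uses the power map $f\mapsto f^a$ with $a<\min\{1,p,q\}$: precompactness of $\K$ in $L^{p,q}$ is equivalent to precompactness of $\K^a:=\{f^a:f\in\K\}$ in $L^{p/a,q/a}$ (via the elementary two-sided estimate $|s^a-t^a|\le|s-t|^a\lesssim\big(\tfrac{s+t}{|s-t|}\big)^{1-a}|s^a-t^a|$), reducing to the Banach range $p,q>1$. There sufficiency is proved by replacing \eqref{RKLpq-3} with the averaged condition $\|f-\langle f\rangle_{B(\cdot,r)}\|_{L^{p,q}}\to 0$ (controlled by \eqref{RKLpq-3} via duality and Minkowski) and invoking Arzel\`a--Ascoli on the family $x\mapsto\langle f\rangle_{B(x,r)}$. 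The same power trick and duality argument (now between $L^{p,\infty}$ and $L^{p',1}$) handle sufficiency for $q=\infty$.
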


Unfortunately, Theorem \ref{thm:RKLpq} does not hold in the weighted case. In fact, it just gives a sufficient condition of precompactness in $L^{p, q}(w)$. To be precise, we obtain the following result.

\begin{theorem}\label{thm:RKW}
Let $w$ be a weight so that $w, w^{-\lambda} \in L^1_{\loc}(\Rn)$ for some $\lambda>0$. Let $0<p, q<\infty$ and $\K \subset L^{p, q}(w)$. Then $\K$ is precompact in $L^{p, q}(w)$ if the following are satisfied:
\begin{list}{\rm (\theenumi)}{\usecounter{enumi}\leftmargin=1cm \labelwidth=1cm \itemsep=0.2cm \topsep=0.2cm \renewcommand{\theenumi}{\alph{enumi}}}
 
\item\label{RKW-1} ${\displaystyle \sup_{f \in \K} \|f\|_{L^{p, q}(w)} < \infty}$, 

\item\label{RKW-2} ${\displaystyle \lim_{A \to \infty} \sup_{f \in \K} 
\|f \mathbf{1}_{B(0, A)^c}\|_{L^{p, q}(w)}=0}$, 

\item\label{RKW-3} ${\displaystyle \lim_{|h| \to 0} \sup_{f \in \K} 
\|\tau_h f - f\|_{L^{p, q}(w)}=0}$. 
\end{list}
Moreover, \eqref{RKW-1} and \eqref{RKW-2} are necessary, but \eqref{RKW-3} is not. In the case $q=\infty$, the conditions \eqref{RKW-1}, \eqref{RKW-2}, and \eqref{RKW-3} are sufficient, but \eqref{RKW-2} and \eqref{RKW-3} are not necessary.  
\end{theorem}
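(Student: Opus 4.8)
The plan is to prove Theorem \ref{thm:RKW} by reducing the weighted statement to the unweighted Kolmogorov--Riesz theorem (Theorem \ref{thm:RKLpq}) via a careful transference argument, while handling the failure of \eqref{RKW-3} being necessary and the extra hypothesis $w^{-\lambda} \in L^1_{\loc}$ with a separate local-compactness argument. First I would observe that for the \emph{sufficiency} direction the natural obstruction is that the map $f \mapsto f w^{1/p}$ is an isometry from $L^{p,q}(w)$ onto $L^{p,q}(\Rn)$ only when $q=p$; for general $q$ the Lorentz quasi-norm is not simply a weighted integral, so I cannot transfer directly. Instead I would work with the distribution function: since $L^{p,q}(w)$ depends on $f$ only through $\mu_f^w(\alpha) := w(\{|f|>\alpha\})$, and $w(\{|f w^{1/p} \cdot w^{-1/p}| > \alpha\})$ relates the two distribution functions through the weight, I would show that a bounded set $\K \subset L^{p,q}(w)$ satisfying \eqref{RKW-1}--\eqref{RKW-3} maps, after multiplication by an appropriate cutoff and a truncation at height $n$, into a set whose closure is compact; the key is to prove total boundedness of $\K$ directly.

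The cleanest route, and the one I would carry out, is: (i) use \eqref{RKW-2} to reduce to functions supported in a fixed ball $B = B(0,A)$, at the cost of an $\varepsilon$; (ii) use \eqref{RKW-1} together with a truncation $f \mapsto f \mathbf{1}_{\{|f| \le M\}}$, controlling the error in $L^{p,q}(w)$ by a tail estimate that uses only \eqref{RKW-1} and the Lorentz structure, reducing to a \emph{uniformly bounded} family supported in $B$; (iii) on this family, mollify: set $f_\rho := f * \varphi_\rho$ with a standard mollifier, and estimate $\|f_\rho - f\|_{L^{p,q}(w)}$ using \eqref{RKW-3} — here the hypothesis $w^{-\lambda} \in L^1_{\loc}$ enters, because on the bounded family I can interpolate the pointwise smallness coming from the $L^{p,q}$-continuity of translations against the $L^\infty$ bound and the local finiteness of $w$ and $w^{-\lambda}$ to get the Lorentz-norm smallness, uniformly over $\K$; (iv) finally, the mollified family $\{f_\rho : f \in \K\}$ for fixed small $\rho$ is uniformly bounded, uniformly Lipschitz, and supported in a fixed slightly larger ball, hence precompact in $C(\overline{B})$ by Arzel\`a--Ascoli, and since $w \in L^1_{\loc}$ the inclusion $C(\overline{B}) \hookrightarrow L^{p,q}(w)$ is bounded, so this family is precompact in $L^{p,q}(w)$. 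An $\varepsilon/3$-type argument then shows $\K$ is totally bounded in $L^{p,q}(w)$, hence precompact since $L^{p,q}(w)$ is complete for $0 < p < \infty$, $0 < q < \infty$ (with the usual care that it is only a quasi-Banach space when $p$ or $q$ is less than $1$, where one uses the Aoki--Rolewicz theorem so that total boundedness still yields precompactness).

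For the \emph{necessity} of \eqref{RKW-1} and \eqref{RKW-2}: \eqref{RKW-1} is immediate since a precompact set is bounded; for \eqref{RKW-2} I would argue by contradiction — if it failed there would be $\varepsilon_0 > 0$, $A_k \to \infty$, and $f_k \in \K$ with $\|f_k \mathbf{1}_{B(0,A_k)^c}\|_{L^{p,q}(w)} \ge \varepsilon_0$; passing to a convergent subsequence $f_k \to f$ in $L^{p,q}(w)$, dominated convergence for the Lorentz quasi-norm against $w$ (using $w \in L^1_{\loc}$ only locally, but the tail of a \emph{fixed} $L^{p,q}(w)$ function vanishes) gives $\|f_k \mathbf{1}_{B(0,A_k)^c}\|_{L^{p,q}(w)} \le \|(f_k - f)\mathbf{1}_{B(0,A_k)^c}\|_{L^{p,q}(w)} + \|f \mathbf{1}_{B(0,A_k)^c}\|_{L^{p,q}(w)} \to 0$, a contradiction; here one uses the quasi-triangle inequality and absorbs the constant. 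To show \eqref{RKW-3} is \emph{not} necessary, I would exhibit a concrete example: take a weight $w$ that is very singular (e.g. $w$ with a jump or a power singularity across a hyperplane, chosen so $w, w^{-\lambda} \in L^1_{\loc}$) and a single nonzero $f \in L^{p,q}(w)$, e.g. $f = \mathbf{1}_{[0,1]^n}$, so that $\K = \{f\}$ is trivially precompact, yet $\|\tau_h f - f\|_{L^{p,q}(w)} \not\to 0$ as $|h| \to 0$ because $w$ concentrates near $\partial([0,1]^n)$ in a way that makes the symmetric difference carry nonvanishing $w$-mass; the finitely-supported singleton defeats \eqref{RKW-3} while satisfying precompactness. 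Finally, the $q = \infty$ sentence is handled by noting that the sufficiency proof above never genuinely used $q < \infty$ except in the completeness/density steps, which have $q = \infty$ analogues, while the counterexamples showing \eqref{RKW-2}, \eqref{RKW-3} are not necessary for $q = \infty$ can be imported verbatim from the unweighted case in Theorem \ref{thm:RKLpq} (take $w \equiv 1$), since $L^{p,\infty}$ already fails those implications.

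I expect the main obstacle to be step (iii): transferring the hypothesis \eqref{RKW-3}, which controls translates in the \emph{Lorentz} quasi-norm, into uniform smallness of $\|f_\rho - f\|_{L^{p,q}(w)}$ over the \emph{truncated, ball-supported} subfamily. The delicate point is that $f_\rho - f = \int (\tau_{-y} f - f)\varphi_\rho(y)\,dy$ is an average of translates, and one must commute this averaging with the (non-homogeneous, non-additive) Lorentz quasi-norm and the weight; the standard trick — Minkowski's integral inequality — is unavailable when $q < 1$ or $p < 1$. I would instead bound the distribution function of $f_\rho - f$ pointwise by a Hardy--Littlewood maximal-type average of the distribution functions of $\tau_{-y}f - f$, combined with the $L^\infty$ bound on the truncated family (so the functions live in a fixed $L^\infty$-ball, making $L^{p,q}(w)$ comparable to $L^{p_0,q}(w)$ for a convenient $p_0 \ge 1$ on that ball by the local integrability $w \in L^1_{\loc}$), and only \emph{then} apply a genuine (linear-range) Minkowski inequality; the hypothesis $w^{-\lambda} \in L^1_{\loc}$ is what guarantees these comparisons are quantitative and uniform. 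Getting this uniform-over-$\K$ estimate with explicit dependence only on the three hypotheses, with no circularity, is the technical heart of the argument.
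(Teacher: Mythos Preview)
Your proposal has a genuine gap in step (ii) and misses the key idea that makes the paper's proof work cleanly for quasi-Banach exponents.

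\textbf{Step (ii) fails as written.} Uniform truncation at height $M$ cannot be achieved from \eqref{RKW-1} alone: take $w \equiv 1$, $p = q$, and $f_k = k^{1/p}\mathbf{1}_{[0,1/k]}$; this family is bounded in $L^p$ but $\|f_k \mathbf{1}_{\{|f_k| > M\}}\|_{L^p} = 1$ for all $k > M^p$. One would need to combine \eqref{RKW-1} with \eqref{RKW-3} to extract uniform integrability, but you do not indicate how, and even then applying \eqref{RKW-3} to the \emph{truncated} family $\{f\mathbf{1}_{\{|f| \le M\}}\}$ is problematic since truncation does not commute with translation.

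\textbf{The paper bypasses your main obstacle entirely via rescaling.} You correctly identify step (iii) --- commuting the mollification average with the Lorentz quasi-norm when $p < 1$ or $q < 1$ --- as the technical heart, but your proposed workaround (pass to $L^\infty$-bounds, compare to $L^{p_0,q}$) is vague and, because of the truncation issue above, circular. The paper instead uses the rescaling $f \mapsto f^a$ for small $a > 0$ (already developed in the proof of Theorem \ref{thm:RKLpq}): setting $p_0 := 1 + 1/\lambda$ and reducing to $p, q > p_0^2 > 1$, one lands in the Banach range where Minkowski's integral inequality is available and gives directly
\[
\|f - \langle f \rangle_{B(\cdot,r)}\|_{L^{p,q}(w)} \lesssim \fint_{B(0,r)} \|\tau_y f - f\|_{L^{p,q}(w)}\,dy \le \sup_{|h| < r} \|\tau_h f - f\|_{L^{p,q}(w)}.
\]
This feeds straight into the Arzel\`a--Ascoli argument of Theorem \ref{thm:RKB} (with $\kappa$ replaced by $p_0$, which is exactly where $w^{-\lambda} \in L^1_{\loc}$ enters via the Kolmogorov-type estimate \eqref{eq:WLP-12}). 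No truncation, no mollification, no ad hoc $L^\infty$ comparisons. Your necessity arguments for \eqref{RKW-1}, \eqref{RKW-2} and the $q = \infty$ remarks are fine and align with the paper; for the failure of \eqref{RKW-3} the paper simply cites an existing $A_2$ counterexample rather than constructing one.
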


As seen from Theorem \ref{thm:RKW}, to give a characterization of precompactness in $L^{p, q}(w)$ with $q \in (0, \infty)$, one has to replace the condition \eqref{RKW-3} by a weaker condition. To achieve this, we show the equivalence as follows. It is worth mentioning that the exponents $p, q$ are allowed to be quasi-Banach.

\begin{theorem}\label{thm:RKWA}
Let $w \in A_{p_0}$ for some $p_0 \in (1, \infty)$. Let $0<p, q<\infty$ and $\K \subseteq L^{p, q}(w)$. Let $0<a<   \min\{p/p_0, q, 1\}$. Then $\K \subseteq L^{p, q}(w)$ is precompact if and only if the following are satisfied:
\begin{list}{\rm (\theenumi)}{\usecounter{enumi}\leftmargin=1cm \labelwidth=1cm \itemsep=0.2cm \topsep=0.2cm \renewcommand{\theenumi}{\alph{enumi}}}
 
\item\label{RKWA-1} ${\displaystyle \sup_{f \in \K} \|f\|_{L^{p, q}(w)} < \infty}$, 

\item\label{RKWA-2} ${\displaystyle \lim_{A \to \infty} \sup_{f \in \K} 
\|f \mathbf{1}_{B(0, A)^c}\|_{L^{p, q}(w)}=0}$, 

\item\label{RKWA-3} ${\displaystyle \lim_{r \to 0} \sup_{f \in \K} 
\bigg\| \bigg( \fint_{B(0, r)} |\tau_y f - f|^a \, dy \bigg)^{\frac1a} \bigg\|_{L^{p, q}(w)} = 0}$. 
\end{list}
Moreover, in the case $q=\infty$, the conditions \eqref{RKWA-1}, \eqref{RKWA-2}, and \eqref{RKWA-3} are sufficient, but \eqref{RKW-2} and \eqref{RKW-3} are not necessary.  
\end{theorem}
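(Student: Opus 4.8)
The plan is to prove Theorem \ref{thm:RKWA} by leveraging the already-established Theorem \ref{thm:RKW} as the sufficiency engine and producing a substitute ``equicontinuity in $L^a$-average'' condition \eqref{RKWA-3} that is both necessary and, when combined with \eqref{RKWA-1}--\eqref{RKWA-2}, sufficient. The central mechanism is a mollification/regularization argument: for $r>0$ set $f_r(x) := \bigl(\fint_{B(0,r)} |\tau_y f(x)|^a\,dy\bigr)^{1/a}$ — or, more conveniently for the triangle inequality, the linear average $A_r f(x) := \fint_{B(0,r)} \tau_y f(x)\,dy$ — and show that (i) the family $\{A_r f : f \in \K\}$ is precompact in $L^{p,q}(w)$ for each fixed $r$, because $A_r f$ enjoys genuine continuity estimates (translation of $A_r f$ is controlled by an $L^a$-average of translates of $f$, which is exactly \eqref{RKWA-3}, plus $A_r f$ inherits the tail decay \eqref{RKWA-2}), and (ii) $\sup_{f \in \K}\|A_r f - f\|_{L^{p,q}(w)} \to 0$ as $r \to 0$, again by \eqref{RKWA-3} together with the pointwise bound $|A_r f - f| \le \bigl(\fint_{B(0,r)}|\tau_y f - f|^a\,dy\bigr)^{1/a}$ valid since $a \le 1$. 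Then $\K$ is a uniform limit (in $L^{p,q}(w)$) of precompact families, hence precompact. The hypothesis $w \in A_{p_0}$ enters to guarantee that $L^{p,q}(w)$ is a genuine quasi-Banach space in which uniform approximation by precompact sets yields precompactness, and that the maximal-type operator $M_a f := (M(|f|^a))^{1/a}$ is bounded on $L^{p,q}(w)$ when $a < \min\{p/p_0, q, 1\}$: indeed $w \in A_{p_0} = A_{(p/a)/(p/(ap_0))}$-type reasoning shows $M$ is bounded on $L^{p/a, q/a}(w)$, so $M_a$ is bounded on $L^{p,q}(w)$, and this is what makes the averages $A_r f$ and the fiberwise $L^a$-integrals well-behaved and uniformly controlled.

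For the necessity direction I would argue as follows. Suppose $\K$ is precompact in $L^{p,q}(w)$; then \eqref{RKWA-1} and \eqref{RKWA-2} are immediate (they are already known to be necessary by Theorem \ref{thm:RKW}, since precompactness gives a finite $\varepsilon$-net and each single function in $L^{p,q}(w)$ has small tail). For \eqref{RKWA-3}, cover $\K$ by finitely many balls of radius $\varepsilon$ centered at $g_1,\dots,g_N \in L^{p,q}(w)$. For each $g_j$ one shows directly that $\bigl\| (\fint_{B(0,r)}|\tau_y g_j - g_j|^a\,dy)^{1/a}\bigr\|_{L^{p,q}(w)} \to 0$ as $r \to 0$: this is a single-function statement provable by density of, say, compactly supported Lipschitz functions in $L^{p,q}(w)$ (again using $w \in A_{p_0}$), dominated convergence, and the pointwise bound $(\fint_{B(0,r)}|\tau_y g - g|^a)^{1/a} \le M_a(\text{something}) \le$ an $L^{p,q}(w)$-integrable majorant, so that one may pass to the limit. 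Then for arbitrary $f \in \K$, pick $g_j$ with $\|f - g_j\|_{L^{p,q}(w)} < \varepsilon$, and estimate the $L^a$-average of $\tau_y f - f$ by the $L^a$-average of $\tau_y g_j - g_j$ plus error terms $\tau_y(f - g_j)$ and $f - g_j$; the error terms are controlled in $L^{p,q}(w)$ by the boundedness of $M_a$ and by translation-invariance of $\|\cdot\|_{L^{p,q}(w)}$ — wait, here one must be careful, since $\|\cdot\|_{L^{p,q}(w)}$ is \emph{not} translation invariant for a general weight; instead one uses that $\|(\fint_{B(0,r)}|\tau_y(f-g_j)|^a\,dy)^{1/a}\|_{L^{p,q}(w)} \le \|M_a(f - g_j)\|_{L^{p,q}(w)} \lesssim \|f - g_j\|_{L^{p,q}(w)} < C\varepsilon$ uniformly in $r$, which is precisely where the $A_{p_0}$-assumption and the restriction $a < \min\{p/p_0,q,1\}$ are used.

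The case $q = \infty$ is handled separately and only asserts sufficiency of \eqref{RKWA-1}--\eqref{RKWA-3}: this follows from the sufficiency part of Theorem \ref{thm:RKW} (or the $q=\infty$ clause thereof) once one observes that \eqref{RKWA-3} together with boundedness of $M_a$ on $L^{p,\infty}(w)$ forces the hypotheses of that theorem; the failure of necessity of \eqref{RKW-2} and \eqref{RKW-3} in the $q=\infty$ case is inherited verbatim from the corresponding counterexamples already exhibited for Theorem \ref{thm:RKW}, so no new construction is needed.

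I expect the main obstacle to be the verification that $M_a$ is bounded on $L^{p,q}(w)$ for the stated range of $a$, handled cleanly via the Hardy--Littlewood maximal function on weighted Lorentz spaces, and the attendant bookkeeping to make every estimate \emph{uniform over $f \in \K$} — in particular, the subtle point that the weighted Lorentz (quasi-)norm is not translation-invariant, so all translation-type errors must be routed through $M_a$ rather than through a naive change of variables. A secondary technical point is that $L^{p,q}(w)$ for $q<1$ or $p<1$ is only a quasi-normed space, so the ``uniform limit of precompact sets is precompact'' step must be justified with a quasi-triangle inequality and a Cauchy-in-total-boundedness argument rather than an ordinary triangle inequality; this is routine but must be stated carefully.
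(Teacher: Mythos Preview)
Your necessity argument is essentially the same as the paper's and is correct: cover $\K$ by finitely many balls, approximate the centers by $\mathscr{C}_c^\infty$ functions, and route all translation errors through $M_a$ using $w\in A_{p_0}\subset A_{p/a}$ so that $M$ is bounded on $L^{p/a,q/a}(w)$. That part is fine.

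The sufficiency argument, however, has a genuine gap. Your step (ii) relies on the pointwise bound
\[
|A_r f - f| \;\le\; \Bigl(\fint_{B(0,r)}|\tau_y f - f|^a\,dy\Bigr)^{1/a},
\]
which you say is ``valid since $a\le 1$''. In fact Jensen's inequality goes the other way for $a<1$: since $t\mapsto t^a$ is concave, $\bigl(\fint|g|^a\bigr)^{1/a}\le \fint|g|$, so the $L^a$-average is \emph{dominated by}, not a majorant of, the $L^1$-average. Thus condition \eqref{RKWA-3} does not control $\|A_r f - f\|_{L^{p,q}(w)}$ in the way you need. There is a related problem in step (i): to verify the translation condition of Theorem~\ref{thm:RKW} for $\{A_r f\}$ via the symmetric-difference estimate $|\tau_h(A_r f)-A_r f|\lesssim (|h|/r)\,Mf$, you need $M$ bounded on $L^{p,q}(w)$, which requires $w\in A_p$; you only have $w\in A_{p_0}$, and nothing prevents $p\le p_0$. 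You correctly identified that only $M_a$ is bounded, but the \emph{linear} average $A_r f$ is controlled by $M$, not $M_a$.

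The paper's route sidesteps both issues by a rescaling: reduce to non-negative $f$, pass to $\K^a:=\{f^a:f\in\K\}\subset L^{p/a,q/a}(w)$, and observe that $p/a>p_0$ forces $w\in A_{p/a}$ so that Theorem~\ref{thm:RKB} applies in the Banach range. The correct pointwise inequality is
\[
\bigl|f^a(x)-\langle f^a\rangle_{B(x,r)}\bigr| \;\le\; \fint_{B(0,r)} |\tau_y f(x)-f(x)|^a\,dy,
\]
which holds because $|s^a-t^a|\le|s-t|^a$ for $s,t\ge 0$ and $a\in(0,1]$; this is precisely the concavity that broke your inequality, now working in your favor. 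Hence \eqref{RKWA-1}--\eqref{RKWA-3} imply the hypotheses of Theorem~\ref{thm:RKB} for $\K^a$, so $\K^a$ is precompact in $L^{p/a,q/a}(w)$, and one transfers back to precompactness of $\K$ in $L^{p,q}(w)$ by the elementary equivalence already established in the proof of Theorem~\ref{thm:RKLpq}. For the $q=\infty$ clause, the sufficiency follows by the same rescaling (using the $q=\infty$ part of Theorem~\ref{thm:RKB}); the non-necessity of (b) is indeed inherited, but the non-necessity of the $L^a$-average condition \eqref{RKWA-3} requires checking the same test function $f(x)=|x|^{-1/p}\mathbf{1}_{\{x>0\}}$ against the \emph{new} condition, which the paper does explicitly and which is not quite ``verbatim'' from Theorem~\ref{thm:RKW}.
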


From Theorems \ref{thm:RKLpq}--\ref{thm:RKWA}, we see that the precompactness in $L^{p, \infty}(\Rn)$ becomes much more delicate. A main reason is that the collection of continuous functions and functions with compact support is not dense in $L^{p, \infty}(w)$ for all exponents $p \in (0, \infty)$ and all weights $w \in L^1_{\loc}(\Rn)$ (cf. Lemma \ref{lem:dense-Lpq}). Besides, the space $L^{p, \infty}(\Rn)$ with $p \in (0, \infty)$ does not have absolutely continuous quasi-norm (cf. Lemma \ref{lem:ACN}), which leads to the invalidity of some classical precompactness criterion (for example, Lemma \ref{lem:UAC}). Beyond that, in terms of  $\BMO(\Rn)$ and $\CMO(\Rn)$, they are not lattices, which means that $|f| \le |g|$ does not imply $\|f\|_{\BMO} \le \|g\|_{\BMO}$. This indicates that it is impossible to characterize precompactness in $\CMO$ as Theorems \ref{thm:RKLpq}--\ref{thm:RKWA}. Based on these facts, we have to seek new precompactness criterion in $L^{p, \infty}(\Rn)$ and $\CMO(\Rn)$. For this purpose, we make use of the projection operator in \eqref{def:PN} and establish the following characterizations of precompactness.

\begin{theorem}\label{thm:PNT-cpt}
Let $T$ be a bilinear operator and $\frac1p = \frac{1}{p_1} + \frac{1}{p_2}>0$ with $p_1, p_2 \in (1, \infty]$. 
\begin{list}{\rm (\theenumi)}{\usecounter{enumi}\leftmargin=1.2cm \labelwidth=1cm \itemsep=0.2cm \topsep=.2cm \renewcommand{\theenumi}{\arabic{enumi}}}

\item\label{list:PN1} Assume that $T$ is bounded from $L^{p_1}(\Rn) \times L^{p_2}(\Rn)$ to $L^p(\Rn)$. Then $T$ is compact from $L^{p_1}(\Rn) \times L^{p_2}(\Rn)$ to $L^p(\Rn)$ if and only if $\lim_{N \to \infty} \|P_N^{\perp} T\|_{L^{p_1} \times L^{p_2} \to L^p} = 0$.

\item\label{list:PN2} Assume that $T$ is bounded from $L^1(\Rn) \times L^1(\Rn)$ to $L^{\frac12, \infty}(\Rn)$. Then $T$ is compact from $L^1(\Rn) \times L^1(\Rn)$ to $L^{\frac12, \infty}(\Rn)$ if $\lim_{N \to \infty} \|P_N^{\perp} T\|_{L^1 \times L^1 \to L^{\frac12, \infty}} = 0$. But the converse fails.

\item\label{list:PN3} Assume that $T$ is bounded from $L^{\infty}(\Rn) \times L^{\infty}(\Rn)$ to $\BMO(\Rn)$. Then $T$ is compact from $L^{\infty}(\Rn) \times L^{\infty}(\Rn)$ to $\CMO(\Rn)$ if and only if $\lim_{N \to \infty} \|P_N^{\perp} T\|_{L^{\infty} \times L^{\infty} \to \BMO} = 0$. 

\end{list} 
\end{theorem}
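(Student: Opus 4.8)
\textbf{Proof plan for Theorem \ref{thm:PNT-cpt}.}
The guiding principle is that the projection operator $P_N$ (defined in \eqref{def:PN}) truncates a function both in space and in the Haar frequency scale, so that each $P_N T$ maps bounded sets of the domain into sets of functions that are uniformly supported in a fixed large ball, uniformly bounded, and built from finitely many Haar generations; such sets are precompact by Theorems \ref{thm:RKLpq}--\ref{thm:RKWA} (or by elementary finite-dimensional arguments where applicable). Hence if $\|P_N^{\perp} T\| \to 0$ then $T$ is approximated in operator norm by compact operators and is therefore compact; this gives the ``if'' direction in all three parts at once. The subtlety is entirely in the converse, and in why it succeeds for $L^p$ and $\CMO$ but fails for $L^{p,\infty}$.

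\emph{Part \ref{list:PN1}.} First I would prove the ``if'' direction: write $T = P_N T + P_N^{\perp} T$, observe $\|T - P_N T\|_{L^{p_1}\times L^{p_2}\to L^p} = \|P_N^{\perp}T\| \to 0$, and check that each $P_N T$ is compact. Compactness of $P_N T$ follows because, for $(f,g)$ in the unit ball, the functions $P_N T(f,g)$ are supported in $B(0,C_N)$, have $L^p$-norm bounded by $\|T\|$, and lie in the span of Haar functions of generations $\le N$ localized to that ball plus a uniform modulus-of-continuity/tightness estimate inherited from the structure of $P_N$; one then invokes Theorem \ref{thm:RKLpq} with $q=p$ (the Lebesgue case). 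For the converse, suppose $T$ is compact but $\|P_N^{\perp}T\| \not\to 0$; extract $(f_N,g_N)$ in the unit ball with $\|P_N^{\perp}T(f_N,g_N)\|_{L^p}\ge c>0$. By compactness a subsequence of $T(f_N,g_N)$ converges in $L^p$ to some $h$; since $L^p$ has absolutely continuous norm, $P_N^{\perp}h \to 0$ in $L^p$, and $\|P_N^{\perp}(T(f_N,g_N)-h)\|_{L^p}\le \|T(f_N,g_N)-h\|_{L^p}\to 0$ because $P_N^{\perp}$ is a uniformly bounded projection; combining these contradicts the lower bound. The key input here is the uniform boundedness of $P_N$ on $L^p$ and the absolute continuity of the $L^p$ quasi-norm.

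\emph{Part \ref{list:PN2}.} The ``if'' direction runs as above, using Theorem \ref{thm:RKLpq} (sufficiency in the $L^{\frac12,\infty}$ endpoint, which holds even though conditions (b), (c) there are not necessary) to see each $P_N T$ is compact into $L^{\frac12,\infty}$. The converse fails precisely because $L^{p,\infty}(\Rn)$ does not have absolutely continuous quasi-norm (Lemma \ref{lem:ACN}): the step ``$P_N^{\perp}h\to 0$'' breaks down, and likewise $P_N^{\perp}$ applied to an $L^{\frac12,\infty}$-convergent sequence need not tend to zero even if the limit is nice. To make ``the converse fails'' precise I would exhibit a concrete compact $T$ (e.g. a rank-one or finite-rank bilinear operator whose output is a fixed function $h\in L^{\frac12,\infty}\setminus\overline{C_c}$, or a suitably chosen bilinear dyadic paraproduct-type operator) for which $\|P_N^{\perp}T\|_{L^1\times L^1\to L^{\frac12,\infty}}$ stays bounded below; this uses Lemma \ref{lem:dense-Lpq} to locate such an $h$.

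\emph{Part \ref{list:PN3}.} The ``if'' direction again splits $T=P_NT+P_N^{\perp}T$; the content is that $P_N T$ is compact from $L^{\infty}\times L^{\infty}$ into $\CMO$, which one reads off from the definition of $\CMO$ as the closure of appropriate nice functions and from the fact that the range of $P_N T$ on bounded sets satisfies, uniformly, the defining vanishing-oscillation conditions of $\CMO$ at small and large scales (this is where one is careful that $\BMO$ and $\CMO$ are not lattices, so one argues directly with the oscillation seminorm rather than with a domination). For the converse one uses that $\CMO$ \emph{does} behave well here: if $T$ is compact into $\CMO$ and $(f_N,g_N)$ are chosen with $\|P_N^{\perp}T(f_N,g_N)\|_{\BMO}\ge c$, pass to a subsequence with $T(f_N,g_N)\to h$ in $\CMO\subseteq\BMO$; then $P_N^{\perp}h\to 0$ in $\BMO$ because $h\in\CMO$ (this is essentially the definition of $\CMO$ via the projections $P_N$, cf. the display before Definition \ref{def:repre}), and $\|P_N^{\perp}(T(f_N,g_N)-h)\|_{\BMO}\lesssim \|T(f_N,g_N)-h\|_{\BMO}\to 0$ by uniform boundedness of $P_N^{\perp}$ on $\BMO$; contradiction.

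The main obstacle I anticipate is not the ``if'' directions but (i) verifying cleanly that each $P_N T$ is compact in the $\CMO$ target, where the absence of a lattice structure forbids the usual domination arguments and one must check the two-sided (small-scale and large-scale) vanishing-oscillation conditions uniformly over bounded sets directly from the structure of $P_N$; and (ii) constructing the explicit counterexample in Part \ref{list:PN2} that certifies the failure of the converse, which requires pinning down an element of $L^{\frac12,\infty}$ outside the closure of $C_c$ that is genuinely in the range of a compact bilinear operator bounded $L^1\times L^1\to L^{\frac12,\infty}$, and checking $\|P_N^{\perp}T\|\not\to 0$ for it using Lemmas \ref{lem:dense-Lpq} and \ref{lem:ACN}.
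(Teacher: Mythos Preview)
Your overall strategy matches the paper's, but you are overcomplicating the ``if'' direction and aiming the counterexample in the wrong direction.

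\textbf{Compactness of $P_N T$.} Since $\D(N)$ is a finite set of cubes, the range of $P_N$ is the finite-dimensional span of $\{h_Q: Q\in \D(N)\}$. Thus $P_N T$ is a finite-rank operator and is automatically compact into any quasi-Banach target---no Kolmogorov--Riesz verification is needed, and in particular the lattice-structure obstacle you raise for Part \eqref{list:PN3} evaporates. The paper does exactly this (citing \cite[Proposition 15.1]{FHHMZ}) and then invokes Lemma \ref{lem:limit}.

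\textbf{The counterexample in Part \eqref{list:PN2}.} Your instinct to look for an output $h\in L^{\frac12,\infty}\setminus\overline{C_c}$ is misleading. The paper's example on $\R$ is the rank-one operator
\[
T(f_1,f_2) := \langle f_1,h_{I_0}\rangle\langle f_2,h_{I_0}\rangle\,\mathbf{1}_{I_0},
\qquad I_0=[0,1),
\]
whose output $\mathbf{1}_{I_0}$ is as nice as possible. Taking $f_1=f_2=h_{I_0}$, one computes explicitly $P_N^{\perp}\mathbf{1}_{I_0}=2^{-N}\mathbf{1}_{[0,2^N)}$, and $\|2^{-N}\mathbf{1}_{[0,2^N)}\|_{L^{\frac12,\infty}}=2^{N}\to\infty$. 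So the failure is not that the target function lies outside a dense subclass; it is that $P_N^{\perp}$ applied to a \emph{fixed} indicator blows up in the $L^{\frac12,\infty}$ quasi-norm.

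\textbf{Uniform boundedness in Part \eqref{list:PN3}.} Your converse argument invokes ``uniform boundedness of $P_N^{\perp}$ on $\BMO$'' as a black box. The paper does \emph{not} claim this for $P_N^{\perp}$ alone; instead it shows, for each fixed $k$, that $\|P_k^{\perp}T\|_{L^\infty\times L^\infty\to\BMO}\le C_k'<\infty$ (with $C_k'$ a priori depending on $k$), and that for each fixed $(f_1,f_2)$ one has $\sup_k\|P_k^{\perp}T(f_1,f_2)\|_{\BMO}<\infty$ (since $T(f_1,f_2)\in\CMO$). Banach--Steinhaus then yields $\sup_k\|P_k^{\perp}T\|_{L^\infty\times L^\infty\to\BMO}<\infty$, which is what the contradiction argument actually needs. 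You should insert this step rather than assume it.
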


\subsection{Weighted compactness of dyadic operators} 
To demonstrate $\text{(b)} \Longrightarrow \text{(c)}'$, in light of Definition \ref{def:repre}, we will use the duality and prove the weighted compactness of bilinear dyadic shifts and paraproducts (cf. Theorems \ref{thm:SD-cpt} and \ref{thm:Pi-cpt}). The duality brings about the restriction $p \in (1, \infty)$. To obtain the quasi-Banach range, we apply Rubio de Francia extrapolation of compactness (cf. Theorem \ref{thm:EP-Lp}), which asserts that to obtain the weighted compactness in the full range of exponents, it just needs the unweighted compactness for some Banach exponent, if weighted boundedness is already known. This provides a great convenience for most applications. Here we also consider the situations where some of the exponents of the Lebesgue spaces appearing in the hypotheses and/or in the conclusion can be possibly infinity, which extends the results in \cite{COY, HL23}. Meanwhile, these advantages of extrapolation and Theorem \ref{thm:RKLpq} are used in the proof of Theorems \ref{thm:SD-cpt} and \ref{thm:Pi-cpt}.

\begin{theorem}\label{thm:SD-cpt}
For all $i, j, k \in \N$, $\mathbb{E}_{\omega} \mathbf{S}_{\D_{\omega}}^{i, j, k}$ is compact from $L^{p_1}(w_1^{p_1})  \times L^{p_2}(w_2^{p_2})$ to $L^p(w^p)$ for all $p_1, p_2 \in (1, \infty]$ and for all $(w_1, w_2) \in A_{(p_1, p_2)}$, where $\frac1p = \frac{1}{p_1} + \frac{1}{p_2}>0$ and $w=w_1 w_2$.
\end{theorem}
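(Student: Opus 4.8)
The plan is to prove Theorem \ref{thm:SD-cpt} in two stages: first establish the \emph{weighted boundedness} of the averaged shift $\mathbb{E}_{\w} \mathbf{S}_{\D_{\w}}^{i,j,k}$ on the whole range, with operator norm controlled polynomially in $i,j,k$, and then upgrade boundedness to compactness by exploiting the decay $F_N \to 0$ together with the extrapolation of compactness (Theorem \ref{thm:EP-Lp}). For the boundedness step, one fixes $\w$ and observes that each finite cancellative shift $\mathbf{S}_{\D_{\w}}^{i,j,k}$ is a bilinear dyadic shift of complexity $(i,j,k)$ in the sense of \cite{LMOV}; the standard sparse-domination / direct testing argument gives
\[
\big\| \mathbf{S}_{\D_{\w}}^{i,j,k} \big\|_{L^{p_1}(w_1^{p_1}) \times L^{p_2}(w_2^{p_2}) \to L^p(w^p)} \lesssim (1 + \max\{i,j,k\})^{C} \, [(w_1,w_2)]_{A_{(p_1,p_2)}}^{\alpha},
\]
uniformly in $\w$, since the coefficient bound $|a_{I,J,K,Q}| \le \mathbf{F}(Q)|I|^{1/2}|J|^{1/2}|K|^{1/2}/|Q|^2 \le |I|^{1/2}|J|^{1/2}|K|^{1/2}/|Q|^2$ is exactly the normalization making those shifts uniformly bounded. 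Averaging over $\w$ preserves the bound, and this takes care of the claimed boundedness for all $p_1, p_2 \in (1,\infty]$ (including the $L^\infty$ endpoints, which follow from the off-diagonal extrapolation results quoted earlier in the excerpt).

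For the compactness step I would first work in an unweighted Banach window, say $p_1 = p_2 = 4$, $p = 2$, so that Theorem \ref{thm:RKLpq} applies directly, and verify the three Kolmogorov--Riesz conditions for the image of the unit ball under $\mathbb{E}_{\w} \mathbf{S}_{\D_{\w}}^{i,j,k}$. Condition \eqref{RKLpq-1} is just the boundedness already proved. For \eqref{RKLpq-2} and \eqref{RKLpq-3} the idea is to split the shift as $\mathbb{E}_{\w}\mathbf{S}_{\D_{\w}}^{i,j,k} = \mathbb{E}_{\w}\mathbf{S}_{\D_{\w}}^{i,j,k,\le N} + \mathbb{E}_{\w}\mathbf{S}_{\D_{\w}}^{i,j,k,> N}$, where the ``$\le N$'' part keeps only the generating cubes $Q \in \Q(N)$ (bounded location and bounded scales) and the ``$>N$'' part collects the rest. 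The tail part has operator norm $\lesssim F_N (1+\max\{i,j,k\})^C \to 0$ as $N \to \infty$ because its coefficients obey $|a_{I,J,K,Q}| \le \mathbf{F}(Q)|I|^{1/2}|J|^{1/2}|K|^{1/2}/|Q|^2$ with $\sup_{Q \notin \Q(N)}\mathbf{F}(Q) = F_N$; hence it is a small perturbation. The finite part $\mathbb{E}_{\w}\mathbf{S}_{\D_{\w}}^{i,j,k,\le N}$ is a finite sum (over the finitely many admissible $Q$, and over the discretized shift parameter $\w$, since only finitely many $\w$-classes affect cubes of bounded scale) of operators each of which maps bounded sets into sets of functions supported in a fixed bounded region and spanned by finitely many fixed Haar functions $h_K$ — a bounded subset of a finite-dimensional space, hence precompact. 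A finite sum of operators with precompact image is compact, and a limit in operator norm of compact operators is compact; therefore $\mathbb{E}_{\w}\mathbf{S}_{\D_{\w}}^{i,j,k}$ is compact $L^4 \times L^4 \to L^2$.

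Finally, since we have unweighted compactness at one Banach tuple of exponents \emph{and} full weighted boundedness on $A_{(p_1,p_2)}$, the bilinear extrapolation of compactness (Theorem \ref{thm:EP-Lp}) promotes this to compactness from $L^{p_1}(w_1^{p_1}) \times L^{p_2}(w_2^{p_2})$ to $L^p(w^p)$ for \emph{all} $p_1, p_2 \in (1,\infty]$ with $1/p = 1/p_1 + 1/p_2 > 0$ and all $(w_1,w_2) \in A_{(p_1,p_2)}$, which is the assertion. I expect the main obstacle to be the compactness of the truncated finite part done with enough uniformity: one must check that restricting to $Q \in \Q(N)$ genuinely reduces the sum defining $A_Q^{i,j,k}$ to finitely many terms with output Haar functions $h_K$ drawn from a finite set (this uses that $K \in \D_k(Q)$ so $\ell(K) = 2^{-k}\ell(Q)$ is bounded below and $K \subset Q$ is bounded, over the bounded family of $Q$'s), and that the average $\mathbb{E}_{\w}$ over dyadic systems does not destroy this finiteness — which is fine because a random dyadic system is determined, as far as cubes of a given bounded scale range intersecting a bounded set are concerned, by finitely many binary digits of $\w$, so $\mathbb{E}_{\w}$ on the truncated part is effectively a finite convex combination. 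A secondary technical point is carefully tracking that the perturbation estimate for the tail is uniform in $\w$ before averaging, which follows from the uniform coefficient bound and the uniform (in $\w$) bilinear shift boundedness.
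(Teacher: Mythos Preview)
Your overall architecture matches the paper's: reduce to an unweighted Banach tuple via the weighted boundedness (the paper's Theorem~\ref{thm:SD}) together with extrapolation of compactness (Theorem~\ref{thm:EP-Lp}), and then handle the unweighted case by splitting $\mathbf{S}_{\D_\w}^{i,j,k}$ into the ``tail'' over $Q\notin\D_\w(N)$ and the ``head'' over $Q\in\D_\w(N)$, with the tail controlled by $F_N\to0$.

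The gap is in your treatment of the head after averaging in $\w$. Your claim that ``only finitely many $\w$-classes affect cubes of bounded scale'' is false for the random grids used here: for $Q_0\in\D_0$ with $\ell(Q_0)=2^m$, the shifted cube $Q_0+\w$ depends on $\w_j$ for \emph{all} $j>-m$, so even with $2^{-N}\le\ell(Q)\le 2^N$ the relevant index set is infinite and the positions of $Q$ (and of the output Haar functions $h_K$, $K\in\D_k(Q)$) vary continuously with $\w$. Consequently $\mathbb{E}_\w\mathbf{S}_{\D_\w}^{i,j,k,\le N}$ is \emph{not} a finite convex combination of finite-rank operators, and its range is not contained in any fixed finite-dimensional subspace. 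An average over a probability space of uniformly bounded finite-rank operators need not be compact, so the ``limit of compacts'' step does not go through as written.

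What the paper does instead is exactly the computation you would need to repair this: it verifies the Kolmogorov--Riesz conditions directly. For the equicontinuity condition one fixes $|v|$ small, picks $N=N(v)$ with $2^{-aN}\sim|v|$ for a large parameter $a$, and bounds the head contribution $\Xi_2$ by summing the crude pointwise estimate $\|\tau_v h_K-h_K\|_{L^p}\lesssim |v|^{1/p}\ell(K)^{(n-1)/p}$ over the (at most $2^{3nN+n+1}$) cubes $Q\in\D_\w(N)$ and the $2^{kn}$ children $K\in\D_k(Q)$; this gives $\Xi_2\lesssim 2^{CN}|v|^{1/p}\to0$ once $a>Cp$. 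The tail $\Xi_1\lesssim F_N\to0$ simultaneously. Both estimates are uniform in $\w$, so Minkowski pushes them through $\mathbb{E}_\w$. The decay-at-infinity condition is easier: for $A$ large and $N\sim\tfrac12\log_2 A$, every $K$ arising from $Q\in\D_\w(N)$ lies inside $B(0,A)$, so only the tail survives on $B(0,A)^c$. If you replace your finite-dimensionality paragraph with this quantitative estimate on $\|\tau_v h_K-h_K\|$, your proof becomes correct and essentially identical to the paper's.
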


\begin{theorem}\label{thm:Pi-cpt}
Let $\b_{\w} :=\{b_I\}_{I \in \D_{\w}} \in \CMO(\Rn)$ for each $\w \in \Omega$. Then $\mathbb{E}_{\w} \Pi_{\b_{\w}}$, $\mathbb{E}_{\w} \Pi_{\b_{\w}}^{*1}$, and $\mathbb{E}_{\w} \Pi_{\b_{\w}}^{*2}$ are compact from $L^{p_1}(w_1^{p_1})  \times L^{p_2}(w_2^{p_2})$ to $L^p(w^p)$ for all $p_1, p_2 \in (1, \infty]$ and for all $(w_1, w_2) \in A_{(p_1, p_2)}$, where $\frac1p = \frac{1}{p_1} + \frac{1}{p_2}>0$ and $w=w_1 w_2$.
\end{theorem}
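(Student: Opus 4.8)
The plan is to prove Theorem~\ref{thm:Pi-cpt} by first reducing to an unweighted estimate in a Banach range via the extrapolation of compactness (Theorem~\ref{thm:EP-Lp}), and then establishing compactness of a single paraproduct $\Pi_{\b}$ with $\b \in \CMO(\Rn)$ using the projection-operator criterion of Theorem~\ref{thm:PNT-cpt}\eqref{list:PN1}. Since $\Pi_{\b}^{*1}$ and $\Pi_{\b}^{*2}$ have structurally identical expansions (with the roles of the Haar-cancellative and averaging slots interchanged), it suffices to treat $\Pi_{\b}$; the averaging over $\w \in \Omega$ is harmless because a finite or (by dominated convergence) even infinite average of compact operators with a uniform operator-norm bound is compact, so one may even fix $\w$ and work with a single dyadic grid $\D$. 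First I would invoke the known weighted boundedness of bilinear dyadic paraproducts: for $\b$ with $\|\b\|_{\BMO_\D} \le 1$, $\Pi_{\b}$ maps $L^{p_1}(w_1^{p_1}) \times L^{p_2}(w_2^{p_2}) \to L^p(w^p)$ boundedly for all $p_1,p_2 \in (1,\infty]$ and $(w_1,w_2) \in A_{(p_1,p_2)}$, with norm controlled by the $A_{(p_1,p_2)}$ characteristic. This is standard (it follows e.g. from the sparse-domination or Carleson-embedding machinery in the bilinear weighted theory) and I would cite it. With weighted boundedness in hand, Theorem~\ref{thm:EP-Lp} reduces the compactness claim, for all exponents and all weights, to unweighted compactness at one convenient Banach tuple, say $L^{4}(\Rn) \times L^{4}(\Rn) \to L^{2}(\Rn)$.

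Next I would verify the criterion $\lim_{N\to\infty}\|P_N^{\perp}\Pi_{\b}\|_{L^4 \times L^4 \to L^2} = 0$. The point is that the $\CMO$ condition on $\b$, namely $\lim_{N\to\infty}\sup_{Q \in \D}\frac{1}{|Q|}\sum_{I \notin \D(N),\, I \subset Q}|b_I|^2 = 0$, says precisely that the ``tail'' sequence $\b^{(N)} := \{b_I \mathbf{1}_{I \notin \D(N)}\}_{I \in \D}$ has $\|\b^{(N)}\|_{\BMO_\D} \to 0$. Splitting $\Pi_{\b} = \Pi_{\b - \b^{(N)}} + \Pi_{\b^{(N)}}$, the second piece has operator norm $\lesssim \|\b^{(N)}\|_{\BMO_\D} \to 0$ by the (quantitative) boundedness cited above, uniformly. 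For the first piece $\Pi_{\b - \b^{(N)}}$ the sum runs over the finitely many scales/locations in $\D(N)$ — more precisely over cubes $I$ with $2^{-N} \le \ell(I) \le 2^N$ and $I \cap B(0,2^N) \ne \emptyset$ after a mild further truncation — so its output $\sum_I b_I \langle f_1\rangle_I \langle f_2 \rangle_I h_I$ is supported in a fixed ball and built from Haar functions at boundedly many scales; one checks that $P_M^{\perp}$ applied to such a finite-rank-in-scale operator has norm tending to $0$ as $M \to \infty$ (the projection $P_M$ eventually fixes all the relevant $h_I$). Combining, $\|P_M^{\perp}\Pi_{\b}\| \le \|P_M^{\perp}\Pi_{\b-\b^{(N)}}\| + \|\Pi_{\b^{(N)}}\|$; choosing $N$ large to make the second term small and then $M$ large completes the limit. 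Hence Theorem~\ref{thm:PNT-cpt}\eqref{list:PN1} yields unweighted compactness at the exponent $(4,4,2)$.

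The main obstacle I anticipate is the bookkeeping in the ``finitely many scales'' step: one must be careful that restricting to $I \in \D(N)$ controls only scales, not spatial location, so for compactness of $\Pi_{\b - \b^{(N)}}$ one also needs the averages $\langle f_i \rangle_I$ to live effectively on a bounded region, which requires a secondary spatial truncation of the cube collection together with an estimate showing the discarded far-away cubes contribute a small operator norm (using $\ell^2$-summability of the $b_I$ and the Carleson/BMO bound, plus Hölder to absorb $\langle f_i\rangle_I$). A clean way to package this is to note that $\Pi_{\b - \b^{(N)}}$ restricted to a bounded spatial window is genuinely finite-dimensional in the relevant sense, then handle the tail in space by the same $\BMO_\D$-smallness argument applied to $\{b_I \mathbf{1}_{I \not\subset B(0,R)}\}$. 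A second, more routine, point to get right is confirming that the hypotheses of the bilinear compactness-extrapolation theorem (Theorem~\ref{thm:EP-Lp}) are met here, in particular that the required weighted boundedness holds across the full $A_{(p_1,p_2)}$ range including the endpoint exponents $p_i = \infty$, and that the single unweighted compact estimate sits at an admissible starting tuple; once those are in place the extrapolation does all the remaining work and delivers the full conclusion of Theorem~\ref{thm:Pi-cpt}.
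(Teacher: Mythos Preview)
Your reduction via Theorem~\ref{thm:EP-Lp} and Theorem~\ref{thm:Pi} to the unweighted estimate at $L^4 \times L^4 \to L^2$ matches the paper exactly. For a \emph{single} grid your projection argument is both correct and cleaner than what the paper does: because the output of $\Pi_{\b}$ is already a Haar expansion, one has the identity $P_N^{\perp}\Pi_{\b} = \Pi_{\b^{(N)}}$, so $\|P_N^{\perp}\Pi_{\b}\|_{L^4\times L^4\to L^2} \lesssim \|\b^{(N)}\|_{\BMO_{\D}} \to 0$ follows in one line from Theorem~\ref{thm:Pi} and the $\CMO$ hypothesis. Note also that your ``main obstacle'' is a non-issue: by the paper's definition $\D(N)=\D\cap\Q(N)$ with $\Q(N)=\{I:2^{-N}\le\ell(I)\le 2^N,\ \rd(I,2^N\I)\le N\}$ already constrains location, so $\D(N)$ is a finite set and $\Pi_{\b-\b^{(N)}}$ is genuinely finite-rank without any secondary truncation.

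The genuine gap is your treatment of the $\omega$-average. The assertion that ``an infinite average of compact operators with a uniform operator-norm bound is compact'' is not a theorem one can invoke by dominated convergence; it requires Bochner integrability of $\omega\mapsto\Pi_{\b_\omega}$ into the space of compact bilinear operators (which in turn needs separability of that space and strong measurability), and you have not argued this. More concretely, your projection $P_N$ lives on a fixed grid, so $P_N^{\perp}\mathbb{E}_\omega\Pi_{\b_\omega}$ does \emph{not} simplify to anything like $\mathbb{E}_\omega\Pi_{\b_\omega^{(N)}}$, and the clean single-grid identity is lost. The paper sidesteps this entirely by applying Kolmogorov--Riesz (Theorem~\ref{thm:RKLpq}) directly to the averaged operator: Minkowski's inequality gives $\|\mathbb{E}_\omega\Pi_{\b_\omega}(f_1,f_2)\mathbf{1}_{B(0,A)^c}\|_{L^2}\le\mathbb{E}_\omega\|\Pi_{\b_\omega^{(N)}}(f_1,f_2)\|_{L^2}$ for $A$ large (using that cubes in $\D_\omega(N)$ lie in a fixed ball independent of $\omega$), and the translation condition is split into a tail piece $\Upsilon_1$ controlled by $\sup_\omega\|\b_\omega^{(N)}\|_{\BMO_{\D_\omega}}$ and a finite piece $\Upsilon_2$ handled by the Carleson embedding (Lemma~\ref{lem:Car}) together with the cardinality bound $\#\D_\omega(N)\lesssim 2^{3nN}$. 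Your route can be completed, but either by supplying the Bochner-integrability argument or by verifying compactness of $\mathbb{E}_\omega P_N^\omega\Pi_{\b_\omega}$ directly---and the latter essentially reproduces the paper's $\Upsilon_2$ estimate.
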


\begin{theorem}\label{thm:EP-Lp}
Assume that $T$ is a bilinear operator such that  
\begin{align*}
\text{$T$ is compact from $L^{p_1}(u_1^{p_1}) \times L^{p_2}(u_2^{p_2})$ to $L^p(u^p)$}   
\end{align*}
for some $\frac1p = \frac{1}{p_1} + \frac{1}{p_2}>0$ with $p_1, p_2 \in [1, \infty]$ and for some $(u_1, u_2) \in A_{(p_1, p_2)}$, where $u=u_1 u_2$; and 
\begin{align*}
\text{$T$ is bounded from $L^{q_1}(v_1^{q_1}) \times L^{q_2}(v_2^{q_2})$ to $L^q(v^q)$} 
\end{align*}
for some $\frac1q = \frac{1}{q_1} + \frac{1}{q_2}$ with $q_1, q_2 \in [1, \infty]$ and for all $(v_1, v_2) \in A_{(q_1, q_2)}$, where $v=v_1 v_2$. Then 
\begin{align*}
\text{$T$ is compact from $L^{r_1}(w_1^{r_1}) \times L^{r_2}(w_2^{r_2})$ to $L^r(w^r)$} 
\end{align*}
for all $r_1, r_2 \in (1, \infty]$ and for all $(w_1, w_2) \in A_{(r_1, r_2)}$, where $\frac1r = \frac{1}{r_1} + \frac{1}{r_2}>0$ and $w=w_1 w_2$.  
\end{theorem}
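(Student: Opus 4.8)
The plan is to produce compactness at every target triple $(r_1,r_2;w_1,w_2)$ by interpolating the \emph{single} given compactness estimate against a full scale of boundedness estimates.

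\textbf{Step 1 (extrapolate the boundedness).} First I would feed the boundedness hypothesis into the multilinear Rubio de Francia type extrapolation theorem: from boundedness of $T$ on $L^{q_1}(v_1^{q_1})\times L^{q_2}(v_2^{q_2})\to L^q(v^q)$ for \emph{all} $(v_1,v_2)\in A_{(q_1,q_2)}$ one obtains that $T$ is bounded from $L^{s_1}(\sigma_1^{s_1})\times L^{s_2}(\sigma_2^{s_2})$ to $L^s(\sigma^s)$ for every $\frac1s=\frac1{s_1}+\frac1{s_2}$ with $s_1,s_2\in(1,\infty]$ and every $(\sigma_1,\sigma_2)\in A_{(s_1,s_2)}$, $\sigma=\sigma_1\sigma_2$ (using the endpoint form of extrapolation when some $q_i=\infty$). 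Hence from now on $T$ is bounded on the whole scale, and only compactness remains to be shown.

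\textbf{Step 2 (place the target space between two interpolation endpoints).} Let $(p_1,p_2;u_1,u_2)$, $u=u_1u_2$, be the triple where compactness is assumed, and fix a target $(r_1,r_2;w_1,w_2)$, $w=w_1w_2$, with $r_i\in(1,\infty]$ and $(w_1,w_2)\in A_{(r_1,r_2)}$. The heart of the argument is a weight lemma: there exist $\theta\in(0,1)$, exponents $t_1,t_2$ with $\frac1t=\frac1{t_1}+\frac1{t_2}$, and weights $\nu_1,\nu_2$ with $(\nu_1,\nu_2)\in A_{(t_1,t_2)}$, $\nu=\nu_1\nu_2$, such that
\begin{align*}
\frac1{r_i}=\frac{\theta}{p_i}+\frac{1-\theta}{t_i},\qquad
w_i^{r_i}=\big(u_i^{p_i}\big)^{\frac{r_i\theta}{p_i}}\big(\nu_i^{t_i}\big)^{\frac{r_i(1-\theta)}{t_i}}\qquad(i=1,2),
\end{align*}
together with the analogous identities for $(p,u)$, $(t,\nu)$, $(r,w)$. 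By the Stein--Weiss interpolation theorem with change of measures, these identities are exactly what is needed so that
\begin{align*}
L^{r_i}(w_i^{r_i})=\big[L^{p_i}(u_i^{p_i}),\,L^{t_i}(\nu_i^{t_i})\big]_\theta,\qquad
L^r(w^r)=\big[L^p(u^p),\,L^t(\nu^t)\big]_\theta .
\end{align*}
Existence of such compatible data is obtained by the standard logarithmic weight computations underlying multilinear extrapolation, combined with the reverse H\"{o}lder self-improvement of the classes $A_{(\cdot,\cdot)}$: one solves the displayed equations for $\nu_i$ and checks, for a suitable choice of $(t_1,t_2)$ and $\theta$, that $(\nu_1,\nu_2)$ indeed lies in $A_{(t_1,t_2)}$.

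\textbf{Step 3 (interpolate compactness) and endpoints.} With Step 2 in hand, I would invoke the complex interpolation theorem for compact bilinear operators: if a bilinear operator is compact from $X_0^1\times X_0^2$ to $Y_0$ and bounded from $X_1^1\times X_1^2$ to $Y_1$, then it is compact from $[X_0^1,X_1^1]_\theta\times[X_0^2,X_1^2]_\theta$ to $[Y_0,Y_1]_\theta$; this reduces to the classical linear Calder\'{o}n--Cwikel statement by freezing one of the two arguments at a time. Applied with $X_0^i=L^{p_i}(u_i^{p_i})$, $Y_0=L^p(u^p)$ (compactness by hypothesis) and $X_1^i=L^{t_i}(\nu_i^{t_i})$, $Y_1=L^t(\nu^t)$ (boundedness by Step 1), this gives compactness of $T$ from $L^{r_1}(w_1^{r_1})\times L^{r_2}(w_2^{r_2})$ to $L^r(w^r)$. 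The endpoint exponents $p_i,q_i,r_i\in\{1,\infty\}$ would be handled by preliminary reductions: re-extrapolate the boundedness point into the open range $(1,\infty)$, and treat an $L^\infty$-slot in the compactness datum or in the target by a separate limiting/duality argument exploiting that the asserted target range $r_i\in(1,\infty]$ is the closure of the finite-exponent case.

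\textbf{Main obstacle.} The delicate point is entirely in Step 2, namely the existence of the compatible interpolation data with $(\nu_1,\nu_2)\in A_{(t_1,t_2)}$. This is where the genuinely bilinear nature of the class $A_{(p_1,p_2)}$ of \cite{LOPTT} intervenes: the component weights need not be locally integrable, only their product is well behaved, so the bookkeeping in the weight equations must be carried out at the level of the joint $A_{(\cdot,\cdot)}$ condition rather than slot by slot. Together with the endpoint reductions above, this is precisely what extends \cite{COY,HL23} to the present generality, in which infinite exponents are permitted both in the hypotheses and in the conclusion.
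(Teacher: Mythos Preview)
Your proposal is correct and follows essentially the same three-step scheme as the paper: extrapolate the boundedness via Theorem~\ref{thm:RdF}, use a weight-interpolation lemma (the paper's Lemma~\ref{lem:AA}, extending \cite[Lemma~4.1]{COY} to endpoints) to realise the target triple as an intermediate space between the compactness datum and a boundedness datum, then apply a compact bilinear interpolation theorem (\cite[Theorem~3.6]{COY}). One caution: the paper lists an additional ingredient---a weighted Kolmogorov--Riesz precompactness characterisation in $L^s(w)$ for $s\in(0,\infty)$ (\cite[Theorem~2.10]{COY})---which is what actually makes the compact interpolation go through in the quasi-Banach range $r<1$; your sketch of Step~3 as ``freeze one argument and apply linear Calder\'on--Cwikel'' is too loose here, since the classical Cwikel result is a Banach-couple statement and bilinear compactness does not simply factor through the frozen linear operators.
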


\subsection{Necessary hypotheses}
The implication ${\rm (c)} \Longrightarrow {\rm (a)}$ follows from Theorems \ref{thm:CCZK}--\ref{thm:T1CMO}. Indeed, it just requires the compactness holding for one triple of exponents $(p, p_1, p_2)  \in (1, \infty)^3$ to justify the hypotheses \eqref{H1}--\eqref{H3}.

\begin{theorem}\label{thm:CCZK} 
Let $T$ be a bilinear operator associated with a standard bilinear Calder\'{o}n--Zygmund kernel $K$. Let $\frac{1}{p}=\frac{1}{p_1}+\frac{1}{p_2}$ with $p, p_1, p_2 \in (1, \infty)$. If $T$ is compact from $L^{p_1}(\Rn) \times L^{p_2}(\Rn)$ to $L^p(\Rn)$, then $K$ is a compact bilinear Calder\'{o}n--Zygmund kernel. 
\end{theorem}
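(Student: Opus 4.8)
The plan is to show that compactness of $T$ forces the kernel $K$ to satisfy the decay, smoothness, and ``vanishing at infinity / at the diagonal / for small cubes'' conditions that, by Definition~\ref{def:CCZK}, make it a \emph{compact} bilinear Calder\'on--Zygmund kernel. Since $T$ is already associated with a \emph{standard} kernel, the size bound $|K(x,y,z)| \lesssim (|x-y|+|x-z|)^{-2n}$ and the H\"older-type smoothness estimates hold; what must be upgraded is that the relevant quantities tend to $0$ in the three limiting regimes (the diameter of the support going to zero, the points escaping to infinity, and the oscillation modulus vanishing). The engine for this is the standard trick of \emph{testing $T$ against well-separated bumps}: fix $x_0 \neq y_0 \neq z_0 \neq x_0$ and small scale $\rho$, let $\varphi, \psi, \eta$ be $L^{p_1}, L^{p_2}, L^{p'}$-normalized bumps adapted to balls of radius $\rho$ centered at $y_0, z_0, x_0$, and expand
\begin{align*}
\big\langle T(\varphi_{y_0,\rho}, \psi_{z_0,\rho}), \eta_{x_0,\rho} \big\rangle
= \iiint K(x,y,z)\, \varphi_{y_0,\rho}(y)\, \psi_{z_0,\rho}(z)\, \eta_{x_0,\rho}(x)\, dy\, dz\, dx,
\end{align*}
which, as $\rho \to 0$ and after renormalizing, extracts pointwise values (or finite differences) of $K$ at $(x_0,y_0,z_0)$ up to errors controlled by the smoothness of $K$.

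First I would set up the precise family of test functions realizing each of the three limits: (1) to see vanishing ``for small cubes'', take all three bumps living near a single point $w$ at separated sub-scales so that their separation is comparable to the common diameter, and let that diameter $\to 0$; (2) to see vanishing ``at infinity'', fix the configuration and translate the whole triple $(x_0,y_0,z_0) \to \infty$; (3) to see the vanishing oscillation (uniform continuity modulus $\to 0$ away from the diagonal), compare $K$ at $(x,y,z)$ and $(x',y,z)$ with $|x-x'|$ small relative to the separation. In each case the point is that the normalized bumps $\{(\varphi_k,\psi_k)\}$ form a \emph{bounded} sequence in $L^{p_1}\times L^{p_2}$ whose images $T(\varphi_k,\psi_k)$, by compactness, must have an $L^p$-convergent subsequence; but translation-invariance of the test configuration (for regime (2)) or the explicit scaling (for regime (1)) forces any such limit to have ``spread out'' or ``shrunk'' mass, and the only way compactness can be reconciled with this is that the relevant pairings tend to $0$. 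Concretely I would invoke the already-available Lemma~\ref{lem:trans} (the translation lemma used earlier to show $\mathcal R_j$ is non-compact) as the template: the contrapositive of that mechanism says that if some kernel quantity did \emph{not} vanish along one of these limits, one could manufacture a bounded sequence with no convergent subsequence, contradicting compactness.

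The key steps, in order: (i) record the definition of a compact bilinear CZ kernel as three quantitative ``$\limsup = 0$'' conditions on $K$ (size tail, smoothness tail, and the far-field/small-scale decay of both), so the goal is a finite checklist; (ii) for each condition, build the associated bounded bump sequence $\{(\varphi_k,\psi_k)\}_k$ in $L^{p_1}\times L^{p_2}$ and a companion sequence of normalized functionals $\{\eta_k\}$ in $L^{p'}$; (iii) compute $\langle T(\varphi_k,\psi_k),\eta_k\rangle$ and show that, modulo a main term proportional to the kernel quantity under scrutiny, the remainder is controlled by the standard size/smoothness bounds and is $o(1)$ (using the $1<p_1,p_2,p<\infty$ hypothesis so H\"older's inequality and the weak-$(1,1)$-free setting cause no trouble); (iv) argue by contradiction: if the kernel quantity stayed bounded below along a sequence of configurations, then $\{T(\varphi_k,\psi_k)\}$ would be a sequence in the bounded set $T(A,B)$ with no Cauchy subsequence (because the near-diagonal or escaping-to-infinity structure makes consecutive terms stay a fixed distance apart in $L^p$), contradicting precompactness of $\overline{T(A,B)}$; (v) conclude that all three conditions hold, i.e.\ $K$ is a compact bilinear CZ kernel.

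The main obstacle will be step~(iii)--(iv) for the \emph{smoothness} (continuity-modulus) condition: unlike the size condition, extracting the \emph{difference} $K(x,y,z)-K(x',y,z)$ cleanly requires choosing $\eta_k$ to be (a normalized approximation of) a difference of two bumps, and then one must check that the contribution of the kernel's known H\"older regularity to the \emph{off-main} terms genuinely decouples from the ``bad'' part one is trying to isolate — this is where the bilinear geometry of the three cubes $I_1,I_2,I_3$ (flagged in the introduction as the source of difficulty) reappears, since one must keep the three supports in a configuration that is simultaneously well-separated \emph{and} lets the finite difference survive the $\rho\to 0$ limit. I expect this to require a careful but ultimately routine case analysis on which of the three pairwise distances $|x-y|,|x-z|,|y-z|$ is dominant, mirroring the analogous linear arguments in \cite{Vil,OV,PPV} adapted to the bilinear kernel. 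Everything else (size tail and far-field decay) should follow the $\mathcal R_j$/Lemma~\ref{lem:trans} template almost verbatim.
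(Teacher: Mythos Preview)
Your overall strategy---test against separated bumps, use compactness to extract convergence, derive a contradiction if a kernel quantity fails to vanish---matches the paper's. But your step~(iv) glosses over the crux. To show that $\{T(\varphi_k,\psi_k)\}$ has no Cauchy subsequence when a kernel quantity stays bounded below, you need (after pairing with $\eta_k$) that the cross term $|\langle T(\varphi_j,\psi_j),\eta_k\rangle|$ is small for $j\gg k$. This is \emph{not} routine: $T$ is not translation invariant, so the Lemma~\ref{lem:trans} mechanism (disjoint supports of the \emph{images}) does not apply, and the ``escaping-to-infinity structure'' of the \emph{inputs} says nothing a priori about the outputs. The paper handles this with a scaling trick you have not anticipated: since compactness implies boundedness and $T$ has a standard CZ kernel, bilinear CZ theory gives $L^{q_1}\times L^{q_2}\to L^q$ boundedness for \emph{all} $q_1,q_2\in(1,\infty)$. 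One then bounds $|\langle T(\varphi_j,\psi_j),\eta_k\rangle|$ by H\"older in the $L^q$--$L^{q'}$ pairing, producing a factor $(|x_k-x'_k|/|x_j-x'_j|)^{n(1/p-1/q)}$; choosing $q$ on the correct side of $p$ (depending on whether the scales shrink or blow up along the sequence) and then $j$ large makes this small. Without this exponent flexibility your contradiction does not close.

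There is also a structural simplification you are missing. The paper first shows (Lemma~\ref{lem:size}) that once the \emph{smoothness} conditions \eqref{eq:Holder-1}--\eqref{eq:Holder-3} hold with a compact triple and $K\to 0$ at infinity, the compact size condition \eqref{eq:Size} follows automatically by a telescoping argument. So one only needs to treat the normalized smoothness difference $\mathcal{E}(x,y,z;x')$, and the standard H\"older bound already gives $\mathcal{E}\lesssim(|x-x'|/(|x-y|+|x-z|))^{\delta-\delta'}$; hence the only nontrivial regime is when $|x-x'|\simeq|x-y|+|x-z|$, and this comparability is precisely what makes the scaling argument above work. Your separate handling of ``size tail'' versus ``smoothness tail'' is redundant, and your regime~(3) (``$|x-x'|$ small relative to the separation'') is already covered by the standard kernel hypothesis---it is the \emph{opposite} case that needs the compactness argument.
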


\begin{theorem}\label{thm:WCP} 
Let $\frac1p = \frac{1}{p_1} + \frac{1}{p_2}$ with $p, p_1, p_2 \in (1, \infty)$. If a bilinear operator $T$ is bounded from $L^{p_1}(\Rn) \times L^{p_2}(\Rn)$ to $L^p(\Rn)$, then
\begin{align*}
|\langle T(\mathbf{1}_I, \mathbf{1}_I), \mathbf{1}_I \rangle| 
\lesssim \big[\|P_N^{\perp} T\|_{L^{p_1} \times L^{p_2} \to L^p} 
+ \|T\|_{L^{p_1} \times L^{p_2} \to L^p} F(I; 2N) \big] \, |I|, 
\end{align*} 
for all $I \in \D$ and $N \in \N$, where 
\begin{align*}
& F(I; N) := F_1(2^N \ell(I)) F_2(2^{-N} \ell(I)) F_3(N^{-1} \rd(I, 2^N \I)), 
\\
& F_1(t) := (1+t^{-1})^{-\frac{n}{p}}, 
\quad\text{ and }\quad 
F_2(t)=F_3(t) := \mathbf{1}_{[0, 1]}(t).
\end{align*} 
In particular, if $T$ is compact from $L^{p_1}(\Rn) \times L^{p_2}(\Rn)$ to $L^p(\Rn)$, then it satisfies the weak compactness property. 
\end{theorem}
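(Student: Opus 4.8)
The plan is to split the form $\langle T(\mathbf 1_I,\mathbf 1_I),\mathbf 1_I\rangle$ with the help of the projection $P_N$ from \eqref{def:PN}. The crude bound, from H\"older's inequality and the boundedness of $T$, is $|\langle T(\mathbf 1_I,\mathbf 1_I),\mathbf 1_I\rangle|\le\|T(\mathbf 1_I,\mathbf 1_I)\|_{L^p}\,\|\mathbf 1_I\|_{L^{p'}}\le\|T\|_{L^{p_1}\times L^{p_2}\to L^p}\,|I|$, using $\|\mathbf 1_I\|_{L^q}=|I|^{1/q}$ and $\tfrac1{p_1}+\tfrac1{p_2}+\tfrac1{p'}=1$, and this already proves the theorem whenever $F(I;2N)\gtrsim 1$; the whole content is thus to gain smallness when $I$ is too small, too large, or too far from the origin, which is precisely what $P_N$ detects. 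Recalling that $P_N$ is self-adjoint (as it is built from Haar functions), I would write
\[
\langle T(\mathbf 1_I,\mathbf 1_I),\mathbf 1_I\rangle
= \langle T(\mathbf 1_I,\mathbf 1_I),P_N\mathbf 1_I\rangle
+ \langle P_N^\perp T(\mathbf 1_I,\mathbf 1_I),\mathbf 1_I\rangle .
\]
By H\"older the last term is $\le\|P_N^\perp T\|_{L^{p_1}\times L^{p_2}\to L^p}\,|I|$, matching the first term of the claimed bound, while for the other term H\"older and the boundedness of $T$ give $|\langle T(\mathbf 1_I,\mathbf 1_I),P_N\mathbf 1_I\rangle|\le\|T\|_{L^{p_1}\times L^{p_2}\to L^p}\,|I|^{1/p}\,\|P_N\mathbf 1_I\|_{L^{p'}}$. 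Hence the whole estimate reduces to the elementary inequality $\|P_N\mathbf 1_I\|_{L^{p'}(\Rn)}\lesssim F(I;2N)\,|I|^{1/p'}$, valid for all $I\in\D$ and $N\in\N$.

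To prove this inequality I would use that $\Delta_Q\mathbf 1_I=0$ unless $Q$ is a proper dyadic ancestor of $I$ (since $\mathbf 1_I$ is locally constant at scales $\le\ell(I)$ and vanishes off $I$), so $P_N\mathbf 1_I=\sum_k\Delta_{I^{(k)}}\mathbf 1_I$ over the ancestors $I^{(k)}$ of $I$ that are admissible at level $N$, and then argue by cases. If $\ell(I)$ exceeds the top admissible scale — in particular $\ell(I)>2^{2N}$, i.e.\ $F_2=0$ — then no ancestor is admissible and $P_N\mathbf 1_I=0$. If $\rd(I,2^{2N}\I)>2N$, i.e.\ $F_3=0$, a geometric comparison shows that no admissible cube meets $I$, so again $P_N\mathbf 1_I=0$. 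Otherwise $F_2\sim F_3\sim1$, and from $\|\Delta_{I^{(k)}}\mathbf 1_I\|_{L^{p'}}\lesssim 2^{-kn/p}|I|^{1/p'}$ (the martingale difference of $\mathbf 1_I$ on its $k$-th ancestor has size $\lesssim 2^{-kn}$ on a set of measure $\lesssim 2^{kn}|I|$) the triangle inequality yields $\|P_N\mathbf 1_I\|_{L^{p'}}\lesssim\sum_{k\ge k_0}2^{-kn/p}|I|^{1/p'}\lesssim 2^{-k_0n/p}|I|^{1/p'}$, where $2^{k_0}\approx\max(1,2^{-N}\ell(I)^{-1})$ is fixed by the smallest admissible scale; this is $\lesssim(1+2^{-N}\ell(I)^{-1})^{-n/p}|I|^{1/p'}\le F_1(2^{2N}\ell(I))\,|I|^{1/p'}$, as wanted. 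Putting the pieces together gives the quantitative estimate of the theorem.

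For the ``in particular'' statement, a compact bilinear operator is bounded, so if $T$ is compact from $L^{p_1}\times L^{p_2}$ to $L^p$ then the quantitative estimate applies and, by Theorem \ref{thm:PNT-cpt}\eqref{list:PN1}, $\|P_N^\perp T\|_{L^{p_1}\times L^{p_2}\to L^p}\to0$ as $N\to\infty$. Putting $\Phi(I):=C\inf_{N\in\N}\big(\|P_N^\perp T\|_{L^{p_1}\times L^{p_2}\to L^p}+\|T\|_{L^{p_1}\times L^{p_2}\to L^p}\,F(I;2N)\big)$, we get $|\langle T(\mathbf 1_I,\mathbf 1_I),\mathbf 1_I\rangle|\le\Phi(I)\,|I|$; since for each fixed $N$ the quantity $F(I;2N)$ tends to $0$ as $I$ becomes very small, very large, or very far from the origin, so does $\Phi(I)$ along any such degeneration, which is precisely the weak compactness property of Definition \ref{def:WCP}.

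The step I expect to be the main obstacle is the elementary inequality $\|P_N\mathbf 1_I\|_{L^{p'}}\lesssim F(I;2N)|I|^{1/p'}$, and inside it the ``only too small'' case, where one must extract exactly the decay power $n/p$ and match it against $F_1$; one also has to verify the two geometric facts behind the other two cases, namely that $\Delta_Q\mathbf 1_I$ survives only on ancestors of $I$ and that the spatial window of $P_N$ really excludes cubes at relative distance $\gtrsim N$ from it. The remaining points are bookkeeping: the harmless passage from level $N$ to level $2N$ caused by the relative-distance margin in the definition of $P_N$, and the treatment of any non-cancellative (averaging) component of $P_N$.
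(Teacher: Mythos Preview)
Your proposal is correct and follows essentially the same approach as the paper: split via $P_N$ using its self-adjointness, bound the $P_N^\perp$ piece directly by H\"older, and reduce the $P_N$ piece to the estimate $\|P_N\mathbf 1_I\|_{L^{p'}}\lesssim F(I;2N)|I|^{1/p'}$, which is then proved by observing that only proper dyadic ancestors of $I$ contribute and running the same three-case analysis (scale too large, location too far, otherwise a geometric sum giving the $F_1$ decay). The ``in particular'' clause via Theorem~\ref{thm:PNT-cpt} is also exactly the paper's argument. One minor remark: there is no ``non-cancellative (averaging) component of $P_N$'' to worry about, since by \eqref{def:PN} the projection is built solely from cancellative Haar functions.
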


\begin{theorem}\label{thm:T1CMO} 
Let $T$ be a bilinear operator associated with a standard bilinear Calder\'{o}n--Zygmund kernel $K$. Let $\frac{1}{p}=\frac{1}{p_1}+\frac{1}{p_2}$ with $p, p_1, p_2 \in (1, \infty)$. If $T$ is compact from $L^{p_1}(\Rn) \times L^{p_2}(\Rn)$ to $L^p(\Rn)$, then $T(1,1), T^{*1}(1,1), T^{*2}(1,1) \in \CMO(\Rn)$. 
\end{theorem}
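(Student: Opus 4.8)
The plan is to prove Theorem \ref{thm:T1CMO} by exhibiting $T(1,1)$ (and symmetrically $T^{*1}(1,1)$, $T^{*2}(1,1)$) as a limit, in an appropriate $\BMO$-type sense, of $T(f_k, g_k)$ for suitable bounded sequences $\{f_k\}, \{g_k\}$ in $L^{p_1}, L^{p_2}$, and then to read off the $\CMO$ membership from the precompactness of $\{T(f_k, g_k)\}$ guaranteed by hypothesis (c). Recall first that since $T$ is bounded $L^{p_1} \times L^{p_2} \to L^p$ and is associated with a standard bilinear Calder\'on--Zygmund kernel, it automatically extends to a bounded operator $L^\infty \times L^\infty \to \BMO$, so that $T(1,1) \in \BMO(\Rn)$ is well-defined modulo constants; the content is the \emph{vanishing} oscillation, i.e.\ the three classical Uchiyama-type conditions defining $\CMO$: the mean oscillations over cubes $Q$ vanish as $\ell(Q) \to 0$, as $\ell(Q) \to \infty$, and as $Q$ recedes to infinity.

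First I would set up the standard truncation/approximation of the identity: fix a smooth bump $\varphi$ with $\mathbf{1}_{B(0,1)} \le \varphi \le \mathbf{1}_{B(0,2)}$, put $\varphi_R(x) := \varphi(x/R)$, and compare $T(1,1)$ with $T(\varphi_R, \varphi_R)$ on a fixed cube $Q$. Using the kernel representation away from the diagonal together with the standard Calder\'on--Zygmund size/smoothness estimates, one shows $\langle T(1,1) - T(\varphi_R, \varphi_R), b_Q\rangle \to 0$ as $R \to \infty$ for every mean-zero bounded $b_Q$ supported on $Q$, with quantitative control uniform in the position and size of $Q$ within any fixed bounded regime — this is exactly the computation that underlies the well-definedness of $T(1,1)$ in $\BMO$, so I would invoke it rather than redo it. Consequently, to verify the three vanishing conditions it suffices to control $\operatorname{osc}_Q T(\varphi_{R(Q)}, \varphi_{R(Q)})$ for a judicious choice $R(Q)$. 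For the small-cube and escape-to-infinity regimes one picks $R(Q)$ comparable to a large fixed scale (or slowly growing), so that $\varphi_{R(Q)}$ ranges over a \emph{bounded} set in $L^{p_1}$ and $L^{p_2}$ (here $p_i = \infty$ is allowed, which is the cleanest choice, or one truncates further to land in finite $L^{p_i}$); then $\{T(\varphi_{R(Q)}, \varphi_{R(Q)})\}$ is precompact in $L^p$ by hypothesis (c), hence uniformly integrable and tight in the Kolmogorov--Riesz sense (Theorem \ref{thm:RKLpq}), and a precompact family in $L^p$ has uniformly vanishing local mean oscillation at small scales and uniformly small oscillation on cubes escaping to infinity. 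This transfers, via the approximation step, to $T(1,1)$, giving the $\ell(Q) \to 0$ and $Q \to \infty$ conditions.

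The large-cube condition ($\ell(Q) \to \infty$) is the genuinely different one and, I expect, the main obstacle, because there a single fixed bounded family of bumps will not capture $T(1,1)$ on arbitrarily large cubes; one must let $R = R(Q) \to \infty$ with $\ell(Q)$, and then $\varphi_R$ no longer lies in a bounded subset of any $L^{p_i}$ with $p_i < \infty$, so hypothesis (c) cannot be applied directly to $T(\varphi_R, \varphi_R)$. The device to handle this is to work with a rescaled/normalized family: for a cube $Q$ of side $\ell$ centered at $x_Q$, consider the $L^p$-function $f^{(Q)} := |Q|^{-1/p_1}\mathbf{1}_{3Q}$-type objects (appropriately smoothed), or better, dilate the whole picture — by translation and dilation invariance properties of the kernel class, $\frac{1}{|Q|}\int_Q |T(1,1) - \langle T(1,1)\rangle_Q|$ can be rewritten as the oscillation over the \emph{unit} cube of $T_Q(1,1)$ where $T_Q$ is the operator with dilated kernel $K_Q(x,y,z) = \ell^{2n}K(x_Q + \ell x, x_Q + \ell y, x_Q + \ell z)$, which again satisfies the standard CZ estimates with the \emph{same} constants. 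One then argues by contradiction: if the large-cube oscillation does not vanish, there is a sequence of cubes $Q_k$ with $\ell(Q_k) \to \infty$ along which the normalized oscillation stays $\ge \varepsilon$; extracting the corresponding dilated operators $T_{Q_k}$ and the bumps $\varphi_{R_k}$ with $R_k = c\,\ell(Q_k)^{-1} \cdot(\text{large})$ rescaled to unit scale, one obtains a bounded sequence of inputs to which compactness of $T$ (in the rescaled coordinates, noting the oscillation pairing only sees a bounded neighbourhood of the unit cube) applies, producing a convergent subsequence whose limit must have the vanishing oscillation of a $\CMO$ function at that scale — contradicting the lower bound $\varepsilon$. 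Making the rescaling argument rigorous — in particular checking that the error terms from replacing $1$ by a compactly supported bump commute correctly with the dilation and stay uniformly small, and that the compactness of $T$ passes to the scaled operators uniformly — is the technical heart of the proof. Finally, the identical argument applied to the formal adjoints $T^{*1}, T^{*2}$ (which are also associated with standard bilinear CZ kernels, by the symmetry of Definition \ref{def:CZO}, and are bounded by duality) yields $T^{*1}(1,1), T^{*2}(1,1) \in \CMO(\Rn)$, completing the proof.
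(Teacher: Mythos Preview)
Your approach has a genuine gap at the core inference step. You claim that ``a precompact family in $L^p$ has uniformly vanishing local mean oscillation at small scales,'' but this is false: the singleton $\{\mathbf{1}_{[0,1]}\}$ is trivially precompact in any $L^p(\R)$, yet its mean oscillation on small intervals straddling $0$ or $1$ stays equal to $1/2$. The Kolmogorov--Riesz equicontinuity condition $\sup_{f}\|\tau_h f - f\|_{L^p} \to 0$ controls $L^p$-moduli of continuity, but passing to mean oscillation on a cube $Q$ via H\"older costs a factor $|Q|^{-1/p}$, which blows up as $\ell(Q)\to 0$. So precompactness of $\{T(\varphi_{R(Q)},\varphi_{R(Q)})\}$ in $L^p$ does not give the small-cube vanishing you need, and the same defect obstructs the escape-to-infinity case for cubes of small sidelength. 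The large-cube rescaling argument has an independent gap: after dilation you obtain a \emph{sequence} of operators $T_{Q_k}$, each individually compact (being conjugate to $T$ by isomorphisms), but you have no mechanism for the compactness to be uniform in $k$; rescaling only relabels coordinates and does not produce a single bounded family of inputs to which the compactness of $T$ itself applies.

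The paper's proof avoids these obstacles by abandoning the Uchiyama three-condition characterization altogether. It uses instead the projection characterization $f \in \CMO \iff \|P_N^\perp f\|_{\BMO} \to 0$ (see \eqref{ffbb-1}), and the key bridge from $L^p$-compactness to $\BMO$-control is Theorem~\ref{thm:PNT-cpt}: compactness of $T$ forces the \emph{operator norm} $\|P_N^\perp T\|_{L^{p_1}\times L^{p_2}\to L^p}\to 0$. This, combined with the quantitative approximation of $T(1,1)$ by $T(\Psi_k,\Psi_k)$ furnished by Lemma~\ref{lem:T11} (which in turn requires the kernel to be a \emph{compact} Calder\'on--Zygmund kernel, supplied by Theorem~\ref{thm:CCZK}), lets one bound $\langle P_N^\perp T(1,1), b\rangle$ uniformly over $\mathrm{H}^1$-atoms $b$. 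The ingredient your plan is missing is precisely this passage from $L^p$-compactness to operator-norm smallness of the tail projection $P_N^\perp T$.
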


\subsection{Extrapolation of endpoint compactness} 
To show ${\rm (d) \Longrightarrow (c)'}$, we establish extrapolation of $L^{p, \infty}$ compactness, see Theorem \ref{thm:EP-Lpinfty}. In light of Theorem \ref{thm:EP-Lp}, Theorem \ref{thm:EP-Lpinfty} can be reduced to showing the unweighted $L^{r_1} \times L^{r_2} \to L^r$ compactness, where $r_1, r_2 \in (1, \infty)$. As seen above, Theorems \ref{thm:RKLpq}--\ref{thm:RKWA} do not give the necessary conditions of precompactness in the space $L^{p, \infty}(\Rn)$, which leads they can not be directly applied to show Theorem \ref{thm:EP-Lpinfty}. To circumvent this obstacle, we have to first transfer the $L^{p, \infty}$ compactness to the $L^{s, t}(\Rn)$ compactness, where $s \neq \infty$ and $t \neq \infty$. In Lorentz space $L^{s, t}(\Rn)$, Theorem \ref{thm:RKLpq} is a sufficient and necessary of precompactness, and to conclude the $L^{r_1} \times L^{r_2} \to L^r$ compactness, we use the bilinear Marcinkiewicz interpolation with initial restricted weak type conditions (cf. Theorem \ref{thm:BM}), for which it needs three initial points that form a triangle in $\R^2$. The latter is provided by extrapolation of boundedness (cf. Theorem \ref{thm:RdF}).

\begin{theorem}\label{thm:EP-Lpinfty}
Assume that $T$ is a bilinear operator such that 
\begin{align*}
\text{$T$ is compact from $L^{p_1}(\Rn) \times L^{p_2}(\Rn)$ to $L^{p, \infty}(\Rn)$}  
\end{align*}
for some $\frac1p = \frac{1}{p_1} + \frac{1}{p_2}$ with $p_1, p_2 \in [1, \infty]$, and 
\begin{align*}
\text{$T$ is bounded from $L^{q_1}(v_1^{q_1}) \times L^{q_2}(v_2^{q_2})$ to $L^q(v^q)$} 
\end{align*}
for some $\frac1q = \frac{1}{q_1} + \frac{1}{q_2}$ with $q_1, q_2 \in [1, \infty]$ and for all $(v_1, v_2) \in A_{(q_1, q_2)}$, where $v=v_1 v_2$. Then 
\begin{align*}
\text{$T$ is compact from $L^{r_1}(w_1^{r_1}) \times L^{r_2}(w_2^{r_2})$ to $L^r(w^r)$} 
\end{align*}
for all $r_1, r_2 \in (1, \infty]$ and for all $(w_1, w_2) \in A_{(r_1, r_2)}$, where $\frac1r = \frac{1}{r_1} + \frac{1}{r_2}>0$ and $w=w_1 w_2$. 
\end{theorem}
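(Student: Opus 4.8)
The plan is to reduce the $L^{p,\infty}$-compactness hypothesis to an \emph{unweighted} strong-type compactness at some Banach triple $(r_1,r_2,r)$ with $r_1,r_2\in(1,\infty)$, and then feed that into the $L^p$-extrapolation machine of Theorem \ref{thm:EP-Lp}. Thus the whole argument splits into two parts: first, upgrade the weak-type compactness to a strong-type compactness on Lorentz spaces; second, invoke extrapolation. For the second part there is nothing to do: once $T$ is compact from $L^{r_1}(\Rn)\times L^{r_2}(\Rn)$ to $L^r(\Rn)$ for one choice of $r_1,r_2\in(1,\infty)$ and $T$ is bounded on the full weighted scale $L^{q_1}(v_1^{q_1})\times L^{q_2}(v_2^{q_2})\to L^q(v^q)$ for all $(v_1,v_2)\in A_{(q_1,q_2)}$ (which we have by hypothesis), Theorem \ref{thm:EP-Lp} delivers compactness from $L^{r_1}(w_1^{r_1})\times L^{r_2}(w_2^{r_2})$ to $L^r(w^r)$ for all $r_1,r_2\in(1,\infty]$ and all $(w_1,w_2)\in A_{(r_1,r_2)}$, which is exactly the conclusion.

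For the first part, the difficulty is that Theorems \ref{thm:RKLpq}--\ref{thm:RKWA} only give a \emph{sufficient} Kolmogorov--Riesz criterion in $L^{p,\infty}$, not a necessary one, so I cannot work with the $L^{p,\infty}$ target directly. The strategy is to transfer the target to a Lorentz space $L^{s,t}$ with $t<\infty$, where Theorem \ref{thm:RKLpq} \emph{is} an equivalence. First I would use extrapolation of boundedness (Theorem \ref{thm:RdF}) applied to the boundedness hypothesis at the triple $(q_1,q_2,q)$ to obtain strong-type bounds $L^{a_1}\times L^{a_2}\to L^a$ on an open set of triples, and the weak-type compactness hypothesis at $(p_1,p_2,p)$ together with interpolation to get restricted weak-type estimates at three points $(\tfrac1{a_1},\tfrac1{a_2})$, $(\tfrac1{b_1},\tfrac1{b_2})$, $(\tfrac1{c_1},\tfrac1{c_2})$ forming a nondegenerate triangle in $\R^2$; then the bilinear Marcinkiewicz interpolation theorem (Theorem \ref{thm:BM}) yields strong-type boundedness $L^{r_1}\times L^{r_2}\to L^r$ in the interior of that triangle, in particular at some Banach triple. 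Simultaneously I would show that the \emph{compactness} (not just boundedness) propagates: choosing a target Lorentz exponent $t<\infty$ and interpolating the weak-type-compact endpoint against a strong-type-bounded endpoint gives that $T$ is compact from $L^{s_1}\times L^{s_2}$ into $L^{s,t}(\Rn)$ for exponents slightly off the original line. Here I would verify the Kolmogorov--Riesz conditions \eqref{RKLpq-1}--\eqref{RKLpq-3} of Theorem \ref{thm:RKLpq} for the image set $T(A,B)$: condition \eqref{RKLpq-1} from boundedness, condition \eqref{RKLpq-2} from the decay of $T$ on far-away cubes (the factor $F_1$ in Theorem \ref{thm:WCP}, together with the known $L^{p,\infty}$-compactness which forces $\|P_N^\perp T\|\to 0$ type control), and condition \eqref{RKLpq-3} from equicontinuity of translates, again transported from the weak-type-compact endpoint via interpolation of the compactness modulus.

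The main obstacle I anticipate is establishing that \emph{compactness}, and not merely boundedness, survives the interpolation step into $L^{s,t}$: the bilinear Marcinkiewicz theorem as usually stated interpolates boundedness between restricted weak type endpoints, whereas I need the conclusion that the image of a bounded set is precompact. The way around this is to interpolate the relevant quantitative compactness moduli---namely $\sup_{f\in\K}\|f\mathbf 1_{B(0,A)^c}\|$ and $\sup_{f\in\K}\|\tau_h f-f\|$ applied to $\K=T(A,B)$---directly between the weak-type endpoint (where they tend to $0$ as $A\to\infty$, $h\to0$ by the hypothesis of $L^{p,\infty}$-compactness rephrased through Theorem \ref{thm:PNT-cpt}\eqref{list:PN2}) and the strong-type bounded endpoint (where they are merely finite), using the real-interpolation inequality for these sublinear functionals; the product structure gives a geometric gain in the interpolation parameter, so the moduli still vanish in the interior triple $L^{s,t}$. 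Once Theorem \ref{thm:RKLpq} is applicable there, $T(A,B)$ is precompact in $L^{s,t}$, hence (since $L^{s,t}\hookrightarrow L^{r}$ continuously for a suitable Banach $r$ obtained from the Marcinkiewicz range) $T$ is compact from $L^{r_1}(\Rn)\times L^{r_2}(\Rn)$ to $L^r(\Rn)$, and Theorem \ref{thm:EP-Lp} finishes the proof.
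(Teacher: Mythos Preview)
Your overall architecture is the paper's: reduce to unweighted strong-type compactness at a single Banach triple and then feed Theorem~\ref{thm:EP-Lp}; pass through an intermediate Lorentz target $L^{r^0,s}$ with $s<\infty$ so that the Kolmogorov--Riesz characterization of Theorem~\ref{thm:RKLpq} becomes an \emph{equivalence}; and propagate the moduli $\|T(\cdot,\cdot)\mathbf{1}_{B(0,A)^c}\|$ and $\|(\tau_hT-T)(\cdot,\cdot)\|$ to the desired strong-type target via the bilinear Marcinkiewicz theorem (Theorem~\ref{thm:BM}) applied to the auxiliary operators $T(\cdot,\cdot)\mathbf{1}_{B(0,A)^c}$ and $\tau_hT-T$.

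There is, however, a genuine gap in how you source the moduli. You propose to extract them at the $L^{p,\infty}$ endpoint, invoking Theorem~\ref{thm:PNT-cpt}\eqref{list:PN2}; but that item says explicitly that the converse fails, i.e.\ $L^{p,\infty}$-compactness does \emph{not} force $\|P_N^{\perp}T\|_{L^1\times L^1\to L^{1/2,\infty}}\to 0$, and Theorem~\ref{thm:RKLpq} likewise says the conditions \eqref{RKLpq-2}--\eqref{RKLpq-3} are \emph{not} necessary when $q=\infty$. So you cannot read off decay or equicontinuity from the $L^{p,\infty}$ hypothesis alone, and your proposed interpolation of moduli ``between the weak-type endpoint and a strong-type bounded endpoint'' has a zero at one vertex that you cannot establish. (Your appeal to the factor $F_1$ in Theorem~\ref{thm:WCP} is also misplaced: that result concerns the weak compactness property of bilinear Calder\'on--Zygmund operators and is irrelevant for a general bilinear $T$.)

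The paper's fix is exactly the step you allude to but do not substantiate: it invokes the compact bilinear real-interpolation theorem of Cobos--Fern\'andez-Cabrera--Mart\'inez \cite[Theorem~4.9]{CFM18}, applied with $q_0=q_1=q=r=s<\min\{p,q,1\}$, to the pair (compact: $L^{p_1}\times L^{p_2}\to L^{p,\infty}$) and (bounded: $L^{q_1}\times L^{q_2}\to L^{q}$). This gives \emph{genuine} compactness $L^{r^0_1,s}\times L^{r^0_2,s}\to L^{r^0,s}$ with $s<\infty$, and now Theorem~\ref{thm:RKLpq} is an equivalence, so the moduli are available at this intermediate vertex. From here the paper does not use any embedding $L^{s,t}\hookrightarrow L^r$ (which would be false in general); instead it applies Theorem~\ref{thm:BM} \emph{separately} to $T$, to $T(\cdot,\cdot)\mathbf{1}_{B(0,A)^c}$, and to $\tau_hT-T$, using the small restricted-weak-type constant at the $L^{r^0,s}$ vertex and the merely bounded constants at two further vertices, to obtain the strong-type moduli at the target $(r_1,r_2,r)$ and conclude via Theorem~\ref{thm:RKLpq}.
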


To prove ${\rm (e) \Longrightarrow (c)'}$, we establish extrapolation of $\CMO$ compactness, see Theorem \ref{thm:EP-Linfty}. We utilize Theorem \ref{thm:EP-Lp} to reduce Theorem \ref{thm:EP-Linfty} to the $L^{r_1} \times L^{r_2} \to L^r$ compactness, whose proof relies on a bilinear interpolation (cf. Theorem \ref{thm:IP-LpLi}), which just needs two different initial points. Since there is extremely limited information  about the necessary conditions of $\CMO$ compactness, we will use the projection operator which characterizes the $L^p$ and $\CMO$ compactness, see Theorem \ref{thm:PNT-cpt}.

\begin{theorem}\label{thm:EP-Linfty}
Assume that $T$ is a bilinear operator such that  
\begin{align*}
\text{$T$ is compact from $L^{\infty}(\Rn) \times L^{\infty}(\Rn)$ to $\CMO(\Rn)$},  
\end{align*}
and \begin{align*}
\text{$T$ is bounded from $L^{q_1}(v_1^{q_1}) \times L^{q_2}(v_2^{q_2})$ to $L^q(v^q)$} 
\end{align*}
for some $\frac1q = \frac{1}{q_1} + \frac{1}{q_2}$ with $q_1, q_2 \in [1, \infty]$ and for all $(v_1, v_2) \in A_{(q_1, q_2)}$, where $v=v_1 v_2$. Then 
\begin{align*}
\text{$T$ is compact from $L^{r_1}(w_1^{r_1}) \times L^{r_2}(w_2^{r_2})$ to $L^r(w^r)$} 
\end{align*}
for all $r_1, r_2 \in (1, \infty]$ and for all $(w_1, w_2) \in A_{(r_1, r_2)}$, where $\frac1r = \frac{1}{r_1} + \frac{1}{r_2}>0$ and $w=w_1 w_2$.  
\end{theorem}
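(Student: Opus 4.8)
The plan is to follow the strategy announced in the introduction: first reduce the weighted conclusion to a single unweighted bound via the extrapolation machinery already in hand, then produce that bound by interpolating between the $\CMO$ (equivalently $L^\infty\times L^\infty\to\mathrm{BMO}$) compactness hypothesis and one of the weighted boundedness estimates. Concretely, by Theorem~\ref{thm:EP-Lp} it suffices to exhibit a single Banach triple $(r,r_1,r_2)\in(1,\infty)^3$ with $\frac1r=\frac1{r_1}+\frac1{r_2}$ for which $T$ is compact from $L^{r_1}(\Rn)\times L^{r_2}(\Rn)$ to $L^r(\Rn)$, because once we have unweighted compactness at one Banach point together with the assumed weighted boundedness at $(q,q_1,q_2)$, Theorem~\ref{thm:EP-Lp} upgrades this to compactness on the full multiple-weight range $A_{(r_1,r_2)}$ for all $r_1,r_2\in(1,\infty]$. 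So the whole problem is localized to: \emph{from $L^\infty\times L^\infty\to\mathrm{CMO}$ compactness plus $L^{q_1}(v_1^{q_1})\times L^{q_2}(v_2^{q_2})\to L^q(v^q)$ boundedness, deduce $L^{r_1}\times L^{r_2}\to L^r$ compactness for some interior Banach triple.}

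The second step is the interpolation. Here I would invoke the bilinear interpolation result advertised as Theorem~\ref{thm:IP-LpLi}, which (per the remark in the introduction) requires only two initial data points, one of which may be the $L^\infty\times L^\infty\to\mathrm{BMO}$ endpoint. The two endpoints are: (i) $T$ compact $L^\infty\times L^\infty\to\mathrm{CMO}$, and (ii) $T$ bounded $L^{q_1}\times L^{q_2}\to L^q$ (unweighted, obtained from the weighted hypothesis by taking $v_1=v_2\equiv 1$, noting $(1,1)\in A_{(q_1,q_2)}$ when $q_1,q_2>1$, or otherwise using a Lebesgue-space endpoint available in $A_{(q_1,q_2)}$). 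Interpolating compactness at (i) against boundedness at (ii) yields compactness at every intermediate point; choosing the interpolation parameter so the resulting exponents lie strictly between, we land on a triple $(r,r_1,r_2)\in(1,\infty)^3$ at which $T$ is compact $L^{r_1}\times L^{r_2}\to L^r$. One technical point to watch: the interpolation must genuinely interpolate \emph{compactness}, not merely boundedness, from one endpoint — this is where $\mathrm{CMO}$ (rather than $\mathrm{BMO}$) and the projection-operator characterization of $\CMO$-compactness in Theorem~\ref{thm:PNT-cpt}\eqref{list:PN3} enter: the decay $\lim_{N\to\infty}\|P_N^\perp T\|_{L^\infty\times L^\infty\to\mathrm{BMO}}=0$ furnished by the hypothesis is the quantitative input that survives interpolation and delivers a corresponding decay (hence compactness, via the unweighted case of Theorem~\ref{thm:RKLpq}) at the intermediate Lebesgue point.

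The third step is purely bookkeeping: feed the interior unweighted compactness point just produced, together with the given weighted boundedness point $(q,q_1,q_2)$ and its weight class $A_{(q_1,q_2)}$, into Theorem~\ref{thm:EP-Lp}. Its conclusion is verbatim the conclusion of Theorem~\ref{thm:EP-Linfty}: compactness from $L^{r_1}(w_1^{r_1})\times L^{r_2}(w_2^{r_2})$ to $L^r(w^r)$ for all $r_1,r_2\in(1,\infty]$ with $\frac1r=\frac1{r_1}+\frac1{r_2}>0$ and all $(w_1,w_2)\in A_{(r_1,r_2)}$. I expect the \textbf{main obstacle} to be Step~2: making the bilinear interpolation actually transport compactness off the $\mathrm{CMO}$ endpoint. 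Unlike classical bilinear Marcinkiewicz interpolation of boundedness, one needs to control, uniformly along the interpolation scale, the "tail" operators $P_N^\perp T$ — i.e., to show that smallness of $\|P_N^\perp T\|$ at the endpoint propagates to smallness at the intermediate point, using the boundedness at the other endpoint as a fixed reservoir. This is precisely what Theorems~\ref{thm:PNT-cpt} and~\ref{thm:IP-LpLi} are designed to supply, so the argument reduces to correctly quoting them; but verifying that $P_N^\perp$ commutes suitably with the interpolation construction (and that the endpoint $\mathrm{BMO}$ boundedness of $P_N^\perp T$ is uniform in $N$, which follows from $\|P_N^\perp T\|\le \|P_N^\perp\|\,\|T\|\lesssim\|T\|$) is the delicate point deserving care.
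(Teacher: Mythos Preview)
Your overall strategy is correct and matches the paper's: translate the $\CMO$-compactness hypothesis into the decay $\|P_N^\perp T\|_{L^\infty\times L^\infty\to\BMO}\to 0$ via Theorem~\ref{thm:PNT-cpt}\eqref{list:PN3}, interpolate this decay against a Lebesgue boundedness estimate to get $\|P_N^\perp T\|_{L^{r_1}\times L^{r_2}\to L^r}\to 0$ at an interior Banach point, conclude compactness there via Lemma~\ref{lem:limit} (or Theorem~\ref{thm:PNT-cpt}\eqref{list:PN1}), and finish with Theorem~\ref{thm:EP-Lp}. Two concrete gaps, however, need to be filled.

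First, Theorem~\ref{thm:IP-LpLi} as stated is a Lebesgue-to-Lebesgue result; it does \emph{not} accept $\BMO$ as a target space, so you cannot invoke it directly with one endpoint at $L^\infty\times L^\infty\to\BMO$. The paper handles this by a two-stage interpolation. In the first stage one freezes $f_2\in L^\infty$ and applies the classical \emph{linear} interpolation between $L^{q_1}\to L^{q_1}$ and $L^\infty\to\BMO$ (the $L^p$--$\BMO$ interpolation, referenced to Grafakos) to the operator $f_1\mapsto P_N^\perp T(f_1,f_2)$, obtaining $\|P_N^\perp T\|_{L^{s_1}\times L^\infty\to L^{s_1}}\le C^{1-\alpha}\varepsilon^\alpha$ for $s_1\in(q_1,\infty)$. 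Only now, with both endpoints in Lebesgue spaces, does Theorem~\ref{thm:IP-LpLi} apply: interpolating between $L^{s_1}\times L^\infty\to L^{s_1}$ and an auxiliary $L^{t_1}\times L^{t_2}\to L^t$ reaches the interior point $(r_1,r_2)$.

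Second, the interpolation requires unweighted boundedness at \emph{flexible} auxiliary exponents such as $L^{q_1}\times L^\infty\to L^{q_1}$ and $L^{t_1}\times L^{t_2}\to L^t$, which are generally different from the single $(q_1,q_2)$ in the hypothesis (and your approach breaks if, say, $q_1=\infty$). The paper secures these by first applying Theorem~\ref{thm:RdF} to the weighted boundedness assumption, yielding unweighted boundedness of $T$ (hence of $P_N^\perp T$, uniformly in $N$ by~\eqref{ddf-4}) from $L^{s_1}\times L^{s_2}$ to $L^s$ for \emph{all} $s_1,s_2\in(1,\infty]$. Your parenthetical ``or otherwise using a Lebesgue-space endpoint available in $A_{(q_1,q_2)}$'' gestures at this but does not supply it; the extrapolation step is essential.
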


\subsection{Endpoint weighted compactness} 
To conclude the proof of Theorem \ref{thm:cpt}, it remains to show ${\rm (a) \Longrightarrow (d)'}$ and ${\rm (a) \Longrightarrow (e)'}$. The proof of ${\rm (a) \Longrightarrow (d)'}$ is based on Theorem \ref{thm:RKW}, and to check each condition in Theorem \ref{thm:RKW}, it requires the unweighted $L^1 \times L^1 \to L^{\frac12, \infty}$ compactness. The latter is proved by Theorem \ref{thm:RKLpq} and the Calder\'{o}n--Zygmund decomposition. It is the first time to use Kolmogorov--Riesz theorems to investigate endpoint compactness. Moreover, the implication ${\rm (a) \Longrightarrow (e)'}$ follows from a pointwise estimate \eqref{MSP-1}. It was shown that for any Calder\'{o}n--Zygmund operator $T$, 
\begin{align*}
M^{\#} (Tf)(x) 
\lesssim M_r f(x), \quad x \in \Rn, \, \, r>1.
\end{align*}
But this inequality does not hold for $r=1$ and $T$ being the Hilbert transform, see \cite{CM78}. 
The same argument holds in the bilinear case, but it can not yield the desired weighted estimate if $(w_1, w_2) \in A_{(\infty, \infty)}$. This is the main reason why we use $\CMO_{\lambda}$ instead of $\CMO$.

\subsection{Applications} 
To illustrate the applicability of Theorem \ref{thm:cpt}, we present several kinds of bilinear operators, which satisfy the hypotheses \eqref{H1}--\eqref{H3}.

The first example is bilinear paraproducts. Let $\Phi, \Psi \in \mathscr{C}_c^{\infty}(\Rn)$ be radial functions such that $\supp (\Phi) \subset B(0, 1)$, $\int_{\Rn} \Phi (x) \, dx = 1$, $\int_{\Rn} \Psi(x) \, dx = 0$, and $\int_0^{\infty} |\widehat{\Psi}(t e_1)|^2 \frac{dt}{t} =1$, where $e_1 := (1, 0, \ldots, 0)$. For any $t>0$, define convolution operators 
\begin{align*}
P_t f := \Phi_t *f 
\quad\text{ and }\quad 
Q_t f := \Psi_t * f, 
\end{align*} 
where $\Phi_t(x) := t^{-n} \Phi(t^{-1} x)$ and $\Psi_t(x) := t^{-n} \Psi(t^{-1} x)$. Given $b \in \BMO(\Rn)$, the \emph{bilinear continuous paraproduct} $\pi_b$ is defined by 
\begin{align*}
\pi_b (f, g) 
:= \int_0^{\infty} Q_t \big( (Q_t b) (P_t f) (P_t g) \big) \, \frac{dt}{t}. 
\end{align*}
The paraproduct goes back to the seminal work of Bony \cite{Bony} and Coifman and Meyer \cite{CM78}, and one of its significant applications is the $T1$ theorem due to David and Journ\'{e} \cite{DJ}. The bilinear extension first appeared explicitly in the work of Yabuta \cite{Yab}, and was used to prove bilinear $T1$ theorem \cite{Hart14}.

\begin{theorem}\label{thm:Pibc}
For any $b \in \CMO(\Rn)$, $\pi_b$ satisfies the hypotheses \eqref{H1}, \eqref{H2}, and \eqref{H3}. 
\end{theorem}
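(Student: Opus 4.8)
\textbf{Proof proposal for Theorem \ref{thm:Pibc}.}
The plan is to verify the three hypotheses \eqref{H1}, \eqref{H2}, \eqref{H3} directly from the continuous Littlewood--Paley representation of $\pi_b$, exploiting the hypothesis $b\in\CMO(\Rn)$ at each step. First I would identify the bilinear kernel: expanding $Q_t(P_tf\cdot P_tg\cdot Q_tb)$ as an iterated integral gives
\[
K(x,y,z)=\int_0^\infty\!\!\int_{\Rn}\Psi_t(x-u)\,(Q_tb)(u)\,\Phi_t(u-y)\,\Phi_t(u-z)\,du\,\frac{dt}{t}.
\]
Using $\supp\Phi,\supp\Psi\subset B(0,1)$, the smoothness and cancellation of $\Psi$, and the uniform bound $\|Q_tb\|_\infty\lesssim\|b\|_{\BMO}$ (from $\Psi$ having mean zero), the standard paraproduct estimates show $K$ is a standard bilinear Calder\'on--Zygmund kernel. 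To upgrade this to a \emph{compact} kernel (Definition \ref{def:CCZK}), I would split the $t$-integral into $t$ small, $t$ large, and $t$ comparable to $|x-y|+|x-z|$, and then invoke the $\CMO$ hypothesis on $b$: density of $\mathscr{C}^\infty_c$ (or smooth truncations) in $\CMO$ lets us write $b=b_1+b_2$ with $b_1$ smooth compactly supported and $\|b_2\|_{\BMO}$ small; the $b_1$-part contributes a kernel whose relevant $t$-range is genuinely truncated (because $Q_tb_1\to0$ both as $t\to0$, by smoothness, and as $t\to\infty$, by compact support of $b_1$) and whose spatial tails decay, giving exactly the extra decay factors $F_1,F_2,F_3$ in the compact kernel definition, while the $b_2$-part is small in operator norm. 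This yields \eqref{H1}.

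Next, for \eqref{H2}, the weak compactness property, I would test $\pi_b$ on indicators:
\[
\langle\pi_b(\mathbf 1_I,\mathbf 1_I),\mathbf 1_I\rangle
=\int_0^\infty\!\!\int_{\Rn}(Q_tb)(u)\,(P_t\mathbf 1_I)(u)^2\,(Q_t\mathbf 1_I)(u)\,du\,\frac{dt}{t}.
\]
Since $|P_t\mathbf 1_I|\le1$ and $P_t\mathbf 1_I$ is supported essentially in a $t$-neighborhood of $I$, while $Q_t\mathbf 1_I$ is supported near $\partial I$ with $|Q_t\mathbf 1_I|\lesssim\min(1,t/\ell(I))$, the standard estimate gives $|\langle\pi_b(\mathbf 1_I,\mathbf 1_I),\mathbf 1_I\rangle|\lesssim\|b\|_{\BMO}|I|$. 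For the \emph{compactness} refinement required by Definition \ref{def:WCP} I would again use $b=b_1+b_2$: the $b_2$-term is controlled by the small quantity $\|b_2\|_{\BMO}$, and for $b_1$ smooth and compactly supported the relevant $t$-integral is effectively restricted to a bounded range of scales depending on $I$ and to cubes $I$ not too far from $\supp b_1$, producing the decay in $F(I;N)$. Finally, \eqref{H3} is the cleanest point: one computes $\pi_b(1,1)=\int_0^\infty Q_t(Q_tb)\,\frac{dt}{t}=c\,b$ by the Calder\'on reproducing formula normalization $\int_0^\infty|\widehat\Psi(te_1)|^2\frac{dt}{t}=1$, so $\pi_b(1,1)=c\,b\in\CMO(\Rn)$ by hypothesis; and the two adjoints $\pi_b^{*1}(1,1)$, $\pi_b^{*2}(1,1)$ are easily seen to vanish, since in $\langle\pi_b(f,g),h\rangle$ the roles of $f$ (or $g$) and $h$ are not symmetric and taking $f\equiv1$ (or $g\equiv1$) against the mean-zero $\Psi_t$ in the outer $Q_t$ after duality leaves a factor $P_t1=1$ paired against $Q_t$ acting on the test function, which integrates to zero — more precisely $\pi_b^{*1}(1,1)$ pairs $\langle\pi_b^{*1}(1,1),\varphi\rangle=\langle\pi_b(\varphi,1),1\rangle=\int_0^\infty\int(Q_tb)(Q_t1)(P_t\varphi)\,\frac{dt}{t}=0$ because $Q_t1=0$, and likewise for $*2$. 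Hence all of $\pi_b(1,1),\pi_b^{*1}(1,1),\pi_b^{*2}(1,1)$ lie in $\CMO(\Rn)$.

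I expect the main obstacle to be \eqref{H1}: proving that the kernel is not merely standard but \emph{compact} in the precise quantitative sense of Definition \ref{def:CCZK} requires carefully tracking, uniformly in the three variables and the scale $t$, how the $\CMO$ property of $b$ transfers to decay of $Q_tb$ at small and large scales and to spatial decay, and then bundling those into the required product of decay functions while keeping the error term uniformly small. The decomposition $b=b_1+b_2$ is the right device, but making the $b_1$-kernel's truncation quantitative (the small-$t$ estimate uses $|Q_tb_1|\lesssim t\|\nabla b_1\|_\infty$ and the large-$t$ estimate uses that $Q_tb_1$ is supported in a $t$-neighborhood of $\supp b_1$ with $\|Q_tb_1\|_\infty\lesssim t^{-1}\|b_1\|_1$, say) and then assembling these with the oscillation/decay of $\Psi_t$ into the precise form demanded by the definition is where the real work lies; by contrast \eqref{H2} is a minor variant of the same analysis, and \eqref{H3} is essentially immediate from the reproducing formula normalization.
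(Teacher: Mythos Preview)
Your approach is correct but takes a genuinely different route from the paper for \eqref{H1} and \eqref{H2}. On \eqref{H3} you agree with the paper: $\pi_b(1,1)=b$ (with the stated normalization the constant is exactly $1$) and $\pi_b^{*1}(1,1)=\pi_b^{*2}(1,1)=0$ because $Q_t1=0$; the paper cites \cite[Lemma 5.1]{Hart14} for these identities. For \eqref{H1} and \eqref{H2}, however, the paper does not verify the compact kernel or weak compactness property directly. Instead it quotes \cite[Proposition 3.1]{BLOT1}, which gives $L^{p_1}\times L^{p_2}\to L^p$ \emph{compactness} of $\pi_b$ for $b\in\CMO$, and then applies the paper's own necessity theorems (Theorems~\ref{thm:CCZK} and~\ref{thm:WCP}), i.e.\ the direction $\mathrm{(c)}\Rightarrow\mathrm{(a)}$ of the main result, to conclude \eqref{H1} and \eqref{H2}. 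This is considerably shorter and showcases the power of the equivalence just proved. Your direct verification via the decomposition $b=b_1+b_2$ with $b_1\in\mathscr{C}_c^\infty$ and $\|b_2\|_{\BMO}$ small is a legitimate alternative that is more self-contained (it does not invoke the main theorem or the external compactness result). One caveat: for \eqref{H1} you phrased the $b_2$-contribution as ``small in operator norm'', but Definition~\ref{def:CCZK} is a pointwise kernel condition; the decomposition works once you recast the compact-kernel hypothesis as the vanishing of certain $\limsup$'s (as in Lemma~\ref{lem:improve} or the proof of Theorem~\ref{thm:CCZK}), so that the $b_2$-piece contributes $\lesssim\|b_2\|_{\BMO}\le\varepsilon$ to each $\limsup$.
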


The second example is bilinear dyadic paraproducts, which have developed into a fundamental tool in dyadic analysis, see \cite{CY, LMOV, MV}. Fix a dyadic grid $\D$, and let $\{\psi_I\}_{I \in \D}$ be a system of wavelets (cf. Definition \ref{def:wavelet}). Given a function $b \in \BMO(\Rn)$, we define \emph{bilinear dyadic paraproducts} as 
\begin{align*}
\Pi_b(f_1, f_2) 
&:= \sum_{I \in \D} \langle b, \psi_I \rangle \langle f_1 \rangle_I \langle f_2 \rangle_I \,  \psi_I, 
\\
\Pi_b^{*1}(f_1, f_2) 
&:= \sum_{I \in \D} \langle b, \psi_I \rangle \langle f_1, \psi_I \rangle \langle f_2 \rangle_I \,  \frac{\mathbf{1}_I}{|I|}, 
\\
\Pi_b^{*2}(f_1, f_2) 
&:= \sum_{I \in \D} \langle b, \psi_I \rangle \langle f_1 \rangle_I \langle f_2, \psi_I \rangle \,  \frac{\mathbf{1}_I}{|I|}. 
\end{align*}

\begin{theorem}\label{thm:Pib}
Let $b \in \CMO(\Rn)$. Then each operator $T \in \{\Pi_b, \Pi_b^{*1}, \Pi_b^{*2}\}$ satisfies the hypotheses \eqref{H1}, \eqref{H2}, and \eqref{H3}. 
\end{theorem}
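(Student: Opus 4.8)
\textbf{Proof proposal for Theorem \ref{thm:Pib}.}
The plan is to verify the three hypotheses \eqref{H1}, \eqref{H2}, \eqref{H3} for each of the dyadic paraproducts $\Pi_b$, $\Pi_b^{*1}$, $\Pi_b^{*2}$ attached to a function $b \in \CMO(\Rn)$, working one operator at a time and exploiting the fact that the system $\{\psi_I\}_{I \in \D}$ consists of compactly supported, smooth, cancellative wavelets with the standard size and H\"older estimates $|\psi_I| \lesssim |I|^{-1/2} \mathbf{1}_I$, $\supp \psi_I \subset I$, $\int \psi_I = 0$, and $|\psi_I(x) - \psi_I(y)| \lesssim |I|^{-1/2-\alpha/n}|x-y|^\alpha$. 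First I would set up the kernel of $\Pi_b$: writing out the action against test functions,
\begin{align*}
\langle \Pi_b(f_1,f_2), f_3 \rangle
= \sum_{I \in \D} \langle b, \psi_I \rangle \, \frac{1}{|I|^2} \iiint \mathbf{1}_I(y_1)\mathbf{1}_I(y_2)\psi_I(x)\, f_1(y_1) f_2(y_2) f_3(x)\, dy_1 dy_2 dx,
\end{align*}
so that the kernel is $K(x,y_1,y_2) = \sum_{I} \langle b,\psi_I\rangle |I|^{-2} \psi_I(x)\mathbf{1}_I(y_1)\mathbf{1}_I(y_2)$. The size and smoothness bounds of a standard bilinear Calder\'on--Zygmund kernel follow from the usual summation over the (finitely many at each scale) cubes $I$ containing the relevant points, using $|\langle b,\psi_I\rangle| \lesssim \|b\|_{\BMO} |I|^{1/2}$; the same computation with the roles of the arguments permuted handles $\Pi_b^{*1}$ and $\Pi_b^{*2}$, whose kernels have the cancellative factor on a $y$-variable instead of on $x$.

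The heart of the matter is upgrading these to the \emph{compact} kernel estimates of Definition \ref{def:CCZK}, and here I would use $b \in \CMO(\Rn)$ in the precise form recorded in the excerpt: $\|\b\|_{\BMO_\D} \le 1$ together with $\lim_{N\to\infty}\sup_\D\sup_{Q\in\D}\frac{1}{|Q|}\sum_{I \not\in \D(N),\, I\subset Q}|b_I|^2 = 0$, where $b_I = \langle b,\psi_I\rangle$. The decay factors $F_1, F_2, F_3$ in the compact kernel definition measure, respectively, that the kernel is small when the cubes are very large, very small, or very far from a fixed reference cube; the point is that the tail sum of $|\langle b,\psi_I\rangle|^2$ over cubes $I \notin \D(N)$ — i.e.\ those that are too big, too small, or too eccentric relative to the origin — goes to zero, and by Cauchy--Schwarz this controls exactly the portion of the kernel sum responsible for the bad behavior. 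Concretely, I would split $K = \sum_{I \in \D(N)} + \sum_{I \notin \D(N)}$: the first (finite-scale, bounded-location) piece is harmless because it is a finite sum of nice pieces, while the second is estimated by $\big(\sup_Q \frac{1}{|Q|}\sum_{I\notin\D(N), I\subset Q}|b_I|^2\big)^{1/2}$ times the "full" kernel bound, which tends to $0$; translating the scale/location cutoffs of $\D(N)$ into the $F_1,F_2,F_3$ language of the definition gives \eqref{H1}.

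For \eqref{H2}, the weak compactness property, I would test directly: $\langle \Pi_b(\mathbf{1}_I,\mathbf{1}_I),\mathbf{1}_I\rangle = \sum_{J \subset I} \langle b,\psi_J\rangle \langle \mathbf{1}_I\rangle_J \langle \mathbf{1}_I\rangle_J \langle \mathbf{1}_I, \psi_J\rangle = \sum_{J\subset I}\langle b,\psi_J\rangle \langle \mathbf{1}_I,\psi_J\rangle$ (only $J \subseteq I$ survives, and $\langle \mathbf{1}_I\rangle_J = 1$ there), which by Cauchy--Schwarz is bounded by $\big(\sum_{J\subset I}|\langle b,\psi_J\rangle|^2\big)^{1/2}\big(\sum_{J\subset I}|\langle \mathbf{1}_I,\psi_J\rangle|^2\big)^{1/2} \lesssim \|b\|_{\BMO}|I|^{1/2}\cdot|I|^{1/2} = \|b\|_{\BMO}|I|$, and the decay of the $\CMO$ tail over cubes $J \notin \D(N)$ furnishes the required vanishing of the normalized quantity as $I$ escapes to small/large scales or to infinity; the computations for $\Pi_b^{*1}, \Pi_b^{*2}$ are identical up to relabeling. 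Finally \eqref{H3} is the cleanest: one computes $\Pi_b(1,1) = \sum_I \langle b,\psi_I\rangle \psi_I = b$ (wavelet reconstruction, since $\langle 1\rangle_I = 1$), while $\Pi_b^{*1}(1,1) = \sum_I \langle b,\psi_I\rangle \langle \psi_I\rangle_I \frac{\mathbf{1}_I}{|I|} = 0$ because $\langle \psi_I \rangle_I = |I|^{-1}\int\psi_I = 0$, and likewise $\Pi_b^{*2}(1,1) = 0$; and for the adjoints, e.g.\ $\Pi_b^{*1,\,*1}(1,1) = \Pi_b(1,1) = b \in \CMO$, $\Pi_b^{*1,\,*2}(1,1) = 0$, so in every case $T(1,1), T^{*1}(1,1), T^{*2}(1,1) \in \{b, 0\} \subset \CMO(\Rn)$. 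The main obstacle I anticipate is the bookkeeping in \eqref{H1}: one must pass from the abstract $\CMO$-sequence decay condition to the three geometric decay functions $F_1, F_2, F_3$ simultaneously (and uniformly in the grid), which requires carefully matching the scale truncation $\ell(I) \sim 2^{\pm N}$ and the spatial truncation $I \cap 2^N\I \ne \emptyset$ built into $\D(N)$ against the product structure $F_1(2^N\ell)F_2(2^{-N}\ell)F_3(\cdots)$, rather than any single difficult estimate.
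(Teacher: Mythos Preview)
Your treatment of \eqref{H3} is correct and identical to the paper's.

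For \eqref{H2} there is a genuine error in your direct computation: the claim that ``only $J \subseteq I$ survives'' in $\langle \Pi_b(\mathbf{1}_I,\mathbf{1}_I),\mathbf{1}_I\rangle$ is false. When $J \supsetneq I$ one has $\langle \mathbf{1}_I \rangle_J = |I|/|J| \neq 0$ and $\langle \mathbf{1}_I, \psi_J\rangle = \int_I \psi_J$, which need not vanish; and when the testing cube $I \in \Q$ is not dyadic (as Definition~\ref{def:WCP} requires), the combinatorics are worse still. The paper bypasses any direct computation: it observes $\mathcal{P}_N^{\perp}\Pi_b = \Pi_{\mathcal{P}_N^{\perp} b}$, invokes the paraproduct boundedness Theorem~\ref{thm:Pi} to get $\|\mathcal{P}_N^{\perp}\Pi_b\|_{L^4\times L^4\to L^2} \lesssim \|\mathcal{P}_N^{\perp}b\|_{\BMO} \to 0$, and then feeds this into the general mechanism of Theorem~\ref{thm:WCP}.

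For \eqref{H1}, your splitting $K = \sum_{I\in\D(N)} + \sum_{I\notin\D(N)}$ is exactly the paper's. For the second sum the paper writes it as a pairing $\langle \mathcal{P}_N^{\perp}b, \Phi\rangle$ and invokes $\mathrm{H}^1$--$\BMO$ duality via a square-function bound on $\Phi$; your Cauchy--Schwarz route (which, specialized to $Q=I$, gives the pointwise bound $|b_I| \le \varepsilon_N |I|^{1/2}$ for $I\notin\D(N)$) reaches the same estimate by direct summation over the ancestor chain, so this part is fine if less elegant. The first sum, however, is not ``harmless'' merely by finiteness: one must show it is \emph{small relative to the standard H\"older bound} in each of the three regimes. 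The paper carries out the needed geometric case analysis, showing this sum is in fact empty in the large-scale and far-away regimes (ii)--(iii), and in the small-scale regime (i) extracting a factor $2^{-2nN}$ via two further sub-cases depending on the relative size of $\ell(I_0)$ and $2^{-N}$. Your sketch anticipates that some such bookkeeping is the main obstacle, but the phrase ``finite sum of nice pieces'' does not capture what is actually required.
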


Let us proceed to the third example. Given a symbol $\sigma$, the \emph{bilinear pseudo-differential operator} $T_{\sigma}$ is defined by
\begin{align*}
T_{\sigma}(f_1, f_2)(x) := \iint_{\R^{2n}} \sigma(x, \xi_1, \xi_2)
e^{2\pi i x \cdot (\xi_1+\xi_2)} \widehat{f}_1(\xi_1) \widehat{f}_2(\xi_2) \, d\xi_1 \, d\xi_2,
\end{align*}
for all $f_1, f_2 \in \S(\Rn)$, where $\widehat{f}(\xi) := \int_{\Rn} e^{-2 \pi i x \cdot \xi} f(x) \,  dx$. Given $m \in \R$ and $\rho,\delta \in [0, 1]$, we say that $\sigma \in \mathcal{K}_{\rho,\delta}^m$ if it satisfies 
\begin{align*}
\big|\partial_{x}^{\alpha} \partial_{\xi}^{\beta} \partial_{\eta}^{\gamma} \sigma(x, \xi, \eta) \big|
\leq C_{\alpha, \beta, \gamma}(x, \xi, \eta) (1+ |\xi| + |\eta|)^{m + \delta|\alpha| - \rho (|\beta| + |\gamma|)}, 
\end{align*}
for all multi-indices $\alpha$, $\beta$ and $\gamma$, where $C_{\alpha, \beta, \gamma}$ is a bounded function satisfying 
\begin{align*}
\lim_{|x| + |\xi| + |\eta| \to \infty} C_{\alpha,\beta, \gamma}(x, \xi, \eta)
=0.
\end{align*}
We say that $\sigma \in \mathcal{S}_{\rho,\delta}^m$ if the above function $C_{\alpha,\beta, \gamma}$  depends only on $\alpha$, $\beta$, and $\gamma$. 

In the linear case, the condition $\sigma \in \mathcal{K}_{\rho,\delta}^m$ implicitly originated in \cite{Cor} to establish $L^2$ compactness of $T_{\sigma}$, and was further adapted in \cite{CST} to obtain weighted $L^p$ compactness of $T_{\sigma}$.

\begin{theorem}\label{thm:Tsig}
Let $\sigma \in \mathcal{K}_{1, \delta}^0$ with $\delta \in [0, 1)$. Then the bilinear pseudo-differential operator $T_{\sigma}$ satisfies the hypotheses \eqref{H1}, \eqref{H2}, and \eqref{H3}. 
\end{theorem}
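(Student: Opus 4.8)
\emph{Strategy.} The plan is to verify \eqref{H1}--\eqref{H3} directly, reducing everything to the bilinear kernel
\begin{align*}
K(x, y_1, y_2) = \iint_{\R^{2n}} \sigma(x, \xi, \eta)\, e^{2 \pi i (x - y_1) \cdot \xi}\, e^{2 \pi i (x - y_2) \cdot \eta}\, d\xi\, d\eta
\end{align*}
and to the symbolic description of $T_{\sigma}$ and its transposes. Recall the classical fact that, for $\delta \in [0, 1)$, a symbol in the \emph{uniform} class $\mathcal{S}^0_{1, \delta}$ produces a bounded bilinear Calder\'on--Zygmund operator --- bounded from $L^{q_1}(v_1^{q_1}) \times L^{q_2}(v_2^{q_2})$ to $L^q(v^q)$ for all $(v_1, v_2) \in A_{(q_1, q_2)}$, with operator norm and kernel constants controlled by finitely many $\mathcal{S}^0_{1, \delta}$-seminorms of the symbol (the restriction $\delta < 1$ is essential: at $\delta = 1$ one meets the forbidden class $\mathcal{S}^0_{1,1}$). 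The only genuinely new input is the decay of the coefficients $C_{\alpha, \beta, \gamma}$, which I would harvest through a quantitative truncation. Fix $\phi \in C_c^{\infty}(\R^n)$ with $\phi \equiv 1$ near $0$; given $\varepsilon > 0$, choose $M = M(\varepsilon)$ so large that $|C_{\alpha, \beta, \gamma}(x, \xi, \eta)| < \varepsilon$ whenever $|x| + |\xi| + |\eta| \ge M$ for all multi-indices up to some fixed finite order; and split $\sigma = \sigma^M_{\mathrm{loc}} + \sigma^M_{\mathrm{tail}}$, where $\sigma^M_{\mathrm{loc}}(x, \xi, \eta) := \sigma(x, \xi, \eta)\, \phi(x / M)\, \phi(\xi / M)\, \phi(\eta / M)$. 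A Leibniz-rule bookkeeping --- each derivative that lands on a cutoff gains $M^{-1}$ and is supported where $|x| + |\xi| + |\eta| \gtrsim M$, hence where $|C_{\alpha, \beta, \gamma}| < \varepsilon$ --- shows that $\sigma^M_{\mathrm{tail}} \in \mathcal{S}^0_{1, \delta}$ with all seminorms up to the chosen order of size $O(\varepsilon)$, whereas $\sigma^M_{\mathrm{loc}}$ is smooth with compact support, so its kernel $K^M_{\mathrm{loc}}$ is supported in $\{|x| \lesssim M\}$ and satisfies $|K^M_{\mathrm{loc}}(x, y_1, y_2)| \le C_{M, N}(1 + |x - y_1|)^{-N}(1 + |x - y_2|)^{-N}$ for every $N$.

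\emph{Hypothesis \eqref{H1}.} Standard integration by parts in $\xi$ and $\eta$ against the symbol gives, off the diagonal, the usual size and smoothness estimates for $K$ with constants depending on finitely many $\mathcal{S}^0_{1, \delta}$-seminorms of $\sigma$. Applying this to $\sigma^M_{\mathrm{tail}}$ yields a kernel whose \emph{full} Calder\'on--Zygmund constant is $O(\varepsilon)$; since $\varepsilon$ is arbitrary, this delivers the decay functions of Definition \ref{def:CCZK} governing smallness of the kernel at small and large scales (and under degeneration of the configuration). The remaining decay function --- smallness of the kernel far from the origin --- comes from $K^M_{\mathrm{loc}}$, which is supported in $\{|x| \lesssim M\}$ and decays rapidly in $x - y_1$ and $x - y_2$. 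Assembling the two pieces and organizing the $(\varepsilon, M)$ dependence produces precisely the compact bilinear Calder\'on--Zygmund kernel estimates, i.e.\ \eqref{H1}. As a byproduct, the same truncation gives $\|T_{\sigma} - T_{\sigma^M_{\mathrm{loc}}}\|_{L^2 \times L^2 \to L^1} = O(\varepsilon)$, and each $T_{\sigma^M_{\mathrm{loc}}}$ is compact from $L^2 \times L^2$ to $L^1$ (it sends bounded sets to bounded equicontinuous families supported in a fixed ball), so $T_{\sigma}$ is compact there; hence \eqref{H1}--\eqref{H3} could alternatively be deduced from Theorem \ref{thm:cpt}, $\mathrm{(c)} \Rightarrow \mathrm{(a)}$. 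I keep the direct route below for concreteness.

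\emph{Hypotheses \eqref{H2} and \eqref{H3}.} For \eqref{H2}, pairing $T_{\sigma}$ against the testing functions $\varphi_I$ of Definition \ref{def:WCP} and expanding via the symbol gives, for a cube $I = I(c_I, \ell)$, an expression of the form $\iint ( \int_I \sigma(x, \xi, \eta)\, e^{2 \pi i x \cdot (\xi + \eta)}\, dx ) \, \widehat{\varphi_I}(\xi)\, \widehat{\varphi_I}(\eta)\, d\xi\, d\eta$. I would decompose the $(\xi, \eta)$-integral according to the size of $|\xi| + |\eta|$ relative to $\ell^{-1}$ and to the threshold $M_{\varepsilon}$ beyond which $|C_{\alpha, \beta, \gamma}| < \varepsilon$, estimate the Fourier factors coordinatewise, and check the three vanishing regimes: as $\ell \to 0$ the relevant frequencies are $\gtrsim \ell^{-1} \to \infty$, so the symbol contributes $O(\varepsilon)$ plus a term negligible for small $\ell$; as $\ell \to \infty$ the main term reduces to the average of $\sigma(\cdot, 0, 0) \in C_0(\R^n)$ over $I$, which vanishes; and as $\rd(I, \I) \to \infty$ one has $x \in I$ large, whence $|C_{\alpha, \beta, \gamma}(x, \cdot, \cdot)| < \varepsilon$. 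Matching these against Definition \ref{def:WCP} (equivalently, against the $F_1, F_2, F_3$ of Theorem \ref{thm:WCP}) gives \eqref{H2}. For \eqref{H3}, regularizing $\mathbf{1}$ by $\mathbf{1}_{B(0, R)}$ and letting $R \to \infty$ (the limit justified by the kernel decay from \eqref{H1}) identifies $T_{\sigma}(1, 1)(x) = \sigma(x, 0, 0)$; since $\nabla_x \sigma(\cdot, 0, 0)$ is bounded (by $C_{e_j, 0, 0}$) and $|\sigma(x, 0, 0)| \le C_{0, 0, 0}(x, 0, 0) \to 0$, we get $\sigma(\cdot, 0, 0) \in C_0(\R^n) \subset \CMO(\R^n)$, because $\|\cdot\|_{\BMO} \lesssim \|\cdot\|_{\infty}$ and $C_c^{\infty}(\R^n)$ is $\|\cdot\|_{\infty}$-dense in $C_0(\R^n)$. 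For the transposes, the bilinear pseudo-differential symbolic calculus (valid for $\rho = 1 > \delta$) realizes $T_{\sigma}^{*1}$ and $T_{\sigma}^{*2}$ as bilinear pseudo-differential operators with symbols $\sigma^{*1}, \sigma^{*2}$ given by asymptotic expansions in $\mathcal{S}^0_{1, \delta}$; tracking that construction shows the coefficients of $\sigma^{*j}$ still vanish at infinity, so $\sigma^{*j} \in \mathcal{K}^0_{1, \delta}$ and $T_{\sigma}^{*j}(1, 1)(x) = \sigma^{*j}(x, 0, 0) \in C_0(\R^n) \subset \CMO(\R^n)$, completing \eqref{H3}.

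\emph{Main obstacle.} I expect the crux to be \eqref{H1}: translating ``$\sigma^M_{\mathrm{tail}}$ has $O(\varepsilon)$ Calder\'on--Zygmund constant'' and ``$K^M_{\mathrm{loc}}$ is spatially localized and rapidly decreasing'' into the exact decay-function form of Definition \ref{def:CCZK}, keeping the frequency integrations by parts and the $(\varepsilon, M)$ bookkeeping under control. The direct computation for \eqref{H2} is the other delicate point: the case analysis over frequency scales must be carried out carefully, and if Definition \ref{def:WCP} is phrased with $\mathbf{1}_I$ rather than smooth bumps, then $\widehat{\mathbf{1}_I}$ decays only like $|\xi_j|^{-1}$ in each coordinate, forcing a coordinatewise treatment. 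The symbolic-calculus input for \eqref{H3} is classical, but its ``compact'' refinement --- that the vanishing of the coefficients survives the asymptotic expansion of the transpose symbols --- should be recorded explicitly.
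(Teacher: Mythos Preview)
Your ``alternative'' is precisely the paper's route: prove compactness on a single triple $L^4 \times L^4 \to L^2$ by the truncation $\sigma = \sigma^M_{\mathrm{loc}} + \sigma^M_{\mathrm{tail}}$ (compactness of $T_{\sigma^M_{\mathrm{loc}}}$ via the Kolmogorov--Riesz criterion of Theorem~\ref{thm:RKLpq}, and $\|T_{\sigma - \sigma^M_{\mathrm{loc}}}\| \to 0$ since the tail has small $\mathcal{S}_{1,\delta}^0$-seminorms), then invoke Theorems~\ref{thm:CCZK}--\ref{thm:T1CMO} to read off \eqref{H1}--\eqref{H3}. One correction: those theorems require $p \in (1, \infty)$, so your $L^2 \times L^2 \to L^1$ does not suffice; use $L^4 \times L^4 \to L^2$ instead, where the identical Arzel\`a--Ascoli argument works. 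Your chosen direct route is viable but harder, and the step you flag for \eqref{H3}---that $\sigma^{*j} \in \mathcal{K}_{1,\delta}^0$, i.e.\ that the vanishing of the coefficients survives the transpose oscillatory integral---is a genuine extra calculation, not a free consequence of the standard $\mathcal{S}_{1,\delta}^0$ calculus. The paper's indirection buys exactly this economy: once $L^4 \times L^4 \to L^2$ compactness is in hand, Theorem~\ref{thm:T1CMO} delivers $T^{*j}(1,1) \in \CMO$ with no need to analyse $\sigma^{*j}$ at all.
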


Our last example is bilinear commutators. A function $b$ on $\Rn$ is called \emph{Lipschitz} if there exists a constant $C_0 \in (0, \infty)$ such that 
\begin{align*}
|b(x) - b(y)| \le C_0 \, |x-y| 
\quad\text{ for all } x, y \in \Rn. 
\end{align*}
Given a bilinear operator $T$ and a measurable function $b$, we define, whenever they make sense, the \emph{bilinear commutators} of $T$ as 
\begin{align*}
[b, T]_1(f_1, f_2) &:= b \, T(f_1, f_2) - T(b f_1, f_2), 
\\
[b, T]_2(f_1, f_2) &:= b \, T(f_1, f_2) - T(f_1, b f_2). 
\end{align*}
As mentioned before, the study of compactness of commutators has been very extensive, see \cite{BDMT, CIRXY, COY, CT, HL22, HL23, HLTY, TYY, Uch}. But unfortunately, in general, even if it is assumed that $b \in \CMO(\Rn)$, the corresponding kernel of commutators is not a (bilinear) Calder\'{o}n--Zygmund kernel. Considering this fact and the bilinear Calder\'{o}n--Zygmund kernel shown in \cite{BO}, we use 
the condition $\sigma \in \mathcal{K}_{\rho,\delta}^m$ as above to obtain the following result.

\begin{theorem}\label{thm:bT}
Let $\sigma \in \mathcal{K}_{1,0}^1$ and $b$ be a Lipschitz function with $\nabla b \in L^{\infty}(\Rn)$. Then the bilinear commutators $[b, T_{\sigma}]_1$ and $[b, T_{\sigma}]_2$ satisfy the hypotheses \eqref{H1}, \eqref{H2}, and \eqref{H3}. 
\end{theorem}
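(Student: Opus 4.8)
\textbf{Proof proposal for Theorem~\ref{thm:bT}.} The plan is to verify the three hypotheses \eqref{H1}--\eqref{H3} for $T=[b,T_\sigma]_i$ ($i=1,2$) by first analyzing the kernel of $T_\sigma$ and then transferring the vanishing behavior to the commutator. First I would record the kernel representation of $T_\sigma$ for $\sigma\in\mathcal K_{1,0}^1$: writing $T_\sigma(f_1,f_2)(x)=\iint K_\sigma(x,y_1,y_2)f_1(y_1)f_2(y_2)\,dy_1\,dy_2$ off the diagonal, the symbol estimates of order $m=1$ with $\rho=1$, $\delta=0$ yield, after the standard integration-by-parts argument, the bound $|K_\sigma(x,y_1,y_2)|\lesssim C(x,y_1,y_2)(|x-y_1|+|x-y_2|)^{-(2n+1)}$ together with matching derivative/H\"older estimates with one extra power of decay; here $C$ is bounded and decays to $0$ as $|x|+|y_1|+|y_2|\to\infty$, inherited from the $C_{\alpha,\beta,\gamma}$ in the definition of $\mathcal K_{1,0}^1$. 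Crucially, the order $m=1$ is \emph{one worse} than a Calder\'on--Zygmund kernel, so $K_\sigma$ itself is \emph{not} a bilinear CZ kernel --- this is exactly the obstruction flagged in the paragraph preceding the statement --- but the kernel of the commutator $[b,T_\sigma]_i$ is $(b(x)-b(y_i))K_\sigma(x,y_1,y_2)$, and since $\nabla b\in L^\infty$ gives $|b(x)-b(y_i)|\le\|\nabla b\|_\infty|x-y_i|\lesssim|x-y_1|+|x-y_2|$, this extra Lipschitz factor absorbs precisely the lost power, producing a kernel bounded by $C(x,y_1,y_2)(|x-y_1|+|x-y_2|)^{-2n}$.

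Next I would check \eqref{H1}: that $K_{[b,T_\sigma]_i}(x,y_1,y_2)=(b(x)-b(y_i))K_\sigma(x,y_1,y_2)$ is a compact bilinear CZ kernel in the sense of Definition~\ref{def:CCZK}. The size estimate is as above. For the smoothness/H\"older estimates one differentiates the product: the term where the derivative hits $K_\sigma$ uses the extra-decay derivative bounds for $K_\sigma$ (again losing one power, again compensated by the Lipschitz factor $|b(x)-b(y_i)|$), while the term where a derivative hits $b(x)-b(y_i)$ produces a factor $\nabla b$, which is merely bounded --- but then one is left with $K_\sigma$ itself carrying the size bound $(\cdot)^{-(2n+1)}$, which has the required extra decay. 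The decaying envelope: one shows the kernel and its difference quotients are bounded by $C'(x,y_1,y_2)\,(\text{standard CZ expression})$ with $C'$ bounded and $C'(x,y_1,y_2)\to 0$ as $|x|+|y_1|+|y_2|\to\infty$, and also extra smallness when the scale $|x-y_1|+|x-y_2|\to 0$ or $\to\infty$; all of this is read off from $C_{\alpha,\beta,\gamma}\to 0$ plus the polynomial gains. This is the step I expect to be the main obstacle: one must bookkeep carefully which of the several "vanishing directions" required by Definition~\ref{def:CCZK} (decay of the base point to infinity, vanishing as the cube shrinks, vanishing as the cube grows, vanishing at large eccentricity/distance) each piece of the product rule supplies, and confirm that the Lipschitz factor never destroys the needed decay --- in particular that multiplying by $|b(x)-b(y_i)|$, which \emph{grows} linearly, is always matched against a kernel term with a genuine spare power.

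For \eqref{H2}, the weak compactness property of Definition~\ref{def:WCP}, the cleanest route is to use Theorem~\ref{thm:WCP}: it suffices to show $[b,T_\sigma]_i$ is compact (or even just bounded with the $P_N^\perp$ estimate) on some $L^{p_1}\times L^{p_2}\to L^p$ in the Banach range, or alternatively to verify the testing condition $|\langle [b,T_\sigma]_i(\mathbf 1_I,\mathbf 1_I),\mathbf 1_I\rangle|\lesssim F(I;2N)|I|+(\text{small})|I|$ directly. I would do the latter by expanding $\langle [b,T_\sigma]_i(\mathbf 1_I,\mathbf 1_I),\mathbf 1_I\rangle=\langle (b-b_I)T_\sigma(\mathbf 1_I,\mathbf 1_I),\mathbf 1_I\rangle-\langle T_\sigma((b-b_I)\mathbf 1_I,\mathbf 1_I),\mathbf 1_I\rangle$ (subtracting the constant $b_I=\langle b\rangle_I$ is legitimate by the commutator structure), using $|b-b_I|\lesssim\|\nabla b\|_\infty\ell(I)$ on $I$ together with the known $L^2\times L^2\to L^1$ bound for $T_\sigma$ (valid since $\sigma\in\mathcal K_{1,0}^1\subset\mathcal S_{1,0}^1$ and a one-order symbol gives, via $\nabla b\in L^\infty$, a bounded commutator --- this is the bilinear analogue of the Coifman--Meyer/Calder\'on commutator bound), and extracting the decaying factor from $C_{\alpha,\beta,\gamma}\to 0$ evaluated near $I$. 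Finally for \eqref{H3} I would show $[b,T_\sigma]_i(1,1)$ and the two adjoints lie in $\CMO(\Rn)$: here $[b,T_\sigma]_i(1,1)=bT_\sigma(1,1)-T_\sigma(b,1)$ (and symmetrically), and one shows directly that the defining oscillation $\frac1{|Q|}\int_Q|[b,T_\sigma]_i(1,1)-\langle\cdot\rangle_Q|$ is small uniformly for $Q$ outside a large compact family and for $Q$ of very small or very large side length --- using again the Lipschitz control on $b$ to turn $b-b_Q$ into $\ell(Q)$ and the decay of $C_{\alpha,\beta,\gamma}$ to make the contribution of far-away or fine/coarse scales negligible; the adjoints $[b,T_\sigma]_i^{*j}$ are commutators of the adjoint pseudo-differential operators, whose symbols lie in the same class $\mathcal K_{1,0}^1$ up to the usual transpose symbolic calculus, so the same argument applies. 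One then invokes Theorem~\ref{thm:cpt} to conclude the claimed weighted compactness, though the statement as given asks only for \eqref{H1}--\eqref{H3}.
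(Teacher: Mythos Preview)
Your approach is genuinely different from the paper's, and it has a real gap in the treatment of \eqref{H2} and \eqref{H3}.

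The paper does \emph{not} verify \eqref{H1}--\eqref{H3} directly. Instead it runs the argument backwards through the main theorem: it first proves that $[b,T_\sigma]_k$ is compact from $L^4\times L^4$ to $L^2$, and then invokes Theorems~\ref{thm:CCZK}--\ref{thm:T1CMO} (the necessity direction, already established) to conclude \eqref{H1}--\eqref{H3}. The compactness is obtained by approximation: with smooth cutoffs $\phi_j$ one sets $\sigma_j=\sigma\phi_j$, checks that $\sigma_j\in\mathcal S_{1,0}^1$ uniformly, uses the result of B\'enyi--Oh that $[b,T_{\sigma_j}]_k$ is a bilinear Calder\'on--Zygmund operator, and then shows (i) $\|[b,T_{\sigma_j}]_k-[b,T_\sigma]_k\|_{L^4\times L^4\to L^2}\to 0$ via the seminorm control $P_{\alpha,\beta,\gamma}^{1,1,0}(\sigma-\sigma_j)\to 0$ (Lemma~\ref{lem:Pab}), and (ii) each $[b,T_{\sigma_j}]_k$ is compact, verified by the Kolmogorov--Riesz criterion (Theorem~\ref{thm:RKLpq}) using that $\sigma_j$ has compact $x$-support and finite-order smoothness.

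Your direct route founders on the following point: in both your \eqref{H2} and \eqref{H3} arguments you split the commutator and then appeal to bounds on $T_\sigma$ \emph{itself} (e.g.\ ``the known $L^2\times L^2\to L^1$ bound for $T_\sigma$'', or analyzing $T_\sigma(1,1)$ and $T_\sigma(b,1)$ separately). But for $\sigma\in\mathcal S_{1,0}^1$ the operator $T_\sigma$ is \emph{not} bounded on any $L^{p_1}\times L^{p_2}\to L^p$ --- the order-$1$ symbol loses a derivative, and only the commutator with a Lipschitz $b$ recovers boundedness. Likewise $T_\sigma(1,1)$ has no obvious meaning. So the expansion $\langle(b-b_I)T_\sigma(\mathbf 1_I,\mathbf 1_I),\mathbf 1_I\rangle - \langle T_\sigma((b-b_I)\mathbf 1_I,\mathbf 1_I),\mathbf 1_I\rangle$ splits into two terms neither of which you can control separately; the cancellation between them is exactly the content of the commutator structure, which the paper exploits only through the global compactness, never by opening up the commutator. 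Your kernel argument for \eqref{H1} is plausible in outline (and the paper's Lemma~\ref{lem:KP} supplies the needed pointwise bounds on $K_\sigma$), but by itself it does not give you \eqref{H2} and \eqref{H3}.
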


\subsection{Structure of the paper}
The rest of the paper is organized as follows. Section \ref{sec:pre} contains some preliminaries including notation, bilinear Calder\'{o}n--Zygmund operators, dyadic grids, Haar functions, $\BMO$ and $\CMO$ spaces, Muckenhoupt weights, and technical lemmas. In Section \ref{sec:repre}, we show the compact bilinear dyadic representation, namely, ${\rm (a) \Longrightarrow (b)}$. Section \ref{sec:RK} is devoted to establishing compactness criterion in Lorentz spaces, Theorems \ref{thm:RKLpq}--\ref{thm:PNT-cpt}. 
After that, we present the proof of Theorems \ref{thm:SD-cpt}--\ref{thm:Pi-cpt} in Section \ref{sec:Lp}, 
and the proof of Theorems \ref{thm:CCZK}--\ref{thm:T1CMO} in Section \ref{sec:nec}. Then in Section \ref{sec:EP}, we focus on showing Rubio de Francia extrapolation of compactness, Theorems \ref{thm:EP-Lp}, \ref{thm:EP-Lpinfty}, and \ref{thm:EP-Linfty}. Besides, the implications ${\rm (a) \Longrightarrow (d)'}$ and ${\rm (a) \Longrightarrow (e)'}$ are proved in Section \ref{sec:endpoint} and Section \ref{sec:Linfty} respectively. Finally, we show Theorems \ref{thm:Pibc}--\ref{thm:bT} in Section \ref{sec:app}. For completeness, we give two appendices. Appendix \ref{sec:A-Lorentz} includes some properties in Lorentz spaces, and Appendix \ref{sec:A-IP} covers some properties of multiple weights $A_{\vec{p}}$ with exponents $p_i=\infty$.

\medskip
\noindent{\bf Acknowledgments.} 
Part of this work was carried out while the first three authors were visiting the Institute for Advanced Study in Mathematics, Harbin Institute of Technology. The authors express their gratitude to this institution. The authors would like to thank Professor Tuomas Hyt\"{o}nen for his valuable comments to improve this work.

\section{Preliminaries}\label{sec:pre}

\subsection{Notation}
Let us introduce some notation used throughout this article. 
\begin{itemize}

\item For convenience, we use $\ell^{\infty}$ metric on $\Rn$. 

\item Let $\N :=\{0, 1, 2, \ldots\}$ be the set of natural numbers. 

\item Write $\I=[-\frac12, \frac12)^n$ and $\lambda \I := [-\frac{\lambda}{2}, \frac{\lambda}{2})^n$ for any $\lambda>0$. 

\item Given $p \in (1, \infty)$, let $p'$ denote the H\"{o}lder conjugate exponent of $p$, i.e., $\frac{1}{p} + \frac{1}{p'} =1$. 

\item The translation operator is defined by $\tau_h f(x) := f(x-h)$ for all $x, h \in \Rn$. 

\item For any $p \in (0, \infty]$ and a weight $w$ on $\Rn$, we simply write $\|f\|_{L^p(w^p)} := \|fw\|_{L^p}$. 

\item For a measurable set $A \subset \Rn$ with $0<|A|<\infty$, write $\langle f \rangle_A = \fint_{A} f\,dx := \frac{1}{|A|} \int_A f\,dx$. 

\item By a cube $I$ in $\Rn$ we mean $I := \prod_{i=1}^n [a_i, a_i+\ell)$, where $a_i \in \R$ and $\ell>0$.  

\item Let $\Q$ denote the family of all cubes in $\Rn$.

\item Let $\D$ denote a generic dyadic grid on $\Rn$ (see Section \ref{sec:dyadic}). 

\item For every $N \in \N$, set $\mathcal{Q}(N) :=\{I \in \Q: 2^{-N} \leq  \ell(I) \leq 2^N, \rd(I, 2^N \I) \leq N\}$ and $\D(N) := \D \cap \mathcal{Q}(N)$. 

\item For any cube $I \subset \Rn$, we denote its center by $c_I$ and its sidelength by $\ell(I)$. For any $\lambda>0$, we denote by $\lambda I$ the cube with the center $c_I$ and sidelength $\lambda \ell(I)$.  

\item The distance between cubes $I$ and $J$ is given by $\d(I, J) := \inf \{|x-y|: x \in I, y \in J\}$.  

\item The relative distance between cubes $I$ and $J$ is defined by $\rd(I, J) := \frac{\d(I, J)}{\max\{\ell(I), \ell(J)\}}$.

\item Let $\mathscr{C}(\Rn)$ be the space of continuous functions on $\Rn$. 

\item Let $\mathscr{C}_c^{\infty}(\Rn)$ be the collection of smooth functions with compact support on $\Rn$. 

\item Let $\S(\Rn)$ denote the class of Schwarz functions on $\Rn$, and let $\S'(\Rn)$ denote the space of tempered distributions. 

\item We shall use $A \lesssim B$ and $A \simeq B$ to mean, respectively, that $A \leq C B$ and $0<c \leq A/B\leq C$, where the constants $c$ and $C$ are harmless positive constants, not necessarily the same at each occurrence, which depend only on dimension and the constants appearing in the hypotheses of theorems.  
\end{itemize}

\subsection{Bilinear Calder\'{o}n--Zygmund operators}

\begin{definition}\label{def:FF}
Let $\F$ consist of all triples $(F_1, F_2, F_3)$ of bounded functions $F_1, F_2, F_3: [0, \infty) \to [0, \infty)$ satisfying
\begin{align*}
\lim_{t \to 0} F_1(t)
=\lim_{t \to \infty} F_2(t)
= \lim_{t \to \infty} F_3(t)
=0.
\end{align*}
Let $\F_0$ be the collection of all bounded functions $F: \Q \to [0, \infty)$ satisfying
\begin{align*}
\lim_{\ell(Q) \to 0} F(Q)
= \lim_{\ell(Q) \to \infty} F(Q)
= \lim_{|c_Q| \to \infty} F(Q)
=0. 
\end{align*}
\end{definition}

\begin{definition}\label{def:CCZK}
Let $\Delta$ be the diagonal of $\Rn \times \Rn \times \Rn$. A function $K: (\R^{3n} \setminus \Delta) \to \C$ is called \emph{a compact bilinear Calder\'{o}n--Zygmund kernel} if there exists $\delta \in (0, 1]$ such that 
\begin{align}
\label{eq:Size} 
|K(x, y, z)| 
&\leq  \frac{F(x, y, z)}{(|x-y| + |x-z|)^{2n}},    
\\ 
\label{eq:Holder-1} 
|K(x, y, z) - K(x', y, z)| 
&\leq F(x, y, z) \frac{|x-x'|^{\delta}}{(|x-y| + |x-z|)^{2n+\delta}}  
\end{align}
whenever $|x-x'| \leq \max\{|x-y|, |x-z| \}/2$, 
\begin{align}\label{eq:Holder-2} 
|K(x, y, z) - K(x, y', z)|  
\leq F(x, y, z) \frac{|y-y'|^{\delta}}{(|x-y| + |x-z|)^{2n+\delta}} 
\end{align}
whenever $|y - y'| \leq \max\{|x-y|, |x-z|\}/2$,  
\begin{align}\label{eq:Holder-3} 
|K(x, y, z) - K(x, y, z')|  
\leq F(x, y, z) \frac{|y-y'|^{\delta}}{(|x-y| + |x-z|)^{2n+\delta}}
\end{align}
whenever $|z-z'| \leq \max\{|x-y|, |x-z| \}/2$, where 
\begin{align*}
F(x, y, z) := F_1(|x-y| + |x-z|) F_2(|x-y| + |x-z|) F_3(|x+y| + |x+z|) 
\end{align*}
with the triple $(F_1, F_2, F_3) \in \F$. 

We say that $K$ is \emph{a standard bilinear Calder\'{o}n--Zygmund kernel} if the function $F$ above is replaced by a uniform constant $C \ge 1$. The smallest constant $C$ is denoted by $\|K\|_{\mathrm{CZ}(\delta)}$.  
\end{definition}

\begin{definition}\label{def:CZO} 
We say that a bilinear operator $T : \S(\Rn) \times \S(\Rn) \to \S'(\Rn)$ is \emph{associated with a compact (or standard) bilinear Calder\'{o}n--Zygmund kernel} if there exists a compact (or standard) bilinear Calder\'{o}n--Zygmund kernel $K$ such that  
\begin{align*}
\langle T(f_1, f_2), g \rangle 
= \int_{\R^{3n}} K(x, y, z) f_1(y) f_2(z) g(x) \, dx \, dy \, dz, 
\end{align*}
for any $f_1, f_2, g \in \S(\Rn)$ with $\supp(f_1) \cap \supp(f_2) \cap \supp(g) = \emptyset$. 

An operator $T$ is called a \emph{bilinear Calder\'{o}n--Zygmund operator} if it is associated with a standard bilinear Calder\'{o}n--Zygmund kernel and can be boundedly extended from $L^{q_1}(\Rn) \times L^{q_2}(\Rn)$ to $L^q(\Rn)$ for some $\frac1q = \frac{1}{q_1} + \frac{1}{q_2}$ with $q_1, q_2 \in (1, \infty)$. 
\end{definition}

\begin{definition}\label{def:WCP}
Let  $T: \S(\Rn) \times \S(\Rn) \to \S'(\Rn)$ be a bilinear operator. We say that $T$ satisfies the \emph{weak compactness property} if there exists $F \in \mathscr{F}_0$ such that 
\begin{align*}
|\langle T (\mathbf{1}_Q, \mathbf{1}_Q), \mathbf{1}_Q \rangle| 
\le F(Q) |Q|, \quad\text{ for all } Q \in \Q. 
\end{align*}
We say that $T$ satisfies the \emph{weak boundedness property} if the function $F$ above is replaced by a uniform constant $C \ge 1$. 
\end{definition}

\subsection{Dyadic grids}\label{sec:dyadic}
Let $\D_0$ be the standard dyadic grid on $\Rn$:
\[
\D_0 := \big\{2^{-k}([0, 1)^n + m): k \in \Z, \, m \in \Z^n \big\}.
\]
Let $\Omega :=(\{0, 1\}^n)^{\Z}$ and let $\mathbb{P}_{\omega}$ be the natural probability measure on $\Omega$: each component $\omega_j$ has an equal probability $2^{-n}$ of taking any of the $2^n$ values in $\{0, 1\}^n$, and all components are independent of each other. Given $\omega=(\omega_j)_{j \in \Z} \in \Omega$, the \emph{random dyadic grid} $\D_{\omega}$ on $\Rn$ is defined by
\begin{align*}
\D_{\omega} := \bigg\{Q+\omega := Q + \sum_{j: 2^{-j}< \ell(Q)} 2^{-j} \omega_j : Q \in \D_0\bigg\}.
\end{align*}
By a \emph{dyadic grid} $\D$ we mean that $\D=\D_{\omega}$ for some $\omega \in \Omega$.

A cube $Q \in \D=\D_{\w}$ is called \emph{bad} if there exists a cube $Q' \in \D$ such that 
\begin{align*}
\ell(Q') \ge 2^r \ell(Q)
\quad\text{ and }\quad 
\d(Q, \partial Q') \le 2 \ell(Q)^{\gamma} \ell(Q')^{1-\gamma}. 
\end{align*}
Here $\gamma = \frac{\delta}{2(2n+\delta)}$, where $\delta > 0$ appears in the kernel estimates. Otherwise a cube is called \emph{good}. Let $\D_{\w, \text{good}}$ denote the family of good cubes in $\D_{\w}$. Note that $\pi_{\text{good}} := \mathbb{P}_{\omega}(Q + \omega \text{ is good})$ is independent of the choice of $Q \in \D_0$. The appearing parameter $r$ is a large enough fixed constant so that $\pi_{\text{good}} > 0$.

\subsection{Haar functions}\label{sec:Haar}
Let $h_I$ be an $L^2$ normalized Haar function related to $I \in \D$, where $\D$ is a dyadic grid on $\Rn$. With this we mean that $h_I$, $I = I_1 \times \cdots \times I_n$, is one of the $2^n$ functions $h_I^\eta$, $\eta=(\eta_1, \ldots, \eta_n) \in \{0, 1\}^n$, defined by
\begin{align*}
h_I^\eta := h_{I_1}^{\eta_1} \otimes \cdots \otimes h_{I_n}^{\eta_n},
\end{align*}
where $h_{I_i}^0 = |I_i|^{-\frac12} \mathbf{1}_{I_i}$ and $h_{I_i}^1 = |I_i|^{-\frac12}(\mathbf{1}_{I_i^-}-\mathbf{1}_{I_i^+})$ for every $i = 1, \ldots, n$. Here $I_i^-$ and $I_i^+$ are the left and right halves of the interval $I_i$ respectively. If $\eta \neq 0$, the Haar function is cancellative : $\int_{\Rn} h_I \, dx= 0$. All the cancellative Haar functions form an orthonormal basis of $L^2(\Rn)$. If $f \in L^2(\Rn)$, we may write
\begin{align*} 
f= \sum_{I \in \D} \sum_{\eta \in \{0, 1 \}^n \setminus \{0\}} \langle f, h_I^\eta \rangle h_I^\eta.
\end{align*}
However, we suppress the finite $\eta$ summation and just write $f = \sum_{I \in \D} \langle f, h_I \rangle h_I.$

Given $k \in \N$ and $Q \in \D$, let $\D_k(Q) := \{I \in \D: I \subset Q, \ell(I) = 2^{-k} \ell(Q)\}$.
Define 
\begin{align*}
\Delta_Q f := \sum_{Q' \in \ch(Q)} 
\big(\langle f \rangle_{Q'} - \langle f \rangle_Q \big) \mathbf{1}_{Q'} 
\quad\text{ and }\quad 
\Delta_Q^k f := \sum_{I \in \D_k(Q)} \Delta_I f. 
\end{align*}
Then for every $p \in (1, \infty)$ and $f \in L^p(\Rn)$, there holds  
\begin{align}\label{f-decom}
f = \sum_{Q \in \D} \Delta_Q f 
= \sum_{Q \in \D} \langle f, h_Q \rangle \, h_Q, 
\end{align}
where the convergence takes place unconditionally (that is, independently of the order) in $L^p(\Rn)$. 

Given $k \in \N$ and a dyadic grid $\D$, define the dyadic square function as 
\begin{align*}
S_{\D}^k f 
:= \bigg(\sum_{Q \in \D} |\Delta_Q^k f|^2 \bigg)^{\frac12}. 
\end{align*}
For $k=0$, denote $S_{\D} := S_{\D}^0$. It follows from \cite[Exercise 3.7]{Wil} that 
\begin{align}\label{ddf-1}
\|S_{\D} f\|_{L^r} \simeq \|f\|_{L^r}, \quad\text{for all } r  \in (0, \infty). 
\end{align}
Moreover, it was shown in \cite[Theorem 2.5]{Wil} that for any $r \in (0, \infty)$ and $v \in A_{\infty}$, 
\begin{align}\label{JSD-3}
\|f\|_{L^r(v)} 
\lesssim \|S_{\D}^k f\|_{L^r(v)},  \quad \forall \, k \in \N, 
\end{align}

Given $k \in \Z$, we define the operators 
\begin{align}\label{def:Ek}
E_{2^k} f := \sum_{Q \in \D : \, \ell(Q)=2^k} \langle f \rangle_Q \, \mathbf{1}_Q 
\quad\text{ and }\quad 
D_{2^k} f:= E_{2^{k-1}} f - E_{2^k} f. 
\end{align}
Then it is not hard to check that  
\begin{align}\label{ddf-2}
E_{2^k} f = \sum_{Q \in \D: \, \ell(Q)>2^k} \Delta_Q f, \qquad 
D_{2^k} f = \sum_{Q \in \D: \, \ell(Q)=2^k} \Delta_Q f, 
\end{align}
and 
\begin{align}\label{ddf-3}
\bigg\|\bigg(\sum_{k \in \Z} |D_{2^k} f|^2 \bigg)^{\frac12} \bigg\|_{L^r} 
\simeq \|f\|_{L^r}, \quad\text{ for all } r \in (0, \infty). 
\end{align}

Given $N \in \N$, we define the \emph{projection operator} and its \emph{orthogonal operator} by 
\begin{align}\label{def:PN}
P_N f  := \sum_{Q \in \D(N)} \langle f, h_Q \rangle \, h_Q 
\quad\text{ and }\quad 
P_N^{\perp} f := f - P_N f, 
\end{align}
with convergence interpreted pointwise almost everywhere. Then there holds 
\begin{align}\label{ddf-4}
\sup_{N \in \N} \|P_N^{\perp} f\|_{L^r} 
\lesssim \|f\|_{L^r},\quad\text{ for all } r \in (0, \infty). 
\end{align}
Indeed, by \eqref{ddf-1}, for all $N \in \N$, 
\begin{align*}
\|P_N f\|_{L^r} 
& \simeq \|S_{\D}(P_N f)\|_{L^r} 
= \bigg\| \bigg(\sum_{Q \in \D} |\Delta_Q(P_N f)|^2 \bigg)^{\frac12} \bigg\|_{L^r}
\\ 
&= \bigg\| \bigg(\sum_{Q \in \D(N)} |\Delta_Q f|^2 \bigg)^{\frac12} \bigg\|_{L^r}
\le \|S_{\D} f\|_{L^r} 
\simeq \|f\|_{L^r}, 
\end{align*}
where the implicit constants are independent of $N$.

\subsection{$\BMO$ and $\CMO$ spaces}
\begin{definition}\label{def:BMO}
A locally integrable function $f: \Rn \to \mathbb{C}$ belongs to $\BMO(\Rn)$ if
\begin{align*}
\|f\|_{\BMO} := \sup_{Q \in \Q} \fint_{Q} |f(x) - \langle f \rangle_Q| dx < \infty.
\end{align*}
Let $\CMO(\Rn)$ denote the closure of $\mathscr{C}_c^{\infty}(\Rn)$ in $\BMO(\Rn)$. Additionally, the space $\CMO(\Rn)$ is endowed with the norm of $\BMO(\Rn)$. 
\end{definition}

By \cite{LTW} and \cite{Uch}, we see that $f \in \CMO(\Rn)$ if and only if 
\begin{align}\label{ffbb-1}
f \in \BMO(\Rn) 
\quad\text{ and }\quad 
\lim_{N \to \infty} \|P_N^{\perp} f\|_{\BMO} = 0.
\end{align}

Let $\mathrm{H}^1(\Rn)$ denote the Hardy space (cf. \cite[Chapter VII, Section 3]{Stein70}). It was shown by Fefferman and Stein \cite{FS} that $\BMO(\Rn)$ is the dual space of $\mathrm{H}^1(\Rn)$.

Given $\lambda>0$, the sharp maximal function $M_{\lambda}^{\#}$ is defined by 
\begin{align*}
M_{\lambda}^{\#} f(x) 
:= \sup_{Q \ni x} \inf_{c \in \R} 
\bigg(\fint_Q |f(y) - c|^{\lambda} \, dy \bigg)^{\frac{1}{\lambda}}. 
\end{align*}
When $\lambda=1$, denote $M^{\#}=M_{\lambda}^{\#}$. The operator $M_{\lambda}^{\#}$ was introduced by Str\"{o}mberg \cite{Str} and is a modification of $M^{\#}$ of Fefferman and Stein \cite{FS}. In addition, it was shown in \cite[p. 518]{Str} that 
\begin{align}\label{ffbb-2}
\|f\|_{\BMO} 
\simeq \|M_{\lambda}^{\#} f\|_{L^{\infty}}, \quad 0<\lambda<\infty. 
\end{align}
Inspired by \eqref{ffbb-1} and \eqref{ffbb-2}, we define the weighted $\BMO$ and $\CMO$ spaces as follows.

\begin{definition}\label{def:BMOw}
Let $\lambda>0$ and $0<w(x)<\infty$ a.e. $x \in \Rn$. We say that a locally integrable function $f: \Rn \to \mathbb{C}$ belongs to $\BMO_{\lambda}(w^{\infty})$ if 
\begin{align*}
\|f\|_{\BMO_{\lambda}(w^{\infty})} 
:= \|(M_{\lambda}^{\#}f) w\|_{L^{\infty}} < \infty. 
\end{align*}
We say that a function $f: \Rn \to \mathbb{C}$ belongs to $\CMO_{\lambda}(w^{\infty})$ if 
\begin{align*}
f \in \BMO_{\lambda}(w^{\infty})
\quad \text{ and } \quad
\lim_{N \to \infty} \|P_N^{\perp} f\|_{\BMO_{\lambda}(w^{\infty})} = 0. 
\end{align*}
\end{definition}

\subsection{Muckenhoupt weights}
A measurable function $w$ on $\Rn$ is called a \emph{weight} if $0<w(x)<\infty$ for a.e.~$x \in \Rn$. Given $p \in (1, \infty)$, we define the Muckenhoupt class $A_p$ as the collection of all weights $w$ on $\Rn$ satisfying 
\begin{equation*}
[w]_{A_p} 
:= \sup_{Q \in \Q} \bigg(\fint_Q w\, dx \bigg) \bigg(\fint_Q w^{1-p'}\, dx \bigg)^{p-1}
< \infty. 
\end{equation*} 
In the endpoint case $p=1$, we say that $w \in A_1$ if  
\begin{equation*}
[w]_{A_1} 
:= \sup_{Q \in \Q} \bigg(\fint_Q w\, dx\bigg) \Big(\esssup_Q w^{-1} \Big) 
< \infty.
\end{equation*}
Then, we define $A_{\infty} := \bigcup_{p \geq 1}A_p$.

Let us recall two properties of Muckenhoupt classes. The first one is the reverse H\"{o}lder inequality: for every $w \in A_p$ with $p \in [1, \infty]$,  
\begin{align}\label{eq:RH}
\bigg(\fint_{Q} w^{r_w} \, dx \bigg)^{\frac{1}{r_w}} 
\lesssim \fint_Q w \, dx,  \quad\text{ for each cube } Q,  
\end{align}
where $r_w \in (1, \infty)$. The second one is the openness of the class $A_p$: for any $p \in (1, \infty)$, 
\begin{align}\label{eq:open}
w \in A_p \, \, \Longrightarrow \, \, 
w \in A_{p-\varepsilon} \, \, \text{ for some } \varepsilon \in (0, p-1). 
\end{align}
More properties about Muckenhoupt classes can be found in \cite[Section 7]{Gra1}.

Next, we turn to the multilinear version of Muckenhoupt weights introduced in \cite{LOPTT}. 

\begin{definition}\label{def:Ap}
Given $\vec{p} = (p_1, \ldots, p_m)$ with $1 \le p_1, \ldots, p_m \le \infty$ and $\vec{w} = (w_1, \ldots, w_m)$ with $0<w_1, \ldots, w_m<\infty$ a.e. on $\Rn$, we say that $\vec{w} \in A_{\vec{p}}$ if 
\begin{align*}
[\vec{w}]_{A_{\vec{p}}} 
:= \sup_{Q \in \Q} \bigg(\fint_Q w^p \, dx \bigg)^{\frac1p} \prod_{i=1}^m 
\bigg(\fint_Q w_i^{-p_i'} \, dx \bigg)^{\frac{1}{p_i'}} < \infty,
\end{align*}
where $\frac1p = \sum_{i=1}^m \frac{1}{p_i}$ and $w= \prod_{i=1}^m w_i$. If $p_i = 1$, $\big(\fint_Q w_i^{-p_i'} \, dx\big)^{1/{p_i'}}$ is understood as $(\essinf_Q w_i)^{-1}$; and if $p=\infty$, $\big(\fint_Q w^p \, dx\big)^{\frac1p}$ is understood as $\esssup_Q w$. 
\end{definition}

The following characterization of the class $A_{\vec{p}}$ was essentially contained in \cite[Theorem 3.6]{LOPTT}. Although \cite[Theorem 3.6]{LOPTT} only concerns the setting $1 \le p_1, \ldots, p_m<\infty$, some minor modifications of its proof will give the corresponding result in the case $p_i=\infty$ for some/all $i \in \{1, \ldots, m\}$.

\begin{lemma}\label{lem:weight}
Let $\vec{p}=(p_1, \ldots, p_m)$ with $1 \leq p_1, \ldots, p_m \le \infty$ and let $\vec{w}=(w_1, \ldots, w_m)$ be a vector of weights.  Then 
\begin{align*}
\vec{w} \in A_{\vec{p}} 
\quad \iff \quad 
w^p \in A_{mp} \text{ and } w_i^{-p'_i} \in A_{mp'_i}, \, \, i=1, \ldots, m, 
\end{align*}
where $\frac1p = \sum_{i=1}^m \frac{1}{p_i}$ and $w= \prod_{i=1}^m w_i$. If $p_i = 1$, $w_i^{-p_i'} \in A_{mp'_i}$ is understood as $w_i^{1/m} \in A_{1}$; and if $p=\infty$, $w^p \in A_{mp}$ is understood as $w^{-1/m} \in A_1$.
\end{lemma}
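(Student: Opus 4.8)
The plan is to follow the blueprint of the scalar characterization $w\in A_p \iff (w\in A_{p} \text{ via } u=w^p\in A_{mp}$ and conjugate weights in the appropriate classes$)$, which is \cite[Theorem 3.6]{LOPTT} in the range $1\le p_1,\dots,p_m<\infty$, and extend it to allow $p_i=\infty$. First I would fix notation: write $\sigma_i := w_i^{-p_i'}$ for $p_i\in(1,\infty)$, $u:=w^p$ with $\frac1p=\sum_i\frac1{p_i}$, $w=\prod_i w_i$, and interpret the endpoint cases by the conventions stated in the definition ($\sigma_i=(\essinf_Q w_i)^{-1}$ when $p_i=1$, and $(\fint_Q u)^{1/p}=\esssup_Q w$ when $p=\infty$). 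The strategy is: (1) prove the forward implication $\vec w\in A_{\vec p}\Rightarrow u\in A_{mp}$ and $\sigma_i\in A_{mp_i'}$ (with the stated endpoint reinterpretations) by choosing clever test functions inside the $A_{mp}$ / $A_{mp_i'}$ suprema; (2) prove the converse by Hölder's inequality; (3) handle the endpoint conventions separately, noting $A_1$ is the limiting class and $\esssup$, $\essinf$ replace the relevant averages.

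For step (1), the key computation is the standard one from \cite{LOPTT}: on a fixed cube $Q$, to estimate $\big(\fint_Q u\big)\big(\fint_Q u^{-\frac1{mp-1}}\big)^{mp-1}$ one tests the second average against a function comparable to $\prod_i\sigma_i$ raised to suitable powers, using that $\frac1{mp-1}$ and the exponents $p_i'$ are linked through $\frac1p=\sum\frac1{p_i}$ and Hölder with exponents $\frac{mp_i'}{mp-1}$ summing correctly; the $[\vec w]_{A_{\vec p}}$ bound then controls everything. Symmetrically, to bound $\sigma_i\in A_{mp_i'}$ one tests against $u^{1/m}$ times the other $\sigma_j$'s. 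When some $p_i=\infty$, the factor $(\fint_Q w_i^{-p_i'})^{1/p_i'}$ disappears from $[\vec w]_{A_{\vec p}}$ (it becomes $1$), and $\frac1p=\sum_{j\ne i}\frac1{p_j}$, so the same algebra goes through verbatim with one fewer factor; similarly if $p=\infty$ then $\fint_Q u$ is replaced by $\esssup_Q w$ and the claim $u\in A_{mp}$ degenerates to $w^{-1/m}\in A_1$, which one reads off from $(\esssup_Q w)\prod_i(\fint_Q w_i^{-p_i'})^{1/p_i'}\lesssim 1$ after rewriting $\esssup_Q w = (\essinf_Q w^{-1})^{-1}$ — this needs the elementary fact that $w^{-1/m}=\prod_i w_i^{-1/m}$ and $\fint_Q \prod_i w_i^{-1/m}\le \prod_i(\fint_Q w_i^{-p_i'})^{1/(mp_i')}$ by Hölder, which together with $\essinf_Q w^{-1}\le \fint_Q w^{-1/m}$... — more carefully, one uses $\fint_Q w^{-1/m}\le$ (product of $\sigma_i$ averages)$^{1/m}\lesssim (\esssup_Q w)^{-1/m}[\vec w]_{A_{\vec p}}^{1/m}$, giving exactly $w^{-1/m}\in A_1$.

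For step (2), the converse, assume $u\in A_{mp}$ and $\sigma_i\in A_{mp_i'}$. On a cube $Q$, write $1=\prod_i w_i^{1/p}\cdot w^{-1/p}$... more directly, $\fint_Q \prod_i w_i = \fint_Q w$ and we need $\big(\fint_Q u\big)^{1/p}\prod_i\big(\fint_Q \sigma_i\big)^{1/p_i'}\lesssim 1$; but $\big(\fint_Q u\big)^{1/p} = \big(\fint_Q w^p\big)^{1/p}$ and applying $u\in A_{mp}$ gives $\big(\fint_Q u\big)^{1/(mp)}\big(\fint_Q u^{-1/(mp-1)}\big)^{(mp-1)/(mp)}\lesssim 1$; one then relates $u^{-1/(mp-1)}$ to $\prod_i\sigma_i$ by Hölder/the pointwise identity $w = \prod w_i$ so that $w^{-p/(mp-1)}\le C\prod_i \sigma_i^{\theta_i}$ for suitable $\theta_i$ with $\sum\theta_i\cdot(\text{stuff})$ matching, and invoke $\sigma_i\in A_{mp_i'}$ to close. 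The endpoint cases $p_i=1$ (where $\sigma_i\in A_{mp_i'}$ reads $w_i^{1/m}\in A_1$) and $p=\infty$ are again handled by replacing the corresponding averages with $\essinf$/$\esssup$ and using that these are the monotone limits of the $L^r$ averages as $r\to\infty$.

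The main obstacle I anticipate is \emph{not} any deep idea — the scalar-to-multilinear dictionary is well established — but rather the bookkeeping of the endpoint conventions: one must check that every inequality used in \cite[Theorem 3.6]{LOPTT} (Hölder with the precise conjugate exponents, and the algebraic identities among $p$, $p_i$, $mp$, $mp_i'$) remains meaningful and correct when one or more $p_i$ equals $\infty$ (so a factor drops out and $p$ is computed from the remaining indices) or when $p=\infty$ (so the ``$u\in A_{mp}$'' assertion must be read as ``$w^{-1/m}\in A_1$''), and that the degenerate cases where \emph{all} $p_i=\infty$ (forcing $p=\infty$ and the statement to collapse to $w^{-1/m}\in A_1$ with $[\vec w]_{A_{\vec p}}=\esssup_Q w$) are consistent. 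In short, I would state the lemma, reduce to \cite[Theorem 3.6]{LOPTT} for the finite range, and then devote the bulk of the argument to a careful case analysis verifying the endpoint reinterpretations, using throughout the elementary facts that $A_1$ is the decreasing limit of $A_s$ as $s\downarrow1$, that $\essinf_Q$ and $\esssup_Q$ are the monotone limits of $L^{-r}$ and $L^r$ averages, and Hölder's inequality with the exponents dictated by $\frac1p=\sum_{i}\frac1{p_i}$.
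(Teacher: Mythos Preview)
Your proposal is correct and matches the paper's approach: the paper does not write out a proof at all but simply cites \cite[Theorem 3.6]{LOPTT} for the range $1\le p_1,\dots,p_m<\infty$ and remarks that ``some minor modifications of its proof will give the corresponding result in the case $p_i=\infty$ for some/all $i$.'' Your plan---reduce to \cite{LOPTT} for finite exponents and then verify the endpoint reinterpretations by the obvious limiting/H\"older bookkeeping---is exactly what the paper has in mind, and in fact you have sketched more detail than the paper provides.
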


Let us present the multivariable Rubio de Francia extrapolation theorem from \cite{Nie}. 
\begin{theorem}\label{thm:RdF}
Let $\mathcal{F}$ be a collection of $(m+1)$-tuples of non-negative functions. Assume that there exist 
some $q_1, \ldots, q_m \in [1, \infty]$ such that 
\begin{align*}
\|f\|_{L^q(v^q)} \big(\text{resp. } \|f\|_{L^{q, \infty}(v^q)} \big)
\le C_1 \prod_{i=1}^m \|f_i\|_{L^{q_i}(v_i^{q_i})}, 
\quad \forall \, (f, f_1, \ldots, f_m) \in \mathcal{F},  
\end{align*}
for all $\vec{v} \in A_{\vec{q}}$, where $\frac1q = \sum_{i=1}^m \frac{1}{q_i}$ and $v=\prod_{i=1}^m v_i$. Then 
\begin{align*}
\|f\|_{L^p(w^p)} \big(\text{resp. } \|f\|_{L^{p, \infty}(w^p)} \big)
\le C_2 \prod_{i=1}^m \|f_i\|_{L^{p_i}(w_i^{p_i})}, 
\quad \forall \, (f, f_1, \ldots, f_m) \in \mathcal{F},  
\end{align*}
for all $p_1, \ldots, p_m \in (1, \infty]$ and for all $\vec{w} \in A_{\vec{p}}$, where $\frac1p = \sum_{i=1}^m \frac{1}{p_i}>0$ and $w=\prod_{i=1}^m w_i$. 
\end{theorem}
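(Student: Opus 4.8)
The plan is to run a multilinear Rubio de Francia algorithm that pushes the target exponent tuple $\vec p$ back to the fixed reference tuple $\vec q$, reaching the endpoint cases $p_i=\infty$ by an induction on the number of infinite exponents. I argue for the strong-type statement; the weak-type statement in parentheses is obtained by the identical mechanism, replacing every $L^r$-norm by the corresponding $L^{r,\infty}$-norm and using Kolmogorov's inequality (together with the local embedding $L^{r,\infty}\hookrightarrow L^s$, $s<r$) to make the duality step meaningful. Since $\mathcal F$ is not assumed closed under scaling, all estimates are run on a fixed tuple $(f,f_1,\dots,f_m)\in\mathcal F$.

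\emph{Step 1: set-up and the weight characterization.} Fix a target $\vec p=(p_1,\dots,p_m)$ with $p_i\in(1,\infty]$ and $1/p=\sum_i 1/p_i>0$, fix $\vec w\in A_{\vec p}$, and fix $(f,f_1,\dots,f_m)\in\mathcal F$; we may assume $0<\|f_i\|_{L^{p_i}(w_i^{p_i})}<\infty$ for each $i$, the degenerate cases following by approximation. By Lemma \ref{lem:weight}, $w^p\in A_{mp}$ and $w_i^{-p_i'}\in A_{mp_i'}$ for each $i$ (interpreted appropriately when $p_i=\infty$, cf. Appendix \ref{sec:A-IP}); and since $1/p<m$ we have $mp>1$. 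By the openness of Muckenhoupt classes \eqref{eq:open} these memberships persist after shrinking the exponents slightly, which provides the room exploited below.

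\emph{Step 2: the core case — all exponents finite.} Suppose $p_i<\infty$ for every $i$. By a duality/rescaling reduction for $\|f\|_{L^p(w^p)}$ (via the $L^{p'}$-pairing when $p\ge1$, and when $p<1$ via the standard device that replaces $\mathcal F$ by a family with a Banach target exponent, legitimate because $mp>1$), the estimate is reduced to bounding a pairing $\int|f|\,h$ against a nonnegative $h$ with a normalization dictated by $w^p\in A_{mp}$. One then applies the Rubio de Francia iteration, built from (weighted) Hardy--Littlewood maximal operators, simultaneously to $h$ and to suitable fractional powers of $w_1,\dots,w_m$, producing weights $v_1,\dots,v_m$ such that: (i) each $v_i\in A_{q_i}$ with controlled characteristic; (ii) the \emph{product} $v=\prod_i v_i\in A_q$, so that $\vec v\in A_{\vec q}$ again by Lemma \ref{lem:weight}; and (iii) the norms are comparable, $\|f_i\|_{L^{q_i}(v_i^{q_i})}\lesssim\|f_i\|_{L^{p_i}(w_i^{p_i})}$ and $\|f\|_{L^p(w^p)}\lesssim\|f\|_{L^q(v^q)}$, where Hölder's inequality with exponents $p/q$ and its conjugate (rescaled slotwise) together with the reverse Hölder inequality \eqref{eq:RH} absorb the extra maximal averages created by the iteration. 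Feeding $\vec v$ into the hypothesis closes this case.

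\emph{Step 3: infinite exponents, and the main obstacle.} If some $p_i=\infty$, that slot carries no integrability: one freezes $f_i$, extracts $\|f_i\|_{L^\infty}=\|f_i\|_{L^\infty(w_i^\infty)}$ (up to the harmless weight), and applies the already-established $(m-1)$-linear instance of the theorem to the remaining slots, checking the residual weight condition via the description of $A_{\vec p}$ at infinite exponents in Appendix \ref{sec:A-IP}; this is an induction on $\#\{i:p_i=\infty\}$ with base case Step 2. The delicate point throughout is part (ii) of Step 2: because $\vec w\in A_{\vec p}$ is strictly weaker than the slot-by-slot conditions $w_i^{p_i}\in A_{p_i}$, one cannot extrapolate each coordinate independently, so the auxiliary weights $v_i$ must be manufactured so that their \emph{product} is admissible while each factor is individually good. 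Matching the normalizations across all slots in the Rubio de Francia construction — in effect Nieraeth's refinement of the classical $m$-fold $A_1$-factorization — and verifying $\vec v\in A_{\vec q}$ is the technical heart; once it is in place, Steps 1 and 3 and the weak-type version are essentially bookkeeping.
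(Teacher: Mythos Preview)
The paper does not prove Theorem~\ref{thm:RdF}; it is quoted as a known result from \cite{Nie}. Your sketch is broadly in the spirit of that reference, and you correctly identify the central difficulty: the auxiliary weights $v_i$ produced by the Rubio de Francia iteration must be coupled so that the \emph{tuple} $\vec v$ lies in $A_{\vec q}$, which is strictly stronger than controlling each $v_i$ separately.

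That said, Step~3 has a genuine gap. Freezing the $i$-th slot when $p_i=\infty$ does not reduce the problem to an $(m-1)$-linear instance of the theorem: the hypothesis is only available for the original $(m+1)$-tuple family $\mathcal F$ at the fixed exponent vector $\vec q$, and there is no mechanism for extracting from it an $(m-1)$-linear hypothesis at a shorter exponent vector $\vec q'$. (Concretely, after freezing $f_i$ you would need, for every $(m-1)$-tuple of weights $\vec v'\in A_{\vec q'}$, a bound $\|f\|_{L^{q'}({v'}^{q'})}\lesssim\prod_{j\ne i}\|f_j\|_{L^{q_j}(v_j^{q_j})}$; to invoke the $m$-linear hypothesis you would have to manufacture a companion weight $v_i$ with $\|f_i\|_{L^{q_i}(v_i^{q_i})}$ under control and $(\vec v',v_i)\in A_{\vec q}$, which is exactly the problem you are trying to solve.) Also, $\|f_i\|_{L^\infty(w_i^\infty)}=\|f_iw_i\|_{L^\infty}$, not $\|f_i\|_{L^\infty}$; the weight is not harmless here. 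In \cite{Nie} the endpoints $p_i=\infty$ are not handled by induction on their number but are absorbed directly into the same iteration: by Lemma~\ref{lem:weight} one has $w_i^{-1}\in A_m$ when $p_i=\infty$, so the Rubio de Francia algorithm can be run on $w_i^{-1}$ in that slot, and the construction proceeds uniformly across all slots without changing $m$.
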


To give a characterization of $A_{\vec{p}}$, let us define the multilinear maximal operator
\begin{equation*}
\mathcal{M}(\vec{f})(x) 
:= \sup_{Q \ni x} \prod_{i=1}^m \fint_Q |f_i(y_i)| \, dy_i, 
\end{equation*}
where the supremum is taken over all cubes $Q \in \Q$ containing $x$. In the case $m=1$, we write $M = \mathcal{M}$ and $M_{\lambda} f = M(|f|^{\lambda})^{\frac{1}{\lambda}}$ for any $\lambda>0$.

\begin{theorem}\label{thm:Mbdd}
Let $p_1, \ldots, p_m \in (1, \infty]$. Then $\vec{w} \in A_{\vec{p}}$ if and only if $\mathcal{M}$ is bounded from $L^{p_1}(w_1^{p_1}) \times \cdots \times L^{p_m}(w_m^{p_m})$ to $L^p(w^p)$, where $\frac1p = \sum_{i=1}^m \frac{1}{p_i}$ and $w=\prod_{i=1}^m w_i$. 
\end{theorem}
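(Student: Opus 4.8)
The plan is to prove the two directions of Theorem~\ref{thm:Mbdd} separately. For the necessity, suppose $\mathcal{M}$ is bounded from $L^{p_1}(w_1^{p_1}) \times \cdots \times L^{p_m}(w_m^{p_m})$ to $L^p(w^p)$. Fix a cube $Q$ and test the boundedness inequality on the tuple $\vec{f} = (f_1 \mathbf{1}_Q, \ldots, f_m \mathbf{1}_Q)$ for arbitrary non-negative $f_i$. Since $\mathcal{M}(\vec{f})(x) \ge \prod_{i=1}^m \fint_Q f_i \, dy_i$ for every $x \in Q$, the left side dominates $\big(\prod_{i=1}^m \fint_Q f_i\big) w^p(Q)^{1/p}$, and the right side equals $C \prod_{i=1}^m \|f_i \mathbf{1}_Q\|_{L^{p_i}(w_i^{p_i})}$. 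Choosing $f_i = w_i^{-p_i'}$ when $p_i < \infty$ (and $f_i = \mathbf{1}_Q$ when $p_i = \infty$), and simplifying, yields exactly the $A_{\vec{p}}$ testing condition on $Q$; taking the supremum over $Q$ gives $\vec{w} \in A_{\vec{p}}$. One must be slightly careful at the endpoint $p_i = \infty$, where $(\fint_Q w_i^{-p_i'})^{1/p_i'}$ is interpreted as $(\essinf_Q w_i)^{-1}$: here one tests with $f_i$ an approximation of the indicator of the sublevel set $\{w_i \le \essinf_Q w_i + \varepsilon\} \cap Q$ and lets $\varepsilon \to 0$.

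For the sufficiency, the clean approach is to reduce to the linear case via Lemma~\ref{lem:weight}. Assume $\vec{w} \in A_{\vec{p}}$, so by Lemma~\ref{lem:weight} we have $w^p \in A_{mp}$ and $w_i^{-p_i'} \in A_{mp_i'}$ for each $i$ (with the stated endpoint interpretations). The pointwise bound $\mathcal{M}(\vec{f}) \le \prod_{i=1}^m M f_i$ is too lossy to exploit weights directly, so instead I would run the standard argument: by H\"older's inequality with exponents $p_i/p$ it suffices to control $\fint_Q |f_i|$ in terms of a single scalar weight. Concretely, one shows $\mathcal{M}$ maps into $L^p(w^p)$ by a duality/interpolation argument or, most efficiently, by invoking the known unweighted bound $\mathcal{M}: L^{p_1} \times \cdots \times L^{p_m} \to L^p$ for $p_1, \ldots, p_m \in (1,\infty]$ together with the multilinear Rubio de Francia extrapolation theorem (Theorem~\ref{thm:RdF}): one first establishes the weighted bound for one convenient tuple of exponents. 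Alternatively, and more self-containedly, adapt the Lerner--Ortiz-P\'erez--Torres--Trujillo-Gonz\'alez argument: split $f_i$ using the $A_\infty$ properties of the component weights and the reverse H\"older inequality \eqref{eq:RH}, then sum the resulting geometric series over dyadic scales after a standard linearization of the maximal operator on a sparse/Calder\'on--Zygmund family of cubes.

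The main obstacle, and the part requiring the most care, is handling the endpoint exponents $p_i = \infty$ uniformly alongside the finite ones — the original \cite{LOPTT} treats only $1 \le p_i < \infty$, and when some $p_i = \infty$ the factor $\|f_i\|_{L^\infty(w_i^\infty)} = \|f_i w_i\|_{L^\infty}$ behaves qualitatively differently (no averaging, $\essinf$ in place of an integral average, and $w^p$ possibly read as $\esssup_Q w$ when $p = \infty$). I would isolate the indices with $p_i = \infty$, bound the corresponding factor $\fint_Q |f_i| \le \|f_i w_i\|_{L^\infty} \cdot \fint_Q w_i^{-1}$ and absorb $\fint_Q w_i^{-1}$ into the $A_{\vec p}$ constant, thereby reducing to a maximal operator in the remaining (finitely-indexed) variables against a modified weight that still lies in the appropriate Muckenhoupt class by Lemma~\ref{lem:weight}; iterating removes all infinite indices. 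The only genuinely delicate sub-case is $p = \infty$ (all $p_i = \infty$), where the claim reduces to the elementary statement that $w^{-1} \in A_1$ is equivalent to boundedness of $\mathcal{M}$ into $L^\infty(\esssup)$, which follows directly from the definition of $A_1$.
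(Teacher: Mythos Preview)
Your proposal has the right overall shape (testing for necessity, extrapolation for sufficiency), but there are two genuine errors at precisely the endpoints you flag as delicate.

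\textbf{Necessity at $p_i = \infty$.} You write that for $p_i = \infty$ the factor $(\fint_Q w_i^{-p_i'})^{1/p_i'}$ is interpreted as $(\essinf_Q w_i)^{-1}$, and propose testing with indicators of sublevel sets of $w_i$. That interpretation is for $p_i = 1$, not $p_i = \infty$ (and $p_i = 1$ is excluded from the theorem anyway). When $p_i = \infty$ one has $p_i' = 1$, and the factor in Definition~\ref{def:Ap} is simply the honest average $\fint_Q w_i^{-1}$. Your sublevel-set test produces $|E_\varepsilon|/|Q|$ on the left and $\essinf_Q w_i + \varepsilon$ on the right, which recovers neither $\fint_Q w_i^{-1}$ nor anything equivalent to it. The correct test function is $f_i = w_i^{-1}$: then $\|f_i w_i\|_{L^\infty} = 1$ and $\fint_Q |f_i| = \fint_Q w_i^{-1}$ directly. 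This is what the paper does.

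\textbf{Sufficiency at $p = \infty$.} You claim this case ``reduces to the elementary statement that $w^{-1} \in A_1$''. For $m \ge 2$ that is false: by Lemma~\ref{lem:weight}, $\vec w \in A_{\vec\infty}$ is equivalent to $w^{-1/m} \in A_1$ \emph{together with} $w_i^{-1} \in A_m$ for each $i$; a condition on the product $w$ alone cannot control $\prod_i \fint_Q w_i^{-1}$. Even with the correct hypothesis, the pointwise inequality $w(x) \prod_i \fint_Q w_i^{-1} \le [\vec w]_{A_{\vec\infty}}$ for $x \in Q$ requires $w(x) \le \esssup_Q w$, and the null set where this fails depends on $Q$. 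The paper resolves this by first dominating $\mathcal M$ by a supremum over finitely many shifted dyadic grids (a countable family of cubes, so the exceptional sets union to a null set), proves the $L^\infty(w^\infty)$ bound directly from the $A_{\vec\infty}$ condition, and then invokes Theorem~\ref{thm:RdF} to extrapolate to all $p_1, \ldots, p_m \in (1,\infty]$. Your alternative of ``iteratively removing infinite indices'' is not a reduction either: after bounding $\fint_Q |f_i| \le \|f_i w_i\|_\infty \fint_Q w_i^{-1}$, the factor $\fint_Q w_i^{-1}$ is $Q$-dependent, and what remains is still the $m$-linear maximal function applied to $(f_1, \ldots, w_i^{-1}, \ldots, f_m)$, not an $(m{-}1)$-linear one against a modified weight.
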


\begin{proof} 
In the situation $p_1, \ldots, p_m \in (1, \infty)$, this result was proved in \cite[Theorem 3.7]{LOPTT}. But the endpoint case $p_i=\infty$ will occur in the current scenario, we here present a new proof.  

To show the sufficiency, assume that $\vec{w} \in A_{\vec{\infty}}$ and $w=\prod_{i=1}^m w_i$. By \cite[p. 792]{HP}, each cube $Q$ is contained in a shifted dyadic cube $Q_{\alpha} \in \mathscr{D}_{\alpha}$ with $\ell(Q_{\alpha}) \le 6 \ell(Q)$ for some $\alpha \in \Lambda := \{0, 1/3\}^n$, where 
\begin{align*}
\mathscr{D}_{\alpha} 
:= \big\{2^{-k} \big([0, 1)^n + m + (-1)^k \alpha \big): k \in \Z, m \in \Z^n\big\}. 
\end{align*}
Thus, 
\begin{equation}\label{MMD-1}
\mathcal{M}(\vec{f})(x) 
\lesssim \sup_{\alpha \in \Lambda} \mathcal{M}_{\mathscr{D}_{\alpha}}(\vec{f})(x) 
:= \sup_{\alpha \in \Lambda} \sup_{\substack{Q \in \mathscr{D}_{\alpha} \\ Q \ni x}}  
\prod_{i=1}^m \fint_Q |f_i| \, dy_i, \quad x \in \Rn. 
\end{equation}
For each cube $Q \in \bigcup_{\alpha \in \Lambda} \mathscr{D}_{\alpha}$, there exists a subset $E_Q \subset Q$ such that $|E_Q|=0$ and $w(x) \le \esssup_{Q} w$ for all $x \in Q \setminus E_Q$. Denote $E := \bigcup_{\alpha \in \Lambda} \bigcup_{Q \in \mathscr{D}_{\alpha}} E_Q$. Then $|E|=0$ and for all $x \in \Rn \setminus E$,  
\begin{equation}\label{MMD-2}
\sup_{\alpha \in \Lambda} \sup_{\substack{Q \in \mathscr{D}_{\alpha} \\ Q \ni x}}  
\frac{w(x)}{\esssup\limits_Q w} \le 1. 
\end{equation}
Gathering \eqref{MMD-1} and \eqref{MMD-2}, we obtain that for any $x \in \Rn \setminus E$, 
\begin{align*}
\mathcal{M}(\vec{f})(x) w(x) 
&\lesssim w(x) \sup_{\alpha \in \Lambda} \sup_{\substack{Q \in \mathscr{D}_{\alpha} \\ Q \ni x}}  
\prod_{i=1}^m \|f_i w_i\|_{L^{\infty}} \bigg(\fint_Q w_i^{-1} \, dy_i \bigg) 
\le [\vec{w}]_{A_{\vec{\infty}}} \prod_{i=1}^m \|f_i w_i\|_{L^{\infty}},  
\end{align*}
which asserts that $\|\mathcal{M}(\vec{f}) w\|_{L^{\infty}} \lesssim \prod_{i=1}^m \|f_i w_i\|_{L^{\infty}}$ for all $\vec{w} \in A_{\vec{\infty}}$. By Theorem \ref{thm:RdF}, this implies that $\mathcal{M}$ is bounded from $L^{p_1}(w_1^{p_1}) \times \cdots \times L^{p_m}(w_m^{p_m})$ to $L^p(w^p)$ for all $p_1, \ldots, p_m \in (1, \infty]$ and $\vec{w} \in A_{\vec{p}}$. 

To demonstrate the necessity, assume that $\mathcal{M}$ is bounded from $L^{p_1}(w_1^{p_1}) \times \cdots \times L^{p_m}(w_m^{p_m})$ to $L^p(w^p)$. First, consider the case $p_1=\cdots=p_m=\infty$. Let $f_i = w_i^{-1}$, $i=1, \ldots, m$. Let $Q$ be a cube. By definition, there exists a subset $E_Q \subset Q$ such that $|E_Q|=0$ and $\mathcal{M}(\vec{f})(x) w(x) \le \|\mathcal{M}(\vec{f}) w\|_{L^{\infty}}$ for all $x \in Q \setminus E_Q$. Then for all $x \in Q \setminus E_Q$, 
\begin{align*}
w(x) \prod_{i=1}^m \bigg(\fint_Q w_i^{-1} \, dy_i \bigg)
\le \|\mathcal{M}(\vec{f}) w\|_{L^{\infty}} 
\lesssim \prod_{i=1}^m \|f_i w_i\|_{L^{\infty}} 
=1, 
\end{align*}
which shows $\vec{w} \in A_{\vec{\infty}}$. 

Next, let us handle the case $p_i \neq \infty$ for some $i \in \{1, \ldots, m\}$. There exists a subset $\mathcal{I}_1 \subset \{1, \ldots, m\}$ such that $\# \mathcal{I}_1 \ge 1$, $p_i \neq \infty$ for each $i \in \mathcal{I}_1$, and $p_j = \infty$ for each $j \in \mathcal{I}_2 := \{1, \ldots, m\} \setminus \mathcal{I}_1$. Note that $\frac1p := \sum_{i=1}^m \frac{1}{p_i}>0$. Let $Q$ be a cube. Let $\vec{f}$ satisfy $0 < \prod_{i=1}^m \langle |f_i| \rangle_Q <\infty$. Then for any $0<\lambda<\prod_{i=1}^m \langle |f_i| \rangle_Q$, 
\begin{align*}
\lambda \, w^p(Q)^{\frac1p} 
\le \lambda \, w^p(\{x \in \Rn: \mathcal{M}(\vec{f} \mathbf{1}_Q)(x) > \lambda\})^{\frac1p} 
\le \|\mathcal{M}(\vec{f} \mathbf{1}_Q) w\|_{L^p} 
\lesssim \prod_{i=1}^m \|f_i \mathbf{1}_Q w_i\|_{L^{p_i}}. 
\end{align*}
Letting $\lambda \to \prod_{i=1}^m \langle |f_i| \rangle_Q$, we have 
\begin{align*}
w^p(Q)^{\frac1p} \prod_{i=1}^m \langle |f_i| \rangle_Q 
\lesssim \prod_{i=1}^m \|f_i \mathbf{1}_Q w_i\|_{L^{p_i}}. 
\end{align*}
Pick $f_i = w_i^{-p'_i}$ for every $i \in \mathcal{I}_1$, and $f_j=w_j^{-1}$ for every $j \in \mathcal{I}_2$. Thus, the above inequality gives 
\begin{align*}
w^p(Q)^{\frac1p} \prod_{i \in \mathcal{I}_1} \langle w_i^{-p'_i} \rangle_Q 
\prod_{j \in \mathcal{I}_2} \langle w_j^{-1} \rangle_Q 
\lesssim |Q|^{\frac1p} \prod_{i \in \mathcal{I}_1} \langle w_i^{-p'_i} \rangle_Q^{\frac{1}{p_i}},  
\end{align*}
which implies $\vec{w} \in A_{\vec{p}}$ as desired.  
\end{proof}

\subsection{Technical lemmas} 
We end up this section with some useful technical results. 

\begin{lemma}[\cite{CYY}]\label{lem:diag}
Let $r>1$ and $1<s<1+\frac1n$. Assume that a bounded and decreasing function $F_2: [0, \infty) \to [0, \infty)$ satisfies  $\lim_{t \to \infty} F_2(t) = 0$. Then there exists a bounded and decreasing function $\widetilde{F}_2: [0, \infty) \to [0, \infty)$ satisfying $\lim_{t \to \infty} \widetilde{F}_2(t) = 0$ such that for all cubes $I, J \in \Rn$ with $\ell(I) = \ell(J)$, $I \cap J = \emptyset$, and $\d(I, J)=0$,
\begin{align*}
\mathscr{I}_1
&:= \bigg(\fint_I \fint_J F_2(|x - y|)^r \, dx \, dy \bigg)^{\frac1r}
\lesssim \widetilde{F}_2(\ell(I)),
\end{align*}
and
\begin{align*}
\mathscr{I}_2
&:= \bigg(\fint_I \fint_J \frac{1}{|x - y|^{s n}} \, dx \, dy  \bigg)^{\frac1s}
\lesssim |I|^{-1}.
\end{align*}
\end{lemma}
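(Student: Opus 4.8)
\textbf{Proof proposal for Lemma \ref{lem:diag}.}

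The plan is to reduce both estimates to a one-dimensional computation over the single ``shared-face'' coordinate and then absorb the remaining coordinates trivially. Since we use the $\ell^\infty$ metric, write $\ell := \ell(I) = \ell(J)$, and note that $I \cap J = \emptyset$ with $\d(I,J)=0$ forces $I$ and $J$ to share a common face: after translating and relabeling coordinates we may assume $I = [0,\ell)^n$ and $J = [-\ell,0) \times [0,\ell)^{n-1}$, so that for $x \in I$, $y \in J$ we have $|x-y| = \max\{x_1 - y_1,\ |x'-y'|_\infty\}$ with $x_1 - y_1 \in (0, 2\ell)$. The key geometric observation is the distribution-function bound: for $x\in I$ fixed and $0<s\le 2\ell$,
\begin{align*}
\big|\{y \in J : |x-y| \le s\}\big| \lesssim s^n,
\end{align*}
and symmetrically in $x$. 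Integrating this over $x\in I$ gives $|\{(x,y)\in I\times J : |x-y|\le s\}| \lesssim \ell^{n-1} s \cdot \ell^{n-1}\cdot\min\{s,\ell\}^{?}$ — more precisely, a clean bound is obtained by the layer-cake formula applied to the $y$-integral first, keeping $x$ fixed.

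For $\mathscr{I}_2$: fix $x\in I$ and estimate $\fint_J |x-y|^{-sn}\,dy$. Using the layer-cake representation and the bound $|\{y\in J : |x-y|^{-sn} > \lambda\}| \le |\{y : |x-y| < \lambda^{-1/(sn)}\}| \lesssim \min\{\lambda^{-1/s}, \ell^n\}$, one gets $\int_J |x-y|^{-sn}\,dy \lesssim \int_0^{\ell^{-sn}} \ell^n\,d\lambda + \int_{\ell^{-sn}}^\infty \lambda^{-1/s}\,d\lambda \lesssim \ell^{n-sn}$, where the second integral converges precisely because $1/s > 1$, i.e. $s>1$; here we do \emph{not} even need $s<1+1/n$. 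Dividing by $|I||J| = \ell^{2n}$ and taking the $1/s$ power yields $\mathscr{I}_2 \lesssim (\ell^{-sn}\cdot\ell^{-n})^{1/s} = \ell^{-n-n/s} \le \ell^{-n} = |I|^{-1}$ once $\ell$ is, say, bounded — but to get it for all $\ell$ one should instead only integrate $|x-y|^{-sn}$ over the region $|x-y|\lesssim\ell$ (which is all of $I\times J$ up to the diameter), giving exactly $\ell^{-sn}\cdot\ell^{2(n-1)}\cdot\ell^2$ for the double integral and hence $\mathscr{I}_2 \lesssim (\ell^{-sn}\ell^{-2n}\ell^{2n})^{1/s}\cdot\ell^{?}$; the bookkeeping here is the routine part and the condition $s<1+1/n$ will be what is actually needed to keep the exponent of $\ell$ nonpositive. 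I would carry this out carefully rather than hand-wave it.

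For $\mathscr{I}_1$: since $F_2$ is bounded and decreasing with $F_2(t)\to 0$, split the domain of integration at the scale $\ell$. On the part where $|x-y| \ge \sqrt{\ell}$ (say) we use monotonicity: $F_2(|x-y|) \le F_2(\sqrt\ell)$, which is small for large $\ell$. On the part where $|x-y| < \sqrt\ell$ we use only $F_2 \le \|F_2\|_\infty$ together with the measure estimate $|\{(x,y) : |x-y| < \sqrt\ell\}| \lesssim \ell^{2n-1}\cdot\sqrt\ell \ll \ell^{2n}$ when $\ell$ is large, so after dividing by $\ell^{2n}$ this contributes something like $\|F_2\|_\infty \ell^{-1/2}$, again small. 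For \emph{small} $\ell$ we simply bound $\mathscr{I}_1 \le \|F_2\|_\infty$ trivially and note that $F_2(|x-y|) \le F_2(0^+)$... — actually for small $\ell$ we want $\widetilde F_2$ large, so there is no constraint. Thus defining $\widetilde F_2(\ell) := F_2(\sqrt\ell) + C\|F_2\|_\infty \ell^{-1/2}$ for $\ell\ge 1$ and $\widetilde F_2(\ell) := \|F_2\|_\infty$ for $\ell<1$ (then replacing it by its decreasing majorant $\sup_{t\ge\ell}$), one gets a bounded decreasing function tending to $0$ at infinity that dominates $\mathscr{I}_1$. The main obstacle — and the only genuinely nontrivial point — is organizing the two-scale split so that $\widetilde F_2$ comes out \emph{decreasing}; this is handled by passing to the decreasing envelope at the end, which is harmless since all the upper bounds produced are themselves $\to 0$.
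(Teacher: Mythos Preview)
The paper does not prove this lemma (it is cited from \cite{CYY}), so there is no in-paper argument to compare against; I assess your proposal directly. Your $\mathscr{I}_1$ argument is essentially correct: the split at $|x-y|=\sqrt\ell$ works, and passing to the decreasing envelope at the end is harmless. A cleaner route is a dyadic decomposition $|x-y|\sim 2^{-k}\ell$, which yields exactly the function $\widetilde F_2(t)=\sum_{k\ge 0} 2^{-k\theta}F_2(2^{-k}t)$ that the paper uses in \eqref{eq:FF23}.

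Your $\mathscr{I}_2$ argument, however, has a real gap. The claim that $\int_{\ell^{-sn}}^\infty \lambda^{-1/s}\,d\lambda$ converges ``because $1/s>1$, i.e.\ $s>1$'' is backwards: $1/s>1$ means $s<1$, and for $s>1$ this tail \emph{diverges}. The distribution bound $|\{y\in J:|x-y|<t\}|\lesssim \min\{t^n,\ell^n\}$ is therefore too crude. What you are missing is that this level set is \emph{empty} for $t<\d(x,J)=x_1$ (in your coordinates), which truncates the layer cake and gives
\[
\int_J |x-y|^{-sn}\,dy \ \lesssim\ \int_{x_1}^{2\ell} t^{\,n-sn-1}\,dt \ \simeq\ x_1^{\,n(1-s)}.
\]
This still blows up as $x_1\to 0$; the hypothesis $s<1+\tfrac1n$ enters only \emph{after} averaging in $x\in I$, via $\int_0^\ell x_1^{\,n(1-s)}\,dx_1<\infty \iff n(s-1)<1$. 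Your fallback (``only integrate over $|x-y|\lesssim\ell$'') does nothing, since $|x-y|\le 2\ell$ automatically on $I\times J$; the singularity at the shared face is the entire issue, and it is the $x_1$-dependence that resolves it.
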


\begin{lemma}\label{lem:improve}
Given $(F_1, F_2, F_3) \in \F$, let
\begin{align*}
F(x, y, z) := F_1(|x-y| + |x-z|) F_2(|x-y| + |x-z|) F_3(|x+y| + |x+z|).
\end{align*}
\begin{list}{\rm (\theenumi)}{\usecounter{enumi}\leftmargin=1.2cm \labelwidth=1cm \itemsep=0.2cm \topsep=.2cm \renewcommand{\theenumi}{\roman{enumi}}}

\item\label{size-imp} There exists a triple $(F'_1, F'_2, F'_3) \in \F$ such that $F'_1$ is monotone increasing, $F'_2$ and $F'_3$ are monotone decreasing, and 
\begin{align}\label{FF-imp-1}
F(x, y, z)
\le F'(x, y, z)
&:= F'_1(|x-y| + |x-z|) F'_2(|x-y| + |x-z|) 
\\ \nonumber 
&\qquad\times F'_3 \bigg(1 + \frac{|x+y| + |x+z|}{1+|x-y|+|x-z|}\bigg).
\end{align}

\item\label{Holder-imp}
Assume that $F_1$ is monotone increasing. Then there exist $\delta' \in (0, \delta)$ and $(F'_1, F'_2, F'_3) \in \F$ such that $F'_1$ is monotone increasing, $F'_2$ and $F'_3$ are monotone decreasing, and
\begin{align}\label{FF-imp-2}
F(x, y, z) \frac{|x-x'|^{\delta}}{(|x-y| + |x-z|)^{2n + \delta}}
\le F'(x, y, z) \frac{|x-x'|^{\delta'}}{(|x-y| + |x-z|)^{2n + \delta'}},
\end{align}
whenever $|x-x'| \leq \max\{|x-y|, |x-z\}/2$, where
\begin{align}\label{def:Fxy}
F'(x, y, z)
:= F'_1(|x-x'|) F'_2(|x-y| + |x-z|) F'_3 \bigg(1 + \frac{|x+y| + |x+z|}{1+ |x-y| + |x-z|}\bigg).
\end{align}
\end{list}
\end{lemma}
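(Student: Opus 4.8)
\textbf{Proof plan for Lemma \ref{lem:improve}.}

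The two parts are both instances of the following soft principle: a product of functions each of which vanishes at $0$, at $\infty$, or has a good decay rate can be ``repackaged'' so that the individual factors have monotonicity and the spatial arguments are presented in the form most convenient for later dyadic estimates. For part \eqref{size-imp} I would first replace $F_1$ by $F_1'(t) := \sup_{0 \le s \le t} F_1(s)$, which is monotone increasing, bounded by $\|F_1\|_\infty$, and still satisfies $\lim_{t\to 0} F_1'(t) = 0$ since $F_1(0^+) = 0$; similarly replace $F_2$ by $F_2'(t) := \sup_{s \ge t} F_2(s)$ and $F_3$ by $F_3'(t) := \sup_{s \ge t} F_3(s)$, which are monotone decreasing with the correct limits at $\infty$. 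Then $F(x,y,z) \le F_1'(\rho) F_2'(\rho) F_3'(\sigma)$ with $\rho := |x-y|+|x-z|$ and $\sigma := |x+y|+|x+z|$. The only real content is to pass from the argument $\sigma$ to the argument $1 + \sigma/(1+\rho)$ in the third factor. The point is that on the region $\sigma \le 1 + \rho$ one has $1 + \sigma/(1+\rho) \le 2$, so $F_3'(1+\sigma/(1+\rho)) \ge F_3'(2) > 0$ unless $F_3'$ vanishes there, and one simply bounds $F_3'(\sigma) \le \|F_3\|_\infty \lesssim F_3'(1+\sigma/(1+\rho))$ by enlarging $F_3'$ on $[0,2]$ to a positive constant while keeping it decreasing — whereas on the region $\sigma > 1 + \rho$ one has $1 + \sigma/(1+\rho) \le 2\sigma/(1+\rho) \le 2\sigma$, so $F_3'(\sigma) = F_3'((1+\sigma/(1+\rho)) \cdot \tfrac{\sigma}{1+\sigma/(1+\rho)}) \le F_3'$ evaluated at something comparable, and by a further dilation $t \mapsto F_3'(t/2)$ (still decreasing, still vanishing at $\infty$) one absorbs the factor. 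Collecting these modifications into a single triple $(F_1', F_2', F_3') \in \F$ gives \eqref{FF-imp-1}.

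For part \eqref{Holder-imp} the extra ingredient is the interpolation between the two forms of the H\"older term. Write the left side as $F(x,y,z) \, \rho^{-2n-\delta} |x-x'|^\delta$ and use the constraint $|x-x'| \le \rho/2$ (here $\rho = \max\{|x-y|,|x-z|\} \simeq |x-y|+|x-z|$ up to the usual $\ell^\infty$ comparability). Fix any $\delta' \in (0,\delta)$. Then
\begin{align*}
\frac{|x-x'|^\delta}{\rho^{2n+\delta}}
= \frac{|x-x'|^{\delta'}}{\rho^{2n+\delta'}} \cdot \Big(\frac{|x-x'|}{\rho}\Big)^{\delta-\delta'}
\le \frac{|x-x'|^{\delta'}}{\rho^{2n+\delta'}} \cdot 2^{-(\delta-\delta')},
\end{align*}
so the change of H\"older exponent is free; the loss $2^{-(\delta-\delta')}$ is harmless. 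What remains is to re-express the amplitude $F(x,y,z) = F_1(\rho) F_2(\rho) F_3(\sigma)$ — recalling $F_1$ is now assumed increasing — in terms of the prescribed arguments in \eqref{def:Fxy}: namely $F_1'(|x-x'|)$, $F_2'(\rho)$, and $F_3'(1+\sigma/(1+\rho))$. The $F_2$ and $F_3$ factors are handled exactly as in part \eqref{size-imp} (monotonize, then adjust the argument of $F_3$). The genuinely new step is replacing $F_1(\rho)$, which records smallness of $\rho$, by $F_1'(|x-x'|)$, which records smallness of $|x-x'|$. Since $|x-x'| \le \rho/2 \le \rho$ and $F_1$ is increasing, we have $F_1(|x-x'|) \le F_1(\rho)$, which is the wrong direction; instead one uses the reverse bound available because $|x-x'|$ can be much smaller than $\rho$: one splits according to whether $|x-x'| \ge \varepsilon \rho$ or $|x-x'| < \varepsilon\rho$ for a threshold, OR — cleaner — one simply transfers a small power. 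Concretely, write $F_1(\rho)^{\delta-\delta'} \le \|F_1\|_\infty^{\delta-\delta'-1} F_1(\rho)$ is not what is needed; rather, distribute the gained factor $(|x-x'|/\rho)^{\delta-\delta'}$ as $(|x-x'|/\rho)^{\delta-\delta'} \le |x-x'|^{\delta - \delta'}$ when $\rho \ge $ a constant, and note $F_1(\rho) F_1(\rho)$-type majorization lets us absorb. The clean route: set $F_1'(t) := \big(\sup_{0\le s \le t} F_1(s)\big) + t^{(\delta-\delta')/2} \wedge \|F_1\|_\infty$ — a bounded increasing function vanishing at $0$ — and use $(|x-x'|/\rho)^{\delta-\delta'} F_1(\rho) \le F_1(\rho) \le \|F_1\|_\infty \lesssim F_1'(|x-x'|)$ on $|x-x'| \ge 1$, while on $|x-x'| < 1$ use $(|x-x'|)^{\delta - \delta'} \lesssim F_1'(|x-x'|)$ directly after moving a sliver of the power $\delta - \delta'$ (shrinking $\delta'$ slightly more if needed) onto $F_1'$.

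I expect the main obstacle to be bookkeeping in part \eqref{Holder-imp}: one must simultaneously (i) keep a genuine H\"older power $\delta' > 0$ in the denominator, (ii) manufacture decay in $|x-x'|$ for the factor $F_1'$ out of the gap $\delta - \delta'$ together with the structural inequality $|x-x'| \le \rho/2$, and (iii) not destroy the vanishing of $F_2', F_3'$ at $\infty$ when dilating their arguments. None of this is deep, but the three constraints interact, so the argument should be organized by first fixing $\delta'$ (e.g. $\delta' = \delta/2$), then peeling off a power $\delta - \delta' = \delta/2$ from the H\"older quotient, splitting that power further as $(\delta/4) + (\delta/4)$ to feed one quarter into building $F_1'$ and keep the monotonized $F_2', F_3'$ untouched, and only at the end verifying all three are bounded and have the required limits. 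Part \eqref{size-imp} is then just the special, easier case with no H\"older quotient and no $F_1'(|x-x'|)$ to construct.
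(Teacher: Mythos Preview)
Your outline for part \eqref{size-imp} can be made to work, but it is more involved than necessary; the paper sidesteps the case analysis on $\sigma$ versus $1+\rho$ by defining each $F_i'$ as a supremum of $F(x,y,z)^{1/3}$ over the appropriate sublevel or superlevel set of the $i$-th argument, so that $F^{1/3}\le F_i'$ for each $i$ simultaneously and the product bound $F\le F_1'F_2'F_3'$ is immediate.

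Part \eqref{Holder-imp} has a real gap. Your candidate
\[
F_1'(t)=\Big(\sup_{0\le s\le t}F_1(s)\Big)+\min\big(t^{(\delta-\delta')/2},\|F_1\|_\infty\big)
\]
does not control $F_1(\rho)\,(|x-x'|/\rho)^{\delta-\delta'}$ when $\rho$ is small and comparable to $|x-x'|$. Take the admissible choice $F_1(s)=\mathbf{1}_{\{s\ge a\}}$ with $a>0$ small, and set $|x-x'|=a/2$, $\rho=a$; then the quantity to bound equals $2^{-(\delta-\delta')}$, whereas your $F_1'(a/2)=0+(a/2)^{(\delta-\delta')/2}\to 0$ as $a\to 0$. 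Neither summand helps: $F_1(t)$ can vanish while $F_1(2t)$ does not, and the power term presupposes a decay rate $F_1$ need not have. The correct construction is the tautological one,
\[
F_1'(t):=\sup_{\rho\ge 2t}F_1(\rho)\,(t/\rho)^{\delta-\delta'},
\]
which is bounded and increasing by inspection, and vanishes as $t\to 0$ by splitting the supremum into $\rho<\eta$ (where $F_1(\rho)$ is small, using monotonicity of $F_1$) and $\rho\ge\eta$ (where $(t/\rho)^{\delta-\delta'}\le(t/\eta)^{\delta-\delta'}$ is small). The paper packages this same dichotomy differently: it sets $F(x,y,z;x'):=F(x,y,z)(|x-x'|/\rho)^{\delta-\delta'}$, proves by contradiction that $F(x,y,z;x')\to 0$ as $|x-x'|\to 0$ (this is exactly where the hypothesis that $F_1$ is increasing is used), and then defines all three $F_i'$ at once as suprema of $F(\cdot;x')^{1/3}$, mirroring part \eqref{size-imp}.
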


\begin{proof}
Note that $|x+y| + |x+z| \ge \frac{|x+y| + |x+z|}{1+|x-y| + |x-z|}$. Then by definition, 
\begin{align}\label{imp-2}
\lim_{|x-y| + |x-z| \to 0} F(x, y, z)
&= \lim_{|x-y| + |x-z| \to \infty} F(x, y, z)
\\ \nonumber 
&= \lim_{1 + \frac{|x+y| + |x+z|}{1+|x-y| + |x-z|} \to \infty} F(x, y, z)
=0.
\end{align}
Define
\begin{align*}
F'_1(t) &:= \sup_{|x-y| + |x-z| \le t} F(x, y, z)^{\frac13}, 
\\
F'_2(t) &:= \sup_{|x-y| + |x-z| \ge t} F(x, y, z)^{\frac13},
\\
F'_3(t) &:= \sup_{1 + \frac{|x+y| + |x+z|}{1+|x-y| + |x-z|} \ge t} F(x, y, z)^{\frac13}.
\end{align*}
It is obvious that $F'_1$ is bounded and monotone increasing, both $F'_2$ and $F'_3$ are bounded and monotone decreasing. Moreover, \eqref{imp-2} implies that $(F'_1, F'_2, F'_3) \in \F$,
\begin{align}\label{FX-1}
F(x, y, z)^{\frac13} 
\le F'_i(|x-y| + |x-z|), \quad i=1,2, 
\end{align}
and 
\begin{align}\label{FX-2}
F(x, y, z)^{\frac13} 
\le F'_3 \bigg(1 + \frac{|x+y| + |x+z|}{1+|x-y| + |x-z|}\bigg). 
\end{align}
Thus, \eqref{FF-imp-1} is a consequence of \eqref{FX-1} and \eqref{FX-2}.

To prove part \eqref{Holder-imp}, pick $\delta' \in (0, \delta)$ and denote 
\begin{align}\label{def:Fxyy}
F(x, y, z; x')
:= F(x, y, z) \frac{|x-x'|^{\delta - \delta'}}{(|x-y| + |x-z|)^{\delta - \delta'}} 
\end{align}
for all $|x-x'| \leq \max\{|x-y|, |x-z|\}/2$. 
We claim that
\begin{align}\label{imp-3}
\lim_{|x-x'| \to 0} F(x, y, z; x')
&= \lim_{|x-y| + |x-z| \to \infty} F(x, y, z; x')
\\ \nonumber 
&= \lim_{1+\frac{|x+y| + |x-z|}{1+|x-y| + |x-z|} \to \infty} F(x, y, z; x')
=0.
\end{align}
Indeed, the last two limits in \eqref{imp-3} follow from \eqref{imp-2}. To show the first one, assume that there exists a sequence $\{(x_k, y_k, z_k; x'_k)\}_{k \in \N}$ such that $|x_k-x'_k| \le \max\{|x_k-y_k|, |x_k-z_k|\}/2$ with $\lim_{k \to \infty} |x_k-x'_k|=0$, but
\begin{align}\label{infF}
\inf_k F(x_k, y_k, z_k; x'_k)>0.
\end{align}
If $\liminf_{k \to \infty} \frac{|x_k-x'_k|}{|x_k-y_k| + |x_k-z_k|}=0$, then the finiteness of $F$ gives $\liminf_{k \to \infty} F(x_k, y_k, z_k; x'_k)=0$, which contradicts \eqref{infF}. If $\liminf_{k \to \infty} \frac{|x_k-x'_k|}{|x_k-y_k| + |x_k-z_k| } > 0$, then there exist a constant $C_0>0$ and a subsequence (which we relabel) $\{(x_k, y_k, z_k, x'_k)\}_{k \in \N}$ such that $|x_k-y_k| + |x_k-z_k| \le C_0 |x_k-x'_k|$. By the monotonicity of  $F_1$ and that $\lim_{k \to \infty} |x_k-x'_k|=0$, we conclude 
\begin{align*}
0 &\le F(x_k, y_k, z_k; x'_k)
\le F(x_k, y_k, z_k)
\\
&\lesssim F_1(|x_k-y_k| + |x_k-z_k|) 
\le F_1(C_0 |x_k-x'_k|) \to 0,
\end{align*}
as $k \to \infty$. Thus, $\lim_{k \to \infty} F(x_k, y_k, z_k; x'_k) =0$, which contradicts \eqref{infF}. Consequently, the first limit in \eqref{imp-3} holds.

Define
\begin{align*}
F'_1(t) &:= \sup_{|x-x'| \le t} F(x, y, z; x')^{\frac13},
\\ 
F'_2(t) &:= \sup_{|x-y| + |x-z| \ge t} F(x, y, z; x')^{\frac13},
\\
F'_3(t) &:= \sup_{1+\frac{|x+y| + |x+z|}{1+|x-y| + |x-z|} \ge t} F(x, y, z; x')^{\frac13}.
\end{align*}
It is easy to check that $F'_1$ is bounded and monotone increasing, both $F'_2$ and $F'_3$ are bounded and monotone decreasing. Besides, \eqref{imp-3} gives that $(F'_1, F'_2, F'_3) \in \F$, 
\begin{align}\label{FX-3}
F(x, y, z; x')^{\frac13} \le F'_1(|x-x'|), \qquad 
F(x, y, z; x')^{\frac13} \le F'_2(|x-y| + |x-z|), 
\end{align}
and 
\begin{align}\label{FX-4}
F(x, y, z; x')^{\frac13}
\le F'_3 \bigg(1 + \frac{|x+y| + |x+z|}{1+|x-y| + |x-z|}\bigg). 
\end{align}
Therefore, \eqref{FX-3} and \eqref{FX-4} imply 
\begin{align*}
\frac{F(x, y, z) \, |x-x'|^{\delta}}{(|x-y| + |x-z|)^{2n + \delta}}
&= \frac{F(x, y, z; x') \, |x-x'|^{\delta'}}{(|x-y| + |x-z|)^{2n + \delta'}}
\le \frac{F'(x, y, z) \, |x-x'|^{\delta'}}{(|x-y| + |x-z|)^{2n + \delta'}},
\end{align*}
which coincides with \eqref{FF-imp-2}. The proof is complete.
\end{proof}

\begin{lemma}\label{lem:PP}
The following statements hold: 
\begin{list}{\rm (\theenumi)}{\usecounter{enumi}\leftmargin=1.2cm \labelwidth=1cm \itemsep=0.2cm \topsep=.2cm \renewcommand{\theenumi}{\arabic{enumi}}}

\item\label{list:P1} If the estimates in Definition \ref{def:CCZK} are satisfied for different triples $(F_1^1, F_2^1, F_3^1) \in \F$ and $(F_1^2, F_2^2, F_3^2) \in \F$ respectively, then they also hold for a new triple $(F_1, F_2, F_3) \in \F$.

\item\label{list:P2} If the weak compactness property in Definition \ref{def:WCP} is satisfied for different functions $F_1 \in \F_0$ and $F_2 \in \F_0$ respectively, then it also holds for a new function $F \in \F_0$. 

\item\label{list:P3} For each triple $(F_1, F_2, F_3) \in \F$ in Definitions \ref{def:CCZK}, we may assume that $F_1$ is monotone increasing while $F_2$ and $F_3$ are monotone decreasing. 

\item\label{list:P4} Since any dilation of functions in $\F_0$ and $\F$ still belongs to the original space, we may omit all universal constants appearing in the argument involving these functions.

\item\label{list:P5} In light of Lemma \ref{lem:improve}, we may use alternative estimates for kernels in Definitions \ref{def:CCZK}. 
\end{list}

\end{lemma}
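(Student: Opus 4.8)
The plan is to dispatch the five assertions one at a time; only \eqref{list:P1} carries any content, the other four being routine closure properties of the classes $\F$ and $\F_0$ together with Lemma~\ref{lem:improve}. For \eqref{list:P1}, suppose the estimates of Definition~\ref{def:CCZK} hold with two triples $(F^1_1, F^1_2, F^1_3), (F^2_1, F^2_2, F^2_3) \in \F$ and H\"{o}lder exponents $\delta_1, \delta_2 \in (0,1]$. I would set $F_j := F^1_j + F^2_j$ for $j = 1, 2, 3$ and $\delta := \min\{\delta_1, \delta_2\}$. Each $F_j$ is bounded and the limit conditions defining $\F$ are preserved under addition, so $(F_1, F_2, F_3) \in \F$. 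The single computation to record is the elementary inequality
\[
F^i_1(s)\,F^i_2(s)\,F^i_3(t) \;\le\; \bigl(F^1_1 + F^2_1\bigr)(s)\,\bigl(F^1_2 + F^2_2\bigr)(s)\,\bigl(F^1_3 + F^2_3\bigr)(t), \qquad i = 1, 2,
\]
which holds because expanding the product on the right exhibits both of these monomials among its eight nonnegative summands (all factors being $\ge 0$). Taking $s = |x-y| + |x-z|$ and $t = |x+y| + |x+z|$ gives the size bound \eqref{eq:Size} with the common $F := F_1 F_2 F_3$. For the H\"{o}lder bounds \eqref{eq:Holder-1}--\eqref{eq:Holder-3} I would combine this monomial inequality with the observation that, under the standing hypothesis $|x-x'| \le \max\{|x-y|, |x-z|\}/2 \le s/2$ (and its analogues in the $y$- and $z$-variables), one has $(|x-x'|/s)^{\delta_i - \delta} \le 1$, so the denominator exponent $\delta_i$ may be lowered to $\delta$ at no cost; this yields all three H\"{o}lder estimates with the single triple $(F_1, F_2, F_3)$ and the single exponent $\delta$.

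For \eqref{list:P2}, if the weak compactness bound of Definition~\ref{def:WCP} holds with $F_1, F_2 \in \F_0$, I would simply take $F := F_1 + F_2$ (or $\min\{F_1, F_2\}$): the class $\F_0$ is visibly closed under both operations, and $|\langle T(\mathbf{1}_Q, \mathbf{1}_Q), \mathbf{1}_Q \rangle| \le F_i(Q)\,|Q| \le F(Q)\,|Q|$ for the relevant $i$. Assertions \eqref{list:P3} and \eqref{list:P5} are read off from Lemma~\ref{lem:improve}: part \eqref{size-imp} turns an arbitrary triple in $\F$ into one of the desired monotone type ($F'_1$ increasing, $F'_2$ and $F'_3$ decreasing) with $F \le F'$, where the third factor of $F'$ now carries the renormalized argument $1 + \tfrac{|x+y| + |x+z|}{1 + |x-y| + |x-z|}$; since Definition~\ref{def:CCZK} holds a fortiori with the larger $F'$, one may assume this monotonicity throughout, which is \eqref{list:P3}. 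Once $F_1$ is increasing, part \eqref{Holder-imp} shows that the composite quantity $F(x,y,z)\,|x-x'|^{\delta}(|x-y|+|x-z|)^{-2n-\delta}$ appearing in the H\"{o}lder estimates is dominated by the improved expression \eqref{FF-imp-2}, with a new monotone triple and a smaller exponent $\delta' \in (0,\delta)$; this is \eqref{list:P5}. For \eqref{list:P4}, I would note that boundedness together with each limit condition defining $\F$ and $\F_0$ is stable under the scalings $F \mapsto cF$ and $F(\cdot) \mapsto F(c\,\cdot)$ for $c > 0$ (for $\F_0$, a dilation $Q \mapsto \lambda Q$ fixes the center $c_Q$ and scales $\ell(Q)$), so any universal constant or universal rescaling produced along the way can be absorbed into a function of the same class.

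I do not foresee a genuine obstacle: the only step needing care is the passage to $\delta = \min\{\delta_1, \delta_2\}$ in \eqref{list:P1}, where the hypothesis $|x-x'| \le \max\{|x-y|, |x-z|\}/2$ (and its analogues) is precisely what makes the exponent reduction harmless; everything else is bookkeeping with the defining limits of $\F$ and $\F_0$ and an appeal to Lemma~\ref{lem:improve}.
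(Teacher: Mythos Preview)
Your argument is correct, with two minor differences from the paper worth noting. For \eqref{list:P1} and \eqref{list:P2} the paper takes $F_j := \max\{F_j^1, F_j^2\}$ rather than the sum; either choice works, and your extra remark about reducing to the common H\"{o}lder exponent $\delta = \min\{\delta_1,\delta_2\}$ is a detail the paper leaves implicit. The one substantive difference is in \eqref{list:P3}: the paper does \emph{not} invoke Lemma~\ref{lem:improve} here but simply replaces $(F_1,F_2,F_3)$ by the monotone envelopes
\[
F_1^*(t) := \sup_{0\le s\le t} F_1(s), \qquad F_2^*(t) := \sup_{s\ge t} F_2(s), \qquad F_3^*(t) := \sup_{s\ge t} F_3(s),
\]
which belong to $\F$, have the required monotonicity, and dominate the originals pointwise---so the estimates of Definition~\ref{def:CCZK} hold in their \emph{original} form with $F_3^*(|x+y|+|x+z|)$. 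Your route through Lemma~\ref{lem:improve}\eqref{size-imp} instead produces a dominating triple whose third factor carries the renormalized argument $1+\tfrac{|x+y|+|x+z|}{1+|x-y|+|x-z|}$, so strictly speaking it establishes the monotonicity only for the \emph{alternative} kernel estimates sanctioned by \eqref{list:P5}, not for Definition~\ref{def:CCZK} itself. This is harmless in practice (the paper freely passes between the two forms), but the direct envelope construction is both more elementary and proves \eqref{list:P3} exactly as stated. Items \eqref{list:P4} and \eqref{list:P5} are handled identically.
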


\begin{proof}
Let us first show item \eqref{list:P1}. Given $(F_1^1, F_2^1, F_3^1) \in \F$ and $(F_1^2, F_2^2, F_3^2) \in \F$ in Definition \ref{def:CCZK}, if we define
\begin{align*}
F_j := \max\{F_j^1, F_j^2\}, \quad j=1, 2, 3, 
\end{align*}
then the size and H\"{o}lder conditions hold for $(F_1, F_2, F_3) \in \F$. Item \eqref{list:P2} can be shown in the same way. To show item \eqref{list:P3}, given $(F_1, F_2, F_3) \in \F$ in Definitions \ref{def:CCZK}, we define
\begin{align*}
F_1^*(t) := \sup_{0 \le s \le t} F_1(s), \quad
F_2^*(t) := \sup_{s \ge t} F_2(s), \quad\text{and}\quad
F_3^*(t) := \sup_{s \ge t} F_3(s).
\end{align*}
Then it is easy to check that all estimates in Definitions \ref{def:CCZK} are satisfies for $(F^*_1, F^*_2, F^*_3) \in \F$, $F^*_1$ is monotone increasing, $F^*_2$ and $F^*_3$ are monotone decreasing. 
Items \eqref{list:P4} and \eqref{list:P5} are direct. 
\end{proof}

\section{A compact bilinear dyadic representation}\label{sec:repre} 
The goal of this section is to prove ${\rm (a) \Longrightarrow (b)}$ in Theorem \ref{thm:cpt}, for which we will combine the ideas from \cite{CYY} and \cite{LMOV}. To proceed to the proof, let us keep Lemma \ref{lem:PP} in mind, namely, we always assume that all functions in $\mathscr{F}_0$ and $\mathscr{F}$ appearing in the hypotheses \eqref{H1}--\eqref{H3} satisfy those properties there. 

\subsection{Initial reductions}  

By definition and \eqref{f-decom}, we rewrite 
\begin{align*}
\langle T(f_1, f_2), f_3 \rangle 
& = \mathbb{E}_{\w} \sum_{I_1, I_2, I_3 \in \D_{\w}} 
\langle T(\Delta_{I_1} f_1, \Delta_{I_2} f_2), \Delta_{I_3} f_3 \rangle 
\\
&= \mathbb{E}_{\w} \sum_{I_3 \in \D_{\w}} 
\sum_{I_1 \in \D_{\w} \atop \ell(I_3) \le \ell(I_1)} 
\sum_{I_2 \in \D_{\w} \atop \ell(I_3) \le \ell(I_2)} 
\langle T(\Delta_{I_1} f_1, \Delta_{I_2} f_2), \Delta_{I_3} f_3 \rangle 
\\
&\quad + \mathbb{E}_{\w} \sum_{I_2 \in \D_{\w}} 
\sum_{I_1 \in \D_{\w} \atop \ell(I_2) \le \ell(I_1)} 
\sum_{I_3 \in \D_{\w} \atop \ell(I_2) < \ell(I_3)}
\langle T(\Delta_{I_1} f_1, \Delta_{I_2} f_2), \Delta_{I_3} f_3 \rangle 
\\
&\quad + \mathbb{E}_{\w} \sum_{I_1 \in \D_{\w}} 
\sum_{I_2 \in \D_{\w} \atop \ell(I_1) < \ell(I_2)} 
\sum_{I_3 \in \D_{\w} \atop \ell(I_1) < \ell(I_3)} 
\langle T(\Delta_{I_1} f_1, \Delta_{I_2} f_2), \Delta_{I_3} f_3 \rangle
\\
&=: \mathscr{S}_1 + \mathscr{S}_2 + \mathscr{S}_3. 
\end{align*}
Throughout this paper, we only focus on $\mathscr{S}_1$ since other two terms can be shown with minor modifications. It follows from \eqref{ddf-2} that 
\begin{align*}
\mathscr{S}_1 
&= \mathbb{E}_{\w} \sum_{I_3 \in \D_{\w}} 
\big\langle T(E_{\ell(I_3)/2}^{\w} f_1, E_{\ell(I_3)/2}^{\w} f_2), \Delta_{I_3} f_3 \big\rangle 
\\
&= \mathbb{E}_{\w} \sum_{I_3 \in \D_0} 
\big\langle T(E_{\ell(I_3)/2}^{\w} f_1, E_{\ell(I_3)/2}^{\w} f_2), \Delta_{I_3+\w} f_3 \big\rangle. 
\end{align*}
Note that $\mathbf{1}_{\text{good}}(I_3 + \w)$ depends on $\w_j$ for $2^{-j} \ge \ell(I_3)$, while both $E_{\ell(I_3)/2}^{\w} f_1$ and $E_{\ell(I_3)/2}^{\w} f_2$ depend on $\w_j$ for $2^{-j} <  \ell(I_3)/2 < \ell(I_3)$, and $\Delta_{I_3+\w} f_3$ depends on $\w_j$ for $2^{-j} < \ell(I_3)$. Then using independence, we arrive at 
\begin{align*}
\mathscr{S}_1 
&= \frac{1}{\pi_{\text{good}}} \sum_{I_3 \in \D_0} 
\mathbb{E}_{\w} \big[ \mathbf{1}_{\text{good}}(I_3+\w)\big] 
\mathbb{E}_{\w} \big[\big\langle T(E_{\ell(I_3)/2}^{\w} f_1, E_{\ell(I_3)/2}^{\w} f_2), \Delta_{I_3+\w} f_3 \big\rangle \big]
\\
&= \frac{1}{\pi_{\text{good}}} \sum_{I_3 \in \D_0} 
\mathbb{E}_{\w} \big[ \mathbf{1}_{\text{good}}(I_3+\w)  
\big\langle T(E_{\ell(I_3)/2}^{\w} f_1, E_{\ell(I_3)/2}^{\w} f_2), \Delta_{I_3+\w} f_3 \big\rangle \big]
\\
&= \frac{1}{\pi_{\text{good}}} \mathbb{E}_{\w} \sum_{I_3 \in \D_{\w, \text{good}}} 
\big\langle T(E_{\ell(I_3)/2}^{\w} f_1, E_{\ell(I_3)/2}^{\w} f_2), \Delta_{I_3} f_3 \big\rangle
\\
&= \frac{1}{\pi_{\text{good}}} \mathbb{E}_{\w} \sum_{I_3 \in \D_{\w, \text{good}}} 
\sum_{I_1 \in \D_{\w} \atop \ell(I_3) \le \ell(I_1)} 
\sum_{I_2 \in \D_{\w} \atop \ell(I_3) \le \ell(I_2)} 
\langle T(\Delta_{I_1} f_1, \Delta_{I_2} f_2), \Delta_{I_3} f_3 \rangle 
\\
&=: \frac{1}{\pi_{\text{good}}} \mathbb{E}_{\w} \mathscr{S}_1(\w). 
\end{align*}

In what follows, fix $\w$, and let $\D = \D_{\w}$ and $\mathscr{S}_1 = \mathscr{S}_1(\w)$. By \eqref{ddf-2}, we have 
\begin{align*}
\mathscr{S}_1
&= \sum_{I_3 \in \D_{\text{good}}} 
\sum_{I_1 \in \D \atop \ell(I_3) \le \ell(I_1)} 
\sum_{I_2 \in \D \atop \ell(I_1) \le \ell(I_2)} 
\langle T(\Delta_{I_1} f_1, \Delta_{I_2} f_2), \Delta_{I_3} f_3 \rangle
\\
&\quad + \sum_{I_3 \in \D_{\text{good}}} 
\sum_{I_2 \in \D \atop \ell(I_3) \le \ell(I_2)} 
\sum_{I_1 \in \D \atop \ell(I_2) < \ell(I_1)} 
\langle T(\Delta_{I_1} f_1, \Delta_{I_2} f_2), \Delta_{I_3} f_3 \rangle
\\
&=\sum_{I_3 \in \D_{\text{good}}} \sum_{I_1 \in \D \atop \ell(I_3) \le \ell(I_1)} 
\langle T(\Delta_{I_1} f_1, E_{\ell(I_1)/2} f_2), \Delta_{I_3} f_3 \rangle
\\ 
&\quad+ \sum_{I_3 \in \D_{\text{good}}} \sum_{I_2 \in \D \atop \ell(I_3) \le \ell(I_2)} 
\langle T(E_{\ell(I_2)} f_1, \Delta_{I_2} f_2), \Delta_{I_3} f_3 \rangle
\\ 
&=: \mathscr{S}_{1, 1} + \mathscr{S}_{1, 2}. 
\end{align*}
By symmetry, we will be mainly concerned with the first term $\mathscr{S}_{1, 1}$:  
\begin{align*} 
\mathscr{S}_{1, 1}  
= \sum_{I_3 \in \D_{\text{good}}}  
\sum_{\substack{I_1, I_2 \in \D \\ \ell(I_3) \le \ell(I_2) = 2\ell(I_1)}} 
\langle T(\Delta_{I_1} f_1, \langle f_2 \rangle_{I_2} \mathbf{1}_{I_2}), \Delta_{I_3} f_3 \rangle. 
\end{align*}
Furthermore, we split 
\begin{align}\label{S11} 
\mathscr{S}_{1, 1}
= \mathscr{S}_{1, 1}^1 + \mathscr{S}_{1, 1}^2 + \mathscr{S}_{1, 1}^3, 
\end{align}
where 
\begin{align*} 
\mathscr{S}_{1, 1}^1  
&:= \sum_{\substack{I_1, I_2 \in \D, \, I_3 \in \D_{\text{good}} \\ \ell(I_3) \le \ell(I_1) = 2\ell(I_2) \\ 
\max\{\d(I_3, I_1), \, \d(I_3, I_2)\} > 2\ell(I_3)^{\gamma} \ell(I_2)^{1-\gamma}}} 
\langle T(h_{I_1}, h^0_{I_2}), h_{I_3}\rangle 
\langle f_1, h_{I_1} \rangle \langle f_2, h^0_{I_2} \rangle \langle f_3, h_{I_3} \rangle, 
\\
\mathscr{S}_{1, 1}^2  
&:= \sum_{\substack{I_1, I_2 \in \D, \, I_3 \in \D_{\text{good}} \\ \ell(I_3) \le \ell(I_1) = 2\ell(I_2) \\ 
\max\{\d(I_3, I_1), \, \d(I_3, I_2)\} \le 2\ell(I_3)^{\gamma} \ell(I_2)^{1-\gamma} \\ 
I_3 \cap I_1 = \emptyset \text{ or } I_3 \cap I_2 = \emptyset \text{ or } I_3 = I_1}} 
\langle T(h_{I_1}, h^0_{I_2}), h_{I_3}\rangle 
\langle f_1, h_{I_1} \rangle \langle f_2, h^0_{I_2} \rangle \langle f_3, h_{I_3} \rangle, 
\\
\mathscr{S}_{1, 1}^3  
&:= \sum_{\substack{I_1, I_2 \in \D, \, I_3 \in \D_{\text{good}} \\ 
\ell(I_1) = 2\ell(I_2), \, I_3 \subset I_2 \subset I_1}} 
\langle T(h_{I_1}, h^0_{I_2}), h_{I_3}\rangle 
\langle f_1, h_{I_1} \rangle \langle f_2, h^0_{I_2} \rangle \langle f_3, h_{I_3} \rangle.  
\end{align*}
These three parts are called separated, adjacent, and nested respectively. Recall that we use the $\ell^{\infty}$ metric.

Henceforth, given $(F_1, F_2, F_3) \in \mathscr{F}$, we define  
\begin{align}\label{eq:FF23}
\widetilde{F}_2(t) 
:= \sum_{k=0}^{\infty} 2^{-k \theta} F_2(2^{-k}t)
\quad\text{ and }\quad 
\widetilde{F}_3(I) 
:= \sum_{k=0}^{\infty} 2^{-k \theta} F_3 (\rd(2^k I, \I)),
\end{align} 
where the parameter $\theta \in (0, 1)$ is harmless and small enough. By homogeneity, we may assume that 
\begin{align*}
\|F\|_{L^{\infty}} \le 2^{-n}, \quad 
\|F_i\|_{L^{\infty}} \le 1, \quad i=1, 2, 3, 
\quad\text{ and }\quad 
\|\widetilde{F}_j\|_{L^{\infty}} \le 1, \quad j=2, 3,  
\end{align*}
where $F \in \mathscr{F}_0$ comes from Definition \ref{def:WCP}.

\subsection{Separated part}
We begin with the term $\mathscr{S}_{1, 1}^1$. In the current scenario, by \cite[Lemma 4.1]{LMOV}, there exists a minimal cube $Q := I_1 \vee I_2 \vee I_3 \in \D$ such that 
\begin{align}\label{QSEP}
I_1 \cup I_2 \cup I_3 \subset Q
\quad\text{ and }\quad 
\max\{\d(I_3, I_1), \, \d(I_3, I_2)\} \gtrsim \ell(I_3)^{\gamma} \ell(Q)^{1-\gamma}.
\end{align} 
Observe that for all $x \in I_3$, $y \in I_1$, and $z \in I_2$, 
\begin{align}\label{PP-1}
\max\{|x-y|, |x-z|\} 
&\ge \max\{\d(I_3, I_1), \, \d(I_3, I_2)\} 
\\ \nonumber 
&> 2 \ell(I_3)^{\gamma} \ell(I_2)^{1-\gamma}
\ge 2^{\gamma} \ell(I_3)
\ge 2^{1+\gamma} |x-c_{I_3}|
\end{align}
and 
\begin{align}\label{PP-2}
1 + \frac{|x+y| + |x+z|}{1+ |x-y| + |x-z|}
&\ge \frac{2 (|y| + |z|)}{1+ |x-y| + |x-z|}
\\ \nonumber 
&\ge \frac{4 \d(Q, \I)}{1 + 2 \ell(Q)}
\ge \frac{\d(Q, \I)}{\max\{\ell(Q), 1\}}
= \rd(Q, \I).
\end{align}
Hence, by the monotoncity of $F_1$, $F_2$, and $F_3$, \eqref{PP-1} and \eqref{PP-2} yield 
\begin{align}\label{PP-3}
F(x, y, z)
&:= F_1(|x-c_{I_3}|) F_2(|x-y| + |x-z|) F_3 \bigg(1 + \frac{|x+y| + |x+z|}{1+ |x-y| + |x-z|}\bigg)
\\ \nonumber 
&\le F_1(\ell(I_3)) F_2(\ell(I_3)) F_3(\rd(Q, \I)) 
=: F(I_1, I_2, I_3, Q). 
\end{align}
In light of \eqref{QSEP}, \eqref{PP-1}, and \eqref{PP-3}, we use the cancellation of $h_{I_3}$ and the H\"{o}lder condition of $K$ to deduce 
\begin{align}\label{TH-1}
|\langle T(h_{I_1}, h^0_{I_2}), h_{I_3}\rangle|
&\le \int_{\R^{3n}} \frac{F(x, y, z) \, \ell(I_3)^{\delta}}{(|x-y| + |x-z|)^{2n+\delta}} 
|h_{I_1}(y)| |h^0_{I_2}(z)| |h_{I_3}(x)|  \, dx\, dy \, dz 
\\ \nonumber 
&\le \frac{F(I_1, I_2, I_3, Q) \, \ell(I_3)^{\delta}}{\max\{\d(I_3, I_1), \, \d(I_3, I_2)\}^{2n+\delta}} 
|I_1|^{\frac12} |I_2|^{\frac12} |I_3|^{\frac12} 
\\ \nonumber 
&\le C_1 \frac{F(I_1, I_2, I_3, Q) \, \ell(I_3)^{\delta}}{[\ell(I_3)^{\gamma} \ell(Q)^{1-\gamma}]^{2n+\delta}} 
|I_1|^{\frac12} |I_2|^{\frac12} |I_3|^{\frac12} 
\\ \nonumber 
&= C_1 \bigg[ \frac{\ell(I_3)}{\ell(Q)} \bigg]^{\frac{\delta}{2}} 
F(I_1, I_2, I_3, Q) \frac{|I_1|^{\frac12} |I_2|^{\frac12} |I_3|^{\frac12}}{|Q|^2}.  
\end{align}

Let $C_0 \in (0, \infty)$ be a universal constant chosen later. Set 
\begin{align*}
a_{I_1, I_2, I_3, Q} 
:= \frac{\langle T(h_{I_1}, h^0_{I_2}), h_{I_3}\rangle}
{C_0 [\ell(I_3)/\ell(Q)]^{\frac{\delta}{2}}} 
\end{align*}
if $I_1, I_2 \in \D$ and $I_3 \in \D_{\text{good}}$ satisfy $\ell(I_3) \le \ell(I_1) = 2\ell(I_2)$, $\max\{\d(I_3, I_1), \, \d(I_3, I_2)\} > 2 \ell(I_3)^{\gamma} \ell(I_2)^{1-\gamma}$, and $I_1 \vee I_2 \vee I_3 = Q$, and otherwise set $a_{I_1, I_2, I_3, Q} = 0$. This enables us to rewrite 
\begin{align*}
\mathscr{S}_{1, 1}^1  
&= \sum_{\substack{0 \le i \le k \\ Q \in \D}}  
\sum_{\substack{I_1, I_2 \in \D, I_3 \in \D_{\rm{good}} \\ 
\max\{\d(I_3, I_1), \, \d(I_3, I_2)\} > 2 \ell(I_3)^{\gamma} \ell(I_2)^{1-\gamma} \\ 
2 \ell(I_2) = \ell(I_1) = 2^{-i} \ell(Q) \\ \ell(I_3) = 2^{-k} \ell(Q), \, I_1 \vee I_2 \vee I_3 = Q}} 
\langle T(h_{I_1}, h^0_{I_2}), h_{I_3}\rangle 
\langle f_1, h_{I_1} \rangle \langle f_2, h^0_{I_2} \rangle \langle f_3, h_{I_3} \rangle
\\
&= C_0 \sum_{0 \le i \le k} 2^{-k \delta/2}
\sum_{Q \in \D} \sum_{\substack{I_1 \in \D_i(Q) \\ 
I_2 \in \D_{i+1}(Q) \\ I_3 \in \D_k(Q)}} 
a_{I_1, I_2, I_3, Q} 
\langle f_1, h_{I_1} \rangle \langle f_2, h^0_{I_2} \rangle \langle f_3, h_{I_3} \rangle
\\
&= C_0 \sum_{k=0}^{\infty} \sum_{i=0}^k 2^{-k \delta/2} 
\big\langle \mathbf{S}_{\D}^{i, i+1, k}, f_3 \big\rangle,  
\end{align*}
where we have used that for any $C_0 \ge C_1$, the estimate \eqref{TH-1} and the condition $(F_1, F_2, F_3) \in \F$ imply 
\begin{align}\label{aijkq}
|a_{I_1, I_2, I_3, Q}| 
\le \mathbf{F}(Q) \frac{|I_1|^{\frac12} |I_2|^{\frac12} |I_3|^{\frac12}}{|Q|^2} 
\end{align}
with  
\begin{align}\label{FIJK}
\mathbf{F}(Q) \le 1 
\quad\text{and}\quad 
\lim_{N \to \infty} F_N 
:= \lim_{N \to \infty} \sup_{Q \not\in \Q(N)} \mathbf{F}(Q) = 0. 
\end{align}
In the current scenario, the exact form of $\mathbf{F}(Q)$ is given by 
\begin{align*}
\mathbf{F}(Q) 
:= F_1(\ell(Q)) F_2(2^{-k} \ell(Q)) F_3(\rd(Q, \I)). 
\end{align*}
In fact we do not care about the exact form of $\mathbf{F}(Q)$, but just require \eqref{aijkq} and \eqref{FIJK}.

\subsection{Adjacent part}\label{sec:adj}
In this case, by \cite[Lemma 4.3]{LMOV}, there exists a minimal cube $Q := I_1 \vee I_2 \vee I_3 \in \D$ such that 
\begin{align}\label{QDIA}
I_1 \cup I_2 \cup I_3 \subset Q
\quad\text{ and }\quad \ell(Q) \le 2^r \ell(I_3).
\end{align} 
We would like to prove 
\begin{align}\label{TH-2}
|\langle T(h_{I_1}, h^0_{I_2}), h_{I_3}\rangle| 
\le C_2 \bigg[\frac{\ell(I_3)}{\ell(Q)}\bigg]^{\frac{\delta}{2}} 
F(I_1, I_2, I_3, Q) 
\frac{|I_1|^{\frac12} |I_2|^{\frac12} |I_3|^{\frac12}}{|Q|^2}, 
\end{align}
where $F(I_1, I_2, I_3, Q) := F(I_2)$ if $I_3=I_1$ and $I_2 \in \ch(I_3)$, otherwise 
\begin{align*}
F(I_1, I_2, I_3, Q)
:= F_1(\ell(I_3)) \widetilde{F}_2(\ell(I_3)) F_3(\rd(Q, \I)). 
\end{align*}
Recall that $\widetilde{F}_2$ is defined in \eqref{eq:FF23}. 

First, consider the case $I_3 \cap I_1 = \emptyset$. In this case, $\d(I_3, I_1) \le 2 \ell(I_3)^{\gamma} \ell(I_2)^{1-\gamma} \le 2\ell(I_1)$, and hence, $I_3 \subset 7I_1 \setminus I_1$. Then the size condition gives 
\begin{align}\label{TDH-1}
|\langle T(h_{I_1}, h^0_{I_2}), h_{I_3}\rangle| 
\le |I_1|^{-\frac12} |I_2|^{-\frac12} |I_3|^{-\frac12}  
\int_{I_2} \int_{I_1} \int_{I_3} 
\frac{F(x, y, z) \, dx \, dy \, dz}{(|x-y| + |x-z|)^{2n}},  
\end{align}
where 
\begin{align*}
F(x, y, z)
:= F_1(|x-y| + |x-z|) F_2(|x-y| + |x-z|) F_3 \bigg(1 + \frac{|x+y| + |x+z|}{1+ |x-y| + |x-z|}\bigg). 
\end{align*}
Let $\beta \in (0, 1)$ be an auxiliary parameter. Note that for any $x \in I_3$, we have $I_2-x \subset \{|z| \le \ell(Q)\}$ and 
\begin{align}\label{TDH-2}
\int_{I_2} \frac{dz}{|x-z|^{n-\beta}} 
= \int_{I_2-x} \frac{dz}{|z|^{n-\beta}} 
\lesssim \int_0^{\ell(Q)} t^{\beta-1} \, dt 
\lesssim \ell(Q)^{\beta}. 
\end{align}
Additionally, given $q \in (1, \frac{n+1}{n+\beta})$, Lemma \ref{lem:diag} yields 
\begin{align}\label{TDH-3}
&\int_{I_1} \int_{7I_1 \setminus I_1} 
\frac{F_2(|x-y|)}{|x-y|^{n+\beta}} dx \, dy 
\\ \nonumber 
&\le \bigg[\int_{I_1} \int_{7I_1 \setminus I_1} 
\frac{dx \, dy}{|x-y|^{q(n+\beta)}} \bigg]^{\frac1q}
\bigg[\int_{I_1} \int_{7I_1 \setminus I_1} 
F_2(|x-y|)^{q'} dx \, dy \bigg]^{\frac{1}{q'}} 
\\ \nonumber 
&\lesssim |I_1|^{\frac{2}{q}-1-\frac{\beta}{n}}  
\widetilde{F}_2(\ell(I_1)) |I_1|^{\frac{2}{q'}}
\le \widetilde{F}_2(\ell(I_3)) |I_1| \, \ell(I_1)^{-\beta},  
\end{align}
and for all $(x, y, z) \in I_3 \times I_1 \times I_2$, $|x-y| + |x-z| \le 2 \ell(Q) \le 2^{r+1} \ell(I_3)$ and 
\begin{align}\label{TDH-4}
1 &+ \frac{|x+y| + |x+z|}{1+ |x-y| + |x-z|}
\ge \frac{2 (|y| + |z|)}{1+ |x-y| + |x-z|}
\\ \nonumber 
&\ge \frac{4 \d(Q, \I)}{1 + 2 \ell(Q)}
\ge \frac{\d(Q, \I)}{\max\{\ell(Q), 1\}}
= \rd(Q, \I).
\end{align}
Hence, invoking \eqref{QDIA}--\eqref{TDH-4}, we arrive at 
\begin{align}\label{TD}
|\langle T(h_{I_1}, h^0_{I_2}), h_{I_3}\rangle| 
&\lesssim |I_1|^{-\frac12} |I_2|^{-\frac12} |I_3|^{-\frac12} 
F_1(\ell(I_3)) F_3(\rd(Q, \I))
\\ \nonumber 
&\quad\times  \int_{I_1} \int_{7I_1 \setminus I_1} 
\frac{F_2(|x-y|)}{|x-y|^{n+\beta}} 
\bigg(\int_{I_2} \frac{dz}{|x-z|^{n-\beta}}\bigg) \, dx \, dy 
\\ \nonumber
&\lesssim F_1(\ell(I_3)) \widetilde{F}_2(\ell(I_3)) F_3(\rd(Q, \I)) 
\frac{|I_1|^{\frac12} |I_2|^{\frac12} |I_3|^{\frac12}}{|Q|^2} 
\bigg(\frac{\ell(I_3)}{\ell(Q)}\bigg)^{\frac{\delta}{2}}. 
\end{align}
In the same way, we have the same bound in the case $I_3 \cap I_2 = \emptyset$. 

Next, we treat the case $I_3 \cap I_2 \neq \emptyset$ and $I_3=I_1$. Obviously, $I_2 \in \ch(I_3)$. Then we rewrite 
\begin{align}\label{TD-1}
\langle T(h_{I_3}, h^0_{I_2}), h_{I_3}\rangle 
= \sum_{I'_3, I''_3 \in \ch(I_3)} \langle h_{I_3} \rangle_{I'_3} \langle h_{I_3} \rangle_{I''_3} |I_2|^{-\frac12} 
\langle T(\mathbf{1}_{I'_3}, \mathbf{1}_{I_2}), \mathbf{1}_{I''_3}\rangle.  
\end{align}
If $I'_3 = I_2 = I''_3 $, the weak compactness property implies  
\begin{align}\label{TD-2}
|\langle T(\mathbf{1}_{I_2}, \mathbf{1}_{I_2}), \mathbf{1}_{I_2}\rangle|
\le F(I_2) \, |I_2|.  
\end{align}
If $I''_3 \neq I_2$, then $I''_3 \subset 3I_2 \setminus I_2$, which is similar to the case $I_3 \cap I_1 = \emptyset$.  As in \eqref{TD}, there holds 
\begin{align}\label{TD-3}
|\langle T(\mathbf{1}_{I'_3}, \mathbf{1}_{I_2}), \mathbf{1}_{I''_3}\rangle|
&\lesssim F_1(\ell(I''_3)) \widetilde{F}_2(\ell(I'_3)) F_3(\rd(Q, \I)) 
\frac{|I'_3| |I_2| |I''_3|}{|Q|^2} \bigg(\frac{\ell(I''_3)}{\ell(Q)}\bigg)^{\frac{\delta}{2}}
\\ \nonumber 
&\lesssim F_1(\ell(I_3)) \widetilde{F}_2(\ell(I_3)) F_3(\rd(Q, \I)) 
\frac{|I_1| |I_2| |I_3|}{|Q|^2} \bigg(\frac{\ell(I_3)}{\ell(Q)}\bigg)^{\frac{\delta}{2}}. 
\end{align}
Analogously, in the case $I'_3 \neq I_2$, one has 
\begin{align}\label{TD-4}
&|\langle T(\mathbf{1}_{I'_3}, \mathbf{1}_{I_2}), \mathbf{1}_{I''_3}\rangle|
= |\langle T^{*1}(\mathbf{1}_{I''_3}, \mathbf{1}_{I_2}), \mathbf{1}_{I'_3}\rangle|
\\ \nonumber 
&\lesssim F_1(\ell(I_3)) \widetilde{F}_2(\ell(I_3)) F_3(\rd(Q, \I)) 
\frac{|I_1| |I_2| |I_3|}{|Q|^2} \bigg(\frac{\ell(I_3)}{\ell(Q)}\bigg)^{\frac{\delta}{2}}. 
\end{align}
Gathering \eqref{TD-1}--\eqref{TD-4}, we obtain \eqref{TH-2}.

In this situation, one has $\ell(I_1) \simeq \ell(I_2) \simeq \ell(I_3) \simeq \ell(Q)$. Letting 
\begin{align*}
\mathbf{F}(Q) 
:= \max\Big\{ \sum_{Q' \in \ch(Q)} F(Q'), \, 
F_1(\ell(Q)) \widetilde{F}_2(\ell(Q)) F_3(\rd(Q, \I)) \Big\}, 
\end{align*}
we invoke \eqref{TH-2} to obtain 
\begin{align}\label{TH-22}
|\langle T(h_{I_1}, h^0_{I_2}), h_{I_3}\rangle| 
\le C_2 \bigg[\frac{\ell(I_3)}{\ell(Q)}\bigg]^{\frac{\delta}{2}} 
\mathbf{F}(Q) 
\frac{|I_1|^{\frac12} |I_2|^{\frac12} |I_3|^{\frac12}}{|Q|^2}. 
\end{align}
Set 
\begin{align*}
a_{I_1, I_2, I_3, Q} 
:= \frac{\langle T(h_{I_1}, h^0_{I_2}), h_{I_3}\rangle}{C_0 [\ell(I_3)/\ell(Q)]^{\frac{\delta}{2}}} 
\end{align*}
if $I_1, I_2 \in \D$ and $I_3 \in \D_{\text{good}}$ satisfy $\ell(I_3) \le \ell(I_1) = 2\ell(I_2)$, $\max\{\d(I_3, I_1)\, \d(I_3, I_2)\} \le 2\ell(I_3)^{\gamma} \ell(I_2)^{1-\gamma}$, and $I_3 \cap I_1 = \emptyset$ or $I_3 \cap I_2 = \emptyset$ or $I_3=I_1$, and otherwise set $a_{I_1, I_2, I_3, Q} = 0$. Then by \eqref{TH-22} and that $(F_1, F_2, F_3) \in \F$, we see that \eqref{aijkq} and \eqref{FIJK} hold whenever $C_0 \ge \max\{C_1, C_2\}$. Thus, in light of \eqref{QDIA} and \eqref{TH-2}, we have 
\begin{align*}
\mathscr{S}_{1, 1}^2  
&= \sum_{0 \le i \le k \le r} \sum_{Q \in \D} 
\sum_{\substack{I_1, I_2 \in \D, \, I_3 \in \D_{\text{good}} \\ 
\max\{\d(I_3, I_1), \, \d(I_3, I_2)\} \le 2\ell(I_3)^{\gamma} \ell(I_2)^{1-\gamma} \\ 
I_3 \cap I_1 = \emptyset \text{ or } I_3 \cap I_2 = \emptyset \text{ or } I_3 = I_1\\ 
2 \ell(I_2) = \ell(I_1) = 2^{-i} \ell(Q) \\ \ell(I_3) = 2^{-k} \ell(Q), \, I_1 \vee I_2 \vee I_3 = Q}} 
\langle T(h_{I_1}, h^0_{I_2}), h_{I_3}\rangle 
\langle f_1, h_{I_1} \rangle \langle f_2, h^0_{I_2} \rangle \langle f_3, h_{I_3} \rangle
\\
&= C_0 \sum_{0 \le i \le k \le r}  2^{-k \delta/2} 
\sum_{Q \in \D} \sum_{\substack{I_1 \in \D_i(Q) \\ 
I_2 \in \D_{i+1}(Q) \\ I_3 \in \D_k(Q)}} 
a_{I_1, I_2, I_3, Q} 
\langle f_1, h_{I_1} \rangle \langle f_2, h^0_{I_2} \rangle \langle f_3, h_{I_3} \rangle
\\
&= C_0 \sum_{k=0}^r \sum_{i=0}^k 2^{-k \delta/2} 
\big\langle \mathbf{S}_{\D}^{i, i+1, k}, f_3 \big\rangle. 
\end{align*}

\subsection{Nested part} 
By definition, there holds 
\begin{align*}
\mathscr{S}_{1, 1}^3  
= \sum_{I_3 \in \D_{\text{good}}} \sum_{I_2 \supset I_3}  
\big\langle T(h_{I_2^{(1)}}, h^0_{I_2}), h_{I_3} \big\rangle 
\langle f_1, h_{I_2^{(1)}} \rangle \langle f_2, h^0_{I_2} \rangle \langle f_3, h_{I_3} \rangle.  
\end{align*}
We begin with the decomposition 
\begin{align*}
\langle T(h_{I_2^{(1)}}, \mathbf{1}_{I_2}), h_{I_3} \rangle 
= \langle h_{I_2^{(1)}} \rangle_{I_2} \langle T(1, 1), h_{I_3} \rangle 
- \langle h_{I_2^{(1)}} \rangle_{I_2} \langle T(1, \mathbf{1}_{I_2^c}), h_{I_3} \rangle 
+ \langle T(\phi_{I_2}, \mathbf{1}_{I_2}), h_{I_3} \rangle, 
\end{align*}
where $\phi_{I_2} := \mathbf{1}_{I_2^c} \big(h_{I_2^{(1)}} - \langle h_{I_2^{(1)}} \rangle_{I_2} \big)$. This allows us to write 
\begin{align}\label{SS-1}
\mathscr{S}_{1, 1}^3  
= \mathscr{S}_{1, 1}^{3, 1} - \mathscr{S}_{1, 1}^{3, 2}  + \mathscr{S}_{1, 1}^{3, 3}, 
\end{align}
where 
\begin{align*}
\mathscr{S}_{1, 1}^{3, 1}  
&:= \sum_{I_3 \in \D_{\text{good}}} \sum_{I_2 \supset I_3} 
\langle h_{I_2^{(1)}} \rangle_{I_2} \langle T(1, 1), h_{I_3} \rangle 
\langle f_1, h_{I_2^{(1)}} \rangle \langle f_2 \rangle_{I_2} \langle f_3, h_{I_3} \rangle, 
\\
\mathscr{S}_{1, 1}^{3, 2}  
&:= \sum_{I_3 \in \D_{\text{good}}} \sum_{I_2 \supset I_3} 
\langle h_{I_2^{(1)}} \rangle_{I_2} \langle T(1, \mathbf{1}_{I_2^c}), h_{I_3} \rangle 
\langle f_1, h_{I_2^{(1)}} \rangle \langle f_2 \rangle_{I_2} \langle f_3, h_{I_3} \rangle, 
\\ 
\mathscr{S}_{1, 1}^{3, 3}  
&:= \sum_{I_3 \in \D_{\text{good}}} \sum_{I_2 \supset I_3} 
\langle T(\phi_{I_2}, \mathbf{1}_{I_2}), h_{I_3} \rangle  
\langle f_1, h_{I_2^{(1)}} \rangle \langle f_2 \rangle_{I_2} \langle f_3, h_{I_3} \rangle.   
\end{align*}
Likewise, considering $\mathscr{S}_{1, 2}$, we obtain the corresponding paraproduct term 
\begin{align*}
\mathscr{S}_{1, 2}^{3, 1}  
&:= \sum_{I_3 \in \D_{\text{good}}} \sum_{I_2 \supset I_3} 
\langle h_{I_2^{(1)}} \rangle_{I_2} \langle T(1, 1), h_{I_3} \rangle 
\langle f_1 \rangle_{I_2^{(1)}} \langle f_2, h_{I_2^{(1)}} \rangle \langle f_3, h_{I_3} \rangle.  
\end{align*}
Note that 
\begin{align*}
\langle f, h_{I^{(1)}} \rangle \langle h_{I^{(1)}} \rangle_I
= \langle f \rangle_I - \langle f \rangle_{I^{(1)}}, 
\end{align*}
which gives 
\begin{align*}
\langle f_1, h_{I_2^{(1)}} \rangle \langle h_{I_2^{(1)}} \rangle_{I_2} \langle f_2 \rangle_{I_2}
+ \langle f_1 \rangle_{I_2^{(1)}} \langle f_2, h_{I_2^{(1)}} \rangle \langle h_{I_2^{(1)}} \rangle_{I_2}
= \langle f_1 \rangle_{I_2} \langle f_2 \rangle_{I_2} 
- \langle f_1 \rangle_{I_2^{(1)}} \langle f_2 \rangle_{I_2^{(1)}}. 
\end{align*}
Hence, this leads to 
\begin{align}\label{PSS}
\mathscr{S}_{1, 1}^{3, 1} + \mathscr{S}_{1, 2}^{3, 1} 
= \sum_{I_3 \in \D_{\text{good}}} \langle T(1, 1), h_{I_3} \rangle 
\langle f_1 \rangle_{I_3} \langle f_2 \rangle_{I_3} \langle f_3, h_{I_3} \rangle
= C_0 \big\langle \Pi_{\b}(f_1, f_2), f_3 \big\rangle, 
\end{align}
where we set 
\begin{align*}
b_I = C_0^{-1} \langle T(1, 1), h_I \rangle, 
\quad\text{ if $I \in \mathcal{D}_{\text{good}}$}, 
\end{align*}
otherwise set $b_I=0$. Choose $C_0 \ge \max\{C_1, C_2, \|T(1, 1)\|_{\BMO}\}$. Then the fact $T(1, 1) \in \CMO(\Rn)$ implies the sequence $\b := \{b_I\}_{I \in \D}$ belongs to $\CMO(\Rn)$. 

To analyze $\mathscr{S}_{1, 1}^{3, 2}$, we claim that 
\begin{align}\label{TH-3}
|\langle h_{I_2^{(1)}} \rangle_{I_2}| 
|\langle T(1, \mathbf{1}_{I_2^c}), h_{I_3} \rangle| 
\le C_3 \bigg[\frac{\ell(I_3)}{\ell(I_2)}\bigg]^{\frac{\delta}{2}} 
F(I_2, I_3) |I_2|^{-\frac12} |I_3|^{\frac12}, 
\end{align}
where 
\begin{align}\label{eq:FFF}
F(I_2, I_3) :=  F_1(\ell(I_3)) \widetilde{F}_2(\ell(I_3)) \widetilde{F}_3(I_2).
\end{align}
Recall that $\widetilde{F}_2$ and $\widetilde{F}_3$ are defined in \eqref{eq:FF23}. Indeed, if $\ell(I_2) > 2^r \ell(I_3)$, then for all $x \in I_3$ and $z \in I_2^c$, the goodness of $I_3$ yields
\begin{align}\label{FZX-1}
|x-z| \ge \d(I_3, I_2^c) 
>2 \ell(I_3)^{\gamma} \ell(I_2)^{1-\gamma} 
\ge \ell(I_3)^{\frac12} \ell(I_2)^{\frac12} 
\ge \ell(I_3) \ge 2|x-c_{I_3}|, 
\end{align}
which together with the H\"{o}lder condition of $K$ implies  
\begin{align}\label{FZX-2}
|\langle T(\mathbf{1}_{3I_3}, \mathbf{1}_{I_2^c}), h_{I_3} \rangle|
\le  |I_3|^{-\frac12} \int_{I_2^c} \int_{3I_3} \int_{I_3} 
\frac{F(x, y, z) \ell(I_3)^{\delta}}{(|x-y| + |x-z|)^{2n+\delta}} dx \, dy \, dz, 
\end{align}
where 
\begin{align}\label{FXC}
F(x, y, z)
:= F_1(|x-c_{I_3}|) F_2(|x-y| + |x-z|) F_3 \bigg(1 + \frac{|x+y| + |x+z|}{1+ |x-y| + |x-z|}\bigg). 
\end{align}
The inequality \eqref{TDH-2} gives 
\begin{align}\label{FZX-3}
\int_{3I_3} \frac{dy}{|x-y|^{n-\beta}} 
\lesssim \ell(I_3)^{\beta}, \qquad x \in I_3, 
\end{align}
and for any $\alpha>0$ and $\ell := \ell(I_3)^{\frac12} \ell(I_2)^{\frac12}$, 
\begin{align}\label{FZX-4}
\psi(\alpha) 
& := \int_{I_3} \int_{|x-z| \ge \ell} F_3\bigg( \frac{4|x|}{1+|x-z|}\bigg) \frac{dz \, dx}{|x-z|^{n+\alpha}} 
\\ \nonumber 
&\le \sum_{k \ge 0} \int_{I_3} \int_{2^k \ell \le |x-z| < 2^{k+1} \ell} 
(2^k \ell)^{-n-\alpha} F_3(\rd(2^k I_2, \I)) \, dz \, dx 
\\ \nonumber 
&\lesssim |I_3| \big[\ell(I_3)^{\frac12} \ell(I_2)^{\frac12}\big]^{-\alpha} 
\sum_{k \ge 0} 2^{-k \alpha} F_3(\rd(2^k I_2, \I))
\end{align}
provided that for all $x \in I_3$ and $2^k \ell \le |x-z| < 2^{k+1} \ell$, 
\begin{align*}
\frac{4|x|}{1+|x-z|}
\ge \frac{4 \d(2^k I_2, 0)}{1+2^{k+1} \ell(I_2)} 
\ge \frac{\d(2^k I_2, 0)}{\max\{2^k \ell(I_2), 1\}}
= \rd(2^k I_2, \I). 
\end{align*}
Then it follows from \eqref{FZX-1}--\eqref{FZX-4} that 
\begin{align}\label{FZX-5}
|\langle T(\mathbf{1}_{3I_3}, \mathbf{1}_{I_2^c}), h_{I_3} \rangle|
&\lesssim  F_1(\ell(I_3)) F_2(\ell(I_3)) |I_3|^{-\frac12} \ell(I_3)^{\delta+\beta}  
\psi(\delta+\beta)
\\ \nonumber 
&\lesssim F_1(\ell(I_3)) F_2(\ell(I_3)) \widetilde{F}_3(I_2) 
|I_3|^{\frac12} \bigg(\frac{\ell(I_3)}{\ell(I_2)}\bigg)^{\frac{\delta}{2}}. 
\end{align}
On the other hand, by \eqref{FZX-1}, we see that for any $x \in I_3$, $y \in (3I_3)^c$, and $z \in I_2^c$, 
\begin{align*}
|x-y| + |x-z| 
&\ge \max\{|x-y|, |x-z|\} 
\ge |x-z| 
\\ 
&\ge \ell(I_3)^{\frac12} \ell(I_2)^{\frac12} =: \ell 
\ge \ell(I_3)  
\ge 2 |x-c_{I_3}|,
\end{align*}
which along with the H\"{o}lder condition of $K$ leads to  
\begin{align*}
|\langle T(\mathbf{1}_{(3I_3)^c}, \mathbf{1}_{I_2^c}), h_{I_3} \rangle| 
& \le |I_3|^{-\frac12} \int_{I_3} \bigg[ \int_{I_2^c} \int_{(3I_3)^c} 
\frac{\ell(I_3)^{\delta} F(x, y, z)}{(|x-y| + |x-z|)^{2n+\delta}} \, dy \, dz \bigg] \, dx,  
\end{align*}
where $F(x, y, z)$ is defined in \eqref{FXC}. Given $k \ge 0$ and $x \in I_3 \subset I_2$, letting 
\begin{align*}
R_k(x) := \{(y, z) \in \R^{2n}: 2^k \ell \le |x-y| + |x-z| < 2^{k+1} \ell\},
\end{align*} 
we have for all $(y, z) \in R_k(x)$, 
\begin{align*}
1 + \frac{|x+y| + |x+z|}{1+ |x-y| + |x-z|}
\ge \frac{4|x|}{1+ 2^{k+1} \ell(I_2)}
\ge \frac{\d(2^k I_2, 0)}{\max\{2^k \ell(I_2), 1\}}
= \rd(2^k I_2, \I).  
\end{align*}
Thus, using the estimates above and the monotonicity of $F_1, F_2, F_3$, we conclude   
\begin{align}\label{FZX-6}
|\langle T(\mathbf{1}_{(3I_3)^c}, \mathbf{1}_{I_2^c}), h_{I_3} \rangle|
&\lesssim  F_1(\ell(I_3)) F_2(\ell(I_3)) |I_3|^{-\frac12} \ell(I_3)^{\delta} 
\\ \nonumber 
&\quad \times \int_{I_3} \bigg[\sum_{k \ge 0} \iint_{R_k(x)} 
\frac{F_3(\rd(2^k I_2, \I))}{(2^k \ell)^{2n+\delta}} dy\, dz\bigg] \, dx
\\ \nonumber 
&\lesssim F_1(\ell(I_3)) F_2(\ell(I_3)) |I_3|^{\frac12} \ell(I_3)^{\delta} 
\ell^{-\delta} \sum_{k \ge 0} 2^{-k \delta} F_3(\rd(2^k I_2, \I)) 
\\ \nonumber 
&\le F_1(\ell(I_3)) F_2(\ell(I_3)) \widetilde{F}_3(I_2) 
|I_3|^{\frac12} \bigg(\frac{\ell(I_3)}{\ell(I_2)}\bigg)^{\frac{\delta}{2}}. 
\end{align}
Consequently, \eqref{FZX-5} and \eqref{FZX-6} imply \eqref{TH-3} in the case $\ell(I_2) > 2^r \ell(I_3)$. 

To treat the case $\ell(I_2) \le 2^r \ell(I_3)$, we split 
\begin{align*}
\langle T(1, \mathbf{1}_{I_2^c}), h_{I_3} \rangle 
&= \langle T(\mathbf{1}_{3I_3}, \mathbf{1}_{3I_2 \setminus I_2}), h_{I_3} \rangle 
+ \langle T(\mathbf{1}_{3I_3}, \mathbf{1}_{(3I_2)^c}), h_{I_3} \rangle
\\
&\quad+ \langle T(\mathbf{1}_{(3I_3)^c}, \mathbf{1}_{3I_2 \setminus I_2}), h_{I_3} \rangle 
+ \langle T(\mathbf{1}_{(3I_3)^c}, \mathbf{1}_{(3I_2)^c}), h_{I_3} \rangle.
\end{align*}
The first term is similar to $\langle T(h_{I_1}, h_{I_2}^0), h_{I_3} \rangle$ in the case $I_3 \cap I_2 = \emptyset$ in Section \ref{sec:adj}, which along with \eqref{TH-2} yields the desired bound. The second and third terms are symmetric, while the estimates for the second and last terms are analogous to \eqref{FZX-5} and \eqref{FZX-6} respectively, but now $\ell$ is replaced by $\ell(I_3)$. Eventually, both are dominated by $F_1(\ell(I_3)) F_2(\ell(I_3)) \widetilde{F}_3(I_2)  |I_3|^{\frac12}$, which together with $\ell(I_2) \simeq \ell(I_3)$ gives the estimate as desired. This shows \eqref{TH-3} in the case $\ell(I_2) \le 2^r \ell(I_3)$.

For $\mathscr{S}_{1, 1}^{3, 3}$, we have 
\begin{align}\label{TH-4}
|\langle T(\phi_{I_2}, \mathbf{1}_{I_2}), h_{I_3} \rangle| 
\le C_4 \bigg[\frac{\ell(I_3)}{\ell(I_2)}\bigg]^{\frac{\delta}{2}} 
F(I_2, I_3) |I_2|^{-\frac12} |I_3|^{\frac12}, 
\end{align}
where $F(I_2, I_3)$ is defined in \eqref{eq:FFF}. Indeed, in the case $\ell(I_2) > 2^r \ell(I_3)$, it is similar to $\langle T(\mathbf{1}_{(3I_3)^c}, \mathbf{1}_{I_2^c}), h_{I_3} \rangle$. If $\ell(I_2) \le 2^r \ell(I_3)$, we split 
\begin{align*}
\langle T(\phi_{I_2}, \mathbf{1}_{I_2}), h_{I_3} \rangle 
= \langle T(\phi_{I_2} \mathbf{1}_{3I_2 \setminus I_2}, \mathbf{1}_{I_2}), h_{I_3} \rangle 
+ \langle T(\phi_{I_2} \mathbf{1}_{(3I_2)^c}, \mathbf{1}_{I_2}), h_{I_3} \rangle, 
\end{align*}
where the last two terms are similar to $\langle T(\mathbf{1}_{3I_3}, \mathbf{1}_{3I_2 \setminus I_2}), h_{I_3} \rangle$ and $\langle T(\mathbf{1}_{3I_3}, \mathbf{1}_{(3I_2)^c}), h_{I_3} \rangle$ respectively.

Now set 
\begin{align*}
a_{I_2, I_3} 
:= \frac{\langle h_{I_2^{(1)}} \rangle_{I_2} \langle T(1, \mathbf{1}_{I_2^c}), h_{I_3}\rangle 
+ \langle T(\phi_{I_2}, \mathbf{1}_{I_2}), h_{I_3} \rangle}{C_0 [\ell(I_3)/\ell(I_2)]^{\frac{\delta}{2}}} 
\end{align*}
if $I_2 \in \D$ and $I_3 \in \D_{\text{good}}$ satisfy $I_3 \subset I_2$, and otherwise set $a_{I_2, I_3} = 0$. For any $k \ge 0$ and $I_3 \in \D_k(I_2)$, let 
\begin{align*}
\mathbf{F}(I_2) 
:= F_1(\ell(I_2)) \widetilde{F}_2(2^{-k} \ell(I_2)) \widetilde{F}_2(I_2). 
\end{align*}
Then the estimates \eqref{TH-3} and \eqref{TH-4} and that $(F_1, F_2, F_3) \in \F$ imply both \eqref{aijkq} and \eqref{FIJK} hold whenever 
\begin{align*}
C_0 \ge \max\{C_1, C_2, C_3+C_4, \|T(1, 1)\|_{\BMO}\}.
\end{align*} 
Consequently, by \eqref{TH-3} and \eqref{TH-4}, we deduce 
\begin{align*}
\mathscr{S}_{1, 1}^{3, 2} + \mathscr{S}_{1, 1}^{3, 3} 
&= C_0 \sum_{k=0}^{\infty} 2^{-k \delta/2} 
\sum_{I_2 \in \D} \sum_{I_3 \in \D_k(I_2)} 
a_{I_2, I_3} \langle f_1, h_{I_2^{(1)}} \rangle 
\langle f_2, h^0_{I_2} \rangle \langle f_3, h_{I_3} \rangle
\\
&= C_0 \sum_{k=1}^{\infty} 2^{-k \delta/2} 
\big\langle \mathbf{S}_{\D}^{0, 1, k}, f_3 \big\rangle. 
\end{align*}

For the remaining terms, one can obtain the corresponding dyadic shifts symmetrically and similarly. To guarantee \eqref{aijkq} holds, choose $C_0$ to be the largest one of finite bounds $\|T(1, 1)\|_{\BMO}$, $\|T^{*1}(1, 1)\|_{\BMO}$, $\|T^{*2}(1, 1)\|_{\BMO}$, and $C_j$, $j=1, 2, \ldots$. Collecting all parts together, we conclude the proof of the compact dyadic representation theorem.

\section{Characterizations of compactness}\label{sec:RK}

\subsection{Unweighted Kolmogorov--Riesz theorems}

\begin{theorem}\label{thm:RKB}
Let $1<p, q<\infty$, $w \in A_p$,  and $\K \subset L^{p, q}(w)$. Then $\K$ is precompact in $L^{p, q}(w)$ if and only if the following are satisfied:
\begin{list}{\rm (\theenumi)}{\usecounter{enumi}\leftmargin=1cm \labelwidth=1cm \itemsep=0.2cm \topsep=0.2cm \renewcommand{\theenumi}{\alph{enumi}}}
 
\item\label{RKB-1} ${\displaystyle \sup_{f \in \K} \|f\|_{L^{p, q}(w)} < \infty}$, 

\item\label{RKB-2} ${\displaystyle \lim_{A \to \infty} \sup_{f \in \K} 
\|f \mathbf{1}_{B(0, A)^c}\|_{L^{p, q}(w)}=0}$, 

\item\label{RKB-3} ${\displaystyle \lim_{r \to 0} \sup_{f \in \K} 
\|f - \langle f \rangle_{B(\cdot, r)}\|_{L^{p, q}(w)}=0}$. 
\end{list}
Moreover, in the case $q=\infty$, the conditions \eqref{RKB-1}, \eqref{RKB-2}, and \eqref{RKB-3} are sufficient, but \eqref{RKB-2} and \eqref{RKB-3} are not necessary.  
\end{theorem}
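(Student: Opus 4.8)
The plan is to route everything through the local averaging operators $A_r f := \langle f\rangle_{B(\cdot,r)}$. The key structural fact is that for $w\in A_p$ these are \emph{uniformly} bounded on $L^{p,q}(w)$: indeed $|A_r f|\le Mf$ pointwise, and the Hardy--Littlewood maximal operator $M$ is bounded on $L^{p,q}(w)$ for every $w\in A_p$ and every $q\in(1,\infty]$, with norm independent of $r$. A second recurring fact I will use is that for $h$ supported in a fixed ball $B$ one has $\|h\|_{L^{p,q}(w)}\le \|\mathbf 1_B\|_{L^{p,q}(w)}\,\|h\|_{L^\infty}$, and $\|\mathbf 1_B\|_{L^{p,q}(w)}=c_{p,q}\,w(B)^{1/p}<\infty$ since $w\in L^1_{\loc}$; thus uniform convergence on a fixed ball upgrades to $L^{p,q}(w)$ convergence there.

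\textbf{Necessity of \eqref{RKB-1}--\eqref{RKB-3} when $1<q<\infty$.} Condition \eqref{RKB-1} is immediate. For \eqref{RKB-2}, fix $\epsilon>0$ and a finite $\epsilon$-net $\{f_j\}_{j=1}^N$ of $\K$; since $q<\infty$, for each fixed $f_j$ one has $\mu_w(\{|f_j|>\lambda\})<\infty$ (because $f_j\in L^{p,q}(w)\subset L^{p,\infty}(w)$), so $\|f_j\mathbf 1_{B(0,A)^c}\|_{L^{p,q}(w)}\to0$ as $A\to\infty$ by monotone convergence in $\lambda$; picking $A$ uniformly for the finitely many $f_j$ and using the quasi-triangle inequality gives \eqref{RKB-2}. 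For \eqref{RKB-3}, I run the standard $\epsilon/3$ argument: $\mathscr{C}_c^\infty(\Rn)$ is dense in $L^{p,q}(w)$ for $q<\infty$, $A_r g\to g$ in $L^{p,q}(w)$ for $g\in\mathscr{C}_c^\infty(\Rn)$ (uniform convergence on a fixed ball plus the upgrade above), and $\sup_{r>0}\|A_r\|_{L^{p,q}(w)\to L^{p,q}(w)}<\infty$; combining these along a finite $\epsilon$-net of $\K$ whose members are first approximated by $\mathscr{C}_c^\infty$ functions yields $\lim_{r\to0}\sup_{f\in\K}\|f-A_rf\|_{L^{p,q}(w)}=0$.

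\textbf{Sufficiency, for all $q\in(0,\infty]$ with $1<q$.} Fix $\epsilon>0$. Using \eqref{RKB-1} with bound $M_0$ and \eqref{RKB-2}, choose $A$ with $\|f\mathbf 1_{B(0,A)^c}\|_{L^{p,q}(w)}<\epsilon$ for all $f\in\K$; using \eqref{RKB-3} and the uniform boundedness of $A_r$, choose $r\in(0,1)$ with $\|f-A_rf\|_{L^{p,q}(w)}<\epsilon$ for all $f\in\K$. Put $g_f:=A_r(f\mathbf 1_{B(0,A)})$. Writing $g_f-f=(A_rf-f)+f\mathbf 1_{B(0,A)^c}-A_r(f\mathbf 1_{B(0,A)^c})$ and bounding the last term by $\|A_r\|\,\epsilon$, one gets $\|g_f-f\|_{L^{p,q}(w)}\le C\epsilon$ uniformly in $f$, so it suffices to show that $\{g_f:f\in\K\}$ is totally bounded in $L^{p,q}(w)$. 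Each $g_f$ is continuous and supported in $B(0,A+1)$, and I claim the family is uniformly bounded and equicontinuous. Both reduce to the \emph{uniform} local integrability estimate $\sup_{f\in\K}\int_E|f|\,dx\to0$ as $|E|\to0$ over $E\subset B(0,A+1)$: by Hölder's inequality $\int_E|f|\,dx\le\big(\int_E|f|^{r_0}w\big)^{1/r_0}\big(\int_E w^{1-r_0'}\big)^{1/r_0'}$ for any exponent $r_0<p$, the first factor is controlled by $\|f\|_{L^{p,q}(w)}\lesssim M_0$ (on the finite-$w$-measure set $B(0,A+1)$, with $r_0$ slightly below $p$ when $q>p$), and $w^{1-r_0'}\in L^1_{\loc}(\Rn)$ because $w^{1-p'}\in L^1_{\loc}$ and the reverse Hölder inequality \eqref{eq:RH} for $w^{1-p'}$ gives a small improvement of the exponent; absolute continuity of $\int_\cdot w^{1-r_0'}$ then yields the claim. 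Equicontinuity follows since $|A_r h(x)-A_r h(y)|\le |B(0,r)|^{-1}\int_{B(x,r)\triangle B(y,r)}|h|$ and $|B(x,r)\triangle B(y,r)|\to0$ as $|x-y|\to0$. By Arzel\`a--Ascoli, $\{g_f\}$ is precompact in $C(\overline{B(0,A+1)})$, hence totally bounded in the sup norm, hence totally bounded in $L^{p,q}(w)$ by the upgrade above. Letting $\epsilon\to0$ shows $\K$ is totally bounded, and since $L^{p,q}(w)$ is complete (it is a Banach space for $p>1$) it is precompact.

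\textbf{Failure of necessity of \eqref{RKB-2},\eqref{RKB-3} when $q=\infty$, and the main obstacle.} Since the constant weight $w\equiv1$ lies in $A_p$, it suffices to produce a single $f\in L^{p,\infty}(\Rn)$ — so that $\K=\{f\}$ is trivially precompact — violating \eqref{RKB-2} and \eqref{RKB-3}; the function $f(x)=|x|^{-n/p}$ works, as $\|f\|_{L^{p,\infty}}<\infty$ while $\|f\mathbf 1_{B(0,A)^c}\|_{L^{p,\infty}}\ge c>0$ for every $A$, and near the origin $f-A_rf$ still behaves like $|x|^{-n/p}$, so $\|f-A_rf\|_{L^{p,\infty}}\ge c>0$ for every small $r$ (this reflects the failure of absolute continuity of the quasi-norm of $L^{p,\infty}$). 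The hard part of the argument is the sufficiency direction, and within it the uniform equicontinuity of the averaged family $\{A_r(f\mathbf 1_{B(0,A)})\}_{f\in\K}$: this is the only place where the $A_p$ hypothesis is genuinely needed, via the uniform absolute continuity of $\int_E|f|\,dx$, which rests on the local integrability of $w^{1-p'}$ together with the self-improvement \eqref{eq:RH}; it is also where care is required so that the quasi-triangle constants of $L^{p,q}(w)$ stay controlled through the chain of approximations $f\rightsquigarrow f\mathbf 1_{B(0,A)}\rightsquigarrow A_rf\rightsquigarrow g_f$.
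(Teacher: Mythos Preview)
Your proposal is correct and follows essentially the same strategy as the paper: necessity via $\epsilon$-nets together with density of $\mathscr{C}_c^\infty$ and the uniform $L^{p,q}(w)$-boundedness of $A_r$ (through $M$), sufficiency via Arzel\`a--Ascoli applied to the ball-averaged family with the $A_p$ hypothesis supplying the uniform local integrability estimate (your H\"older split with $r_0<p$ and reverse H\"older for $w^{1-p'}$ is a streamlined version of the paper's chain \eqref{eq:WLP-12}), and the power-function counterexample for $q=\infty$. One harmless slip: your decomposition of $g_f-f$ should read $(A_rf-f)-A_r(f\mathbf 1_{B(0,A)^c})$, without the extra $+f\mathbf 1_{B(0,A)^c}$ term, but the ensuing bound is unaffected.
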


\begin{proof}
First, let us prove the necessity. Assume that $\K$ is precompact in $L^{p, q}(w)$. In particular, $\K$ is totally bounded. Given $\varepsilon>0$,   there exists a finite number of functions $\{f_j\}_{j=1}^N \subset \K$ such that $\K \subseteq \bigcup_{k=1}^N B(f_k,\varepsilon)$, where 
\begin{align*}
B(f_k, \varepsilon) := \big\{f \in L^{p, q}(\Rn): \|f-f_k\|_{L^{p, q}(w)} < \varepsilon \big\}, 
\quad k=1, \ldots, N.  
\end{align*}
Let $f \in \K$ be an arbitrary function. Then there exists some $k \in \{1,\ldots,N\}$, 
\begin{align}\label{eq:fk-f}
\|f - f_k\|_{L^{p, q}(w)} \le \varepsilon.
\end{align}
which gives 
\begin{align*}
\|f\|_{L^{p, q}(w)} 
\lesssim \|f - f_k\|_{L^{p, q}(w)} + \|f_k\|_{L^{p, q}(w)} 
\le \varepsilon + \max_{1 \leq k \leq N} \|f_k\|_{L^{p, q}(w)}. 
\end{align*}
This shows the condition \eqref{RKB-1}. Note that 
\begin{align}\label{wloc}
w \in A_p \quad \Longrightarrow \quad 
w, w^{-\frac{1}{p-1}} \in L_{\loc}^1(\Rn). 
\end{align}
Since $\mathscr{C}_c^{\infty}(\Rn)$ is dense in $L^{p, q}(w)$ (cf. Lemma \ref{lem:dense-Lpq}), there exists $g_k \in \mathscr{C}_c^{\infty}(\Rn)$ such that 
\begin{align*}
\|f_k - g_k\|_{L^{p, q}(w)} 
\le \varepsilon, 
\end{align*}
which together with \eqref{eq:fk-f} implies  
\begin{align}\label{wlp-1}
\|f - g_k\|_{L^{p, q}(w)} 
\lesssim \|f - f_k\|_{L^{p, q}(w)} + \|f_k - g_k\|_{L^{p, q}(w)} 
\lesssim \varepsilon. 
\end{align}
Set $\supp g_k \subset B(0, A_k)$ for each $k=1, \ldots, N$, and $A_0 := \max\{A_1, \ldots, A_N\}$. Then for any $A \ge A_0$, by \eqref{eq:fk-f} and \eqref{wlp-1},  
\begin{align*}
\|f \mathbf{1}_{B(0, A)^c}\|_{L^{p, q}(w)} 
\lesssim \|f - g_k\|_{L^{p, q}(w)} 
+ \|g_k \mathbf{1}_{B(0, A)^c}\|_{L^{p, q}(w)} 
\lesssim \varepsilon. 
\end{align*}
which justifies the condition \eqref{RKB-2}. To proceed, we split 
\begin{align}\label{wlp-2}
\|f - \langle f \rangle_{B(\cdot, r)}\|_{L^{p, q}(w)} 
&\lesssim \|f - g_k\|_{L^{p, q}(w)} + \|g_k - \langle g_k \rangle_{B(\cdot, r)}\|_{L^{p, q}(w)}
\\ \nonumber 
&\quad+ \| \langle g_k \rangle_{B(\cdot, r)} - \langle f \rangle_{B(\cdot, r)}\|_{L^{p, q}(w)}. 
\end{align}
Since $g_k \in \mathscr{C}_c^{\infty}(\Rn)$, there exists some $r_0>0$ so that 
\begin{align*}
\sup_{|x-y| < r_0} |g_k(x) - g_k(y)| 
\le \varepsilon \, w(B(0, 2A_0))^{-\frac1p}.
\end{align*}
Hence, for any $0<r<\min\{r_0, A_0\}$, this in turn gives 
\begin{align}\label{wlp-3}
\|g_k - \langle g_k \rangle_{B(\cdot, r)}\|_{L^{p, q}(w)} 
\le \bigg\|\mathbf{1}_{B(0, 2A_0)} \fint_{B(0, r)} 
|g_k(\cdot) - g_k(\cdot+y)| \, dy \bigg\|_{L^{p, q}(w)} 
\lesssim \varepsilon. 
\end{align} 
To treat the last term, observe that 
\begin{align*}
|\langle g_k \rangle_{B(x, r)} - \langle f \rangle_{B(x, r)}| 
\leq \fint_{B(x, r)} |f- g_k| \, dy 
\leq M(f-g_k)(x), 
\end{align*}
and 
\begin{align}\label{Mhst}
\|M h\|_{L^{s, t}(v)}
\lesssim \|h\|_{L^{s, t}(v)}, 
\quad 1<s<\infty, \quad 1 \le t \le \infty,  \quad v \in A_s, 
\end{align} 
where the latter was given in \cite[eq. (2.55)]{CMM}. In view of \eqref{wlp-1}, these in turn yield 
\begin{align}\label{wlp-4}
\| \langle g_k \rangle_{B(\cdot, r)} - \langle f \rangle_{B(\cdot, r)}\|_{L^{p, q}(w)} 
\le \|M(f - g_k)\|_{L^{p, q}(w)}  
\lesssim \|f - g_k\|_{L^{p, q}(w)} 
\lesssim \varepsilon. 
\end{align}
Gathering \eqref{eq:fk-f}--\eqref{wlp-4}, we deduce that for any $r \in (0, r_0)$, 
\begin{align*}
\|f - \langle f \rangle_{B(\cdot, r)}\|_{L^{p, q}(w)} 
\lesssim \varepsilon,\quad\text{uniformly in } f \in \K.  
\end{align*}
This shows the condition \eqref{RKB-3}. 

Next, we turn to the sufficiency. Assume that the conditions \eqref{RKB-1}, \eqref{RKB-2}, and \eqref{RKB-3} hold. Given $\varepsilon>0$, the conditions \eqref{RKB-2} and \eqref{RKB-3} imply that there exist $A>0$ and $r>0$ such that 
\begin{align}\label{eq:WLP-1} 
\|f \mathbf{1}_{B(0, A)^c}\|_{L^{p, q}(w)} \le \varepsilon \quad\text{ and }\quad 
\|f - \langle f \rangle_{B(\cdot, r)}\|_{L^{p, q}(w)} \le \varepsilon, \,\, \forall \, f \in \K.
\end{align}
By \eqref{eq:open}, there exists $\kappa \in (1, p)$ so that $w \in A_{\kappa}$. Thus, $w, w^{1-\kappa'} \in L_{\loc}^1(\Rn)$. Pick $p_0 \in (1, p/\kappa)$, $p_1 \in (\kappa p_0, p)$, and $s=\frac{p_1}{p_0}$. Then, 
\begin{align}\label{kpqs} 
\frac{1}{s-1} < \frac{1}{\kappa-1} 
\quad\text{ and }\quad 
\frac{\kappa-1}{s-1} \frac{1}{p_0 s'} = \frac{\kappa-1}{p_1}. 
\end{align}
Recall the well-known Kolmogorov's inequality (see \cite[Exercise 1.1.11]{Gra1}):   
\begin{align}\label{eq:Kolm}
\bigg(\fint_E |f|^s \, d\mu \bigg)^{\frac1s} 
\le \bigg(\frac{t}{t-s}\bigg)^{\frac1s} \mu(E)^{-\frac1t} \|f\|_{L^{t, \infty}(E)}, \quad 0 < s<t<\infty, 
\end{align}
where $E$ is a measurable set with $0<\mu(E)<\infty$.

Fix $x \in \overline{B(0, A)}$, and denote $B := B(x, r) \subset B(0, 2A+r) =: B_0$. Then by H\"{o}lder's inequality, \eqref{kpqs}, and \eqref{eq:Kolm}, 
\begin{align}\label{eq:WLP-12}
&\bigg(\fint_B |f|^{p_0} \, dz \bigg)^{\frac{1}{p_0}} 
= \bigg(\fint_B |f|^{p_0} w^{\frac1s} w^{-\frac1s}\, dz \bigg)^{\frac{1}{p_0}} 
\\ \nonumber
& \le \bigg(\fint_B |f|^{p_1} w \, dz \bigg)^{\frac{1}{p_1}} 
\bigg(\fint_B w^{-\frac{1}{s-1}} \, dz \bigg)^{\frac{1}{p_0 s'}} 
\\ \nonumber
& \le \bigg(\frac{w(B)}{|B|} \bigg)^{\frac{1}{p_1}}
\bigg(\fint_B |f|^{p_1} \, dw \bigg)^{\frac{1}{p_1}} 
\bigg(\fint_B w^{-\frac{1}{\kappa-1}} \, dz \bigg)^{\frac{\kappa-1}{s-1} \frac{1}{p_0 s'}} 
\\ \nonumber
& \lesssim \bigg(\frac{w(B)}{|B|} \bigg)^{\frac{1}{p_1}} 
w(B)^{-\frac1p} \|f\|_{L^{p, \infty}(w)}
\bigg(\fint_B w^{1-\kappa'} \, dz \bigg)^{\frac{\kappa-1}{p_1}} 
\\ \nonumber
&\lesssim |B|^{-\frac{\kappa}{p_1}} w(B)^{\frac{1}{p_1} - \frac1p} 
w^{1-\kappa'}(B)^{\frac{\kappa-1}{p_1}} \|f\|_{L^{p, \infty}(w)} 
\\ \nonumber 
&\lesssim r^{-\frac{n\kappa}{p_1}} w(B_0)^{\frac{1}{p_1} - \frac1p} 
w^{1-\kappa'}(B_0)^{\frac{\kappa-1}{p_1}} \|f\|_{L^{p, q}(w)}. 
\end{align} 
The fact $w, w^{1-\kappa'} \in L^1_{\loc}(\Rn)$ gives $w(B_0), w^{1-\kappa'}(B_0) < \infty$. Now invoking \eqref{eq:WLP-12}, we have for any $x, y \in \overline{B(0, A)}$,  
\begin{equation}\label{eq:WLP-2}
| \langle f \rangle_{B(x, r)}| 
\lesssim r^{-\frac{n \kappa}{p_1}} w(B_0)^{\frac{1}{p_1} - \frac1p} 
w^{1-\kappa'}(B_0)^{\frac{\kappa-1}{p_1}} \|f\|_{L^{p, q}(w)}, 
\end{equation}
and  
\begin{align}\label{eq:WLP-3}
& |\langle f \rangle_{B(x, r)} - \langle f \rangle_{B(y, r)}| 
\le \frac{1}{|B(0, r)|} \int_{\Rn} |f(z)| |\mathbf{1}_{B(x, r)} - \mathbf{1}_{B(y, r)}| \, dz 
\\ \nonumber 
& \lesssim r^{-n} \bigg(\int_{B(x, r) \cup B(y, r)} |f|^{p_0} \, dz \bigg)^{\frac{1}{p_0}} 
\bigg(\int_{\Rn} |\mathbf{1}_{B(x, r)} - \mathbf{1}_{B(y, r)}|^{p'_0} \, dz \bigg)^{\frac1{p'_0}} 
\\ \nonumber 
& \lesssim r^{-\frac{n}{p'_0} -\frac{n \kappa}{p_1}} w(B_0)^{\frac{1}{p_1} - \frac1p} 
w^{1-\kappa'}(B_0)^{\frac{\kappa-1}{p_1}} \|f\|_{L^{p, q}(w)} 
\bigg(\int_{\Rn} |\mathbf{1}_{B(x, r)} - \mathbf{1}_{B(y, r)}| \, dz \bigg)^{\frac1{p'_0}}. 
\end{align} 
It follows from the condition \eqref{RKB-1} and \eqref{eq:WLP-2}--\eqref{eq:WLP-3} that $\{\langle f \rangle_{B(x, r)}\}_{f \in \K}$ is equi-bounded and equi-continuous on the closed ball $\overline{B(0, A)}$. Hence, by the classical Ascoli--Arzel$\grave{\rm a}$ theorem (cf. \cite[p. 85]{Yos}), it is precompact in $\mathscr{C}(\overline{B(0, A)})$, and hence, totally bounded in $\mathscr{C}(\overline{B(0, A)})$. Then, one can find a finite number of functions $\{f_j\}_{j=1}^N \subset \K$ such that 
\begin{equation*}
\inf_{1 \le j \le N} \sup_{|x| \le A} |\langle f \rangle_{B(x, r)} - \langle f_j \rangle_{B(x, r)}| 
\le \varepsilon \, w(B(0, A))^{-\frac1p} \ \text{ for all }f \in \K, 
\end{equation*}
which gives that for each $f \in \K$ there exists $j \in \{1,\ldots, N\}$ such that 
\begin{equation}\label{eq:WLP-4}
\sup_{|x| \le A} |\langle f \rangle_{B(x, r)} - \langle f_j \rangle_{B(x, r)}| 
\le \varepsilon \, w(B(0, A))^{-\frac1p}.
\end{equation}
Consequently, we utilize \eqref{eq:WLP-1} and \eqref{eq:WLP-4} to arrive at 
\begin{align*}
&\|f - f_j\|_{L^{p, q}(w)}
\lesssim \|(f - f_j) \mathbf{1}_{B(0, A)}\|_{L^{p, q}(w)} 
+ \|(f - f_j) \mathbf{1}_{B(0, A)^c}\|_{L^{p, q}(w)}
\\
&\lesssim \|(f - \langle f \rangle_{B(\cdot, r)}) \mathbf{1}_{B(0, A)}\|_{L^{p, q}(w)}
+ \|(\langle f \rangle_{B(\cdot, r)} - \langle f_j \rangle_{B(\cdot, r)}) 
\mathbf{1}_{B(0, A)}\|_{L^{p, q}(w)}
\\
&\quad+ \| (\langle f_j \rangle_{B(\cdot, r)} - f_j) \mathbf{1}_{B(0, A)}\|_{L^{p, q}(w)}
+ \|f \mathbf{1}_{B(0, A)^c}\|_{L^{p, q}(w)} 
+ \|f_j \mathbf{1}_{B(0, A)^c}\|_{L^{p, q}(w)} 
\\
&\le \|f- \langle f \rangle_{B(\cdot, r)}\|_{L^{p, q}(w)}
+ \varepsilon \, w(B(0, A))^{-\frac1p} \|\mathbf{1}_{B(0, A)}\|_{L^{p, q}(w)}
\\
&\quad+\|f_j - \langle f_j \rangle_{B(\cdot, r)}\|_{L^{p, q}(w)}
+ \|f \mathbf{1}_{B(0, A)^c}\|_{L^{p, q}(w)} 
+ \|f_j \mathbf{1}_{B(0, A)^c}\|_{L^{p, q}(w)}
\\
&\lesssim 5\varepsilon,
\end{align*}
which shows that $\K$ is totally bounded. Therefore, $\K$ is precompact in $L^{p, q}(w)$. 

Finally, let us treat the case $q=\infty$. One can easily verify that the proof of the sufficiency above still holds. The necessity of the condition \eqref{RKB-1} is trivial. To show that the conditions \eqref{RKB-2} and \eqref{RKB-3} are not necessary, we construct a counterexample in $\R$. Given $1<p<\infty$, denote 
\begin{align*}
w \equiv 1 \in A_p \quad\text{ and }\quad 
f (x) := |x|^{-\frac1p} \mathbf{1}_{\{x>0\}}. 
\end{align*}
Then $\|f\|_{L^{p, \infty}} = 1$. Let $r>0$. For any $0<x<r$, 
\begin{align*}
\fint_{B(x, r)} f(y) \, dy 
= \frac{1}{2r} \int_0^{x+r} y^{-\frac1p} \, dy 
= \frac{p}{2r(p-1)} (x+r)^{1-\frac1p}, 
\end{align*}
and hence, for all $0<x<2r \big(\frac{p-1}{2p} \big)^p$, 
\begin{align*}
f(x) - \fint_{B(x, r)} f(y) \, dy 
\ge x^{-\frac1p} - \frac{p}{2r(p-1)} (2r)^{1-\frac1p} 
> \frac{x^{-\frac1p}}{2}. 
\end{align*}
Then for every $\lambda > p' (2r)^{-\frac1p}$ (equivalently, $(2\lambda)^{-p} < 2r \big( \frac{p-1}{2p}\big)^p$), the above inequality gives 
\begin{align*}
|\{x \in \R: |f(x) - \langle f \rangle_{B(x, r)}| > \lambda\}|
\ge \bigg|\bigg\{0<x<2r \Big( \frac{p-1}{2p} \Big)^p: \frac{x^{-\frac1p}}{2} > \lambda \bigg\}\bigg|
= (2\lambda)^{-p}, 
\end{align*}
which immediately implies 
\begin{align}\label{exp-1}
\|f - \langle f \rangle_{B(\cdot, r)}\|_{L^{p, \infty}} 
\ge 1/2, \quad\text{ for all } r>0. 
\end{align}
Additionally, for all $A>0$ and $0<\lambda<A^{-\frac1p}$, 
\begin{align*}
\lambda |\{x \in B(0, A)^c: |f(x)| > \lambda\}|^{\frac1p}
= \lambda (\lambda^{-p} - A)^{\frac1p}
= (1-A \lambda^p)^{\frac1p}, 
\end{align*}
which leads to 
\begin{align}\label{exp-2}
\|f \mathbf{1}_{B(0, A)^c}\|_{L^{p, \infty}} 
= 1, \quad\text{ for all } A>0. 
\end{align}
Since $\K := \{f\}$ is a compact set in $L^{p, \infty}(\R)$, the estimates \eqref{exp-1} and \eqref{exp-2} show that the conditions \eqref{RKB-2} and \eqref{RKB-3} are not necessary. 
\end{proof}

\begin{proof}[\bf Proof of Theorem \ref{thm:RKLpq}]
Before starting the proof, let us give some reductions. Denote 
\begin{align*}
\K^+ := \{f^+: f \in \K\} \quad\text{and}\quad 
\K^- := \{f^-: f \in \K\}, 
\end{align*}
where 
\begin{align*}
f^+ := (|f| + f)/2 \quad\text{and}\quad 
f^- := (|f| - f)/2. 
\end{align*}
Then for all $f, g \in \K$ and $x \in \Rn$, 
\begin{align*}
& 0 \le f^+(x) \le |f(x)|, \qquad |f^+(x) - g^+(x)| \le |f(x) - g(x)|, 
\\
& 0 \le f^-(x) \le |f(x)|, \qquad |f^-(x) - g^-(x)| \le |f(x) - g(x)|, 
\\
&\text{and}\quad 
|f(x)-g(x)| \le |f^+(x) - g^+(x)| + |f^-(x) - g^-(x)|,  
\end{align*}
which along with $(\tau_h f)^{\pm} = \tau_h f^{\pm}$ implies that 
\begin{align*}
\text{\eqref{RKLpq-1}--\eqref{RKLpq-3} hold for $\K$
$\iff$ \eqref{RKLpq-1}--\eqref{RKLpq-3} hold for $\K^+$ and $\K^-$}, 
\end{align*}
and 
\begin{align*}
\text{$\K$ is precompact in $L^{p, q}(\Rn)$ $\iff$ 
$\K^+$ and $\K^-$ are precompact in $L^{p, q}(\Rn)$}.
\end{align*}
Considering these, we may assume that $\K$ is a family of non-negative functions.

Pick a positive number $a$ such that $a<\min\{1, p, q\}$. Then $1<\frac{p}{a}, \frac{q}{a} < \infty$. Setting 
$\K^a := \{f^a: f \in \K\}$, we claim that 
\begin{align}\label{GP-1}
\text{\eqref{RKLpq-1}--\eqref{RKLpq-3} hold for $\K \subset L^{p, q}(\Rn)$
$\iff$ \eqref{RKLpq-1}--\eqref{RKLpq-3} hold for $\K^a \subset L^{\frac{p}{a}, \frac{q}{a}}(\Rn)$}, 
\end{align}
and
\begin{align}\label{GP-2}
\text{$\K$ is precompact in $L^{p, q}(\Rn) \iff $ 
$\K^a$ is precompact in $L^{\frac{p}{a}, \frac{q}{a}}(\Rn)$}.
\end{align}
To justify \eqref{GP-1} and \eqref{GP-2}, we present an elementary calculation (cf. \cite{Tsu}): for any $\alpha \in (0,1)$, 
\begin{equation}\label{eq:sata}
|s^{\alpha} - t^{\alpha}| 
\le |s-t|^{\alpha} 
\le \frac{1}{\alpha} \bigg(\frac{s+t}{|s-t|}\bigg)^{1-\alpha} |s^{\alpha} - t^{\alpha}|, 
\quad \text{ for all }s, t>0.
\end{equation}
Then for all $f, g \in \K$,  
\begin{align}\label{faga-1}
\|f^a - g^a\|_{L^{\frac{p}{a}, \frac{q}{a}}} 
\le \||f - g|^a\|_{L^{\frac{p}{a}, \frac{q}{a}}} 
= \|f - g\|_{L^{p, q}}^a.  
\end{align}
Fix $\varepsilon>0$ and $f, g \in \K$, then denote 
\begin{equation*}
E_{\varepsilon} 
:= \bigg\{x \in \Rn: \frac{f(x) + g(x)}{|f(x) - g(x)|}\le \frac{1}{\varepsilon} \bigg\}. 
\end{equation*}
By \eqref{eq:sata}, we have 
\begin{align}\label{faga-2}
\|f - g\|_{L^{p, q}}
&\le \||f - g|^a \mathbf{1}_{E_{\varepsilon}}\|_{L^{\frac{p}{a}, \frac{q}{a}}}^{\frac1a}  
+ \|(f - g) \mathbf{1}_{E_{\varepsilon}^c}\|_{L^{p, q}}
\\ \nonumber 
&\lesssim \varepsilon^{a-1} \||f^a - g^a| \mathbf{1}_{E_{\varepsilon}}\|_{L^{\frac{p}{a}, \frac{q}{a}}}^{\frac1a} 
+ \varepsilon \|(f + g) \mathbf{1}_{E_{\varepsilon}^c}\|_{L^{p, q}}
\\ \nonumber
&\lesssim \varepsilon^{a-1} \|f^a - g^a\|_{L^{\frac{p}{a}, \frac{q}{a}}}^{\frac1a} 
+ \varepsilon \|f^a\|_{L^{\frac{p}{a}, \frac{q}{a}}}^{\frac1a}  
+ \varepsilon \|g^a\|_{L^{\frac{p}{a}, \frac{q}{a}}}^{\frac1a}
\\ \nonumber
&\le \varepsilon^{a-1} \|f^a - g^a\|_{L^{\frac{p}{a}, \frac{q}{a}}}^{\frac1a} 
+ 2 \varepsilon K_0, 
\end{align}
where the implicit constants are independent of $\varepsilon$, $f$, and $g$. Here, 
\begin{align*}
K_0 := \sup_{f \in \K} \|f\|_{L^{p, q}} 
= \sup_{f \in \K} \|f^a\|_{L^{\frac{p}{a}, \frac{q}{a}}}^{\frac1a}.
\end{align*}  
Thus, \eqref{faga-1} and \eqref{faga-2} imply \eqref{GP-1} holds. 

Furthermore, \eqref{faga-1} gives the left-to-right implication in \eqref{GP-2}. To show the inverse, assume that $\K^a$ is precompact in $L^{\frac{p}{a}, \frac{q}{a}}(\Rn)$, and let $\{f_j\}$ be an arbitrary sequence of functions in $\K$. By the  precompactness of $\K^a$, one can find a Cauchy subsequence of $\{f_j^a\}$ (which we relabel). Then $K_0 := \sup_j \|f_j^a\|_{L^{\frac{p}{a}, \frac{q}{a}}}^{\frac1a} < \infty$, and given $\varepsilon>0$, there exists an integer $N=N(\varepsilon)$ such that for all $i, j \ge N$, 
\begin{equation}\label{eq:faij}
\|f_i^a - f_j^a\|_{L^{\frac{p}{a}, \frac{q}{a}}} 
\le \varepsilon^a.
\end{equation}
Fix $i, j \ge N$. By \eqref{eq:faij}, the estimate \eqref{faga-2} applied to $(f, g) = (f_i, f_j)$ implies  
\begin{align*}
\|f_i - f_j\|_{L^{p, q}}
\lesssim \varepsilon^{a-1} \|f_i^a - f_j^a\|_{L^{\frac{p}{a}, \frac{q}{a}}}^{\frac1a}  
+ 2 \varepsilon K_0 
\le \varepsilon^a + 2 \varepsilon K_0, 
\end{align*}
where the implicit constants are independent of $i$, $j$, and $\varepsilon$. This asserts that $\{f_j\}$ is a Cauchy sequence in $\K \subset L^{p, q}(\Rn)$. Thus, by the completeness of $L^{p, q}(\Rn)$, $\K$ is precompact in $L^{p, q}(\Rn)$.

By \eqref{GP-1} and \eqref{GP-2}, to conclude the proof, it is enough to show the case $1<p, q<\infty$. The sufficiency follows from Theorem \ref{thm:RKB} and the following estimates 
\begin{align}\label{eq:hr}
\|f - \langle f \rangle_{B(\cdot, r)}\|_{L^{p, q}} 
&\le \bigg\|\fint_{B(0, r)} |f - \tau_y f| \, dy \bigg\|_{L^{p, q}}
\\ \nonumber 
&\lesssim \fint_{B(0, r)} \|f - \tau_y f\|_{L^{p, q}}  \, dy
\le \sup_{|h|<r} \|\tau_h f -f\|_{L^{p, q}}, 
\end{align}
where to achieve the second inequality, we used the duality between $L^{p, q}(\Rn)$ and $L^{p', q'}(\Rn)$, and Fubini's theorem. To show the necessity, we assume that $\K$ is precompact in $L^{p, q}(\Rn)$. Let $\varepsilon>0$ be an arbitrary number. Then, there exists a finite number of functions $\{f_j\}_{j=1}^N \subset L^{p, q}(\Rn)$ such that for each $f \in \K$, there exists some $j \in \{1, \ldots, N\}$ such that $\|f-f_j\|_{L^{p, q}(\Rn)} \le \varepsilon$. Hence, 
\begin{align*}
\sup_{f \in \K} \|f\|_{L^{p, q}} 
\le \sup_{f \in \K} \big(\inf_{1 \le j \le N} \|f-f_j\|_{L^{p, q}} 
+ \sup_{1 \le j \le N} \|f_j\|_{L^{p, q}} \big)
\le \varepsilon + \sup_{1 \le j \le N} \|f_j\|_{L^{p, q}}, 
\end{align*}
which proves the condition \eqref{RKLpq-1} holds. 

Let $f \in \K$. Then 
\begin{align}\label{fgpq-1}
\|f-f_j\|_{L^{p, q}} \le \varepsilon, \quad\text{for some } j \in \{1, \ldots, N\}.
\end{align} 
Since $\mathscr{C}_c^{\infty}(\Rn)$ is dense in $L^{p, q}(\Rn)$ (cf. Lemma \ref{lem:dense-Lpq}), there exists $g_j \in \mathscr{C}_c^{\infty}(\Rn)$ such that $\|f_j-g_j\|_{L^{p, q}} \le \varepsilon$. This together with \eqref{fgpq-1} gives 
\begin{align}\label{fgpq-2}
\|f-g_j\|_{L^{p, q}} 
\le \|f-f_j\|_{L^{p, q}} + \|f_j - g_j\|_{L^{p, q}}
\le 2\varepsilon.
\end{align} 
If we let $A_0>0$ satisfy $\bigcup_{j=1}^N \supp g_j \subset B(0, A_0)$, then for any $A \ge A_0$, \eqref{fgpq-2} implies 
\begin{align*}
\|f \mathbf{1}_{B(0, A)^c}\|_{L^{p, q}} 
\lesssim \|f-g_j\|_{L^{p, q}} 
+ \|g_j \mathbf{1}_{B(0, A)^c}\|_{L^{p, q}}
\le 2 \varepsilon, 
\end{align*}
which coincides with the condition \eqref{RKLpq-2}. Moreover, by the fact $g_j \in \mathscr{C}_c^{\infty}(\Rn)$, there exists $\delta_0=\delta_0(\varepsilon)>0$ such that 
\begin{align}\label{fgpq-3}
\sup_{|x-y| < \delta_0} |g_j(x) - g_j(y)| 
\le \varepsilon |B(0, 2A_0)|^{-\frac1p}.
\end{align}
Consequently, for every $|h| < \min\{\delta_0, A_0\}$, the inequality \eqref{fgpq-3} yields 
\begin{equation}\label{fgpq-4}
\|\tau_h g_j - g_j\|_{L^{p, q}} 
\le \varepsilon |B(0, 2A_0)|^{-\frac1p} \|\mathbf{1}_{B(0, 2A_0)}\|_{L^{p, q}}
\lesssim \varepsilon, 
\end{equation}
provided $\supp(\tau_h g_j-g_j) \subset B(0, 2A_0)$. Invoking \eqref{fgpq-2} and \eqref{fgpq-4}, we deduce that for all $|h|<\min\{\delta_0, A_0\}$, 
\begin{align*}
\|\tau_h f - f\|_{L^{p, q}} 
& \le \|\tau_h f - \tau_h g_j\|_{L^{p, q}} 
+ \|\tau_h g_j - g_j\|_{L^{p, q}} 
+ \|g_j - f\|_{L^{p, q}}
\\
& = 2 \|f - g_j\|_{L^{p, q}} 
+ \|\tau_h g_j - g_j\|_{L^{p, q}} 
\lesssim 5 \varepsilon,  
\end{align*}
where the implicit constant is independent of $\varepsilon$, $h$, and $f$. This immediately implies the condition \eqref{RKLpq-3} as desired.

It remains to deal with the case $0<p<\infty$ and $q=\infty$. As above, we are reduced to treating the case $1<p<\infty$. The sufficiency of \eqref{RKLpq-1}, \eqref{RKLpq-2}, and \eqref{RKLpq-3} is a consequence of Theorem \ref{thm:RKB} and \eqref{eq:hr} with $q=\infty$. To obtain the latter, we use the duality between $L^{p, \infty}(\Rn)$ and $L^{p', 1}(\Rn)$, see  \cite[Exercise 1.4.12]{Gra1}. In addition, the necessity of \eqref{RKLpq-1} is trivial, while by Theorem \ref{thm:RKB},  \eqref{RKLpq-2} is not necessary. To prove \eqref{RKLpq-3} is not necessary, we denote 
\begin{align*}
f (x) := |x|^{-\frac1p} \mathbf{1}_{\{x>0\}}, \quad x \in \R. 
\end{align*}
Then $\|f\|_{L^{p, \infty}} = 1$, and for any $h>0$, $\tau_h f(x) = |x-h|^{-\frac1p}$ if $x>h$, $\tau_h f(x)=0$ if $x \le h$. Thus, for all $h>0$ and $\lambda > h^{-\frac1p}$, 
\begin{align*}
|\{x \in \R: |\tau_h f(x) - f(x)| > \lambda\}|
& \ge |\{0<x<h: |\tau_h f(x) - f(x)| > \lambda\}|
\\
&= |\{0<x<h: |x|^{-\frac1p} > \lambda\}|
= \lambda^{-p}, 
\end{align*}
which gives  
\begin{align*}
\|\tau_h f - f\|_{L^{p, \infty}} 
\ge 1, \quad\text{ for all } h>0. 
\end{align*}
This asserts that $\K := \{f\}$ is a compact set in $L^{p, \infty}(\R)$, but does not satisfy the condition  \eqref{RKB-3}.  
\end{proof}

\subsection{Weighted Kolmogorov--Riesz theorems}

\begin{proof}[\bf Proof of Theorem \ref{thm:RKW}]
Write $p_0 := 1+\frac{1}{\lambda}$. By the rescaling argument as in the proof of Theorem \ref{thm:RKLpq}, it suffices to consider the case $p_0^2<p, q<\infty$. Pick $p_1 \in (p_0^2, p)$ and $s=\frac{p_1}{p_0}$. Similarly to \eqref{eq:hr}, there holds 
\begin{align*}
\|f - \langle f \rangle_{B(\cdot, r)}\|_{L^{p, q}(w)} 
\lesssim \fint_{B(0, r)} \|f - \tau_y f\|_{L^{p, q}(w)}  \, dy
\le \sup_{|h|<r} \|\tau_h f -f\|_{L^{p, q}(w)}. 
\end{align*}
Thus, the proof is analogous to that of the sufficiency of Theorem \ref{thm:RKB}, where $\kappa$ is replaced by $p_0$. 

Since $\mathscr{C}_c^{\infty}(\Rn)$ is dense in $L^{p, q}(w)$ (cf. Lemma \ref{lem:dense-Lpq}), the necessity of \eqref{RKW-1} and \eqref{RKW-2} can be shown much as in the proof of the necessity of Theorem \ref{thm:RKLpq}. The condition \eqref{RKW-3} is not necessary because \cite[Theorem 2.8 (b)]{COY} presented a counterexample in the case $w \in A_2$ and $0<p=q<\infty$. 

When $q=\infty$, the sufficiency of \eqref{RKW-1}--\eqref{RKW-3} can be proved as above, while by Theorem \ref{thm:RKLpq}, \eqref{RKW-2} and \eqref{RKW-3} are not necessary.  This completes the proof. 
\end{proof}

\begin{proof}[\bf Proof of Theorem \ref{thm:RKWA}]
To show the sufficiency, assume that the conditions \eqref{RKWA-1}--\eqref{RKWA-3} hold. As argued in the proof of Theorem \ref{thm:RKLpq}, we assume that $\K$ is a family of non-negative functions. Since 
\begin{align*}
\big|f	^a(x) - \langle f^a \rangle_{B(x, r)} \big| 
\le \fint_{B(0, r)} |\tau_y f (x) - f(x)|^a \, dy, 
\end{align*}
the conditions \eqref{RKWA-1}--\eqref{RKWA-3} imply 
\begin{align}
\label{fGfG-1}
& \sup_{f \in \K} \| f^a\|_{L^{\frac{p}{a}, \frac{q}{a}}(w)} < \infty, 
\\
\label{fGfG-2}
\lim_{A \to \infty} & \sup_{f \in \K} \|f^a \mathbf{1}_{B(0, A)^c}\|_{L^{\frac{p}{a}, \frac{q}{a}}(w)}=0, 
\\ 
\label{fGfG-3}
\lim_{r \to 0} & \sup_{f \in \K}\| f^a - \langle f^a \rangle_{B(\cdot, r)}\|_{L^{\frac{p}{a}, \frac{q}{a}}(w)}=0.
\end{align}
Note that $1<\frac{p}{a}, \frac{q}{a}<\infty$ and $w \in A_{p_0} \subset A_{\frac{p}{a}}$. Then by Theorem \ref{thm:RKB} and \eqref{fGfG-1}--\eqref{fGfG-3}, $\K^a := \{f^a: f \in \K\}$ is precompact in $L^{\frac{p}{a}, \frac{q}{a}}(w)$. Much as in \eqref{GP-2}, one can prove that $\K$ is precompact in $L^{p, q}(w)$.

To justify the necessity, assume that $\K$ is precompact in $L^{p, q}(w)$. The condition $w \in A_{p_0}$ implies $w, w^{1-p'_0} \in L^1_{\loc}(\Rn)$. Then this, together with Theorem \ref{thm:RKW}, gives the conditions \eqref{RKWA-1} and \eqref{RKWA-2}. To check \eqref{RKWA-3}, let $\varepsilon>0$. By the precompactness of $\K$, there exists a finite number of functions $\{f_j\}_{j=1}^N \subset \K$ such that $\inf_{1 \le j \le N} \|f-f_j\|_{L^{p, q}(w)} \le \varepsilon$ for all $f \in \K$. Fix $f \in \K$. Then there is some $j \in \{1, \ldots, N\}$ such that 
\begin{align}\label{eq:ffe}
\|f-f_j\|_{L^{p, q}(w)} \le \varepsilon. 
\end{align}
By Lemma \ref{lem:dense-Lpq}, $\mathscr{C}_c^{\infty}(\Rn)$ is dense in $L^{p, q}(w)$. Then for each $j \in \{1, \ldots, N\}$, there exists $g_j \in \mathscr{C}_c^{\infty}(\Rn)$ so that $\|f_j-g_j\|_{L^{p, q}(w)} \le \varepsilon$, which gives 
\begin{align}\label{fgjj}
\|f-g_j\|_{L^{p, q}(w)}
\lesssim \|f-f_j\|_{L^{p, q}(w)}
+ \|f_j-g_j\|_{L^{p, q}(w)} 
\le 2 \varepsilon,  
\end{align}
and 
\begin{align}\label{xygj}
\sup_{|x-y| < r_0} |g_j(x) - g_j(y)| 
\le \varepsilon \, w(B(0, 2A_0))^{-\frac1p} 
\quad\text{ for some } r_0>0, 
\end{align}
where $\bigcup_{j=1}^N \supp(g_j) \subset B(0, A_0)$. We split 
\begin{align}\label{fjjj}
\mathcal{I} 
& := \bigg\|\bigg(\fint_{B(0, r)} |\tau_y f - f|^a \, dy\bigg)^{\frac1a} \bigg\|_{L^{p, q}(w)} 
\\ \nonumber 
& \lesssim \bigg\|\bigg(\fint_{B(0, r)} |\tau_y f - \tau_y g_j|^a \, dy\bigg)^{\frac1a} \bigg\|_{L^{p, q}(w)} 
\\  \nonumber
&\quad + \bigg\|\bigg(\fint_{B(0, r)} |\tau_y g_j - g_j|^a \, dy\bigg)^{\frac1a} \bigg\|_{L^{p, q}(w)} 
\\ \nonumber
&\quad + \bigg\|\bigg(\fint_{B(0, r)} |g_j - f|^a \, dy\bigg)^{\frac1a} \bigg\|_{L^{p, q}(w)} 
\\ \nonumber
&=: \mathcal{I}_1 + \mathcal{I}_2 + \mathcal{I}_3. 
\end{align}
It follows from the fact $w \in A_{p_0} \subset A_{\frac{p}{a}}$, \eqref{Mhst}, and \eqref{fgjj} that  
\begin{align}\label{fjjj-1} 
\mathcal{I}_1 
&\le \big\|M \big(|f-g_j|^a \big)^{\frac1a} \big\|_{L^{p, q}(w)}
= \big\|M \big(|f-g_j|^a \big)\big\|_{L^{\frac{p}{a}, \frac{q}{a}}(w)}^{\frac1a} 
\\ \nonumber 
&\lesssim \| |f-g_j|^a\|_{L^{\frac{p}{a}, \frac{q}{a}}(w)}^{\frac1a} 
= \|f-g_j\|_{L^{p, q}(w)} 
\lesssim 2 \varepsilon. 
\end{align}
For all $0<r<\min\{r_0, A_0\}$, using \eqref{xygj} and that $\supp(\tau_y g_j -g_j) \subset B(0, 2A_0)$ for any $|y|<r$, we have 
\begin{align}\label{fjjj-2} 
\mathcal{I}_2  
\le \|\mathbf{1}_{B(0, 2A_0)}\|_{L^{p, q}(w)} 
\sup_{|x-y| < r_0} |g_j(x) - g_j(y)| 
\lesssim \varepsilon. 
\end{align}
Additionally, \eqref{fgjj} implies 
\begin{align}\label{fjjj-3} 
\mathcal{I}_3 
= \|g_j-f\|_{L^{p, q}(w)} 
\lesssim 2 \varepsilon. 
\end{align}
Now gathering \eqref{fjjj}--\eqref{fjjj-3}, we conclude that 
\begin{align*}
\mathcal{I} 
\lesssim 5 \varepsilon,  
\quad\text{ for all } 0<r<\min\{r_0, A_0\}. 
\end{align*}
This yields \eqref{RKWA-3}.  

In the case $q=\infty$, the sufficiency of \eqref{RKWA-1}--\eqref{RKWA-3} is the same as above because of the rescaling argument and Theorem \ref{thm:RKB} with $q=\infty$. By Theorem \ref{thm:RKB} again, the condition \eqref{RKW-2} is not necessary. Pick 
\begin{align*}
f (x) := |x|^{-\frac1p} \mathbf{1}_{\{x>0\}}, \quad x \in \R. 
\end{align*}
Then $\|f\|_{L^{p, \infty}} = 1$. Let $r>0$ and $0<x<r/2$. We have  
\begin{align*}
S_r f(x) 
&:= \bigg( \fint_{B(0, r)} |\tau_y f(x) - f(x)|^a \, dy \bigg)^{\frac1a}
\\
&\ge \bigg( \frac{1}{2r} \int_{\frac{r}{2}}^r |\tau_y f(x) - f(x)|^a \, dy \bigg)^{\frac1a}
\\
&= \bigg( \frac{1}{2r} \int_{\frac{r}{2}}^r |x|^{-\frac{a}{p}} \, dy \bigg)^{\frac1a}
=4^{-\frac1a} |x|^{-\frac1p}, 
\end{align*}
which gives 
\begin{align*}
&|\{x \in \R: S_r f(x) > \lambda\}| 
\ge |\{0<x<r/2: S_r f(x) > \lambda\}| 
\\
&\ge |\{0<x<r/2: 4^{-\frac1a} x^{-\frac1p} > \lambda\}| 
= \min\big\{r/2,  4^{-\frac{p}{a}} \lambda^{-p}\big\}
= 4^{-\frac{p}{a}} \lambda^{-p}, 
\end{align*}
provided $\lambda>4^{\frac1a} (r/2)^{\frac1p}$. Hence, 
\begin{align*}
\|S_r f\|_{L^{p, \infty}} 
\ge 4^{-\frac1a}, \quad \text{for all } r>0. 
\end{align*}
This shows that the compact subset $\K := \{f\}$ in $L^{p, \infty}(\R)$ does not satisfy the condition  \eqref{RKWA-3}.  
\end{proof}

\subsection{Characterizations of compactness via projection}

The following result was shown in \cite[Lemma 6.1]{GXY}, which states that the limit of a sequence of compact bilinear operators is also compact. 

\begin{lemma}\label{lem:limit}
Let $\mathscr{X}_1$ and $\mathscr{X}_2$ be Banach spaces and $\mathscr{Y}$ be a quasi-Banach space. For each $j \in \N$, let $T_j$ be a compact bilinear operator from $\mathscr{X}_1 \times \mathscr{X}_2$ to $\mathscr{Y}$. If a bilinear operator $T: \mathscr{X}_1 \times \mathscr{X}_2 \to \mathscr{Y}$ satisfies 
\begin{align*}
\lim_{j \to \infty} \|T_j - T\|_{\mathscr{X}_1 \times \mathscr{X}_2 \to \mathscr{Y}} = 0, 
\end{align*}
then $T$ is compact from $\mathscr{X}_1 \times \mathscr{X}_2$ to $\mathscr{Y}$. 
\end{lemma}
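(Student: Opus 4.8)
The plan is to prove Lemma \ref{lem:limit} by the standard diagonal/approximation argument for compact operators, adapted to the bilinear setting and to a quasi-Banach target. First I would fix a bounded sequence $\{(f_k, g_k)\}_{k \in \N} \subset \mathscr{X}_1 \times \mathscr{X}_2$, say with $\|f_k\|_{\mathscr{X}_1} \le R$ and $\|g_k\|_{\mathscr{X}_2} \le R$ for all $k$, and show that $\{T(f_k, g_k)\}$ has a convergent subsequence in $\mathscr{Y}$. Since each $T_j$ is compact, a diagonal argument over $j$ produces a single subsequence (which I relabel as $\{(f_k, g_k)\}$) along which $\{T_j(f_k, g_k)\}_k$ converges in $\mathscr{Y}$ for every fixed $j \in \N$; call the limit $y_j := \lim_k T_j(f_k, g_k)$.

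The next step is to show that $\{T(f_k, g_k)\}_k$ is Cauchy in $\mathscr{Y}$. Here I would use the quasi-triangle inequality for the quasi-norm $\|\cdot\|_{\mathscr{Y}}$, with quasi-triangle constant $C_{\mathscr{Y}}$: for any $k, l$ and any $j$,
\begin{align*}
\|T(f_k, g_k) - T(f_l, g_l)\|_{\mathscr{Y}}
&\lesssim \|T(f_k, g_k) - T_j(f_k, g_k)\|_{\mathscr{Y}}
+ \|T_j(f_k, g_k) - T_j(f_l, g_l)\|_{\mathscr{Y}}
\\
&\quad+ \|T_j(f_l, g_l) - T(f_l, g_l)\|_{\mathscr{Y}},
\end{align*}
where the implicit constant depends only on $C_{\mathscr{Y}}$ (iterating the quasi-triangle inequality a bounded number of times). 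The first and third terms are bounded by $\|T_j - T\|_{\mathscr{X}_1 \times \mathscr{X}_2 \to \mathscr{Y}}\, \|f_k\|_{\mathscr{X}_1} \|g_k\|_{\mathscr{X}_2} \le R^2 \|T_j - T\|_{\mathscr{X}_1 \times \mathscr{X}_2 \to \mathscr{Y}}$, which is small for $j$ large by hypothesis; the middle term is small for $k, l$ large once $j$ is fixed, since $\{T_j(f_k, g_k)\}_k$ converges. So: given $\varepsilon > 0$, first choose $j$ so that $R^2\|T_j - T\|_{\mathscr{X}_1 \times \mathscr{X}_2 \to \mathscr{Y}} < \varepsilon$, then choose $K$ so that $\|T_j(f_k, g_k) - T_j(f_l, g_l)\|_{\mathscr{Y}} < \varepsilon$ for $k, l \ge K$; this makes the left side $\lesssim \varepsilon$. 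Hence $\{T(f_k, g_k)\}$ is Cauchy, and by completeness of the quasi-Banach space $\mathscr{Y}$ it converges. This shows $T$ is compact.

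The one genuine point requiring care — and the step I expect to be the mild obstacle — is the quasi-Banach nature of $\mathscr{Y}$: the triangle inequality fails, so the three-term splitting carries a multiplicative constant $C_{\mathscr{Y}}$ (resp. $C_{\mathscr{Y}}^2$ after two iterations), and one must be sure this constant is absorbed harmlessly before choosing $\varepsilon$; by the Aoki--Rolewicz theorem one could alternatively pass to an equivalent $r$-norm for some $r \in (0,1]$ and work with $\|\cdot\|_{\mathscr{Y}}^r$, which is subadditive, but this is not needed since a fixed finite constant suffices. Everything else is routine: the diagonalization uses only that $\mathscr{X}_1, \mathscr{X}_2$ are normed (so bounded sets are well-defined) and that $\N$-indexed extractions can be diagonalized, and the convergence of the Cauchy sequence uses completeness of $\mathscr{Y}$. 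Since this is exactly \cite[Lemma 6.1]{GXY}, I would in the actual write-up simply cite that reference, but the argument above is the self-contained proof.
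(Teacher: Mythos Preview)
Your proposal is correct and matches the paper's treatment: the paper does not give a proof of this lemma at all but simply cites \cite[Lemma 6.1]{GXY}, exactly as you anticipated. Your self-contained diagonal/Cauchy argument is the standard one and handles the quasi-Banach constant correctly, so there is nothing to add.
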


\begin{proof}[\bf Proof of Theorem \ref{thm:PNT-cpt}]
We begin with showing item \eqref{list:PN1}. Let us first prove the sufficiency. Assume that $\lim_{N \to \infty} \|P_N^{\perp} T\|_{L^{p_1} \times L^{p_2} \to L^p} =0$. Note that $P_N$ is a finite-dimensional operator for each $N \in \N$. Then, by \cite[Proposition 15.1]{FHHMZ}, $\{P_N T\}_{N \in \N}$ is a collection of compact operators from $L^{p_1}(\Rn) \times L^{p_2}(\Rn)$ to $L^p(\Rn)$. By Lemma \ref{lem:limit} and  that $\lim_{N \to \infty} \|P_N T -T\|_{L^{p_1} \times L^{p_2} \to L^p} = \lim_{N \to \infty} \|P_N^{\perp} T\|_{L^{p_1} \times L^{p_2} \to L^p} = 0$, we conclude that $T$ is compact from $L^{p_1}(\Rn) \times L^{p_2}(\Rn)$ to $L^p(\Rn)$. 

To show the necessity, let $T$ be compact from $L^{p_1}(\Rn) \times L^{p_2}(\Rn)$ to $L^p(\Rn)$. Then 
\begin{align*}
\K := \big\{T(f_1, f_2): \|f_1\|_{L^{p_i}} \le 1, i=1,2 \big\}
\end{align*} 
is a precompact subset in $L^p(\Rn)$. It suffices to demonstrate that 
\begin{align}\label{NPN}
\lim_{N \to \infty} \sup_{\|f_1\|_{L^{p_1}} \le 1 \atop \|f_2\|_{L^{p_2}} \le 1} 
\|P_N^{\perp}(T(f_1, f_2))\|_{L^p} 
= 0. 
\end{align}
Assume that \eqref{NPN} does not hold. Then there exist $\varepsilon_0>0$ and a sequence $\{f_i^k\}_{k=1}^{\infty}$ with $\|f_i^k\|_{L^{p_i}(\Rn)} \le 1$, $i=1, 2$, so that 
\begin{align}\label{fkep}
\|P_k^{\perp}(T(f_1^k, f_2^k))\|_{L^p} 
\ge \varepsilon_0. 
\end{align}
By the precompactness of $\K$, one can find a subsequence (which we relabel) $\{f_i^k\}_{k=1}^{\infty}$ so that 
\begin{align}\label{fifi}
\lim_{k \to \infty} \|f_i^k - f_i\|_{L^{p_i}} = 0 \, \text{ for some } f_i \in L^{p_i}(\Rn), \, \, i=1, 2.
\end{align}
Observe that 
\begin{align}\label{pkf}
\lim_{k \to \infty} \|P_k^{\perp} (T(f_1, f_2))\|_{L^p} = 0, 
\end{align}
and by \eqref{ddf-4} and the $L^{p_1} \times L^{p_2} \to L^p$ boundedness of $T$, 
\begin{align}\label{supk}
\sup_{k \ge 1} \|P_k^{\perp} T\|_{L^{p_1} \times L^{p_2} \to L^p}
\lesssim \|T\|_{L^{p_1} \times L^{p_2} \to L^p}
<\infty.
\end{align}
We split 
\begin{align*}
&\|P_k^{\perp} (T(f_1^k, f_2^k))\|_{L^p}
\\
&\le \|P_k^{\perp} (T(f_1, f_2))\|_{L^p}
+ \|P_k^{\perp} (T(f_1, f_2^k -f_2))\|_{L^p}
+ \|P_k^{\perp} (T(f_1^k-f_1, f_2^k))\|_{L^p}
\\
&\le \|P_k^{\perp} (T(f_1, f_2))\|_{L^p}
+ \|P_k^{\perp} T\|_{L^{p_1} \times L^{p_2} \to L^p} 
\|f_1\|_{L^{p_1}} \|f_2^k-f_2\|_{L^{p_2}}
\\
&\quad+ \|P_k^{\perp} T\|_{L^{p_1} \times L^{p_2} \to L^p} 
\|f_1^k-f_1\|_{L^{p_1}} \|f_2^k\|_{L^{p_2}},  
\end{align*}
which along with \eqref{fifi}--\eqref{supk} leads to 
\begin{align*}
\lim_{k \to \infty} \|P_k^{\perp} (T(f_1^k, f_2^k))\|_{L^p} 
=0. 
\end{align*}
This contradicts with \eqref{fkep}. Therefore, \eqref{NPN} holds. 

Much as above, it is easy to prove the first part in item \eqref{list:PN2}. To show the second part, we follow \cite{OV} to construct a counterexample in $\R$. Define a bilinear operator 
\begin{align*}
T(f_1, f_2) 
:= \langle f_1, h_{I_0} \rangle \langle f_2, h_{I_0} \rangle \, \mathbf{1}_{I_0}, 
\end{align*}
where $I_0 = [0, 1)$. Since $T$ is a finite-dimensional operator, it is compact from $L^1(\Rn) \times L^1(\Rn) \to L^{\frac12, \infty}(\Rn)$. Choose $f_1 = f_2 = h_{I_0}$ such that $\|f_1\|_{L^1} = \|f_2\|_{L^1} = 1$. Then for any $N \ge 1$, 
\begin{align*}
P_N^{\perp} T(f_1, f_2) = P_N^{\perp} \mathbf{1}_{I_0}
=  \mathbf{1}_{I_0} - \sum_{I \in \D(N)} \langle \mathbf{1}_{I_0}, h_I \rangle h_I. 
\end{align*}
Given $I \in \D(N)$, if $\langle \mathbf{1}_{I_0}, h_I \rangle \neq 0$, then $I_0 \subsetneq I$, hence $I=[0, 2^k)$ with $1 \le k \le N$ and $\langle \mathbf{1}_{I_0}, h_I \rangle = \langle h_I \rangle_{I_0} = |I|^{-\frac12} = 2^{-\frac{k}{2}}$. This gives 
\begin{align}\label{kpn-1}
P_N^{\perp} T(f_1, f_2) 
=  \mathbf{1}_{[0, 1)} + \sum_{k=1}^N 2^{-k} \mathbf{1}_{[2^{k-1}, 2^k)} 
- \sum_{k=1}^N 2^{-k} \mathbf{1}_{[0, 2^{k-1})}. 
\end{align}
Note that 
\begin{align}\label{kpn-2}
\sum_{k=1}^N 2^{-k} \mathbf{1}_{[0, 2^{k-1})} 
&= \sum_{k=1}^N 2^{-k} \mathbf{1}_{[0, 1)} 
+ \sum_{k=2}^N 2^{-k} \sum_{j=1}^{k-1} \mathbf{1}_{[2^{j-1}, 2^j)} 
\\ \nonumber 
&= \sum_{k=1}^N 2^{-k} \mathbf{1}_{[0, 1)} 
+ \sum_{j=1}^{N-1} \sum_{k=j+1}^N 2^{-k} \mathbf{1}_{[2^{j-1}, 2^j)} 
\\ \nonumber
&= (1-2^{-N}) \mathbf{1}_{[0, 1)} 
+ \sum_{j=1}^{N-1} (2^{-j} - 2^{-N}) \mathbf{1}_{[2^{j-1}, 2^j)}. 
\end{align}
Combining \eqref{kpn-1} with \eqref{kpn-2}, we obtain 
\begin{align*}
P_N^{\perp} T(f_1, f_2) 
=  2^{-N} \mathbf{1}_{[0, 1)} + \sum_{k=1}^N 2^{-N} \mathbf{1}_{[2^{k-1}, 2^k)} 
+ 2^{-N} \mathbf{1}_{[2^{N-1}, 2^N)} 
= 2^{-N} \mathbf{1}_{[0, 2^N)},  
\end{align*}
which implies for any $N \ge 1$, 
\begin{align*}
\sup_{\lambda>0} \lambda |\{x \in \R: |P_N^{\perp}T(f_1, f_2)(x)| > \lambda\}|^2 
& \ge \sup_{0<\lambda<2^{-N}} \lambda |\{x \in [0, 2^N): 2^{-N} > \lambda\}|^2 
\\
&= \sup_{0<\lambda<2^{-N}} \lambda 2^{2N} 
= 2^N \to \infty, \, \text{ as } N \to \infty. 
\end{align*}
This shows $\lim_{N \to \infty} \|P_N^{\perp} T\|_{L^1 \times L^1 \to L^{\frac12, \infty}} \neq 0$.

The proof of item \eqref{list:PN3} is similar to that of item \eqref{list:PN1}. The only difference is that \eqref{pkf} and \eqref{supk} are respectively replaced by the following estimates 
\begin{align}\label{pkff}
\lim_{k \to \infty} \|P_k^{\perp} (T(f_1, f_2))\|_{\BMO} = 0, 
\quad\text{whenever } f_1, f_2 \in L^{\infty}(\Rn), 
\end{align}
and 
\begin{align}\label{supkk}
\sup_{k \ge 1} \|P_k^{\perp} T\|_{L^{\infty} \times L^{\infty} \to \BMO} < \infty.
\end{align}
Indeed, \eqref{pkff} follows from that $T(f_1, f_2) \in \CMO(\Rn)$. To prove \eqref{supkk}, note that for any $k \ge 1$ and $b \in \BMO(\Rn)$, 
\begin{align*}
|P_k b| 
\le \sum_{I \in \D(k)} |\langle b, h_I \rangle| \, |h_I|  
\le \# \D(k) \sup_{I \in \D} |\langle b, h_I \rangle| \, |I|^{-\frac12} 
\le C_k \, \|b\|_{\BMO_{\D}} 
\le C_k \, \|b\|_{\BMO}. 
\end{align*}
Hence, for all $k \ge 1$ and $f_1, f_2 \in L^{\infty}(\Rn)$, 
\begin{align*}
\|P_k^{\perp} T(f_1, f_2)\|_{\BMO} 
&\le \|P_k T(f_1, f_2)\|_{\BMO}  + \|T(f_1, f_2)\|_{\BMO}  
\\ 
&\le 2 \|P_k T(f_1, f_2)\|_{L^{\infty}}  + \|T(f_1, f_2)\|_{\BMO}  
\\ 
& \le (2 C_k +1) \, \|T(f_1, f_2)\|_{\BMO} 
\le C'_k \, \|f_1\|_{L^{\infty}} \|f_2\|_{L^{\infty}}, 
\end{align*}
where the constant $C'_k$ is independent of $f_1$ and $f_2$, which means 
\begin{align}\label{pktf-1}
\|P_k^{\perp} T\|_{L^{\infty} \times L^{\infty} \to \BMO} 
\le C'_k, \quad\text{ for each } k \ge 1. 
\end{align}
Moreover, \eqref{pkff} implies 
\begin{align}\label{pktf-2}
\sup_{k \ge 1} \|P_k^{\perp} (T(f_1, f_2))\|_{\BMO} 
\le C(f_1, f_2), 
\quad\text{for all } f_1, f_2 \in L^{\infty}(\Rn). 
\end{align}
Thus, \eqref{supkk} is a consequence of Banach--Steinhaus theorem, \eqref{pktf-1}, and \eqref{pktf-2}. 
\end{proof}

The following result will provide us great convenience in practice.

\begin{lemma}\label{lem:TT}
Let $\frac1p = \frac{1}{p_1} + \frac{1}{p_2}$ with $p, p_1, p_2 \in (1, \infty)$. Let $\{\alpha_j\}_{j \ge 0}$ be a sequence of positive numbers satisfying $\sum_{j \ge 0} \alpha_j < \infty$. Assume that a bilinear operator $T$ satisfies the following: 
\begin{enumerate}
\item[(1)]  $T=\sum_{j \ge 0} \alpha_j \, T_j$, where each $T_j$ is a bilinear operator. 

\vspace{0.2cm}
\item[(2)] $\sup_{j \ge 0} \|T_j\|_{L^{p_1} \times L^{p_2} \to L^p} \le C_0 < \infty$. 

\vspace{0.2cm}
\item[(3)] For every $j \ge 0$, $T_j$ is compact from $L^{p_1}(\Rn) \times L^{p_2}(\Rn)$ to $L^p(\Rn)$. 
\end{enumerate}
Then, $T$ is compact from $L^{p_1}(\Rn) \times L^{p_2}(\Rn)$ to $L^p(\Rn)$. 
\end{lemma}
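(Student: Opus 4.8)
The plan is to approximate $T$ in the bilinear operator norm by its partial sums and then invoke Lemma~\ref{lem:limit}. For each $N \in \N$, set $S_N := \sum_{j=0}^{N} \alpha_j T_j$. Since $p_1, p_2, p \in (1, \infty)$, the spaces $L^{p_1}(\Rn)$ and $L^{p_2}(\Rn)$ are Banach and $L^p(\Rn)$ is Banach, so Lemma~\ref{lem:limit} will apply to the sequence $\{S_N\}$ once we verify that each $S_N$ is compact and that $S_N \to T$ in operator norm.

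First I would record the elementary fact that a finite linear combination of compact bilinear operators is again compact: if $R_1, R_2 \colon L^{p_1}(\Rn) \times L^{p_2}(\Rn) \to L^p(\Rn)$ are compact and $\{(f_k, g_k)\}$ is a bounded sequence, one extracts a subsequence along which $R_1(f_k, g_k)$ converges in $L^p(\Rn)$, and then a further subsequence along which $R_2(f_k, g_k)$ converges; along this last subsequence $(R_1 + R_2)(f_k, g_k)$ converges, and compactness is clearly preserved under multiplication by scalars. Iterating and using hypothesis~(3), I get that $S_N$ is compact from $L^{p_1}(\Rn) \times L^{p_2}(\Rn)$ to $L^p(\Rn)$ for every $N \in \N$.

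Next, by hypothesis~(1) we have $T - S_N = \sum_{j > N} \alpha_j T_j$, so hypothesis~(2) and the triangle inequality give
\[
\|T - S_N\|_{L^{p_1} \times L^{p_2} \to L^p}
\le \sum_{j > N} \alpha_j \, \|T_j\|_{L^{p_1} \times L^{p_2} \to L^p}
\le C_0 \sum_{j > N} \alpha_j .
\]
Since $\sum_{j \ge 0} \alpha_j < \infty$, the tail $\sum_{j > N} \alpha_j$ tends to $0$ as $N \to \infty$; the same computation with the full sum shows in passing that $T$ is a well-defined bounded operator with $\|T\|_{L^{p_1} \times L^{p_2} \to L^p} \le C_0 \sum_{j \ge 0} \alpha_j$ and that the series defining $T(f_1, f_2)$ converges absolutely in $L^p(\Rn)$. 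Hence $\|S_N - T\|_{L^{p_1} \times L^{p_2} \to L^p} \to 0$, and Lemma~\ref{lem:limit}, applied with the operators $T_j$ there taken to be $S_N$, yields that $T$ is compact from $L^{p_1}(\Rn) \times L^{p_2}(\Rn)$ to $L^p(\Rn)$.

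The proof is soft: the hypotheses $p_1, p_2, p \in (1, \infty)$ enter only to guarantee the Banach structure needed for Lemma~\ref{lem:limit}, and the one place where the hypotheses do real work is the tail estimate, where the uniform bound~(2) combined with the summability of $\{\alpha_j\}$ forces the partial sums to converge to $T$ in operator norm. The only mildly non-automatic step is the stability of compactness under finite sums, which is handled by the successive subsequence extraction above; I do not anticipate any serious obstacle.
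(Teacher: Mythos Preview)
Your proof is correct and follows essentially the same approach as the paper: define the partial sums $S_N = \sum_{j=0}^N \alpha_j T_j$, bound the tail $\|T - S_N\| \le C_0 \sum_{j>N} \alpha_j \to 0$, and apply Lemma~\ref{lem:limit}. The paper's version is more terse, simply asserting that $S_N$ is compact without the subsequence extraction you include, but the argument is the same.
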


\begin{proof}
For each $N \ge 1$, let $T^N := \sum_{j=0}^N \alpha_j \, T_j$. Then $T^N$ is compact from $L^{p_1}(\Rn) \times L^{p_2}(\Rn)$ to $L^p(\Rn)$. Note that 
\begin{align*}
\|T - T^N \|_{L^{p_1} \times L^{p_2} \to L^p} 
= \bigg\|\sum_{j>N} \alpha_j T_j \bigg\|_{L^{p_1} \times L^{p_2} \to L^p} 
\le \sum_{j>N} \alpha_j \|T_j\|_{L^{p_1} \times L^{p_2} \to L^p} 
\le C_0 \sum_{j>N} \alpha_j,  
\end{align*} 
which tends to zero as $N \to \infty$. This and Lemma \ref{lem:limit} imply the compactness of $T$.
\end{proof}

\section{Weighted $L^{p_1} \times L^{p_2} \to L^p$ compactness}\label{sec:Lp}
Now let us conclude ${\rm (b)} \Longrightarrow {\rm (c)'}$ in Theorem \ref{thm:cpt}. Assume (b) holds. Then by duality, Theorems \ref{thm:RKLpq}, \ref{thm:SD-cpt}, and \ref{thm:Pi-cpt}, and Lemma \ref{lem:TT}, we deduce that 
\begin{align}\label{eq:TP-1}
\text{$T$ is compact from $L^{p_1}(\Rn) \times L^{p_2}(\Rn)$ to $L^p(\Rn)$} 
\end{align}
for all $\frac1p = \frac{1}{p_1} + \frac{1}{p_2}$ with $p, p_1, p_2 \in (1, \infty)$. 
On the other hand, \eqref{H2} and \eqref{H3} respectively imply the weak boundedness property and $T(1, 1), T^{*1}(1, 1), T^{*2}(1, 1) \in \BMO(\Rn)$, which yields \cite[Theorem 1.1]{LMOV}. The later along with Theorems \ref{thm:SD} and \ref{thm:Pi} gives that 
\begin{align}\label{eq:TP-2}
\text{$T$ is bounded from $L^{q_1}(v_1^{q_1}) \times L^{q_2}(v_2^{q_2})$ to $L^p(v^q)$} 
\end{align}
for all $\frac1q = \frac{1}{q_1} + \frac{1}{q_2}$ with $q, q_1, q_2 \in (1, \infty)$ and for all $(v_1, v_2) \in A_{(q_1, q_2)}$, where $v=v_1 v_2$. Thus, the conclusion ${\rm (c)'}$ follows from \eqref{eq:TP-1}, \eqref{eq:TP-2}, and Theorem \ref{thm:EP-Lp}.

\subsection{Weighted compactness of bilinear dyadic shifts}  
In order to show Theorem \ref{thm:SD-cpt}, we first establish the weighted boundedness of dyadic shifts $\mathbf{S}_{\D}^{i, j, k}$.

\begin{theorem}\label{thm:SD}
Let $i, j, k \in \N$. There holds 
\begin{align*}
\sup_{\D} \|\mathbf{S}_{\D}^{i, j, k}(f_1, f_2)\|_{L^p(w^p)}
\lesssim \|f_1\|_{L^{p_1}(w_1^{p_1})} \|f_2\|_{L^{p_2}(w_2^{p_2})}, 
\end{align*}
for all $p_1, p_2 \in (1, \infty]$ and for all $(w_1, w_2) \in A_{(p_1, p_2)}$, where $\frac1p = \frac{1}{p_1} + \frac{1}{p_2}>0$ and $w=w_1 w_2$.
\end{theorem}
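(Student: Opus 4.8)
The plan is to deduce Theorem \ref{thm:SD} from a \emph{sparse domination} of $\mathbf{S}_{\D}^{i,j,k}$ together with the weighted bounds for multilinear sparse forms; this one step simultaneously yields the full range $p_1,p_2\in(1,\infty]$, the genuinely bilinear weight class $A_{(p_1,p_2)}$, and the values $p\le 1$, with constants depending only on $n$, on $i,j,k$, and on $[(w_1,w_2)]_{A_{(p_1,p_2)}}$ — in particular uniformly over all dyadic grids $\D$. Concretely, I would first show that for fixed $i,j,k\in\N$ there is a constant $C=C(n,i,j,k)$ so that for all compactly supported bounded $f_1,f_2,g$ one can find a sparse family $\mathcal{S}\subset\D$ with $\bigl|\langle \mathbf{S}_{\D}^{i,j,k}(f_1,f_2),g\rangle\bigr|\le C\sum_{S\in\mathcal{S}}|S|\,\langle|f_1|\rangle_S\,\langle|f_2|\rangle_S\,\langle|g|\rangle_S$. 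Granting this, Theorem \ref{thm:SD} follows from the (by now standard) bound for the trilinear sparse form: for $\tfrac1p=\tfrac1{p_1}+\tfrac1{p_2}>0$ with $p_1,p_2\in(1,\infty]$ and $(w_1,w_2)\in A_{(p_1,p_2)}$, $w=w_1w_2$, one has $\sum_{S\in\mathcal{S}}|S|\langle|f_1|\rangle_S\langle|f_2|\rangle_S\langle|g|\rangle_S\lesssim\|f_1\|_{L^{p_1}(w_1^{p_1})}\|f_2\|_{L^{p_2}(w_2^{p_2})}\|g\|_{L^{p'}(w^{-p})}$ when $p>1$, with the analogous statement for the sparse operator $\sum_{S}\langle|f_1|\rangle_S\langle|f_2|\rangle_S\mathbf{1}_S$ covering $p\le1$; a density argument in $f_1,f_2$ removes the restriction to bounded compactly supported functions. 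Alternatively, once one weighted estimate is in hand, the multivariable Rubio de Francia extrapolation (Theorem \ref{thm:RdF}) fills in the remaining exponents — but that base estimate itself still requires the genuinely bilinear input described below.

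\textbf{Getting the sparse domination.} The structural inputs are the coefficient bound $|a_{I,J,K,Q}|\le \mathbf{F}(Q)|I|^{1/2}|J|^{1/2}|K|^{1/2}|Q|^{-2}$ (for boundedness we only need $\mathbf{F}(Q)\le1$; the limit condition on $\mathbf{F}$ is reserved for the compact version) and the fact that $A_Q^{i,j,k}(f_1,f_2)$ is supported on $Q$ and sees $f_1,f_2$ only through their restrictions to $Q$. One must not argue by the naive termwise estimate $|\langle g,h_K\rangle|\le|K|^{1/2}\langle|g|\rangle_K$, $|\langle f_l,\widetilde{h}\rangle|\le|\cdot|^{1/2}\langle|f_l|\rangle_{\cdot}$: after the identities $\sum_{I\in\D_i(Q)}|I|\langle|f_1|\rangle_I=|Q|\langle|f_1|\rangle_Q$ etc.\ this only produces $\sum_{Q\in\D}|Q|\langle|f_1|\rangle_Q\langle|f_2|\rangle_Q\langle|g|\rangle_Q$, which diverges, so the cancellation of $h_K$ (always present) and of $\widetilde{h}_I$ in the cancellative slots (recall that in each shift at most one of $\widetilde{h}_I,\widetilde{h}_J$ equals the non-cancellative $h^0$, and $h_K$ is always cancellative) is essential. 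I would therefore run the usual stopping-time scheme: fix a top cube $Q_0\in\D$, apply a Calder\'on--Zygmund stopping construction to $f_1,f_2,g$ relative to $Q_0$ (maximal sub-cubes where one of $\langle|f_l|\rangle$, $\langle|g|\rangle$ exceeds a fixed multiple of its value on $Q_0$, or where a localized square function is large), estimate the ``top'' portion of the shift — the part of $\sum_{Q}A_Q^{i,j,k}$ living between $Q_0$ and the stopping cubes — by $C(n,i,j,k)\,|Q_0|\langle|f_1|\rangle_{Q_0}\langle|f_2|\rangle_{Q_0}\langle|g|\rangle_{Q_0}$ via an $L^2$ estimate for the localized shift (Cauchy--Schwarz in $K$ turns $\sum_{K\in\D_k(Q)}|\langle g,h_K\rangle|^2$ into $\|\Delta_Q^k g\|_{L^2}^2$, and one uses $\sum_{K\in\D_k(Q)}|K|=|Q|$ and the stopping bounds), and then iterate over the stopping cubes. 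The complexity $i,j,k$ only costs a dimensional factor $\lesssim 2^{n\max(i,j,k)}$, since $A_Q^{i,j,k}$ reaches $\max(i,j,k)$ generations below $Q$; this is harmless because $i,j,k$ are fixed. Summing over an initial partition of $\Rn$ into top cubes gives the sparse bound, with a constant depending only on $n$ and $i,j,k$, hence uniform in $\D$.

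\textbf{Main obstacle.} The hard, genuinely bilinear point — the reason this does not reduce to two applications of the linear weighted theory for dyadic shifts and paraproducts — is that the natural class $A_{(p_1,p_2)}$ is strictly larger than $A_{p_1}\times A_{p_2}$: the components $w_1,w_2$ need not even be locally integrable, only their product $w$ is controlled, so the estimate cannot be split into a $w_1$-weighted bound in the $f_1$-variable and a $w_2$-weighted bound in the $f_2$-variable. This is precisely what the sparse form with three $L^1$-averages (equivalently, the multilinear $A_{\vec p}$ sparse theory, or Theorem \ref{thm:RdF} together with Theorem \ref{thm:Mbdd}) is designed to absorb, and it is also what accommodates the endpoints $p_i=\infty$ and the quasi-Banach range $p\le1$. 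A secondary technical nuisance is bookkeeping the non-cancellative slot $\widetilde{h}_J=h_J^0$ in the stopping-time step, where the cancellation of $h_J$ is unavailable and one leans instead on the cancellation of $h_K$ and on the support localization of $A_Q^{i,j,k}$ to $Q$.
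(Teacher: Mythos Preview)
Your proposal is correct but takes a genuinely different route from the paper. The paper does not build a sparse family at all: it first dominates the trilinear form by the auxiliary operator
\[
\mathbf{A}_{\D}^{i,k}(f_1,f_2,f_3)
:=\sum_{Q\in\D}\langle|\Delta_Q^i f_1|\rangle_Q\,\langle|f_2|\rangle_Q\,\langle|\Delta_Q^k f_3|\rangle_Q\,\mathbf{1}_Q,
\]
uses duality and multivariable Rubio de Francia extrapolation (Theorem~\ref{thm:RdF}) to reduce to a trilinear weighted estimate for $\mathbf{A}_{\D}^{i,k}$ in the Banach range, and then proves that estimate by repeatedly peeling off the martingale differences via the identity $\langle|\Delta_Q^k f|,\mathbf{1}_Q\rangle=\langle f,\Delta_Q^k\varphi^k_{Q,f}\rangle$, the weighted square-function bound~\eqref{JSD-3}, and the vector-valued multilinear maximal inequality~\eqref{MVV}. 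No stopping-time construction enters.

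Your sparse route is cleaner to state and immediately gives quantitative dependence on $[(w_1,w_2)]_{A_{(p_1,p_2)}}$. The paper's square-function/extrapolation route buys something else: its implicit constant is \emph{independent} of the complexity $i,j,k$, whereas your sketch carries the exponential factor $2^{n\max(i,j,k)}$. For Theorem~\ref{thm:SD} as stated (fixed $i,j,k$) this is indeed harmless, as you note; but downstream the paper needs the uniform version --- Lemma~\ref{lem:TT} and the passage ${\rm (b)}\Rightarrow{\rm (c)}'$ sum shifts over $k$ against only $2^{-k\delta/2}$, which would not absorb $2^{nk}$. If you intend to plug your argument into those later steps, you would need to refine the stopping construction to get polynomial (or uniform) dependence on the complexity, as in the linear sparse theory.
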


\begin{proof}
By symmetry, it suffices to treat the case $\widetilde{h}_I = h_I$. Noting that 
\begin{align*}
\langle \Delta_Q^i f, h_I \rangle 
= \langle f, h_I \rangle \mathbf{1}_{\{I \in \D_i(Q)\}},
\end{align*} 
we have  
\begin{align*}
|\langle \mathbf{S}_{\D}^{i, j, k}(f_1, f_2), f_3 \rangle| 
\le \|\mathbf{A}_{\D}^{i, k} (f_1, f_2, f_3)\|_{L^1}, 
\end{align*}
where 
\begin{align*}
\mathbf{A}_{\D}^{i, k} (f_1, f_2, f_3) 
:= \sum_{Q \in \D} \langle |\Delta_Q^i f_1| \rangle_Q 
\langle |f_2| \rangle_Q  \langle |\Delta_Q^k f_3| \rangle_Q \, \mathbf{1}_Q. 
\end{align*}
By duality and Theorem \ref{thm:RdF}, we are deduced to showing 
\begin{align}\label{eq:AD}
\sup_{\D} \|\mathbf{A}_{\D}^{i, k} (f_1, f_2, f_3)\|_{L^p(w^p)}
\lesssim \prod_{i=1}^3 \|f_i\|_{L^{p_i}(w_i^{p_i})}, 
\end{align}
for all $p_1, p_2, p_3 \in (1, \infty)$ and for all $(w_1, w_2, w_3) \in A_{(p_1, p_2, p_3)}$, where $\frac1p = \frac{1}{p_1} + \frac{1}{p_2} + \frac{1}{p_3}$ and $w=w_1 w_2 w_3$. 

Let $\frac1p = \frac{1}{p_1} + \frac{1}{p_2} + \frac{1}{p_3}$ with $1<p, p_1, p_2, p_3<\infty$, and let $(w_1, w_2, w_3) \in A_{(p_1, p_2, p_3)}$. Write $w=w_1 w_2 w_3$. By Theorem \ref{thm:RdF} again, it is enough to prove \eqref{eq:AD} for such exponents $(p_1, p_2, p_3)$ and weights $(w_1, w_2, w_3)$. Let $p_4=p'$, $w_4=w^{-1}$, and $f_4 w_4 \in L^{p_4}$ with $\|f_4 w_4\|_{L^{p_4}} \le 1$. Observe that 
\begin{align}\label{SDK-1}
\langle |\Delta_Q^k f|, \mathbf{1}_Q \rangle 
= \langle \Delta_Q^k f, \varphi^k_{Q, f} \rangle 
= \langle f, \Delta_Q^k \varphi^k_{Q, f} \rangle,  
\end{align}
where $|\varphi^k_{Q, f}| \le \mathbf{1}_Q$. Setting 
\begin{align*}
\Phi_{i, k} := \sum_{Q \in \D}  \langle |\Delta_Q^i f_1| \rangle_Q 
\langle |f_2| \rangle_Q \langle |f_4| \rangle_Q \, \Delta_Q^k \varphi^k_{Q, f_3}, 
\end{align*}
we use the fact $\Delta_{Q'}^k \Delta_Q^k f = \Delta_Q^k f \, \mathbf{1}_{\{Q'=Q\}}$ to arrive at  
\begin{align*}
S_{\D}^k \Phi_{i, k} 
\lesssim \bigg(\sum_{Q \in \D}  
\big| \langle |\Delta_Q^i f_1| \rangle_Q \langle |f_2| \rangle_Q  
\langle |f_4| \rangle_Q \big|^2 \mathbf{1}_Q \bigg)^{\frac12}, 
\end{align*}
which together with \eqref{JSD-3} gives 
\begin{align*}
&|\langle \mathbf{A}_{\D}^{i, k} (f_1, f_2, f_3), f_4 \rangle| 
\le \sum_{Q \in \D} \langle |\Delta_Q^i f_1| \rangle_Q 
\langle |f_2| \rangle_Q  \langle f_3, \Delta_Q^k \varphi^k_{Q, f_3} \rangle \langle |f_4| \rangle_Q
\\
&\le \|f_3 w_3\|_{L^{p_3}} \|\Phi_{i, k} w_3^{-1}\|_{L^{p'_3}}
\lesssim \|f_3 w_3\|_{L^{p_3}} \|(S_{\D}^k \Phi_{i, k}) w_3^{-1}\|_{L^{p'_3}}
\\ 
&\lesssim \|f_3 w_3\|_{L^{p_3}} \bigg\|\bigg(\sum_{Q \in \D}  
\big| \langle |\Delta_Q^i f_1| \rangle_Q \langle |f_2| \rangle_Q  
\langle |f_4| \rangle_Q \big|^2 \mathbf{1}_Q \bigg)^{\frac12} w_3^{-1}\bigg\|_{L^{p'_3}}. 
\end{align*}
Fix a sequence of functions $\{f_3^Q\}_{Q \in \D}$ satisfying  $\big\|\big(\sum\limits_{Q \in \D} |f_3^Q|^2 \big)^{\frac12}\big\|_{L^{p_3}} \le 1$. By duality, it suffices to prove 
\begin{align}\label{SDK-2}
\mathscr{I} := \bigg|\int_{\Rn} \sum_{Q \in \D}  
\langle |\Delta_Q^i f_1| \rangle_Q \langle |f_2| \rangle_Q  
\langle |f_4| \rangle_Q \mathbf{1}_Q f_3^Q w_3^{-1}\, dx \bigg|
\lesssim \prod_{i=1, 2, 4} \|f_i w_i\|_{L^{p_i}}. 
\end{align}

By \eqref{SDK-1} and \eqref{JSD-3}, there holds 
\begin{align}\label{SDK-3}
\mathscr{I} 
&\le \sum_{Q \in \D}  \langle f_1, \Delta_Q^i \varphi_{Q, f_1}^i \rangle
\langle |f_2| \rangle_Q  \langle |f_4| \rangle_Q \langle |f_3^Q| w_3^{-1} \rangle_Q 
\\ \nonumber 
&=: \langle f_1 w_1, \Phi_i w_1^{-1} \rangle 
\le \|f_1 w_1\|_{L^{p_1}} \|\Phi_i w_1^{-1}\|_{L^{p'_1}}
\\ \nonumber
&\lesssim \|f_1 w_1\|_{L^{p_1}} \|(S_{\D}^i \Phi_i) w_1^{-1}\|_{L^{p'_1}}, 
\end{align}
where 
\begin{align*}
\Phi_i := \sum_{Q \in \D} \langle |f_2| \rangle_Q 
\langle |f_3^Q| w_3^{-1} \rangle_Q \langle |f_4| \rangle_Q \, 
\Delta_Q^i \varphi^i_{Q, f_1}, 
\end{align*}
and 
\begin{align*}
S_{\D}^i \Phi_i 
\lesssim \bigg(\sum_{Q \in \D} \big|\langle |f_2| \rangle_Q  
\langle |f_3^Q| w_3^{-1} \rangle_Q \langle |f_4| \rangle_Q
\big|^2 \mathbf{1}_Q \bigg)^{\frac12}
\le \bigg(\sum_{Q \in \D} \mathcal{M}(f_2, f_3^Q w_3^{-1}, f_4)^2 \bigg)^{\frac12}.
\end{align*}
Note that 
\begin{align*}
\frac{1}{p'_1} = \frac{1}{p_2} + \frac{1}{p_3} + \frac{1}{p_4}, \quad 
w_1^{-1} = w_2 w_3 w_4, 
\quad\text{ and }\quad 
(w_2, w_3, w_4) \in A_{(p_2, p_3, p_4)}.
\end{align*} 
Thus, 
\begin{align}\label{SDK-4}
&\|(S_{\D}^i \Phi_i) w_1^{-1}\|_{L^{p'_1}}
\le \bigg\|\bigg(\sum_{Q \in \D} 
\mathcal{M}(f_2, f_3^Q w_3^{-1}, f_4)^2 \bigg)^{\frac12} w_1^{-1}\bigg\|_{L^{p'_1}}
\\ \nonumber 
&\lesssim \prod_{i=2, 4} \|f_i w_i\|_{L^{p_i}} \times 
\bigg\|\bigg(\sum_{Q \in \D} |f_3^Q w_3^{-1}|^2 \bigg)^{\frac12} w_3\bigg\|_{L^{p_3}} 
\le \prod_{i=2, 4} \|f_i w_i\|_{L^{p_i}}, 
\end{align}
where we have used the vector-valued inequality 
\begin{align}\label{MVV}
\bigg\| \bigg(\sum_{k_1, \ldots, k_m} 
|\mathcal{M}(f_1^{k_1}, \ldots, f_m^{k_m})|^2 \bigg)^{\frac12}\bigg\|_{L^r(v^r)}
\lesssim \prod_{i=1}^m \bigg\| \bigg(\sum_{k_i} |f_i^{k_i}|^2 \bigg)^{\frac12}\bigg\|_{L^{r_i}(v_i^{r_i})},
\end{align}
for all $\vec{r}=(r_1, \ldots, r_m)  \in (1, \infty)^m$ and for all $\vec{v} =(v_1, \ldots, v_m) \in A_{\vec{r}}$, which follows from \cite[Theorem 7.3.1]{Gra2} and \cite[Theorem 3.7]{LOPTT}. Therefore, \eqref{SDK-2} is a consequence of \eqref{SDK-3} and \eqref{SDK-4}. 
\end{proof}

\begin{proof}[\bf Proof of Theorem \ref{thm:SD-cpt}]
Without loss of generality, we may assume that $\widetilde{h}_I = h_I$. For simplicity, denote $\mathbf{S}_{\D} := \mathbf{S}_{\D}^{i, j, k}$. Fix $\frac1p = \frac{1}{p_1} + \frac{1}{p_2}$ with $p, p_1, p_2 \in (1, \infty)$. By Theorems \ref{thm:EP-Lp} and \ref{thm:SD}, it suffices to prove that $\mathbb{E}_{\omega} \mathbf{S}_{\D_{\omega}}$ is compact from $L^{p_1}(\Rn)  \times L^{p_2}(\Rn)$ to $L^p(\Rn)$. In light of Theorem \ref{thm:RKLpq}, we are deduced to showing the following estimates 
\begin{align}
\label{SDFK-1}
&\sup_{\substack{\|f_1\|_{L^{p_1}} \le 1 \\ \|f_2\|_{L^{p_2}} \le 1}} 
\|\mathbb{E}_{\w} \mathbf{S}_{\D_{\w}}(f_1, f_2) \|_{L^p}
\lesssim 1, 
\\ 
\label{SDFK-2}
\lim_{A \to \infty} &\sup_{\substack{\|f_1\|_{L^{p_1}} \le 1 \\ \|f_2\|_{L^{p_2}} \le 1}}
\|\mathbb{E}_{\w} \mathbf{S}_{\D_{\w}} (f_1, f_2) \, \mathbf{1}_{B(0, A)^c}\|_{L^p} 
= 0, 
\\ 
\label{SDFK-3}
\lim_{|v| \to 0} &\sup_{\substack{\|f_1\|_{L^{p_1}} \le 1 \\ \|f_2\|_{L^{p_2}} \le 1}}
\|\tau_v \, \mathbb{E}_{\omega} \mathbf{S}_{\D_{\omega}} (f_1, f_2) - 
\mathbb{E}_{\omega} \mathbf{S}_{\D_{\omega}} (f_1, f_2)\|_{L^p} 
=0. 
\end{align}
Note that \eqref{SDFK-1} immediately follows from Theorem \ref{thm:SD}.

To justify \eqref{SDFK-2}, given $N \in \N$, we define  
\begin{align*}
\mathbf{S}_{\D}^N f 
:= \sum_{Q \notin \D(N)} A_Q^{i, j, k} f.  
\end{align*}
Observe that 
\begin{align}
\label{D-1}
& \Delta_{Q'}^k \big(A_Q^{i, j, k}(f_1, f_2) \big) 
= A_Q^{i, j, k}(f_1, f_2) \mathbf{1}_{\{Q'=Q\}} 
= A_Q^{i, j, k}(\Delta_{Q'}^i f_1, f_2), 
\\
\label{D-2}
& \Delta_Q^k \big(\mathbf{S}_{\D}^N(f_1, f_2) \big) 
= A_Q^{i, j, k}(f_1, f_2) \mathbf{1}_{\{Q \not\in \D(N)\}}, 
\\
\label{D-3}
& |A_Q^{i, j, k} (f_1, f_2)| 
\le  F_N \langle |f_1| \rangle_Q \langle |f_2| \rangle_Q \, \mathbf{1}_Q, 
\quad\text{for all }Q \notin \D(N).  
\end{align}
Then it follows from \eqref{D-1}--\eqref{D-3} that 
\begin{align}\label{SNL2}
\|\mathbf{S}_{\D}^N f\|_{L^p}
&\simeq \bigg\|\bigg( \sum_{Q \in \D} 
\big|\Delta_Q^k \big(\mathbf{S}_{\D}^N (f_1, f_2) \big) \big|^2 \bigg)^{\frac12} \bigg\|_{L^p} 
\\ \nonumber 
&= \bigg\|\bigg( \sum_{Q \not\in \D(N)} 
|A_Q^{i, j, k} (f_1, f_2)|^2 \bigg)^{\frac12}\bigg\|_{L^p}
\\ \nonumber 
&= \bigg\|\bigg( \sum_{Q \not\in \D(N)} 
|A_Q^{i, j, k} (\Delta_Q^i f_1, f_2)|^2 \bigg)^{\frac12}\bigg\|_{L^p}
\\ \nonumber 
&\le F_N \bigg\|\bigg( \sum_{Q \in \D} 
\langle |\Delta_Q^i f_1| \rangle_Q^2 \langle |f_2| \rangle_Q^2
\mathbf{1}_Q \bigg)^{\frac12}\bigg\|_{L^p}
\\ \nonumber 
&\le F_N \bigg\|\bigg( \sum_{Q \in \D} |\Delta_Q^i f_1|^2 \bigg)^{\frac12} M_{\D} f_2\bigg\|_{L^p}
\\ \nonumber 
&\le F_N \bigg\|\bigg( \sum_{Q \in \D} |\Delta_Q^i f_1|^2 \bigg)^{\frac12} \bigg\|_{L^{p_1}}
\|M_{\D} f_2\|_{L^{p_2}}
\\ \nonumber 
&\lesssim F_N \|f_1\|_{L^{p_1}} \|f_2\|_{L^{p_2}},  
\end{align}
where the implicit constants are independent of $\D$, $N$, $f_1$, and $f_2$. 

Let $A \ge 2^{10}$ and $N := [\frac12 \log_2 A] >2$. Note that for any $\omega \in \Omega$ and $K \in \D_{\omega}(N)$, 
\begin{align}\label{KN}
K \subset \{|x| \le (N+2) 2^N\} 
\subset \{|x| \le 2^{2N}\} 
\subset \{|x| \le A\}. 
\end{align}
Thus, by Minkowski's inequality, \eqref{SNL2}, and \eqref{KN}, 
\begin{align*}
\|&\mathbb{E}_{\w} \mathbf{S}_{\D_{\w}} (f_1, f_2) \mathbf{1}_{\{|\cdot| > A\}}\|_{L^p} 
\le \mathbb{E}_{\w} \|\mathbf{S}_{\D_{\w}} (f_1, f_2) \mathbf{1}_{\{|\cdot| > A\}}\|_{L^p} 
\\
&= \mathbb{E}_{\w} \|\mathbf{S}_{\D_{\w}}^N (f_1, f_2) \mathbf{1}_{\{|\cdot| > A\}}\|_{L^p} 
\lesssim F_N \|f_1\|_{L^{p_1}} \|f_2\|_{L^{p_2}}, 
\end{align*}
where the implicit constant is independent of $A$, $\w$, $f_1$, and $f_2$. This gives \eqref{SDFK-2}.

In order to demonstrate \eqref{SDFK-3}, observe that
\begin{align}\label{HES-2}
\|\tau_v \mathbb{E}_{\w} \mathbf{S}_{\D_{\w}} (f_1, f_2) - 
\mathbb{E}_{\w} \mathbf{S}_{\D_{\w}} (f_1, f_2)\|_{L^p} 
\le \mathbb{E}_{\w} \|\tau_v \mathbf{S}_{\D_{\w}} (f_1, f_2) 
- \mathbf{S}_{\D_{\w}} (f_1, f_2)\|_{L^p}.  
\end{align}
Let $0 < |v| < 2^{-j-10}$ and $a \ge 2$ be an integer chosen later. Then there exists an integer $N=N(v) \ge 5$ such that $2^{-a (N+1)} \le 2^{j+1} |v| < 2^{- a N}$. We split  
\begin{align}\label{HFX} 
\|\tau_v \mathbf{S}_{\D_{\w}} (f_1, f_2) 
- \mathbf{S}_{\D_{\w}} (f_1, f_2)\|_{L^p} 
\le \Xi_1(v; \w, f_1, f_2) + \Xi_2(v; \w, f_1, f_2), 
\end{align}
where 
\begin{align*}
\Xi_1(v; \w, f_1, f_2) 
&:= \bigg\|\sum_{Q \notin \D_{\w}(N)} 
\sum_{\substack{I \in \D_i(Q) \\ J \in \D_j(Q) \\ K \in \D_k(Q)}}  
\alpha_{I, J, K, Q} \langle f_1, h_I \rangle 
\langle f_2, \widetilde{h}_J \rangle (\tau_v h_K - h_K)\bigg\|_{L^p}, 
\\ 
\Xi_2(v; \w, f_1, f_2)
&:= \bigg\|\sum_{Q \in \D_{\w}(N)} 
\sum_{\substack{I \in \D_i(Q) \\ J \in \D_j(Q) \\ K \in \D_k(Q)}} 
\alpha_{I, J, K, Q} \langle f_1, h_I \rangle 
\langle f_2, \widetilde{h}_J \rangle (\tau_v h_K - h_K)\bigg\|_{L^p}. 
\end{align*}
By changing variables and \eqref{SNL2}, we obtain 
\begin{align}\label{XVF}
\Xi_1(v; \omega, f_1, f_2) 
&\le \|\tau_v \mathbf{S}_{\D_{\w}}^N(f_1, f_2)\|_{L^p} 
+ \|\mathbf{S}_{\D_{\w}}^N(f_1, f_2)\|_{L^p}
\\ \nonumber 
&= 2 \|\mathbf{S}_{\D_{\w}}^N(f_1, f_2)\|_{L^p} 
\lesssim F_N \|f_1\|_{L^{p_1}} \|f_2\|_{L^{p_2}}, 
\end{align}
where the implicit constant is independent of $v$, $\w$, $f_1$, and $f_2$. Since $|v| \to 0$ implies $N \to \infty$, \eqref{FIJK} and \eqref{XVF} imply 
\begin{align}\label{HFX-1}
\lim_{|v| \to 0} \sup_{\w \in \Omega} 
\sup_{\substack{\|f_1\|_{L^{p_1}} \le 1 \\ \|f_2\|_{L^{p_2}} \le 1}} 
\Xi_1(v; \omega, f_1, f_2) =0. 
\end{align}
Observe that for any $N \ge 2$, 
\begin{align}\label{car-DN}
\# \D_{\omega}(N) 
&\le \# \big\{K \in \D_\omega: 2^{-N} \le \ell(K) \le 2^N, K \subset 2^{2N+1} \I \big\} 
\\ \nonumber 
&\le \sum_{-N \le k \le N} 2^{(2N+1)n}  2^{-kn} 
\le 2^{(2N+1)n} 2^{nN+1} 
= 2^{3nN+n+1}.
\end{align}
Thus, 
\begin{align*}
\Xi_2(v; \omega, f_1, f_2) 
&\leq \sum_{Q \in \D_{\w}(N)}  \langle |f_1| \rangle_Q \langle |f_2| \rangle_Q 
\sum_{K \in \D_k(Q)} |K|^{\frac12}  \|\tau_v h_K - h_K\|_{L^p} 
\\
&\lesssim \sum_{Q \in \D_{\omega}(N)} |Q|^{-\frac1p} 
\|f_1\|_{L^{p_1}} \|f_2\|_{L^{p_2}}
\sum_{K \in \D_k(Q)} |v|^{\frac1p} \ell(K)^{\frac1p (n-1)} 
\\
&\le \sum_{Q \in \D_{\omega}(N)} 2^{k n} 2^{-k \frac{n}{p}} (2^{-k} \ell(Q))^{-\frac1p}
|v|^{\frac1p} \|f_1\|_{L^{p_1}} \|f_2\|_{L^{p_2}}
\\
&\le 2^{3nN+n+1} 2^{k(n-\frac{n}{p}+\frac1p)} 2^{\frac{N}{p}} 
\big(2^{-j-1} 2^{-aN} \big)^{\frac1p} 
\|f_1\|_{L^{p_1}} \|f_2\|_{L^{p_2}}
\\
&\le 2^{kn+n+1} 2^{\frac{N}{p}(3np+1-a)} 
\|f_1\|_{L^{p_1}} \|f_2\|_{L^{p_2}}.
\end{align*}
Taking $a>3np+1$, we see that 
\begin{align}\label{HFX-2}
\lim_{|v| \to 0} \sup_{\w \in \Omega} 
\sup_{\substack{\|f_1\|_{L^{p_1}} \le 1 \\ \|f_2\|_{L^{p_2}} \le 1}} 
\Xi_2(v; \w, f_1, f_2) =0. 
\end{align}
Therefore, \eqref{SDFK-3} is a consequence of \eqref{HES-2}, \eqref{HFX}, \eqref{HFX-1}, and \eqref{HFX-2}.  
\end{proof}

\subsection{Weighted compactness of bilinear dyadic paraproducts}  
To prove Theorem \ref{thm:Pi-cpt}, it is necessary to first show the weighted boundedness of dyadic paraproducts. 

\begin{theorem}\label{thm:Pi}
For any $\b :=\{b_I\}_{I \in \D} \in \BMO_{\D}$ and $T_{\b} \in \{\Pi_{\b}, \Pi_{\b}^{*1}, \Pi_{\b}^{*2}\}$,  there holds 
\begin{align*}
\|T_{\b}\|_{L^{p_1}(w_1^{p_1}) \times L^{p_2}(w_2^{p_2}) \to L^p(w^p)} 
\lesssim \|\b\|_{\BMO_{\D}}, 
\end{align*}
for all $p_1, p_2 \in (1, \infty]$ and for all $(w_1, w_2) \in A_{(p_1, p_2)}$, where $\frac1p = \frac{1}{p_1} + \frac{1}{p_2}>0$, $w=w_1 w_2$, and the implicit constant is independent of $b$ and $\D$.
\end{theorem}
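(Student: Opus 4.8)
The proof runs in close parallel to that of Theorem~\ref{thm:SD}. First, by the multivariable Rubio de Francia extrapolation theorem (Theorem~\ref{thm:RdF}) it suffices to prove, for each $T_{\b}\in\{\Pi_{\b},\Pi_{\b}^{*1},\Pi_{\b}^{*2}\}$, a single weighted estimate at a convenient exponent pair and \emph{all} admissible weights; I will prove
\begin{align*}
\|T_{\b}(f_1,f_2)\|_{L^2(w^2)}\lesssim \|\b\|_{\BMO_{\D}}\,\|f_1\|_{L^4(w_1^4)}\,\|f_2\|_{L^4(w_2^4)}
\end{align*}
for every $(w_1,w_2)\in A_{(4,4)}$ with $w=w_1w_2$, with implicit constant independent of $\b$ and $\D$; Theorem~\ref{thm:RdF} then propagates it to all $p_1,p_2\in(1,\infty]$ and all $(w_1,w_2)\in A_{(p_1,p_2)}$. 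By Lemma~\ref{lem:weight}, $(w_1,w_2)\in A_{(4,4)}$ forces $w^2\in A_4\subset A_{\infty}$ and $w_i^{-4/3}\in A_{8/3}$, which legitimizes the square-function and maximal-function estimates below.

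\textbf{The paraproduct $\Pi_{\b}$.} Since $\Delta_I(\Pi_{\b}(f_1,f_2))=b_I\langle f_1\rangle_I\langle f_2\rangle_I h_I$, the weighted Littlewood--Paley bound \eqref{JSD-3} (with $r=2$, $k=0$, $v=w^2$) gives
\begin{align*}
\|\Pi_{\b}(f_1,f_2)\|_{L^2(w^2)}^2
\lesssim \|S_{\D}(\Pi_{\b}(f_1,f_2))\|_{L^2(w^2)}^2
=\sum_{I\in\D}|b_I|^2\,|\langle f_1\rangle_I|^2\,|\langle f_2\rangle_I|^2\,\langle w^2\rangle_I .
\end{align*}
Now I transfer the weight onto the inputs: H\"older's inequality yields $\langle|f_i|\rangle_I\le\langle(|f_i|w_i)^4\rangle_I^{1/4}\langle w_i^{-4/3}\rangle_I^{3/4}$, while the $A_{(4,4)}$ condition yields $\langle w^2\rangle_I\le[\vec w]_{A_{(4,4)}}^2\,\langle w_1^{-4/3}\rangle_I^{-3/2}\langle w_2^{-4/3}\rangle_I^{-3/2}$; the dual-weight factors cancel identically, leaving
\begin{align*}
\|\Pi_{\b}(f_1,f_2)\|_{L^2(w^2)}^2
\lesssim \sum_{I\in\D}|b_I|^2\,\langle G_1\rangle_I^{1/2}\langle G_2\rangle_I^{1/2},
\qquad G_i:=(|f_i|w_i)^4 .
\end{align*}
Since $\{|b_I|^2\}_{I\in\D}$ is a Carleson sequence with constant $\|\b\|_{\BMO_{\D}}^2$ and $\tfrac12+\tfrac12=1$, it remains to establish the (borderline) bilinear dyadic Carleson embedding
\begin{align*}
\sum_{I\in\D}|b_I|^2\,\langle G_1\rangle_I^{1/2}\langle G_2\rangle_I^{1/2}
\lesssim \|\b\|_{\BMO_{\D}}^2\,\|G_1\|_{L^1}^{1/2}\|G_2\|_{L^1}^{1/2}
=\|\b\|_{\BMO_{\D}}^2\,\|f_1\|_{L^4(w_1^4)}^2\|f_2\|_{L^4(w_2^4)}^2 .
\end{align*}
I will obtain it by the dyadic sparse/Carleson-measure machinery --- replacing the Carleson sequence $\{|b_I|^2\}$ by a sparse family, dominating the left side by a sparse bilinear form, and invoking the $L^4\times L^4\to L^2$ boundedness of the bilinear maximal operator $\mathcal M$ (Theorem~\ref{thm:Mbdd}) together with its vector-valued extension \eqref{MVV} --- exactly as the trilinear averaging operator $\mathbf{A}_{\D}^{i,k}$ is handled in the proof of Theorem~\ref{thm:SD}.

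\textbf{The adjoint paraproducts.} For $T_{\b}=\Pi_{\b}^{*1}$ (the case $\Pi_{\b}^{*2}$ being symmetric) I dualize: $\langle\Pi_{\b}^{*1}(f_1,f_2),f_3\rangle=\sum_{I\in\D}b_I\langle f_1,h_I\rangle\langle f_2\rangle_I\langle f_3\rangle_I$, whence
\begin{align*}
|\langle\Pi_{\b}^{*1}(f_1,f_2),f_3\rangle|
\le \sum_{I\in\D}|b_I|\,|\langle f_1,h_I\rangle|\,\langle|f_2|\rangle_I\langle|f_3|\rangle_I
=\sum_{I\in\D}|b_I|\,|I|^{1/2}\,\langle|\Delta_I f_1|\rangle_I\,\langle|f_2|\rangle_I\langle|f_3|\rangle_I .
\end{align*}
Using the identity $|\langle f_1,h_I\rangle|=|I|^{1/2}\langle|\Delta_I f_1|\rangle_I$ and moving the difference operator onto a dual bump exactly as in \eqref{SDK-1}, this reduces to the boundedness of a trilinear averaging operator of the form $\sum_{I\in\D}|b_I|\,|I|^{-1/2}\langle|\Delta_I f_1|\rangle_I\langle|f_2|\rangle_I\langle f_3\rangle_I\,\mathbf{1}_I$, in which the Carleson bound $\sum_{I\subset Q}|b_I|^2\le\|\b\|_{\BMO_{\D}}^2|Q|$ plays the role of the shift-coefficient normalization. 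After extrapolating (Theorem~\ref{thm:RdF}) to $(p_1,p_2,p_3)\in(1,\infty)^3$, this is closed exactly as for $\mathbf{A}_{\D}^{i,k}$ in Theorem~\ref{thm:SD}: by the square-function estimate \eqref{JSD-3} applied in the $f_1$ variable, together with the vector-valued maximal inequality \eqref{MVV}.

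\textbf{Main obstacle.} The delicate point is the genuinely bilinear weight class $A_{(p_1,p_2)}$, which is strictly larger than a product class $A_{p_1}\times A_{p_2}$ and hence forbids iterating scalar results: the weight $w$ must be distributed onto $f_1,f_2$ through the $A_{(p_1,p_2)}$ condition, and one is forced to use the bilinear maximal/sparse machinery (Theorem~\ref{thm:Mbdd} and \eqref{MVV}). Moreover, the resulting bilinear Carleson embedding for $\Pi_{\b}$ sits exactly at the endpoint (the reciprocal H\"older exponents sum to $1$, mirroring that $\mathcal M\colon L^{p_1}\times L^{p_2}\to L^1$ is the borderline mapping), so one genuinely needs a sparse reduction rather than a crude combination of H\"older's inequality with the scalar Carleson embedding theorem.
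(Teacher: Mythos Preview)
Your reduction for $\Pi_{\b}$ is correct up to the displayed bilinear Carleson embedding
\[
\sum_{I\in\D}|b_I|^2\,\langle G_1\rangle_I^{1/2}\langle G_2\rangle_I^{1/2}
\;\lesssim\; \|\b\|_{\BMO_{\D}}^{2}\,\|G_1\|_{L^1}^{1/2}\,\|G_2\|_{L^1}^{1/2},
\]
but this inequality is \emph{false}, and no sparse reduction can rescue it. In $\R$ take $b_I=|I|^{1/2}$ for $I=[0,2^{-k})$, $k\ge 0$, and $b_I=0$ otherwise (so $\|\b\|_{\BMO_{\D}}\le\sqrt 2$), and $G_1=G_2=\mathbf 1_{[0,2^{-N})}$. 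Then the left side is $\ge\sum_{k=0}^{N}2^{-k}\cdot 2^{k-N}=(N+1)2^{-N}$ while the right side is $\simeq 2^{-N}$. The same obstruction kills the sparse route you sketch: after dominating the Carleson sequence by a sparse form one lands at $\int (MG_1)^{1/2}(MG_2)^{1/2}\,dx$, and $M$ is unbounded on $L^1$. The underlying reason is exactly what you flag in your ``Main obstacle'' paragraph: after absorbing the weight via the $A_{(4,4)}$ condition you have pushed the problem to the endpoint $L^1\times L^1\to L^{1/2}$ of the bilinear maximal function, where it fails. So the invocation of Theorem~\ref{thm:Mbdd} and \eqref{MVV} at $L^4\times L^4\to L^2$ is not available anymore --- the weights are already gone.

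The paper avoids this trap by pairing with a third function \emph{before} squaring. Using $\mathrm H^1$--$\BMO$ duality it writes
\[
|\langle \Pi_{\b}(f_1,f_2),f_3\rangle|
\;\lesssim\; \|\b\|_{\BMO_{\D}}\,\|S_{\D}\Phi\|_{L^1},
\qquad
S_{\D}\Phi=\Big(\sum_{I}|\langle f_1\rangle_I\langle f_2\rangle_I\langle f_3,h_I\rangle|^{2}\tfrac{\mathbf 1_I}{|I|}\Big)^{1/2},
\]
so the $\BMO_{\D}$ constant is extracted once and for all, and one is left with a \emph{trilinear square function} that no longer carries the Carleson coefficients. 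By extrapolation (Theorem~\ref{thm:RdF}) it suffices to bound this square function from $L^{p_1}(w_1^{p_1})\times L^{p_2}(w_2^{p_2})\times L^{p_3}(w_3^{p_3})$ to $L^p(w^p)$ with $p>1$; at that point duality against an $\ell^2$-valued sequence, \eqref{JSD-3}, and the vector-valued maximal inequality \eqref{MVV} close the argument exactly as in Theorem~\ref{thm:SD}. The key structural difference from your plan is that the paper never squares the paraproduct in the weighted $L^2$ norm; it keeps a test function $f_3$ in the picture so that the Haar cancellation (via $\langle f_3,h_I\rangle$) supplies the missing orthogonality that your endpoint Carleson embedding lacks.
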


\begin{proof} 
By duality, it suffices to consider $\Pi_{\b}$. The $\mathrm{H}^1$-$\BMO$ duality (see \cite{Wu}) gives 
\begin{align}\label{PSD-1}
|\langle \Pi_{\b}(f_1, f_2), f_3 \rangle| 
\lesssim \|\b\|_{\BMO_{\D}} \|S_{\D} \Phi\|_{L^1}, 
\end{align}
where 
\begin{align*}
\Phi := \sum_{I \in \D} \langle f_1 \rangle_I 
\langle f_2 \rangle_I \langle f_3, h_I \rangle \, h_I. 
\end{align*}
Observe that 
\begin{align}\label{PSD-2}
S_{\D} \Phi 
= \bigg(\sum_{I \in \D} |\langle f_1 \rangle_I 
\langle f_2 \rangle_I \langle f_3, h_I \rangle|^2 \frac{\mathbf{1}_I}{|I|}\bigg)^{\frac12}
=: \mathcal{S}_{\D}(f_1, f_2, f_3).
\end{align}
By duality, \eqref{PSD-1}--\eqref{PSD-2}, and Theorem \ref{thm:RdF}, we are reduced to proving 
\begin{align}\label{SSS}
\|\mathcal{S}_{\D}(f_1, f_2, f_3) \, w\|_{L^p} 
\lesssim \prod_{i=1}^3 \|f_i w_i\|_{L^{p_i}} 
\end{align}
for all $p_1, p_2, p_3 \in (1, \infty)$ and for all $(w_1, w_2, w_3) \in A_{(p_1, p_2, p_3)}$, where $\frac1p = \frac{1}{p_1} + \frac{1}{p_2} + \frac{1}{p_3}$ and $w=w_1 w_2 w_3$. 

In view of Theorem \ref{thm:RdF}, it is enough to show \eqref{SSS} in the case $p>1$. Note that given a sequence of functions $\{a_I(x)\}_{I \in \D}$, we have 
\begin{align*}
\bigg\|\bigg(\sum_{I \in \D} |a_I|^2 \bigg)^{\frac12} w \bigg\|_{L^p}
= \sup \bigg\{\bigg|\int_{\Rn} \sum_{I \in \D} a_I \, b_I \, w \, dx \bigg|: 
\bigg\|\bigg(\sum_{I \in \D} |b_I|^2 \bigg)^{\frac12}\bigg\|_{L^{p'}} \le 1\bigg\}. 
\end{align*}
Fix a sequence of functions $\{f_3^I\}_{I \in \D}$ satisfying  $\big\|\big(\sum\limits_{I \in \D} |f_3^I|^2 \big)^{\frac12}\big\|_{L^{p'}} \le 1$. It suffices to demonstrate 
\begin{align*}
\mathscr{J}
:= \bigg|\int_{\Rn} \sum_{I \in \D} \langle f_1 \rangle_I 
\langle f_2 \rangle_I \langle f_3, h_I \rangle \mathbf{1}_I |I|^{-\frac12} f_3^I \, w \, dx \bigg| 
\lesssim \prod_{i=1}^3 \|f_i w_i\|_{L^{p_i}}. 
\end{align*}
By H\"{o}lder's inequality, 
\begin{align}\label{JSD-1}
\mathscr{J}
&= \Big|\Big\langle f_3 w_3, \sum_{I \in \D} \langle f_1 \rangle_I 
\langle f_2 \rangle_I \langle f_3^I \, w \rangle_I |I|^{\frac12} \, h_I \, w_3^{-1} \Big\rangle\Big|
\\ \nonumber 
&=:|\langle f_3 w_3, \Psi w_3^{-1} \rangle|
\le \|f_3 w_3\|_{L^{p_3}} \|\Psi w_3^{-1}\|_{L^{p'_3}}, 
\end{align}
and 
\begin{align*}
S_{\D} \Psi 
= \bigg(\sum_{I \in \D} |\langle f_1 \rangle_I 
\langle f_2 \rangle_I \langle f_3^I w \rangle_I|^2 \mathbf{1}_I \bigg)^{\frac12}
\le \bigg(\sum_{I \in \D} \mathcal{M}(f_1, f_2, f_3^I w)^2 \bigg)^{\frac12}.
\end{align*}
Observe that 
\begin{align*}
\frac{1}{p'_3} = \frac{1}{p_1} + \frac{1}{p_2} + \frac{1}{p'}, \quad 
w_3^{-1} = w_1 w_2 w^{-1}, \quad\text{and}\quad 
(w_1, w_2, w^{-1}) \in A_{(p_1, p_2, p')}.
\end{align*}
Then \eqref{MVV} implies 
\begin{align}\label{JSD-2}
\|(S_{\D} \Phi) w_3^{-1}\|_{L^{p'_3}} 
&\le \bigg\|\bigg(\sum_{I \in \D} 
\mathcal{M}(f_1, f_2, f_3^I w)^2 \bigg)^{\frac12} w_3^{-1}\bigg\|_{L^{p'_3}}
\\ \nonumber 
&\lesssim \|f_1 w_1\|_{L^{p_1}} \|f_2 w_2\|_{L^{p_2}} 
\bigg\|\bigg(\sum_{I \in \D} |f_3^I w|^2 \bigg)^{\frac12} w^{-1}\bigg\|_{L^{p'}}
\\ \nonumber 
&\le \|f_1 w_1\|_{L^{p_1}} \|f_2 w_2\|_{L^{p_2}}. 
\end{align}
Hence, it follows from \eqref{JSD-1}--\eqref{JSD-2} and \eqref{JSD-3} that  
\begin{align*}
\mathscr{J}
\le \|f_3 w_3\|_{L^{p_3}} \|\Phi w_3^{-1}\|_{L^{p'_3}} 
\lesssim \|f_3 w_3\|_{L^{p_3}} \|(S_{\D} \Phi) w_3^{-1}\|_{L^{p'_3}} 
\lesssim \prod_{i=1}^3 \|f_i w_i\|_{L^{p_i}}. 
\end{align*}
This completes the proof. 
\end{proof}

Let us recall the dyadic Carleson embedding theorem from \cite[Theorem 4.5]{HP}. 
\begin{lemma}\label{lem:Car}
let $\D$ be a dyadic grid. Assume that the nonnegative numbers $\{\alpha_Q\}_{Q \in \D}$ satisfy 
\begin{align*}
\sum_{Q' \in \D: \, Q' \subset Q} \alpha_Q 
\le A \, |Q|, \quad\forall Q \in \D. 
\end{align*}
Then for all $p \in (1, \infty)$ and $f \in L^p(\Rn)$, 
\begin{align*}
\bigg(\sum_{Q \in \D} \alpha_Q |\langle f \rangle_Q|^p \bigg)^{\frac1p} 
\le A^{\frac1p} p' \|f\|_{L^p}. 
\end{align*}
\end{lemma}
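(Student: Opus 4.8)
The plan is to establish this embedding inequality by reducing it to the sharp $L^p$ bound for the dyadic maximal operator
\[
M_{\D} f(x) := \sup_{Q \in \D :\, Q \ni x} \langle |f| \rangle_Q, \qquad x \in \Rn,
\]
by means of a layer-cake argument that converts the Carleson packing hypothesis into control on the level sets $\{M_{\D} f > t\}$. Since $|\langle f \rangle_Q| \le \langle |f| \rangle_Q$ for every cube $Q$, I may and do assume $f \ge 0$ throughout.

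First I would fix $t>0$ and let $\mathcal{Q}_t \subseteq \D$ be the collection of maximal dyadic cubes $R$ with $\langle f \rangle_R > t$; these exist for $f \in L^p(\Rn)$ because $\langle f \rangle_Q \le |Q|^{-1/p}\|f\|_{L^p} \to 0$ as $\ell(Q) \to \infty$. The cubes in $\mathcal{Q}_t$ are pairwise disjoint, every $Q \in \D$ with $\langle f \rangle_Q > t$ lies in exactly one of them, and $\{M_{\D} f > t\} = \bigcup_{R \in \mathcal{Q}_t} R$. Applying the hypothesis to each $R \in \mathcal{Q}_t$ then gives
\[
\sum_{Q \in \D :\, \langle f \rangle_Q > t} \alpha_Q
\le \sum_{R \in \mathcal{Q}_t} \ \sum_{Q \in \D :\, Q \subseteq R} \alpha_Q
\le A \sum_{R \in \mathcal{Q}_t} |R|
= A\, |\{M_{\D} f > t\}|.
\]
Next I would rewrite the left-hand side of the claim through its distribution function: using $\langle f \rangle_Q^p = \int_0^\infty \mathbf{1}_{\{\lambda < \langle f \rangle_Q^p\}}\, d\lambda$, interchanging sum and integral by Tonelli, and substituting $\lambda = t^p$, one obtains
\[
\sum_{Q \in \D} \alpha_Q \langle f \rangle_Q^p
= \int_0^\infty \Big( \sum_{Q \in \D :\, \langle f \rangle_Q > t} \alpha_Q \Big) p\, t^{p-1}\, dt
\le A \int_0^\infty |\{M_{\D} f > t\}|\, p\, t^{p-1}\, dt
= A\, \|M_{\D} f\|_{L^p}^p .
\]
Taking $p$-th roots and inserting the sharp bound $\|M_{\D} f\|_{L^p} \le p'\, \|f\|_{L^p}$ then yields $\big(\sum_Q \alpha_Q |\langle f \rangle_Q|^p\big)^{1/p} \le A^{1/p} p' \|f\|_{L^p}$, as desired.

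The one genuinely needed input is the sharp inequality $\|M_{\D} f\|_{L^p} \le p'\, \|f\|_{L^p}$, which is exactly where the constant $p'$ enters — crude interpolation from the weak $(1,1)$ and $L^\infty$ endpoints would give a larger constant. I would derive it from the refined weak-type estimate $|\{M_{\D} f > t\}| \le t^{-1}\int_{\{M_{\D} f > t\}} f$, immediate from the disjointness of the $\mathcal{Q}_t$ and the inequality $\langle f \rangle_R > t$ on each $R$, combined with Tonelli and Hölder:
\[
\|M_{\D} f\|_{L^p}^p
= p\int_0^\infty t^{p-1}|\{M_{\D} f > t\}|\, dt
\le p\int_0^\infty t^{p-2}\int_{\{M_{\D} f > t\}} f\, dt
= \frac{p}{p-1}\int_{\Rn} f\,(M_{\D} f)^{p-1}\, dx
\le p'\,\|f\|_{L^p}\,\|M_{\D} f\|_{L^p}^{p-1},
\]
after which dividing by $\|M_{\D} f\|_{L^p}^{p-1}$ — legitimate once a standard truncation of $f$ guarantees $\|M_{\D} f\|_{L^p}<\infty$ — gives the bound. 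I do not anticipate a real obstacle here: the whole argument is a routine stopping-time-plus-distribution-function computation, and the only point needing mild care is the finiteness check preceding that division (the maximal bound itself being classical, one may alternatively quote it directly from the literature).
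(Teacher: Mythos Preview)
Your proof is correct and is precisely the standard argument for the dyadic Carleson embedding theorem. The paper does not prove this lemma at all: it simply quotes the result from \cite[Theorem~4.5]{HP}, whose proof is exactly the stopping-time plus layer-cake reduction to the sharp dyadic maximal inequality that you carry out.
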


\begin{proof}[\bf Proof of Theorem \ref{thm:Pi-cpt}] 
We only present the proof for $\mathbb{E}_{\w} \Pi_{\b_{\w}}$ because other cases can be shown by the same scheme and duality. In view of Theorems \ref{thm:EP-Lp} and \ref{thm:Pi}, it is enough to show that $\mathbb{E}_{\w} \Pi_{\b_{\w}}$ is compact from $L^4(\Rn)  \times L^4(\Rn)$ to $L^2(\Rn)$. With Theorem \ref{thm:RKLpq} in hand, this is reduced to proving that 
\begin{align}
\label{PibFK-1}
&\sup_{\substack{\|f_1\|_{L^4} \le 1 \\ \|f_2\|_{L^4} \le 1}} 
\|\mathbb{E}_{\w} \Pi_{\b_{\w}}(f_1, f_2) \|_{L^2}
\lesssim 1, 
\\ 
\label{PibFK-2}
\lim_{A \to \infty} &\sup_{\substack{\|f_1\|_{L^4} \le 1 \\ \|f_2\|_{L^4} \le 1}}
\|\mathbb{E}_{\w} \Pi_{\b_{\w}} (f_1, f_2) \, \mathbf{1}_{\{|\cdot| > A\}}\|_{L^2} 
= 0, 
\\ 
\label{PibFK-3}
\lim_{|v| \to 0} &\sup_{\substack{\|f_1\|_{L^4} \le 1 \\ \|f_2\|_{L^4} \le 1}}
\|\tau_v \, \mathbb{E}_{\w} \Pi_{\b_{\w}} (f_1, f_2) - 
\mathbb{E}_{\w} \Pi_{\b_{\w}} (f_1, f_2)\|_{L^2} 
=0. 
\end{align}
Theorem \ref{thm:Pi} gives \eqref{PibFK-1} at once.

Denote $\b_{\w}^N := \big\{ b_I \mathbf{1}_{\{I \notin \D_{\w}(N) \}} \big\}_{I \in \D_{\w}}$. Then $\b_{\w} \in \CMO(\Rn)$ gives 
\begin{align}\label{ANB}
\lim_{N \to \infty} \sup_{\w \in \Omega} \|\b_{\w}^N\|_{\BMO_{\D_{\w}}} = 0. 
\end{align}
As did in \eqref{KN}, Minkowski's inequality and Theorem \ref{thm:Pi} yield  
\begin{align*}
&\|\mathbb{E}_{\w} \Pi_{\b_{\w}} (f_1, f_2) \mathbf{1}_{\{|\cdot| > A\}}\|_{L^2} 
\le \mathbb{E}_{\w} \|\Pi_{\b_{\w}} (f_1, f_2) \mathbf{1}_{\{|\cdot| > A\}}\|_{L^2} 
\\
&= \mathbb{E}_{\w} \|\Pi_{\b_{\w}^N} (f_1, f_2) \mathbf{1}_{\{|\cdot| > A\}}\|_{L^2} 
\le \mathbb{E}_{\w} \|\Pi_{\b_{\w}^N} (f_1, f_2)\|_{L^2}  
\\
&\lesssim \mathbb{E}_{\w} \|\b_{\w}^N\|_{\BMO_{\D_{\w}}} \|f_1\|_{L^4} \|f_2 \|_{L^4} 
\le \sup_{\w \in \Omega} \|\b_{\w}^N\|_{\BMO_{\D_{\w}}} \|f_1\|_{L^4} \|f_2 \|_{L^4},  
\end{align*}
which together with \eqref{ANB} implies \eqref{PibFK-2}. 

To proceed, note that 
\begin{align}\label{HPB-1}
\|\tau_v \mathbb{E}_{\w} \Pi_{\b_{\w}} (f_1, f_2) 
- \mathbb{E}_{\w} \Pi_{\b_{\w}} (f_1, f_2)\|_{L^2} 
\le \mathbb{E}_{\w} \|\tau_v \Pi_{\b_{\w}} (f_1, f_2) - \Pi_{\b_{\w}} (f_1, f_2)\|_{L^2}
\end{align}
Let $0<|v| \le \frac{1}{16}$ and $a \ge 2$ be an integer chosen later. Then there exists an integer $N=N(v) \ge 2$ so that $2^{-a(N+1)} < |v| \le 2^{-aN}$. We split 
\begin{align}\label{HPB-2}
\|\tau_v \Pi_{\b_{\w}} (f_1, f_2) - \Pi_{\b_{\w}} (f_1, f_2)\|_{L^2} 
&\le \Upsilon_1(v; \w, f_1, f_2) + \Upsilon_2(v; \w, f_1, f_2), 
\end{align}
where 
\begin{align*}
\Upsilon_1(v; \w, f_1, f_2)
&:= \bigg\|\sum_{I \notin \D_{\w}(N)}  b_I
\langle f_1 \rangle_I \langle f_2 \rangle_I  (\tau_v h_I - h_I) \bigg\|_{L^2}, 
\\
\Upsilon_2(v; \w, f_1, f_2) 
&:= \bigg\|\sum_{I \in \D_{\w}(N)} b_I  
\langle f_1 \rangle_I  \langle f_2 \rangle_I (\tau_v h_I - h_I) \bigg\|_{L^2}. 
\end{align*}
For the first term, we use Theorem \ref{thm:Pi} to deduce  
\begin{align*}
&\Upsilon_1(v; \w, f_1, f_2) 
\le 2 \bigg\|\sum_{I \notin \D_{\w}(N)} b_I  
\langle f_1 \rangle_I \langle f_2 \rangle_I  h_I \bigg\|_{L^2}
\\
&= 2 \| \Pi_{\b_{\w}^N} (f_1, f_2)\|_{L^2}
\lesssim \sup_{\w \in \Omega} \|\b_{\w}^N\|_{\BMO_{\D_{\w}}}  
\|f_1\|_{L^4} \|f_2\|_{L^4}, 
\end{align*}
where the implicit constant is independent of $\omega$. This along with \eqref{ANB} gives 
\begin{align}\label{HO-1}
\lim_{|v| \to 0} \sup_{\w \in \Omega} 
\sup_{\substack{\|f_1\|_{L^4} \le 1 \\ \|f_2\|_{L^4} \le 1}} 
\Upsilon_1(v; \w, f_1, f_2) = 0.
\end{align}
To estimate $\Upsilon_2$, observe that 
\begin{align}\label{bcar}
\sum_{I \in \D_{\w}:\, I \subset Q} |b_I|^2 
\le \|\b\|_{\BMO_{\D_{\w}}}^2 \, |Q|
\le |Q|, 
\quad\forall Q \in \D_{\w}. 
\end{align}
Then, by the Cauchy-Schwarz inequality, \eqref{car-DN}, \eqref{bcar}, and Lemma \ref{lem:Car}, 
\begin{align*}
\Upsilon_2(v; \w, f_1, f_2)
& \le \sum_{I \in \D_{\w}(N)} |b_I| 
|\langle f_1 \rangle_I| |\langle f_2 \rangle_I| \|\tau_v h_I - h_I\|_{L^2}
\\
& \lesssim \sum_{I \in \D_\omega(N)} |b_I| 
|\langle f_1 \rangle_I| |\langle f_2 \rangle_I| 
|v|^{\frac12} \ell(I)^{-\frac12} 
\\
& \le |v|^{\frac12} \ell(I)^{-\frac12} \big[\# \D_\omega(N) \big]^{\frac12} 
\bigg[\sum_{I \in \D_{\w}} |b_I|^2 
|\langle f_1 \rangle_I|^2 |\langle f_2 \rangle_I|^2 \bigg]^{\frac12}  
\\
& \le 2^{-a\frac{N}{2}} 2^{\frac{N}{2}} 2^{\frac12 (3nN+n+1)}
\prod_{i=1}^2 \bigg[\sum_{I \in \D_{\w}} 
|b_I|^2 |\langle f_i \rangle_I |^4 \bigg]^{\frac14}
\\
& \lesssim 2^{\frac{N}{2}(3n+1-a)} \|f_1\|_{L^4} \|f_2\|_{L^4}, 
\end{align*}
where the implicit constant are independent of $v$, $\w$, $f_1$, and $f_2$. Choosing $a>3n+1$, this immediately implies 
\begin{align}\label{HO-2}
\lim_{|v| \to 0} \sup_{\w \in \Omega} 
\sup_{\substack{\|f_1\|_{L^4} \le 1 \\ \|f_2\|_{L^4} \le 1}} 
\Upsilon_2(v; \w, f_1, f_2) = 0.
\end{align}
Consequently, \eqref{PibFK-3} follows from \eqref{HPB-1}--\eqref{HO-2}. The proof is complete. 
\end{proof}

\section{The necessity of hypotheses \eqref{H1}--\eqref{H3}}\label{sec:nec}
This section is devoted to showing Theorems \ref{thm:CCZK}, \ref{thm:WCP}, and \ref{thm:T1CMO}, which immediately gives ${\rm (c) \Longrightarrow (a)}$ in Theorem \ref{thm:cpt}.

\subsection{Compact Calder\'{o}n--Zygmund kernels}\label{sec:CCZK}
First, let us demonstrate Theorem \ref{thm:CCZK}. To this end, we present two lemmas below. 

\begin{lemma}\label{lem:size} 
Assume that $K$ satisfies the smoothness conditions \eqref{eq:Holder-1}--\eqref{eq:Holder-3} and that 
\begin{align*}
\lim_{|x-y| + |x-z| \to \infty} K(x, y, z) = 0.
\end{align*}  
Then $K$ satisfies the size condition \eqref{eq:Size} with another triple $(F'_1, F'_2, F'_3) \in \F$.  
\end{lemma}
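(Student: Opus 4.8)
The plan is to deduce the size bound \eqref{eq:Size} from the H\"older conditions by integrating the smoothness estimate along a suitable path and using the decay hypothesis $K(x,y,z) \to 0$ as $|x-y| + |x-z| \to \infty$ as the ``boundary value.'' Fix $(x,y,z) \notin \Delta$ and set $R := |x-y| + |x-z| > 0$. The idea is to move one of the three points out to infinity in the direction that increases $R$, say along the ray through $x$ starting from $y$: for $t \ge 0$ put $y_t := y + t(y-x)$ (if $|x-y| > 0$; otherwise $|x-z| > 0$ and we translate $z$ instead). Along this path $|x - y_t| = (1+t)|x-y|$ is increasing, so $|x-y_t| + |x-z|$ is comparable to and eventually much larger than $R$, and moreover each increment satisfies the smallness requirement $|y_{t'} - y_t| \le \max\{|x-y_t|, |x-z|\}/2$ needed to apply \eqref{eq:Holder-2}, after discretizing $t$ dyadically, i.e. along $y_{2^k}$, $k \ge 0$. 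Then
\begin{align*}
|K(x,y,z)|
&\le \sum_{k \ge 0} |K(x, y_{2^k}, z) - K(x, y_{2^{k+1}}, z)|
+ \lim_{k \to \infty} |K(x, y_{2^k}, z)| \\
&\le \sum_{k \ge 0} F(x, y_{2^k}, z) \frac{|y_{2^k} - y_{2^{k+1}}|^{\delta}}{(|x - y_{2^k}| + |x-z|)^{2n+\delta}},
\end{align*}
the last limit being zero by hypothesis, and $|y_{2^k} - y_{2^{k+1}}| = 2^k|x-y| \lesssim |x - y_{2^k}|$. Each term is then bounded by a constant times $F(x, y_{2^k}, z)(|x-y_{2^k}| + |x-z|)^{-2n} \le F(x,y_{2^k},z)\, 2^{-2nk} R^{-2n}$, and summing in $k$ gives $|K(x,y,z)| \lesssim R^{-2n} \sup_{k \ge 0} F(x, y_{2^k}, z)$.

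It remains to package $\sup_{k \ge 0} F(x, y_{2^k}, z)$ into a new triple $(F'_1, F'_2, F'_3) \in \F$. Recall $F(x,y,z) = F_1(|x-y|+|x-z|)\,F_2(|x-y|+|x-z|)\,F_3(|x+y|+|x+z|)$. Along the path, $|x - y_{2^k}| + |x-z| \ge R$, so by monotonicity (using Lemma \ref{lem:PP}\eqref{list:P3} we may assume $F_1$ increasing and $F_2, F_3$ decreasing) $F_2(|x-y_{2^k}|+|x-z|) \le F_2(R)$ and $F_1(|x-y_{2^k}| + |x-z|) \le \sup_{s} F_1(s) \lesssim 1$; the genuinely new decay as $R \to 0$ must come from somewhere, and indeed as $R \to 0$ the point $y$ approaches $x$ but $y_{2^k}$ for large $k$ stays bounded away, so one instead keeps the first term $k=0$ separate: $F(x,y,z) = F_1(R) F_2(R) F_3(|x+y|+|x+z|)$ already carries the factor $F_1(R) \to 0$, and for $k \ge 1$ one exploits the telescoping more carefully, or alternatively uses the improved-kernel formulation of Lemma \ref{lem:improve}, which recasts $F$ in terms of the variables $R$, and $1 + \frac{|x+y|+|x+z|}{1+R}$; these are precisely the quantities controlled along the path. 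Applying Lemma \ref{lem:improve}\eqref{size-imp} first to replace $F$ by a monotone $F'$ and then running the above telescoping argument with $F'$ in place of $F$, one checks that each of the three limiting behaviors ($R \to 0$, $R \to \infty$, $|x+y|+|x+z| \to \infty$) of $\sup_{k} F'(x, y_{2^k}, z)$ vanishes, using that along the path the first two variables only increase while $\frac{|x+y_{2^k}| + |x+z|}{1 + |x-y_{2^k}| + |x-z|}$ stays comparable to $\frac{|x+y|+|x+z|}{1+R}$ up to the scale of the first few terms. Defining $F'_1, F'_2, F'_3$ as the appropriate suprema (exactly as in the proof of Lemma \ref{lem:improve}) then exhibits $(F'_1, F'_2, F'_3) \in \F$ with $|K(x,y,z)| \le F'(x,y,z)(|x-y|+|x-z|)^{-2n}$.

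The main obstacle I anticipate is the $R \to 0$ direction: the telescoping path sends $y$ (or $z$) \emph{away} from $x$, so the summed estimate naturally produces decay only in $R$ large and in the $|x+y|+|x+z|$ variable, not in $R$ small. One must recover the small-scale decay $F_1(R) \to 0$ from the $k = 0$ term of the telescoping sum alone (whose prefactor $|y - y_{2}|^{\delta}/(|x-y|+|x-z|)^{2n+\delta} \simeq R^{-2n}$ comes multiplied by $F(x,y,z)$ itself, which contains $F_1(R)$), while arguing that the tail $\sum_{k \ge 1}$ contributes a term whose coefficient also tends to $0$ as $R \to 0$ — this is where one needs the relative-size variable and the monotonicity normalizations, and where Lemma \ref{lem:improve} does the real work. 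I would carry out the steps in this order: (1) normalize $F_1, F_2, F_3$ monotone via Lemma \ref{lem:PP}; (2) pass to the improved kernel bound via Lemma \ref{lem:improve}\eqref{size-imp}; (3) set up the dyadic telescoping path and the decay-at-infinity boundary term; (4) sum the series to get $R^{-2n} \sup_k F'(x,y_{2^k},z)$; (5) verify the three limits and define $(F'_1,F'_2,F'_3) \in \F$.
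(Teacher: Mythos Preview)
Your overall strategy—telescope $K(x,y,z)$ to $K$ at infinity via the H\"older conditions and use the decay hypothesis as boundary term—is exactly the paper's approach. But there is a concrete gap in your discretization, and you are missing the key simplifying idea that makes the paper's proof clean.

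\textbf{The step-size gap.} With $y_t = y + t(y-x)$ and $t_k = 2^k$, the increment is $|y_{2^k} - y_{2^{k+1}}| = 2^k|x-y|$, while condition \eqref{eq:Holder-2} at the base point $(x,y_{2^k},z)$ requires $|y_{2^k}-y_{2^{k+1}}| \le \tfrac12\max\{|x-y_{2^k}|,|x-z|\}$; when $|x-y|\ge |x-z|$ this reads $2^k|x-y| \le \tfrac12(1+2^k)|x-y|$, i.e.\ $2^k \le 1$, which fails for every $k\ge 1$. You need a geometric ratio $a\le 3/2$ (the paper uses $a=7/6$), not $a=2$.

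\textbf{The missed idea.} Your ``main obstacle''—tracking the $F_3$ variable and recovering the $R\to 0$ decay—comes from moving only one point. The paper instead first combines \eqref{eq:Holder-1}--\eqref{eq:Holder-3} into a single H\"older estimate in all three variables (its display \eqref{FF-Holder}), and then moves $x$, $y$, $z$ \emph{simultaneously}: at each step it increases $x$ by $\epsilon$ and decreases $y,z$ by $\epsilon$ with $\epsilon = (|x_{k-1}-y_{k-1}|+|x_{k-1}-z_{k-1}|)/24$. This has the crucial feature that $x_k+y_k$ and $x_k+z_k$ are \emph{unchanged}, so $|x_k+y_k|+|x_k+z_k| = |x+y|+|x+z|$ and $F_3$ is evaluated at a fixed argument along the entire iteration. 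Meanwhile $|x_k-y_k|+|x_k-z_k| = a^k R$ with $a=7/6$. The telescoping then gives
\[
|K(x,y,z)| \le \sum_{k\ge 0} \frac{F_1(a^kR)\,F_2(a^kR)\,F_3(|x+y|+|x+z|)}{(a^kR)^{2n}},
\]
and setting $F'_3 := F_3$ and $F'_i(t):=\sum_{k\ge 0}a^{-k}F_i(a^kt)$ for $i=1,2$ immediately yields $(F'_1,F'_2,F'_3)\in\F$ (dominated convergence gives $F'_1(0^+)=0$ and $F'_2(\infty)=0$). No appeal to Lemma~\ref{lem:improve} is needed for this part, and the small-scale decay falls out for free. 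Your path could likely be pushed through, but the paper's coordinated motion of all three points is the trick that dissolves your anticipated obstacle entirely.
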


\begin{proof}
We first claim that 
\begin{align}\label{FF-Holder} 
|K(x, y, z) - K(x', y', z')| 
\leq F(x, y, z) \frac{(|x-x'| + |y-y'| + |z-z'|)^{\delta}}{(|x-y| + |x-z|)^{2n+\delta}}  
\end{align}
whenever $|x-x'| + |y-y'| + |z-z'| \leq (|x-y| + |x-z|)/8$. Indeed, for such $x$, $y$ and $z$, the condition \eqref{eq:Holder-1} and Lemma \ref{lem:PP} give 
\begin{align}\label{KKF-1}  
|K(x, y, z) - K(x', y, z)| 
\leq F_1(x, y, z) \frac{|x-x'|^{\delta}}{(|x-y| + |x-z|)^{2n+\delta}},   
\end{align}
where 
\begin{align*}
F_1(x, y, z) := F_{1, 1}(|x-y| + |x-z|) F_{1, 2}(|x-y| + |x-z|) F_{1, 3}(|x+y| + |x+z|), 
\end{align*}
and $(F_{1, 1}, F_{1, 2}, F_{1, 3}) \in \F$ with $F_{1, 1}$ being monotone increasing, $F_{1, 2}$ and $F_{1, 3}$ being monotone decreasing. It is easy to verify  that 
\begin{align}\label{XY34}  
\frac34 \le \frac{|x'-y| + |x'-z|}{|x-y| + |x-z|} \le \frac54, 
\end{align}
and 
\begin{align*} 
|y-y'| \leq (|x-y| + |x-z|)/8 \leq (|x'-y| + |x'-z|)/6, 
\end{align*}
which together with \eqref{eq:Holder-2} implies 
\begin{align}\label{KFZ-1}  
|K(x', y, z) - K(x', y', z)| 
&\leq F_1(x', y, z) \frac{|y-y'|^{\delta}}{(|x'-y| + |x'-z|)^{2n+\delta}}  
\\ \nonumber 
&\leq 2^{2n+\delta}F_1(x', y, z)  \frac{|y-y'|^{\delta}}{(|x-y| + |x-z|)^{2n+\delta}}. 
\end{align}
Define
\begin{align*}
F_{2, 1}(t) &:= \sup_{\substack{|x-y| + |x-z| \le t \\ 8|x-x'| \le |x-y| + |x-z|}} 
\big(2^{2n+\delta} F_1(x', y, z) \big)^{\frac13},
\\ 
F_{2, 2}(t) &:= \sup_{\substack{|x-y| + |x-z| \ge t \\ 8|x-x'| \le |x-y| + |x-z|}} 
\big(2^{2n+\delta} F_1(x', y, z) \big)^{\frac13},
\\
F_{2, 3}(t) &:= \sup_{\substack{|x+y| + |x+z| \ge t \\ 8|x-x'| \le |x-y| + |x-z|}} 
\big(2^{2n+\delta} F_1(x', y, z) \big)^{\frac13},
\end{align*}
and then set 
\begin{align*}
F_2(x, y, z) := F_{2, 1}(|x-y| + |x-z|) F_{2, 2}(|x-y| + |x-z|) F_{2, 3}(|x+y| + |x+z|). 
\end{align*}
Thus, $F_{2, 1}$ is bounded and monotone increasing, both $F_{2, 2}$ and $F_{2, 3}$ are bounded and monotone decreasing, and 
\begin{align}\label{KFZ-2}
2^{2n+\delta} F_1(x', y, z) \le F_2(x, y, z). 
\end{align}
Together with \eqref{XY34}, the property $\lim_{t \to 0} F_{1, 1}(t) = \lim_{t \to \infty} F_{1, 2}(t) =0$ implies 
$\lim_{t \to 0} F_{2, 1}(t) = \lim_{t \to \infty} F_{2, 2}(t) =0$. Moreover, if $|x+y| + |x+z| \le |x-y| + |x-z|$, then $\lim_{t \to \infty} F_{1, 2}(t) =0$ gives $\lim_{t \to \infty} F_{2, 3}(t) =0$. If $|x+y| + |x+z| \ge |x-y| + |x-z|$, then 
\begin{align*}
&|x'+y| + |x'+z| 
\ge |x-y| + |x-z| - 2|x-x'| 
\\
&\ge |x-y| + |x-z| - (|x-y| + |x-z|)/4
\ge 3(|x-y| + |x-z|)/4, 
\end{align*}
which along with $\lim_{t \to \infty} F_{1, 3}(t) =0$ yields $\lim_{t \to \infty} F_{2, 3}(t) = 0$. This shows $(F_{2, 1}, F_{2, 2}, F_{2, 3}) \in \F$. Collecting \eqref{XY34}--\eqref{KFZ-2}, we conclude 
\begin{align}\label{KKF-2}  
|K(x', y, z) - K(x', y', z)| 
\leq F_2(x, y, z) \frac{|y-y'|^{\delta}}{(|x-y| + |x-z|)^{2n+\delta}}.  
\end{align}
Much as above, there exists $(F_{3, 1}, F_{3, 2}, F_{3, 3}) \in \F$ such that 
\begin{align}\label{KKF-3}   
|K(x', y', z) - K(x', y', z')| 
&\leq F_3(x, y, z) \frac{|z-z'|^{\delta}}{(|x-y| + |x-z|)^{2n+\delta}},   
\end{align}
where 
\begin{align*}
F_3(x, y, z) := F_{3, 1}(|x-y| + |x-z|) F_{3, 2}(|x-y| + |x-z|) F_{3, 3}(|x+y| + |x+z|). 
\end{align*}
Hence, as did in the proof of Lemma \ref{lem:PP},  \eqref{FF-Holder} is a consequence of \eqref{KKF-1}, \eqref{KKF-2}, and \eqref{KKF-3}.

Let $x, y, z \in \Rn$ with $x \neq y$ or $x \neq z$. Let $a=7/6$. We may assume that $x \ge y$ and $x \ge z$. 
Let $x_0=x$, $y_0=y$, and $z_0=z$. For each $k \ge 1$, define 
\begin{align*}
x_k &= x_{k-1} + (|x_{k-1} - y_{k-1}| + |x_{k-1} - z_{k-1}|)/24, 
\\ 
y_k &= y_{k-1} - (|x_{k-1} - y_{k-1}| + |x_{k-1} - z_{k-1}|)/24, 
\\
z_k &= z_{k-1} - (|x_{k-1} - y_{k-1}| + |x_{k-1} - z_{k-1}|)/24. 
\end{align*}
Then for any $k \ge 0$, $x_k \ge y_k$, $x_k \ge z_k$, and 
\begin{align*}
|x_k - y_k| + |x_k - z_k| & = a (|x_{k-1} - y_{k-1}| + |x_{k-1} - z_{k-1}|), 
\\ 
|x_k + y_k| + |x_k + z_k| &= |x_{k-1} + y_{k-1}| + |x_{k-1} + z_{k-1}|, 
\end{align*}
which gives 
\begin{align}
\label{XK-1} |x_k - y_k| + |x_k - z_k| & = a^k (|x - y| + |x - z|), 
\\ 
\label{XK-2} |x_k + y_k| + |x_k + z_k| &= |x + y| + |x + z|. 
\end{align}
Moreover, 
\begin{align*}
|x_{k-1} - x_k| + |y_{k-1} - y_k| + |z_{k-1} - z_k| 
\le (|x_{k-1} - y_{k-1}| + |x_{k-1} - z_{k-1}|)/8, 
\end{align*}
which along with \eqref{FF-Holder} implies 
\begin{align*}
|K(x_{k-1}, y_{k-1}, z_{k-1})| 
\le |K(x_k, y_k, z_k)| + 
\frac{F(x_{k-1}, y_{k-1}, z_{k-1})}{(|x_{k-1} - y_{k-1}| + |x_{k-1} - z_{k-1}|)^2}. 
\end{align*}
Iterating the inequality above, we have 
\begin{align*}
|K(x, y, z)| 
\le \lim_{k \to \infty}|K(x_k, y_k, z_k)| + 
\sum_{k=0}^{\infty} \frac{F(x_k, y_k, z_k)}{(|x_k - y_k| + |x_k - z_k|)^2}. 
\end{align*}
Choosing  
\begin{align*}
F'_3 (t) : = F_3(t) \quad\text{ and }\quad 
F'_i(t) := \sum_{k =0}^{\infty} a^{-k} F_i(a^k t), \quad i=1, 2, 
\end{align*}
we see that $(F'_1, F'_2, F'_3) \in \F$, and \eqref{XK-1}--\eqref{XK-2} imply 
\begin{align*}
|K(x, y, z)| 
\lesssim \frac{F'(x, y, z)}{(|x - y| + |x - z|)^2}, 
\end{align*}
where 
\begin{align*}
F'(x, y, z) := F'_1(|x-y| + |x-z|) F'_2(|x-y| + |x-z|) F'_3(|x+y| + |x+z|). 
\end{align*}
The proof is complete.  
\end{proof}

Given $\lambda>0$, $y \in \Rn$, and a function $\Phi$ on $\Rn$, we denote 
\begin{align*}
\tau_y \Phi(x) := \Phi(x-y) 
\quad\text{ and }\quad 
\Phi_{\lambda}(x) := \lambda^{-n} \Phi(\lambda^{-1} x), \quad x \in \Rn.
\end{align*}

\begin{lemma}\label{lem:TTK}
Let $T$ be a bilinear operator associated with a standard bilinear Calder\'{o}n--Zygmund kernel $K$ with the parameter $\delta \in (0, 1]$. Let $0 < \varepsilon < 2^{-\delta} \|K\|_{\rm{CZ}(\delta)}$, $x' \in \Rn$ and $(x, y, z) \in \R^{3n} \setminus \Delta$ such that $|x-x'| \le \frac14 \max\{|x-y|, |x-z|\}$. Then for any $0<\lambda_i<(\varepsilon \|K\|_{\rm{CZ}(\delta)}^{-1})^{\frac{1}{\delta}} |x-x'|$, $i=1,2,3$, 
\begin{align}
\label{eq:TTT-1}
|\langle T(\tau_y \Phi_{\lambda_2}, \tau_z \Phi_{\lambda_3}), \tau_x \Phi_{\lambda_1} \rangle - K(x, y, z) | 
&\lesssim \frac{\varepsilon \, |x-x'|^{\delta}}{(|x-y|+|x-z|)^{2n+\delta}},  
\\
\label{eq:TTT-2} 
|\langle T(\tau_y \Phi_{\lambda_2}, \tau_z \Phi_{\lambda_3}), 
\tau_x \Phi_{\lambda_1} - \tau_{x'} \Phi_{\lambda_1} \rangle|
& \lesssim \frac{|x-x'|^{\delta}}{(|x-y|+|x-z|)^{2n+\delta}}, 
\end{align}
where $\Phi$ is a positive smooth function such that $\supp \Phi \subset \I$ and $\int_{\Rn} \Phi \, dx=1$. 
\end{lemma}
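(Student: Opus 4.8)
\textbf{Proof plan for Lemma \ref{lem:TTK}.}
The plan is to estimate the pairing $\langle T(\tau_y \Phi_{\lambda_2}, \tau_z \Phi_{\lambda_3}), \tau_x \Phi_{\lambda_1} \rangle$ against $K(x,y,z)$ directly from the integral representation of $T$, exploiting that the three bumps have disjoint supports once the $\lambda_i$ are small enough. First I would record the key geometric consequence of the smallness hypothesis: since $0<\lambda_i<(\varepsilon\|K\|_{\mathrm{CZ}(\delta)}^{-1})^{1/\delta}|x-x'|\le |x-x'|\le \tfrac14\max\{|x-y|,|x-z|\}$, the supports $\supp(\tau_x\Phi_{\lambda_1})\subset x+\lambda_1\I$, $\supp(\tau_y\Phi_{\lambda_2})\subset y+\lambda_2\I$, $\supp(\tau_z\Phi_{\lambda_3})\subset z+\lambda_3\I$ are pairwise disjoint (their mutual distances stay comparable to $|x-y|+|x-z|$), so that the defining formula in Definition \ref{def:CZO} applies:
\begin{align*}
\langle T(\tau_y \Phi_{\lambda_2}, \tau_z \Phi_{\lambda_3}), \tau_x \Phi_{\lambda_1} \rangle
= \int_{\R^{3n}} K(u,v,w)\, \Phi_{\lambda_2}(v-y)\Phi_{\lambda_3}(w-z)\Phi_{\lambda_1}(u-x)\, du\, dv\, dw .
\end{align*}
Because each $\Phi_{\lambda_i}$ has integral one, the right-hand side is an average of $K(u,v,w)$ over $(u,v,w)$ ranging in a box of size $\lambda_1\times\lambda_2\times\lambda_3$ around $(x,y,z)$; subtracting $K(x,y,z)$ (also written as the same average of the constant $K(x,y,z)$) reduces \eqref{eq:TTT-1} to bounding $\fint\fint\fint |K(u,v,w)-K(x,y,z)|$ over that box.

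Next I would invoke the Hölder/smoothness conditions \eqref{eq:Holder-1}--\eqref{eq:Holder-3} of the standard kernel, applied in a three-step telescoping $K(u,v,w)\to K(x,v,w)\to K(x,y,w)\to K(x,y,z)$, valid since $|u-x|\le\lambda_1$, $|v-y|\le\lambda_2$, $|w-z|\le\lambda_3$ are all at most $|x-x'|\le\tfrac14\max\{|x-y|,|x-z|\}$ and the denominators $|u-v|+|u-w|$ etc.\ stay comparable to $|x-y|+|x-z|$ throughout. Each step contributes a term of the shape
\begin{align*}
\|K\|_{\mathrm{CZ}(\delta)} \frac{\lambda_i^{\delta}}{(|x-y|+|x-z|)^{2n+\delta}}
\le \frac{\varepsilon\,|x-x'|^{\delta}}{(|x-y|+|x-z|)^{2n+\delta}},
\end{align*}
where the last inequality is exactly the constraint $\lambda_i^{\delta}<\varepsilon\|K\|_{\mathrm{CZ}(\delta)}^{-1}|x-x'|^{\delta}$. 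Summing the three contributions gives \eqref{eq:TTT-1}. For \eqref{eq:TTT-2} I would write $\langle T(\tau_y\Phi_{\lambda_2},\tau_z\Phi_{\lambda_3}),\tau_x\Phi_{\lambda_1}-\tau_{x'}\Phi_{\lambda_1}\rangle$ as the difference of two pairings, each of which equals a $K$-average of the type above but now centered at $x$ and at $x'$ respectively; using the size bound \eqref{eq:Size} (constant version) and the first-variable smoothness \eqref{eq:Holder-1}, together with $|x-x'|\le\tfrac14\max\{|x-y|,|x-z|\}$ and the smallness of the $\lambda_i$, one gets the claimed bound $\lesssim |x-x'|^{\delta}(|x-y|+|x-z|)^{-2n-\delta}$ with an absolute constant (no $\varepsilon$).

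The only delicate point is bookkeeping the geometry: one must check carefully that for $(u,v,w)$ in the relevant boxes the quantity $|u-v|+|u-w|$ stays within a fixed multiplicative factor of $|x-y|+|x-z|$, so that the smoothness estimates can be applied and the resulting denominators replaced by $(|x-y|+|x-z|)^{2n+\delta}$ up to harmless constants, and that the supports are genuinely disjoint so Definition \ref{def:CZO} is legitimately invoked. This is routine given $|x-x'|\le\tfrac14\max\{|x-y|,|x-z|\}$ and $\lambda_i\le|x-x'|$, but it is where all the hypotheses get used; once it is in place, \eqref{eq:TTT-1} and \eqref{eq:TTT-2} follow from the telescoping argument and the constraints on $\lambda_i$ as above. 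I do not expect any serious obstacle beyond this.
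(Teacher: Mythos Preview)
Your proposal is correct and matches the paper's own proof essentially step for step: write the pairing via the kernel representation (legitimate since the three bump supports have empty triple intersection), subtract $K(x,y,z)$ using $\int\Phi_{\lambda_i}=1$, telescope in three steps via \eqref{eq:Holder-1}--\eqref{eq:Holder-3}, and absorb $\|K\|_{\mathrm{CZ}(\delta)}\lambda_i^{\delta}$ into $\varepsilon|x-x'|^{\delta}$ by the constraint on $\lambda_i$. For \eqref{eq:TTT-2} the paper likewise takes the difference of the two integral representations and applies \eqref{eq:Holder-1} directly to $K(x+v_1,y+v_2,z+v_3)-K(x'+v_1,y+v_2,z+v_3)$; the size condition \eqref{eq:Size} is not actually needed there, so you can drop that reference, but otherwise your outline is the same argument.
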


\begin{proof}
Fix $0 < \lambda_i < (\varepsilon \|K\|_{\rm{CZ}(\delta)}^{-1})^{\frac{1}{\delta}} |x-x'|$, $i=1,2,3$. For any $v_i \in \supp(\Phi_{\lambda_i})$, we see that 
\begin{align*}
4 \max_i |v_i| 
\le 2 \max_i \lambda_i
\le |x-x'|
\le \max\{|x-y|, |x-z|\}/4, 
\end{align*}
which implies 
\begin{align}
\label{xxvv-1} |x-x'| & \le \max\{|x +v_1 - y - v_2|, |x+v_1-z-v_3|\}/2, 
\\ 
\label{xxvv-2} |v_1| & \le \max\{|x - y - v_2|, |x-z-v_3|\}/2, 
\\
\label{xxvv-3} |v_2|  & \le \max\{|x-y|, |x-z-v_3|\}/2, 
\\ 
\label{xxvv-4} |v_3| & \le \max\{|x-y|, |x-z|\}/2.
\end{align}
Then by \eqref{eq:Holder-1}--\eqref{eq:Holder-3} and \eqref{xxvv-2}--\eqref{xxvv-4},  
\begin{align*}
&|K(x+v_1, y+v_2, z+v_3) - K(x, y, z)| 
\\ 
&\le |K(x+v_1, y+v_2, z+v_3) - K(x, y+v_2, z+v_3)| 
\\
&\quad+ |K(x, y+v_2, z+v_3) - K(x, y, z+v_3)| 
\\
&\quad + |K(x, y, z+v_3) - K(x, y, z)| 
\\
&\lesssim \frac{|v_1|^{\delta}}{(|x-y-v_2|+|x-z-v_3|)^{2n+\delta}}
\\
&\quad + \frac{|v_2|^{\delta}}{(|x-y|+|x-z-v_3|)^{2n+\delta}}
+ \frac{|v_3|^{\delta}}{(|x-y|+|x-z|)^{2n+\delta}}
\\
&\lesssim \frac{\max_i \lambda_i^{\delta}}{(|x-y|+|x-z|)^{2n+\delta}} 
\lesssim \frac{\varepsilon \, |x-x'|^{\delta}}{(|x-y|+|x-z|)^{2n+\delta}}. 
\end{align*}
Hence, we arrive at 
\begin{align*}
\text{LHS of } \eqref{eq:TTT-1} 
&= \bigg|\int_{\R^{3n}} [K(x+v_1, y+v_2, z+v_3) - K(x, y, z)] 
\prod_{i=1}^3 \Phi_{\lambda_i}(v_i) \, dv_i \bigg|
\\
&\lesssim \int_{\R^{3n}} \frac{\varepsilon |x-x'|^{\delta}}{(|x-y|+|x-z|)^{2n+\delta}} 
\prod_{i=1}^3 \Phi_{\lambda_i}(v_i) \, dv_i 
\leq \frac{\varepsilon |x-x'|^{\delta}}{(|x-y|+|x-z|)^{2n+\delta}}. 
\end{align*}
To prove \eqref{eq:TTT-2}, we use \eqref{eq:Holder-1} and \eqref{xxvv-1} to obtain 
\begin{align*}
&|\mathbf{K}(v_1, v_2, v_3)| 
:= |K(x+v_1, y+v_2, z+v_3) - K(x'+v_1, y+v_2, z+v_3)|
\\
&\lesssim \frac{|x-x'|^{\delta}}{(|x+v_1-y-v_2|+|x+v_1-z-v_3|)^{2n+\delta}}
\simeq \frac{|x-x'|^{\delta}}{(|x-y|+|x-z|)^{2n+\delta}}. 
\end{align*}
As a consequence, 
\begin{align*}
\text{LHS of } \eqref{eq:TTT-2} 
= \bigg|\int_{\R^{3n}} \mathbf{K}(v_1, v_2, v_3) \prod_{i=1}^3 \Phi_{\lambda_i}(v_i) \, dv_i \bigg|
\lesssim \frac{|x-x'|^{\delta}}{(|x-y|+|x-z|)^{2n+\delta}}. 
\end{align*}
The proof is complete. 
\end{proof}

\begin{proof}[\bf Proof of Theorem \ref{thm:CCZK}]
By symmetry and Lemma \ref{lem:size}, it suffices to prove that there exists $\delta' \in (0, \delta)$ such that 
\begin{align*}
\sup_{x' \in \Rn: x' \neq x \atop |x-x'| \leq \max\{|x-y|, |x-z| \}/2} \mathcal{E}(x, y, z; x') \to 0, 
\end{align*}
whenever $|x-x'| \to 0$, or $|x-y|+|x-z| \to \infty$, or $|x+y|+|x+z| \to \infty$ and $|x-y|+|x-z| \simeq 1$, where 
\begin{align*}
\mathcal{E}(x, y, z; x') 
:= |x-x'|^{-\delta'} \frac{|K(x, y, z) - K(x', y, z)| }{(|x-y| + |x-z|)^{-(2n+\delta')}}, 
\end{align*}
since the smoothness condition \eqref{eq:Holder-1} gives   
\begin{align*}
\mathcal{E}(x, y, z; x') 
\lesssim \frac{|x-x'|^{\delta-\delta'}}{(|x-y| + |x-z|)^{\delta-\delta'}} 
\lesssim 1, 
\end{align*}
whenever $|x-x'| \leq \max\{|x-y|, |x-z| \}/2$. The above immediately implies that 
\begin{equation}\label{Exk}
\begin{array}{c}
\lim\limits_{k \to \infty} \mathcal{E}(x_k, y_k, z_k; x'_k)=0 
\text{ for any sequence $\{(x_k, y_k, z_k)\}_{k \in \N}$} 
\\[4pt] 
\text{with $|x_k-x'_k| \leq \max\{|x_k-y_k|, |x_k-z_k|\}/2$} 
\end{array} 
\end{equation} 
such that 
\begin{equation}\label{eq:lim-zero}
\lim_{k \to \infty} \frac{|x_k-x'_k|}{|x_k-y_k| + |x_k-z_k|} = 0. 
\end{equation}
This means that it remains to demonstrate \eqref{Exk} when $\{(x_k, y_k, z_k)\}_{k \in \N}$ does not satisfy \eqref{eq:lim-zero} but satisfies one of the following: 
\begin{equation}\label{case123}
\begin{aligned}
&{\rm (i)} \,\, |x_k-x'_k| \to 0, \quad {\rm (ii)} \,\, |x_k-y_k|+|x_k-z_k| \to \infty, 
\\
&{\rm (iii)} \,\, |x_k+y_k|+|x_k+z_k| \to \infty \text{ and } |x_k-y_k|+|x_k-z_k| \simeq 1.
\end{aligned}
\end{equation}
In this scenario, there exists $c_0 \in (0, 1/2)$ and a subsequence (which we relabel) $\{(x_k, y_k, z_k)\}_{k \in \N}$ such that 
\begin{equation}\label{eq:lim-non-zero}
c_0 < \frac{|x_k-x'_k|}{|x_k-y_k| + |x_k-z_k|} \leq \frac12, \quad \forall k \in \N. 
\end{equation}

Let $(x_k, y_k, z_k)$ and $x'_k$ satisfy \eqref{eq:lim-non-zero}. Let $0<\varepsilon<2^{-\delta}C$ be fixed and arbitrarily small. We choose a sequence $\{\lambda_k\}_{k \in \N}$ such that 
\begin{align}\label{eq:lambda}
(C^{-1}\varepsilon)^{1/\delta} |x_k-x'_k|/2 
\leq \lambda_k \leq (C^{-1} \varepsilon)^{1/\delta} |x_k-x'_k|. 
\end{align}
Using \eqref{eq:lim-non-zero}, \eqref{eq:TTT-1} and \eqref{eq:lambda}, we deduce that 
\begin{align}\label{Exyz}
\mathcal{E}(x_k, y_k, z_k; x'_k) 
&\leq C_0^{\delta'}  (|x_k-y_k| + |x_k-z_k|)^{2n} |K(x_k, y_k, z_k) - K(x'_k, y_k, z_k)| 
\\ \nonumber  
&\leq 2 C_0^{\delta'} \varepsilon + C_0^{\delta'}  (|x_k-y_k| + |x_k-z_k|)^{2n} \Delta(x_k, y_k, z_k; x'_k)
\\ \nonumber 
&\lesssim \varepsilon +  |x_k-x'_k|^{2n} \Delta(x_k, y_k, z_k; x'_k), 
\end{align} 
where $\Delta(x_k, y_k, z_k; x'_k)
:= |\langle T(\tau_{y_k} \Phi_{\lambda_k}, \tau_{z_k} \Phi_{\lambda_k}), 
\tau_{x_k} \Phi_{\lambda_k} - \tau_{x'_k} \Phi_{\lambda_k} \rangle|$. 
If we denote 
\begin{equation}\label{def:fgh}
\begin{aligned}
&f_k := |x_k-x'_k|^{\frac{n}{p'_1}} \tau_{y_k} \Phi_{\lambda_k}, \quad 
g_k := |x_k-x'_k|^{\frac{n}{p'_2}} \tau_{z_k} \Phi_{\lambda_k}, 
\\ 
&\text{and } \, h_k := |x_k-x'_k|^{\frac{n}{p}} \big(\tau_{x_k} \Phi_{\lambda_k} - \tau_{x'_k} \Phi_{\lambda_k}\big),
\end{aligned}
\end{equation}
then \eqref{eq:lambda} yields 
\begin{align*}
\|f_k\|_{L^{p_1}} 
&= (\lambda_k^{-1} |x_k-x'_k|)^{\frac{n}{p'_1}} \|\Phi\|_{L^{p_1}}
\lesssim \varepsilon^{-\frac{n}{\delta p'_1}}, 
\\
\|g_k\|_{L^{p_2}} 
& = (\lambda_k^{-1} |x_k-x'_k|)^{\frac{n}{p'_2}} \|\Phi\|_{L^{p_2}}
\lesssim \varepsilon^{-\frac{n}{\delta p'_2}}, 
\\
\|h_k\|_{L^{p'}} 
&\lesssim (\lambda_k^{-1} |x_k-x'_k|)^{\frac{n}{p}}  \|\Phi\|_{L^{p'}}
\lesssim \varepsilon^{-\frac{n}{\delta p}}, 
\end{align*}
This says that $\{f_k\}$ is a bounded sequence in $L^{p_1}(\Rn)$ and $\{g_k\}$ is a bounded sequence in $L^{p_2}(\Rn)$. Then by the compactness of $T$, there exists a convergent subsequence (which we relabel) $\{T(f_k, g_k)\}$. This gives that for any $\eta>0$ chosen later, there is $N_{\eta}>0$ such that for all $k, j>N_{\eta}$, 
\begin{align}\label{eq:fkfj}
\|T(f_k, g_k) - T(f_j, g_j)\|_{L^p} < \eta. 
\end{align}
Thus, by \eqref{Exyz}--\eqref{eq:fkfj}, we have for all $k, j>N_{\eta}$, 
\begin{align*}
&\mathcal{E}(x_k, y_k, z_k; x'_k) 
\lesssim \varepsilon + |\langle T(f_k, g_k), h_k \rangle|
\\
&\lesssim \varepsilon + |\langle T(f_k, g_k) - T(f_j, g_j), h_k \rangle|
+ |\langle T(f_j, g_j), h_k \rangle| 
\\
&\lesssim \varepsilon + \|T(f_k, g_k) - T(f_j, g_j)\|_{L^p(\Rn)} \|h_k\|_{L^{p'}}
+ |\langle T(f_j, g_j), h_k \rangle| 
\\
&\lesssim \varepsilon + \eta \varepsilon^{-\frac{n}{\delta p}} + |\langle T(f_j, g_j), h_k \rangle| 
\lesssim \varepsilon + |\langle T(f_j, g_j), h_k \rangle|, 
\end{align*} 
provided $\eta< \varepsilon^{1+\frac{n}{\delta p}}$. To complete the proof, it suffices to show that for given $k >N_{\eta}$, there exists $j=j(k)>N_{\eta}$ sufficiently large such that 
\begin{align}\label{Tfjhk}
|\langle T(f_j, g_j), h_k \rangle| 
\lesssim \varepsilon. 
\end{align} 

To proceed, we rewrite $\langle T(f_j, g_j), h_k \rangle$ as 
\begin{align*}
\langle T(f_j, g_j), h_k \rangle
= |x_j-x'_j|^{n(2-\frac1p)} |x_k-x'_k|^{\frac{n}{p}} 
\langle T(\tau_{y_j} \Phi_{\lambda_j}, \tau_{z_j} \Phi_{\lambda_j}), 
\tau_{x_k} \Phi_{\lambda_k} - \tau_{x'_k} \Phi_{\lambda_k} \rangle. 
\end{align*} 
Recall that $T$ is compact from $L^{p_1}(\Rn) \times L^{p_2}(\Rn)$ to $L^p(\Rn)$. This and \cite[Theorem 3]{GT1} give that  
\begin{align}\label{Tfj-1}
\text{$T$ is bounded from $L^{q_1}(\Rn) \times L^{q_2}(\Rn)$ to $L^q(\Rn)$}, 
\end{align}
for all $\frac1q = \frac{1}{q_1} + \frac{1}{q_2}$ with $q_1, q_2 \in (1, \infty)$. Choose $\frac1q = \frac{1}{q_1} + \frac{1}{q_2}$ with $q, q_1, q_2 \in (1, \infty)$. Accordingly, we use \eqref{Tfj-1} to deduce that 
\begin{align}\label{Tfj-2}
|\langle T(f_j, g_j), h_k \rangle| 
&\lesssim |x_j-x'_j|^{n(2-\frac1p)} |x_k-x'_k|^{\frac{n}{p}} 
\|\tau_{y_j} \Phi_{\lambda_j}\|_{L^{q_1}} 
\\ \nonumber 
&\qquad\times \|\tau_{z_j} \Phi_{\lambda_j}\|_{L^{q_2}}  
\|\tau_{x_k} \Phi_{\lambda_k} - \tau_{x'_k} \Phi_{\lambda_k} \|_{L^{q'}} 
\\ \nonumber 
&\lesssim |x_j-x'_j|^{n(2-\frac1p)} |x_k-x'_k|^{\frac{n}{p}} 
\lambda_j^{-\frac{n}{q'_1}} \lambda_j^{-\frac{n}{q'_2}} \lambda_k^{\frac{n}{q}} 
\\ \nonumber 
&\lesssim \varepsilon^{-\frac{2n}{\delta}} 
\bigg(\frac{|x_k-x'_k|}{|x_j-x'_j|}\bigg)^{n(\frac1p-\frac1q)}
=\varepsilon^{-\frac{2n}{\delta}} 
\bigg(\frac{|x_j-x'_j|}{|x_k-x'_k|}\bigg)^{n(\frac{1}{p'}-\frac{1}{q'})}. 
\end{align} 

If the sequence $\{(x_k, y_k, z_k)\}$ satisfies the case (i) in \eqref{case123} and \eqref{eq:lim-non-zero}, then for fixed $k >N_{\eta}$, we choose $j>k$ large enough and $q \in (1, p)$ in \eqref{Tfj-2} so that 
\begin{align*}
|\langle T(f_j, g_j), h_k \rangle| 
\lesssim \varepsilon^{-\frac{2n}{\delta}} \bigg(\frac{|x_j-x'_j|}{|x_k-x'_k|}\bigg)^{n(\frac{1}{p'}-\frac{1}{q'})}
< \varepsilon. 
\end{align*} 
If the sequence $\{(x_k, y_k, z_k)\}$ satisfies the case (ii) in \eqref{case123} and \eqref{eq:lim-non-zero}, then $|x_k-x'_k| \to \infty$. For fixed $k >N_{\eta}$, we choose $j>k$ large enough and $q \in (p, \infty)$ in \eqref{Tfj-2} so that 
\begin{align*}
|\langle T(f_j, g_j), h_k \rangle| 
\lesssim \varepsilon^{-\frac{2n}{\delta}} \bigg(\frac{|x_k-x'_k|}{|x_j-x'_j|}\bigg)^{n(\frac1p-\frac1q)} 
< \varepsilon. 
\end{align*} 
If the sequence $\{(x_k, y_k, z_k)\}$ satisfies the case (iii) in \eqref{case123} and \eqref{eq:lim-non-zero}, then 
\begin{align*}
2(|y_k| + |z_k|)
\ge (|x_k+y_k| + |x_k+z_k|) - (|x_k-y_k| + |x_k-z_k|), 
\end{align*}
which implies $\lim_{k \to \infty}(|y_k| + |z_k|)=\infty$, and furthermore, $\lim_{j \to \infty} |x_k-y_j|+|x_k-z_j|=\infty$ for any fixed $k$. Therefore, fixed $k >N_{\eta}$, we use \eqref{eq:TTT-2} and \eqref{eq:lim-non-zero}, and choose $j>k$ sufficiently large to conclude that 
\begin{align*}
|\langle T(f_j, g_j), h_k \rangle|
\lesssim \frac{|x_j-x'_j|^{n(2-\frac1p)} |x_k-x'_k|^{\frac{n}{p} + \delta}}{(|x_k-y_j|+|x_k-z_j|)^{2n+\delta}}
\lesssim \frac{1}{(|x_k-y_j|+|x_k-z_j|)^{2n+\delta}}
< \varepsilon. 
\end{align*} 
This shows \eqref{Tfjhk} and completes the proof. 
\end{proof}

\subsection{Weak compactness property}\label{sec:WCP}
Next, let us give the proof of Theorem \ref{thm:WCP}. Fix $I \in \D$ and $N \in \N$. We split 
\begin{align}\label{TII-1}
\langle T(\mathbf{1}_I, \mathbf{1}_I), \mathbf{1}_I \rangle 
= \langle P_N (T(\mathbf{1}_I, \mathbf{1}_I)), \mathbf{1}_I \rangle 
+ \langle P_N^{\perp}(T(\mathbf{1}_I, \mathbf{1}_I)), \mathbf{1}_I \rangle. 
\end{align}
By H\"{o}lder's inequality, 
\begin{align}\label{TII-2}
|\langle P_N^{\perp}(T(\mathbf{1}_I, \mathbf{1}_I)), \mathbf{1}_I \rangle| 
&\le \|P_N^{\perp} T\|_{L^{p_1} \times L^{p_2} \to L^p} 
\|\mathbf{1}_I\|_{L^{p_1}} \|\mathbf{1}_I\|_{L^{p_2}} \|\mathbf{1}_I\|_{L^{p'}}
\\ \nonumber
&= \|P_N^{\perp} T\|_{L^{p_1} \times L^{p_2} \to L^p} |I|, 
\end{align}
and by the boundedness of $T$, 
\begin{align}\label{TII-3}
|\langle P_N (T(\mathbf{1}_I, \mathbf{1}_I)), \mathbf{1}_I \rangle| 
&\le \|P_N\|_{L^p \to L^p}  \|T\|_{L^{p_1} \times L^{p_2} \to L^p} |I|
\\ \nonumber 
&\lesssim  \|T\|_{L^{p_1} \times L^{p_2} \to L^p} F(I; 2N) \, |I|
\end{align}
for any $I \in \D(2N)$. 

To proceed, fix $I \notin \D(2N)$. It suffices to consider the following three cases: 
\begin{align*}
\text{(i) } \, \, \ell(I)<2^{-2N}, \quad 
\text{(ii) } \, \, \ell(I) > 2^{2N}, \quad 
\text{(iii) } \, \, 2^{-2N} \le \ell(I) \le 2^{2N} \text{ and } \rd(I, 2^{2N} \I) > 2N. 
\end{align*}
Since $P_N \circ P_N = P_N = P_N^*$, we have 
\begin{align}\label{NII-1}
\langle P_N (T(\mathbf{1}_I, \mathbf{1}_I)), \mathbf{1}_I \rangle
=\langle P_N^2 (T(\mathbf{1}_I, \mathbf{1}_I)), \mathbf{1}_I \rangle 
=\langle P_N (T(\mathbf{1}_I, \mathbf{1}_I)), P_N\mathbf{1}_I \rangle.  
\end{align}
By the cancellation of Haar functions,  there holds 
\begin{align}\label{NII-2}
P_N \mathbf{1}_I
=\sum_{J \in \D(N): I \subsetneq J} \langle \mathbf{1}_I, h_J \rangle h_J. 
\end{align} 
Observe that if $J \in \D(N)$ with $I \subsetneq J$, then in case (ii), there holds $\ell(J)>2^N$, and in case (iii), we have 
\begin{align*}
\d(J, 2^N \I) 
\ge \d(I, 2^N \I) - \ell(J) 
\ge 2N \cdot 2^{2N} - 2^N 
> N 2^N,  
\quad\text{hence, } 
\rd(J, 2^N \I) > N. 
\end{align*}
This means that there does not exist $J \in \D(N)$ such that $I \subsetneq J$ in cases (ii) and (iii), which along with \eqref{NII-1} and \eqref{NII-2} implies 
\begin{align}\label{NII-3}
\langle P_N (T(\mathbf{1}_I, \mathbf{1}_I)), \mathbf{1}_I \rangle =0, \quad 
\text{for any $I$ in cases (ii) and (iii)}.
\end{align}

Let us next treat the case (i). By definition, one can find a sequence of dyadic cubes $\{I_j\}_{j=-N}^N$ such that $\{J \in \D(N): I \subsetneq J\} = \{I_j\}_{j=-N}^N$ and $\ell(I_j)=2^j$. Then \eqref{NII-2} gives 
\begin{align}\label{NII-4}
\|P_N \mathbf{1}_I\|_{L^{p'}} 
&\le \sum_{j=-N}^N |\langle \mathbf{1}_I, h_{I_j} \rangle| \|h_{I_j}\|_{L^{p'}} 
\le \sum_{j=-N}^N |I| |I_j|^{-1+\frac{1}{p'}} 
\\ \nonumber 
&= \sum_{j=-N}^N 2^{-jn/p} |I| 
\lesssim 2^{Nn/p} |I|, 
\end{align}
Now collecting \eqref{NII-1} and \eqref{NII-4}, we conclude 
\begin{align}\label{TII-4}
|\langle P_N (T(\mathbf{1}_I, \mathbf{1}_I)), \mathbf{1}_I \rangle|
&\le \|P_N\|_{L^p \to L^p} \|T(\mathbf{1}_I, \mathbf{1}_I)\|_{L^p} 
\|P_N \mathbf{1}_I\|_{L^{p'}}
\\ \nonumber 
&\lesssim \|P_N\|_{L^p \to L^p} \|T\|_{L^{p_1} \times L^{p_2} \to L^p} 
|I|^{\frac{1}{p_1}} |I|^{\frac{1}{p_2}} 2^{\frac{Nn}{p}} |I| 
\\ \nonumber 
&\le \|P_N\|_{L^p \to L^p} \|T\|_{L^{p_1} \times L^{p_2} \to L^p} 
(2^{2N} \ell(I))^{\frac{n}{p}} |I|
\\ \nonumber 
&\simeq \|P_N\|_{L^p \to L^p} \|T\|_{L^{p_1} \times L^{p_2} \to L^p} F(I; 2N) \, |I|. 
\end{align}
Therefore, the desired estimate follows from \eqref{TII-1}--\eqref{TII-3}, \eqref{NII-3}, and \eqref{TII-4}. 

Furthermore, if $T$ is compact from $L^{p_1}(\Rn) \times L^{p_2}(\Rn)$ to $L^p(\Rn)$, then Theorem \ref{thm:PNT-cpt} gives that 
\begin{align*}
\lim\limits_{N \to \infty} \|P_N^{\perp} T\|_{L^{p_1} \times L^{p_2} \to L^p} =0,  
\end{align*}
which together with $\sup_{N \in \N} \|P_N\|_{L^p \to L^p} \lesssim 1$ yields the weak compactness property. 
\qed

\subsection{$\CMO$ conditions}\label{sec:T1CMO}
Finally, we turn to the proof of Theorem \ref{thm:T1CMO}. 

\begin{lemma}\label{lem:T11}
Let $T$ be a bilinear operator associated with a compact bilinear Calder\'{o}n--Zygmund kernel $K$ with parameter $\delta \in (0, 1]$. Let $\Phi \in \mathscr{C}_c^{\infty}(\Rn)$ be a cut-off function such that $\mathbf{1}_{B(0, 1)} \le \Phi \le \mathbf{1}_{B(0, 2)}$. Let $I \subset \Rn$ be a cube and let $f \in \mathscr{C}_c^{\infty}(\Rn)$ have compact support in $I$ and mean zero. Then the following hold: 
\begin{list}{\rm (\theenumi)}{\usecounter{enumi}\leftmargin=1cm \labelwidth=1cm \itemsep=0.1cm \topsep=.2cm \renewcommand{\theenumi}{\arabic{enumi}}}

\item\label{Lf-1} For any $a \in \Rn$, the limit 
\begin{align*}
\mathscr{L}(f) := \lim_{k \to \infty} 
\bigg\langle T \Big(\Phi \Big(\frac{\cdot-a}{2^k \ell(I)}\Big), 
\Phi \Big(\frac{\cdot-a}{2^k \ell(I)}\Big) \Big), f \bigg\rangle 
\quad \text{exists}. 
\end{align*}

\item\label{Lf-2} For all $a \in \Rn$ and $k \in \N$ such that $2^k \ge \sqrt{n}+|a-x_I|/\ell(I)$, we have 
\begin{align*}
\bigg|&\mathscr{L}(f) - \bigg\langle T \Big(\Phi \Big(\frac{\cdot-a}{2^k \ell(I)}\Big), 
\Phi \Big(\frac{\cdot-a}{2^k \ell(I)}\Big) \Big), f \bigg\rangle \bigg| 
\\ 
&\lesssim 2^{-k \delta} (1+|a-x_I|/\ell(I))^{\delta}  
\sum_{k'=0}^{\infty} 2^{-k' \delta} F(2^{k'+k} |I|)  \|f\|_{L^1}, 
\end{align*}
where $F(t) := F_1(t) F_2(t) F_3\big(1+\frac{a}{1+t} \big)$. 

\item\label{Lf-3} The limit above is independent of the parameter $a \in \Rn$ and the function $\Phi$. 

\item\label{Lf-4} If $T$ is bounded from $L^{p_1}(\Rn) \times L^{p_2}(\Rn)$ to $L^p(\Rn)$ for some $\frac1p = \frac{1}{p_1} + \frac{1}{p_2}$ with $p, p_1, p_2 \in (1, \infty)$, then $\mathscr{L}$ is a bounded linear functional on $\mathrm{H}^1(\Rn)$. In particular, by the duality between $\mathrm{H}^1(\Rn)$ and $\BMO(\Rn)$, we define $T(1, 1)$ as  
\begin{align*}
\mathscr{L}(f) = \langle T(1, 1), f \rangle. 
\end{align*}
\end{list} 
\end{lemma}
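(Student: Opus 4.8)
\textbf{Proof proposal for Lemma \ref{lem:T11}.}

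The plan is to establish the four assertions essentially in the order stated, since each builds on the previous one. For part \eqref{Lf-1} and the quantitative estimate in part \eqref{Lf-2}, the starting point is to observe that since $f$ has mean zero and is supported in $I$, for $k' < k$ the difference
\begin{align*}
\bigg\langle T\Big(\Phi\Big(\tfrac{\cdot-a}{2^{k'}\ell(I)}\Big), \Phi\Big(\tfrac{\cdot-a}{2^{k'}\ell(I)}\Big)\Big) - T\Big(\Phi\Big(\tfrac{\cdot-a}{2^{k}\ell(I)}\Big), \Phi\Big(\tfrac{\cdot-a}{2^{k}\ell(I)}\Big)\Big), f \bigg\rangle
\end{align*}
can be written, by bilinearity, as a sum of terms in which at least one argument of $T$ is supported away from $I$ (in the annulus where the two cutoffs differ), so that the kernel representation in Definition \ref{def:CZO} applies. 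First I would split $\Phi(\tfrac{\cdot-a}{2^{k}\ell(I)}) = \Phi(\tfrac{\cdot-a}{2^{k'}\ell(I)}) + \big[\Phi(\tfrac{\cdot-a}{2^{k}\ell(I)}) - \Phi(\tfrac{\cdot-a}{2^{k'}\ell(I)})\big]$ in each slot, expand, and discard the term with both arguments equal to the smaller cutoff. For the remaining terms one uses the mean-zero cancellation of $f$ against the $x$-variable of $K$ together with the Hölder estimate \eqref{eq:Holder-1}: for $y$ in the annulus $2^{k'}\ell(I) \lesssim |y-a| \lesssim 2^{k+1}\ell(I)$ and $x \in I$, we have $|x - x_I| \le \sqrt{n}\,\ell(I) \le \tfrac12 |x-y|$ once $2^{k'} \gtrsim 1 + |a-x_I|/\ell(I)$, so $|K(x,y,z) - K(x_I, y, z)| \lesssim F(x,y,z)\,\ell(I)^\delta (|x-y|+|x-z|)^{-2n-\delta}$, and integrating over the dyadic annuli in $|y-a|+|z-a|$ produces the geometric series $\sum_{k'} 2^{-k'\delta} F(2^{k'+k}|I|)$ against $\|f\|_{L^1}$. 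This simultaneously shows the sequence is Cauchy (giving \eqref{Lf-1}) and yields the tail bound \eqref{Lf-2} by summing $k' \ge k$; the factor $(1+|a-x_I|/\ell(I))^\delta$ comes from tracking the precise threshold on $2^{k}$ and the size of $I$ relative to the center $a$.

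For part \eqref{Lf-3}, independence of $a$ and $\Phi$ follows by comparing two defining sequences: given $a, a'$ and cutoffs $\Phi, \Phi'$, for $k$ large both $\Phi(\tfrac{\cdot - a}{2^k\ell(I)})$ and $\Phi'(\tfrac{\cdot - a'}{2^k\ell(I)})$ equal $1$ on a fixed large neighborhood of $I$, so their difference is again supported in an annulus at scale $\gtrsim 2^k \ell(I)$, and the same mean-zero-plus-Hölder argument shows the pairing of the difference against $f$ tends to $0$ as $k \to \infty$; since both subsequences converge, the limits agree. This is a routine repetition of the estimate from \eqref{Lf-2} with the cutoffs playing symmetric roles, so I would present it briefly.

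The substantive point is part \eqref{Lf-4}: showing $\mathscr{L}$ extends to a bounded functional on $\mathrm{H}^1(\Rn)$. The natural approach is to test against $\mathrm{H}^1$-atoms. If $f$ is an $\mathrm{H}^1$-atom adapted to a cube $I$ --- so $\supp f \subset I$, $\int f = 0$, $\|f\|_{L^\infty} \le |I|^{-1}$ --- then I would use \eqref{Lf-2} with $a = x_I$ and $k = 0$ (legitimate since $\sqrt n + 0 \ge$ the required threshold after a harmless dilation, or choosing the smallest admissible $k$), bounding $|\mathscr{L}(f)|$ by $|\langle T(\Phi(\tfrac{\cdot - x_I}{\ell(I)}), \Phi(\tfrac{\cdot - x_I}{\ell(I)})), f\rangle|$ plus a tail term controlled by $\sum_{k'} 2^{-k'\delta} \|F_1\|_\infty \|F_2\|_\infty \|F_3\|_\infty \|f\|_{L^1} \lesssim 1$ (using $\|f\|_{L^1}\le 1$ and that $F_1,F_2,F_3$ are bounded). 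For the main term, $\Phi(\tfrac{\cdot-x_I}{\ell(I)})$ is supported in $B(x_I, 2\ell(I))$ with $L^{p_i}$ norm $\lesssim \ell(I)^{n/p_i} \simeq |I|^{1/p_i}$, so by the $L^{p_1}\times L^{p_2}\to L^p$ boundedness of $T$ and Hölder, $|\langle T(\Phi, \Phi), f\rangle| \lesssim \|T\|\, |I|^{1/p_1}|I|^{1/p_2}\|f\|_{L^{p'}} \lesssim \|T\|\, |I|^{1/p}\, |I|^{-1}|I|^{1/p'} = \|T\|$. Thus $|\mathscr{L}(f)| \lesssim 1$ uniformly over atoms, which by the atomic characterization of $\mathrm{H}^1$ and a density/limiting argument (extending $\mathscr{L}$ from finite linear combinations of atoms, which are dense in $\mathrm{H}^1$, and noting $\mathscr{L}$ is already defined and linear on such combinations) gives boundedness on $\mathrm{H}^1(\Rn)$; the definition $\langle T(1,1), f\rangle := \mathscr{L}(f)$ then makes sense by $\mathrm{H}^1$-$\BMO$ duality (Fefferman--Stein). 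The main obstacle I anticipate is bookkeeping: one must check that $\mathscr{L}$ as originally defined on mean-zero $\mathscr{C}_c^\infty$ functions is consistent with the atomic extension --- i.e. that the $k\to\infty$ limit is the \emph{same} linear functional one controls atom-by-atom --- and that the tail estimate in \eqref{Lf-2}, which involves $F(2^{k'+k}|I|)$ rather than just a bounded constant, is genuinely summable and uniform; here one uses $F_i$ bounded (so $F(t) \le \|F_1\|_\infty\|F_2\|_\infty\|F_3\|_\infty$) to get a clean $O(1)$ bound without needing the decay of the $F_i$.
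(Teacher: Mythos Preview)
Your proposal is correct and follows essentially the same approach as the paper: for parts \eqref{Lf-1}--\eqref{Lf-2} the paper also splits the difference by bilinearity into terms where one argument is $\Psi_{k+1}-\Psi_k$ (supported in an annulus disjoint from $\supp f$), then uses the mean zero of $f$ together with the H\"older condition \eqref{eq:Holder-1} to obtain the $2^{-k\delta}F(2^k|I|)\|f\|_{L^1}$ bound on consecutive differences, and sums; for part \eqref{Lf-4} the paper likewise takes $a=x_I$, bounds the main term $|\langle T(\Psi_{k_0},\Psi_{k_0}),f\rangle|$ via the $L^{p_1}\times L^{p_2}\to L^p$ boundedness and H\"older, and controls the tail using part \eqref{Lf-2} and the boundedness of the $F_i$. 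The only cosmetic difference is that the paper works with consecutive differences $\Psi_{k+1}-\Psi_k$ and telescopes, whereas you phrase it as a general $k'<k$ comparison; and the paper does not write out part \eqref{Lf-3} explicitly, while you correctly sketch it as a repetition of the annulus argument.
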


\begin{proof}
Fix $a \in \Rn$ and $k \in \N$ with $2^k \geq \sqrt{n} + 2|a-x_I|/\ell(I)$, and write $\Psi_k := \Phi \big(\frac{\cdot-a}{2^k \ell(I)}\big)$. It is easy to see that $\supp(\Psi_k) \subset \{x: |x-a| \leq 2^{k+1} \ell(I)\}$ and $\supp(\Psi_{k+1} - \Psi_k) \subset \{x: 2^k \ell(I) < |x-a| \le 2^{k+2} \ell(I)\}$. Let $x \in \supp(f) \subset I$ and $y \in \supp(\Psi_{k+1} - \Psi_k)$. Then we have 
\begin{align*}
|x-a| \leq |x- x_I| + |a-x_I| 
\leq \ell(I) (\sqrt{n}/2 + |a-x_I|/\ell(I)) 
\leq 2^{k-1} \ell(I) 
< |y-a|/2, 
\end{align*}
and hence, $|x-y| \geq |y-a|-|x-a|>|x-a| \geq 0$. This means $\supp(f) \cap \supp(\Psi_{k+1} - \Psi_k)=\emptyset$. Moreover, if we denote 
\begin{align*}
F_a(x, y, z) := F_1(|x-a|) F_2(|y-a|+|z-a|) F_3 \bigg(1+\frac{a}{1+|y-a|+|z-a|} \bigg),  
\end{align*}
then 
\begin{align*}
F_a(x, y, z) &\leq F_1(2^k |I|) F_2(2^{k+1} |I|) F_3 \bigg(1+\frac{a}{1+2^{k+3}|I|} \bigg) 
\\
&\lesssim F_1(2^k |I|) F_2(2^k |I|)  F_3\bigg(1+\frac{a}{1+2^k |I|} \bigg)=F(2^k |I|), 
\end{align*}
where we used the monotonicity properties of $F_1$, $F_2$ and $F_3$. 

We are going to show that $\{\langle T(\Psi_k, \Psi_k), f\rangle\}_{k \geq 1}$ is a Cauchy sequence. Note that 
\begin{align}\label{eq:T-Phi-Phi}
&|\langle T(\Psi_{k+1}, \Psi_{k+1}), f\rangle - \langle T(\Psi_k, \Psi_k), f\rangle| 
\\ \nonumber 
&\leq |\langle T(\Phi_{k+1}-\Psi_k, \Psi_k), f\rangle| 
+|\langle T(\Psi_{k+1}, \Psi_{k+1} - \Psi_k), f\rangle|. 
\end{align}
By the disjoint support, the mean zero of $f$, and $|x-a| \leq \max\{|y-a|, |z-a|\}/2$, we obtain 
\begin{align}\label{PhiPhi-1}
&|\langle T(\Psi_{k+1} - \Psi_k, \Psi_k), f\rangle| 
\\ \nonumber 
&= \bigg|\int_{\R^{3n}} [K(x, y, z)-K(a, y, z)] 
(\Psi_{k+1} - \Psi_k)(y) \Psi_k(z) f(x) \, dx \, dy \, dz \bigg|
\\ \nonumber 
&\lesssim \int_{\R^{3n}} \frac{|x-a|^{\delta}F_a(x, y, z)}{(|y-a|+|z-a|)^{2n+\delta}} 
\mathbf{1}_{\supp(\Psi_{k+1} - \Psi_k)}(y) \Psi_k(z) |f(x)| \, dx \, dy \, dz
\\ \nonumber 
&\lesssim (2^k \ell(I))^n |I| F(2^k |I|) 
\int_{\R^{2n}} \frac{(\ell(I) + |a-x_I|)^{\delta}}{(2^k \ell(I))^{2n+\delta}} 
\Phi\Big(\frac{z-a}{2^k \ell(I)} \Big) |f(x)| \, dx \, dz
\\ \nonumber 
&\lesssim 2^{-k\delta}  (1+|a-x_I|/\ell(I))^{\delta} F(2^k |I|)  \|f\|_{L^1}. 
\end{align}
Similarly, one has 
\begin{align*}
|\langle T(\Psi_k, \Psi_{k+1}-\Phi_k), f\rangle| 
\lesssim 2^{-k\delta}  (1+|a-x_I|/\ell(I))^{\delta} F(2^k |I|)  \|f\|_{L^1}, 
\end{align*}
which together \eqref{eq:T-Phi-Phi} and \eqref{PhiPhi-1} implies  
\begin{align}\label{PhiPhi-2}
|\langle T(\Psi_{k+1}, \Psi_{k+1}), f\rangle - \langle T(\Psi_k, \Psi_k), f\rangle| 
\lesssim 2^{-k\delta}  (1+|a-x_I|/\ell(I))^{\delta} F(2^k |I|)  \|f\|_{L^1}. 
\end{align}
Since $F_1, F_2, F_3$ are bounded, \eqref{PhiPhi-2} gives that $\{\langle T(\Psi_k, \Psi_k), f\rangle\}_{k \geq 1}$ is a Cauchy sequence. Therefore, the limit in item \eqref{Lf-1} exists, which we denote by $\mathscr{L}_a(f)$. This shows item \eqref{Lf-1}. 

Additionally, 
\begin{align*}
\mathscr{L}_a(f) - \langle T(\Psi_k, \Psi_k), f\rangle 
&=\mathscr{L}_a(f) - \langle T(\Psi_{k+j+1}, \Psi_{k+j+1}), f\rangle 
\\
&\quad + \sum_{k'=k}^{k+j} \big( \langle T(\Psi_{k'+1}, \Psi_{k'+1}), f\rangle 
- \langle T(\Psi_{k'}, \Psi_{k'}), f\rangle \big). 
\end{align*}
Letting $j \to \infty$ and invoking \eqref{PhiPhi-2}, we obtain 
\begin{align*}
|\mathscr{L}_a(f) - \langle T(\Psi_k, \Psi_k), f\rangle| 
&\le \sum_{k'=k}^{\infty} \big( \langle T(\Psi_{k'+1}, \Psi_{k'+1}), f\rangle 
- \langle T(\Psi_{k'}, \Psi_{k'}), f\rangle \big)
\\
& \lesssim  (1+|a-x_I|/\ell(I))^{\delta} 
\sum_{k'=k}^{\infty} 2^{-k' \delta} F(2^{k'} |I|)  \|f\|_{L^1}
\\
&= 2^{-k \delta} (1+|a-x_I|/\ell(I))^{\delta} 
\sum_{k'=0}^{\infty} 2^{-k' \delta} F(2^{k'+k} |I|)  \|f\|_{L^1}. 
\end{align*}
This shows item \eqref{Lf-2}. 

Finally, to show item \eqref{Lf-4}, we modify the definition of $\Psi_k$ above into $\Psi_k := \Phi \big(\frac{\cdot-x_I}{2^k \ell(I)}\big)$. Then we use item \eqref{Lf-2}, the boundedness of $T$, and \eqref{PhiPhi-2} to arrive at 
\begin{align*}
|\mathscr{L}(f)| 
&\le |\mathscr{L}(f) - \langle T(\Psi_k, \Psi_k), f\rangle| 
+ \langle T(\Psi_0, \Psi_0), f\rangle| 
\\ 
&\quad+ \sum_{k'=0}^{k-1} \big( \langle T(\Psi_{k'+1}, \Psi_{k'+1}), f\rangle 
- \langle T(\Psi_{k'}, \Psi_{k'}), f\rangle \big)
\\
& \lesssim 2^{-k \delta} \|f\|_{L^1(\Rn)} 
+ \|\Psi_0\|_{L^{p_1}(\Rn)} \|\Psi_0\|_{L^{p_2}} \|f\|_{L^{p'}}
\\
&\quad+ \sum_{k'=0}^{k-1} 2^{-k' \delta} F(2^{k'} |I|)  \|f\|_{L^1}
\\
&\lesssim 1. 
\end{align*}
Thus, we assert that $\mathscr{L}$ is a bounded linear functional on $\mathrm{H}^1(\Rn)$. The proof is complete. 
\end{proof}

\begin{proof}[\bf Proof of Theorem \ref{thm:T1CMO}]
By symmetry, it suffices to show $T(1, 1) \in \CMO(\Rn)$, which follows from 
\begin{align}\label{NPNT}
T(1, 1) \in \BMO(\Rn) \quad\text{ and }\quad
\lim_{N \to \infty} \langle P_N^{\perp} (T(1, 1)), f \rangle =0,  
\end{align}
uniformly for all $f \in \mathscr{C}_c^{\infty}(\Rn)$ in the unit ball of $\mathrm{H}^1(\Rn)$ with mean zero and support in a dyadic cube $I \subset \Rn$. By Theorem \ref{thm:CCZK}, we see that $K$ is a compact bilinear Calder\'{o}n--Zygmund kernel. This together with Lemma \ref{lem:T11} item \eqref{Lf-4} implies that $T(1, 1) \in \BMO(\Rn)$. 

Note that 
\begin{align*}
P_N f = \sum_{J \in \D_N} \big(|J|^{\frac12} \langle f, h_J \rangle \big) \big(|J|^{-\frac12} h_J \big) 
\end{align*}
is a finite linear combination of 1-atoms $|J|^{-\frac12} h_J$. Then, $P_N f \in \mathrm{H}^1(\Rn)$, hence 
\begin{align*}
P_N^{\perp} f = f - P_N f  \in H^1(\Rn).
\end{align*} 
Since $P_N^{\perp}$ is self-adjoint, we invoke $T(1, 1) \in \BMO(\Rn)$ and $P_N^{\perp}f \in \mathrm{H}^1(\Rn)$ to obtain  
\begin{align}\label{PNT-1}
\langle P_N^{\perp} (T(1, 1)), f \rangle
= \langle T(1, 1), P_N^{\perp} f \rangle
=\mathscr{L}(P_N^{\perp} f), 
\end{align}
where the functional $\mathscr{L}$ is defined in Lemma \ref{lem:T11}. 

Let $\varepsilon>0$ be an arbitrary number. Choose $k \in \N$ so that $2^{-k \delta} < \varepsilon$. Since $T$ is compact from $L^{p_1}(\Rn) \times L^{p_2}(\Rn)$ to $L^p(\Rn)$, by Theorem \ref{thm:PNT-cpt}, there exists $N_0>0$ such that for all $N>N_0$, 
\begin{align}\label{PNe}
\|P_N^{\perp} T\|_{L^{p_1} \times L^{p_2} \to L^p} 
\le 2^{-\frac{kn}{p}} \varepsilon. 
\end{align}
Considering that $\supp(f) \subset I$ and $\supp(h_J) \subset J$, we rewrite 
\begin{align}\label{PNT-2}
\mathscr{L}(P_N^{\perp} f) 
= \mathscr{L}(h) 
+ \sum_{J \in \D_N^c: I \subsetneq J} \langle f, h_J \rangle  \mathscr{L}(h_J), 
\end{align}
where $\varphi := \sum_{J \in \D_N^c: J \subset I} \langle f, h_J \rangle h_J$ with 
\begin{align*}
\|\varphi\|_{L^{p'}} 
\le \|f\|_{L^{p'}}
\lesssim |I|^{-\frac1p}. 
\end{align*}
To proceed, choose a cut-off function $\Phi \in \S(\Rn)$ satisfying $\mathbf{1}_{B(0, 1)} \le \Phi \le \mathbf{1}_{B(0, 2)}$, and set $\Phi_I(x) := \Phi \big(\frac{x-x_I}{2^k \ell(I)}\big)$. By H\"{o}lder's inequality and \eqref{PNe}, 
\begin{align}\label{LL-1}
\mathscr{I}_1 
&:= |\langle P_N^{\perp} (T(\Phi_I, \Phi_I)), \varphi \rangle|  
\nonumber \\
&\lesssim \|P_N^{\perp} T\|_{L^{p_1} \times L^{p_2} \to L^p} 
\|\Phi_I\|_{L^{p_1}} \|\Phi_I\|_{L^{p_2}} \|\varphi\|_{L^{p'}}
\nonumber \\
&\lesssim 2^{-\frac{kn}{p}} \varepsilon  \, 
(2^k \ell(I))^{\frac{n}{p_1}} (2^k \ell(I))^{\frac{n}{p_2}} |I|^{-\frac1p}
=\varepsilon,  
\end{align}
and Lemma \ref{lem:T11} applied to $f=\varphi$ and $a=x_I$ gives  
\begin{align}\label{LL-2}
\mathscr{I}_2
&:= |\mathscr{L}(\varphi) - \langle P_N^{\perp} (T(\Phi_I, \Phi_I), \varphi \rangle|  
\\ \nonumber 
&\lesssim 2^{-k \delta} \sum_{k'=0}^{\infty} 2^{-k' \delta} F(2^{k'+k} |I|)  \|\varphi\|_{L^1}
\lesssim \varepsilon, 
\end{align}
since $\supp(\varphi) \subset I$ and $F$ is bounded. Analogously, writing $\Psi_J(x) := \Phi \big(\frac{x-x_I}{2^k \ell(J)}\big)$, we have 
\begin{align*}
\mathscr{I}_{1, J}  
&:= |\langle P_N^{\perp} (T(\Psi_J, \Psi_J)), h_J \rangle|  
\nonumber \\
&\lesssim \|P_N^{\perp} T\|_{L^{p_1} \times L^{p_2} \to L^p} 
\|\Psi_J\|_{L^{p_1}} \|\Psi_J\|_{L^{p_2}} \|h_J\|_{L^{p'}}
\nonumber \\
&\lesssim 2^{-\frac{kn}{p}} \varepsilon  \, 
(2^k \ell(J))^{\frac{n}{p_1}} (2^k \ell(J))^{\frac{n}{p_2}} |J|^{-\frac1p}
=\varepsilon |J|^{\frac12},  
\end{align*}
and for any $J \supsetneq I$, 
\begin{align*}
\mathscr{I}_{2, J}
:= |\mathscr{L}(h_J) - \langle P_N^{\perp} (T(\Psi_J, \Psi_J)), h_J \rangle|  
\lesssim (2^k \ell(J)/\ell(I))^{-\delta} \|h_J\|_{L^1}, 
\end{align*}
which respectively yields  
\begin{align}\label{LL-3}
\sum_{J \in \D_N^c: I \subsetneq J} |\langle f, h_J \rangle| \mathscr{I}_{1, J}  
\lesssim \varepsilon \sum_{J \in \D_N^c: I \subsetneq J} |I| |J|^{-\frac32} \|f\|_{L^1}
\lesssim \varepsilon \sum_{j \ge 1} 2^{-j}
\lesssim \varepsilon, 
\end{align}
and 
\begin{align}\label{LL-4}
\sum_{J \in \D_N^c: I \subsetneq J} |\langle f, h_J \rangle| \mathscr{I}_{2, J} 
&\lesssim 2^{-k \delta} \sum_{J \in \D_N^c: I \subsetneq J} \bigg(\frac{\ell(I)}{\ell(J)}\bigg)^{\delta} 
\|f\|_{L^1}  \|h_J\|_{L^{\infty}} \|h_J\|_{L^1}
\\ \nonumber
&\lesssim 2^{-k \delta} \sum_{j \ge 1} 2^{-j\delta} \|f\|_{L^1}  
\lesssim \varepsilon. 
\end{align} 
Now collecting the estimates \eqref{PNT-2}--\eqref{LL-4}, we conclude 
\begin{align*}
\mathscr{L}(P_N^{\perp} f) 
\le \mathscr{I}_1 + \mathscr{I}_2 
+ \sum_{J \in \D_N^c: I \subsetneq J} |\langle f, h_J \rangle| (\mathscr{I}_{1, J} + \mathscr{I}_{2, J}) 
\lesssim \varepsilon, 
\end{align*}
which together with \eqref{PNT-1} implies \eqref{NPNT} as desired. 
\end{proof}

\section{Rubio de Francia extrapolation of compactness}\label{sec:EP}

\subsection{Extrapolation from $L^p$ compactness}
The proof of Theorem \ref{thm:EP-Lp} is essentially contained in \cite{COY}, which does not consider the endpoint case $p_i=\infty$, or/and  $q_i=\infty$, or/and $r_i=\infty$. We give an outline of the proof. It needs three main ingredients: 
\begin{itemize}	
\item the target spaces $L^{r_i}(w_i^{r_i})$ can be written as the interpolation space of $L^{p_i}(u_i^{p_i})$ and $L^{\widetilde{q}_i}(v_i^{\widetilde{q}_i})$, for which $T$ is compact from $L^{p_1}(u_1^{p_1}) \times L^{p_2}(u_2^{p_2}) \to L^p(u^p)$ and $T$ is bounded from $L^{\widetilde{q}_1}(v_1^{\widetilde{q}_1}) \times L^{\widetilde{q}_2}(v_2^{\widetilde{q}_2}) \to L^{\widetilde{q}}(v^{\widetilde{q}})$. This corresponds to  \cite[Lemma 4.3]{COY} and follows from Theorem \ref{thm:RdF} and Lemma \ref{lem:AA}. The latter extends  \cite[Lemma 4.1]{COY} to the context of exponents being $1$ and $\infty$, so that we can handle the endpoint case $r_i=\infty$.

\item a characterization of precompactness in $L^s(w)$ for $s \in (0, \infty)$ and $w \in A_{\infty}$. This was established in \cite[Theorem 2.10]{COY}.  To apply it, we here put the restriction $p \neq \infty$ and $r \neq \infty$.

\item the interpolation for compact bilinear operators \cite[Theorem 3.6]{COY}, which has involved exponents $p_i, q_i \in [1, \infty]$. Thus, it can be used directly in the current scenario.  
\end{itemize}
More details are left to the reader. 
\qed

\subsection{Extrapolation from $L^{p, \infty}$ compactness} 
Before showing Theorem \ref{thm:EP-Lpinfty}, let us present a bilinear version of the Marcinkiewicz interpolation theorem with initial restricted weak type conditions, which is a straightforward corollary of \cite[Theorem 7.2.2]{Gra2}.

\begin{theorem}\label{thm:BM}
Let $\frac{1}{p^k} = \frac{1}{p^k_1} + \frac{1}{p^k_2}$ with $0<p^k_1, p^k_2 \le \infty$, $k=0, 1, 2$. Assume that $T$ is a bilinear operator such that 
\begin{align*}
& \|T(\mathbf{1}_{E_1}, \mathbf{1}_{E_2})\|_{L^{p^0, \infty}} 
\le C_0 |E_1|^{\frac{1}{p^0_1}} |E_2|^{\frac{1}{p^0_2}}, 
\\
& \|T(\mathbf{1}_{E_1}, \mathbf{1}_{E_2})\|_{L^{p^1, \infty}} 
\le C_1 |E_1|^{\frac{1}{p^1_1}} |E_2|^{\frac{1}{p^1_2}}, 
\\
& \|T(\mathbf{1}_{E_1}, \mathbf{1}_{E_2})\|_{L^{p^2, \infty}} 
\le C_2 |E_1|^{\frac{1}{p^2_1}} |E_2|^{\frac{1}{p^2_2}}, 
\end{align*}
for all measurable sets $E_j \subset \Rn$ with $|E_j|<\infty$, $j=1, 2$. If the points $(\frac{1}{p^0_1}, \frac{1}{p^0_2})$, $(\frac{1}{p^1_1}, \frac{1}{p^1_2})$, and $(\frac{1}{p^2_1}, \frac{1}{p^2_2})$ form a triangle in $\R^2$, then 
\begin{align*}
\|T\|_{L^{p_1} \times L^{p_2} \to L^p} 
\lesssim C_0^{\theta_0} C_1^{\theta_1} C_2^{\theta_2}, 
\end{align*}
for all $\theta_0 + \theta_1 + \theta_2 =1$ with $0<\theta_0, \theta_1, \theta_2<1$, and for all $\frac1p = \frac{1}{p_1} + \frac{1}{p_2}$ with 
\begin{align*}
\frac1p = \frac{\theta_0}{p^0} + \frac{\theta_1}{p^1} + \frac{\theta_2}{p^2} 
\quad\text{ and }\quad 
\frac{1}{p_j} = \frac{\theta_0}{p^0_j} + \frac{\theta_1}{p^1_j} + \frac{\theta_2}{p^2_j}, 
\quad j=1, 2. 
\end{align*}
\end{theorem}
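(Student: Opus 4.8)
The statement of Theorem \ref{thm:BM} is a vector-valued/bilinear version of the classical Marcinkiewicz interpolation theorem with restricted weak-type hypotheses at three non-collinear points, and the plan is to deduce it directly from the multilinear Marcinkiewicz interpolation theorem as stated in \cite[Theorem 7.2.2]{Gra2}. First I would recall the precise content of that reference: it takes a multilinear operator of restricted weak type at finitely many vertex exponent tuples and, assuming the target exponents lie in the open convex hull of the reciprocal points (with the Young-type relation linking $p$ to the $p_j$ at each vertex), concludes strong-type boundedness with a quantitative norm bound that is multiplicative in the restricted weak-type constants with exponents given by the barycentric coordinates. With $m=2$ and the three vertices $(\tfrac{1}{p^0_1},\tfrac{1}{p^0_2})$, $(\tfrac{1}{p^1_1},\tfrac{1}{p^1_2})$, $(\tfrac{1}{p^2_1},\tfrac{1}{p^2_2})$, the hypothesis that these three points form a (nondegenerate) triangle in $\R^2$ is exactly the requirement that their convex hull has nonempty interior, and the parameters $\theta_0,\theta_1,\theta_2$ with $\theta_0+\theta_1+\theta_2=1$, $0<\theta_i<1$, range over precisely the interior points, with $(\tfrac{1}{p_1},\tfrac{1}{p_2})=\sum_i \theta_i(\tfrac{1}{p^i_1},\tfrac{1}{p^i_2})$ the barycentric representation.

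The key steps, in order, would be: (1) verify that the quantities $\tfrac1p,\tfrac{1}{p_1},\tfrac{1}{p_2}$ defined through the $\theta_i$ are consistent, i.e. that $\tfrac1p=\tfrac{1}{p_1}+\tfrac{1}{p_2}$ follows automatically from $\tfrac{1}{p^i}=\tfrac{1}{p^i_1}+\tfrac{1}{p^i_2}$ at each vertex and $\sum_i\theta_i=1$ — this is a one-line linear-combination check; (2) observe that since the three reciprocal points are affinely independent, the map $(\theta_0,\theta_1,\theta_2)\mapsto(\tfrac{1}{p_1},\tfrac{1}{p_2})$ is a bijection from the open $2$-simplex onto the interior of the triangle, so every admissible target pair arises from a unique barycentric triple; (3) apply \cite[Theorem 7.2.2]{Gra2} verbatim: the restricted weak-type bounds on characteristic functions at the three vertices are exactly the hypotheses required there, and its conclusion yields $\|T\|_{L^{p_1}\times L^{p_2}\to L^p}\lesssim C_0^{\theta_0}C_1^{\theta_1}C_2^{\theta_2}$, where the implicit constant depends only on the exponents and the geometry of the triangle (distances of the target point to the edges), not on the $C_i$. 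One should also note the mild point that $L^{p,\infty}=L^{p}$ at the vertices is not needed — only restricted weak type — and that $T$ need only be defined a priori on simple functions, since the interpolation theorem produces the extension.

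The only genuine subtlety — and what I expect to be the ``main obstacle,'' though it is really just bookkeeping — is matching the normalization conventions of \cite[Theorem 7.2.2]{Gra2} to the statement here: that reference is typically phrased with the restricted weak-type constants attached to $\|T(\mathbf 1_{E_1},\mathbf 1_{E_2})\|_{L^{p^i,\infty}}$ in exactly the form written above, and one must check that the ``triangle condition'' in our statement coincides with the affine-independence hypothesis there, and that the allowed range of $(p_1,p_2)$ (with possibly $p<1$, i.e. the quasi-Banach target) is covered — it is, because the multilinear Marcinkiewicz theorem in \cite{Gra2} explicitly allows the target Lebesgue exponent to drop below $1$. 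Since all of this is a direct citation with a trivial reparametrization, I would keep the proof to a short paragraph: state that the three restricted weak-type estimates and the triangle hypothesis are precisely the hypotheses of \cite[Theorem 7.2.2]{Gra2} with $m=2$, record the consistency check $\tfrac1p=\tfrac1{p_1}+\tfrac1{p_2}$, and invoke that theorem to conclude.

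\begin{proof}[\bf Proof of Theorem \ref{thm:BM}]
This is the bilinear case $m=2$ of the multilinear Marcinkiewicz interpolation theorem with restricted weak type hypotheses; see \cite[Theorem 7.2.2]{Gra2}. Indeed, the three assumed estimates
\begin{align*}
\|T(\mathbf{1}_{E_1}, \mathbf{1}_{E_2})\|_{L^{p^k, \infty}}
\le C_k \, |E_1|^{\frac{1}{p^k_1}} |E_2|^{\frac{1}{p^k_2}},
\qquad k=0, 1, 2,
\end{align*}
valid for all measurable sets $E_1, E_2 \subset \Rn$ of finite measure, say exactly that $T$ is of restricted weak type $(p^k_1, p^k_2, p^k)$ with constant $C_k$ at the three vertices $\big(\frac{1}{p^k_1}, \frac{1}{p^k_2}\big) \in \R^2$, $k=0, 1, 2$. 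The hypothesis that these three points form a nondegenerate triangle in $\R^2$ is precisely the affine independence required by \cite[Theorem 7.2.2]{Gra2}, and in that case every triple $(\theta_0, \theta_1, \theta_2)$ with $\theta_0 + \theta_1 + \theta_2 = 1$ and $0<\theta_0, \theta_1, \theta_2<1$ determines a unique interior point
\begin{align*}
\bigg(\frac{1}{p_1}, \frac{1}{p_2}\bigg)
= \theta_0 \bigg(\frac{1}{p^0_1}, \frac{1}{p^0_2}\bigg)
+ \theta_1 \bigg(\frac{1}{p^1_1}, \frac{1}{p^1_2}\bigg)
+ \theta_2 \bigg(\frac{1}{p^2_1}, \frac{1}{p^2_2}\bigg)
\end{align*}
of the triangle, together with $\frac1p = \frac{\theta_0}{p^0} + \frac{\theta_1}{p^1} + \frac{\theta_2}{p^2}$. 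Since $\frac{1}{p^k} = \frac{1}{p^k_1} + \frac{1}{p^k_2}$ for $k=0, 1, 2$ and $\theta_0 + \theta_1 + \theta_2 = 1$, we have
\begin{align*}
\frac1p
= \sum_{k=0}^2 \theta_k \bigg(\frac{1}{p^k_1} + \frac{1}{p^k_2}\bigg)
= \frac{1}{p_1} + \frac{1}{p_2},
\end{align*}
so the exponents are consistent with the Young-type relation $\frac1p = \frac{1}{p_1} + \frac{1}{p_2}$, and $p$ is allowed to be less than $1$. Applying \cite[Theorem 7.2.2]{Gra2} now yields that $T$ extends to a bounded bilinear operator from $L^{p_1}(\Rn) \times L^{p_2}(\Rn)$ to $L^p(\Rn)$ with
\begin{align*}
\|T\|_{L^{p_1} \times L^{p_2} \to L^p}
\lesssim C_0^{\theta_0} C_1^{\theta_1} C_2^{\theta_2},
\end{align*}
where the implicit constant depends only on the exponents $p^k_1, p^k_2$, $k=0,1,2$, and on $\theta_0, \theta_1, \theta_2$ (equivalently, on the position of $\big(\frac{1}{p_1}, \frac{1}{p_2}\big)$ relative to the edges of the triangle), but not on $C_0, C_1, C_2$. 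This is the desired conclusion.
\end{proof}
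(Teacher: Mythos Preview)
Your proposal is correct and matches the paper's approach exactly: the paper does not give a proof of this theorem at all but simply introduces it as ``a straightforward corollary of \cite[Theorem 7.2.2]{Gra2},'' which is precisely what you do, with the added (and helpful) verification of the consistency relation $\tfrac1p=\tfrac{1}{p_1}+\tfrac{1}{p_2}$ and the identification of the triangle hypothesis with affine independence.
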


\begin{proof}[\bf Proof of Theorem \ref{thm:EP-Lpinfty}]
Let $\frac1r = \frac{1}{r_1} + \frac{1}{r_2}$ with $r_1, r_2 \in (1, \infty)$. It suffices to show 
\begin{align}\label{Lrcpt-1}
\text{$T$ is compact from $L^{r_1}(\Rn) \times L^{r_2}(\Rn)$ to $L^r(\Rn)$}. 
\end{align}
Assuming \eqref{Lrcpt-1} momentarily, we conclude the proof as follows. By Theorem \ref{thm:RdF}, the $L^{q_1}(v_1^{q_1}) \times L^{q_2}(v_2^{q_2}) \to L^q(v^q)$ boundedness implies  
\begin{align}\label{Lrcpt-2}
\text{$T$ is bounded from $L^{s_1}(u_1^{s_1}) \times L^{s_2}(u_2^{s_2})$ to $L^s(u^s)$}, 
\end{align}
for all $s_1, s_2 \in (1, \infty)$ and for all $(u_1, u_2) \in A_{(s_1, s_2)}$, where $\frac1s = \frac{1}{s_1} + \frac{1}{s_2}$ and $u=u_1 u_2$. Thus, the desired conclusion immediately follows from Theorem \ref{thm:EP-Lp}, \eqref{Lrcpt-1}, and \eqref{Lrcpt-2}.

Let us next justify \eqref{Lrcpt-1}. Recall that 
\begin{align}\label{Tqq-1} 
\text{$T$ is compact from $L^{p_1}(\Rn) \times L^{p_2}(\Rn)$ to $L^{p, \infty}(\Rn)$}.  
\end{align}
Pick $\frac1q = \frac{1}{q_1} + \frac{1}{q_2}$ with $q_1, q_2 \in (1, \infty)$, $q_1 \neq p_1$, and $q_2 \neq p_2$. The estimate \eqref{Lrcpt-2} gives  
\begin{align}\label{Tqq-2}
\text{$T$ is bounded from $L^{q_1}(\Rn) \times L^{q_2}(\Rn)$ to $L^q(\Rn)$}. 
\end{align}
Given $\theta \in (0, 1)$, define 
\begin{align}\label{rpq-1}
\frac{1}{r^0} = \frac{1-\theta}{p} + \frac{\theta}{q} 
\quad\text{ and }\quad  
\frac{1}{r^0_j} = \frac{1-\theta}{p_j} + \frac{\theta}{q_j}, \quad j=1, 2. 
\end{align} 
Then properly choose $\theta \in (0, 1)$ such that the point $(\frac{1}{r^0_1}, \frac{1}{r^0_2})$ is different from $(\frac{1}{r_1}, \frac{1}{r_2})$. In addition, for each $k=1, 2$, take $\frac{1}{r^k} = \frac{1}{r^k_1} + \frac{1}{r^k_2}$ with $1<r^k_1, r^k_2<\infty$ so that 
\begin{equation}\label{rpq-2}
\begin{aligned}
&\textstyle\text{the points $(\frac{1}{r^0_1}, \frac{1}{r^0_2})$, $(\frac{1}{r^1_1}, \frac{1}{r^1_2})$, and $(\frac{1}{r^2_1}, \frac{1}{r^2_2})$ form a triangle in $\R^2$}, 
\\
&\textstyle\text{whose interior contains the target point $(\frac{1}{r_1}, \frac{1}{r_2})$}.
\end{aligned}
\end{equation}
This enables us to write  
\begin{align}\label{rpq-3}
\frac1r = \frac{\theta_0}{r^0} + \frac{\theta_1}{r^1} + \frac{\theta_2}{r^2} 
\quad\text{ and }\quad 
\frac{1}{r_j} = \frac{\theta_0}{r^0_j} + \frac{\theta_1}{r^1_j} + \frac{\theta_2}{r^2_j}, 
\quad j=1, 2, 
\end{align}
for some $\theta_0, \theta_1, \theta_2 \in (0, 1)$ satisfying $\theta_0 + \theta_1 + \theta_2 =1$. 

Fix $0<s < \min\{p, q, 1\}$. Then $L^{p, \infty}(\Rn)$ and $L^q(\Rn)$ are $s$-normed quasi-Banach spaces.  In view of \eqref{Tqq-1}, \eqref{Tqq-2}, and \eqref{rpq-1}, the compact bilinear interpolation  \cite[Theorem 4.9]{CFM18} applied to the case $q_0=q_1=q=r=s$ yields    
\begin{align}\label{rbp-3}
\text{$T$ is compact from $L^{r^0_1, s}(\Rn) \times L^{r^0_2, s}(\Rn)$ to $L^{r^0, s}(\Rn)$},  
\end{align}
where we have used the real interpolation (cf. \cite[Theorem 5.3.1]{BL}): 
\begin{align*}
\big(L^{\mathfrak{p}_0, \mathfrak{q}_0}(\Rn), 
L^{\mathfrak{p}_1, \mathfrak{q}_1}(\Rn)\big)_{\vartheta, \mathfrak{q}}
= L^{\mathfrak{p}, \mathfrak{q}}(\Rn)
\end{align*}
for all $0<\mathfrak{p}_0 \neq \mathfrak{p}_1 \le \infty$, $0<\mathfrak{q}_0, \mathfrak{q}_1, \mathfrak{q} \le \infty$, $\frac{1}{\mathfrak{p}} = \frac{1-\vartheta}{\mathfrak{p}_0} + \frac{\vartheta}{\mathfrak{p}_1}$, and $0<\vartheta<1$. 

Let $\varepsilon>0$ be an arbitrary number and $E_1, E_2 \subset \Rn$ be measurable sets with $|E_1|, |E_2|<\infty$. By \eqref{rbp-3} and Theorem \ref{thm:RKLpq},  there exist $A_0=A_0(\varepsilon)>0$ and $\delta_0=\delta_0(\varepsilon)>0$ such that 
\begin{align}
\label{TPS-1} 
\|T(\mathbf{1}_{E_1}, \mathbf{1}_{E_2})\|_{L^{r^0, \infty}} 
&\lesssim \|T(\mathbf{1}_{E_1}, \mathbf{1}_{E_2})\|_{L^{r^0, s}} 
\le C_1 |E_1|^{\frac{1}{r^0_1}} |E_2|^{\frac{1}{r^0_2}}, 
\\
\label{TPS-2} 
\|T(\mathbf{1}_{E_1}, \mathbf{1}_{E_2}) \mathbf{1}_{B(0, A)^c}\|_{L^{r^0, \infty}} 
&\lesssim \|T(\mathbf{1}_{E_1}, \mathbf{1}_{E_2}) \mathbf{1}_{B(0, A)^c}\|_{L^{r^0, s}} 
\le \varepsilon |E_1|^{\frac{1}{r^0_1}} |E_2|^{\frac{1}{r^0_2}},  
\\
\label{TPS-3}
\|(\tau_h T - T)(\mathbf{1}_{E_1}, \mathbf{1}_{E_2})\|_{L^{r^0, \infty}} 
&\lesssim \|(\tau_h T - T)(\mathbf{1}_{E_1}, \mathbf{1}_{E_2})\|_{L^{r^0, s}} 
\le \varepsilon |E_1|^{\frac{1}{r^0_1}} |E_2|^{\frac{1}{r^0_2}}, 
\end{align}
for all $A \ge A_0$ and $0<|h| \le \delta_0$, where $C_1$ is an absolute constant. In addition, by \eqref{Lrcpt-2}, one has 
\begin{align}\label{TPS-4}
\|T(\mathbf{1}_{E_1}, \mathbf{1}_{E_2})\|_{L^{r^1, \infty}} 
\lesssim \|T(\mathbf{1}_{E_1}, \mathbf{1}_{E_2})\|_{L^{r^1}} 
\le C_2 |E_1|^{\frac{1}{r^1_1}} |E_2|^{\frac{1}{r^1_2}}, 
\end{align}
for some uniform constant $C_2$, which further implies 
\begin{align}
\label{TPS-5}
\|T(\mathbf{1}_{E_1}, \mathbf{1}_{E_2}) \mathbf{1}_{B(0, A)^c}\|_{L^{r^1, \infty}} 
&\le C_2 |E_1|^{\frac{1}{r^1_1}} |E_2|^{\frac{1}{r^1_2}}, \quad\text{ for all } A>0, 
\\ 
\label{TPS-6}
\|(\tau_h T - T)(\mathbf{1}_{E_1}, \mathbf{1}_{E_2})\|_{L^{r^1, \infty}} 
&\le C_3 |E_1|^{\frac{1}{r^1_1}} |E_2|^{\frac{1}{r^1_2}}, 
\quad\text{ for all } h \in \Rn. 
\end{align}
Likewise, 
\begin{align}
\label{TPS-7}
\|T(\mathbf{1}_{E_1}, \mathbf{1}_{E_2})\|_{L^{r^2, \infty}} 
&\le C_4 |E_1|^{\frac{1}{r^2_1}} |E_2|^{\frac{1}{r^2_2}}, 
\\ 
\label{TPS-8}
\|T(\mathbf{1}_{E_1}, \mathbf{1}_{E_2}) \mathbf{1}_{B(0, A)^c}\|_{L^{r^2, \infty}} 
&\le C_4 |E_1|^{\frac{1}{r^2_1}} |E_2|^{\frac{1}{r^2_2}}, \quad\text{ for all } A>0, 
\\ 
\label{TPS-9}
\|(\tau_h T - T)(\mathbf{1}_{E_1}, \mathbf{1}_{E_2})\|_{L^{r^2, \infty}} 
&\le C_5 |E_1|^{\frac{1}{r^2_1}} |E_2|^{\frac{1}{r^2_2}}, 
\quad\text{ for all } h \in \Rn. 
\end{align}
We mention that all constants $C_1, \ldots, C_5$ are independent of $\varepsilon$, $E_1$, $E_2$, $A$, and $h$. 

By \eqref{rpq-2}, \eqref{rpq-3}, \eqref{TPS-1}, \eqref{TPS-4}, \eqref{TPS-7}, and Theorem \ref{thm:BM},  there holds 
\begin{equation}\label{TPS-11}
\sup_{\substack{\|f_1\|_{L^{r_1}} \le 1 \\ \|f_2\|_{L^{r_2}} \le 1}} 
\|T(f_1, f_2)\|_{L^r} 
\leq C_1^{\theta_0} C_2^{\theta_1} C_4^{\theta_2}.
\end{equation}
In light of \eqref{rpq-2}, \eqref{rpq-3}, \eqref{TPS-2}, \eqref{TPS-5}, and \eqref{TPS-8}, Theorem \ref{thm:BM} applied to the bilinear operator $T(f_1, f_2) \mathbf{1}_{B(0, A)^c}$ gives    
\begin{equation*}
\|T(f_1, f_2) \mathbf{1}_{B(0, A)^c}\|_{L^r} 
\leq \varepsilon^{\theta_0} C_2^{\theta_1} C_4^{\theta_2} 
\|f_1\|_{L^{r_1}} \|f_2\|_{L^{r_2}}, 
\quad\text{ for all } A \ge A_0, 
\end{equation*}
which leads to    
\begin{align}\label{TPS-12}
\lim_{A \to \infty} \sup_{\substack{\|f_1\|_{L^{r_1}} \le 1 \\ \|f_2\|_{L^{r_2}} \le 1}} 
\|T(f_1, f_2) \mathbf{1}_{B(0, A)^c}\|_{L^r} 
=0.
\end{align}
Moreover, considering \eqref{rpq-2}, \eqref{rpq-3}, \eqref{TPS-3}, \eqref{TPS-6}, and \eqref{TPS-9}, we invoke Theorem \ref{thm:BM} applied to the bilinear operator $\tau_h T - T$ to arrive at 
\begin{equation*}
\|(\tau_h T - T)(f_1, f_2)\|_{L^r} 
\leq \varepsilon^{\theta_0} C_3^{\theta_1} C_5^{\theta_2} 
\|f_1\|_{L^{r_1}} \|f_2\|_{L^{r_2}}, 
\quad\text{ for all } 0<|h| \le \delta_0, 
\end{equation*}
which means    
\begin{align}\label{TPS-13}
\lim_{|h| \to 0} \sup_{\substack{\|f_1\|_{L^{r_1}} \le 1 \\ \|f_2\|_{L^{r_2}} \le 1}} 
\|(\tau_h T - T)(f_1, f_2)\|_{L^r} 
=0.
\end{align}
Therefore, \eqref{Lrcpt-1} follows at once from \eqref{TPS-11}--\eqref{TPS-13} and Theorem \ref{thm:RKLpq} with $p=q=r$. 
\end{proof}

\subsection{Extrapolation from $\CMO$ compactness}
To prove Theorem \ref{thm:EP-Linfty}, we will use the following bilinear interpolation by Calder\'{o}n and Zygmund \cite[Theorem $B_1$]{CZ51}. 

\begin{theorem}\label{thm:IP-LpLi}
Let $0<p_0, q_0 \le \infty$ with $1 \le p_1, p_2, q_1, q_2 \le \infty$. Assume that $T$ is a bilinear operator such that  
\begin{align*}
&\|T\|_{L^{p_1} \times L^{p_2} \to L^{p_0}} \le C_0, 
\\ 
&\|T\|_{L^{q_1} \times L^{q_2} \to L^{q_0}} \le C_1. 
\end{align*}
Then 
\begin{align*}
\|T\|_{L^{r_1} \times L^{r_2} \to L^{r_0}} \le C_0^{1-\theta} C_1^{\theta},  
\end{align*}
where  
\begin{align*}
0<\theta<1, \quad 
\frac{1}{r_0} = \frac{1-\theta}{p_0} + \frac{\theta}{q_0}, 
\quad \text{ and } \quad  
\frac{1}{r_j} = \frac{1-\theta}{p_j} + \frac{\theta}{q_j} > 0, \quad j=1, 2. 
\end{align*} 
\end{theorem}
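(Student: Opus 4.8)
\textbf{Proof proposal for Theorem \ref{thm:IP-LpLi}.}
The plan is to reduce the statement to the classical Calder\'on--Zygmund bilinear interpolation theorem \cite[Theorem $B_1$]{CZ51}, essentially by recording that the latter applies verbatim to the data we are given. First I would observe that the hypotheses already state that $T$ maps $L^{p_1}\times L^{p_2}$ boundedly into $L^{p_0}$ and $L^{q_1}\times L^{q_2}$ boundedly into $L^{q_0}$, with the exponents $p_1,p_2,q_1,q_2$ in the Banach range $[1,\infty]$ and the target exponents $p_0,q_0$ possibly in the quasi-Banach range $(0,\infty]$. This is precisely the admissible configuration for the multilinear version of the Riesz--Thorin/Calder\'on interpolation scheme: linearity in each entry plus a product bound on indicator functions is all that is needed, and the output exponents may go below $1$ because the conclusion is only an a priori inequality on the operator norm, not a completeness statement.

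The key steps I would carry out, in order, are: (i) fix $\theta\in(0,1)$ and define $r_0,r_1,r_2$ by the stated reciprocal-interpolation identities, noting that the hypothesis $\frac{1}{r_j}>0$ for $j=1,2$ guarantees $r_1,r_2<\infty$ so that simple functions are dense in $L^{r_1}$ and $L^{r_2}$; (ii) by bilinearity and density it suffices to bound $\|T(f_1,f_2)\|_{L^{r_0}}$ for $f_1,f_2$ simple functions, or even for normalized indicator functions after the usual reduction, so the problem is genuinely of restricted type on both sides; (iii) apply the complex-interpolation argument of Calder\'on and Zygmund, building the analytic family $z\mapsto T(f_1^z,f_2^z)$ with $f_j^z$ the standard complex interpolation of the two $L$-norms on the $j$-th factor and tested against the complex interpolant of the dual exponents $r_0'$ when $r_0\ge 1$ (and, when $r_0<1$, invoking instead the version of their theorem that handles quasi-Banach targets via a factorization through $L^{r_0/\eta}$ for a suitable $\eta<1$, which is exactly what \cite[Theorem $B_1$]{CZ51} provides); (iv) apply the three-lines lemma to the resulting subharmonic function to extract the bound $C_0^{1-\theta}C_1^{\theta}$.

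Since \cite[Theorem $B_1$]{CZ51} is cited as an available black box in the excerpt, the cleanest presentation is simply to verify that our hypotheses match its statement and conclude. Concretely: the theorem of Calder\'on--Zygmund asserts exactly that if a bilinear operator satisfies $\|T\|_{L^{p_1}\times L^{p_2}\to L^{p_0}}\le C_0$ and $\|T\|_{L^{q_1}\times L^{q_2}\to L^{q_0}}\le C_1$ with $1\le p_1,p_2,q_1,q_2\le\infty$ and $0<p_0,q_0\le\infty$, then for every $\theta\in(0,1)$ the intermediate bound $\|T\|_{L^{r_1}\times L^{r_2}\to L^{r_0}}\le C_0^{1-\theta}C_1^{\theta}$ holds with the exponents defined by the displayed reciprocal identities; the only side condition is $\frac{1}{r_1},\frac{1}{r_2}>0$, which is assumed. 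Thus the proof consists of invoking this theorem directly.

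\begin{proof}[\bf Proof of Theorem \ref{thm:IP-LpLi}]
Fix $\theta \in (0, 1)$ and define $r_0, r_1, r_2$ by
\begin{align*}
\frac{1}{r_0} = \frac{1-\theta}{p_0} + \frac{\theta}{q_0},
\qquad
\frac{1}{r_j} = \frac{1-\theta}{p_j} + \frac{\theta}{q_j}, \quad j=1, 2.
\end{align*}
By hypothesis $\frac{1}{r_1}, \frac{1}{r_2}>0$, so $r_1, r_2<\infty$ and simple functions are dense in $L^{r_j}(\Rn)$, $j=1, 2$. Since $T$ is bilinear, it suffices to establish the claimed bound on simple functions in each entry. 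Now the assumptions
\begin{align*}
\|T\|_{L^{p_1} \times L^{p_2} \to L^{p_0}} \le C_0
\quad\text{ and }\quad
\|T\|_{L^{q_1} \times L^{q_2} \to L^{q_0}} \le C_1,
\end{align*}
with $1 \le p_1, p_2, q_1, q_2 \le \infty$ and $0<p_0, q_0 \le \infty$, are precisely the hypotheses of the bilinear interpolation theorem of Calder\'{o}n and Zygmund \cite[Theorem $B_1$]{CZ51}, whose conclusion is exactly
\begin{align*}
\|T\|_{L^{r_1} \times L^{r_2} \to L^{r_0}} \le C_0^{1-\theta} C_1^{\theta}.
\end{align*}
This is the desired estimate.
\end{proof}
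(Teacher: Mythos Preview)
Your proposal is correct and matches the paper's treatment exactly: the paper does not prove this theorem but simply records it as the bilinear interpolation result of Calder\'{o}n and Zygmund \cite[Theorem $B_1$]{CZ51}, which is precisely the black box you invoke.
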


\begin{proof}[\bf Proof of Theorem \ref{thm:EP-Linfty}]
Let $\frac1r = \frac{1}{r_1} + \frac{1}{r_2}>0$ with $r_1, r_2 \in (1, \infty]$. We claim that  
\begin{align}\label{Lrr-1}
\text{$T$ is compact from $L^{r_1}(\Rn) \times L^{r_2}(\Rn)$ to $L^r(\Rn)$}. 
\end{align}
Let us see how \eqref{Lrr-1} implies the required weighted compactness. In view of Theorem \ref{thm:RdF}, the $L^{q_1}(v_1^{q_1}) \times L^{q_2}(v_2^{q_2}) \to L^q(v^q)$ boundedness gives   
\begin{align}\label{Lrr-2}
\text{$T$ is bounded from $L^{s_1}(u_1^{s_1}) \times L^{s_2}(u_2^{s_2})$ to $L^s(u^s)$}, 
\end{align}
for all $s_1, s_2 \in (1, \infty]$ and for all $(u_1, u_2) \in A_{(s_1, s_2)}$, where $\frac1s = \frac{1}{s_1} + \frac{1}{s_2}>0$ and $u=u_1 u_2$. Hence, the desired result is a consequence of \eqref{Lrr-1}, \eqref{Lrr-2}, and Theorem \ref{thm:EP-Lp}.  

It remains to demonstrate \eqref{Lrr-1}. Let $\varepsilon>0$ be an arbitrary number. In light of Theorem \ref{thm:PNT-cpt}, the $L^{\infty} \times L^{\infty} \to \CMO$ compactness implies that there exists $N_0 = N_0(\varepsilon) \ge 1$ such that  
\begin{align}\label{PNB-1}
\|P_N^{\perp} T\|_{L^{\infty} \times L^{\infty} \to \BMO} 
\le \varepsilon, \quad \text{for all } N \ge N_0. 
\end{align}
Firs, we treat the case $r_1 \neq \infty$ and $r_2=\infty$ (the case $r_1=\infty$ and $r_2 \neq \infty$ can be handled symmetrically). Pick $q_1 \in (1, r_1)$ and $\alpha = 1 - \frac{q_1}{r_1} \in (0, 1)$. Then $\frac{1}{r_1} = \frac{1-\alpha}{q_1} + \frac{\alpha}{\infty}$. The estimates \eqref{ddf-4} and \eqref{Lrr-2} lead to 
\begin{align}\label{PNB-2}
\|P_N^{\perp} T\|_{L^{q_1} \times L^{\infty} \to L^{q_1}} 
\le C_0 \|T\|_{L^{q_1} \times L^{\infty} \to L^{q_1}} 
\le C_1, \quad\text{for all } N \ge 1, 
\end{align}
where the constants $C_0$ and $C_1$ are independent of $N$. Using \eqref{PNB-1}--\eqref{PNB-2} and the linear interpolation (cf. \cite[Theorem 1.3.4]{Gra1}), we obtain 
\begin{align}\label{PNB-3}
\|P_N^{\perp} T\|_{L^{r_1} \times L^{\infty} \to L^{r_1}} 
\le C_1^{1-\alpha} \varepsilon^{\alpha}, \quad\text{for all } N \ge N_0, 
\end{align}
which together with Theorem \ref{thm:PNT-cpt} implies that $T$ is compact from $L^{r_1}(\Rn) \times L^{\infty}(\Rn)$ to $L^{r_1}(\Rn)$.

Next, let us analyze the case $r_1 \neq \infty$ and $r_2 \neq \infty$. Let $s_1 \in (1, \infty)$. As shown in \eqref{PNB-3}, there holds
\begin{align}\label{PNB-4}
\|P_N^{\perp} T\|_{L^{s_1} \times L^{\infty} \to L^{s_1}} 
\le C_1^{1-\alpha} \varepsilon^{\alpha}, \quad\text{for all } N \ge N_0. 
\end{align}
Now choose $t_1, t_2 \in (1, \infty)$ such that the point $(\frac{1}{r_1}, \frac{1}{r_2})$ lies in the segment connecting $(\frac{1}{t_1}, \frac{1}{t_2})$ and $(\frac{1}{s_1}, \frac{1}{\infty})$. This means 
\begin{align}\label{rrt}
\frac{1}{r_1} = \frac{1-\beta}{t_1} + \frac{\beta}{s_1} 
\quad\text{ and }\quad 
\frac{1}{r_2} = \frac{1-\beta}{t_2} + \frac{\beta}{\infty}, 
\quad\text{ for some } \beta \in (0, 1).  
\end{align}
Set $\frac1t = \frac{1}{t_1} + \frac{1}{t_2}$. By \eqref{Lrr-2} and \eqref{ddf-4}, 
\begin{align}\label{PNB-5}
\|P_N^{\perp} T\|_{L^{t_1} \times L^{t_2} \to L^t} 
\le C_2 \|T\|_{L^{t_1} \times L^{t_2} \to L^t} 
\le C_3, \quad\text{for all } N \ge 1, 
\end{align}
where $C_2$ and $C_3$ are independent of $N$.  Invoking Theorem \ref{thm:IP-LpLi} and \eqref{rrt}, we interpolate between \eqref{PNB-4} and \eqref{PNB-5} to achieve   
\begin{align*}
\|P_N^{\perp} T\|_{L^{r_1} \times L^{r_2} \to L^r} 
\le C_3^{1-\beta} (C_1^{1-\alpha} \varepsilon^{\alpha})^{\beta}, 
\quad \text{ for all } N \ge N_0.
\end{align*}
This shows 
\begin{align*}
\lim_{N \to \infty} \|P_N T - T\|_{L^{r_1} \times L^{r_2} \to L^r} 
= \lim_{N \to \infty} \|P_N^{\perp} T\|_{L^{r_1} \times L^{r_2} \to L^r} 
= 0.
\end{align*}
Consequently, by Lemma \ref{lem:limit}, $T$ is compact from $L^{r_1}(\Rn) \times L^{r_2}(\Rn)$ to $L^r(\Rn)$. 
\end{proof}

\section{$L^1 \times L^1 \to L^{\frac12, \infty}$ compactness}\label{sec:endpoint}
In this section, we would like to show ${\rm (a) \Longrightarrow (d)'}$ and ${\rm (d) \Longrightarrow (c)'}$  in Theorem \ref{thm:cpt}.

\subsection{$L^1 \times L^1 \to L^{\frac12, \infty}$ compactness implies weighted $L^{p_1} \times L^{p_2} \to L^p$ compactness}

\begin{proposition}\label{pro:WL}
Let $T$ be a bilinear operator associated with a standard bilinear Calder\'{o}n--Zygmund kernel. Then the following are equivalent: 
\begin{list}{\textup{(\theenumi)}}{\usecounter{enumi}\leftmargin=1cm \labelwidth=1cm \itemsep=0.2cm 
			\topsep=.2cm \renewcommand{\theenumi}{\roman{enumi}}}		

\item\label{WL-1} $T$ is bounded from $L^1(\Rn) \times L^1(\Rn)$ to $L^{\frac12, \infty}(\Rn)$.

\item\label{WL-2} $T$ is bounded from $L^1(w_2) \times L^1(w_1)$ to $L^{\frac12, \infty}(w^{\frac12})$ for all $(w_1, w_2) \in A_{(1, 1)}$, where $w=w_1 w_2$.

\item\label{WL-3} $T$ is bounded from $L^{p_1}(\Rn) \times L^{p_2}(\Rn)$ to $L^p(\Rn)$ for all (or for some) $p_1, p_2 \in (1, \infty]$, where $\frac1p = \frac{1}{p_1} + \frac{1}{p_2}>0$.  

\item\label{WL-4} $T$ is bounded from $L^{p_1}(w_1^{p_1}) \times L^{p_2}(w_2^{p_2})$ to $L^p(w^p)$ for all (or for some) $p_1, p_2 \in (1, \infty]$ and for all $(w_1, w_2) \in A_{(p_1, p_2)}$, where $\frac1p = \frac{1}{p_1} + \frac{1}{p_2}>0$ and $w=w_1 w_2$.

\end{list} 
\end{proposition}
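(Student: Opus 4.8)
The plan is to prove Proposition \ref{pro:WL} as a cycle of implications:
\[
\text{\eqref{WL-1}} \Longrightarrow \text{\eqref{WL-3}} \Longrightarrow \text{\eqref{WL-4}} \Longrightarrow \text{\eqref{WL-2}} \Longrightarrow \text{\eqref{WL-1}},
\]
together with the observation that the ``for all'' and ``for some'' versions of \eqref{WL-3} and \eqref{WL-4} coincide, which is exactly the content of the multivariable Rubio de Francia extrapolation Theorem \ref{thm:RdF} (applied to the family $\mathcal{F}$ consisting of all tuples $(|T(f_1,f_2)|, f_1, f_2)$ with $f_1, f_2$ bounded and compactly supported). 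So once any single weighted or unweighted $L^{p_1}\times L^{p_2}\to L^p$ bound with $p_1, p_2 \in (1,\infty]$ is in hand, all of them follow.

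First I would handle \eqref{WL-1} $\Longrightarrow$ \eqref{WL-3}. This is a Calder\'{o}n--Zygmund-theory argument: since $T$ is associated with a standard bilinear Calder\'{o}n--Zygmund kernel and is bounded $L^1\times L^1 \to L^{\frac12,\infty}$, it is a bilinear Calder\'{o}n--Zygmund operator in the sense of Definition \ref{def:CZO} (the weak endpoint bound together with the kernel estimates yields, by the bilinear Calder\'{o}n--Zygmund machinery of \cite{GT1}, boundedness $L^{q_1}\times L^{q_2}\to L^q$ for one Banach tuple, hence for all $q_1,q_2\in(1,\infty)$). It remains to reach the quasi-Banach range $p<1$ and the endpoint $p_i=\infty$; the former is the standard multilinear Calder\'{o}n--Zygmund extension, again from \cite{GT1, LOPTT}, and the cases $p_i=\infty$ follow by duality from the boundedness $L^{q_1}\times L^{q_2}\to L^q$ with one exponent small, or can simply be quoted from \cite{LMOV}. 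The next implication \eqref{WL-3} $\Longrightarrow$ \eqref{WL-4} is immediate from Theorem \ref{thm:RdF} as noted above. The implication \eqref{WL-4} $\Longrightarrow$ \eqref{WL-2} is the genuinely endpoint step: from weighted boundedness at some Banach tuple $(q_1,q_2)$ with all weights in $A_{(q_1,q_2)}$, I would descend to the endpoint $p_1=p_2=1$ for all $(w_1,w_2)\in A_{(1,1)}$. The cleanest route is the off-diagonal / endpoint extrapolation already available in the literature in the form encoded by Theorem \ref{thm:RdF} run ``in reverse'' via the Calder\'{o}n--Zygmund weak-$(1,1)$ mechanism: one uses the bilinear Calder\'{o}n--Zygmund decomposition together with the weighted $L^{q_1}\times L^{q_2}\to L^q$ bounds to obtain the weak endpoint estimate with $A_{(1,1)}$ weights, exactly as in \cite[Theorem 1.1]{LMOV} or \cite{LOPTT}. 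Finally \eqref{WL-2} $\Longrightarrow$ \eqref{WL-1} is trivial, taking $w_1=w_2\equiv 1$.

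I would organize the writeup so that the routine kernel-to-boundedness passage is quoted rather than reproved, citing \cite{GT1, LMOV, LOPTT}: all of \eqref{WL-1}$\Rightarrow$\eqref{WL-3} and \eqref{WL-4}$\Rightarrow$\eqref{WL-2} are ``known'' in the bilinear Calder\'{o}n--Zygmund theory once one records that the standard-kernel hypothesis plus any single $L^{p_1}\times L^{p_2}\to L^p$ (or weak $(1,1)$) bound makes $T$ a bilinear Calder\'{o}n--Zygmund operator. Concretely, the steps in order are: (1) observe $T$ is a bilinear CZO; (2) quote the bilinear $T1$/CZ boundedness range $L^{q_1}\times L^{q_2}\to L^q$, $q_1,q_2\in(1,\infty)$, and the weighted version with $A_{(q_1,q_2)}$ weights; (3) quote the weak endpoint $L^1\times L^1\to L^{\frac12,\infty}$ estimate with $A_{(1,1)}$ weights via the bilinear CZ decomposition; (4) invoke Theorem \ref{thm:RdF} to pass between any/all Banach tuples; (5) assemble the cycle.

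The main obstacle is really a bookkeeping one rather than a deep one: making sure the endpoint exponents $p_i=\infty$ (and the corresponding interpretation of $A_{(p_1,p_2)}$ with $p_i=\infty$, governed by Lemma \ref{lem:weight} and the discussion in Appendix \ref{sec:A-IP}) are correctly threaded through every implication, and that the ``for all / for some'' equivalence in \eqref{WL-3} and \eqref{WL-4} is justified uniformly — this is where one must be careful to cite the precise form of Theorem \ref{thm:RdF}, which is stated exactly to allow $p_i=\infty$ in the conclusion. Everything else is a direct appeal to the bilinear Calder\'{o}n--Zygmund theory already developed in \cite{GT1, LOPTT, LMOV}.
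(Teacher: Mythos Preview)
Your cycle differs from the paper's, which runs $\eqref{WL-1}\Rightarrow\eqref{WL-4}\Rightarrow\eqref{WL-3}\Rightarrow\eqref{WL-2}\Rightarrow\eqref{WL-1}$, and the key step $\eqref{WL-1}\Rightarrow\eqref{WL-4}$ is handled in one stroke by citing the sparse domination result \cite[Theorem~3.5]{Li} with $r=1$; the remaining steps are Theorem~\ref{thm:RdF} (for the ``some $\Leftrightarrow$ all'' part of \eqref{WL-4}), \cite[Proposition~7.4.7]{Gra2}, and \cite[Corollary~3.9]{LOPTT}.

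There is a genuine gap in your chain at the step $\eqref{WL-3}\Rightarrow\eqref{WL-4}$, which you say ``is immediate from Theorem~\ref{thm:RdF}.'' It is not: Rubio de Francia extrapolation requires as input a weighted estimate valid for \emph{all} weights in some $A_{\vec q}$ class, whereas \eqref{WL-3} is purely unweighted (i.e., the single weight $w_i\equiv 1$). The same misuse appears in your claim that the ``for all / for some'' equivalence in \eqref{WL-3} follows from Theorem~\ref{thm:RdF}; for the unweighted statement this equivalence comes instead from the bilinear Calder\'on--Zygmund theory \cite[Theorem~3]{GT1} (or \cite[Proposition~7.4.7]{Gra2}), not from extrapolation. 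Relatedly, your first step $\eqref{WL-1}\Rightarrow\eqref{WL-3}$ is not quotable from \cite{GT1}: the results there run from a strong bound to the weak endpoint, not the reverse. To go from the weak endpoint plus kernel conditions to strong (and weighted) bounds one needs either the sharp-maximal-function route of \cite{LOPTT} or sparse domination as in \cite{Li}; this is exactly the content the paper invokes, and once you plug it in (e.g., replace your $\eqref{WL-3}\Rightarrow\eqref{WL-4}$ step by a citation to \cite[Corollary~3.9]{LOPTT} or bypass \eqref{WL-3} entirely via \cite{Li}), your cycle becomes valid and equivalent to the paper's.
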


\begin{proof}
It suffices to prove the implications: $\eqref{WL-1} \Longrightarrow \eqref{WL-4} \Longrightarrow \eqref{WL-3} \Longrightarrow \eqref{WL-2} \Longrightarrow \eqref{WL-1}$. The implication $\eqref{WL-2} \Longrightarrow \eqref{WL-1}$ is trivial, and  $\eqref{WL-1} \Longrightarrow \eqref{WL-4}$ is contained in \cite[Theorem 3.5]{Li} applied to $r=1$. If \eqref{WL-4} holds for some exponent $\frac1p = \frac{1}{p_1} + \frac{1}{p_2}$ with $p_1, p_2 \in (1, \infty]$, then Theorem \ref{thm:RdF} gives that \eqref{WL-4} holds for all such exponents $(p, p_1, p_2)$. This immediately yields $\eqref{WL-4} \Longrightarrow \eqref{WL-3}$. Now assuming  \eqref{WL-3} holds, we use \cite[Proposition 7.4.7]{Gra2} to obtain \eqref{WL-3} for all $p_1, p_2 \in (1, \infty)$, which along with \cite[Corollary 3.9]{LOPTT} gives that \eqref{WL-2} holds. 
\end{proof}

Let us justify ${\rm (d) \Longrightarrow (c)'}$ in Theorem \ref{thm:cpt}. Let $T$ be a bilinear operator associated with a standard bilinear Calder\'{o}n--Zygmund kernel such that  
\begin{align}\label{Nec-1}
\text{$T$ is compact from $L^1(\Rn) \times L^1(\Rn)$ to $L^{\frac12,\infty}(\Rn)$.}
\end{align}
In particular, 
\begin{align*}
\text{$T$ is bounded from $L^1(\Rn) \times L^1(\Rn)$ to $L^{\frac12,\infty}(\Rn)$,}
\end{align*}
which together with Proposition \ref{pro:WL} yields 
\begin{align}\label{Nec-2}
\text{$T$ is bounded from $L^{q_1}(v_1^{q_1}) \times L^{q_2}(v_2^{q_2})$ to $L^q(v^q)$}  
\end{align}
for all $q_1, q_2 \in (1, \infty)$ and for all $(v_1, v_2) \in A_{(q_1, q_2)}$, where $\frac1q = \frac{1}{q_1} + \frac{1}{q_2}$ and $v=v_1 v_2$. In light of \eqref{Nec-1} and \eqref{Nec-2}, we invoke Theorem \ref{thm:EP-Lpinfty} to conclude that 
\begin{align}\label{Nec-3}
\text{$T$ is compact from $L^{p_1}(w_1^{p_1}) \times L^{p_2}(w_2^{p_2})$ to $L^p(w^p)$}
\end{align}
for all $p_1, p_2 \in (1, \infty]$ and for all $(w_1, w_2) \in A_{(p_1, p_2)}$, where $\frac1p = \frac{1}{p_1} + \frac{1}{p_2} >0$ and $w=w_1 w_2$. 
\qed

\subsection{Hypotheses \eqref{H1}--\eqref{H3} imply $L^1 \times L^1 \to L^{\frac12, \infty}$ compactness}
\begin{theorem}\label{thm:T110} 
Let $T$ be a bilinear operator associated with a standard bilinear Calder\'{o}n--Zygmund kernel. Assume that $T$ satisfies the hypotheses \eqref{H1}, \eqref{H2}, and \eqref{H3}. Then $T$ can be extended to a compact operator from $L^1(\Rn) \times L^1(\Rn)$ to $L^{\frac12, \infty}(\Rn)$.  
\end{theorem}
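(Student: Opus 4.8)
The plan is to verify the three conditions of the weighted Kolmogorov--Riesz theorem (Theorem~\ref{thm:RKW}) for the set $\K := \{T(f_1,f_2): \|f_1\|_{L^1}\le 1,\ \|f_2\|_{L^1}\le 1\}$ in the unweighted space $L^{\frac12,\infty}(\Rn)$, together with the projection criterion (Theorem~\ref{thm:PNT-cpt}\eqref{list:PN2}). Actually, since $L^{\frac12,\infty}$ does not have absolutely continuous quasi-norm and Theorem~\ref{thm:RKLpq}--\ref{thm:RKWA} fail to characterize precompactness there, the cleanest route is: (i) first establish that $T$ is \emph{bounded} from $L^1\times L^1$ to $L^{\frac12,\infty}$, and (ii) then establish $\lim_{N\to\infty}\|P_N^\perp T\|_{L^1\times L^1\to L^{\frac12,\infty}} = 0$, which by Theorem~\ref{thm:PNT-cpt}\eqref{list:PN2} gives the compactness. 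For step~(i), the hypotheses \eqref{H2} and \eqref{H3} (weakened to weak boundedness and $T(1,1),T^{*1}(1,1),T^{*2}(1,1)\in\BMO$) combined with the standard bilinear Calder\'on--Zygmund kernel assumption put us in the setting of the bilinear $T1$ theorem \cite{LMOV}, so $T$ is a bilinear Calder\'on--Zygmund operator, hence bounded $L^{q_1}\times L^{q_2}\to L^q$ for $q_1,q_2\in(1,\infty)$, and the weak endpoint bound $L^1\times L^1\to L^{\frac12,\infty}$ follows from the Calder\'on--Zygmund decomposition (this is classical; see \cite{GT1,LOPTT}).

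\textbf{Main step: the projection estimate.} The heart of the argument is showing $\lim_{N\to\infty}\|P_N^\perp T\|_{L^1\times L^1\to L^{\frac12,\infty}}=0$. The plan is to use hypothesis \eqref{H1} (compact kernel) in an essential way, since \eqref{H2}--\eqref{H3} alone only give boundedness. I would exploit the compact dyadic representation: by the implication ${\rm (a)}\Longrightarrow{\rm (b)}$ already proved in Section~\ref{sec:repre}, $T$ decomposes as an average over random dyadic grids of compact bilinear dyadic shifts $\mathbf{S}^{i,j,k}_{\D_\w}$ (with decay $2^{-k\delta/2}$ and coefficient control $\mathbf F(Q)$ satisfying $F_N\to 0$) and compact bilinear dyadic paraproducts $\Pi_{\b_\w}$ with $\b_\w\in\CMO$. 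For each building block one shows the analogue of the quantitative decay used in the proofs of Theorems~\ref{thm:SD-cpt} and \ref{thm:Pi-cpt}: writing $\mathbf{S}^N_{\D}$ for the tail over $Q\notin\D(N)$, the bound $\|\mathbf{S}^N_{\D}(f_1,f_2)\|_{L^{\frac12,\infty}}\lesssim F_N\|f_1\|_{L^1}\|f_2\|_{L^1}$ should follow from a Calder\'on--Zygmund decomposition argument at the $L^{\frac12,\infty}$ endpoint, using that the localized operators $A_Q^{i,j,k}$ carry the small factor $F_N$ uniformly; similarly $\|\Pi_{\b^N_\w}(f_1,f_2)\|_{L^{\frac12,\infty}}\lesssim\|\b^N_\w\|_{\BMO_{\D_\w}}\|f_1\|_{L^1}\|f_2\|_{L^1}$ with $\sup_\w\|\b^N_\w\|_{\BMO_{\D_\w}}\to 0$. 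Since $P_N^\perp$ and the averaging over $\w$ commute with taking these tails (up to adjusting the index $N$ by a fixed amount, exactly as in \eqref{KN} and \eqref{car-DN}), summing the geometric series $\sum_k 2^{-k\delta/2}$ and taking $\mathbb{E}_\w$ via Minkowski's inequality in $L^{\frac12,\infty}$ (valid since we may pass to an $L^s$ norm with $s<\frac12$ by the rescaling/triangle-inequality trick, or work directly since $L^{\frac12,\infty}$ is a quasi-Banach lattice) yields $\|P_N^\perp T\|_{L^1\times L^1\to L^{\frac12,\infty}}\lesssim F_N + \sup_\w\|\b^N_\w\|_{\BMO_{\D_\w}}\to 0$.

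\textbf{Expected obstacle.} The delicate point is the endpoint $L^{\frac12,\infty}$ behaviour: unlike the $L^p$ arguments in Section~\ref{sec:Lp}, we cannot use $L^2$-orthogonality or duality, and the quasi-triangle inequality in $L^{\frac12,\infty}$ loses constants, so one must be careful that the constant absorbing the quasi-norm failure does not ruin the limit $N\to\infty$. The standard remedy is to pass to an $L^s$-normed realization with $0<s<\frac12$ (so $L^{\frac12,\infty}$ is $s$-normed and the infinite sum over $k$ and the expectation $\mathbb{E}_\w$ are controlled by Minkowski), then run the Calder\'on--Zygmund decomposition for each dyadic shift and paraproduct tail with uniform $L^1\times L^1\to L^{\frac12,\infty}$ bounds proportional to the smallness parameters. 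A second technical nuisance is aligning the good/bad cube decomposition and the random-grid averaging with the projection $P_N^\perp$; this is handled exactly as in the proof of Theorem~\ref{thm:SD-cpt} (equations \eqref{SNL2}--\eqref{KN}), adapting the target space. Once these quantitative tail bounds are in place, Theorem~\ref{thm:PNT-cpt}\eqref{list:PN2} closes the argument and gives that $T$ extends to a compact operator from $L^1(\Rn)\times L^1(\Rn)$ to $L^{\frac12,\infty}(\Rn)$.
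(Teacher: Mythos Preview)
Your pivot away from the Kolmogorov--Riesz approach rests on a misconception. You write that Theorems~\ref{thm:RKLpq}--\ref{thm:RKWA} ``fail to characterize precompactness'' in $L^{\frac12,\infty}$ and therefore switch to the projection criterion. But characterization is not needed: Theorem~\ref{thm:RKLpq} explicitly states that for $q=\infty$ the conditions \eqref{RKLpq-1}--\eqref{RKLpq-3} are \emph{sufficient}, and sufficiency is all that is required. The paper's proof follows exactly the route you abandon: it verifies \eqref{L1Li-1}--\eqref{L1Li-3} (uniform boundedness, decay at infinity, translation equicontinuity) for $\{T(f_1,f_2):\|f_i\|_{L^1}\le 1\}$ and invokes Theorem~\ref{thm:RKLpq}. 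The verification proceeds by a Calder\'on--Zygmund decomposition of the \emph{input} functions $f_1,f_2$ at height $\varepsilon^{-1}$: the good--good piece is controlled by the already established $L^{p_1}\times L^{p_2}\to L^p$ compactness of $T$ (via ${\rm(a)}\Rightarrow{\rm(b)}\Rightarrow{\rm(c)}$), while the pieces involving bad functions are handled by direct kernel estimates exploiting \eqref{H1} (Lemmas~\ref{lem:Igb}--\ref{lem:Jbb}). No endpoint analysis of individual dyadic shifts or paraproducts is needed.

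Your alternative route via $\|P_N^\perp T\|_{L^1\times L^1\to L^{\frac12,\infty}}\to 0$ has two genuine obstructions. First, the projection $P_N$ is defined relative to a \emph{fixed} dyadic grid, whereas the representation of $T$ is over \emph{random} grids $\D_\w$; the Haar functions $h_K$ for $K\in\D_\w$ are not eigenfunctions of $P_N$, so your claim that ``$P_N^\perp$ commutes with taking these tails'' over $Q\notin\D_\w(N)$ is false, and \eqref{KN}--\eqref{car-DN} do not address this (those estimates concern the support of the tail, not its interaction with a projection from a different grid). Second, Minkowski's integral inequality fails in $L^{\frac12,\infty}$: passing to an $s$-norm with $s<\frac12$ gives countable subadditivity $\|\sum_k g_k\|^s\le\sum_k\|g_k\|^s$, but does \emph{not} yield $\|\mathbb{E}_\w G_\w\|_{L^{\frac12,\infty}}\le\mathbb{E}_\w\|G_\w\|_{L^{\frac12,\infty}}$, so you cannot move $\mathbb{E}_\w$ outside the quasi-norm. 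The proofs of Theorems~\ref{thm:SD-cpt}--\ref{thm:Pi-cpt} deliberately work in $L^p$ with $p>1$ precisely so that Minkowski applies; that device is unavailable at the endpoint you are targeting.
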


In view of Theorem \ref{thm:RKLpq}, it suffices to prove the following estimates: 
\begin{align}
\label{L1Li-1} & \sup_{\substack{\|f_1\|_{L^1} \le 1 \\ \|f_2\|_{L^1} \le 1}} 
\|T(f_1, f_2)\|_{L^{\frac12, \infty}} < \infty, 
\\ 
\label{L1Li-2} \lim_{A \to \infty} & \sup_{\substack{\|f_1\|_{L^1} \le 1 \\ \|f_2\|_{L^1} \le 1}} 
\|T(f_1, f_2) \mathbf{1}_{B(0, A)^c}\|_{L^{\frac12, \infty}} = 0, 
\\
\label{L1Li-3} \lim_{|h| \to 0} & \sup_{\substack{\|f_1\|_{L^1} \le 1 \\ \|f_2\|_{L^1} \le 1}}  
\|(\tau_h T - T)(f_1, f_2)\|_{L^{\frac12, \infty}}=0. 
\end{align}
By the hypotheses \eqref{H2} and \eqref{H3}, $T$ satisfies the weak boundedness property and $T(1, 1)$, $T^{*1}(1, 1)$, $T^{*2}(1, 1) \in \BMO(\Rn)$. This, along with \cite[Theorem 1.1]{LMOV} and estimates for bilinear dyadic shifts and paraproducts in \cite[Section 3]{LMOV}, implies that 
\begin{align}\label{Trr}
\text{$T$ is bounded from $L^{r_1}(\Rn) \times L^{r_2}(\Rn)$ to $L^r(\Rn)$}, 
\end{align}
for all $\frac1r = \frac{1}{r_1} + \frac{1}{r_2}$ with $1<r, r_1, r_2<\infty$. Then it follows from \eqref{Trr} and \cite[Theorem 1]{GT1} that 
\begin{align*}
\|T\|_{L^1 \times L^1 \to L^{\frac12, \infty}} \lesssim 1. 
\end{align*} 
Thus, \eqref{L1Li-1} holds.

To show \eqref{L1Li-2}, by homogeneity, it is enough to show that given $\varepsilon>0$, there exists $A_0>0$ such that  
\begin{align}\label{Tgb-1}
\mathcal{I} 
:= |\{x \in B(0, A)^c: |T(f_1, f_2)(x)| > 1\}|
\lesssim \varepsilon \|f_1\|_{L^1}^{\frac12} \|f_2\|_{L^1}^{\frac12}, 
\end{align}
for all $A \ge A_0$ and $f_1, f_2 \in L^1(\Rn)$. Without loss of generality we may assume that $\|f_1\|_{L^1} = \|f_2\|_{L^1} =1$. Fix $\varepsilon>0$. For each $j=1, 2$, applying the Calder\'{o}n--Zygmund decomposition to the function $f_j$ at height $\alpha = \varepsilon^{-1}$ (cf. \cite[Theorem 5.3.1]{Gra1}), we obtain the following: 
\begin{list}{(\theenumi)}{\usecounter{enumi}\leftmargin=1.2cm \labelwidth=1cm \itemsep=0.2cm \topsep=.2cm \renewcommand{\theenumi}{\rm{CZ-\arabic{enumi}}}}

\item\label{CZ-1}  $f_j = g_j + b_j = g_j + \sum_{Q \in \Lambda_j} b_{j, Q}$;

\item\label{CZ-2} $\|g_j\|_{L^s} \le (2^n \varepsilon^{-1})^{1-\frac1s} \|f_j\|_{L^1}^{\frac1s}$ for all $s \in [1, \infty]$;

\item\label{CZ-3} $\supp (b_{j, Q}) \subset Q$,  $\int_{\Rn} b_{j, Q} \, dx =0$, and $\|b_{j, Q}\|_{L^1} \le 2^{n+1} \varepsilon^{-1} |Q|$; 

\item\label{CZ-4} the dyadic cubes $\{Q\}_{Q \in \Lambda_j}$ are disjoint and $\sum_{Q \in \Lambda_j} |Q| \le \varepsilon \|f_j\|_{L^1}$. 
\end{list}
Denote $\Omega := \bigcup_{j=1}^2 \bigcup_{Q \in \Lambda_j} 3Q$. Then, $|\Omega| \lesssim \varepsilon$. By \eqref{CZ-1}, we have  
\begin{align}\label{Tgb-2} 
\mathcal{I} 
& \le |\{x \in B(0, A)^c: |T(g_1, g_2)(x)| > 1/4\}| 
\\ \nonumber 
&\quad + |\{x \in B(0, A)^c: |T(g_1, b_2)(x)| > 1/4\}|
\\ \nonumber 
&\quad + |\{x \in B(0, A)^c: |T(b_1, g_2)(x)| > 1/4\}|
\\ \nonumber 
&\quad + |\{x \in B(0, A)^c: |T(b_1, b_2)(x)| > 1/4\}|
\\ \nonumber 
&=: \mathcal{I}_{g, g}  + \mathcal{I}_{g, b} + \mathcal{I}_{b, g} + \mathcal{I}_{b, b}.   
\end{align}
Choose $p, p_1, p_2 \in (1, \infty)$ so that $\frac1p = \frac{1}{p_1} + \frac{1}{p_2}$. Recall that it has been shown that the hypotheses \eqref{H1}--\eqref{H3} imply the $L^{p_1} \times L^{p_2} \to L^p$ compactness of $T$. Thus, by Theorem \ref{thm:RKLpq} applied to $p=q$, there exists $A_0 = A_0(\varepsilon) > 0$ and $\delta_0 = \delta_0(\varepsilon) > 0 $ so that 
\begin{align}
\label{TAD-1}
\|T(h_1, h_2) \mathbf{1}_{B(0, A)^c}\|_{L^p} 
&\le \varepsilon^2 \|h_1\|_{L^{p_1}} \|h_2\|_{L^{p_2}}, 
\quad\text{ for all } A \ge A_0, 
\\
\label{TAD-2}
\|(\tau_h T - T)(h_1, h_2)\|_{L^p} 
&\le \varepsilon^2 \|h_1\|_{L^{p_1}} \|h_2\|_{L^{p_2}}, 
\quad\text{ for all } 0<|h| \le \delta_0, 
\end{align}
for all $h_1 \in L^{p_1}(\Rn)$ and $h_2 \in L^{p_2}(\Rn)$. Then for all $A \ge A_0$, \eqref{CZ-2} and \eqref{TAD-1} give  
\begin{align}\label{eq:Igg}
\mathcal{I}_{g, g}  
\lesssim \|T(g_1, g_2) \mathbf{1}_{B(0, A)^c}\|_{L^p}^p 
\lesssim \varepsilon^{2p} \|g_1\|_{L^{p_1}}^p \|g_2\|_{L^{p_2}}^p 
\lesssim \varepsilon^{p[2-(1-\frac{1}{p_1}) - (1-\frac{1}{p_2})]}
= \varepsilon. 
\end{align}
By Lemma \ref{lem:Igb} applied to $\eta=\varepsilon^2$ and \eqref{CZ-2}--\eqref{CZ-4}, there holds 
\begin{align}\label{eq:Igb}
\mathcal{I}_{g, b}  
&\le |\Omega| + |\{x \in \Omega^c \cap B(0, A)^c: |T(g_1, b_2)(x)| > 1/4\}| 
\\ \nonumber 
&\lesssim \varepsilon + \sum_Q \int_{(3Q)^c \cap B(0, A)^c} |T(g_1, b_{2, Q})(x)| \, dx 
\\ \nonumber 
&\lesssim \varepsilon + \varepsilon^2 \|g_1\|_{L^{\infty}} \sum_Q \|b_{2, Q}\|_{L^1} 
\lesssim \varepsilon. 
\end{align}
Symmetrically, 
\begin{align}\label{eq:Ibg}
\mathcal{I}_{b, g} 
\lesssim \varepsilon. 
\end{align}
By Lemma \ref{lem:Ibb}, for any $A \ge A_0$ large enough, 
\begin{align}\label{eq:Ibb}
\mathcal{I}_{b, b}  
&\le |\Omega| + |\{x \in \Omega^c \cap B(0, A)^c: |T(b_1, b_2)(x)| > 1/4\}| 
\\ \nonumber 
&\lesssim |\Omega| + \int_{\Omega^c \cap B(0, A)^c} |T(b_1, b_2)(x)|^{\frac12} \, dx 
\lesssim \varepsilon. 
\end{align}
Consequently, \eqref{Tgb-1} follows from \eqref{Tgb-2} and \eqref{eq:Igg}--\eqref{eq:Ibb}. 

It remains to verify  \eqref{L1Li-3}. By homogeneity, it is enough to show that given $\varepsilon>0$, there exists $\delta_0 = \delta_0(\varepsilon) > 0$ such that  
\begin{align}\label{JJ-1}
\mathcal{J} 
:= |\{x \in \Rn: |(\tau_h T - T)(f_1, f_2)(x)| > 1\}|
\lesssim \varepsilon \|f_1\|_{L^1}^{\frac12} \|f_2\|_{L^1}^{\frac12}, 
\end{align}
for all $0<|h| \le \delta_0$ and $f_1, f_2 \in L^1(\Rn)$. Assume that $\|f_1\|_{L^1} = \|f_2\|_{L^1} =1$. By the Calder\'{o}n--Zygmund decomposition above, we have 
\begin{align}\label{JJ-2}
\mathcal{J} 
&\le |\{x \in \Rn: |(\tau_h T - T)(g_1, g_2)(x)| > 1/4\}| 
\\ \nonumber 
&\quad+ |\{x \in \Rn: |(\tau_h T - T)(g_1, b_2)(x)| > 1/4\}| 
\\ \nonumber 
&\quad+ |\{x \in \Rn: |(\tau_h T - T)(b_1, g_2)(x)| > 1/4\}| 
\\ \nonumber 
&\quad+ |\{x \in \Rn: |(\tau_h T - T)(b_1, b_2)(x)| > 1/4\}| 
\\ \nonumber 
&=: \mathcal{J}_{g, g} + \mathcal{J}_{g, b} + \mathcal{J}_{b, g} + \mathcal{J}_{b, b}.   
\end{align}
Let $0<|h| \le \delta_0$. Then \eqref{CZ-2} and \eqref{TAD-2} give  
\begin{align}\label{eq:Jgg}
\mathcal{J}_{g, g}  
\lesssim \|(\tau_h T - T)(g_1, g_2)\|_{L^p}^p 
\lesssim \varepsilon^{2p} \|g_1\|_{L^{p_1}}^p \|g_2\|_{L^{p_2}}^p 
\lesssim \varepsilon^{p[2-(1-\frac{1}{p_1}) - (1-\frac{1}{p_2})]}
= \varepsilon.   
\end{align}
To bound $\mathcal{J}_{g, b}$, we split 
\begin{align*}
\mathcal{J}_{g, b} 
&\le \Big|\Big\{x \in \Rn: \sum_Q 
|(\tau_h T -T) (g_1, b_{2, Q})(x)| > 1/4\Big\}\Big| 
\\ 
&\le \Big|\Big\{x \in \Rn: \sum_{\ell(Q) \le 2|h|} 
|\tau_h T (g_1, b_{2, Q})(x)| > 1/16\Big\}\Big|  
\\  
&\quad+ \Big|\Big\{x \in \Rn: \sum_{\ell(Q) \le 2|h|} 
|T(g_1, b_{2, Q})(x)| > 1/16\Big\}\Big|
\\  
&\quad+ \Big|\Big\{x \in \Rn: \sum_{\ell(Q) > 2|h|} 
|(\tau_h T -T) (g_1, b_{2, Q})(x)| > 1/8\Big\}\Big| 
\\  
&= 2 \Big|\Big\{x \in \Rn: \sum_{\ell(Q) \le 2|h|} 
|T(g_1, b_{2, Q})(x)| > 1/16\Big\}\Big| 
\\  
&\quad+ \Big|\Big\{x \in \Rn: \sum_{\ell(Q) > 2|h|} 
|(\tau_h T -T) (g_1, b_{2, Q})(x)| > 1/8\Big\}\Big| 
\\  
&\lesssim |\Omega| + \sum_{\ell(Q) \le 2|h|} \int_{\Omega^c} |T(g_1, b_{2, Q})| 
+ \sum_{\ell(Q) > 2|h|} \int_{\Omega^c} |(\tau_h T -T) (g_1, b_{2, Q})|.  
\end{align*}
Thus, by Lemma \ref{lem:Jgb} applied to $\eta=\varepsilon^2$ and \eqref{CZ-2}--\eqref{CZ-4}, 
\begin{align}\label{eq:Jgb}
\mathcal{J}_{g, b} 
\lesssim |\Omega| + \varepsilon^2 \|g_1\|_{L^{\infty}} \sum_Q \|b_{2, Q}\|_{L^1} 
\lesssim  \varepsilon. 
\end{align}
Analogously, 
\begin{align}\label{eq:Jbg}
\mathcal{J}_{b, g} 
\lesssim  \varepsilon. 
\end{align}
As above, to analyze $\mathcal{J}_{b, b}$, we write  
\begin{align*}
\mathcal{J}_{b, b} 
&\le \Big|\Big\{x \in \Rn: \sum_{Q_1, Q_2} 
|(\tau_h T -T) (b_{1, Q_1}, b_{2, Q_2})(x)| > 1/4\Big\}\Big| 
\\ 
&\le \Big|\Big\{x \in \Rn: \sum_{\substack{\ell(Q_1) \le 2|h| \\ \ell(Q_2) \le 2|h|}} 
|(\tau_h T -T) (b_{1, Q_1}, b_{2, Q_2})(x)| > 1/8\Big\}\Big|  
\\  
&\quad+ \Big|\Big\{x \in \Rn: \sum_{\substack{\ell(Q_1) > 2|h| \\ Q_2 \in \Lambda_2}}
|(\tau_h T - T)(b_{1, Q_1}, b_{2, Q_2})(x)| > 1/16\Big\}\Big|
\\  
&\quad+ \Big|\Big\{x \in \Rn: \sum_{\substack{Q_1 \in \Lambda_1 \\ \ell(Q_2) > 2|h|}} 
|(\tau_h T -T) (b_{1, Q_1}, b_{2, Q_2})(x)| > 1/16\Big\}\Big| 
\\  
&=: \mathcal{J}_{b, b}^1 + \mathcal{J}_{b, b}^2 + \mathcal{J}_{b, b}^3. 
\end{align*}
Invoking \eqref{Jbb-1} and the Cauchy--Schwarz inequality, we obtain 
\begin{align*}
\mathcal{J}_{b, b}^1 
&\le 2 \Big|\Big\{x \in \Rn: \sum_{\substack{\ell(Q_1) \le 2|h| \\ \ell(Q_2) \le 2|h|}} 
|T(b_{1, Q_1}, b_{2, Q_2})(x)| > 1/16\Big\}\Big|   
\\
&\lesssim |\Omega| + \int_{\Omega^c} 
\bigg[ \sum_{\substack{\ell(Q_1) \le 2|h| \\ \ell(Q_2) \le 2|h|}} 
|T(b_{1, Q_1}, b_{2, Q_2})(x)| \bigg]^{\frac12} \, dx 
\\
&\lesssim |\Omega| + F_1(|h|)^{\frac12} \int_{\Omega^c} 
\prod_{j=1}^2 \bigg[ \sum_{Q_j} \|b_{j, Q_j}\|_{L^1} 
\frac{\ell(Q_j)^{\frac{\delta}{2}}}{|x-c_{Q_j}|^{n+\frac{\delta}{2}}} \bigg]^{\frac12} \, dx
\\
&\le |\Omega| + F_1(|h|)^{\frac12} \prod_{j=1}^2 \bigg[ \sum_{Q_j} \|b_{j, Q_j}\|_{L^1} 
\int_{(3Q_j)^c} \frac{\ell(Q_j)^{\frac{\delta}{2}}}{|x-c_{Q_j}|^{n+\frac{\delta}{2}}} \, dx \bigg]^{\frac12}
\\
&\lesssim |\Omega| + F_1(|h|)^{\frac12} 
\lesssim \varepsilon, 
\end{align*}
for any $0<|h| \le \delta_0$ sufficiently small, provided \eqref{CZ-3}--\eqref{CZ-4} and that $\lim_{t \to 0} F_1(t)=0$. Much as in the same way, the inequality \eqref{Jbb-2} implies  
\begin{align*}
\mathcal{J}_{b, b}^2 
&\lesssim |\Omega| + \int_{\Omega^c} 
\bigg[ \sum_{\substack{\ell(Q_1) > 2|h| \\ Q_2 \in \Lambda_2}} 
|T(b_{1, Q_1}, b_{2, Q_2})(x)| \bigg]^{\frac12} \, dx 
\lesssim |\Omega| + F_1(|h|)^{\frac12} 
\lesssim \varepsilon. 
\end{align*}
Symmetrically, one has  
\begin{align*}
\mathcal{J}_{b, b}^3  
\lesssim \varepsilon. 
\end{align*} 
Gathering the estimates above, we achieve   
\begin{align}\label{eq:Jbb}
\mathcal{J}_{b, b}  
\le \mathcal{J}_{b, b}^1 + \mathcal{J}_{b, b}^2 + \mathcal{J}_{b, b}^3
\lesssim \varepsilon. 
\end{align}
Therefore, \eqref{JJ-1} follows from \eqref{JJ-2}--\eqref{eq:Jbb}. 
\qed

\subsection{Hypotheses \eqref{H1}--\eqref{H3} imply weighted $L^1 \times L^1 \to L^{\frac12, \infty}$ compactness}
\begin{theorem}\label{thm:HLW} 
Let $T$ be a bilinear operator associated with a standard bilinear Calder\'{o}n--Zygmund kernel. Assume that $T$ satisfies the hypotheses \eqref{H1}, \eqref{H2}, and \eqref{H3}. Then $T$ can be extended to a compact operator from $L^1(w_1) \times L^1(w_2)$ to $L^{\frac12,\infty}(w^{\frac12})$ for all $(w_1, w_2) \in A_{(1, 1)}$, where $w=w_1 w_2$.  
\end{theorem}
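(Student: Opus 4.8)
\textbf{Proof proposal for Theorem \ref{thm:HLW}.}
The plan is to derive the weighted endpoint compactness from the unweighted one established in Theorem \ref{thm:T110}, together with the weighted $L^{p_1} \times L^{p_2} \to L^p$ compactness already available under the hypotheses \eqref{H1}--\eqref{H3}. The natural tool is the weighted Kolmogorov--Riesz criterion in Lorentz spaces, Theorem \ref{thm:RKW}, applied with $p = \frac12$, $q = \infty$, and $w$ replaced by $w^{\frac12}$; recall that for $q = \infty$ that theorem asserts only that the three conditions \eqref{RKW-1}, \eqref{RKW-2}, \eqref{RKW-3} are \emph{sufficient} for precompactness, which is exactly what we need here. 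Note first that since $(w_1, w_2) \in A_{(1,1)}$, Lemma \ref{lem:weight} gives $w^{\frac12} \in A_1 \subset A_2$, so in particular $w^{\frac12}, w^{-\frac12} \in L^1_{\loc}(\Rn)$ and the hypothesis of Theorem \ref{thm:RKW} is satisfied (one can take any $\lambda \in (0,1)$, e.g. $\lambda = 1$). By homogeneity it suffices to check the three conditions over the set $\{(f_1, f_2): \|f_1\|_{L^1(w_1)} \le 1, \|f_2\|_{L^1(w_2)} \le 1\}$.

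Condition \eqref{RKW-1} is just the weighted weak-type $(1,1) \times (1,1)$ boundedness of $T$, which follows from Proposition \ref{pro:WL}, implication $\eqref{WL-1} \Longrightarrow \eqref{WL-2}$, since Theorem \ref{thm:T110} already gives the unweighted bound \eqref{WL-1}. For conditions \eqref{RKW-2} and \eqref{RKW-3} the idea is to run a Calder\'on--Zygmund decomposition of $f_1$ and $f_2$ with respect to the weighted measures $w_1\,dx$ and $w_2\,dx$ at height $\alpha = \varepsilon^{-1}$, producing good parts $g_j$ with $\|g_j\|_{L^\infty} \lesssim \varepsilon^{-1}$ and $\|g_j\|_{L^1(w_j)} \le 1$, bad parts $b_j = \sum_{Q \in \Lambda_j} b_{j,Q}$ with $\supp(b_{j,Q}) \subset Q$, $\int b_{j,Q} = 0$, $\|b_{j,Q}\|_{L^1(w_j)} \lesssim \varepsilon^{-1} w_j(Q)$, and disjoint cubes with $\sum_{Q \in \Lambda_j} w_j(Q) \lesssim \varepsilon$. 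Setting $\Omega := \bigcup_{j,Q} 3Q$, the $A_1$ property of $w_j$ gives $w^{\frac12}(\Omega) \lesssim \varepsilon$ (via the doubling of $w_j$ and Cauchy--Schwarz for $w^{\frac12} = w_1^{\frac12} w_2^{\frac12}$). Then one splits $T(f_1,f_2)$, resp. $(\tau_h T - T)(f_1,f_2)$, into the nine, resp. nine, pieces $T(g_1,g_2)$, $T(g_1,b_2)$, $T(b_1,g_2)$, $T(b_1,b_2)$ exactly as in the proof of Theorem \ref{thm:T110}. For the $(g,g)$ term one invokes the weighted $L^{p_1}(w_1^{p_1}) \times L^{p_2}(w_2^{p_2}) \to L^p(w^p)$ compactness of $T$ — which holds under \eqref{H1}--\eqref{H3}, being the conclusion ${\rm (c)}'$ of Theorem \ref{thm:cpt} whose proof via ${\rm (a)} \Rightarrow {\rm (b)} \Rightarrow {\rm (c)}'$ does not use the present theorem — together with Theorem \ref{thm:RKW} (or \ref{thm:RKWA}) applied in the \emph{Banach} range to get the smallness of $\|T(g_1,g_2)\mathbf 1_{B(0,A)^c}\|_{L^p(w^p)}$ and $\|(\tau_h T - T)(g_1,g_2)\|_{L^p(w^p)}$; Kolmogorov's inequality then converts this into the required control of the weak $L^{\frac12}(w^{\frac12})$ quasi-norm off $\Omega$. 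For the $(g,b)$, $(b,g)$ and $(b,b)$ terms one uses the pointwise kernel estimates of a compact bilinear Calder\'on--Zygmund kernel — size and H\"older with the decay functions $F_i \in \F$ — exactly as in Lemmas \ref{lem:Igb}, \ref{lem:Ibb}, \ref{lem:Jgb}, together with the cancellation $\int b_{j,Q} = 0$, to show $\sum_Q \int_{(3Q)^c} |T(\cdots, b_{j,Q})| w_j \lesssim \varepsilon$ and the analogous statement for $\tau_h T - T$; here the weight is absorbed by the $A_1$ bound $\fint_Q w_j \gtrsim \essinf_{3Q \setminus Q} w_j$ and the integrability of $|x - c_Q|^{-n-\delta/2} \ell(Q)^{\delta/2}$ against $w_j\,dx$ over $(3Q)^c$, which is finite because $w_j \in A_1 \subset A_\infty$ has at most polynomial growth.

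The main obstacle I expect is the weighted bad-part estimate: in the unweighted case one simply integrates $|x - c_Q|^{-n-\delta/2}$ over $(3Q)^c$ and gets $\lesssim \ell(Q)^{-\delta/2}$, but against $w_j\,dx$ one must control $\int_{(3Q)^c} |x-c_Q|^{-n-\delta/2} w_j(x)\,dx$ uniformly by $\lesssim \ell(Q)^{-\delta/2} \langle w_j \rangle_{Q}$, which requires the $A_1$ (or $A_\infty$) growth estimate $w_j(2^k Q) \lesssim 2^{knp} w_j(Q)$ and summing a geometric series in $k$ with ratio $2^{k(np - \delta/2)}$ — this forces one to exploit $\delta$ large enough relative to the weight, or more precisely to note that $w_j \in A_1$ gives the reverse H\"older exponent and the open property, so that one actually works with $w_j \in A_{1+\eta}$ for small $\eta$ and the decay $\ell(Q)^{\delta/2}$ in the kernel beats the weight growth after choosing $\eta$ (equivalently $k$-summation) appropriately. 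A clean way to avoid this bookkeeping is to instead transfer everything through the already-proven weighted $L^p$ compactness and the unweighted endpoint compactness via an extrapolation-type argument: since $T$ is compact $L^1 \times L^1 \to L^{\frac12,\infty}$ and bounded $L^1(w_1) \times L^1(w_2) \to L^{\frac12,\infty}(w^{\frac12})$ for all $(w_1,w_2) \in A_{(1,1)}$, one hopes for a weighted endpoint extrapolation of compactness; but no such theorem is stated in the excerpt, so I will carry out the direct Calder\'on--Zygmund argument above, which parallels Theorem \ref{thm:T110} line by line with $dx$ replaced by $w_j\,dx$ and the elementary weighted estimates $w^{\frac12}(\Omega)\lesssim\varepsilon$, $\int_{(3Q)^c}\frac{\ell(Q)^{\delta/2}}{|x-c_Q|^{n+\delta/2}}w_j\,dx \lesssim \ell(Q)^{-\delta/2} w_j(Q)$ supplied by $w_j \in A_1$.
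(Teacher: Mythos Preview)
Your overall framework matches the paper's: reduce to the three conditions \eqref{LW-1}--\eqref{LW-3} of Theorem \ref{thm:RKW}, and handle \eqref{LW-1} via Proposition \ref{pro:WL}. The gap is in your treatment of \eqref{LW-2} and \eqref{LW-3}. You repeatedly invoke ``$w_j \in A_1$'' (for the weighted Calder\'on--Zygmund decomposition, for the doubling estimate $w^{1/2}(\Omega)\lesssim\varepsilon$, and for the integral $\int_{(3Q)^c}|x-c_Q|^{-n-\delta/2}w_j\,dx$), but this is false: Lemma \ref{lem:weight} applied to $(w_1,w_2)\in A_{(1,1)}$ gives only $w_j^{1/2}\in A_1$ and $w^{1/2}\in A_1$. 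In particular $w_j$ need not even be locally integrable (e.g.\ $n=1$, $w_1(x)=|x|^{-3/2}$, $w_2\equiv 1$), so a Calder\'on--Zygmund decomposition of $f_j$ with respect to $w_j\,dx$ is not available, and even an unweighted decomposition of $f_j\in L^1(w_j)$ at Lebesgue height is problematic. The same issue contaminates your $(g,g)$ step: you need the \emph{same} pair $(w_1,w_2)$ to lie in some $A_{(p_1,p_2)}$ with $p_1,p_2>1$ to invoke weighted $L^p$ compactness, and $A_{(1,1)}\not\subset A_{(p_1,p_2)}$ in general. If instead you decompose with respect to $w_j\,dx$ (assuming doubling), the bad parts satisfy $\int b_{j,Q}\,w_j\,dx=0$, which is the wrong cancellation for the kernel H\"older estimate.

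The paper avoids all of this by never running a weighted decomposition. For \eqref{LW-2} and \eqref{LW-3} it proves \emph{pointwise} sharp maximal inequalities (Lemmas \ref{lem:MA} and \ref{lem:Mtau}): for any $\eta>0$ there exist $A_0,\delta_0$ with
\[
M_\lambda^{\#}\big(T(f_1,f_2)\mathbf 1_{B(0,A)^c}\big)\le\eta\,\mathcal M(f_1,f_2),\qquad
M_\lambda^{\#}\big((\tau_hT-T)(f_1,f_2)\big)\le\eta\,\mathcal M(f_1,f_2)+\eta\,\mathcal M(f_1,f_2)(\cdot+h),
\]
for $A\ge A_0$, $|h|\le\delta_0$; these use only the \emph{unweighted} endpoint compactness of Theorem \ref{thm:T110} (via Kolmogorov on the local piece) together with the compact-kernel smoothness on the nonlocal pieces, exactly as in \eqref{HQ}--\eqref{SHX23}. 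One then applies the weak-type Fefferman--Stein inequality \eqref{MMS} (valid since $w^{1/2}\in A_1\subset A_\infty$) and the weighted weak-type bound for $\mathcal M$ from \cite[Theorem 3.3]{LOPTT} to conclude. This route cleanly separates the unweighted compactness input from the weighted integration, and uses only the properties of $(w_1,w_2)\in A_{(1,1)}$ that actually hold.
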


Let $(w_1, w_2) \in A_{(1, 1)}$ and $w=w_1 w_2$. Then $w^{\frac12} \in A_1$ by Lemma \ref{lem:weight}. In view of Theorem \ref{thm:RKW}, it suffices to prove the following estimates: 
\begin{align}
\label{LW-1} & \sup_{\substack{\|f_1\|_{L^1(w_1)} \le 1 \\ \|f_2\|_{L^1(w_2)} \le 1}} 
\|T(f_1, f_2)\|_{L^{\frac12, \infty}(w^{\frac12})} < \infty, 
\\ 
\label{LW-2} \lim_{A \to \infty} & \sup_{\substack{\|f_1\|_{L^1(w_1)} \le 1 \\ \|f_2\|_{L^1(w_2)} \le 1}} 
\|T(f_1, f_2) \mathbf{1}_{B(0, A)^c}\|_{L^{\frac12, \infty}(w^{\frac12})} = 0, 
\\
\label{LW-3} \lim_{|h| \to 0} & \sup_{\substack{\|f_1\|_{L^1(w_1)} \le 1 \\ \|f_2\|_{L^1(w_2)} \le 1}}  
\|(\tau_h T - T)(f_1, f_2)\|_{L^{\frac12, \infty}(w^{\frac12})}=0. 
\end{align}
It follows from \eqref{Trr} and \cite[Corollary 3.9]{LOPTT} that 
\begin{align*}
\text{$T$ is bounded from $L^1(w_1) \times L^1(w_2)$ to $L^{\frac12, \infty}(w^{\frac12})$}, 
\end{align*}
which gives \eqref{LW-1}. It was shown in \cite{FS} that for all $0 < p, \lambda < \infty$ and $w \in A_{\infty}$, 
\begin{equation}\label{MMS}
\|f\|_{L^p(w)}
\le \|M_{\lambda} f \|_{L^p(w)}
\lesssim \| M_{\lambda}^{\#} f\|_{L^p(w)},
\end{equation}
for any function $f$ for which the left-hand side is finite. Let $\varepsilon>0$ and $0<\lambda<\frac12$. By \eqref{MMS}, Lemma \ref{lem:Mtau}, and \cite[Theorem 3.3]{LOPTT}, there exists $\delta_0 = \delta(\varepsilon)>0$ so that for all $0<|h| \le \delta_0$, 
\begin{align}\label{LW-4}
&\big\|(\tau_h - T)T(f_1, f_2) \big\|_{L^{\frac12, \infty}(w^{\frac12})}
\lesssim \big\|M_{\lambda}^{\#} \big((\tau_h T -T)(f_1, f_2)\big) \big\|_{L^{\frac12, \infty}(w^{\frac12})}
\\ \nonumber 
&\le \varepsilon \|\mathcal{M}(f_1, f_2)\|_{L^{\frac12, \infty}(w^{\frac12})}
+ \varepsilon \|\mathcal{M}(f_1, f_2)(\cdot+h)\|_{L^{\frac12, \infty}(w^{\frac12})}
\\ \nonumber 
&= 2\varepsilon \|\mathcal{M}(f_1, f_2)\|_{L^{\frac12, \infty}(w^{\frac12})}
\lesssim \varepsilon \|f_1\|_{L^1(w_1)} \|f_2\|_{L^1(w_2)},  
\end{align}
for all functions $f_i \in L^1(w_i) \cap L^{\infty}_c(\Rn)$, $i=1, 2$. Note that in the first inequality above, to use \eqref{MMS}, it requires $\big\|M_{\lambda} \big(T(f_1, f_2)\big)\big\|_{L^{\frac12, \infty}(w^{\frac12})} < \infty$, which can be checked as in \cite[p. 1248]{LOPTT}. Since simple functions in $L^p(v)$ is dense in $L^p(v)$ for any $p \in [1, \infty)$ and for any weight $v$ (cf. \cite[p. 211]{BS}), using \eqref{LW-4} and a limiting argument, we can obtain \eqref{LW-3}. With Lemma \ref{lem:MA} in hand, the inequality \eqref{LW-2} can be shown in the same way. 
\qed

\subsection{Pointwise estimates for bilinear singular integrals}
\begin{lemma}\label{lem:MA}
Let $T$ be a bilinear singular integral operator associated with a compact bilinear Calder\'{o}n--Zygmund kernel. Let $\lambda \in (0, 1/2)$. Then for any $\eta>0$, there exists $A_0=A_0(\eta)>0$ such that 
\begin{align*}
M_{\lambda}^{\#} \big(T(f_1, f_2) \mathbf{1}_{B(0, A)^c}\big)(x) 
\le \eta \, \mathcal{M}(f_1, f_2)(x) 
\end{align*}
for all $x \in \Rn$ and $A \ge A_0$. 
\end{lemma}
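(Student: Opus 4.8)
The statement to prove is Lemma~\ref{lem:MA}: for a bilinear singular integral operator $T$ associated with a \emph{compact} bilinear Calder\'{o}n--Zygmund kernel and $\lambda \in (0,1/2)$, given $\eta>0$ there is $A_0=A_0(\eta)$ such that
\begin{align*}
M_{\lambda}^{\#}\big(T(f_1,f_2)\mathbf{1}_{B(0,A)^c}\big)(x)
\le \eta\,\mathcal{M}(f_1,f_2)(x),\qquad x\in\Rn,\ A\ge A_0.
\end{align*}
The plan is to mimic the classical Fefferman--Stein estimate $M^{\#}(Tf)\lesssim M_rf$ for Calder\'{o}n--Zygmund operators, but to exploit the vanishing features of the kernel $F=F_1F_2F_3$ (via Lemma~\ref{lem:improve} and Lemma~\ref{lem:PP}\,\eqref{list:P3}, we may assume $F_1$ increasing, $F_2,F_3$ decreasing) in order to make the resulting constant small, uniformly in $x$, once the truncation radius $A$ is large. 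First I would fix a cube $Q\ni x$ with center $c_Q$ and sidelength $\ell(Q)$, write $g:=T(f_1,f_2)\mathbf{1}_{B(0,A)^c}$, and split $f_i = f_i\mathbf{1}_{3Q} + f_i\mathbf{1}_{(3Q)^c} =: f_i^0 + f_i^\infty$, so that $T(f_1,f_2)=T(f_1^0,f_2^0)+T(f_1^0,f_2^\infty)+T(f_1^\infty,f_2^0)+T(f_1^\infty,f_2^\infty)$. For the fully-local term, choose the subtracted constant $c$ to be $\big(\fint_Q |T(f_1^0,f_2^0)|^\lambda\big)^{1/\lambda}$-minimizing, i.e.\ estimate $\big(\fint_Q |T(f_1^0,f_2^0)\mathbf{1}_{B(0,A)^c}|^\lambda\big)^{1/\lambda}$ directly by Kolmogorov's inequality \eqref{eq:Kolm} and the weak $(1,1)\times(1,1)\to(1/2,\infty)$ bound for $T$ from Theorem~\ref{thm:T110} (equivalently Proposition~\ref{pro:WL}), getting $\lesssim \langle|f_1|\rangle_{3Q}\langle|f_2|\rangle_{3Q}\le \mathcal{M}(f_1,f_2)(x)$; crucially, the presence of the cutoff $\mathbf{1}_{B(0,A)^c}$ is harmless here because we only lose it, and the smallness must come from the other three terms.

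For the three terms with at least one argument supported off $3Q$, I would subtract the constant $c=T(f_1^\infty,f_2^\infty)(c_Q)+T(f_1^0,f_2^\infty)(c_Q)+T(f_1^\infty,f_2^0)(c_Q)$ (kernel representations make these pointwise well-defined for $y\in Q$) and use the H\"older smoothness of the kernel in the first variable, \eqref{eq:Holder-1} together with Lemma~\ref{lem:improve}\,\eqref{Holder-imp}: for $y\in Q$,
\begin{align*}
|T(f_1^\infty,f_2^\infty)(y)-T(f_1^\infty,f_2^\infty)(c_Q)|
\lesssim \int\!\!\int_{(3Q)^c\times(3Q)^c}
\frac{F'(y,u,v)\,\ell(Q)^{\delta'}}{(|y-u|+|y-v|)^{2n+\delta'}}|f_1(u)||f_2(v)|\,du\,dv,
\end{align*}
and similarly for the mixed terms. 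Decomposing $(3Q)^c$ into dyadic annuli $2^k\cdot 3Q\setminus 2^{k-1}\cdot 3Q$, $k\ge1$, the standard geometric series in $2^{-k\delta'}$ gives a bound $\sum_{k\ge1}2^{-k\delta'}\mathbf{F}(2^k 3Q)\,\langle|f_1|\rangle_{2^k3Q}\langle|f_2|\rangle_{2^k3Q}$, where $\mathbf{F}(R):=\sup_{y\in Q,\ (u,v)\in R\times R} F'(y,u,v)$ is bounded and tends to $0$ as $\ell(R)\to\infty$ or $|c_R|\to\infty$; hence this is $\le \big(\sup_{R\supset 3Q} \mathbf{F}(R)\big)\,\mathcal{M}(f_1,f_2)(x)$ up to an absolute constant. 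The point now is: since $g=T(f_1,f_2)\mathbf{1}_{B(0,A)^c}$ vanishes on $B(0,A)$, we only need the above for cubes $Q$ that \emph{intersect} $B(0,A)^c$ --- and then we further split according to whether $\ell(Q)$ is large or $Q$ is far from the origin.

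\textbf{Where the smallness comes from, and the main obstacle.} The delicate point --- and the main obstacle --- is that $M_\lambda^{\#}g(x)$ involves \emph{all} cubes $Q\ni x$, including small cubes near the origin, on which $g$ may be large and the vanishing of $F$ at large scales gives nothing. The resolution, which I would carry out carefully, is that for such a small cube $Q$ with $Q\cap B(0,A)^c\neq\emptyset$ and $x\in Q$, the cutoff forces $\operatorname{dist}(Q,0)\gtrsim A$ unless $\ell(Q)\gtrsim A$; in the first sub-case every annulus $2^k3Q$ appearing above has center of modulus $\gtrsim A$, so $\mathbf{F}(2^k3Q)\le \sup\{F_3(1+|c_R|/(1+\ell(R))) : |c_R|\gtrsim A\}\cdot\|F_1\|_\infty\|F_2\|_\infty$, which is $\le\eta$ once $A\ge A_0(\eta)$ by $\lim_{t\to\infty}F_3(t)=0$ after the rescaling of Lemma~\ref{lem:improve}; in the second sub-case $\ell(Q)\gtrsim A$ forces $\ell(2^k3Q)\gtrsim A$ for all $k\ge1$, so $\mathbf{F}(2^k3Q)\le\sup\{F_2(t):t\gtrsim A\}\le\eta$ for $A\ge A_0(\eta)$ by $\lim_{t\to\infty}F_2(t)=0$. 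For the local Kolmogorov term one observes that $T(f_1^0,f_2^0)\mathbf{1}_{B(0,A)^c}$ is supported in $3Q\cap B(0,A)^c$, which is empty unless $Q$ again meets $B(0,A)^c$, so the same dichotomy applies and one may instead bound $\langle|f_1|\rangle_{3Q}\langle|f_2|\rangle_{3Q}$ by $\eta^{-1}\cdot\eta\cdot\mathcal{M}(f_1,f_2)(x)$ --- but this does not gain smallness, so one needs that whenever $\ell(Q)$ is small \emph{and} $Q$ meets $B(0,A)^c$ \emph{and} $3Q$ meets $B(0,A)$, the geometry already makes this term absorbable; concretely $\operatorname{dist}(0,Q)\in(A-\sqrt n\ell(Q),A+3\sqrt n\ell(Q))$ is a thin shell, and a separate (and easier) argument using that $T$ is bounded $L^{p_1}\times L^{p_2}\to L^p$ plus the support of $T(f_1^0,f_2^0)\mathbf{1}_{B(0,A)^c}$ being a subset of a fixed dilate of $Q$ of small relative measure handles it. Assembling the four pieces, taking the supremum over $Q\ni x$ and then the infimum over the subtracted constant, and choosing $A_0$ so large that every occurring $\sup F_2$ and $\sup F_3$ is below the prescribed threshold, yields the claimed inequality. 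I expect the bookkeeping of the cube dichotomy (small-vs-large $\ell(Q)$, near-vs-far from origin) to be the part requiring the most care, since it is exactly there that the \emph{compact} kernel hypothesis, rather than merely the standard one, is used.
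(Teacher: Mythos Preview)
Your treatment of the three off-diagonal terms $T(f_1^0,f_2^\infty)$, $T(f_1^\infty,f_2^0)$, $T(f_1^\infty,f_2^\infty)$ is essentially the paper's argument: subtract the value at $c_Q$, use the smoothness condition \eqref{eq:Holder-1} in the form of Lemma~\ref{lem:improve}, decompose into dyadic annuli, and observe that any cube $Q$ meeting $B(0,A)^c$ must lie outside $\Q(N)$ for $N\sim\log_2 A$ (this is precisely \eqref{QNN}), so that the resulting function $F(Q)$ is uniformly small. Your dichotomy ``small $\ell(Q)$, far from origin'' versus ``large $\ell(Q)$'' is the same observation, stated slightly less cleanly.

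The gap is in the fully local term $\mathscr{J}_0=\big(\fint_Q|T(f_1^0,f_2^0)\mathbf{1}_{B(0,A)^c}|^\lambda\big)^{1/\lambda}$. You first say the cutoff is harmless and the smallness will come from elsewhere; then, realizing this gives no smallness, you propose a thin-shell argument claiming that the support of $T(f_1^0,f_2^0)\mathbf{1}_{B(0,A)^c}$ inside $Q$ has small relative measure. This is false: if $Q$ is a small cube whose center lies exactly on the sphere $\partial B(0,A)$, then $|Q\cap B(0,A)^c|\simeq|Q|$, and neither the $L^{p_1}\times L^{p_2}\to L^p$ boundedness nor any measure-of-support argument produces a factor that tends to $0$ with $A$. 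The kernel conditions alone cannot rescue this term either, since $T(f_1^0,f_2^0)$ involves the diagonal behaviour of $T$, which is governed by the weak compactness property and the $\CMO$ conditions, not just by the kernel.

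The paper's remedy is to invoke the \emph{already established} unweighted endpoint compactness \eqref{L1Li-2} from Theorem~\ref{thm:T110}: for $A\ge A_0(\eta)$,
\[
\|T(f_1^0,f_2^0)\mathbf{1}_{B(0,A)^c}\|_{L^{1/2,\infty}}
\le \eta\,\|f_1^0\|_{L^1}\|f_2^0\|_{L^1},
\]
and then Kolmogorov's inequality \eqref{eq:Kolm} with $\lambda<1/2$ gives $\mathscr{J}_0\lesssim\eta\,\langle|f_1|\rangle_{3Q}\langle|f_2|\rangle_{3Q}\le\eta\,\mathcal{M}(f_1,f_2)(x)$ directly. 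This is not circular: Theorem~\ref{thm:T110} is proved first (using only Theorem~\ref{thm:RKLpq} and the Calder\'on--Zygmund decomposition), and Lemma~\ref{lem:MA} is used afterwards for the \emph{weighted} endpoint. Once you replace your thin-shell sketch by this step, the rest of your outline goes through.
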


\begin{proof}
Fix a cube $Q$ and let $x \in Q$. Let $\eta>0$ be an arbitrary number and $A>0$ be chosen later. Since $\big||a|^{\lambda} - |b|^{\lambda} \big| \le |a-b|^{\lambda}$, it is enough to prove 
\begin{align}\label{PJJ}
\mathscr{J} := 
\bigg(\fint_Q |T(f_1, f_2)(\xi) \mathbf{1}_{B(0, A)^c}(\xi) - \alpha_Q|^{\lambda} 
\, d\xi \bigg)^{\frac{1}{\lambda}}
\lesssim \eta \, \mathcal{M}(f_1, f_2)(x),  
\end{align}
for all $A>0$ sufficiently large, where the constant $\alpha_Q = \alpha_Q^1 + \alpha_Q^2 + \alpha_Q^3$ will be determined below. Write $f_i^0 = f_1 \mathbf{1}_{3Q}$ and $f_i^{\infty} = f_1 \mathbf{1}_{(3Q)^c}$, $i=1, 2$. We split 
\begin{align}\label{PJ}
\mathscr{J}
\lesssim \mathscr{J}_0 + \mathscr{J}_1 + \mathscr{J}_2 + \mathscr{J}_3,    
\end{align}
where 
\begin{align*}
\mathscr{J}_0 
& := \bigg(\fint_Q |T(f_1^0, f_2^0)(\xi) \mathbf{1}_{B(0, A)^c}(\xi)|^{\lambda} 
\, d\xi \bigg)^{\frac{1}{\lambda}}, 
\\
\mathscr{J}_1 
& := \bigg(\fint_Q |T(f_1^0, f_2^{\infty})(\xi) \mathbf{1}_{B(0, A)^c}(\xi) - \alpha_Q^1|^{\lambda} \, d\xi \bigg)^{\frac{1}{\lambda}}, 
\\
\mathscr{J}_2 
& := \bigg(\fint_Q |T(f_1^{\infty}, f_2^0)(\xi) \mathbf{1}_{B(0, A)^c}(\xi) - \alpha_Q^2|^{\lambda} \, d\xi \bigg)^{\frac{1}{\lambda}}, 
\\
\mathscr{J}_3 
& := \bigg(\fint_Q |T(f_1^{\infty}, f_2^{\infty})(\xi) \mathbf{1}_{B(0, A)^c}(\xi) - \alpha_Q^3|^{\lambda} \, d\xi \bigg)^{\frac{1}{\lambda}}. 
\end{align*}
By Kolmogorov's inequality \eqref{eq:Kolm} and \eqref{L1Li-2}, there exists $A_0=A_0(\varepsilon)>0$ such that 
\begin{align}\label{PJ0}
\mathscr{J}_0 
&\lesssim |Q|^{-2} \|T(f_1^0, f_2^0) \mathbf{1}_{B(0, A)^c} \|_{L^{\frac12, \infty}}  
\\ \nonumber 
&\le \eta |Q|^{-2} \|f_1^0\|_{L^1} \|f_2^0\|_{L^1} 
\\ \nonumber 
&\lesssim \eta \prod_{i=1}^2 \langle |f_i| \rangle_{3Q} 
\le \eta \, \mathcal{M}(f_1, f_2)(x). 
\end{align}
In order to bound $\mathscr{J}_i$, $i=1, 2, 3$, we present an estimate. For any $\xi \in Q$, much as in \eqref{XRK}, we have 
\begin{align}\label{HQ}
G_Q(\xi) 
&:= \iint_{\R^{2n} \setminus (3Q)^2} 
|K(\xi, y, z) - K(c_Q, y, z)| |f_1(y)| |f_2(z)| \, dy \, dz 
\\ \nonumber 
&\lesssim \sum_{k \ge 0} \iint_{R_k} 
\frac{\ell(Q)^{\delta} F(\xi, y, z)}{(|\xi-y| + |\xi-z|)^{2n+\delta}} |f_1(y)| |f_2(z)| \, dy \, dz 
\\ \nonumber
&\lesssim F_1(\ell(Q)) F_2(\ell(Q)) \sum_{k \ge 0} 2^{-k \delta} F_3(\rd(2^k Q, \I)) 
\prod_{i=1}^2 \fint_{B(x, 2^{k+2}\ell(Q))} |f_i| 
\\ \nonumber
&=:  F(Q) \prod_{i=1}^2 \fint_{B(x, 2^{k+2}\ell(Q))} |f_i| 
\le F(Q) \, \mathcal{M}(f_1, f_2)(x), 
\end{align}
where $R_k :=\{(y, z) \in \R^{2n}: 2^k \ell(Q) \le |\xi-y| + |\xi-z|< 2^{k+1} \ell(Q)\}$, and 
\begin{align*}
F(\xi, y, z) 
:= F_1(|\xi-c_Q|) F_2(|\xi-y|+|\xi-z|) F_3 \bigg(1 + \frac{|\xi+y| + |\xi+z|}{1+|\xi-y|+|\xi-z|} \bigg). 
\end{align*}
Observe that for any $A \ge 16$, choosing $N:=N(A) \ge 2$ so that $4^N \le A < 4^{N+1}$, we see that for each cube $Q$ with $2^{-N} \le \ell(Q) \le 2^N$ and $Q \cap B(0, A)^c \neq \emptyset$, 
\begin{align}\label{QNN}
\d(Q, 2^N \I) 
\ge A - \ell(Q) - 2^{N-1} 
\ge 4^N - 2^N - 2^{N-1} 
> N \cdot 2^N. 
\end{align}
Choose 
\begin{equation}\label{AQ}
\begin{aligned}
& \alpha_Q^1 = 0, \text{ if } Q \cap B(0, A)^c = \emptyset, \quad 
\alpha_Q^1 = T(f_1^0, f_2^{\infty})(c_Q), \, \, \text{ otherwise}; 
\\
& \alpha_Q^2 = 0, \text{ if } Q \cap B(0, A)^c = \emptyset, \quad 
\alpha_Q^2 = T(f_1^{\infty}, f_2^0)(c_Q), \, \, \text{ otherwise}; 
\\
& \alpha_Q^3 = 0, \text{ if } Q \cap B(0, A)^c = \emptyset, \quad 
\alpha_Q^3 = T(f_1^{\infty}, f_2^{\infty})(c_Q), \text{ otherwise}.
\end{aligned}
\end{equation}
Then by \eqref{HQ}--\eqref{AQ}, 
\begin{align}\label{PJ1}
\mathscr{J}_1 
& \lesssim \bigg(\frac{1}{|Q|} \int_{Q \cap B(0, A)^c} 
|T(f_1^0, f_2^{\infty})(\xi) - T(f_1^0, f_2^{\infty})(c_Q)|^s \, d\xi \bigg)^{\frac1s} 
\\ \nonumber 
&\le \sup_{Q \notin \Q(N)} \sup_{\xi \in Q} G_Q(\xi)
\lesssim \sup_{Q \notin \Q(N)} F(Q) \, \mathcal{M}(f_1, f_2)(x) 
\le \eta \, \mathcal{M}(f_1, f_2)(x), 
\end{align}
provided $A \ge 16$ large enough. In the same way, one has 
\begin{align}\label{PJ23}
\mathscr{J}_i \le \eta \, \mathcal{M}(f_1, f_2)(x), \quad i=2, 3. 
\end{align}
Therefore, \eqref{PJJ} is a consequence of \eqref{PJ}, \eqref{PJ0},  \eqref{PJ1}, and \eqref{PJ23}.
\end{proof}

\begin{lemma}\label{lem:Mtau}
Let $T$ be a bilinear singular integral operator associated with a compact bilinear Calder\'{o}n--Zygmund kernel. Let $\lambda \in (0, 1/2)$. Then for any $\eta>0$, there exists $\delta_0=\delta_0(\eta)>0$ such that 
\begin{align*}
M_{\lambda}^{\#} \big((\tau_h T - T)(f_1, f_2)\big)(x) 
\le \eta \, \mathcal{M}(f_1, f_2)(x) + \eta \, \mathcal{M}(f_1, f_2)(x+h)
\end{align*}
for all $x \in \Rn$ and $0<|h| \le \delta_0$. 
\end{lemma}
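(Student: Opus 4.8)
\textbf{Proof plan for Lemma \ref{lem:Mtau}.}
The plan is to imitate the scheme of Lemma \ref{lem:MA} but now tracking the shift by $h$ through the kernel difference. Fix a cube $Q$ and $x \in Q$, and write $B := B(c_Q, 10\sqrt{n}\,\ell(Q))$ so that $Q$ and its $h$-translate are comfortably inside $3B$ once $|h| \le \ell(Q)$. Split $f_i = f_i^0 + f_i^\infty$ with $f_i^0 = f_i \mathbf{1}_{3B}$ and $f_i^\infty = f_i \mathbf{1}_{(3B)^c}$, and decompose
\begin{align*}
(\tau_h T - T)(f_1, f_2)
&= (\tau_h T - T)(f_1^0, f_2^0)
+ (\tau_h T - T)(f_1^0, f_2^\infty)
\\
&\quad + (\tau_h T - T)(f_1^\infty, f_2^0)
+ (\tau_h T - T)(f_1^\infty, f_2^\infty).
\end{align*}
For the local-local term, since $\big||a|^\lambda - |b|^\lambda\big| \le |a-b|^\lambda$ I take the constant $0$ and apply Kolmogorov's inequality \eqref{eq:Kolm} together with \eqref{L1Li-3} (the unweighted $L^1 \times L^1 \to L^{1/2,\infty}$ equicontinuity proved in Theorem \ref{thm:T110}): for $|h| \le \delta_0$ this gives a bound $\eta\,|Q|^{-2}\|f_1^0\|_{L^1}\|f_2^0\|_{L^1} \lesssim \eta\,\mathcal{M}(f_1,f_2)(x)$, exactly as in \eqref{PJ0}.

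For each of the three remaining terms, at least one input is supported off $3B$, so for $\xi \in Q$ the integral is a genuine kernel integral and I choose the subtracted constant to be the value of the corresponding piece at $c_Q$ (as in \eqref{AQ}, but with no cutoff $\mathbf{1}_{B(0,A)^c}$). Then, writing $\xi = c_Q + h$ or $\xi = c_Q$, the integrand becomes a difference $|K(\xi+h,y,z) - K(c_Q+h,y,z)| + |K(\xi,y,z)-K(c_Q,y,z)|$ plus a genuine $h$-difference $|K(\xi+h,y,z) - K(\xi,y,z)|$; here I use that $|h| \le \ell(Q) \le \tfrac12\max\{|\xi-y|,|\xi-z|\}$ when $y$ or $z$ lies outside $3B$. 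The first two differences are estimated exactly by the computation \eqref{HQ}, producing $F(Q)\,\mathcal{M}(f_1,f_2)(x)$ with $F(Q) = F_1(\ell(Q))F_2(\ell(Q))\sum_{k\ge0}2^{-k\delta}F_3(\rd(2^kQ,\I))$; since $(F_1,F_2,F_3)\in\F$, $F(Q)$ is bounded and tends to $0$ as $\ell(Q)\to0$, $\ell(Q)\to\infty$, or $|c_Q|\to\infty$, i.e.\ $F\in\F_0$. The genuine $h$-difference is estimated by the H\"older condition \eqref{eq:Holder-1}, which contributes a factor $|h|^\delta / (|\xi-y|+|\xi-z|)^{\delta} \lesssim (|h|/\ell(Q))^\delta$ against the same dyadic-annulus sum, hence a bound $(|h|/\ell(Q))^\delta\,F(Q)\,\mathcal{M}(f_1,f_2)(x)$. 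Wherever the translated maximal function $\mathcal{M}(f_1,f_2)(x+h)$ naturally appears (the $\tau_h T$ piece centers its averages at $x+h$), I keep it as the second term in the claimed bound.

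The remaining subtlety is the regime $|h| > \ell(Q)$, where the splitting radius $\ell(Q)$ is too small to absorb the shift. In that case I instead bound each of $M_\lambda^\#(\tau_h T(f_1,f_2))$ and $M_\lambda^\#(T(f_1,f_2))$ separately on $Q$: the standard sharp-maximal estimate $M^\#_\lambda(T(f_1,f_2)) \lesssim \mathcal{M}(f_1,f_2)$ (valid for any bilinear Calder\'on--Zygmund operator, e.g.\ via \cite[Theorem 3.2]{LOPTT} or the argument behind \eqref{MSP-1}) handles the untranslated piece, and $M^\#_\lambda(\tau_h T(f_1,f_2))(x) = M^\#_\lambda(T(f_1,f_2))(x+h) \lesssim \mathcal{M}(f_1,f_2)(x+h)$ handles the other; but this only gives a constant bound, not one with a small factor $\eta$. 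To recover the decay I combine this with the observation that for $|h| > \ell(Q)$ one has $Q$ small relative to $|h|$, so I should instead choose the splitting radius to be of order $\max\{\ell(Q), |h|\}$ from the start and use the $\F_0$-decay of $F$ together with a \emph{global} smallness: since every such $Q$ with $|h| > \ell(Q)$ has $\ell(Q) < \delta_0$, the factor $F(Q)$ is already $\le \eta$ by the $\ell(Q)\to0$ limit in Definition \ref{def:FF}, provided the cube is also not too large or too far out — and for cubes with $\ell(Q)$ large or $|c_Q|$ large the decay again comes from $F \in \F_0$. I expect this case analysis around $|h| \lessgtr \ell(Q)$, and in particular making the two regimes match up so that a single $\delta_0 = \delta_0(\eta)$ works uniformly in $Q$ and $x$, to be the main obstacle; everything else is a direct transcription of Lemma \ref{lem:MA} with an extra H\"older-in-the-first-variable estimate.
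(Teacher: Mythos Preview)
There is a real gap, and it lives in the regime $|h|\le\ell(Q)$ with $Q$ of moderate scale (say $\ell(Q)\sim 1$, $c_Q$ near the origin). Your displayed ``three terms'' is not a valid triangle inequality: with the subtracted constant equal to $(\tau_hT-T)(\cdots)(c_Q)$ one has
\[
\big|[K(\xi+h)-K(\xi)]-[K(c_Q+h)-K(c_Q)]\big|
\le |K(\xi+h)-K(c_Q+h)|+|K(\xi)-K(c_Q)|,
\]
two $\xi$-to-$c_Q$ differences and nothing else. Estimating these by \eqref{HQ} gives a factor $F_1(|\xi-c_Q|)\le F_1(\ell(Q))$, hence your $F(Q)\,\mathcal{M}(f_1,f_2)(x)$; for moderate cubes this is merely bounded, not $\le\eta$. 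Your third ``genuine $h$-difference'' term with factor $(|h|/\ell(Q))^{\delta}F(Q)$ is \emph{added} to the first two and therefore cannot supply the missing smallness. The endgame in your last paragraph does not help either: the bad cubes are precisely those with $\ell(Q)$ neither small nor large and $c_Q$ near $0$, so none of the three limits in $\F_0$ applies.

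The paper's fix is to extract the factor $F_1(|h|)$ in both regimes by letting the constant depend on the comparison $\ell(Q)\gtrless 2|h|$. When $\ell(Q)>2|h|$ it takes $\alpha_Q=0$, so the integrand is the $h$-difference $K(\xi+h,y,z)-K(\xi,y,z)$ itself; the compact H\"older condition in the form of Lemma~\ref{lem:improve}\eqref{Holder-imp} (not the bare \eqref{eq:Holder-1}) then yields the factor $F_1(|h|)$ directly, cf.\ \eqref{HQQ}. When $\ell(Q)\le 2|h|$ it takes $\alpha_Q=T(\cdots)(c_Q+h)-T(\cdots)(c_Q)$ and bounds $\tau_hT(\cdots)(\xi)-T(\cdots)(c_Q+h)$ and $T(\cdots)(\xi)-T(\cdots)(c_Q)$ separately; these are $\xi$-to-$c_Q$ differences, but now $F_1(|\xi-c_Q|)\le F_1(\ell(Q))\le F_1(2|h|)$ by the monotonicity of $F_1$, and the translated piece is what produces $\mathcal{M}(f_1,f_2)(x+h)$. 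In short, your case split is the right instinct, but the mechanism for smallness must be $F_1$ evaluated at an increment $\lesssim|h|$, and the constant has to be chosen so that every kernel difference that appears has that property.
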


\begin{proof}

Fix a cube $Q$ and let $x \in Q$. Let $\eta>0$ be an arbitrary number. It suffices to show  
\begin{align}\label{JTAA}
\mathscr{J} &:= 
\bigg(\fint_Q |(\tau_h T - T)(f_1, f_2)(\xi) - \alpha_Q|^{\lambda} \, d\xi \bigg)^{\frac{1}{\lambda}}
\\ \nonumber 
&\lesssim \eta \, \mathcal{M}(f_1, f_2)(x) + \eta \, \mathcal{M}(f_1, f_2)(x+h),  
\end{align}
for all $0<|h| \le \delta_0$ sufficiently small, where the constant $\alpha_Q = \alpha_Q^1 + \alpha_Q^2 + \alpha_Q^3$ will be determined below. Write $f_i^0 = f_1 \mathbf{1}_{3Q}$ and $f_i^{\infty} = f_1 \mathbf{1}_{(3Q)^c}$, $i=1, 2$. We split 
\begin{align}\label{JTA}
\mathscr{J}
\lesssim \mathscr{J}_0 + \mathscr{J}_1 + \mathscr{J}_2 + \mathscr{J}_3,    
\end{align}
where 
\begin{align*}
\mathscr{J}_0 
& := \bigg(\fint_Q |(\tau_h T - T)(f_1^0, f_2^0)(\xi)|^{\lambda} \, d\xi \bigg)^{\frac{1}{\lambda}}, 
\\
\mathscr{J}_1 
& := \bigg(\fint_Q |(\tau_h T - T)(f_1^0, f_2^{\infty})(\xi) - \alpha_Q^1|^{\lambda} 
\, d\xi \bigg)^{\frac{1}{\lambda}}, 
\\
\mathscr{J}_2 
& := \bigg(\fint_Q |(\tau_h T - T)(f_1^{\infty}, f_2^0)(\xi) - \alpha_Q^2|^{\lambda} 
\, d\xi \bigg)^{\frac{1}{\lambda}}, 
\\
\mathscr{J}_3 
& := \bigg(\fint_Q |(\tau_h T - T)(f_1^{\infty}, f_2^{\infty})(\xi) - \alpha_Q^3|^{\lambda} 
\, d\xi \bigg)^{\frac{1}{\lambda}}. 
\end{align*}
By Kolmogorov's inequality \eqref{eq:Kolm} and \eqref{L1Li-3}, there exists $\delta_0=\delta_0(\varepsilon)>0$ such that 
\begin{align}\label{JTA0}
\mathscr{J}_0 
&\lesssim |Q|^{-2} \|(\tau_h T - T)(f_1^0, f_2^0) \|_{L^{\frac12, \infty}}  
\\ \nonumber 
&\le \eta |Q|^{-2} \|f_1^0\|_{L^1} \|f_2^0\|_{L^1} 
\\ \nonumber 
&\lesssim \eta \prod_{i=1}^2 \langle |f_i| \rangle_{3Q} 
\le \eta \, \mathcal{M}(f_1, f_2)(x). 
\end{align}
To dominate $\mathscr{J}_i$, $i=1, 2, 3$, we treat two cases: $\ell(Q)>2|h|$ and $\ell(Q) \le 2|h|$. 
Choose 
\begin{equation}\label{AAQ}
\begin{aligned}
& \alpha_Q^1 = 0, \text{ if } \ell(Q) > 2|h|, \quad 
\alpha_Q^1 = T(f_1^0, f_2^{\infty})(c_{Q+h}) - T(f_1^0, f_2^{\infty})(c_Q), \, \, \, \, \text{ otherwise}; 
\\
& \alpha_Q^2 = 0, \text{ if } \ell(Q) > 2|h|, \quad 
\alpha_Q^2 = T(f_1^{\infty}, f_2^0)(c_{Q+h}) - T(f_1^{\infty}, f_2^0)(c_Q), \, \, \, \, \text{ otherwise}; 
\\
& \alpha_Q^3 = 0, \text{ if } \ell(Q) > 2|h|, \quad 
\alpha_Q^3 = T(f_1^{\infty}, f_2^{\infty})(c_{Q+h}) - T(f_1^{\infty}, f_2^{\infty})(c_Q), \text{ otherwise}.
\end{aligned}
\end{equation}

In the case $\ell(Q)>2|h|$, for all $\xi \in Q$, there holds 
\begin{align}\label{HQQ}
H_Q(\xi) 
&:= \iint_{\R^{2n} \setminus (3Q)^2} 
|K(\xi+h, y, z) - K(\xi, y, z)| |f_1(y)| |f_2(z)| \, dy \, dz 
\\ \nonumber 
&\lesssim \sum_{k \ge 0} \iint_{R_k(\xi)} 
\frac{|h|^{\delta} F(\xi, y, z)}{(|\xi-y| + |\xi-z|)^{2n+\delta}} |f_1(y)| |f_2(z)| \, dy \, dz 
\\ \nonumber
&\lesssim F_1(|h|) \sum_{k \ge 0} 2^{-k \delta} 
\prod_{i=1}^2 \fint_{B(x, 2^{k+2}\ell(Q))} |f_i| 
\le F_1(|h|) \, \mathcal{M}(f_1, f_2)(x), 
\end{align}
where $R_k(\xi) :=\{(y, z) \in \R^{2n}: 2^k \ell(Q) \le |\xi-y| + |\xi-z|< 2^{k+1} \ell(Q)\}$, and 
\begin{align*}
F(\xi, y, z) 
:= F_1(|h|) F_2(|\xi-y|+|\xi-z|) F_3 \bigg(1 + \frac{|\xi+y| + |\xi+z|}{1+|\xi-y|+|\xi-z|} \bigg) 
\lesssim F_1(|h|). 
\end{align*}
It follows from \eqref{AAQ} and \eqref{HQQ} that for each $i=1, 2, 3$, 
\begin{align}\label{JTA1}
\mathscr{J}_i  
&\le \sup_{Q: \, \ell(Q)>2|h|} \sup_{\xi \in Q} H_Q(\xi) 
\lesssim F_1(|h|) \mathcal{M}(f_1, f_2)(x)
\le \eta \, \mathcal{M}(f_1, f_2)(x), 
\end{align}
provided $0<|h| \le \delta_0$ small enough. 

In the case $\ell(Q) \le 2|h|$, we have for all $x \in \Rn$, 
\begin{align}\label{SHX}
S_h(x) 
&:= \sup_{\substack{Q \ni x \\ \ell(Q) \le 2|h|}} \sup_{\xi \in Q}
\iint_{\R^{2n} \setminus (3Q)^2} 
|K(\xi, y, z) - K(c_Q, y, z)| |f_1(y)| |f_2(z)| \, dy \, dz 
\\ \nonumber 
&\lesssim \sum_{k \ge 0} \iint_{R_k(\xi)} 
\frac{\ell(Q)^{\delta} F(\xi, y, z)}{(|\xi-y| + |\xi-z|)^{2n+\delta}} |f_1(y)| |f_2(z)| \, dy \, dz 
\\ \nonumber
&\lesssim F_1(|h|) \sum_{k \ge 0} 2^{-k \delta} 
\prod_{i=1}^2 \fint_{B(x, 2^{k+2}\ell(Q))} |f_i| 
\le F_1(|h|) \, \mathcal{M}(f_1, f_2)(x), 
\end{align}
where 
\begin{align*}
F(\xi, y, z) 
:= F_1(|\xi-c_Q|) F_2(|\xi-y|+|\xi-z|) F_3 \bigg(1 + \frac{|\xi+y| + |\xi+z|}{1+|\xi-y|+|\xi-z|} \bigg) 
\lesssim F_1(|h|). 
\end{align*}
Note that 
\begin{align}\label{SHX0}
\mathscr{J}_1 
\lesssim \mathscr{J}_{1, 1}(x) + \mathscr{J}_{1, 2}(x),   
\end{align}
where 
\begin{align*}
\mathscr{J}_{1, 1}(x)  
& := \sup_{\substack{Q \ni x \\ \ell(Q) \le 2|h|}} 
\bigg(\fint_Q |T(f_1^0, f_2^{\infty})(\xi) 
- T(f_1^0, f_2^{\infty})(c_Q)|^{\lambda} \, d\xi \bigg)^{\frac{1}{\lambda}}, 
\\
\mathscr{J}_{1, 2}(x)  
& := \sup_{\substack{Q \ni x \\ \ell(Q) \le 2|h|}} 
\bigg(\fint_Q |\tau_h T(f_1^0, f_2^{\infty})(\xi) 
- T(f_1^0, f_2^{\infty})(c_{Q+h})|^{\lambda} \, d\xi \bigg)^{\frac{1}{\lambda}}. 
\end{align*}
The latter satisfies 
\begin{align*}
\mathscr{J}_{1, 2}(x)  
& = \sup_{\substack{Q+h \ni x+h \\ \ell(Q+h) \le 2|h|}} 
\bigg(\fint_{Q+h} |T(f_1^0, f_2^{\infty})(\xi) 
- T(f_1^0, f_2^{\infty})(c_{Q+h})|^{\lambda} \, d\xi \bigg)^{\frac{1}{\lambda}}
\\
& = \sup_{\substack{Q \ni x+h \\ \ell(Q) \le 2|h|}} 
\bigg(\fint_{Q} |T(f_1^0, f_2^{\infty})(\xi) 
- T(f_1^0, f_2^{\infty})(c_Q)|^{\lambda} \, d\xi \bigg)^{\frac{1}{\lambda}}
= \mathscr{J}_{1, 1}(x+h), 
\end{align*}
which along with \eqref{SHX} and \eqref{SHX0} gives 
\begin{align}\label{SHX1}
\mathscr{J}_1 
&\lesssim \mathscr{J}_{1, 1}(x) + \mathscr{J}_{1, 1}(x+h)
\le S_h(x) + S_h(x+h)
\\ \nonumber 
&\lesssim F_1(|h|) \, \mathcal{M}(f_1, f_2)(x) + F_1(|h|) \, \mathcal{M}(f_1, f_2)(x+h).   
\end{align} 
The same argument leads to 
\begin{align}\label{SHX23}
\mathscr{J}_i 
&\lesssim F_1(|h|) \, \mathcal{M}(f_1, f_2)(x) + F_1(|h|) \, \mathcal{M}(f_1, f_2)(x+h), 
\quad i=2, 3.   
\end{align} 
Consequently, whenever $0<|h| \le \delta_0$ sufficiently small, we conclude \eqref{JTAA} from \eqref{JTA}, \eqref{JTA0}, \eqref{JTA1}, \eqref{SHX1}, and \eqref{SHX23}. 
\end{proof}

\subsection{Estimates involving bad functions}
This section contains some useful estimates, which have been used in the preceding section to prove the $L^1 \times L^1 \to L^{\frac12, \infty}$ compactness.

\begin{lemma}\label{lem:Igb}
For any $\eta>0$, there exists $A_0=A_0(\eta)>0$ such that for all dyadic cubes $Q$, 
\begin{align*}
\int_{(3Q)^c \cap B(0, A)^c} |T(g_1, b_{2, Q})(x)| \, dx  
\le \eta \|g_1\|_{L^{\infty}} \|b_{2, Q}\|_{L^1}, 
\quad\text{ for all } A \ge A_0.  
\end{align*}
\end{lemma}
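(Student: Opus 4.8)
\textbf{Proof proposal for Lemma \ref{lem:Igb}.}

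The plan is to exploit the mean-zero cancellation of $b_{2,Q}$ together with the smoothness condition \eqref{eq:Holder-3} on the compact bilinear Calder\'{o}n--Zygmund kernel, and then to use the decay built into the triple $(F_1, F_2, F_3) \in \F$ to force the whole expression below $\eta \|g_1\|_{L^\infty} \|b_{2,Q}\|_{L^1}$ once $A$ is large. First I would fix a dyadic cube $Q$ with center $c_Q$ and, for $x \notin 3Q$, write $T(g_1, b_{2,Q})(x)$ as a double integral against $K(x,y,z)$; since $\int b_{2,Q}\,dz = 0$ (property \ref{CZ-3}) I may replace $K(x,y,z)$ by $K(x,y,z) - K(x,y,c_Q)$. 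For $z \in Q$ and $x \notin 3Q$ one has $|z - c_Q| \le \tfrac12 \ell(Q) \le \tfrac12 \max\{|x-y|,|x-z|\}$ (after also restricting $y$ to $\supp(g_1)$, or more simply bounding $|x-y|+|x-z| \gtrsim |x-c_Q|$ on $(3Q)^c$), so \eqref{eq:Holder-3} applies and yields
\begin{align*}
|K(x,y,z) - K(x,y,c_Q)|
\lesssim F(x,y,z)\, \frac{\ell(Q)^{\delta}}{(|x-y|+|x-z|)^{2n+\delta}},
\end{align*}
with $F(x,y,z) = F_1(|x-y|+|x-z|)F_2(|x-y|+|x-z|)F_3(|x+y|+|x+z|)$. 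Using Lemma \ref{lem:improve} (and Lemma \ref{lem:PP}) I would replace $F$ by a product $F_1'(|x-y|+|x-z|)F_2'(|x-y|+|x-z|)F_3'\big(1+\tfrac{|x+y|+|x+z|}{1+|x-y|+|x-z|}\big)$ with $F_1'$ increasing, $F_2', F_3'$ decreasing.

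Next I would integrate out $y$ against $|g_1(y)|\,dy \le \|g_1\|_{L^\infty}\,dy$ over the annular region $\{x : 2^k\ell(Q) \le |x - c_Q| < 2^{k+1}\ell(Q)\}$ for $k \ge 1$ (this is where $3Q$ is excised), exactly as in the estimate \eqref{HQ} of Lemma \ref{lem:MA}: the $y$-integral over a ball of radius $\sim 2^k\ell(Q)$ together with the $(2^k\ell(Q))^{-(2n+\delta)}$ factor and the $\ell(Q)^\delta$ gain produces, after summing the geometric series in $k$,
\begin{align*}
\int_{(3Q)^c} |T(g_1,b_{2,Q})(x)|\,dx
\lesssim \|g_1\|_{L^\infty}\,\|b_{2,Q}\|_{L^1}
\sum_{k \ge 0} 2^{-k\delta}\, F_1'(2^k\ell(Q))\,F_2'(2^k\ell(Q))\,F_3'\big(\rd(2^k Q,\I)\big).
\end{align*}
Now I would intersect with $B(0,A)^c$. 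Fix $\eta > 0$. Split the cubes $Q$ into three families according to $\ell(Q) < 2^{-N}$, $\ell(Q) > 2^{N}$, and $2^{-N} \le \ell(Q) \le 2^{N}$ with $Q \cap B(0,A)^c \ne \emptyset$, where $N = N(A) \to \infty$ as $A \to \infty$ (as in \eqref{QNN}): for small cubes the sum is controlled by $\sup_{t \le 2^{-N}} F_1'(t) \to 0$; for large cubes by $\sup_{t \ge 2^{N}} F_2'(t) \to 0$; and for the middle family, the relative-distance argument \eqref{QNN} shows $\rd(2^k Q,\I) \gtrsim N$ for every $k\ge 0$, so the sum is controlled by $\sup_{t \ge N} F_3'(t) \to 0$. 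In each case the bound is $\le \varepsilon_N \|g_1\|_{L^\infty}\|b_{2,Q}\|_{L^1}$ with $\varepsilon_N \to 0$, so choosing $A_0$ large enough that $\varepsilon_{N(A_0)} \le \eta$ gives the claim for all $A \ge A_0$, uniformly in $Q$.

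The main obstacle I anticipate is bookkeeping rather than conceptual: making the passage from the three decay conditions of $(F_1',F_2',F_3')$ to a single modulus $\varepsilon_N$ genuinely uniform over all dyadic cubes $Q$ --- in particular handling the case $2^{-N}\le \ell(Q)\le 2^N$ where neither the $F_1'$ nor the $F_2'$ factor decays and one must extract the decay from $F_3'$ via the lower bound on $\rd(2^kQ,\I)$, which requires $Q$ to meet $B(0,A)^c$. The argument is essentially a repackaging of the estimates already carried out in \eqref{HQ}--\eqref{PJ23} of Lemma \ref{lem:MA}, so I would cross-reference that lemma to keep the proof short; the only new point is that we integrate in $x$ over all of $(3Q)^c\cap B(0,A)^c$ rather than averaging over a fixed cube, which is harmless because the tail sum in $k$ converges absolutely.
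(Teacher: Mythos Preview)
Your overall strategy---cancellation of $b_{2,Q}$, H\"older smoothness of $K$, dyadic-shell summation---is the right one and matches the paper for cubes $Q \notin \D(N)$. But there is a genuine gap: your three-family split does not cover middle cubes close to the origin, i.e., $Q \in \D(N)$. The condition ``$Q \cap B(0,A)^c \neq \emptyset$'' you impose on the third family is a constraint on the Calder\'on--Zygmund cube $Q$, but the lemma must hold for \emph{every} dyadic $Q$, including those entirely inside $B(0,A)$ with $\ell(Q)\sim 1$. (You seem to be importing that condition from Lemma~\ref{lem:MA}, where $Q$ is the averaging cube for $M_\lambda^\#$ and the indicator $\mathbf{1}_{B(0,A)^c}$ kills the integrand on $Q$; here $Q$ plays a different role and need not meet $B(0,A)^c$ at all.) For such a cube your displayed bound---an integral over all of $(3Q)^c$---gives only $\sum_k 2^{-k\delta} F_3'(\rd(2^kQ,\I)) = O(1)$, seeing neither $N$ nor $A$. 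The paper treats $Q \in \D(N)$ by a separate, direct argument: since $3Q \subset B(0,A/2)$, for $x \in B(0,A)^c$ and $z \in Q$ one has $|x+z| \ge |x|-|z| \ge A/2$, so $F_3(|x+y|+|x+z|) \le F_3(A/2)$ \emph{before} any shell decomposition. This decay mechanism, which exploits $x\in B(0,A)^c$ rather than any property of $Q$, is what your proposal is missing.

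Two smaller inaccuracies in the part you do cover. First, the claim ``$\rd(2^kQ,\I) \gtrsim N$ for every $k \ge 0$'' is false: once $2^k\ell(Q)$ exceeds $|c_Q|$ the dilate $2^kQ$ contains the origin and $\rd(2^kQ,\I)=0$; one must split the $k$-sum as in \eqref{DNF-3}. Second, you invoke part \eqref{size-imp} of Lemma~\ref{lem:improve}, which leaves $F_1'(|x-y|+|x-z|)$ and then forces an extra $k$-split in the small-cube case; the paper uses part \eqref{Holder-imp} instead, producing $F_1'(|z-c_Q|) \le F_1'(\ell(Q))$ outside the shell sum, directly small when $\ell(Q)<2^{-N}$.
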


\begin{proof}
Let $\eta>0$. Let $A \ge 2^{10}$ and $N=[\frac12 \log_2 A] \ge 5$. Consider the case $Q \in \D(N)$. Then 
\begin{align*}
Q \subset B(0, (N+2) 2^N) 
\quad\text{ and}\quad 
3Q \subset B(0, (N+3)2^N) \subset B(0, 2^{2N-1}) \subset B(0, A/2), 
\end{align*}
which implies $(3Q)^c \cap B(0, A)^c = B(0, A)^c$, and for all $x \in B(0, A)^c$ and $z \in Q$, 
\begin{align*}
\min\{|x-z|, |x+z|\} 
\ge |x| - |z|
\ge A - A/2 = A/2 
\ge 16 \ell(Q) 
\ge 32 |z-c_Q|. 
\end{align*}
This, along with the cancellation of $b_{2, Q}$ and the H\"{o}lder condition of $K$, gives 
\begin{align}\label{BAT-1}
& \int_{(3Q)^c \cap B(0, A)^c} |T(g_1, b_{2, Q})(x)| \, dx  
\\ \nonumber 
&= \int_{B(0, A)^c} \bigg|\int_Q \int_{\Rn} \big(K(x, y, z) - K(x, y, c_Q) \big) 
g_1(y) b_{2, Q}(z) \, dy \, dz \bigg| \, dx 
\\ \nonumber 
&\lesssim \int_Q |b_{2, Q}(z)| \bigg[\int_{\Rn} \int_{B(0, A)^c}  
\frac{\ell(Q)^{\delta} \, |g_1(y)|}{(|x-y| + |x-z|)^{2n+\delta}} F(x, y, z) \, dx \, dy\bigg] \, dz
\\ \nonumber 
&\lesssim \int_Q |b_{2, Q}(z)| \bigg[\iint_{|x-y| +|x-z| \ge A/2}  
\frac{A^{\delta} \, \|g_1\|_{L^{\infty}}}{(|x-y|+|x-z|)^{2n+\delta}} F_3(A) \, dx \, dy\bigg] \, dz
\\ \nonumber 
&\lesssim F_3(A) \|g_1\|_{L^{\infty}} \|b_{2, Q}\|_{L^1}, 
\end{align}
where 
\begin{align*}
F(x, y) := F_1(|x-y|+|x-z|) F_2(|x-y|+|x-z|) F_3 (|x+y|+|x+z|) 
\lesssim F_3(A),  
\end{align*}
provided the monotonicity of $F_3$ and the boundedness of $F_1$ and $F_2$. 

Let us treat the case $Q \notin \D(N)$. For any $x \in (3Q)^c$ and $z \in Q$, we have $|x-y| \ge \ell(Q) \ge 2 |z-c_Q|$. Then by the cancellation of $b_{2, Q}$, the H\"{o}lder condition, and Lemma \ref{lem:improve} part \eqref{Holder-imp}, 
\begin{align*}
|T (g_1, b_{2, Q})(x)| 
&= \bigg|\iint_{\R^{2n}} \big(K(x, y, z) - K(x, y, c_Q) \big) g_1(y) b_{2, Q}(z) \, dy \, dz \bigg|
\\
&\lesssim \iint_{\R^{2n}} \frac{\ell(Q)^{\delta}}{(|x-y| + |x-z|)^{2n+\delta}} F(x, y, z) |g_1(y)| |b_{2, Q}(z)| \, dy \, dz,
\end{align*}
where 
\begin{align*}
F(x, y, z) 
:= F_1(|z-c_Q|) F_2(|x-y|+|x-z|) F_3 \bigg(1 + \frac{|x+y| + |x+z|}{1+|x-y|+|x-z|} \bigg). 
\end{align*}
For any $k \ge 0$ and $z \in Q$, set $R_k(z) :=\{(x, y) \in \R^{2n}: 2^k \ell(Q) \le |x-y| + |x-z|< 2^{k+1} \ell(Q)\}$. Then for any $(x, y) \in R_k(z)$, 
\begin{align}\label{XRK}
1 + \frac{|x+y| + |x+z|}{1+|x-y|+|x-z|} 
&\ge \frac{2|z|}{1+|x-y|+|x-z|} 
\ge \frac{2 \d(2^k Q, \I)}{1+2^{k+1} \ell(Q)} 
\\ \nonumber 
&\ge \frac12 \frac{\d(2^k Q, \I)}{\max\{1, 2^k \ell(Q)\}} 
= \frac12 \rd(2^k Q, \I). 
\end{align}
This in turn yields for any $z \in Q$, 
\begin{align}\label{GZ}
G(z) 
&:= \int_{\Rn} \int_{(3Q)^c}  \frac{\ell(Q)^{\delta}}{(|x-y| + |x-z|)^{2n+\delta}} F(x, y, z) |g_1(y)|\, dx \, dy
\\ \nonumber 
&\lesssim \|g_1\|_{L^{\infty}} F_1(\ell(Q)) F_2(\ell(Q)) \sum_{k \ge 0} \iint_{R_k(z)} 
\frac{\ell(Q)^{\delta}}{(2^k \ell(Q))^{2n+\delta}} F_3(\rd(2^k Q, \I)) \, dx \, dy
\\ \nonumber
&\lesssim \|g_1\|_{L^{\infty}} F_1(\ell(Q)) F_2(\ell(Q)) \sum_{k \ge 0} 2^{-k \delta} F_3(\rd(2^k Q, \I))
=: \|g_1\|_{L^{\infty}} F(Q), 
\end{align}
and hence, 
\begin{align}\label{BAT-2}
\int_{(3Q)^c \cap B(0, A)^c} |T(g_1, b_{2, Q})| \, dx  
\lesssim \int_Q |b_{2, Q}(z)| G(z) \, dz
\lesssim F(Q) \|g_1\|_{L^{\infty}} \|b_{2, Q}\|_{L^1}.
\end{align}
As a consequence \eqref{BAT-1} and \eqref{BAT-2}, we deduce 
\begin{align*}
\int_{(3Q)^c \cap B(0, A)^c} |T(g_1, b_{2, Q})| \, dx  
\lesssim \big[F_3(A) + \sup_{Q \notin \D(N)} F(Q) \big] \|b_Q\|_{L^1} 
\le \eta \|b_Q\|_{L^1} , 
\end{align*}
provided $A \ge 2^{10}$ sufficiently large and the fact that $\lim_{N \to \infty} \sup_{Q \notin \D(N)} F(Q) = 0$ (cf. \cite[Lemma 4.5 (d)]{CYY}). This completes the proof. 
\end{proof}

\begin{lemma}\label{lem:Ibb}
For any $\eta>0$, there exists $A_0=A_0(\eta)>0$ such that 
\begin{align*}
\int_{\Omega^c \cap B(0, A)^c} |T(b_1, b_2)(x)|^{\frac12} \, dx 
\lesssim \eta, 
\quad\text{ for all } A \ge A_0.  
\end{align*}
\end{lemma}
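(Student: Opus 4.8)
\textbf{Proof plan for Lemma \ref{lem:Ibb}.}
The plan is to decompose the sum over pairs $(Q_1,Q_2)$ of Calder\'on--Zygmund cubes according to the geometric relation between $Q_1$ and $Q_2$ and between each $Q_i$ and the ball $B(0,A)^c$, and to exploit the double cancellation of $b_{1,Q_1}$ and $b_{2,Q_2}$ together with the compact kernel bounds from Definition \ref{def:CCZK}. First I would write $b_1=\sum_{Q_1\in\Lambda_1}b_{1,Q_1}$, $b_2=\sum_{Q_2\in\Lambda_2}b_{2,Q_2}$ and observe that since $\lambda=\frac12<1$ the quasi-norm $\|\cdot\|_{L^{1/2,\infty}}^{1/2}$ (equivalently the integral $\int_{\Omega^c\cap B(0,A)^c}|\cdot|^{1/2}$) is subadditive enough to pass the sum inside, reducing matters to estimating $\sum_{Q_1,Q_2}\int_{\Omega^c\cap B(0,A)^c}|T(b_{1,Q_1},b_{2,Q_2})(x)|^{1/2}\,dx$. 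I would split into the two regimes $\ell(Q_1)\le\ell(Q_2)$ and $\ell(Q_1)>\ell(Q_2)$ (symmetric), and within each, into the "far" case where $x\notin 3Q_1\cup 3Q_2$ uses the kernel H\"older estimate in the smaller cube's variable, and the remaining cases are controlled by $|\Omega|\lesssim\varepsilon$ since $\Omega^c$ excludes the dilates $3Q_i$.

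The core estimate is the pointwise bound on $T(b_{1,Q_1},b_{2,Q_2})(x)$ for $x\in(3Q_1)^c\cap(3Q_2)^c\cap B(0,A)^c$: subtracting $K(x,c_{Q_1},z)$ and $K(x,y,c_{Q_2})$ and invoking \eqref{eq:Holder-2}--\eqref{eq:Holder-3}, one gets a gain of $\ell(Q_1)^{\delta}\ell(Q_2)^{\delta}$ in the numerator together with the compact factor $F(x,y,z)$, which by the monotonicity reductions of Lemma \ref{lem:PP} and Lemma \ref{lem:improve} is dominated by $F_1(\ell(Q_i))F_2(\ell(Q_i))F_3(\rd(2^kQ_i,\I))$ on dyadic annuli. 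Summing the resulting geometric series in $k$ exactly as in \eqref{GZ}/\eqref{XRK} and then integrating in $x$ against $\ell(Q_i)^{\delta/2}|x-c_{Q_i}|^{-n-\delta/2}$ (the square-root version, needed because we integrate $|T|^{1/2}$), one obtains a bound of the shape
\begin{align*}
\int_{(3Q_1)^c\cap(3Q_2)^c\cap B(0,A)^c}|T(b_{1,Q_1},b_{2,Q_2})(x)|^{1/2}\,dx
\lesssim \mathbf{F}(Q_1,Q_2)\,\|b_{1,Q_1}\|_{L^1}^{1/2}\|b_{2,Q_2}\|_{L^1}^{1/2},
\end{align*}
where $\mathbf{F}$ is a bounded quantity tending to $0$ as either $\ell(Q_i)\to 0$, $\ell(Q_i)\to\infty$, or $\rd(Q_i,\I)\to\infty$; this mimics Lemma \ref{lem:diag} and the $\mathscr{S}_{1,1}^1$ estimate \eqref{TH-1}. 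Using \eqref{CZ-3}, $\|b_{i,Q_i}\|_{L^1}\le 2^{n+1}\varepsilon^{-1}|Q_i|$, and Cauchy--Schwarz in $(Q_1,Q_2)$ together with $\sum_{Q_i}|Q_i|\le\varepsilon$ from \eqref{CZ-4}, the double sum is summable.

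The main obstacle, and the place where the cutoff $B(0,A)^c$ enters essentially, is that $\mathbf{F}(Q_1,Q_2)$ need not be small for cubes of moderate size and moderate location; here I would argue as in Lemma \ref{lem:Igb}, splitting the family of cubes $Q_i$ into those in $\D(N)$ with $N\sim\frac12\log_2 A$ and those outside. For $Q_i\notin\D(N)$ the factor $\sup_{Q\notin\D(N)}\mathbf{F}(Q)\to 0$ as $N\to\infty$ (hence as $A\to\infty$), using $\lim_{N\to\infty}\sup_{Q\notin\Q(N)}\mathbf{F}(Q)=0$ as recorded in \eqref{FIJK} and \cite[Lemma 4.5(d)]{CYY}; for $Q_i\in\D(N)$ one has $3Q_i\subset B(0,A/2)$ as in \eqref{QNN}, so that $x\in B(0,A)^c$ forces $\min\{|x-c_{Q_i}|\}\gtrsim A$ and the kernel decay at infinity (the factor $F_3$ at large argument, or simply the $(|x-y|+|x-z|)^{-2n}$ decay) produces a gain $F_3(A)^{1/2}\to 0$ exactly as in \eqref{BAT-1}. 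Combining the two contributions and choosing $A_0=A_0(\eta)$ large, the total is $\lesssim(|\Omega|^{1/2}+o_A(1))\lesssim\eta$, completing the proof. The only delicate bookkeeping is making sure the square-root integrability $\int_{(3Q)^c}\ell(Q)^{\delta/2}|x-c_Q|^{-n-\delta/2}\,dx\lesssim|Q|^{1/2}\cdot\ell(Q)^{\,?}$ produces the right powers of $|Q_i|$ to pair with $\|b_{i,Q_i}\|_{L^1}^{1/2}$ and still leave an $\varepsilon$-power to spare; a careful choice of exponents there, of the type already used in \eqref{TDH-3}, is what makes the argument close.
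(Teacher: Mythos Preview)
Your overall outline---splitting over pairs $(Q_1,Q_2)$, using the cancellation of the bad pieces together with the compact kernel smoothness, and distinguishing cubes in $\D(N)$ from cubes outside $\D(N)$---matches the paper. However, the very first reduction is where the argument breaks: you propose to use the subadditivity of $t\mapsto t^{1/2}$ to pass the double sum outside and estimate
\[
\sum_{Q_1,Q_2}\int_{\Omega^c\cap B(0,A)^c}|T(b_{1,Q_1},b_{2,Q_2})(x)|^{1/2}\,dx
\]
term by term. Your pointwise bound and the $x$-integration then give, at best, a term of size $\mathbf{F}(Q_1,Q_2)^{1/2}\,\|b_{1,Q_1}\|_{L^1}^{1/2}\|b_{2,Q_2}\|_{L^1}^{1/2}$, and the resulting double sum factors through $\sum_{Q_i}\|b_{i,Q_i}\|_{L^1}^{1/2}$. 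But the Calder\'on--Zygmund data \eqref{CZ-3}--\eqref{CZ-4} only control $\sum_{Q_i}\|b_{i,Q_i}\|_{L^1}\lesssim 1$ and $\sum_{Q_i}|Q_i|\le\varepsilon$; neither $\sum_{Q_i}\|b_{i,Q_i}\|_{L^1}^{1/2}$ nor $\sum_{Q_i}|Q_i|^{1/2}$ nor the number of cubes is bounded, so no ``Cauchy--Schwarz in $(Q_1,Q_2)$'' can close this. (If there are $M$ cubes each with $\|b_Q\|_{L^1}\sim 1/M$, then $\sum_Q\|b_Q\|_{L^1}^{1/2}\sim M^{1/2}$ can be arbitrarily large.) A secondary issue: the kernel hypotheses \eqref{eq:Holder-2}--\eqref{eq:Holder-3} give only \emph{single} smoothness in each variable, so ``double cancellation'' gaining $\ell(Q_1)^{\delta}\ell(Q_2)^{\delta}$ is not available; one must use cancellation only in the smaller cube and then split $\ell(Q_{\min})^{\delta}\le\ell(Q_1)^{\delta/2}\ell(Q_2)^{\delta/2}$.

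The paper avoids the summability trap by keeping the double sum \emph{inside} the square root: one bounds
\[
\int_{\Omega^c\cap B(0,A)^c}\Big[\sum_{Q_1,Q_2}|T(b_{1,Q_1},b_{2,Q_2})(x)|\Big]^{1/2}\,dx,
\]
establishes a pointwise product bound $|T(b_{1,Q_1},b_{2,Q_2})(x)|\lesssim \Phi_1(x;Q_1)\,\Phi_2(x;Q_2)$ of the type \eqref{tbb-1}--\eqref{tbb-2}, so that the inner sum is at most $\big(\sum_{Q_1}\Phi_1\big)\big(\sum_{Q_2}\Phi_2\big)$, takes the square root, and applies Cauchy--Schwarz \emph{in the $x$-variable}. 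Each factor becomes $\big(\sum_{Q_i}\int_{(3Q_i)^c}\Phi_i\big)^{1/2}$, and now $\int_{(3Q_i)^c}\Phi_i(\cdot;Q_i)\,dx\lesssim\|b_{i,Q_i}\|_{L^1}$ to the \emph{first} power (times a compact factor), which \emph{is} summable by \eqref{CZ-3}--\eqref{CZ-4}. The smallness then comes from exactly your dichotomy: the factor $F_3(A)$ when both $Q_1,Q_2\in\D(N)$, and $\sup_{Q\notin\D(N)}F(Q)$ otherwise.
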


\begin{proof}
Let $\eta>0$. Let $A \ge 2^{10}$ and $N=[\frac12 \log_2 A] \ge 5$. It suffices to prove that for all $x \in \Omega^c \cap B(0, A)^c$,  
\begin{align}\label{tbb-1}
|T(b_{1, Q_1}, b_{2, Q_2})(x)| 
\lesssim F_3(A) \|b_{1, Q_1}\|_{L^1} 
\frac{\ell(Q_1)^{\frac{\delta}{2}}}{|x-c_{Q_1}|^{n+\frac{\delta}{2}}} 
\|b_{2, Q_2}\|_{L^1} 
\frac{\ell(Q_2)^{\frac{\delta}{2}}}{|x-c_{Q_2}|^{n+\frac{\delta}{2}}} 
\end{align}
whenever $Q_1, Q_2 \in \D(N)$, and 
\begin{align}\label{tbb-2}
|T(b_{1, Q_1}, b_{2, Q_2})(x)| 
\lesssim F_1(\min\{\ell(Q_1), \ell(Q_2)\}) F_2(\max\{\ell(Q_1), \ell(Q_2)\}) \|b_{1, Q_1}\|_{L^1}  
\\ \nonumber 
\times 
\frac{\ell(Q_1)^{\frac{\delta}{2}}}{|x-c_{Q_1}|^{n+\frac{\delta}{2}}} 
\int_{Q_2} \frac{|b_{2, Q_2}(z)| \, \ell(Q_2)^{\frac{\delta}{2}}}{|x-c_{Q_{1, 2}}|^{n+\frac{\delta}{2}}} 
F_3 \bigg( \frac{2|z|}{1+ |x-c_{Q_{1,2}}|} \bigg) \, dz
\end{align}
whenever $Q_2 \notin \D(N)$, where $|x-c_{Q_{1, 2}}| := |x-c_{Q_1}| + |x-c_{Q_2}|$. An analogy holds for $Q_1 \notin \D(N)$. Assuming these estimates momentarily, let us conclude the proof as follows. By \eqref{CZ-1}, we write 
\begin{align*}
\int_{\Omega^c \cap B(0, A)^c} |T(b_1, b_2)|^{\frac12} \, dx 
&\le \int_{\Omega^c \cap B(0, A)^c} 
\bigg[ \sum_{Q_1, Q_2 \in \D(N)} |T(b_{1, Q_1}, b_{2, Q_2})| \bigg]^{\frac12} \, dx 
\\
&\quad+ \int_{\Omega^c \cap B(0, A)^c} 
\bigg[ \sum_{\substack{Q_1 \in \Lambda_1 \\ Q_2 \notin \D(N)}}  
|T(b_{1, Q_1}, b_{2, Q_2})| \bigg]^{\frac12} \, dx 
\\
&\quad+ \int_{\Omega^c \cap B(0, A)^c} 
\bigg[ \sum_{\substack{Q_1 \not\in \D(N) \\ Q_2 \in \Lambda_2}}  
|T(b_{1, Q_1}, b_{2, Q_2})| \bigg]^{\frac12} \, dx 
\\
&=: \mathscr{I}_1 + \mathscr{I}_2 + \mathscr{I}_3. 
\end{align*}
We only bound $\mathscr{I}_2$ since the terms $\mathscr{I}_1$ and $\mathscr{I}_3$ can be estimated in a similar way.  It follows from \eqref{tbb-2} and the Cauchy--Schwarz inequality that 
\begin{align*}
\mathscr{I}_2 
\lesssim (\mathscr{I}_{2, 1} \times \mathscr{I}_{2, 2} )^{\frac12},   
\end{align*}
where 
\begin{align*}
\mathscr{I}_{2, 1} 
:= \sum_{Q_1} \|b_{1, Q_1}\|_{L^1}  
\int_{(3Q_1)^c} \frac{\ell(Q_1)^{\frac{\delta}{2}}}{|x-c_{Q_1}|^{n+\frac{\delta}{2}}} \, dx 
\lesssim \sum_{Q_1} \|b_{1, Q_1}\|_{L^1}  
\lesssim 1, 
\end{align*}
and 
\begin{align*}
\mathscr{I}_{2, 2} 
:= \sum_{Q_2 \notin \D(N)} \int_{Q_2} |b_{2, Q_2}(z)| \, G(z) \, dz, 
\end{align*}
with 
\begin{align*}
G(z) & := F_1(\ell(Q_2)) F_2(\ell(Q_2)) \int_{\Omega^c} 
\frac{\ell(Q_2)^{\frac{\delta}{2}}}{|x-c_{Q_{1, 2}}|^{n+\frac{\delta}{2}}} 
F_3 \bigg( \frac{2|z|}{1+ |x-c_{Q_{1,2}}|} \bigg) \, dx
\\
&\lesssim F_1(\ell(Q_2)) F_2(\ell(Q_2)) \sum_{k \ge 0} 2^{-k \frac{\delta}{2}} F_3(\rd(2^k Q_2, \I))
=: F(Q_2).
\end{align*}
The last inequality can be obtained as in \eqref{GZ} because $|x-c_{Q_{1, 2}}| = |x-c_{Q_1}| + |x-c_{Q_2}| \ge \ell(Q_1) + \ell(Q_2) \ge \ell(Q_2)$ for all $x \in \Omega^c \subset (3Q_1)^c \cap (3Q_2)^c$. Accordingly, by \eqref{CZ-3}, 
\begin{align*}
\mathscr{I}_{2, 2} 
\lesssim \sum_{Q_2 \notin \D(N)} \|b_{2, Q_2}\|_{L^1} F(Q_2)
\lesssim \sup_{Q_2 \notin \D(N)} F(Q_2)
\le \eta^2, 
\end{align*}
provided $A \ge A_0$ sufficiently large. Hence, 
\begin{align*}
\mathscr{I}_2 
\lesssim (\mathscr{I}_{2, 1} \times \mathscr{I}_{2, 2} )^{\frac12}
\lesssim \eta. 
\end{align*}

Next, let us turn to the proof of \eqref{tbb-1} and \eqref{tbb-2}. Let $Q_1, Q_2 \in \D(N)$. By symmetry, we may assume $\ell(Q_1) \le \ell(Q_2)$. Then 
\begin{align*}
Q_1 \cup Q_2 \subset B(0, (N+2) 2^N) 
\quad\text{ and}\quad 
3Q_1 \cup 3Q_2 \subset B(0, 2^{2N-1}) \subset B(0, A/2), 
\end{align*}
which implies for all $x \in \Omega^c \cap B(0, A)^c \subset (3Q_1 \cup 3Q_2)^c \cap B(0, A)^c = B(0, A)^c$, $y \in Q_1$, and $z \in Q_2$, 
\begin{align*}
\min\{|x-y|, |x+y|\} 
\ge |x| - |y|
\ge A - A/2 = A/2 
\ge 16 \ell(Q_1) 
\ge 32 |y-c_{Q_1}|,  
\end{align*}
and 
\begin{align*}
\min\{|x-z|, |x+z|\} 
\ge |x| - |z|
\ge A - A/2 = A/2 
\ge 16 \ell(Q_2) 
\ge 32 |z-c_{Q_2}|.  
\end{align*}
These further give 
\begin{align*}
|x-y| \simeq |x-c_{Q_1}|, \qquad |x-z| \simeq |x-c_{Q_2}|,
\end{align*}
and 
\begin{align*}
F(x, y, z) := F_1(|x-y|+|x-z|) F_2(|x-y|+|x-z|) F_3 (|x+y|+|x+z|) 
\lesssim F_3(A). 
\end{align*}
Using these estimates, the cancellation of $b_{1, Q_1}$, and the H\"{o}lder condition of $K$, we deduce  
\begin{align*}
|T(b_{1, Q_1}, b_{2, Q_2})(x)| 
&= \bigg|\iint_{\R^{2n}} \big(K(x, y, z) - K(x, c_{Q_1}, z) \big) 
b_{1, Q_1}(y) b_{2, Q_2}(z) \, dy \, dz\bigg|
\\  
&\lesssim \iint_{\R^{2n}} \frac{\ell(Q_1)^{\delta} F(x, y, z)}{(|x-y| + |x-z|)^{2n+\delta}} 
|b_{1, Q_1}(y)|  |b_{2, Q_2}(z)| \, dy \, dz
\\  
&\lesssim F_3(A) \|b_{1, Q_1}\|_{L^1} 
\frac{\ell(Q_1)^{\frac{\delta}{2}}}{|x-c_{Q_1}|^{n+\frac{\delta}{2}}} 
\|b_{2, Q_2}\|_{L^1} 
\frac{\ell(Q_2)^{\frac{\delta}{2}}}{|x-c_{Q_2}|^{n+\frac{\delta}{2}}}, 
\end{align*}
which coincides with \eqref{tbb-1}. 

To justify \eqref{tbb-2}, firstly let $Q_2 \notin \D(N)$ with $\ell(Q_1) \le \ell(Q_2)$. Let $x \in \Omega^c$, $y \in Q_1$, and $z \in Q_2$. Then, we have 
\begin{align*}
& \min\{|x-y|, |x-c_{Q_1}|\} \ge \ell(Q_1) \ge 2 |y-c_{Q_1}|, 
\\
&\min\{|x-z|, |x-c_{Q_2}|\} \ge \ell(Q_2) \ge 2 |z-c_{Q_2}|, 
\end{align*} 
which implies  
\begin{align*}
& |x-c_{Q_1}|/2 \le |x-y| \le 3|x- c_{Q_1}|/2, 
\\
&|x-c_{Q_2}|/2 \le |x-z| \le 3|x - c_{Q_2}|/2, 
\end{align*} 
and 
\begin{align*}
&1+|x-y|+|x-z| 
\le 1 + 2|x-c_{Q_1}| + 2|x-c_{Q_2}| 
\\ 
&\le 2(1 + |x-c_{Q_1}| + |x-c_{Q_2}|)
=: 2(1+|x-c_{Q_{1, 2}}|). 
\end{align*}
By the monotonicity of $F_1, F_2, F_3$, 
\begin{align*}
F(x, y, z) 
&:= F_1(|y-c_{Q_1}|) F_2(|x-y|+|x-z|) F_3 \bigg(1 + \frac{|x+y| + |x+z|}{1+|x-y|+|x-z|} \bigg) 
\\
&\le F_1(\ell(Q_1)) F_2(\ell(Q_2)) F_3 \bigg(\frac{2|z|}{1+|x-c_{Q_{1, 2}}|}\bigg). 
\end{align*}
Now it follows from the cancellation of $b_{1, Q_1}$, the H\"{o}lder condition of $K$, and Lemma \ref{lem:improve} part \eqref{Holder-imp} that 
\begin{align*}
|T (b_{1, Q_1}, b_{2, Q_2})(x)| 
&= \bigg|\iint_{\R^{2n}} \big(K(x, y, z) - K(x, c_{Q_1}, z) \big) 
b_{1, Q_1}(y) b_{2, Q_2}(z) \, dy \, dz \bigg|
\\
&\lesssim \iint_{\R^{2n}} \frac{\ell(Q_1)^{\delta} F(x, y, z)}{(|x-y| + |x-z|)^{2n+\delta}} 
|b_{1, Q_1}(y)| |b_{2, Q_2}(z)| \, dy \, dz,
\end{align*}
which along with the estimates above shows \eqref{tbb-2}. Analogously, in the case $\ell(Q_1) > \ell(Q_2)$, we use the cancellation of $b_{2, Q_2}$ to deduce the desired estimate. 
\end{proof}

\begin{lemma}\label{lem:Jgb}
For any $\eta>0$, there exists $\delta_0=\delta_0(\eta)>0$ such that for all $0< |h| \le \delta_0$,  
\begin{align*}
\int_{(3Q)^c} |T(g_1, b_{2, Q})| \, dx 
\le \eta \|g_1\|_{L^{\infty}} \|b_{2, Q}\|_{L^1}, 
\quad\text{if } \ell(Q) \le 2|h|;   
\end{align*}
and 
\begin{align*}
\int_{(3Q)^c} |(\tau_h T - T)(g_1, b_{2, Q})| \, dx  
\le \eta \|g_1\|_{L^{\infty}} \|b_{2, Q}\|_{L^1}, 
\quad\text{if } \ell(Q) > 2|h|.  
\end{align*}
\end{lemma}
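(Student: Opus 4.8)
\textbf{Proof strategy for Lemma \ref{lem:Jgb}.} The plan is to treat the two regimes separately, exploiting the cancellation of $b_{2,Q}$ in both. For the first estimate, where $\ell(Q) \le 2|h|$, I would not use the translation $\tau_h$ at all; the bound
\begin{align*}
\int_{(3Q)^c} |T(g_1, b_{2, Q})| \, dx
\lesssim F(Q) \, \|g_1\|_{L^{\infty}} \|b_{2, Q}\|_{L^1},
\end{align*}
with $F(Q) := F_1(\ell(Q)) F_2(\ell(Q)) \sum_{k \ge 0} 2^{-k\delta} F_3(\rd(2^k Q, \I))$, is exactly the estimate \eqref{BAT-2}--\eqref{GZ} obtained in the proof of Lemma \ref{lem:Igb}, so I would invoke that computation verbatim. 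The point is then that $\ell(Q) \le 2|h| \to 0$, so $F_1(\ell(Q)) \le F_1(2|h|) \to 0$ (using that $F_1$ is monotone increasing with $\lim_{t\to 0} F_1(t) = 0$, cf. Lemma \ref{lem:PP} item \eqref{list:P3}), while $F_2$ and $\sum_k 2^{-k\delta} F_3(\rd(2^k Q, \I))$ stay bounded. Hence choosing $\delta_0$ small enough that $F_1(2\delta_0)$ times the absolute constant is below $\eta$ gives the claim for all $0 < |h| \le \delta_0$.

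For the second estimate, where $\ell(Q) > 2|h|$, the kernel of $\tau_h T - T$ is $K(x+h,y,z) - K(x,y,z)$, and I would combine the two layers of cancellation. Writing out
\begin{align*}
(\tau_h T - T)(g_1, b_{2,Q})(x)
= \iint_{\R^{2n}} \big[ K(x+h,y,z) - K(x,y,z) - K(x+h,y,c_Q) + K(x,y,c_Q) \big] g_1(y) b_{2,Q}(z)\, dy\, dz,
\end{align*}
I would apply the Hölder condition \eqref{eq:Holder-3} in the $z$-variable (valid since $|z - c_Q| \le \tfrac12 \ell(Q) \le \tfrac12 \max\{|x-y|,|x-z|\}$ on $(3Q)^c$) together with the $x$-regularity \eqref{eq:Holder-1} in the $x$-variable (valid since $|h| < \tfrac12 \ell(Q) \le \tfrac12 \max\{|x-y|,|x-z|\}$), so that the bracket is controlled by a double-Hölder quantity. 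A cleaner bookkeeping route is to use Lemma \ref{lem:size}/the generalized Hölder estimate \eqref{FF-Holder}: the bracket is bounded by
\begin{align*}
F(x,y,z) \frac{(|h| + |z-c_Q|)^{\delta}}{(|x-y|+|x-z|)^{2n+\delta}}
- F(x,y,z') \frac{(\cdots)}{(\cdots)},
\end{align*}
but more efficiently I would just say: by \eqref{eq:Holder-1} applied to the two differences $K(x+h,y,z)-K(x,y,z)$ and then \eqref{eq:Holder-3}, one gets a factor $\min\{1, (|h|/(|x-y|+|x-z|))^{\delta'}\}$ times $\ell(Q)^{\delta'}/(|x-y|+|x-z|)^{2n+\delta'}$ with the compactness profile $F$ attached, for some $\delta' \in (0,\delta)$. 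Splitting the $x$-integral over the dyadic annuli $R_k(z) = \{2^k\ell(Q) \le |x-y|+|x-z| < 2^{k+1}\ell(Q)\}$ and using $|h| \le 2^{-k}\ell(Q) \cdot 2^{k+1}|h|/\ell(Q)$, the extra factor $(|h|/\ell(Q))^{\delta'/2}$ can be extracted while the remaining series $\sum_k 2^{-k\delta'/2} F_3(\rd(2^k Q, \I))$ converges as in \eqref{GZ}. The whole expression is then bounded by $(|h|/\ell(Q))^{\delta'/2} \cdot \widetilde{F}(Q) \, \|g_1\|_{L^\infty}\|b_{2,Q}\|_{L^1}$ with $\widetilde{F}$ bounded, and one could worry this blows up as $\ell(Q) \to 0$; but $\ell(Q) > 2|h|$ forces $(|h|/\ell(Q))^{\delta'/2} < 2^{-\delta'/2}$, a harmless constant, so the factor that actually goes to zero is again supplied by the compactness profile $F_1(\ell(Q))$ when $\ell(Q)$ is small, and by $F_2(\ell(Q))$ or $\widetilde{F}_3$ when $\ell(Q)$ is large.

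The main obstacle is the careful handling of the regime $\ell(Q) > 2|h|$ with $\ell(Q)$ of moderate size: here neither the smallness of $|h|$ relative to $\ell(Q)$ nor the smallness/largeness of $\ell(Q)$ itself is automatically winning, so one must genuinely use that $F_1(\ell(Q))F_2(\ell(Q))$ is \emph{uniformly} small outside a compact range of scales together with the gain $(|h|/\ell(Q))^{\delta'/2}$ being bounded. Concretely I would fix $N$ large (depending on $\eta$) so that $\sup_{Q \notin \D(N)} F(Q) < \eta$, handle $Q \notin \D(N)$ by this smallness of the profile, and handle the finitely many scales $Q \in \D(N)$ — where $\ell(Q) \in [2^{-N}, 2^N]$ — using the genuine gain $(|h|/\ell(Q))^{\delta'/2} \le (2^N |h|)^{\delta'/2}$, which can be made $< \eta$ by shrinking $\delta_0$. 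I expect this split, together with the convergence of the annular series (already established in the proof of Lemma \ref{lem:Igb} and in \cite[Lemma 4.5]{CYY}), to close the argument; the rest is the same routine kernel estimate already used several times in the section.
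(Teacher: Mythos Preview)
Your treatment of the first case ($\ell(Q) \le 2|h|$) matches the paper's argument: both use the cancellation of $b_{2,Q}$ together with the improved H\"{o}lder condition of Lemma~\ref{lem:improve}\eqref{Holder-imp}, so that the profile is controlled by $F_1(|z-c_Q|) \le F_1(\ell(Q)) \le F_1(2|h|) \to 0$.

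For the second case ($\ell(Q) > 2|h|$), your approach can be made to work but is substantially more elaborate than the paper's. The paper does \emph{not} use the cancellation of $b_{2,Q}$ here at all: it simply applies the $x$-H\"{o}lder condition \eqref{eq:Holder-1} to $K(x+h,y,z)-K(x,y,z)$ directly, and the improved form from Lemma~\ref{lem:improve}\eqref{Holder-imp} yields the profile $F_1(|h|)\,F_2(\cdot)\,F_3(\cdot) \lesssim F_1(|h|)$. Since $|x-z| \ge \ell(Q) > 2|h|$ on $(3Q)^c \times Q$, the residual integral $|h|^\delta \iint_{|x-y|+|x-z|\ge 2|h|} (|x-y|+|x-z|)^{-(2n+\delta)}\,dx\,dy$ is bounded by an absolute constant, and $F_1(|h|) \to 0$ finishes the proof---no four-term bracket, no $(|h|/\ell(Q))^{\delta'/2}$ gain, and no $\D(N)$ dichotomy. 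Your double-cancellation plus scale-splitting strategy does recover the conclusion (the geometric mean of the two first-order H\"{o}lder bounds, with $\delta' = \delta/2$, gives the factor you need), but the simplification you are missing is that the compactness profile in the improved $x$-H\"{o}lder estimate already carries $F_1(|h|)$, so the smallness in $|h|$ comes for free and uniformly over all cubes $Q$.
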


\begin{proof}
First, let us handle the case $\ell(Q) \le 2|h|$. For all $x \in (3Q)^c$ and $z \in Q$, there holds $|x-z| \ge \ell(Q) \ge 2|z-c_Q|$. Then using the cancellation of $b_{2, Q}$, the H\"{o}lder condition of $K$, and Lemma \ref{lem:improve} part \eqref{Holder-imp}, we obtain 
\begin{align*}
\int_{(3Q)^c} |T(g_1, b_{2, Q})| \, dx 
&= \int_{(3Q)^c} \bigg|\iint_Q \big(K(x, y, z) - K(x, y, c_Q) \big) g_1(y) b_{2, Q}(z) \, dy \, dz \bigg| \, dx
\\ 
&\lesssim \int_Q |b_{2, Q}(z)| \bigg[\iint_{|x-y|+|x-z| \ge \ell(Q)} 
\frac{\ell(Q)^{\delta} F(x, y, z) \|g_1\|_{L^{\infty}}}{(|x-y|+|x-z|)^{2n+\delta}} \, dx \, dy \bigg] \, dz 
\\
&\lesssim F_1(|h|) \|g_1\|_{L^{\infty}} \|b_{2, Q}\|_{L^1}, 
\end{align*}
where 
\begin{align*}
F(x, y, z) 
:= F_1(|z-c_Q|) F_2(|x-y|+|x-z|) F_3 \bigg(1 + \frac{|x+y|+|x+z|}{1+|x-y|+|x-z|} \bigg) 
\lesssim F_1(|h|). 
\end{align*}

Next, we deal with the case $\ell(Q) > 2|h|$. Then for all $x \in (3Q)^c$ and $z \in Q$, we have $|x-z| \ge \ell(Q) \ge 2|h|$. Then by the H\"{o}lder condition of $K$ and Lemma \ref{lem:improve} part \eqref{Holder-imp}, 
\begin{align*}
&\int_{(3Q)^c} |(\tau_h T - T)(g_1, b_{2, Q})| \, dx  
\\ 
&= \int_{(3Q)^c} \bigg|\iint_{\R^{2n}} 
\big(K(x+h, y, z) - K(x, y, z) \big) g_1(y) b_{2, Q}(z) \, dy \, dz \bigg| \, dx 
\\ 
&\lesssim \int_Q |b_{2, Q}(z)| \bigg[\iint_{|x-y|+|x-z| \ge 2|h|} 
\frac{|h|^{\delta} F(x, y, z) \|g_1\|_{L^{\infty}}}{(|x-y|+|x-z|)^{2n+\delta}} \, dx \, dy \bigg] \, dz 
\\
&\lesssim F_1(|h|) \|g_1\|_{L^{\infty}} \|b_{2, Q}\|_{L^1}, 
\end{align*}
where we have used that 
\begin{align*}
F(x, y, z) 
= F_1(|h|) F_2(|x-y|+|x-z|) F_3 \bigg(1 + \frac{|x+y|+|x+z|}{1+|x-y|+|x-z|} \bigg) 
\lesssim F_1(|h|). 
\end{align*}
Noting that $\lim_{|h| \to 0} F_1(|h|) =0$, we conclude the proof. 
\end{proof}

\begin{lemma}\label{lem:Jbb}
Let $h \in \Rn$ with $|h|>0$. For all $x \in (3Q_1 \cup 3Q_2)^c$, there holds 
\begin{align}\label{Jbb-1}
|T(b_{1, Q_1}, b_{2, Q_2})(x)| 
\lesssim F_1(|h|) \prod_{j=1}^2 \|b_{j, Q_j}\|_{L^1} 
\frac{\ell(Q_j)^{\frac{\delta}{2}}}{|x-c_{Q_j}|^{n+\frac{\delta}{2}}},  
\end{align}
whenever $\ell(Q_1) \le 2|h|$ and $\ell(Q_2) \le 2|h|$, and 
\begin{align}\label{Jbb-2}
|(\tau_h T - T)(b_{1, Q_1}, b_{2, Q_2})(x)|
\lesssim F_1(|h|) \prod_{j=1}^2 \|b_{j, Q_j}\|_{L^1} 
\frac{|h|^{\frac{\delta}{2}}}{|x-c_{Q_j}|^{n+\frac{\delta}{2}}},
\end{align}
whenever $\ell(Q_1) > 2|h|$ or $\ell(Q_2) > 2|h|$.  
\end{lemma}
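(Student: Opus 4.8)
\textbf{Proof plan for Lemma \ref{lem:Jbb}.}
The two inequalities are pointwise estimates on the kernel side, so the plan is to expand $T(b_{1,Q_1},b_{2,Q_2})(x)$ and $(\tau_h T-T)(b_{1,Q_1},b_{2,Q_2})(x)$ as double integrals against $K$, exploit the cancellation $\int b_{j,Q_j}=0$ from \ref{CZ-3}, and then run the same type of estimates already used in the proofs of Lemmas \ref{lem:Igb} and \ref{lem:Ibb}. Throughout I will use that $x\in(3Q_1\cup 3Q_2)^c$ together with $y\in Q_1$, $z\in Q_2$ forces $|x-y|\ge\ell(Q_1)\ge 2|y-c_{Q_1}|$ and $|x-z|\ge\ell(Q_2)\ge 2|z-c_{Q_2}|$, hence $|x-y|\simeq|x-c_{Q_1}|$ and $|x-z|\simeq|x-c_{Q_2}|$; these comparisons convert all kernel denominators into powers of $|x-c_{Q_1}|$ and $|x-c_{Q_2}|$.

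For \eqref{Jbb-1}, assume $\ell(Q_1)\le 2|h|$ and $\ell(Q_2)\le 2|h|$. Using $\int b_{1,Q_1}=0$ and the H\"older condition \eqref{eq:Holder-2} in the $y$-variable (legitimate since $|y-c_{Q_1}|\le\ell(Q_1)/2\le\max\{|x-y|,|x-z|\}/2$), I write
\begin{align*}
|T(b_{1,Q_1},b_{2,Q_2})(x)|
\lesssim \iint_{\R^{2n}} \frac{|y-c_{Q_1}|^{\delta}\,F(x,y,z)}{(|x-y|+|x-z|)^{2n+\delta}}\,|b_{1,Q_1}(y)|\,|b_{2,Q_2}(z)|\,dy\,dz,
\end{align*}
where after applying Lemma \ref{lem:improve}\eqref{Holder-imp} the factor $F$ is a product of $F'_1(|y-c_{Q_1}|)$, a decreasing $F'_2$ of $|x-y|+|x-z|$, and a decreasing $F'_3$; since $|y-c_{Q_1}|\le\ell(Q_1)/2\le|h|$, monotonicity of $F'_1$ (and boundedness of $F'_2$, $F'_3$) gives $F\lesssim F_1(|h|)$ after renaming. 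Distributing the $\delta$-gain as $\delta/2+\delta/2$ across the two denominators $|x-c_{Q_1}|$ and $|x-c_{Q_2}|$ and integrating out $y$ over $Q_1$ and $z$ over $Q_2$ yields exactly the claimed product bound with $\ell(Q_j)^{\delta/2}|x-c_{Q_j}|^{-n-\delta/2}$. For \eqref{Jbb-2}, assume WLOG $\ell(Q_1)>2|h|$ (so $|x-y|\ge\ell(Q_1)>2|h|$; if instead $\ell(Q_2)>2|h|$ use the $z$-variable symmetrically); now I do \emph{not} use the cancellation of the $b$'s but rather the H\"older condition \eqref{eq:Holder-1} in the $x$-variable for the difference $K(x+h,y,z)-K(x,y,z)$, valid since $|h|\le\ell(Q_1)/2\le\max\{|x-y|,|x-z|\}/2$. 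This produces $|h|^{\delta}(|x-y|+|x-z|)^{-2n-\delta}$ times $F\lesssim F_1(|h|)$ (again by Lemma \ref{lem:improve}\eqref{Holder-imp} and the monotonicity of $F'_1$, since the $x$-displacement is $|h|$); splitting $|h|^{\delta}=|h|^{\delta/2}|h|^{\delta/2}$ and using $|h|\le\ell(Q_1)/2$ to bound one copy, then comparing denominators as above and integrating out $y,z$, gives the stated bound.

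The only genuinely delicate point — and the one I expect to be the main obstacle — is bookkeeping the auxiliary functions: the raw $F(x,y,z)$ in Definition \ref{def:CCZK} involves $F_3$ of $|x+y|+|x+z|$, which is \emph{not} directly controllable by $|h|$ or by $\ell(Q_j)$, so one must first pass to the equivalent form via Lemma \ref{lem:improve}\eqref{Holder-imp}, after which $F_3$ appears as a decreasing function of $1+\tfrac{|x+y|+|x+z|}{1+|x-y|+|x-z|}\ge 1$ and hence is simply bounded by $\|F_3\|_{L^\infty}$; absorbing all such bounded factors and renaming leaves only $F_1(|h|)$ in front. Once this reduction is in place, the integrations are the routine ones already carried out in \cite[Lemma 4.5]{CYY} and in the proof of Lemma \ref{lem:Igb} above. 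All implicit constants depend only on $n$, $\delta$, and the kernel constants, in particular they are independent of $h$, $Q_1$, $Q_2$, $x$.
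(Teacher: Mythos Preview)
Your proposal is correct and follows essentially the same route as the paper: for \eqref{Jbb-1} use the cancellation of $b_{1,Q_1}$ together with the H\"older condition in the $y$-variable and Lemma \ref{lem:improve}\eqref{Holder-imp} to extract $F_1(|h|)$ from $F_1(|y-c_{Q_1}|)$; for \eqref{Jbb-2} use the H\"older condition in the $x$-variable on $K(x+h,y,z)-K(x,y,z)$, again invoking Lemma \ref{lem:improve}\eqref{Holder-imp} to obtain the factor $F_1(|h|)$. One small remark: in your treatment of \eqref{Jbb-2} the step ``using $|h|\le\ell(Q_1)/2$ to bound one copy'' is unnecessary, since the target bound already carries $|h|^{\delta/2}$ in both factors and the denominator splitting $(|x-y|+|x-z|)^{2n+\delta}\gtrsim |x-c_{Q_1}|^{n+\delta/2}|x-c_{Q_2}|^{n+\delta/2}$ alone suffices.
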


\begin{proof}
Let $\ell(Q_1) \le 2|h|$ and $\ell(Q_2) \le 2|h|$. By symmetry, we may assume that $\ell(Q_1) \le \ell(Q_2)$. Let $x \in (3Q_1 \cup 3Q_2)^c$, $y \in Q_1$, and $z \in Q_2$. Then it is easy to see that  
\begin{align*}
|x-y| \ge \ell(Q_1) \ge 2|y-c_{Q_1}| 
\quad\text{ and }\quad  
|x-z| \ge \ell(Q_2) \ge 2|z-c_{Q_z}|, 
\end{align*}
which implies $|x-y| \simeq |x-c_{Q_1}|$ and $|x-z| \simeq |x-c_{Q_2}|$. By the cancellation of $b_{1, Q_1}$, the H\"{o}lder condition of $K$, and Lemma \ref{lem:improve} part \eqref{Holder-imp}, 
\begin{align*}
|T(b_{1, Q_1}, b_{2, Q_2})(x)| \, dx 
&= \bigg|\iint_{\R^{2n}} \big(K(x, y, z) - K(x, c_{Q_1}, z) \big) 
b_{1, Q_1}(y) b_{2, Q_2}(z) \, dy \, dz \bigg| 
\\ 
&\lesssim \iint_{\R^{2n}} \frac{\ell(Q_1)^{\delta} F(x, y, z)}{(|x-y|+|x-z|)^{2n+\delta}} 
|b_{1, Q_1}(y)| |b_{2, Q_2}(z)| \, dy \, dz  
\\
&\lesssim F_1(|h|) \prod_{j=1}^2 \|b_{j, Q_j}\|_{L^1} 
\frac{\ell(Q_j)^{\frac{\delta}{2}}}{|x-c_{Q_j}|^{n+\frac{\delta}{2}}}, 
\end{align*}
where 
\begin{align*}
F(x, y) 
&:= F_1(|y-c_{Q_1}|) F_2(|x-y|+|x-z|) F_3 \bigg(1 + \frac{|x+y|+|x+z|}{1+|x-y|+|x-z|} \bigg) 
\lesssim F_1(|h|). 
\end{align*}
This gives \eqref{Jbb-1}.

To show \eqref{Jbb-2}, note that for all $x \in (3Q_1 \cup 3Q_2)^c$, $y \in Q_1$, and $z \in Q_2$, 
\begin{align*}
& \max\{|x-y|, |x-z|\} 
\ge \max\{\ell(Q_1), \ell(Q_2)\} 
\ge 2|h|, 
\\
& |x-y| \simeq |x-c_{Q_1}|, 
\quad\text{ and }\quad 
|x-z| \simeq |x-c_{Q_2}|.
\end{align*} 
This, along with the H\"{o}lder condition of $K$ and Lemma \ref{lem:improve} part \eqref{Holder-imp}, implies 
\begin{align*}
&|(\tau_h T - T)(b_{1, Q_1}, b_{2, Q_2})(x)|  
\\ 
&= \bigg|\iint_{\R^{2n}} 
\big(K(x+h, y, z) - K(x, y, z) \big) b_{1, Q_1}(y) b_{2, Q_2}(z) \, dy \, dz \bigg| 
\\ 
&\lesssim \iint_{\R^{2n}} \frac{|h|^{\delta} F(x, y, z)}{(|x-y|+|x-z|)^{2n+\delta}} 
|b_{1, Q_1}(y)| |b_{2, Q_2}(z)| \, dy \, dz
\\
&\lesssim F_1(|h|) \prod_{j=1}^2 \|b_{j, Q_j}\|_{L^1} 
\frac{|h|^{\frac{\delta}{2}}}{|x-c_{Q_j}|^{n+\frac{\delta}{2}}},
\end{align*}
where 
\begin{align*}
F(x, y, z) 
= F_1(|h|) F_2(|x-y|+|x-z|) F_3 \bigg(1 + \frac{|x+y|+|x+z|}{1+|x-y|+|x-z|} \bigg) 
\lesssim F_1(|h|). 
\end{align*}
The proof is complete.  
\end{proof}

\section{$L^{\infty} \times L^{\infty} \to \CMO$ compactness}\label{sec:Linfty}
In this section we aim to show ${\rm (a) \Longrightarrow (e)'}$ and ${\rm (e) \Longrightarrow (c)'}$ in Theorem \ref{thm:cpt}.

\subsection{$L^{\infty} \times L^{\infty} \to \CMO$ compactness implies weighted $L^{p_1} \times L^{p_2} \to L^p$ compactness}

\begin{proposition}\label{pro:LLi} 
Let $0<\lambda < 1/2$. Let $T$ be a bilinear operator associated with a standard bilinear Calder\'{o}n--Zygmund kernel. Then the following are equivalent: 
\begin{list}{\textup{(\theenumi)}}{\usecounter{enumi}\leftmargin=1cm \labelwidth=1cm \itemsep=0.2cm 
			\topsep=.2cm \renewcommand{\theenumi}{\roman{enumi}}}

\item\label{LLi-1} $T$ is bounded from $L^{\infty}(\Rn) \times L^{\infty}(\Rn)$ to $\BMO(\Rn)$.

\item\label{LLi-2} $T$ is bounded from $L^{\infty}(w_1^{\infty}) \times L^{\infty}(w_2^{\infty})$ to $\BMO_{\lambda}(w^{\infty})$ for all $(w_1, w_2) \in A_{(\infty, \infty)}$, where $w=w_1 w_2$.
			
\item\label{LLi-3} $T$ is bounded from $L^{p_1}(\Rn) \times L^{p_2}(\Rn)$ to $L^p(\Rn)$ for all (or for some) $p_1, p_2 \in (1, \infty]$, where $\frac1p = \frac{1}{p_1} + \frac{1}{p_2}>0$. 
			
\item\label{LLi-4} $T$ is bounded from $L^{p_1}(w_1^{p_1}) \times L^{p_2}(w_2^{p_2})$ to $L^p(w^p)$ for all (or for some) $p_1, p_2 \in (1, \infty]$ and for all $(w_1, w_2) \in A_{(p_1, p_2)}$, where $\frac1p = \frac{1}{p_1} + \frac{1}{p_2}>0$ and $w=w_1 w_2$.  

\end{list} 
\end{proposition}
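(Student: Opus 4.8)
\textbf{Proof strategy for Proposition \ref{pro:LLi}.}
The plan is to establish the cycle of implications
\[
\eqref{LLi-1} \Longrightarrow \eqref{LLi-4} \Longrightarrow \eqref{LLi-3} \Longrightarrow \eqref{LLi-2} \Longrightarrow \eqref{LLi-1},
\]
which gives the full equivalence. Two of these links are essentially soft. The implication $\eqref{LLi-4} \Longrightarrow \eqref{LLi-3}$ is trivial on specialization of weights, and $\eqref{LLi-2} \Longrightarrow \eqref{LLi-1}$ follows from the choice $w_1 = w_2 \equiv 1$ together with the equivalence $\|f\|_{\BMO} \simeq \|M_\lambda^{\#} f\|_{L^\infty}$ recorded in \eqref{ffbb-2}; concretely, $\BMO_\lambda(1^\infty)$ with the norm $\|(M_\lambda^{\#} f)\cdot 1\|_{L^\infty}$ coincides with $\BMO(\Rn)$ up to constants. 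Likewise the parenthetical ``for all (or for some)'' in \eqref{LLi-3} and \eqref{LLi-4} is handled by a single appeal to the multilinear Rubio de Francia extrapolation theorem \ref{thm:RdF}: once $T$ is bounded $L^{q_1}(v_1^{q_1}) \times L^{q_2}(v_2^{q_2}) \to L^q(v^q)$ for one exponent triple and all $\vec v \in A_{\vec q}$, it is bounded for every triple $(p,p_1,p_2)$ with $p_1,p_2 \in (1,\infty]$ and all $\vec w \in A_{\vec p}$, which is precisely \eqref{LLi-4} in the strong (``for all'') form, and restricting weights recovers \eqref{LLi-3}. To pass from \eqref{LLi-3} with \emph{some} unweighted triple to boundedness for \emph{all} unweighted triples in $(1,\infty)$ one invokes the multilinear Calder\'on--Zygmund machinery (e.g. \cite[Theorem 1]{GT1} or \cite[Proposition 7.4.7]{Gra2}), and then \cite[Corollary 3.9]{LOPTT} upgrades this to the weighted bounds for all $\vec w \in A_{(q_1,q_2)}$; this chain is the same one used in Proposition \ref{pro:WL}.

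The substantive step is $\eqref{LLi-1} \Longrightarrow \eqref{LLi-4}$, i.e. deducing genuine weighted $L^p$ bounds from a single unweighted $L^\infty \times L^\infty \to \BMO$ estimate. First I would observe that $\eqref{LLi-1}$, combined with the standard kernel hypothesis, allows one to run the bilinear $T1$ theorem: the $L^\infty \times L^\infty \to \BMO$ bound is exactly the statement that $T(1,1) \in \BMO$, and (by considering the formal adjoints, which are again associated to standard bilinear CZ kernels) $T^{*1}(1,1), T^{*2}(1,1) \in \BMO$; together with the weak boundedness property, which is automatic here since $|\langle T(\mathbf 1_Q,\mathbf 1_Q),\mathbf 1_Q\rangle| \le \|T(\mathbf 1_Q,\mathbf 1_Q)\|_{L^1(Q)} \lesssim \|T(\mathbf 1_Q,\mathbf 1_Q) - \langle T(\mathbf 1_Q,\mathbf 1_Q)\rangle_Q\|_{L^1(Q)}|Q|^{?}+\ldots$ — more directly, one gets it from $L^2\times L^2\to L^1$ boundedness once that is known, or one notes the weak boundedness property is a consequence of the $\BMO$ bound via a duality/John--Nirenberg argument. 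The cleanest route is: $\eqref{LLi-1}$ plus the standard kernel gives the hypotheses of the bilinear $T1$ theorem \cite[Theorem 1.1]{LMOV}, hence $T$ is bounded $L^{q_1} \times L^{q_2} \to L^q$ for some (indeed all) $q_1,q_2 \in (1,\infty)$; then \cite[Corollary 3.9]{LOPTT} and Theorem \ref{thm:RdF} give $\eqref{LLi-4}$.

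I would carry out the steps in the order: (i) $\eqref{LLi-2}\Rightarrow\eqref{LLi-1}$ and $\eqref{LLi-4}\Rightarrow\eqref{LLi-3}$ (immediate); (ii) $\eqref{LLi-3}\Rightarrow\eqref{LLi-4}$ via \cite[Proposition 7.4.7]{Gra2}, \cite[Corollary 3.9]{LOPTT}, and Theorem \ref{thm:RdF}; (iii) $\eqref{LLi-1}\Rightarrow\eqref{LLi-4}$ via the bilinear $T1$ theorem as above; (iv) the remaining link $\eqref{LLi-3}$ or $\eqref{LLi-4}\Rightarrow\eqref{LLi-2}$, which is where $\CMO_\lambda$ versus $\CMO$ / $\BMO_\lambda$ versus $\BMO$ matters: here one needs the pointwise sharp-function estimate of the type \eqref{MSP-1}, namely $M_\lambda^{\#}(T(f_1,f_2))(x) \lesssim \mathcal M(f_1,f_2)(x)$ for $\lambda < 1/2$, proved much as in Lemma \ref{lem:MA} (without the tail cutoff), after which $\|T(f_1,f_2)\|_{\BMO_\lambda(w^\infty)} = \|(M_\lambda^{\#}T(f_1,f_2))w\|_{L^\infty} \lesssim \|\mathcal M(f_1,f_2)w\|_{L^\infty} \lesssim \|f_1 w_1\|_{L^\infty}\|f_2 w_2\|_{L^\infty}$ by Theorem \ref{thm:Mbdd} applied with $p_1=p_2=\infty$ and $\vec w \in A_{(\infty,\infty)}$. \textbf{The main obstacle} is precisely this last point: the inequality $M^{\#}(Tf) \lesssim M_r f$ genuinely fails at $r=1$ (Hilbert transform), so one cannot use $\CMO$ in place of $\CMO_\lambda$, and the argument must be organized so that the sharp maximal function is taken with exponent $\lambda<1/2$ throughout; one must also verify the a priori finiteness $\|M_\lambda(T(f_1,f_2))w\|_{L^\infty}<\infty$ needed to justify applying $\|f\|_{L^\infty(w)} \lesssim \|M_\lambda^{\#}f\|_{L^\infty(w)}$, which can be checked on, say, compactly supported bounded $f_i$ and then extended by density, exactly as on \cite[p.~1248]{LOPTT}.
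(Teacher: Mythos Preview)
Your treatment of $\eqref{LLi-2}\Rightarrow\eqref{LLi-1}$, $\eqref{LLi-4}\Rightarrow\eqref{LLi-3}$, $\eqref{LLi-3}\Rightarrow\eqref{LLi-4}$, and $\eqref{LLi-3}\Rightarrow\eqref{LLi-2}$ is correct and matches the paper almost exactly (the paper proves the pointwise bound $M_\lambda^{\#}(T(f_1,f_2)) \lesssim \mathcal{M}(f_1,f_2)$ just as you propose, via Kolmogorov and the $L^1\times L^1 \to L^{1/2,\infty}$ bound on the local part together with the smoothness condition on the off-diagonal parts).

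The gap is in $\eqref{LLi-1}\Rightarrow\eqref{LLi-4}$. Your plan is to feed $\eqref{LLi-1}$ into the bilinear $T1$ theorem, but the hypotheses of that theorem are \emph{not} available from $\eqref{LLi-1}$ alone. The assumption $T:L^\infty\times L^\infty\to\BMO$ gives $T(1,1)\in\BMO$, but it says nothing about $T^{*1}$ or $T^{*2}$ acting $L^\infty\times L^\infty\to\BMO$, so $T^{*1}(1,1),T^{*2}(1,1)\in\BMO$ is unjustified; the adjoints having standard kernels does not help here. Your attempts at the weak boundedness property are likewise circular or incomplete (the ``$|Q|^{?}$'' line does not close, and invoking $L^2\times L^2\to L^1$ boundedness presupposes what you are trying to prove).

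The paper's route for $\eqref{LLi-1}\Rightarrow\eqref{LLi-3}$ avoids the $T1$ theorem entirely by a \emph{freezing} argument. Fix $f_2\in L^\infty$ and set $T_{f_2}(f_1):=T(f_1,f_2)$; its kernel $K_{f_2}(x,y)=\int K(x,y,z)f_2(z)\,dz$ is a standard \emph{linear} Calder\'on--Zygmund kernel with constant $\lesssim\|K\|_{\mathrm{CZ}(\delta)}\|f_2\|_{L^\infty}$, and $\eqref{LLi-1}$ says $T_{f_2}:L^\infty\to\BMO$ with norm $\lesssim\|f_2\|_{L^\infty}$. Linear CZ theory (Journ\'e \cite[p.~50]{J83}) then gives an $H^1$ endpoint, and interpolation between $L^\infty\to\BMO$ and the $H^1$ endpoint yields $\|T(f_1,f_2)\|_{L^{r_1}}\lesssim\|f_1\|_{L^{r_1}}\|f_2\|_{L^\infty}$ for all $r_1\in(1,\infty)$. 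By symmetry the same holds with the roles of $f_1,f_2$ swapped, and bilinear complex interpolation (Theorem~\ref{thm:IP-LpLi}) between these two families produces $\eqref{LLi-3}$ for all $p,p_1,p_2\in(1,\infty)$. From there your step (ii) gives $\eqref{LLi-4}$. This freezing-plus-linear-theory argument is the missing idea; once you have it, your proof goes through.
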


\begin{proof}
We follow the scheme $\eqref{LLi-1} \Longrightarrow \eqref{LLi-3} \Longrightarrow \eqref{LLi-4} \Longrightarrow \eqref{LLi-3} \Longrightarrow \eqref{LLi-2} \Longrightarrow \eqref{LLi-1}$. Both $\eqref{LLi-4} \Longrightarrow \eqref{LLi-3}$ and $\eqref{LLi-2} \Longrightarrow \eqref{LLi-1}$ are trivial. Thus, it suffices to prove $\eqref{LLi-1} \Longrightarrow \eqref{LLi-3}$, $\eqref{LLi-3} \Longrightarrow \eqref{LLi-4}$, and $\eqref{LLi-3} \Longrightarrow \eqref{LLi-2}$. 
 
First, let us demonstrate $\eqref{LLi-3} \Longrightarrow \eqref{LLi-4}$. Assume \eqref{LLi-3} holds. By \cite[Proposition 7.4.7]{Gra2}, \eqref{LLi-3} holds for all $p_1, p_2 \in (1, \infty)$, which along with \cite[Corollary 3.9]{LOPTT} gives that \eqref{LLi-4} holds for all $p_1, p_2 \in (1, \infty)$. The latter and Theorem \ref{thm:RdF} imply \eqref{LLi-4} holds for all exponents $p_1, p_2 \in (1, \infty]$. 

Next, we show $\eqref{LLi-1} \Longrightarrow \eqref{LLi-3}$. Assume \eqref{LLi-1} holds. Let $f_2$ be an arbitrary function in $L^{\infty}(\Rn)$. Define a linear operator 
\begin{align*}
T_{f_2} (f_1)(x) 
:= T(f_1, f_2)(x) 
&= \int_{\Rn} \bigg[\int_{\Rn} K(x, y, z) f_2(z) \, dz \bigg] f_1(y) \, dy 
\\
&=: \int_{\Rn} K_{f_2}(x, y) \, f_1(y) \, dy.
\end{align*}
Then it is not hard to verify that $K_{f_2}$ is a standard Calder\'{o}n--Zygmund kernel with a constant $\|K_{f_2}\|_{\mathrm{CZ}(\delta)}$ satisfying 
\begin{align}\label{KCZ-1}
\|K_{f_2}\|_{\mathrm{CZ}(\delta)} 
\lesssim \|K\|_{\mathrm{CZ}(\delta)} \|f_2\|_{L^{\infty}}.
\end{align} 
By definition, 
\begin{align}\label{KCZ-2}
\|T_{f_2}(f_1)\|_{\BMO} 
= \|T(f_1, f_2)\|_{\BMO} 
\lesssim \|f_2\|_{L^{\infty}} \|f_1\|_{L^{\infty}},
\end{align} 
which says that $T_{f_2}$ is bounded from $L^{\infty}(\Rn)$ to $\BMO(\Rn)$. In view of \eqref{KCZ-1} and \eqref{KCZ-2}, a careful examination of the proof of \cite[p. 50]{J83} gives 
\begin{align}\label{KCZ-3}
\|T_{f_2}(f_1)\|_{\mathrm{H}^1} 
\lesssim \big(\|K_{f_2}\|_{\mathrm{CZ}(\delta)} 
+ \|T_{f_2}\|_{L^{\infty} \to \BMO} \big) \|f_1\|_{L^1} 
\lesssim \|f_2\|_{L^{\infty}} \|f_1\|_{L^1}. 
\end{align} 
Then by the interpolation theorem \cite[p. 43]{J83} (see also \cite[Theorem 3.4.7]{Gra2} for the precise bound), the estimates \eqref{KCZ-2} and \eqref{KCZ-3} imply 
\begin{align}\label{KCZ-4}
\|T(f_1, f_2)\|_{L^{r_1}} 
= \|T_{f_2}(f_1)\|_{L^{r_1}} 
\lesssim \|f_1\|_{L^{r_1}} \|f_2\|_{L^{\infty}}, \quad\, 1<r_1<\infty,  
\end{align} 
where the implicit constant is independent of $f_1$ and $f_2$. Similarly, given $f_1 \in L^{\infty}(\Rn)$, defining $T_{f_1} (f_2)(x) := T(f_1, f_2)(x)$ for $x \in \Rn$, we deduce 
\begin{align}\label{KCZ-5}
\|T(f_1, f_2)\|_{L^{r_2}} 
= \|T_{f_1}(f_2)\|_{L^{r_2}} 
\lesssim \|f_1\|_{L^{\infty}} \|f_2\|_{L^{r_2}}, \quad\, 1<r_2<\infty. 
\end{align} 
In view of Theorem \ref{thm:IP-LpLi}, interpolating between \eqref{KCZ-4} and \eqref{KCZ-5} yields  
\begin{align*}
\|T(f_1, f_2)\|_{L^p} 
\lesssim \|f_1\|_{L^{p_1}} \|f_2\|_{L^{p_2}}, 
\end{align*} 
for all $\frac1p = \frac{1}{p_1} + \frac{1}{p_2}$ with $1<p, p_1, p_2 <\infty$. This along with \cite[Theorem 3]{GT1} implies \eqref{LLi-3} holds.

Finally, to justify $\eqref{LLi-3} \Longrightarrow \eqref{LLi-2}$, in view of Theorem \ref{thm:Mbdd}, it suffices to prove 
\begin{align}\label{MR-1}
M^{\#}_{\lambda} \big(T(f_1, f_2)\big)(x) 
\lesssim \mathcal{M}(f_1, f_2)(x), \quad \, x \in \Rn.  
\end{align}
Assume \eqref{LLi-3} holds. Let $x \in \Rn$ and $Q$ be a cube containing $x$. Let $f_1 w_1, f_2 w_2 \in L^{\infty}(\Rn)$. For each $i=1, 2$, write $f_i^0 = f_i \mathbf{1}_{3Q}$ and $f_i^{\infty} = f_i \mathbf{1}_{(3Q)^c}$. Denote 
\begin{align*}
\beta_Q := T(f_1^0, f_2^{\infty})(c_Q)
+ T(f_1^{\infty}, f_2^0)(c_Q) 
+ T(f_1^{\infty}, f_2^{\infty})(c_Q). 
\end{align*}
Using the fact $0<\lambda<1/2$ and Kolmogorov's inequality \eqref{eq:Kolm}, we have 
\begin{align}\label{MR-2}
\mathscr{G} := 
\bigg(\fint_Q |T(f_1, f_2) - \beta_Q|^{\lambda} \, dy \bigg)^{\frac{1}{\lambda}}
\lesssim |Q|^{-2} \|T(f_1, f_2) - \beta_Q\|_{L^{\frac12, \infty}(Q)}
\lesssim \sum_{i=1}^4 \mathscr{G}_i, 
\end{align}
where 
\begin{align*}
\mathscr{G}_1 
&:= |Q|^{-2} \|T(f_1^0, f_2^0)\|_{L^{\frac12, \infty}(Q)}, 
\\
\mathscr{G}_2 
&:= |Q|^{-2} \|T(f_1^0, f_2^{\infty}) - T(f_1^0, f_2^{\infty})(c_Q)\|_{L^{\frac12, \infty}(Q)}, 
\\
\mathscr{G}_3 
&:= |Q|^{-2} \|T(f_1^{\infty}, f_2^0) - T(f_1^{\infty}, f_2^0)(c_Q)\|_{L^{\frac12, \infty}(Q)}, 
\\
\mathscr{G}_4 
&:= |Q|^{-2} \|T(f_1^{\infty}, f_2^{\infty}) - T(f_1^{\infty}, f_2^{\infty})(c_Q)\|_{L^{\frac12, \infty}(Q)}.
\end{align*}
By Proposition \ref{pro:WL} and the assumption \eqref{LLi-3}, $T$ is bounded from $L^1(\Rn) \times L^1(\Rn)$ to $L^{\frac12, \infty}(\Rn)$, which yields 
\begin{align}\label{MR-21}
\mathscr{G}_1 
&\lesssim |Q|^{-2} \|f_1^0\|_{L^1} \|f_2^0\|_{L^1} 
\lesssim \prod_{i=1}^2 \langle |f_i| \rangle_{3Q} 
\le \mathcal{M}(f_1, f_2)(x).
\end{align}
To estimate $\mathscr{G}_i$, $i=2, 3, 4$, note that 
\begin{align}\label{IIX-1}
|Q|^{-2} \|f\|_{L^{\frac12, \infty}(Q)} 
\le |Q|^{-1} \|f\|_{L^{1, \infty}(Q)} 
\le \fint_Q |f(\xi)| \, d\xi 
\le \sup_{\xi \in Q} |f(\xi)|, 
\end{align}
and for any $\xi \in Q$, 
\begin{align}\label{IIX-2}
\mathscr{G}(\xi) 
&:= \iint_{\R^{2n} \setminus (3Q)^2} 
|K(\xi, y, z) - K(c_Q, y, z)| |f_1(y)| |f_2(z)| \, dy \, dz 
\\ \nonumber
&\lesssim \iint_{\R^{2n} \setminus (3Q)^2} 
\frac{\ell(Q)^{\delta}}{(|\xi-y| + |\xi-z|)^{2n+\delta}} |f_1(y)| |f_2(z)| \, dy \, dz
\\ \nonumber
&\lesssim \mathcal{M}(f_1, f_2)(x). 
\end{align}
Hence, invoking \eqref{IIX-1} and \eqref{IIX-2}, we conclude 
\begin{align}\label{MR-22}
\mathscr{G}_i 
\le \sup_{\xi \in Q} \mathscr{G}(\xi) 
\lesssim \mathcal{M}(f_1, f_2)(x), \quad i=2, 3, 4. 
\end{align}
Consequently, \eqref{MR-1} follows from \eqref{MR-2}, \eqref{MR-21}, and \eqref{MR-22}. 
\end{proof}

We are ready to prove ${\rm (e) \Longrightarrow (c)'}$ in Theorem \ref{thm:cpt}. Assume that 
\begin{align}\label{LLp-1}
\text{$T$ is compact from $L^{\infty}(\Rn) \times L^{\infty}(\Rn)$ to $\CMO(\Rn)$}. 
\end{align}
Then $T$ is bounded from $L^{\infty}(\Rn) \times L^{\infty}(\Rn)$ to $\BMO(\Rn)$, which along with Proposition \ref{pro:LLi} implies 
\begin{align}\label{LLp-2}
\text{$T$ is bounded from $L^{q_1}(v_1^{q_1}) \times L^{q_2}(v_2^{q_2})$ to $L^q(v^q)$}
\end{align} 
for all $q_1, q_2 \in (1, \infty]$ and for all $(v_1, v_2) \in A_{(q_1, q_2)}$, where $\frac1q = \frac{1}{q_1} + \frac{1}{q_2}>0$ and $v=v_1 v_2$. Thus, we apply Theorem \ref{thm:EP-Linfty}, \eqref{LLp-1}, and \eqref{LLp-2} to conclude that 
\begin{align}\label{LLp-3}
\text{$T$ is compact from $L^{p_1}(w_1^{p_1}) \times L^{p_2}(w_2^{p_2})$ to $L^p(w^p)$}
\end{align}
for all $p_1, p_2 \in (1, \infty]$ and for all $(w_1, w_2) \in A_{(p_1, p_2)}$, where $\frac1p = \frac{1}{p_1} + \frac{1}{p_2}>0$ and $w=w_1 w_2$. This corresponds to ${\rm (c)'}$. 
\qed

\subsection{Hypotheses \eqref{H1}--\eqref{H3} imply weighted $L^{\infty} \times L^{\infty} \to \CMO$ compactness}
Next, let us verify ${\rm (a) \Longrightarrow (e)'}$ in Theorem \ref{thm:cpt}. Let $0<\lambda<1/2$ and $(w_1, w_2) \in A_{(\infty, \infty)}$. Set $w= w_1 w_2$. It suffices to show 
\begin{align*}
\lim_{N \to \infty} \sup_{\substack{\|f_1 w_1\|_{L^{\infty}} \le 1 \\ \|f_2 w_2\|_{L^{\infty}} \le 1}} 
\|P_N^{\perp} (T(f_1, f_2))\|_{\BMO_{\lambda}(w^{\infty})} = 0, 
\end{align*}
which is equivalent to that given $\varepsilon>0$, there exists $N_0=N_0(\varepsilon) \ge 1$ such that 
\begin{align}\label{MSP-1}
M_{\lambda}^{\#} \big(P_N^{\perp}(T(f_1, f_2)) \big)(x) w(x)
\lesssim \varepsilon \|f_1 w_1\|_{L^{\infty}} \|f_2 w_2\|_{L^{\infty}}, \quad\text{a.e. } x \in \Rn.  
\end{align}
It was shown in \cite[p. 792]{HP} that every cube $Q$ is contained in a shifted dyadic cube $Q_{\alpha} \in \mathscr{D}_{\alpha}$ with $\ell(Q_{\alpha}) \le 6 \ell(Q)$ for some $\alpha \in \Lambda := \{0, 1/3\}^n$, where 
\begin{align*}
\mathscr{D}_{\alpha} 
:= \big\{2^{-k} \big([0, 1)^n + m + (-1)^k \alpha \big): k \in \Z, m \in \Z^n\big\}. 
\end{align*}
Thus, in \eqref{MSP-1}, $M_{\lambda}^{\#}$ can be restricted to the family of cubes $Q \in \bigcup_{\alpha \in \Lambda} \mathscr{D}_{\alpha}$. 

Let $f_1 w_1, f_2 w_2 \in L^{\infty}(\Rn)$. For each cube $Q \in \bigcup_{\alpha \in \Lambda} \mathscr{D}_{\alpha}$, there exists a subset $E_Q \subset 3Q$ such that $|E_Q|=0$, $w(x) \le \esssup_{3Q} w$, and $\mathcal{M}(f_1, f_2)(x) w(x) \le \|\mathcal{M}(f_1, f_2) w\|_{L^{\infty}}$ for all $x \in 3Q \setminus E_Q$. Denote $E := \bigcup_{\alpha \in \Lambda} \bigcup_{Q \in \mathscr{D}_{\alpha}} E_Q$. Then $|E|=0$.  

Fix $x \in \Rn \setminus E$. Let $Q \in \bigcup_{\alpha \in \Lambda} \mathscr{D}_{\alpha}$ be an arbitrary cube containing $x$. By the choice of $E$, there holds 
\begin{align}\label{supsup}
w(x) \le \esssup_{3Q} w 
\quad\text{ and }\quad 
\mathcal{M}(f_1, f_2)(x) w(x) \le \|\mathcal{M}(f_1, f_2) w\|_{L^{\infty}}. 
\end{align} 
For each $i=1, 2$, write $f_i^0 = f_i \mathbf{1}_{3Q}$ and $f_i^{\infty} = f_i \mathbf{1}_{(3Q)^c}$. Denote 
\begin{align*}
\beta_Q := T(f_1^0, f_2^{\infty})(c_Q)
+ T(f_1^{\infty}, f_2^0)(c_Q) 
+ T(f_1^{\infty}, f_2^{\infty})(c_Q). 
\end{align*}
Then 
\begin{align}\label{MSP-2}
\mathscr{H}(x)
:= w(x) \bigg(\fint_Q |P_N^{\perp}(T(f_1, f_2))(\xi) - \beta_Q|^{\lambda} \, d\xi \bigg)^{\frac{1}{\lambda}}
\le \sum_{i=1}^4 \mathscr{H}_i(x), 
\end{align}
where 
\begin{align*}
\mathscr{H}_1(x) 
&:= w(x) \bigg(\fint_Q |P_N^{\perp}(T(f_1^0, f_2^0))(\xi)|^{\lambda} \, d\xi \bigg)^{\frac{1}{\lambda}}, 
\\
\mathscr{H}_2(x) 
&:= w(x) \bigg(\fint_Q |P_N^{\perp}(T(f_1^0, f_2^{\infty}))(\xi) 
- T(f_1^0, f_2^{\infty})(c_Q)| \, d\xi \bigg)^{\frac{1}{\lambda}}, 
\\
\mathscr{H}_3(x) 
&:= w(x) \bigg(\fint_Q |P_N^{\perp}(T(f_1^{\infty}, f_2^0))(\xi) 
- T(f_1^{\infty}, f_2^0)(c_Q)| \, d\xi \bigg)^{\frac{1}{\lambda}}, 
\\
\mathscr{H}_4(x) 
&:= w(x) \bigg(\fint_Q |P_N^{\perp}(T(f_1^{\infty}, f_2^{\infty}))(\xi) 
- T(f_1^{\infty}, f_2^{\infty})(c_Q)| \, d\xi \bigg)^{\frac{1}{\lambda}}.
\end{align*}

By Lemma \ref{lem:weight}, we see that $w_1^{-1}, w_2^{-1} \in A_2$. This, together with \eqref{eq:RH}, yields   
\begin{align}\label{MRH-1}
\bigg(\fint_Q w_i^{-r_i} \, dx \bigg)^{\frac{1}{r_i}} 
\lesssim \fint_Q w_i^{-1} \, dx, \quad\text{for some $r_i \in (1, \infty)$}, \quad i=1, 2. 
\end{align}
Let $\varepsilon>0$ and $\frac1r = \frac{1}{r_1} + \frac{1}{r_2}$. Since we have shown ${\rm (a)} \Longrightarrow {\rm (b)} \Longrightarrow {\rm (c)}$ in Theorem \ref{thm:cpt}, the hypotheses \eqref{H1}--\eqref{H3} imply that $T$ is compact from $L^{r_1}(\Rn) \times L^{r_2}(\Rn)$ to $L^r(\Rn)$. By the latter and Theorem \ref{thm:PNT-cpt}, there exists $N_0=N_0(\varepsilon) \ge 1$ so that 
\begin{align}\label{MRH-2}
\|P_N^{\perp}T\|_{L^{r_1} \times L^{r_2} \to L^r} 
\le \varepsilon, \quad \text{for all } N \ge N_0. 
\end{align}
Hence, in light of the fact $0<\lambda<1/2<r$ and Jensen's inequality, the estimates \eqref{supsup}, \eqref{MRH-1}, and \eqref{MRH-2} imply that for all $N \ge N_0$, 
\begin{align}\label{MSP-3}
&\mathscr{H}_1(x) 
\le w(x) |Q|^{-\frac1r} \|P_N^{\perp}(T(f_1^0, f_2^0))\|_{L^r} 
\le \varepsilon \, w(x) |Q|^{-\frac1r} \|f_1^0\|_{L^{r_1}} \|f_2^0\|_{L^{r_2}} 
\\ \nonumber 
&\lesssim \varepsilon \, w(x) \prod_{i=1}^2 
\bigg(\fint_{3Q} |f_i w_i|^{r_i} w_i^{-r_i} \bigg)^{\frac{1}{r_i}}
\lesssim \varepsilon \, w(x) \prod_{i=1}^2 \|f_i w_i\|_{L^{\infty}}
\bigg(\fint_{3Q} w_i^{-r_i} \bigg)^{\frac{1}{r_i}} 
\\ \nonumber 
&\lesssim \varepsilon \big(\esssup_{3Q} w \big) \prod_{i=1}^2 \|f_i w_i\|_{L^{\infty}}
\bigg(\fint_{3Q} w_i^{-1} \bigg) 
\le \varepsilon \, [(w_1, w_2)]_{A_{(\infty, \infty)}} \prod_{i=1}^2 \|f_i w_i\|_{L^{\infty}}. 
\end{align}
To bound $\mathscr{H}_i(x)$, $i=2, 3, 4$, let $\alpha=(\alpha_1, \alpha_2) \in \big\{(0, \infty), (\infty, 0), (\infty, \infty)\big\}$. We claim that for all $\xi \in Q$, 
\begin{align}\label{MSP-34}
\mathscr{H}_Q^{\alpha}(\xi) 
&:= |P_N^{\perp}(T(f_1^{\alpha_1}, f_2^{\alpha_2}))(\xi) 
- P_N^{\perp} (T(f_1^{\alpha_1}, f_2^{\alpha_2}))(c_Q)| 
\\ \nonumber 
&\lesssim F_N(Q) \, \mathcal{M}(f_1, f_2)(x), 
\end{align}
and 
\begin{align}\label{DNF}
\lim_{N \to \infty} F_N(Q) 
:= \lim_{N \to \infty} \sum_{\substack{I \notin \D(N) \\ I \subset Q}} F(I) 
= 0, 
\end{align}
where
\begin{align*}
F(I) := F_1(\ell(I)) F_2(\ell(I)) \sum_{k \ge \ell(Q)/\ell(I)} 2^{-k \delta} F_3(\rd(2^k I, \I)). 
\end{align*}
Then it follows from \eqref{supsup}, \eqref{MSP-34}, \eqref{DNF}, and Theorem \ref{thm:Mbdd} that 
\begin{align}\label{MSP-4}
\mathscr{H}_i(x) 
&\le w(x) \sup_{\alpha} \sup_{\xi \in Q} \mathscr{H}_Q^{\alpha}(\xi) 
\lesssim F_N(Q) \|\mathcal{M}(f_1, f_2) w\|_{L^{\infty}}
\\ \nonumber 
&\lesssim \varepsilon \|f_1 w_1\|_{L^{\infty}} \|f_2 w_2\|_{L^{\infty}}, 
\quad i=2, 3, 4, 
\end{align}
whenever $N \ge N_0$ large enough. Hence, \eqref{MSP-1} is a consequence of \eqref{MSP-2}, \eqref{MSP-3}, and \eqref{MSP-4}.

Next, we turn to the proof of \eqref{MSP-34}. Given $\xi \in Q$, we have 
\begin{align}\label{HQX-1}
\mathscr{H}_Q^{\alpha}(\xi) 
\le \sum_{I \not\in \D(N)} |\langle T (f_1^{\alpha_1}, f_2^{\alpha_2}), h_I \rangle| 
|h_I(\xi) - h_I(c_Q)|. 
\end{align}
If $I \cap Q = \emptyset$, then $h_I(\xi) = h_I(c_Q) =0$. If $Q \subsetneq I$, then $h_I(\xi) = h_I(c_Q) = \langle h_I \rangle_Q$. Thus, it suffices to treat the case $I \subset Q$. Besides, $|h_I(\xi) - h_I(c_Q)| \neq 0$ implies $\xi \in I$ or $c_Q \in I$. Such dyadic cubes $I$ can be parametrized by $I=Q_j \in \D$ with $Q_j \subset Q$ and $\ell(Q_j) = 2^{-j} \ell(Q)$, $j \ge 0$. 

Fix a such dyadic cube $I=Q_j$. Let $\xi \in I$ and denote 
\begin{align*}
R_k(\xi) := \{(y, z) \in \R^{2n}: 2^k \ell(Q) \le |\xi-y| + |\xi-z| < 2^{k+1} \ell(Q)\}.
\end{align*}
Then for all  and $y \in R_k(\xi)$, 
\begin{align*}
1 + \frac{|\xi+y| + |\xi+z|}{1+|\xi-y|+|\xi-z|}
&\ge \frac{4|\xi|}{1+|\xi-y|+|\xi-z|} 
\ge \frac{4\d(2^{k+j}I, \I)}{1+2^{k+j+1} \ell(I)}
\\  
&\ge \frac{\d(2^{k+j}I, \I)}{\max\{2^{k+j} \ell(I), 1\}}
= \rd(2^{k+j} I, \I)
\end{align*}
and hence, 
\begin{align*}
F(\xi, y, z) 
&:= F_1(|\xi-c_I|) F_2(|\xi-y|+|\xi-z|) F_3 \bigg(1+\frac{|\xi+y|+|\xi+z|}{1+|\xi-y|+|\xi-z|} \bigg)
\\
&\le F_1(\ell(I)) F_2(\ell(I)) F_3(\rd(2^{k+j} I, \I)). 
\end{align*}
This, along with the cancellation of $h_I$ and the H\"{o}lder condition of $K$, implies  
\begin{align}\label{HQX-2}
& |\langle T(f_1^{\alpha_1}, f_2^{\alpha_2}), h_I \rangle| 
\\ \nonumber 
&= \bigg|\int_I \iint_{\R^{2n} \setminus (3Q)^2} 
(K(\xi, y, z) - K(c_I, y, z)) f_1^{\alpha_1}(y) f_2^{\alpha_2}(z) \, h_I(x) \, dy \, dz \, d\xi\bigg|
\\ \nonumber
&\lesssim |I|^{-\frac12} \sum_{k \ge 0} \int_I \iint_{R_k(\xi)} 
\frac{\ell(I)^{\delta} F(\xi, y, z)}{(|\xi-y| + |\xi-z|)^{2n+\delta}} |f_1(y)| |f_2(z)| \, dy \, dz \, d\xi 
\\ \nonumber
&\lesssim |I|^{\frac12} F_1(\ell(I)) F_2(\ell(I)) 
\sum_{k \ge 0} \frac{2^{-k \delta} F_3(\rd(2^{k+j} I, \I))}{[\ell(Q)/\ell(I)]^{\delta}} 
\prod_{i=1}^2 \fint_{B(x, 2^{k+2} \ell(Q))} |f_i|
\\ \nonumber
&\lesssim |I|^{\frac12} F_1(\ell(I)) F_2(\ell(I)) 
\sum_{k \ge 0} 2^{-(k+j) \delta} F_3(\rd(2^{k+j} I, \I)) 
\mathcal{M}(f_1, f_2)(x). 
\end{align}
Then we deduce \eqref{MSP-34} from \eqref{HQX-1} and \eqref{HQX-2}. 

Finally, let us prove \eqref{DNF}. Observe that 
\begin{align}\label{DNF-1}
\sum_{\substack{I \subset Q \\ \ell(I)<2^{-N}}} F(I) 
\lesssim \sum_{j \ge 0} \sum_{k \ge j} 2^{-k \delta} F_1(2^{-N})
\lesssim \sum_{j \ge 0} 2^{-j \delta} F_1(2^{-N})
\lesssim F_1(2^{-N}),  
\end{align}
and 
\begin{align}\label{DNF-2}
\sum_{\substack{I \subset Q \\ \ell(I)>2^N}} F(I) 
\lesssim \sum_{j \ge 0} \sum_{k \ge j} 2^{-k \delta} F_2(2^N)
\lesssim \sum_{j \ge 0} 2^{-j \delta} F_2(2^N)
\lesssim F_2(2^{-N}). 
\end{align}
If $2^{-N} \le \ell(I) \le 2^N$ and $\rd(I, 2^N \I) > N$, then for any $N \ge 9$ and $k \le k_N := [\log_2 (N^{\frac12} -1)]$, 
\begin{align*}
\d(2^k I, \I) 
\ge \d(I, 2^N \I) + (2^{N-1} - 1/2) - 2^k \ell(I)/2 
\ge N \cdot 2^N - 2^{k+N}, 
\end{align*}
which gives 
\begin{align*}
\rd(2^k I, \I) 
= \frac{\d(2^k I, \I)}{\max\{2^k \ell(I), 1\}} 
\ge \frac{N \cdot 2^N -2^{k+N}}{2^{k+N}} 
= 2^{-k} N -1 
\ge N^{\frac12}, 
\end{align*}
and 
\begin{align}\label{DNF-3}
\sum_{\substack{I \subset Q \\ 2^{-N} \le \ell(I) \le 2^N \\ \rd(I, 2^N \I) > N}} F(I) 
&\lesssim \sum_{0 \le j \le k_N} \sum_{j \le k \le k_N}
+ \sum_{0 \le j \le k_N} \sum_{k > k_N}
+ \sum_{j > k_N} \sum_{k \ge j} 2^{-k\delta} F_3(N^{\frac12})
\\ \nonumber 
&\lesssim \sum_{j \ge 0} \sum_{k \ge j} 2^{-k \delta} F_3(N^{\frac12}) 
+ \sum_{0 \le j \le k_N} \sum_{k > k_N} 2^{-k \delta}  
+ \sum_{j > k_N} \sum_{k \ge j} 2^{-k \delta} 
\\ \nonumber
&\lesssim F_3(N^{\frac12}) + k_N \cdot 2^{-k_N \delta} + 2^{-k_N \delta}
\to 0, \quad\text{as } N \to \infty.  
\end{align}
Accordingly, \eqref{DNF} follows from \eqref{DNF-1}--\eqref{DNF-3}. 
\qed

\section{Applications}\label{sec:app}
This section is devoted to presenting some examples of bilinear operators, which satisfy the hypotheses \eqref{H1}, \eqref{H2}, and \eqref{H3}. Thus, by Theorem \ref{thm:cpt}, they are compact from $L^{p_1}(w_1) \times L^{p_2}(w_2)$ to $L^p(w)$ for all exponents $\frac1p = \frac{1}{p_1} + \frac{1}{p_2}$ with $1<p_1, p_2 < \infty$, and for all weights $(w_1,w_2) \in A_{(p_1, p_2)}$.

\subsection{Bilinear continuous paraproducts} 
Let us prove Theorem \ref{thm:Pibc}. Recall that $b \in \CMO(\Rn)$. By \cite[Lemma 5.1]{Hart14}, we see that 
\begin{align}\label{PBCZ-1}
\text{$\pi_b$ is a bilinear Calder\'{o}n--Zygmund operator} 
\end{align}
satisfying 
\begin{align*}
\pi_b (1, 1) = b 
\quad\text{ and }\quad 
\pi_b^{*1} (1, 1) = \pi_b^{*2} (1, 1) =0, 
\end{align*}
which yields \eqref{H3}. Additionally, it was shown in \cite[Proposition 3.1]{BLOT1} that 
\begin{align}\label{PBCZ-2}
\text{$\pi_b$ is compact from $L^{p_1}(\Rn) \times L^{p_2}(\Rn)$ to $L^p(\Rn)$}
\end{align}
for all $\frac1p = \frac{1}{p_1} + \frac{1}{p_2}$ with $p_1, p_2 \in (1, \infty)$. Then \eqref{H1} and \eqref{H2} follow from Theorems \ref{thm:CCZK} and \ref{thm:WCP}, \eqref{PBCZ-1}, and \eqref{PBCZ-2}. 
\qed

\subsection{Bilinear dyadic paraproducts} 
In \cite[Chapter 3]{M92}, Meyer constructed the smooth 1-dimensional wavelets with compact support. Let $\psi^0 := \phi$ be the father wavelet, and $\psi^1 := \psi$ be the mother wavelet. Then the $n$-dimensional wavelet is given by $\psi^{\eta}(x) := \prod_{i=1}^n \psi^{\eta_i}(x_i)$, where $\eta \in \{0, 1\}^n \setminus \{0\}$.

\begin{definition}\label{def:wavelet}
We say that $\big\{\psi_I^{\eta}: I \in \D, \eta \in \{0, 1\}^n \setminus \{0\} \big\}$ is a \emph{system of wavelets} if $\psi_I^{\eta}(x) : = 2^{nk/2} \psi^{\eta}(2^k x - l)$, where $\psi^{\eta}$ is an $n$-dimensional wavelet and $I = 2^{-k} ([0, 1)^n + l)$, and this collection has the following fundamental properties of a wavelet basis:

\begin{enumerate}
\item It is an orthonormal basis of $L^2(\Rn)$; 

\item Localization: $\supp(\psi_I^{\eta}) \subset I$; 

\item Regularity: $|\partial^{\alpha} \psi_I^{\eta}| \lesssim \ell(I)^{-|\alpha|-n/2}$ for any multi-index $|\alpha| \le N_0$; 

\item Cancellation: $\int_{\Rn} x^{\alpha} \psi_I^{\eta}(x) \, dx = 0$ for any multi-index $|\alpha| \le N_1$.
\end{enumerate} 
\end{definition}

Since the different $\eta$'s play a minor role, $ \psi_I^{\eta}$ will be often abbreviated as $ \psi_I$.

Next, we would like to show Theorem \ref{thm:Pib}. By symmetry, it suffices to consider $\Pi_b$. By definition, it is easy to see that 
\begin{align*}
\Pi_b(1, 1)=b 
\quad\text{ and }\quad 
\Pi_b^{*1}(1, 1)=\Pi_b^{*2}(1, 1)=0, 
\end{align*}
which verifies \eqref{H3}. Similarly to \eqref{def:PN}, we define 
\begin{align*}
\mathcal{P}_N f  := \sum_{I \in \D(N)} \langle f, \psi_I \rangle \, \psi_I 
\quad\text{ and }\quad 
\mathcal{P}_N^{\perp} f := f - \mathcal{P}_N f, 
\end{align*}
Moreover, by Theorem \ref{thm:WCP}, the fact $\mathcal{P}_N^{\perp} \Pi_b = \Pi_{\mathcal{P}_N^{\perp}b}$, and Theorem \ref{thm:Pi} applied to $\mathbf{b}=\{b_I = \langle b, \psi_I \rangle\}_{I \in \D}$, we obtain that for all cube $I$, 
\begin{align}\label{PB4}
|\langle \Pi_b(\mathbf{1}_I, \mathbf{1}_I), \mathbf{1}_I \rangle| 
&\lesssim \big[\|\mathcal{P}_N^{\perp} \Pi_b\|_{L^4 \times L^4 \to L^2} 
+ \|\Pi_b\|_{L^4 \times L^4 \to L^2} F(I; N) \big] \, |I|
\\ \nonumber 
&\lesssim \big[\|\mathcal{P}_N^{\perp} b\|_{\BMO} + \|b\|_{\BMO} F(I; N) \big] \, |I|, 
\end{align} 
where the implicit constant are independent of $I$ and $N$. The condition $b \in \CMO(\Rn)$ gives  
\begin{align}\label{PNB}
\lim_{N \to \infty}  \|\mathcal{P}_N^{\perp}b\|_{\BMO} = 0. 
\end{align}
Thus, \eqref{PB4} and \eqref{PNB} imply that $\Pi_b$ satisfies the weak compactness property. This shows \eqref{H2}.

It remains to justify \eqref{H1}, i.e., $\Pi_b$ has a compact Calder\'{o}n--Zygmund kernel:  
\begin{align*}
K(x, y_1, y_2) 
:= \sum_{I \in \D} \langle b, \psi_I \rangle \psi_I(x) \phi_I(y_1) \phi_I(y_2),  
\quad\text{where } \phi_I := \frac{\mathbf{1}_I}{|I|}.
\end{align*}
Let $I_0$ be the smallest dyadic cube in $\D$ containing $x, y_1, y_2$. Note that for all $x, y_1, y_2 \in \Rn$ with $|x-y_1| + |x-y_2|>0$, 
\begin{align*}
|K(x, y_1, y_2)| 
&\lesssim \sum_{I \in \D: I_0 \subset I} \|b\|_{\BMO} 
\big\| |I|^{-\frac12} \psi_I \big\|_{\mathrm{H}^1} |I|^{-2}
\lesssim |I_0|^{-2} 
\lesssim (|x-y_1| + |x-y_2|)^{-2n},  
\end{align*}
which yields 
\begin{align}\label{KK-0}
\lim_{|x-y_1| + |x-y_2| \to \infty} K(x, y_1, y_2) = 0.
\end{align} 
Since we will no longer need the vanishing property of $\psi_I(x)$, by symmetry, \eqref{PNB}--\eqref{KK-0}, and Lemma \ref{lem:size}, it suffices to prove that in either case of the following: 
\begin{align*}
&\text{(i)} \, \, \quad \max\{|x-y_1|, |x-y_2|\}<2^{-3N}, 
\\ 
&\text{(ii)} \, \quad \max\{|x-y_1|, |x-y_2|\}>2^N, 
\\
&\text{(iii)} \quad \max\{|x+y_1|, |x+y_2|\}> N 2^{N+2}, 
\end{align*}
we have  
\begin{align}\label{KBB}
|\mathscr{K}| 
\lesssim \Big(2^{-2nN} \|b\|_{\BMO} + \|\mathcal{P}_N^{\perp} b\|_{\BMO} \Big) 
\frac{|x-x'|}{(|x-y_1| + |x-y_2|)^{2n+1}}, 
\end{align}
whenever $|x-x'| \le \frac12 \max\{|x-y_1|, |x-y_2|\}$, where  
\begin{align}\label{KK}
\mathscr{K} := 
K(x, y_1, y_2) - K(x', y_1, y_2) 
= \sum_{I \in \D} \langle b, \psi_I \rangle \big[\psi_I(x) - \psi_I(x')\big] \phi_I(y_1) \phi_I(y_2). 
\end{align}

If the inner part in \eqref{KK} is non-zero, then $y_1, y_2 \in I$ and either $x \in I$ or $x' \in I$. Keeping $I_0 \subset I$ in mind, we rewrite  
\begin{align}\label{KK-1}
\mathscr{K} 
= \bigg(\sum_{I \in \D(N)} + \sum_{I \notin \D(N)} \bigg) 
\langle b, h_I \rangle \big[\psi_I(x) - \psi_I(x')\big] \phi_I(y_1) \phi_I(y_2)
=: \mathscr{K}_1 + \mathscr{K}_2. 
\end{align}
Let us first deal with $\mathscr{K}_2$. Observe that 
\begin{align}\label{KK-2}
\mathscr{K}_2 
= \Big\langle \mathcal{P}_N^{\perp} b,  \sum_{I \in \D} 
\big[\psi_I(x) - \psi_I(x')\big] \phi_I(y_1) \phi_I(y_2) h_I \Big\rangle
=: \langle \mathcal{P}_N^{\perp} b, \Phi \rangle,  
\end{align}
and 
\begin{align*}
S_{\D} \Phi(z) 
&= \bigg(\sum_{I \in \D} \big|\big[h_I(x) - h_I(x')\big] 
\phi_I(y_1) \phi_I(y_2)\big|^2 \frac{\mathbf{1}_I(z)}{|I|} \bigg)^{\frac12}
\\
&\le \sum_{I \in \D} |I|^{-\frac12} \big|h_I(x) - h_I(x')\big| 
\phi_I(y_1) \phi_I(y_2) \mathbf{1}_I(z). 
\end{align*}
Then, 
\begin{align}\label{KK-3}
\|S_{\D} \Phi\|_{L^1}
&\le \sum_{I \in \D: I_0 \subset I} \frac{|x-x'|}{\ell(I)^{2n+1}}
\lesssim \frac{|x-x'|}{\ell(I_0)^{2n+1}} 
\lesssim \frac{|x-x'|}{(|x-y_1| + |x-y_2|)^{2n+1}}. 
\end{align}
Combining \eqref{KK-2} with \eqref{KK-3}, we obtain 
\begin{align}\label{KK-4}
|\mathscr{K}_2| 
\lesssim \|\mathcal{P}_N^{\perp} b\|_{\BMO} \|S_{\D} \Phi\|_{L^1}
\lesssim \frac{\|\mathcal{P}_N^{\perp} b\|_{\BMO} \, |x-x'|}{(|x-y_1| + |x-y_2|)^{2n+1}}. 
\end{align}

To proceed, we claim that 
\begin{align}\label{KK-23}
\mathscr{K} = \mathscr{K}_2, \quad\text{ in both cases (ii) and (iii)}. 
\end{align}
Indeed, let $I \in \D$ be in the sum so that the inner part in \eqref{KK} is non-zero. In case (ii), $\ell(I) \ge \ell(I_0) \ge \max\{|x-y_1|, |x-y_2|\}>2^N$, and hence, $I \notin \D(N)$. If $\ell(I)>2^N$, then $I \notin \D(N)$. If $I$ belongs to the case (iii) with $\ell(I) \le 2^N$, then writing $x_i := \frac12(x+y_i)$, we have $x_i \in I_0 \subset I$ and $|x_i-c_I| \le \ell(I)/2 \le 2^{N-1}$, $i=1, 2$. Hence, 
\begin{align*}
&\rd(I, 2^N \I) 
=2^{-N} \d(I, 2^N \I) 
\ge 2^{-N} |c_I|
\\
&\ge 2^{-N} \max_{i=1, 2} \{|x_i| - |x_i - c_I|\}
\ge 2^{-N} \big(\max_{i=1, 2} |x_i| - 2^{N-1} \big)
\\
&= 2^{-N-1} \big(\max\{|x+y_1|, |x+y_2|\} - 2^N \big)
\ge 2^{-N-1} (N 2^{N+2} - 2^N)
>N, 
\end{align*}
which shows $I \notin \D(N)$. Thus, \eqref{KK-23} holds.

Let us turn to the estimate for $\mathscr{K}_1$ in case (i). We rewrite 
\begin{align}\label{KK-5}
\mathscr{K}_1 
= \bigg\langle b,  \sum_{I \in \D(N)} \big(\psi_I(x) - \psi_I(x')\big) \phi_I(y_1) \phi_I(y_2) h_I \bigg\rangle
=: \langle b, \Psi \rangle. 
\end{align}
Set $k_0 := \max\{0, -N-\log_2 \ell(I_0)\}$. As done for $\Phi$, we have 
\begin{align}\label{SDP}
\|S_{\D} \Psi\|_{L^1}
\lesssim \sum_{I \in \D(N): I_0 \subset I} \frac{|x-x'|}{\ell(I)^{2n+1}}
= \sum_{k \ge k_0} \frac{|x-x'|}{(2^k \ell(I_0))^{2n+1}}. 
\end{align}
Then \eqref{KK-5} and \eqref{SDP} imply    
\begin{align*}
|\mathscr{K}_1| 
\lesssim \|b\|_{\BMO} \|S_{\D} \Psi\|_{L^1}
\lesssim \|b\|_{\BMO} |x-x'| \sum_{k \ge k_0} \frac{2^{-2kn}}{\ell(I_0)^{2n+1}}. 
\end{align*}
To analyze the last term, we treat two cases as follows. If $\max\{|x-y_1|, |x-y_2|\} \le 2^{-N} \ell(I_0)$, then 
\begin{align}\label{KK-6}
|\mathscr{K}_1| 
&\lesssim \|b\|_{\BMO} \frac{|x-x'|}{\ell(I_0)^{2n+1}}
\le \frac{2^{-2nN} \|b\|_{\BMO} |x-x'|}{(|x-y_1| + |x-y_2|)^{2n+1}}, 
\end{align}
where the implicit constants are independent of $N$. If $\max\{|x-y_1|, |x-y_2|\}>2^{-N} \ell(I_0)$, then 
\begin{equation*}
\ell(I_0) < 2^N \max\{|x-y_1|, |x-y_2|\} \le 2^{-2N} 
\quad\text{ and }\quad 
k_0 \ge -N - \log_2 \ell(I_0) \ge N, 
\end{equation*}
which in turn leads to 
\begin{align}\label{KK-7}
|\mathscr{K}_1| \
\lesssim \|b\|_{\BMO} |x-x'| \sum_{k \ge N} \frac{2^{-2kn}}{\ell(I_0)^{2n+1}}
\lesssim \frac{2^{-2nN} \|b\|_{\BMO} |x-x'|}{(|x-y_1| + |x-y_2|)^{2n+1}}. 
\end{align}
Consequently, \eqref{KBB} follows from \eqref{KK-1}, \eqref{KK-4}, \eqref{KK-23}, \eqref{KK-6}, and \eqref{KK-7}. 
\qed

\subsection{Bilinear pseudo-differential operators}
For any $\sigma \in \mathcal{S}_{\rho,\delta}^m$, we define the seminorm 
\begin{align*}
P_{\alpha, \beta, \gamma}^{m, \rho, \delta}(\sigma) 
:= \sup_{x, \, \xi, \, \eta \in \Rn}  
\frac{\big|\partial_{x}^{\alpha} \partial_{\xi}^{\beta} \partial_{\eta}^{\gamma} \sigma(x, \xi, \eta) \big|}
{(1+ |\xi| + |\eta|)^{m + \delta|\alpha| - \rho (|\beta| + |\gamma|)}} 
\end{align*}
for all multi-indices $\alpha$, $\beta$ and $\gamma$.

Combining \cite[Theorem 2.1]{BMNT} with \cite[Corollary 1]{GT1}, one has that for any $\sigma \in \mathcal{S}_{1,\delta}^0$ with $\delta \in [0, 1)$, 
\begin{align}\label{TsCZO}
\text{$T_{\sigma}$ is a bilinear Calder\'{o}n--Zygmund operator.}
\end{align} 
In what follows, let $K_{\sigma}$ denote the kernel of $T_{\sigma}$.

\begin{proof}[\bf Proof of Theorem \ref{thm:Tsig}]
Note that $\mathcal{K}_{1, \delta}^0 \subset \mathcal{S}_{1, \delta}^0$. It follows from \eqref{TsCZO} that  
\begin{align}\label{TCZO-1}
\text{$T_{\sigma}$ is a bilinear Calder\'{o}n--Zygmund operator.}
\end{align} 
Considering Theorems \ref{thm:CCZK}, \ref{thm:WCP}, and \ref{thm:T1CMO}, we are reduced to proving that   
\begin{align}\label{TT-1}
\text{$T_{\sigma}$ is compact from $L^4(\Rn) \times L^4(\Rn)$ to $L^2(\Rn)$, \quad $k=1, 2$}. 
\end{align}
Our proof is a bilinear extension of the proof of \cite[Theorem 3.2]{CST}, which itself builds on the idea in \cite[Theorem E]{Cor}.  

Let $\psi$ be a smooth cut-off function such that $\mathbf{1}_{B(0, 1)} \le \psi \le \mathbf{1}_{B(0, 2)}$. For each $j \in \N$, let 
\begin{align}\label{def:psi}
\phi_j(x, \xi, \eta) = \psi_j(x) \psi_j(\xi)  \psi_j(\eta)  
:= \psi(2^{-j} x) \psi(2^{-j} \xi) \psi(2^{-j} \eta) .
\end{align} 
Then we see that 
\begin{align}\label{def:sig}
\sigma_j := \sigma \phi_j \in \mathcal{S}_{1, \delta}^0 
\quad\text{ uniformly in $j \in \N$},  
\end{align}
which along with \eqref{TsCZO} gives 
\begin{align}\label{TCZO-2}
\text{$T_{\sigma_j}$ is a bilinear Calder\'{o}n--Zygmund operator.}
\end{align}
By Lemma \ref{lem:limit}, to obtain \eqref{TT-1}, it suffices to show  
\begin{align}\label{TT-2}
\lim_{j \to \infty} \big\|T_{\sigma_j} - T_{\sigma}\big\|_{L^4 \times L^4 \to L^2} = 0
\end{align}
and 
\begin{align}\label{TT-3}
\text{$T_{\sigma_j}$ is compact from $L^4(\Rn) \times L^4(\Rn)$ to $L^2(\Rn)$, 
\quad $\forall j \in \N$.} 
\end{align}

Let us bound $\|T_{\sigma_j} - T_{\sigma}\|_{L^4 \times L^4 \to L^2}$. By Lemma \ref{lem:KP}, 
\begin{align}\label{KK21}
\|K_{\sigma_j} - K_{\sigma}\|_{\mathrm{CZ}(\delta)} 
= \|K_{\sigma_j - \sigma}\|_{\mathrm{CZ}(\delta)}
\lesssim \sum_{|\alpha|, |\beta| \le 2N_0} 
P_{\alpha, \beta, 0}^{0, 1, \delta}(\sigma_j - \sigma), 
\end{align}
where $N_0>2n$. From \cite[Theorem 2.1]{BMNT} and its proof, it follows that both symbols of $T_{\sigma}^{*1}$ and $T_{\sigma}^{*2}$ belong to $\mathcal{S}_{1, \delta}^0$ and 
\begin{equation}\label{KK22}
\begin{aligned}
|(\sigma_j - \sigma)(x, \xi, \eta)| 
&\lesssim P_{0, 0, 0}^{0, 1, \delta}(\sigma_j - \sigma), 
\\ 
|(\sigma_j^{* k} - \sigma^{* k})(x, \xi, \eta)| 
&\lesssim \sum_{|\alpha|, |\beta|, |\gamma| \le 2N_1}  
P_{\alpha, \beta, \gamma}^{0, 1, \delta}(\sigma_j - \sigma), 
\end{aligned}
\end{equation}
where $N_1>n/2$ sufficiently large. Pick $N := \max\{N_0, N_1\}$. Then together with \eqref{KK21} and \eqref{KK22}, \cite[Corollary 1]{GT1} implies  
\begin{align}\label{KK23}
\|T_{\sigma_j} - T_{\sigma}\|_{L^4 \times L^4 \to L^2} 
\lesssim \sum_{|\alpha|, |\beta|, |\gamma| \le 2N} 
P_{\alpha, \beta, \gamma}^{0, 1, \delta}(\sigma_j - \sigma).  
\end{align}
Thus, \eqref{TT-2} is a consequence of \eqref{KK23} and Lemma \ref{lem:Pab}.

It remains to demonstrate \eqref{TT-3}. By \eqref{TCZO-2} and \cite[Theorem 3]{GT1}, we have  
\begin{align}\label{TFK-1}
\sup_{\substack{\|f_1\|_{L^4} \le 1 \\ \|f_2\|_{L^4} \le 1}} 
\big\|T_{\sigma_j}(f_1, f_2) \big\|_{L^2} 
\lesssim 1.
\end{align}
Note that $\supp \sigma_j \subset \{(x, \xi, \eta) \in \R^{3n}: |x| \le 2^{j+1}, |\xi| \le 2^{j+1}, |\eta| \le 2^{j+1}\}$. Then, 
\begin{align*}
K_{\sigma_j}(x, y, z) = 0
\quad\text{ and }\quad 
T_{\sigma_j}(f_1, f_2)(x) = 0, 
\quad\text{ for all } |x|>2^{j+1}, 
\end{align*}
which yields   
\begin{align}\label{TFK-2}
\lim_{A \to \infty} \sup_{\substack{\|f_1\|_{L^4} \le 1 \\ \|f_2\|_{L^4} \le 1}} 
\big\|T_{\sigma_j}(f_1, f_2) \mathbf{1}_{B(0, A)^c}\big\|_{L^2} = 0.
\end{align}
We are going to show 
\begin{align}\label{TFK-3}
\lim_{|h| \to \infty} \sup_{\substack{\|f_1\|_{L^4} \le 1 \\ \|f_2\|_{L^4} \le 1}} 
\big\|\tau_h T_{\sigma_j}(f_1, f_2) - T_{\sigma_j}(f_1, f_2)\big\|_{L^2} = 0.
\end{align}
Once this is established, \eqref{TT-3} follows from Theorem \ref{thm:RKLpq} and \eqref{TFK-1}--\eqref{TFK-3}. 

The kernel of $T_{\sigma_j}$ is given by 
\begin{align}\label{def:Ksi}
K_{\sigma_j}(x, y, z) 
= \iint_{\R^{2n}} \sigma_j(x, \xi, \eta) e^{2\pi i [(x-y) \cdot \xi + (x-z) \cdot \eta]} d\xi \, d\eta. 
\end{align}
Fix an even number $N>n$. By definition, we see that $\sigma_j \in \mathcal{S}_{1, 0}^{-2N}$ and 
\begin{align}\label{KKSS}
\|K_{\sigma_j}\|_{L^{\infty}} 
+ \|K_{\Delta_{\xi}^N \sigma_j}\|_{L^{\infty}} 
+ \|K_{\Delta_{\eta}^N \sigma_j}\|_{L^{\infty}} 
\lesssim \iint_{\R^{2n}} \frac{d\xi \, d\eta}{(1+|\xi|+|\eta|)^{2N}} 
\lesssim 1. 
\end{align}
Using integration by parts and the fact that $e^{i \xi \cdot x} = (-1)^k |x|^{-2k} \Delta_{\xi}^k e^{i \xi \cdot x}$ for all $k \in \N$ and $x \neq 0$, we deduce  
\begin{align}\label{KD-1}
K_{\Delta_{\xi}^N \sigma_j}(x, y, z) 
&= \iint_{\R^{2n}} \Delta_{\xi}^N \sigma_j(x, \xi, \eta) 
e^{2\pi i \xi \cdot (x-y)} e^{2\pi i \eta \cdot (x-z)} \, d\xi \, d\eta 
\\ \nonumber 
&= \iint_{\R^{2n}} \Delta_{\xi}^N \big(e^{2\pi i \xi \cdot (x-y)} \big) 
e^{2\pi i \eta \cdot (x-z)} \sigma_j(x, \xi, \eta) \, d\xi \, d\eta 
\\ \nonumber
&= (2\pi |x-y|)^{2N} K_{\sigma_j}(x, y, z).  
\end{align}
Similarly, 
\begin{align}\label{KD-2}
K_{\Delta_{\eta}^N \sigma_j}(x, y, z) 
= (2\pi |x-z|)^{2N} K_{\sigma_j}(x, y, z).  
\end{align}
Thus, it follows from \eqref{KD-1} and \eqref{KD-2} that 
\begin{align}\label{KSIJ}
K_{\sigma_j}(x, y, z) 
= \frac{K_{\sigma_j}(x, y, z) 
+ K_{\Delta_{\xi}^N \sigma_j}(x, y, z) 
+ K_{\Delta_{\eta}^N \sigma_j}(x, y, z)}
{1 + (2\pi |x-y|)^{2N} + (2\pi |x-z|)^{2N}}, 
\end{align}
which along with \eqref{KKSS} implies that for any $h \in \Rn$, 
\begin{align}\label{KSJ-1}
|K_{\sigma_j}(x-h, y, z) - K_{\sigma_j}(x, y, z)|
\lesssim \sum_{i=1}^4 \mathbf{K}_j^i(x, y, z), 
\end{align}
where 
\begin{align*}
\mathbf{K}_j^1(x, y, z) 
&:= \bigg|\frac{1}{1 + (2\pi |x-h-y|)^{2N} + (2\pi |x-h-z|)^{2N}} 
\\
&\qquad\qquad- \frac{1}{1 + (2\pi |x-y|)^{2N} + (2\pi |x-z|)^{2N}}\bigg|, 
\\
\mathbf{K}_j^2(x, y, z) 
&:= \frac{|K_{\sigma_j}(x-h, y, z) - K_{\sigma_j}(x, y, z)|}{1 + (2\pi |x-y|)^{2N} + (2\pi |x-z|)^{2N}}, 
\\
\mathbf{K}_j^3(x, y, z) 
&:= \frac{|K_{\Delta_{\xi}^N \sigma_j}(x-h, y, z) 
- K_{\Delta_{\xi}^N \sigma_j}(x, y, z)|}{1 + (2\pi |x-y|)^{2N} + (2\pi |x-z|)^{2N}}, 
\\
\mathbf{K}_j^4(x, y, z) 
&:= \frac{|K_{\Delta_{\eta}^N \sigma_j}(x-h, y, z) 
- K_{\Delta_{\eta}^N \sigma_j}(x, y, z)|}{1 + (2\pi |x-y|)^{2N} + (2\pi |x-z|)^{2N}}. 
\end{align*}
Denote 
\begin{align}\label{KSJ-2}
\mathbf{G}_j^i(x) 
:= \iint_{\R^{2n}} \mathbf{K}_j^i(x, y, z) |f_1(y)| \, |f_2(z)| \, dy \, dz, \quad i=1, 2, 3, 4.
\end{align}
Note that 
\begin{align*}
\mathbf{K}_j^1(x, y, z) 
\lesssim |h| \sum_{k=0}^{2N-1} 
\frac{(1+|x-h-y|+|x-h-z|)^{-k}}{(1+|x-y|+|x-z|)^{2N-k-1}}
=: |h| \sum_{k=0}^{2N-1} \mathbf{K}_{j, k}^1(x, y, z).  
\end{align*}
If we set $\phi_k(x) := \frac{(1+|x-h|)^{-k/2}}{(1+|x|)^{(2N-k-1)/2}}$, then 
\begin{align*}
\mathbf{G}_{j, k}^1(x) 
:= \iint_{\R^{2n}} \mathbf{K}_{j, k}^1(x, y, z) |f_1(y)| \, |f_2(z)| \, dy \, dz 
\le \phi_k*|f_1|(x) \, \phi_k*|f_2|(x), 
\end{align*}
and hence, 
\begin{align}\label{GJJ-1}
\|\mathbf{G}_j^1\|_{L^2}
&\lesssim |h| \sup_k \|\mathbf{G}_{j, k}^1\|_{L^2}
\le |h| \sup_k \|\phi_k*|f_1|\|_{L^4} \|\phi_k*|f_2|\|_{L^4}
\\ \nonumber 
&\le |h| \|f_1\|_{L^4} \|f_2|\|_{L^4} \sup_k \|\phi_k\|_{L^1}^2
\lesssim |h| \|f_1\|_{L^4} \|f_2|\|_{L^4}. 
\end{align}
Moreover, to control $\mathbf{G}_j^2$, we use \eqref{def:Ksi} and the mean value theorem to arrive at 
\begin{align*}
&|K_{\sigma_j}(x-h, y, z) - K_{\sigma_j}(x, y, z)|
\\ 
&\le \iint_{\R^{2n}} |\sigma_j(x-h, \xi, \eta) - \sigma_j(x, \xi, \eta)| \, d\xi \, d\eta 
+ \iint_{\R^{2n}} |\sigma_j(x, \xi, \eta)| \big|e^{2\pi i h \cdot (\xi+\eta)} -1 \big| \, d\xi \, d\eta
\\
&\lesssim |h| \iint_{\R^{2n}} |\nabla_x \sigma_j(x+\theta h, \xi, \eta)| \, d\xi \, d\eta 
+ |h| \iint_{\R^{2n}} |\sigma_j(x, \xi, \eta)| |\xi + \eta| \, d\xi \, d\eta 
\\
&\lesssim |h| \iint_{\R^{2n}} \frac{\psi_{2j}(\xi) \psi_{2j}(\eta)}{(1+|\xi|+|\eta|)^{2N}} \, d\xi \, d\eta 
+ |h| \iint_{\R^{2n}} \frac{|\xi| + |\eta|}{(1+|\xi|+|\eta|)^{2N}} \, d\xi \, d\eta 
\lesssim |h|, 
\end{align*}
which gives  
\begin{align}\label{def:GJ2}
\mathbf{G}_j^2(x) 
\lesssim |h| \iint_{\R^{2n}} \frac{|f_1(y)| \, |f_2(z)|}{(1+|x-y|+|x-z|)^{2N}} \, dy \, dz
\lesssim |h| \mathcal{M}(f_1, f_2)(x), 
\end{align}
and 
\begin{align}\label{GJJ-2}
\|\mathbf{G}_j^2\|_{L^2}
\lesssim |h| \|\mathcal{M}(f_1, f_2)\|_{L^2}
\lesssim |h| \|f_1\|_{L^4} \|f_2|\|_{L^4}. 
\end{align}
The same argument holds for $\mathbf{G}_j^3$ and $\mathbf{G}_j^4$: 
\begin{align}\label{GJJ-34}
\|\mathbf{G}_j^i\|_{L^2}
\lesssim |h| \|f_1\|_{L^4} \|f_2|\|_{L^4}, \quad i=3, 4. 
\end{align}
Therefore, \eqref{KSJ-1}--\eqref{GJJ-34} lead to 
\begin{align*}
\|\tau_h T_{\sigma_j}(f_1, f_2) - T_{\sigma_j}(f_1, f_2)\|_{L^2}  
\lesssim \sum_{i=1}^4 \|\mathbf{G}_j^i\|_{L^2} 
\lesssim |h| \|f_1\|_{L^4} \|f_2|\|_{L^4}, 
\end{align*}
which yields \eqref{TFK-3}. 
\end{proof}

\begin{lemma}\label{lem:Pab}
Let $0<\rho \le 1$, $0 \le \delta \le 1$, and $m \in \R$. Let $\sigma \in \mathcal{K}_{\rho, \delta}^m$ and define $\sigma_j$ as in \eqref{def:sig}. Then for all multi-indices $\alpha$, $\beta$, and $\gamma$, 
\begin{align*}
\lim_{j \to \infty} P_{\alpha, \beta, \gamma}^{m, \rho, \delta}(\sigma-\sigma_j) = 0.  
\end{align*}
\end{lemma}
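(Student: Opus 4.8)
\textbf{Proof proposal for Lemma \ref{lem:Pab}.}

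The plan is to reduce the claim to an elementary estimate on the difference $\sigma-\sigma_j = \sigma(1-\phi_j)$ via the Leibniz rule, and then exploit the defining property of the class $\mathcal{K}_{\rho,\delta}^m$, namely that each coefficient function $C_{\alpha,\beta,\gamma}$ is bounded and vanishes at infinity. First I would fix multi-indices $\alpha,\beta,\gamma$ and write, using $\partial^\theta \phi_j(x,\xi,\eta) = 2^{-j|\theta|}(\partial^\theta \psi)(2^{-j}\cdot)$ evaluated at the appropriate scaled variables, the Leibniz expansion
\begin{align*}
\partial_x^\alpha \partial_\xi^\beta \partial_\eta^\gamma (\sigma - \sigma_j)
= \sum_{\substack{\alpha' \le \alpha,\ \beta' \le \beta,\ \gamma'\le\gamma \\ (\alpha',\beta',\gamma') \ne (0,0,0)}}
\binom{\alpha}{\alpha'}\binom{\beta}{\beta'}\binom{\gamma}{\gamma'}
\big(\partial_x^{\alpha-\alpha'}\partial_\xi^{\beta-\beta'}\partial_\eta^{\gamma-\gamma'}\sigma\big)
\big(\partial_x^{\alpha'}\partial_\xi^{\beta'}\partial_\eta^{\gamma'}(1-\phi_j)\big)
\end{align*}
plus the single leading term $(\partial_x^\alpha\partial_\xi^\beta\partial_\eta^\gamma\sigma)(1-\phi_j)$. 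On the support of $1-\phi_j$ one has $\max\{|x|,|\xi|,|\eta|\} \ge 2^j$, and on the support of any derivative $\partial^\theta(1-\phi_j) = -\partial^\theta\phi_j$ with $\theta\ne0$ one has $2^j \le \max\{|x|,|\xi|,|\eta|\} \le 2^{j+1}$ together with the bound $|\partial^\theta\phi_j| \lesssim 2^{-j|\theta|}$.

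Next I would estimate each term. For a term with $(\alpha',\beta',\gamma')\ne(0,0,0)$, the symbol estimate for $\sigma \in \mathcal{K}_{\rho,\delta}^m$ gives, on the support in question,
\begin{align*}
\big|\partial_x^{\alpha-\alpha'}\partial_\xi^{\beta-\beta'}\partial_\eta^{\gamma-\gamma'}\sigma(x,\xi,\eta)\big|
\le C_{\alpha-\alpha',\beta-\beta',\gamma-\gamma'}(x,\xi,\eta)\,(1+|\xi|+|\eta|)^{m+\delta|\alpha-\alpha'|-\rho(|\beta-\beta'|+|\gamma-\gamma'|)},
\end{align*}
so, dividing by the weight $(1+|\xi|+|\eta|)^{m+\delta|\alpha|-\rho(|\beta|+|\gamma|)}$ that appears in $P_{\alpha,\beta,\gamma}^{m,\rho,\delta}$, one picks up an extra factor $(1+|\xi|+|\eta|)^{-\delta|\alpha'|-\rho(|\beta'|+|\gamma'|)} \le 1$ (here I use $\delta\ge 0$, $\rho>0$) times the derivative of $1-\phi_j$, which is $\lesssim 2^{-j|(\alpha',\beta',\gamma')|} \to 0$. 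Thus every such term is $O(2^{-j})$ uniformly in $(x,\xi,\eta)$. The only delicate term is the leading one, $(\partial_x^\alpha\partial_\xi^\beta\partial_\eta^\gamma\sigma)(1-\phi_j)$: after dividing by the weight it is bounded by $C_{\alpha,\beta,\gamma}(x,\xi,\eta)\mathbf{1}_{\{\max\{|x|,|\xi|,|\eta|\}\ge 2^j\}}$. This is where the hypothesis $\sigma\in\mathcal{K}$ rather than $\mathcal{S}$ is essential: since $\lim_{|x|+|\xi|+|\eta|\to\infty}C_{\alpha,\beta,\gamma}(x,\xi,\eta)=0$, for any $\varepsilon>0$ there is $R$ with $C_{\alpha,\beta,\gamma}\le\varepsilon$ whenever $|x|+|\xi|+|\eta|>R$; taking $j$ large enough that $2^j>R$ forces this term to be $\le\varepsilon$ everywhere on the support of $1-\phi_j$.

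Combining the two bounds, for $j$ large we get $P_{\alpha,\beta,\gamma}^{m,\rho,\delta}(\sigma-\sigma_j)\lesssim \varepsilon + C 2^{-j} \le 2\varepsilon$, which proves the limit is zero. The main (and really only) obstacle is the leading term: the ordinary Hörmander-type symbol bound would only give $O(1)$ there, so one genuinely needs the vanishing-at-infinity property built into $\mathcal{K}_{\rho,\delta}^m$; all the other terms are routine because differentiating the cutoff produces the decaying factor $2^{-j|\theta|}$. I would also remark that the argument is uniform in the choice of $\psi$ and uses only $\|\partial^\theta\psi\|_{L^\infty}<\infty$, and that the same computation shows $\sigma_j\in\mathcal{K}_{\rho,\delta}^m$ (in fact $\mathcal{S}_{\rho,\delta}^m$ uniformly in $j$), which is what \eqref{def:sig} records.
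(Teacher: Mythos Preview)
Your overall strategy---Leibniz expansion of $\sigma(1-\phi_j)$, then separate treatment of the leading term versus the terms where a derivative hits the cutoff---matches the paper's, and your handling of the leading term is correct. However, there is a computational error in the non-leading terms that creates a genuine gap precisely in the case $\rho=1$ (the case the paper actually applies).

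When you divide the symbol estimate for $\partial_x^{\alpha-\alpha'}\partial_\xi^{\beta-\beta'}\partial_\eta^{\gamma-\gamma'}\sigma$ by the weight $(1+|\xi|+|\eta|)^{m+\delta|\alpha|-\rho(|\beta|+|\gamma|)}$, the leftover exponent is
\[
\bigl(m+\delta|\alpha-\alpha'|-\rho(|\beta-\beta'|+|\gamma-\gamma'|)\bigr)-\bigl(m+\delta|\alpha|-\rho(|\beta|+|\gamma|)\bigr)
= -\delta|\alpha'|+\rho(|\beta'|+|\gamma'|),
\]
with a \emph{plus} sign on the $\rho$-term, not minus. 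So the extra factor is $(1+|\xi|+|\eta|)^{-\delta|\alpha'|+\rho(|\beta'|+|\gamma'|)}$, which is $\ge 1$ whenever $|\beta'|+|\gamma'|>0$. On the support of $\partial^{(\alpha',\beta',\gamma')}\phi_j$ with $\beta'\ne 0$ or $\gamma'\ne 0$ one has $1+|\xi|+|\eta|\simeq 2^j$, so this growing factor is $\simeq 2^{j\rho(|\beta'|+|\gamma'|)}$; multiplying by the cutoff bound $2^{-j(|\alpha'|+|\beta'|+|\gamma'|)}$ yields only
\[
2^{-j(1+\delta)|\alpha'|}\,2^{-j(1-\rho)(|\beta'|+|\gamma'|)}.
\]
For $\rho<1$ this still decays, but for $\rho=1$ and $\alpha'=0$ it is merely $O(1)$, so your claimed $O(2^{-j})$ fails there. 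The fix is easy and is exactly what the paper does: note that each such term is still bounded by $C_{\alpha-\alpha',\beta-\beta',\gamma-\gamma'}(x,\xi,\eta)$ times the correct weight times $\mathbf{1}_{\{|x|+|\xi|+|\eta|\ge 2^j\}}$, and then invoke the vanishing of \emph{every} coefficient $C_{\alpha'',\beta'',\gamma''}$ at infinity (the $\mathcal{K}$-hypothesis), not just of $C_{\alpha,\beta,\gamma}$. In other words, the ``only delicate term'' is not only the leading one; all terms are handled by the same vanishing-at-infinity argument.
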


\begin{proof}
Fix multi-indices $\alpha$, $\beta$, and $\gamma$. By Leibniz's rule, we have   
\begin{align*}
&|\partial_x^{\alpha} \partial_{\xi}^{\beta} \partial_{\eta}^{\gamma} (\sigma - \sigma_j)(x, \xi, \eta)|
\lesssim \sum_{\substack{\alpha_0 + \alpha_1 = \alpha \\ \beta_0 + \beta_1 = \beta \\ 
\gamma_0 + \gamma_1 = \gamma}} 
S_{\alpha_0, \beta_0, \gamma_0}^{\alpha_1, \beta_1, \gamma_1}
\\ 
&=: \sum_{\substack{\alpha_0 + \alpha_1 = \alpha \\ \beta_0 + \beta_1 = \beta \\ 
\gamma_0 + \gamma_1 = \gamma}} 
|\partial_x^{\alpha_0} \partial_{\xi}^{\beta_0} \partial_{\eta}^{\gamma_0} \sigma(x, \xi, \eta)| 
|\partial_x^{\alpha_1} \partial_{\xi}^{\beta_1} \partial_{\eta}^{\gamma_1} (1-\phi_j)(x, \xi, \eta)|.   
\end{align*}
If $|\alpha_1| = |\beta_1| = |\gamma_1| = 0$, then by definition and the fact that $\supp(1-\phi_j) \subset \{|x| + |\xi| + |\eta| \ge 2^j\}$, 
\begin{align*}
S_{\alpha_0, \beta_0, \gamma_0}^{\alpha_1, \beta_1, \gamma_1}
\lesssim C_{\alpha, \beta, \gamma}(x, \xi, \eta) (1+ |\xi| + |\eta|)^{m + \delta|\alpha| - \rho (|\beta| + |\gamma|)} 
\mathbf{1}_{\{|x| + |\xi| + |\eta| \ge 2^j\}}. 
\end{align*}
If $|\alpha_1| \neq 0$ and $|\beta_1| = |\gamma_1| = 0$, then 
\begin{align*}
|\partial_x^{\alpha_1} (1-\phi_j)(x, \xi, \eta)| 
\le |\partial_x^{\alpha_1} \psi_j(x)| |\psi_j(\xi)| |\psi_j(\eta)| 
\lesssim 2^{-j |\alpha_1|} \mathbf{1}_{\{2^j \le |x| \le 2^{j+1}\}}, 
\end{align*}
which yields  
\begin{align*}
S_{\alpha_0, \beta_0, \gamma_0}^{\alpha_1, \beta_1, \gamma_1}
& \lesssim C_{\alpha_0, \beta, \gamma}(x, \xi, \eta) (1+ |\xi| + |\eta|)^{m + \delta|\alpha_0| - \rho (|\beta| + |\gamma|)} 
\mathbf{1}_{\{2^j \le |x| \le 2^{j+1}\}}
\\
&\le C_{\alpha_0, \beta, \gamma}(x, \xi, \eta) (1+ |\xi| + |\eta|)^{m + \delta|\alpha| - \rho (|\beta| + |\gamma|)} 
\mathbf{1}_{\{|x| + |\xi| + |\eta| \ge 2^j\}}. 
\end{align*}
If $|\beta_1| \neq 0$ or $|\gamma_1| \neq 0$, then we may assume that $|\beta_1| \neq 0$ and 
\begin{align*}
&|\partial_x^{\alpha_1} \partial_{\xi}^{\beta_1} \partial_{\eta}^{\gamma_1} (1-\phi_j)(x, \xi, \eta)| 
\\
&\le |\partial_x^{\alpha_1} \psi_j(x)| |\partial_{\xi}^{\beta_1} \psi_j(\xi)| |\partial_{\eta}^{\gamma_1} \psi_j(\eta)| 
\lesssim 2^{-j (|\alpha_1| + |\beta_1| + |\gamma_1|)} 
\mathbf{1}_{\{2^j \le |\xi| \le 2^{j+1}\}} 
\mathbf{1}_{\{|\eta| \le 2^{j+1}\}} 
\\
&\lesssim (1+|\xi| + |\eta|)^{-(|\beta_1| + |\gamma_1|)} \mathbf{1}_{\{|\xi| \ge 2^j\}}
\le (1+|\xi| + |\eta|)^{-\rho (|\beta_1| + |\gamma_1|)} \mathbf{1}_{\{|x| + |\xi| + |\eta| \ge 2^j\}},
\end{align*}
provided that $2^j \le |\xi| \le 1 + |\xi| + |\eta| \le 1+2^{j+1} + 2^{j+1} = 5 \cdot 2^j$. This in turn implies 
\begin{align*}
S_{\alpha_0, \beta_0, \gamma_0}^{\alpha_1, \beta_1, \gamma_1}
& \lesssim C_{\alpha_0, \beta_0, \gamma_0}(x, \xi, \eta) 
(1+ |\xi| + |\eta|)^{m + \delta |\alpha_0| - \rho (|\beta_0| + |\gamma_0|)} 
\\
&\quad\times (1+|\xi| + |\eta|)^{-\rho (|\beta_1| + |\gamma_1|)} \mathbf{1}_{\{|x| + |\xi| + |\eta| \ge 2^j\}} 
\\
&\le C_{\alpha_0, \beta_0, \gamma_0}(x, \xi, \eta) (1+ |\xi| + |\eta|)^{m + \delta|\alpha| - \rho (|\beta| + |\gamma|)} 
\mathbf{1}_{\{|x| + |\xi| + |\eta| \ge 2^j\}}. 
\end{align*}
Since the fact $\sigma \in \mathcal{K}_{\rho, \delta}^m$ gives $\lim_{|x| + |\xi| + |\eta| \to \infty} C_{\alpha', \beta', \gamma'}(x, \xi, \eta) = 0$ for all multi-indices $\alpha'$, $\beta'$, and $\gamma'$, we utilize the preceding estimates to conclude that $\lim_{j \to \infty} P_{\alpha, \beta, \gamma}^{m, \rho, \delta}(\sigma-\sigma_j) = 0$. 
\end{proof}

\begin{lemma}\label{lem:KP}
Let $0<\rho \le 1$, $0 \le \delta \le 1$, $m \in \R$, and $N \in \N$ satisfying $2N \rho > 2n+m > 1$. 
Let $\sigma \in \mathcal{S}_{\rho, \delta}^m$ and $K_{\sigma}$ be the kernel of $T_{\sigma}$. Then for all multi-indices $\alpha$, $\beta$ and $\gamma$, 
\begin{align*}
|\partial_x^{\alpha} \partial_y^{\beta} \partial_z^{\gamma} K_{\sigma}(x, y, z)|
\lesssim \frac{{\displaystyle\sum}_{\substack{\alpha' \le \alpha \\ |\beta'| \le 2N_0}}  
P_{\alpha', \beta', 0}^{m, \rho, \delta}(\sigma)}
{(|x-y| + |x-z|)^{2n+m + 2N(1-\rho) + |\alpha| + |\beta| + |\gamma|}}, 
\end{align*}
whenever $x \neq y$ or $x \neq z$, where $N_0$ is an integer such that $N_0 \ge (n+|m|+N)/\rho$. 
\end{lemma}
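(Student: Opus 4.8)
The plan is to estimate the kernel
\[
K_{\sigma}(x,y,z)=\iint_{\R^{2n}}\sigma(x,\xi,\eta)\,e^{2\pi i[(x-y)\cdot\xi+(x-z)\cdot\eta]}\,d\xi\,d\eta
\]
(interpreted as an oscillatory integral) and its derivatives by the standard non-stationary phase method, keeping careful track of the dependence on the seminorms $P^{m,\rho,\delta}_{\alpha,\beta,\gamma}(\sigma)$. First I would reduce to the case $\alpha=\beta=\gamma=0$ for the $x,y,z$ derivatives: differentiating under the integral sign in $y$ brings down a factor $(-2\pi i\xi)^{\beta}$, a derivative in $z$ brings down $(-2\pi i\eta)^{\gamma}$, and a derivative in $x$ produces both a factor $(2\pi i(\xi+\eta))^{\alpha}$ from the phase and, via Leibniz, lower-order $x$-derivatives $\partial_x^{\alpha'}\sigma$ with $\alpha'\le\alpha$. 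Each such multiplication turns $\sigma\in\mathcal S^{m}_{\rho,\delta}$ into a symbol in $\mathcal S^{m+|\alpha|+|\beta|+|\gamma|}_{\rho,\delta}$ whose seminorms are controlled by $\sum_{\alpha'\le\alpha}P^{m,\rho,\delta}_{\alpha',\beta',0}(\sigma)$ for $|\beta'|$ bounded by the number of $\xi,\eta$-derivatives we will integrate by parts below. So it suffices to prove: for $\tilde m:=m+|\alpha|+|\beta|+|\gamma|$ and a symbol $\tau\in\mathcal S^{\tilde m}_{\rho,\delta}$,
\[
\bigl|K_{\tau}(x,y,z)\bigr|\lesssim \frac{\|\tau\|}{(|x-y|+|x-z|)^{2n+\tilde m+2N(1-\rho)}},
\]
where $\|\tau\|$ denotes the relevant finite combination of seminorms.

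Next I would run the oscillatory-integral argument on $K_{\tau}$. Write $t:=|x-y|+|x-z|$. On the region $|\xi|+|\eta|\lesssim t^{-1}$ (where the symbol has size $\lesssim (1+t^{-1})^{\tilde m}$), the integral over this ball contributes $\lesssim t^{-2n}(1+t^{-1})^{\tilde m}\lesssim t^{-2n-\tilde m}$ when $t\le 1$; for $t\ge 1$ this region is a single fixed ball and is harmless. For the complementary region $|\xi|+|\eta|\gtrsim t^{-1}$ I would integrate by parts using the two identities
\[
e^{2\pi i(x-y)\cdot\xi}=\frac{\Delta_{\xi}^{N}e^{2\pi i(x-y)\cdot\xi}}{(2\pi|x-y|)^{2N}},\qquad
e^{2\pi i(x-z)\cdot\eta}=\frac{\Delta_{\eta}^{N}e^{2\pi i(x-z)\cdot\eta}}{(2\pi|x-z|)^{2N}},
\]
choosing which identity to apply according to whether $|x-y|\ge|x-z|$ or not, so that we always divide by $(\max\{|x-y|,|x-z|\})^{2N}\simeq t^{2N}$. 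Moving the $N$ Laplacians onto $\tau$ (that is $2N$ derivatives in $\xi$ or $\eta$, which is why we need $|\beta'|\le 2N_0$ with $N_0\ge(n+|m|+N)/\rho$ after accounting for the extra $y,z$-derivatives already performed) costs a factor $(1+|\xi|+|\eta|)^{-2N\rho}$, so the integrand is bounded by $(1+|\xi|+|\eta|)^{\tilde m-2N\rho}$ times seminorms. Then
\[
\int_{|\xi|+|\eta|\gtrsim t^{-1}}(1+|\xi|+|\eta|)^{\tilde m-2N\rho}\,d\xi\,d\eta\lesssim t^{-(2n+\tilde m-2N\rho)}
\]
provided $2N\rho>2n+\tilde m$, i.e.\ the hypothesis $2N\rho>2n+m>1$ together with the observation that the extra derivatives only help (since $1-\rho\ge 0$ the exponent $2n+\tilde m+2N(1-\rho)$ we must hit is at most $2n+m+2N(1-\rho)+|\alpha|+|\beta|+|\gamma|$, and absorbing the gained powers of $|\xi|+|\eta|$ gives exactly the decay $t^{-(2n+m+2N(1-\rho)+|\alpha|+|\beta|+|\gamma|)}$). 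Dividing by $t^{2N}$ from the integration by parts yields decay $t^{-(2n+\tilde m+2N(1-\rho))}$, which is the claimed bound after unwinding $\tilde m$.

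Finally I would assemble the pieces: combine the near-origin estimate $\lesssim t^{-(2n+\tilde m)}$ (which for $t\le 1$ dominates $t^{-(2n+\tilde m+2N(1-\rho))}$ only after noting $2N(1-\rho)\ge 0$, so actually one should keep $t^{-(2n+\tilde m+2N(1-\rho))}$ as the governing bound when $t\le 1$ as well by simply not integrating past $|\xi|+|\eta|\sim t^{-1}$ in the large-frequency piece — here I would be slightly careful to state the estimate uniformly) with the large-frequency estimate, and track that every symbol seminorm appearing is of the form $P^{m,\rho,\delta}_{\alpha',\beta',0}(\sigma)$ with $\alpha'\le\alpha$ and $|\beta'|\le 2N_0$, using that the $z$-variable plays the same role as $y$ by symmetry so no $\gamma'$-derivatives of $\sigma$ are needed beyond what the $\xi,\eta$-integrations by parts produce (these act on $\sigma$ as $\partial_\xi,\partial_\eta$, recorded as $\beta'$). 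The main obstacle, and the only genuinely delicate point, is bookkeeping: making sure the integration-by-parts count $2N$ plus the derivatives already pulled out of the phase stays within the budget $|\beta'|\le 2N_0$ with the stated $N_0\ge(n+|m|+N)/\rho$, and verifying that the convergence condition $2N\rho>2n+\tilde m$ is implied by $2N\rho>2n+m>1$ once one checks that the extra $|\alpha|+|\beta|+|\gamma|$ powers are compensated by the matching improvement $2N(1-\rho)+|\alpha|+|\beta|+|\gamma|$ in the target exponent; this requires re-choosing $N$ (or rather noting the bound is stated with the same $N$) and I would handle it by a short monotonicity remark rather than by re-deriving optimal constants.
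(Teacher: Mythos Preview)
Your overall architecture matches the paper's: reduce to $\alpha=\beta=\gamma=0$ by differentiating under the integral (Leibniz produces a new symbol of order $\tilde m=m+|\alpha|+|\beta|+|\gamma|$ whose seminorms are controlled by $P^{m,\rho,\delta}_{\alpha',\beta',0}(\sigma)$), and then estimate the oscillatory integral by integrating by parts in $\xi$ or $\eta$. Two ingredients the paper uses that you omit: first, it truncates $\sigma$ to compactly supported $\sigma_j:=\sigma\phi_j$ and estimates $K_{\sigma_j}$ uniformly in $j$, which is what justifies the repeated integrations by parts (your oscillatory integral is not absolutely convergent as written); second, the paper does \emph{not} use a single frequency cutoff with $N$ Laplacians throughout. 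It splits into $|x-y|\ge 1$, where it applies $N_0$ Laplacians (this is the reason the seminorm sum runs over $|\beta'|\le 2N_0$, not $2N$), and $0<|x-y|<1$, where it first rescales $(\xi,\eta)\mapsto (r^{-1}\xi,r^{-1}\eta)$ and then uses cutoffs $\psi,\widetilde\psi$ together with $N$ Laplacians. Your explanation that $N_0$ arises from ``accounting for the extra $y,z$-derivatives already performed'' is not correct: the $y,z$-derivatives produce powers of $\xi,\eta$, not $\xi,\eta$-derivatives; the role of $N_0$ is to furnish enough decay $|x-y|^{-2N_0}\le |x-y|^{-(2n+m+2N(1-\rho))}$ on the range $|x-y|\ge1$ while keeping $\int(1+|\xi|+|\eta|)^{m-2N_0\rho}\,d\xi\,d\eta$ convergent.

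The point you flag as ``the only genuinely delicate point'' is a real gap, not a bookkeeping formality. After the reduction the convergence condition for your far-frequency integral is $2N\rho>2n+\tilde m$, which is strictly stronger than the hypothesis $2N\rho>2n+m$ as soon as $|\alpha|+|\beta|+|\gamma|>0$. Your claim that this ``is implied by $2N\rho>2n+m>1$ once one checks that the extra $|\alpha|+|\beta|+|\gamma|$ powers are compensated by the matching improvement in the target exponent'' is wrong: raising $\tilde m$ both strengthens the convergence requirement \emph{and} strengthens the target decay, so these effects compound rather than cancel, and no ``monotonicity remark'' produces the needed inequality. The paper's small-$t$ argument is formally subject to the same constraint when one feeds $\widetilde\sigma_j$ of order $\tilde m$ back into \eqref{Kj}; in the applications only $|\alpha|+|\beta|+|\gamma|\le 1$ is needed, where the inequalities are easily arranged, but as a proof of the lemma ``for all multi-indices'' your sketch (and the paper's) leaves this step unresolved.
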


\begin{proof}
Let $\psi$ and $\sigma_j$ be defined in \eqref{def:psi} and \eqref{def:sig}, respectively. Let $K_{\sigma_j}$ be the kernel of $T_{\sigma_j}$. Since $\sigma_j$ converges pointwise to $\sigma$, it follows that $K_{\sigma_j}$ converges to $K_{\sigma}$ in the sense of distributions. Hence, it is enough to show the same bound holds for $K_{\sigma_j}$ uniformly in $j \in \N$. By definition and Leibniz's rule, there holds  
\begin{align*}
&|\partial_x^{\alpha_2} \partial_{\xi}^{\beta_2} \partial_{\eta}^{\gamma_2} \sigma_j(x, \xi, \eta)|
\\ 
&\le \sum_{\substack{\alpha_0 + \alpha_1 = \alpha_2 \\ \beta_0 + \beta_1 = \beta_2 \\ 
\gamma_0 + \gamma_1 = \gamma_2}} 
C_{\alpha_2, \beta_2, \gamma_2}^{\alpha_1, \beta_1, \gamma_1} 
|\partial_x^{\alpha_1} \partial_{\xi}^{\beta_1} \partial_{\eta}^{\gamma_1} \sigma(x, \xi, \eta)| 
|\partial_x^{\alpha_0} \psi_j(x)| |\partial_{\xi}^{\beta_0} \psi_j(\xi)| 
|\partial_{\eta}^{\gamma_0} \psi_j(\eta)|
\\
&\le \sum_{\substack{\alpha_0 + \alpha_1 = \alpha_2 \\ \beta_0 + \beta_1 = \beta_2 \\ 
\gamma_0 + \gamma_1 = \gamma_2}} 
C_{\alpha_2, \beta_2, \gamma_2}^{\alpha_1, \beta_1, \gamma_1} 
P_{\alpha_1, \beta_1, \gamma_1}^{m, \rho, \delta}(\sigma) 
(1+|\xi| + |\eta|)^{m+\delta |\alpha_1| - \rho(|\beta_1| + |\gamma_1|)-|\beta_0| - |\gamma_0|},  
\end{align*}
where the constant $C_{\alpha_2, \beta_2, \gamma_2}^{\alpha_1, \beta_1, \gamma_1}$ is independent of $j$ and we have used that for $\beta_0 \neq 0$ (or $\gamma_0 \neq 0$), 
\begin{align*}
|\partial_{\xi}^{\beta_0} \psi_j(\xi)| |\partial_{\eta}^{\gamma_0} \psi_j(\eta)|
\lesssim 2^{-j|\beta_0|} \text{ (or $2^{-j|\gamma_0|}$)} 
\lesssim (1+|\xi| + |\eta|)^{-|\beta_0| - |\gamma_0|}, 
\end{align*}
provided the construction of $\psi_j$. Hence, 
\begin{align}\label{PPm-1}
\sup_{j \in \N} P_{\alpha_2, \beta_2, \gamma_2}^{m, \rho, \delta}(\sigma_j)
\lesssim \sum_{\substack{\alpha_1 \le \alpha_2 \\ \beta_1 \le \beta_2 \\ \gamma_1 \le \gamma_2}}  
P_{\alpha_1, \beta_1, \gamma_1}^{m, \rho, \delta}(\sigma). 
\end{align}
Additionally, observe that 
\begin{align}\label{PPm-2}
\partial_x^{\alpha} \partial_y^{\beta} \partial_z^{\gamma} K_{\sigma_j}(x, y, z)
= \iint_{\R^{2n}} e^{2\pi i \xi \cdot (x-y)} e^{2\pi i \eta \cdot (x-z)} 
\widetilde{\sigma}_j(x, \xi, \eta) \, d\xi \, d\eta, 
\end{align}
where 
\begin{align*}
\widetilde{\sigma}_j(x, \xi, \eta)
= \sum_{\alpha_1 + \alpha_2 + \alpha_3 = \alpha} C_{\alpha, \beta, \gamma}^{\alpha_1, \alpha_2}
\, \xi^{\alpha_2 + \beta} \, \eta^{\alpha_3 + \gamma} 
\, \partial_x^{\alpha_1} \sigma_j(x, \xi, \eta), 
\end{align*}
with  
\begin{align}\label{PPm-3}
P_{\alpha', \beta', \gamma'}^{m+|\alpha| + |\beta| + |\gamma|, \rho, \delta} 
\big(\xi^{\alpha_2 + \beta} \, \eta^{\alpha_3 + \gamma} 
\, \partial_x^{\alpha_1} \sigma_j(x, \xi, \eta) \big)
\lesssim \sum_{\substack{\beta'' \le \beta' \\ \gamma'' \le \gamma'}} 
P_{\alpha_1+\alpha', \beta'', \gamma''}^{m, \rho, \delta}(\sigma_j), 
\end{align}
where the implicit constant is independent of $j$. By \eqref{PPm-1}--\eqref{PPm-3}, it suffices to show 
\begin{align}\label{Kj}
|K_{\sigma_j}(x, y, z)|
\lesssim \frac{\sum_{|\beta'| \le 2N_0} P_{0, \beta', 0}^{m, \rho, \delta}(\sigma_j)}
{(|x-y| + |x-z|)^{2n+m+2N(1-\rho)}}, 
\quad\text{ uniformly in } j \in \N,  
\end{align}
whenever $x \neq y$ or $x \neq z$. 

We may assume that $|x-y| \ge |x-z|$. Thus, $|x-y| \simeq |x-y| + |x-z|$. Consider first the case $|x-y| \ge 1$.  Let $\Delta_{\xi}$ denote  the Laplace operator in the variable $\xi$. Using the fact that $e^{i \xi \cdot x} = (-1)^k |x|^{-2k} \Delta_{\xi}^k e^{i \xi \cdot x}$ for all $k \in \N$ and $x \neq 0$, we obtain 
\begin{align*}
|K_{\sigma_j}(x, y, z)| 
&= \frac{1}{(2\pi |x-y|)^{2N_0}} \bigg|
\iint_{\R^{2n}} \Delta_{\xi}^{N_0} (e^{2\pi i \xi \cdot (x-y)}) e^{2\pi i \eta \cdot (x-z)} 
\sigma_j(x, \xi, \eta) \, d\xi \, d\eta \bigg|
\\
&= \frac{1}{(2\pi |x-y|)^{2N_0}} \bigg|
\iint_{\R^{2n}} e^{2\pi i \xi \cdot (x-y)} e^{2\pi i \eta \cdot (x-z)} 
\Delta_{\xi}^{N_0} \sigma_j(x, \xi, \eta) \, d\xi \, d\eta \bigg|
\\
&\lesssim \frac{1}{|x-y|^{2N_0}} 
\sum_{|\beta| =2N_0} P_{0, \beta, 0}^{m, \rho, \delta}(\sigma_j) 
\iint_{\R^{2n}} \frac{d\xi \, d\eta}{(1+|\xi| + |\eta|)^{2N_0 \rho -m}}
\\
&\le \frac{1}{|x-y|^{2N_0}} 
\sum_{|\beta| =2N_0} P_{0, \beta, 0}^{m, \rho, \delta}(\sigma_j) 
\bigg(\int_{\Rn} \frac{d\xi}{(1+|\xi|)^{N_0 \rho - \frac{m}{2}}} \bigg)^2 
\\
&\lesssim \frac{1}{|x-y|^{2n+m+2N(1-\rho)}} 
\sum_{|\beta| =2N_0} P_{0, \beta, 0}^{m, \rho, \delta}(\sigma_j), 
\end{align*}
where we have used that $2N_0 \rho-m > 2n$ and $2N_0 \ge 2n+m+2N(1-\rho)$.

It remains to deal with the case $0< r:= |x-y| < 1$. Letting $u := \frac{x-y}{|x-y|}$ and $\widetilde{\psi} := 1-\psi$, we rewrite 
\begin{align*}
K_{\sigma_j}(x, y, z) 
&= r^{-2n} \iint_{\R^{2n}} e^{2\pi i \xi \cdot u} e^{2\pi i r^{-1} \eta \cdot (x-z)} 
\sigma_j(x, r^{-1}\xi, r^{-1}\eta) \, d\xi \, d\eta 
\\
&= r^{-2n} \iint_{\R^{2n}} e^{2\pi i \xi \cdot u} e^{2\pi i r^{-1} \eta \cdot (x-z)} 
\sigma_j(x, r^{-1}\xi, r^{-1}\eta) \psi(\xi) \psi(\eta)\, d\xi \, d\eta 
\\
&\quad+ r^{-2n} \iint_{\R^{2n}} e^{2\pi i \xi \cdot u} e^{2\pi i r^{-1} \eta \cdot (x-z)} 
\sigma_j(x, r^{-1}\xi, r^{-1}\eta) \widetilde{\psi}(\xi) \psi(\eta) \, d\xi \, d\eta 
\\
&\quad+ r^{-2n} \iint_{\R^{2n}} e^{2\pi i \xi \cdot u} e^{2\pi i r^{-1} \eta \cdot (x-z)} 
\sigma_j(x, r^{-1}\xi, r^{-1}\eta) \widetilde{\psi}(\eta) \, d\xi \, d\eta 
\\
&=: K_{\sigma_j}^1(x, y, z) + K_{\sigma_j}^2(x, y, z) + K_{\sigma_j}^3(x, y, z).
\end{align*}
The condition $m+2n-1>0$ implies 
\begin{align*}
|K_{\sigma_j}^1(x, y, z)| 
&\le r^{-2n} P_{0, 0, 0}^{m, \rho, \delta}(\sigma_j) 
\int_{|\xi| \le 2} \int_{|\eta| \le 2} (1+r^{-1}|\xi| + r^{-1} |\eta|)^m \, d\xi \, d\eta 
\\
&\simeq P_{0, 0, 0}^{m, \rho, \delta}(\sigma_j) 
\int_{0}^{\frac{2}{r}} \int_{0}^{\frac{2}{r}} 
(1+s+t)^m s^{n-1} t^{n-1} \, ds \, dt 
\\
&\le P_{0, 0, 0}^{m, \rho, \delta}(\sigma_j) 
\int_{0}^{\frac{2}{r}} \int_{0}^{\frac{2}{r}} 
(1+s+t)^{m+2n-2} \, ds \, dt 
\\
&\le P_{0, 0, 0}^{m, \rho, \delta}(\sigma_j) 
(1+r^{-1})^{m+2n} 
\\
&\lesssim P_{0, 0, 0}^{m, \rho, \delta}(\sigma_j) 
\, r^{-2n-m-2N(1-\rho)}. 
\end{align*}
To proceed, note that for any $|\xi| \ge 1$ or $|\eta| \ge 1$, 
\begin{align*}
\big|\partial_{\xi}^{\beta} \big(\sigma_j(x, r^{-1}\xi, r^{-1}\eta) \big)\big|
&\le P_{0, \beta, 0}^{m, \rho, \delta}(\sigma_j) \, r^{-|\beta|} 
(1+ r^{-1}|\xi| + r^{-1} |\eta|)^{m-\rho |\beta|}
\\ 
& \simeq P_{0, \beta, 0}^{m, \rho, \delta}(\sigma_j) \, r^{-m-|\beta|(1-\rho)} 
(1+ |\xi| + |\eta|)^{m-\rho |\beta|}, 
\end{align*}
provided that $1+|\xi| + |\eta| \ge r +|\xi| + |\eta| \ge (1 + |\xi| + |\eta|)/2$. Note also that $\supp \widetilde{\psi} \subset \{|\xi| \ge 1\}$, and for any $|\beta|>0$, $\partial_{\xi}^{\beta} \widetilde{\psi}(\xi) \neq 0$ implies $1 \le |\xi| \le 2$. Thus, we use integration by parts to arrive at 
\begin{align*}
|K_{\sigma_j}^2(x, y, z)|
& = \frac{r^{-2n}}{(2\pi)^{2N}} 
\bigg|\iint_{\R^{2n}} \Delta_{\xi}^N \big(e^{2\pi i \xi \cdot u} \big) 
e^{2\pi i r^{-1} \eta \cdot (x-z)} 
\sigma_j(x, r^{-1}\xi, r^{-1}\eta) \widetilde{\psi}(\xi) \psi(\eta) \, d\xi \, d\eta \bigg|
\\ 
&\le r^{-2n} \iint_{\R^{2n}}  
\big| \Delta_{\xi}^N \big(\sigma_j(x, r^{-1}\xi, r^{-1}\eta) \widetilde{\psi}(\xi) \big) \big| 
|\psi(\eta)| \, d\xi \, d\eta 
\\
&\lesssim r^{-2n} \iint_{\substack{\{|\xi| \ge 1\} \\ \{|\eta| \le 2\}}}  
\big| \Delta_{\xi}^N \big(\sigma_j(x, r^{-1}\xi, r^{-1}\eta) \big) \big| 
|\widetilde{\psi}(\xi)| |\psi(\eta)| \, d\xi \, d\eta 
\\
&\quad+ r^{-2n} \iint_{\substack{\{1\le |\xi| \le 2\} \\ \{|\eta| \le 2\}}} 
\sum_{0<|\beta| \le 2N} 
\big| \partial_{\xi}^{2N-\beta} \big(\sigma_j(x, r^{-1}\xi, r^{-1}\eta) \big) \big| 
|\partial_{\xi}^{\beta} \widetilde{\psi}(\xi) | 
|\psi(\eta)| \, d\xi \, d\eta 
\\
&\lesssim \sum_{|\beta| \le 2N} P_{0, \beta, 0}^{m, \rho, \delta}(\sigma_j) 
\iint_{\substack{\{|\xi| \ge 1\} \\ \{|\eta| \le 2\}}}  
\frac{r^{-2n-m-(2N-\beta)(1-\rho)} |\xi|^{-|\beta|}}{(1+|\xi| + |\eta|)^{(2N-|\beta|)\rho -m}}
\, d\xi \, d\eta 
\\
&\lesssim \sum_{|\beta| \le 2N} P_{0, \beta, 0}^{m, \rho, \delta}(\sigma_j)  
\, r^{-2n-m-2N(1-\rho)}, 
\end{align*}
where we used that for any $|\beta| \le 2N$, 
\begin{align*}
\int_{\{|\xi| \ge 1\}}  
\frac{|\xi|^{-|\beta|} \, d\xi}{(1+|\xi| + |\eta|)^{(2N-|\beta|)\rho -m}} 
\le \int_{\{|\xi| \ge 1\}} \frac{d\xi}{|\xi|^{2N\rho-m}}
\lesssim 1. 
\end{align*}
Similarly, 
\begin{align*}
|K_{\sigma_j}^3(x, y, z)| 
&\le r^{-2n} \iint_{\R^{2n}} 
\big|\Delta_{\xi}^N \big(\sigma_j(x, r^{-1}\xi, r^{-1}\eta) \big)\big| 
|\widetilde{\psi}(\eta)| \, d\xi \, d\eta 
\\
&\le \sum_{|\beta| = 2N} P_{0, \beta, 0}^{m, \rho, \delta}(\sigma_j) 
\iint_{\R^{2n}} \frac{r^{-2n-m-2N(1-\rho)}}{(1+|\xi| + |\eta|)^{2N\rho-m}} \, d\xi \, d\eta 
\\
&\lesssim \sum_{|\beta| = 2N} P_{0, \beta, 0}^{m, \rho, \delta}(\sigma_j) \, 
r^{-2n-m-2N(1-\rho)}, 
\end{align*}
provided $2N\rho-m>2n$. Thus, we conclude \eqref{Kj} from these estimates above. 
\end{proof}

\subsection{Bilinear commutators} 
Let us present the proof of Theorem \ref{thm:bT}. We will apply the strategy in the proof of Theorem \ref{thm:Tsig}. 
It was proved in \cite[Theorem 1]{BO} that for any $\sigma \in \mathcal{S}_{1,0}^1$ and Lipschitz function $b$ with $\nabla b \in L^{\infty}(\Rn)$, 
\begin{align}\label{bTCZO}
\text{$[b, T_{\sigma}]_k$ is a bilinear Calder\'{o}n--Zygmund operator, \quad $k=1, 2$.}
\end{align} 
By the fact that $\mathcal{K}_{1,0}^1 \subset \mathcal{S}_{1,0}^1$ and \eqref{bTCZO}, we see that  
\begin{align}\label{bTCZO-1}
\text{$[b, T_{\sigma}]_k$ is a bilinear Calder\'{o}n--Zygmund operator, \quad $k=1, 2$.}
\end{align} 
In view of Theorems \ref{thm:CCZK}, \ref{thm:WCP}, and \ref{thm:T1CMO}, it suffices to prove  
\begin{align}\label{bT-1}
\text{$[b, T_{\sigma}]_k$ is compact from $L^4(\Rn) \times L^4(\Rn)$ to $L^2(\Rn)$, \quad $k=1, 2$}. 
\end{align}

For each $j \in \N$, let $\phi_j$ be defined in \eqref{def:psi}. Then it is not hard to check that 
\begin{align*}
\sigma_j := \sigma \phi_j \in \mathcal{S}_{1, 0}^1 
\quad\text{ uniformly in $j \in \N$},  
\end{align*}
which together with \eqref{bTCZO} gives 
\begin{align}\label{bTCZO-2}
\text{$[b, T_{\sigma_j}]_k$ is a bilinear Calder\'{o}n--Zygmund operator,}
\end{align}
for all $k=1, 2$ and $j \in \N$. By Lemma \ref{lem:limit}, to obtain \eqref{bT-1}, it is enough to justify  
\begin{align}\label{bT-2}
\lim_{j \to \infty} \big\|[b, T_{\sigma_j}]_k - [b, T_{\sigma}]_k \big\|_{L^4 \times L^4 \to L^2} = 0
\end{align}
and 
\begin{align}\label{bT-3}
\text{$[b, T_{\sigma_j}]_k$ is compact from $L^4(\Rn) \times L^4(\Rn)$ to $L^2(\Rn)$, \quad $\forall j \in \N$.} 
\end{align}

We only focus on the case $k=1$. The kernel of $[b, T_{\sigma}]_1$ is given by 
\begin{align}\label{Ksb}
K_{\sigma}^b(x, y, z) := (b(x) - b(y)) K_{\sigma}(x, y, z).
\end{align} 
Then it follows from Lemma \ref{lem:KP} that 
\begin{align}\label{nu-0}
\|K_{\sigma_j}^b - K_{\sigma}^b\|_{\mathrm{CZ}(\delta)} 
= \|K_{\sigma_j - \sigma}^b\|_{\mathrm{CZ}(\delta)}
\lesssim \sum_{|\alpha|, |\beta| \le 2N_0} 
P_{\alpha, \beta, 0}^{1, 1, 0}(\sigma_j - \sigma), 
\end{align}
where $N_0>2n$. To avoid the confuse of notation, we use $\nu_j$ and $\widetilde{\nu}_j$ instead of $\sigma_j$ and $\widetilde{\sigma}_j$ respectively in \cite[p. 294]{BO}. As calculated there, one has 
\begin{align}\label{nu-1}
P_{\alpha, \beta, \gamma}^{0,1, 0}(\nu_j) 
\lesssim P_{\alpha, \widetilde{\beta}, \gamma}^{1,1, 0}(\sigma)
\quad \text{ and } \quad
P_{\alpha, \beta, \gamma}^{0,1, 0}(\widetilde{\nu}_j) 
\lesssim P_{\alpha, \beta, \widetilde{\gamma}}^{1,1, 0}(\sigma), 
\end{align}
where $\widetilde{\beta} := \beta + (0, \ldots, j, \ldots, 0)$ and $\widetilde{\gamma} := \gamma + (0, \ldots, 1, \ldots, 0)$. Invoking \eqref{nu-1} and checking the proof of \cite[Lemmas 7--8]{BO}, we obtain 
\begin{align}\label{nu-2}
\|[b, T_{\sigma}]_1 (1, 1)\|_{\BMO} 
+ \|[b, T_{\sigma}]_1^{*i} (1, 1)\|_{\BMO} 
&\lesssim \sum_{|\alpha|, |\beta|, |\gamma| \le N_0} 
P_{\alpha, \beta, \gamma}^{1,1, 0}(\sigma), 
\end{align}
for each $i=1, 2$, where $N_0>2n$ large enough. Moreover, from the proof of  \cite[Lemma 10]{BO}, we see that the constant of the weak boundedness property of $[b, T_{\sigma}]_1$ is dominated by 
\begin{align}\label{nu-3}
\|\nabla b\|_{L^{\infty}} 
\big(\|T_{\nu_j}\|_{L^4 \times L^4 \to L^2} 
+ \|T_{\widetilde{\nu}_j}\|_{L^4 \times L^4 \to L^2}\big)
\lesssim \sum_{|\alpha|, |\beta|, |\gamma| \le N_0} 
P_{\alpha, \beta, \gamma}^{1,1, 0}(\sigma), 
\end{align}
where the above inequality can be shown as \eqref{KK23} by means of \eqref{nu-1}. Accordingly, by \eqref{nu-0}, \eqref{nu-2}, \eqref{nu-3}, and \cite[Theorem 1.1]{Hart14} (or \cite{LMOV}), we conclude that 
\begin{align*}
\big\|[b, T_{\sigma_j}]_1 - [b, T_{\sigma}]_1 \big\|_{L^4 \times L^4 \to L^2} 
\lesssim \sum_{|\alpha|, |\beta|, |\gamma| \le N_0} 
P_{\alpha, \beta, \gamma}^{1,1, 0}(\sigma_j - \sigma), 
\end{align*}
which along with Lemma \ref{lem:Pab} gives \eqref{bT-2}.

Next, we prove \eqref{bT-3}. Combining \eqref{bTCZO-2} with \cite[Theorem 3]{GT1}, we achieve  
\begin{align}\label{bTFK-1}
\sup_{\substack{\|f_1\|_{L^4} \le 1 \\ \|f_2\|_{L^4} \le 1}} 
\big\|[b, T_{\sigma_j}]_1(f_1, f_2) \big\|_{L^2} 
\lesssim 1.
\end{align}
The support of $\sigma_j$ implies  
\begin{align*}
K_{\sigma_j}(x, y, z)=0
\quad\text{ and }\quad 
[b, T_{\sigma_j}]_1(f_1, f_2)(x) = 0, 
\quad\text{ for all } |x|>2^{j+1}, 
\end{align*}
which gives   
\begin{align}\label{bTFK-2}
\lim_{A \to \infty} \sup_{\substack{\|f_1\|_{L^4} \le 1 \\ \|f_2\|_{L^4} \le 1}} 
\big\|[b, T_{\sigma_j}]_1(f_1, f_2) \mathbf{1}_{B(0, A)^c}\big\|_{L^2} = 0.
\end{align}
In view of Theorem \ref{thm:RKLpq} and \eqref{bTFK-1}--\eqref{bTFK-2}, the estimate \eqref{bT-3} is reduced to showing  
\begin{align}\label{bTFK-3}
\lim_{|h| \to 0} \sup_{\substack{\|f_1\|_{L^4} \le 1 \\ \|f_2\|_{L^4} \le 1}} 
\big\|\tau_h [b, T_{\sigma_j}]_1(f_1, f_2) - [b, T_{\sigma_j}]_1(f_1, f_2)\big\|_{L^2} = 0.
\end{align}

Let $N>n$ be an even number. It follows from \eqref{KKSS}, \eqref{KSIJ}, and \eqref{Ksb} that for any $h \in \Rn$, 
\begin{align}\label{KSB}
|K_{\sigma_j}^b(x-h, y, z) - K_{\sigma_j}^b(x, y, z)|
\lesssim \sum_{i=0}^4 \mathbf{K}_j^{b, i}(x, y, z), 
\end{align}
where 
\begin{align*}
\mathbf{K}_j^{b, 0}(x, y, z) 
&:= |b(x-h) - b(x)| |K_{\sigma_j}(x, y, z)|, 
\\
\mathbf{K}_j^{b, 1}(x, y, z) 
&:= |b(x-h) - b(y)|
\bigg|\frac{1}{1 + (2\pi |x-h-y|)^{2N} + (2\pi |x-h-z|)^{2N}} 
\\
&\qquad\qquad\qquad\qquad\qquad 
- \frac{1}{1 + (2\pi |x-y|)^{2N} + (2\pi |x-z|)^{2N}}\bigg|, 
\\
\mathbf{K}_j^{b, 2}(x, y, z) 
&:= |b(x-h) - b(y)|
\frac{|K_{\sigma_j}(x-h, y, z) - K_{\sigma_j}(x, y, z)|}{1 + (2\pi |x-h-y|)^{2N} + (2\pi |x-h-z|)^{2N}}, 
\\
\mathbf{K}_j^{b, 3}(x, y, z) 
&:= |b(x-h) - b(y)| \frac{|K_{\Delta_{\xi}^N \sigma_j}(x-h, y, z) 
- K_{\Delta_{\xi}^N \sigma_j}(x, y, z)|}{1 + (2\pi |x-h-y|)^{2N} + (2\pi |x-h-z|)^{2N}}, 
\\
\mathbf{K}_j^{b, 4}(x, y, z) 
&:= |b(x-h) - b(y)| \frac{|K_{\Delta_{\eta}^N \sigma_j}(x-h, y, z) 
- K_{\Delta_{\eta}^N \sigma_j}(x, y, z)|}{1 + (2\pi |x-h-y|)^{2N} + (2\pi |x-h-z|)^{2N}}. 
\end{align*}
Set  
\begin{align*}
\mathbf{G}_j^{b, i}(x) 
:= \iint_{\R^{2n}} \mathbf{K}_j^{b, i}(x, y, z) |f_1(y)| \, |f_2(z)| \, dy \, dz, \quad i=0, 1, 2, 3, 4.
\end{align*}
The fact $\sigma_j \in \mathcal{S}_{1, 0}^{-2N}$ gives 
\begin{align*}
\mathbf{G}_j^{b, 0}(x)
\lesssim \|\nabla b\|_{L^{\infty}} |h| 
\iint_{\R^{2n}} \frac{|f_1(y)| \, |f_2(z)|}{(1+|x-y|+|x-z|)^{2N}} \, dy \, dz
\lesssim |h| \mathcal{M}(f_1, f_2)(x), 
\end{align*}
and hence, 
\begin{align}\label{GJB-0}
\|\mathbf{G}_j^{b, 0}\|_{L^2}
\lesssim |h| \|\mathcal{M}(f_1, f_2)\|_{L^2} 
\lesssim |h| \|f_1\|_{L^4} \|f_2\|_{L^4}. 
\end{align}
Since it is not hard to check that 
\begin{align*}
\mathbf{K}_j^{b, 1}(x, y, z) 
\lesssim \|\nabla b\|_{L^{\infty}} |h| \sum_{k=0}^{2N-1} 
\frac{(1+|x-h-y|+|x-h-z|)^{-k}}{(1+|x-y|+|x-z|)^{2N-k-1}},   
\end{align*}
we use the same argument as in \eqref{GJJ-1} to deduce 
\begin{align}\label{GJB-1}
\|\mathbf{G}_j^{b, 1}\|_{L^2}
\lesssim |h| \|f_1\|_{L^4} \|f_2|\|_{L^4}. 
\end{align}
Moreover, analogously to \eqref{def:GJ2}, 
\begin{align*}
\mathbf{G}_j^{b, 2}(x) 
\lesssim \|\nabla b\|_{L^{\infty}} |h| 
\iint_{\R^{2n}} \frac{|f_1(y)| \, |f_2(z)| \, dy \, dz}{(1+|x-h-y|+|x-h-z|)^{2N-1}}
\lesssim |h| \mathcal{M}(f_1, f_2)(x-h), 
\end{align*}
which yields 
\begin{align}\label{GJB-2}
\|\mathbf{G}_j^{b, 2}\|_{L^2}
\lesssim |h| \|\mathcal{M}(f_1, f_2)\|_{L^2}
\lesssim |h| \|f_1\|_{L^4} \|f_2|\|_{L^4}. 
\end{align}
Much as above, there holds 
\begin{align}\label{GJB-34}
\|\mathbf{G}_j^{b, i}\|_{L^2}
\lesssim |h| \|f_1\|_{L^4} \|f_2|\|_{L^4}, \quad i=3, 4. 
\end{align}
Consequently, from \eqref{KSB}--\eqref{GJB-34}, we conclude 
\begin{align*}
\|\tau_h [b, T_{\sigma_j}](f_1, f_2) - [b, T_{\sigma_j}](f_1, f_2)\|_{L^2}  
\lesssim \sum_{i=0}^4 \|\mathbf{G}_j^{b, i}\|_{L^2} 
\lesssim |h| \|f_1\|_{L^4} \|f_2|\|_{L^4}, 
\end{align*}
which justifies \eqref{bTFK-3} as desired. 
\qed

\appendix

\section{Some properties in Lorentz spaces}\label{sec:A-Lorentz}

\begin{definition}
Given a quasi-Banach space $\mathbb{X}$, a function $f \in \mathbb{X}$  is said to \emph{have absolutely continuous quasi-norm} if $\|f \mathbf{1}_{E_k}\|_{\mathbb{X}} \to 0$ for every sequence $\{E_k\}_{k=1}^{\infty}$ such that $E_k \to \emptyset$ $\mu$-a.e. 

Let $\mathbb{X}_a$ denote the collection of all functions in $\mathbb{X}$ which have absolutely continuous quasi-norm. If $\mathbb{X}_a = \mathbb{X}$, then the space $\mathbb{X}$ is said to \emph{have absolutely continuous quasi-norm}.

The set $K \subset \mathbb{X}_a$ is said to \emph{have uniformly absolutely continuous quasi-norm} in $\mathbb{X}$, denoted by $K \subset \mathrm{UAC}(\mathbb{X})$, if $\sup_{f \in K} \|f \mathbf{1}_{E_k}\|_{\mathbb{X}} \to 0$ for every sequence $\{E_k\}_{k=1}^{\infty}$ such that $E_k \to \emptyset$ $\mu$-a.e. 
\end{definition}

Let $\mathcal{M}(\mu)$ denote the family of all measurable functions in $(\Sigma, \mu)$. The following result gives a criterion of precompactness quasi-Banach function spaces (cf. \cite[Theorem 3.17]{CGO}), which improves \cite[p. 31, Exercise 8]{BS} in the context of Banach function spaces.

\begin{lemma}[{\cite{BS, CGO}}]\label{lem:UAC}
Let $\mathbb{X}$ be a quasi-Banach function space and $K \subset \mathbb{X}_a$. Then $K$ is precompact in $\mathbb{X}$ if and only if it is precompact in $\mathcal{M}(\mu)$ and $K \subset \mathrm{UAC}(\mathbb{X})$. 
\end{lemma}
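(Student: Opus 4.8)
\textbf{Proof proposal for Lemma \ref{lem:UAC}.}

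The statement I need to prove is a characterization of precompactness in a quasi-Banach function space $\mathbb{X}$ for subsets $K \subset \mathbb{X}_a$: precompactness in $\mathbb{X}$ is equivalent to precompactness in the topology of $\mathcal{M}(\mu)$ (convergence in measure on sets of finite measure, or local convergence in measure) together with the uniform absolute continuity condition $K \subset \mathrm{UAC}(\mathbb{X})$. The plan is to split the proof into the two implications, and to reduce the quasi-Banach case to tools that are already classical in the Banach function space setting (Vitali-type convergence), paying attention only to the places where the quasi-triangle inequality (rather than the triangle inequality) is used.

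First I would prove necessity. Assume $K$ is precompact in $\mathbb{X}$. Since convergence in $\mathbb{X}$ implies convergence in $\mathcal{M}(\mu)$ for a quasi-Banach function space (this is part of the axioms of such spaces: norm convergence controls convergence in measure on finite-measure sets), precompactness in $\mathbb{X}$ immediately forces precompactness in $\mathcal{M}(\mu)$. For the $\mathrm{UAC}$ part, fix a sequence $E_k \to \emptyset$ $\mu$-a.e.\ and $\varepsilon > 0$. By total boundedness, cover $K$ by finitely many quasi-balls $B(f_1,\varepsilon), \ldots, B(f_N,\varepsilon)$ with centres $f_j \in K \subset \mathbb{X}_a$. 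For any $f \in K$, pick $j$ with $\|f - f_j\|_{\mathbb{X}} \le \varepsilon$; then the quasi-triangle inequality gives $\|f \mathbf{1}_{E_k}\|_{\mathbb{X}} \lesssim \|(f-f_j)\mathbf{1}_{E_k}\|_{\mathbb{X}} + \|f_j \mathbf{1}_{E_k}\|_{\mathbb{X}} \lesssim \varepsilon + \max_{1\le j \le N}\|f_j \mathbf{1}_{E_k}\|_{\mathbb{X}}$, where the implicit constant is the quasi-norm modulus of $\mathbb{X}$. Since each $f_j$ has absolutely continuous quasi-norm, the finite maximum tends to $0$ as $k \to \infty$, so $\limsup_k \sup_{f \in K}\|f\mathbf{1}_{E_k}\|_{\mathbb{X}} \lesssim \varepsilon$; letting $\varepsilon \to 0$ gives $K \subset \mathrm{UAC}(\mathbb{X})$.

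For sufficiency, assume $K$ is precompact in $\mathcal{M}(\mu)$ and $K \subset \mathrm{UAC}(\mathbb{X})$. Given an arbitrary sequence $\{f_j\} \subset K$, extract a subsequence (not relabelled) converging in $\mathcal{M}(\mu)$ to some $f$; by passing to a further subsequence I may assume $f_j \to f$ $\mu$-a.e. I claim $f_j \to f$ in $\mathbb{X}$, which by completeness of $\mathbb{X}$ finishes the proof. This is where the quasi-Banach Vitali argument enters: combine the uniform absolute continuity on small sets, a uniform ``tightness at infinity'' coming from $\mathrm{UAC}$ applied to sets shrinking to $\emptyset$ that exhaust the complement of large finite-measure sets (in a $\sigma$-finite space, or directly from the $\mathrm{UAC}$ definition which already encodes this), and the a.e.\ convergence together with the Fatou property of $\mathbb{X}$ to control the bulk term. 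Concretely, for $\varepsilon>0$ choose via $\mathrm{UAC}$ a ``good'' set $G$ (finite measure) such that $\sup_{g \in K}\|g \mathbf{1}_{G^c}\|_{\mathbb{X}} \le \varepsilon$; then $\|(f_j - f)\mathbf{1}_{G^c}\|_{\mathbb{X}} \lesssim \|f_j \mathbf{1}_{G^c}\|_{\mathbb{X}} + \|f \mathbf{1}_{G^c}\|_{\mathbb{X}} \lesssim \varepsilon$, using that $f \in \mathbb{X}$ with $\|f \mathbf{1}_{G^c}\|_{\mathbb{X}} \le \liminf_j \|f_j \mathbf{1}_{G^c}\|_{\mathbb{X}} \le \varepsilon$ by Fatou. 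On $G$, use again $\mathrm{UAC}$ to find $\delta>0$ so that $\sup_{g\in K}\|g\mathbf{1}_E\|_{\mathbb{X}} \le \varepsilon$ whenever $E \subset G$ with $\mu(E) < \delta$; by Egorov's theorem $f_j \to f$ uniformly off a set $E$ of measure $<\delta$, and on $G \setminus E$ the uniform smallness of $|f_j - f|$ combined with $\|\mathbf{1}_{G}\|_{\mathbb{X}} < \infty$ makes $\|(f_j-f)\mathbf{1}_{G\setminus E}\|_{\mathbb{X}}$ small for $j$ large. Assembling the three pieces via the quasi-triangle inequality yields $\limsup_j \|f_j - f\|_{\mathbb{X}} \lesssim \varepsilon$, hence convergence.

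The main obstacle I anticipate is the sufficiency direction, specifically making the Vitali/Egorov argument run cleanly in a \emph{quasi}-Banach function space: one must verify that $\mathbb{X}$ has the Fatou property (lower semicontinuity of the quasi-norm under a.e.\ convergence) and that the quasi-triangle constant does not accumulate when one splits into three regions — this is harmless since only finitely many splittings occur. A secondary subtlety is the precise meaning of precompactness ``in $\mathcal{M}(\mu)$'': one has to confirm that it delivers an a.e.-convergent subsequence, which requires either $\sigma$-finiteness of $\mu$ or an argument localizing to the finite-measure set $G$ produced by $\mathrm{UAC}$. Since Lemma \ref{lem:UAC} is quoted from \cite{BS, CGO}, I would present the proof at the level of these reductions and cite \cite[Theorem 3.17]{CGO} for the full technical details, noting that the Banach-space version is \cite[p.~31, Exercise 8]{BS}.
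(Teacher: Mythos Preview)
The paper does not prove this lemma at all: it is stated with attribution to \cite{BS, CGO}, and the sentence immediately preceding it already points to \cite[Theorem 3.17]{CGO} (with the Banach version being \cite[p.~31, Exercise~8]{BS}). Your proposal therefore goes beyond the paper by supplying an actual argument, and the sketch you give --- total boundedness plus finitely many $\mathbb{X}_a$-centres for necessity, and a Vitali/Egorov argument (tightness outside a finite-measure set from $\mathrm{UAC}$, Egorov on the finite-measure piece, Fatou to place the limit in $\mathbb{X}$) for sufficiency --- is the standard route and matches what one finds in the cited references. Your closing remark, that one should cite \cite[Theorem 3.17]{CGO} for the full technical details, is exactly what the paper itself does.
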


\begin{lemma}\label{lem:ACN}
For any $0<p, q < \infty$, $L^{p, q}(\Rn)$ has absolutely continuous quasi-norm, but $L^{p, \infty}(\Rn)$ does not have absolutely continuous quasi-norm. 
\end{lemma}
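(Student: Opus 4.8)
\textbf{Proof plan for Lemma \ref{lem:ACN}.}
The plan is to verify the two assertions separately, both by unwinding the definition of absolutely continuous quasi-norm. For the positive part, fix $0<p,q<\infty$ and let $f\in L^{p,q}(\Rn)$. Given a sequence of measurable sets $E_k\to\emptyset$ a.e., I would set $g_k:=|f|^q\mathbf 1_{E_k}$ and observe that $g_k\to 0$ a.e.\ with $0\le g_k\le |f|^q\in L^1(\Rn)$ when $q\le p$, or more precisely I would work directly with the distribution function: $d_{f\mathbf 1_{E_k}}(\lambda)\le d_f(\lambda)$ and $d_{f\mathbf 1_{E_k}}(\lambda)\to 0$ for every $\lambda>0$ by dominated convergence on the measure space, since $\mathbf 1_{\{|f|>\lambda\}\cap E_k}\to 0$ a.e.\ and is dominated by $\mathbf 1_{\{|f|>\lambda\}}$, which has finite measure for $\lambda>0$. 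Then the decreasing rearrangement satisfies $(f\mathbf 1_{E_k})^*(t)\to 0$ for a.e.\ $t>0$ and $(f\mathbf 1_{E_k})^*(t)\le f^*(t)$; since $t^{1/p}f^*(t)\in L^q(\frac{dt}{t})$, the dominated convergence theorem gives
\[
\|f\mathbf 1_{E_k}\|_{L^{p,q}}^q=\int_0^\infty\big(t^{1/p}(f\mathbf 1_{E_k})^*(t)\big)^q\frac{dt}{t}\longrightarrow 0,
\]
which is exactly absolute continuity of the quasi-norm. (The monotone pointwise convergence $(f\mathbf 1_{E_k})^*(t)\to 0$ a.e.\ follows from the fact that $d_{f\mathbf 1_{E_k}}(\lambda)\downarrow$ to something $\le \lim_k d_f(\lambda)\mathbf 1_{\cdots}$ — I would just cite the standard measure-theoretic fact that $E_k\to\emptyset$ a.e.\ forces $\mu(\{|f|>\lambda\}\cap E_k)\to 0$, hence $(f\mathbf 1_{E_k})^*(t)\to 0$ for every $t$ that is a continuity point, i.e.\ a.e.)

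For the negative part, the task is to exhibit a single $f\in L^{p,\infty}(\Rn)$ and a sequence $E_k\to\emptyset$ a.e.\ with $\limsup_k\|f\mathbf 1_{E_k}\|_{L^{p,\infty}}>0$. The natural choice, mirroring the counterexamples already used in the proofs of Theorems \ref{thm:RKB} and \ref{thm:RKWA}, is to take $n=1$, $f(x):=|x|^{-1/p}\mathbf 1_{\{x>0\}}$, so that $\|f\|_{L^{p,\infty}}=1$, and to set $E_k:=(0,1/k)$. Then $E_k\downarrow\emptyset$, yet $f\mathbf 1_{E_k}(x)=|x|^{-1/p}\mathbf 1_{(0,1/k)}(x)$ has, for $0<\lambda<(1/k)^{-1/p}$ (equivalently for all $\lambda$ large enough),
\[
\lambda\,\big|\{x:|f\mathbf 1_{E_k}(x)|>\lambda\}\big|^{1/p}=\lambda\,\big|\{0<x<1/k:\ x^{-1/p}>\lambda\}\big|^{1/p}=\lambda\cdot\lambda^{-1}=1,
\]
so $\|f\mathbf 1_{E_k}\|_{L^{p,\infty}}=1$ for every $k$. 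This shows $L^{p,\infty}(\Rn)$ does not have absolutely continuous quasi-norm. The general dimension $n$ reduces to this by taking $f(x)=|x|^{-n/p}$ on a truncated cone, or simply by tensoring with a fixed bump in the remaining variables; I would state the one-dimensional example and remark that the multidimensional case is identical.

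I do not expect a genuine obstacle here: both halves are soft measure-theory arguments. The only point requiring a little care is the justification that $(f\mathbf 1_{E_k})^*(t)\to 0$ a.e.\ in the positive part — one must note that $E_k\to\emptyset$ a.e.\ does \emph{not} by itself force $\mu(E_k)\to 0$ (the $E_k$ could have infinite measure), but intersecting with the finite-measure set $\{|f|>\lambda\}$ repairs this, and that is all that enters the distribution function. Once that is in place, dominated convergence against the fixed envelope $t^{1/p}f^*(t)\in L^q(dt/t)$ closes the argument cleanly.
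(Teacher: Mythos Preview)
Your proposal is correct and follows essentially the same route as the paper. For the positive part you run dominated convergence on the Lorentz norm via the rearrangement $(f\mathbf 1_{E_k})^*$, while the paper does the equivalent computation via the distribution function, writing $\|f\mathbf 1_{E_k}\|_{L^{p,q}}=p^{1/p}\|F_k\|_{L^q}$ with $F_k(t)=t^{1-1/q}d_{f\mathbf 1_{E_k}}(t)^{1/p}$ and applying dominated convergence against $F(t)=t^{1-1/q}d_f(t)^{1/p}\in L^q(\R_+)$; your observation that one must intersect with the finite-measure set $\{|f|>\lambda\}$ to force $d_{f\mathbf 1_{E_k}}(\lambda)\to 0$ is exactly the point (and is in fact slightly more carefully stated than the paper's ``$d_{f\mathbf 1_{E_k}}(t)\le\min\{|E_k|,d_f(t)\}$''). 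For the negative part the paper uses the $n$-dimensional version of your example directly, taking $f(x)=|x|^{-n/p}$ and $E_k=B(0,k^{-1})$, and computes $\|f\mathbf 1_{E_k}\|_{L^{p,\infty}}=\nu_n^{1/p}$ for all $k$ --- identical in spirit to your one-dimensional calculation.
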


\begin{proof}
Let $0<p, q < \infty$ and $\{E_k\}_{k=1}^{\infty}$ be a sequence of measurable sets satisfying $E_k \to \emptyset$ almost everywhere. Let $\varepsilon>0$ and $f \in L^{p, q}(\Rn)$. For any $t>0$, define 
\begin{align*}
F(t) := t^{1-\frac1q} \, d_f(t)^{\frac1p} 
\quad\text{ and }\quad 
F_k(t) := t^{1-\frac1q} \, d_{f \mathbf{1}_{E_k}}(t)^{\frac1p},   
\end{align*}
where $d_f(t) := |\{x \in \Rn: |f(x)|>t\}|$. Note that $d_{f \mathbf{1}_{E_k}}(t) \le \min\{|E_k|, d_f(t)\}$, then 
\begin{align*}
\lim_{k \to \infty} F_k(t) = 0, \quad 
0 \le F_k(t) \le F(t), \quad\text{ and }\quad 
F \in L^q(\R_+). 
\end{align*}
Hence, it follows from Lebesgue dominated convergence theorem that 
\begin{align*}
\lim_{k \to \infty} \|f \mathbf{1}_{E_k}\|_{L^{p, q}} 
= \lim_{k \to \infty} p^{\frac1p} \|F_k\|_{L^q} 
= 0. 
\end{align*}
This shows every function in $L^{p, q}(\Rn)$ has absolutely continuous quasi-norm. 

To continue, let $f(x) = |x|^{- \frac{n}{p}}$ and $E_k=B(0, k^{-1})$ for each $k \ge 1$. Then for any $s>0$, 
\begin{align*}
d_f(s) 
= |\{x \in \Rn: |x|^{-\frac{n}{p}} > s\}|
= |B(0, s^{-\frac{p}{n}})| 
= s^{-p} \nu_n, 
\end{align*}
and 
\begin{align*}
d_{f \mathbf{1}_{E_k}}(s) 
= |\{x \in B(0, k^{-1}): |x|^{-\frac{n}{p}} > s\}|  
= k^{-n} \nu_n \mathbf{1}_{\{0<s<k^{\frac{n}{p}}\}} 
+ s^{-p} \nu_n \mathbf{1}_{\{s \ge k^{\frac{n}{p}}\}}, 
\end{align*}
where $\nu_n$ is the volume of the unit ball in $\Rn$. Thus, 
\begin{align*}
\|f \mathbf{1}_{E_k}\|_{L^{p, \infty}} 
= \sup_{s>0} s \, d_{f \mathbf{1}_{E_k}}(s)^{\frac1p} 
= \nu_n^{\frac1p} 
= \|f\|_{L^{p, \infty}}. 
\end{align*}
Since $\lim_{k \to \infty} |E_k| = 0$, this asserts that $L^{p, \infty}(\Rn)$ does not have absolutely continuous quasi-norm. 
\end{proof}

Let $\mu$ be a Borel measure on $\Rn$. A \emph{$\mu$-simple function} is a finite linear combination of characteristic functions of measurable subsets in $\Rn$, where  these subsets may have infinite $\mu$-measure. A \emph{finitely $\mu$-simple function} has the form $f = \sum_{j=1}^N a_j \mathbf{1}_{B_j}$, where $1 \le N < \infty$, $a_j \in \R$, and $B_j$ are pairwise disjoint measurable sets with $\mu(B_j) < \infty$. When $\mu$ is the Lebesgue measure, we shall suppress the presence of $\mu$.

\begin{lemma}\label{lem:dense-Lpq}
Let $w$ be a weight on $\Rn$ such that $w \in L^1_{\loc}(\Rn)$. Then $\mathscr{C}_c^{\infty}(\Rn)$ is dense in $L^{p, q}(w)$ for all $0<p, q<\infty$, but the collection of continuous  functions and functions with compact supports is not dense in $L^{p, \infty}(w)$ for all $0<p<\infty$. 
\end{lemma}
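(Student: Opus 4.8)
\textbf{Proof plan for Lemma~\ref{lem:dense-Lpq}.}
The plan is to treat the two halves of the statement separately. For the density of $\mathscr{C}_c^{\infty}(\Rn)$ in $L^{p,q}(w)$ with $0<p,q<\infty$, I would first reduce to approximating finitely simple functions: since $L^{p,q}(w)$ has absolutely continuous quasi-norm (the proof of Lemma~\ref{lem:ACN} transfers verbatim to the weighted setting, because $w\in L^1_{\loc}$ makes the distribution functions $d_{fw}$ finite on $(0,\infty)$), every $f\in L^{p,q}(w)$ is the limit of its truncations $f\mathbf{1}_{\{1/k<|f|<k\}}\mathbf{1}_{B(0,k)}$, each of which lies in $L^{p',q'}$-dual pairing range and can be approximated in $L^{p,q}(w)$-quasinorm by finitely simple functions via the layer-cake decomposition. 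So it suffices to approximate $\mathbf{1}_E$ with $|E|<\infty$, and by the quasi-triangle inequality and the embedding $\|\cdot\|_{L^{p,q}(w)}\lesssim$ a Lebesgue-type quantity on bounded sets, it suffices to approximate $\mathbf{1}_E$ in measure by smooth compactly supported functions, which is classical (inner regularity of Lebesgue measure plus mollification). The weight enters only through $w\in L^1_{\loc}$, which guarantees $w(K)<\infty$ for compact $K$ so that $\mathscr{C}_c^{\infty}(\Rn)\subset L^{p,q}(w)$ and the approximation errors are controlled by $w$-measures of shrinking neighbourhoods.

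For the negative part, the plan is to exhibit a single function $f\in L^{p,\infty}(w)$ that is not in the closure of $\mathscr{C}(\Rn)\cap\{\text{compactly supported}\}$. I would adapt the unweighted example $f(x)=|x|^{-n/p}$ (suitably modified so that $fw\in L^{p,\infty}$, e.g. localizing near a Lebesgue point of $w$ where $w$ is comparable to a constant, or more robustly taking $f=|x-x_0|^{-n/p}\mathbf{1}_{B(x_0,r_0)}$ for a density point $x_0$ of $w$). The key structural fact is that $L^{p,\infty}(w)$ does not have absolutely continuous quasi-norm: shrinking neighbourhoods of the singularity do not decrease the quasi-norm, so $f$ cannot be approximated by any function $g$ that is bounded near $x_0$ — and continuous compactly supported functions are bounded. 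Quantitatively, if $g$ is continuous then $g$ is bounded in a neighbourhood of $x_0$, say $|g|\le M$ on $B(x_0,\rho)$; then on the set where $|f|>2M$ (a ball of positive measure shrinking as $M\to\infty$ but always nonempty) one has $|f-g|\ge|f|/2$, and by the scaling/weight-regularity at the density point $x_0$ this forces $\|(f-g)w\|_{L^{p,\infty}}\gtrsim c>0$ uniformly in $g$, exactly as in the computation at the end of the proof of Theorem~\ref{thm:RKB}.

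The main obstacle I anticipate is handling the weight in the negative part cleanly: for a general weight $w\in L^1_{\loc}$ one does not have two-sided control of $w$ near an arbitrary point, so I would invoke the Lebesgue differentiation theorem to pick a point $x_0$ with $0<w(x_0)<\infty$ and $\fint_{B(x_0,r)}w\to w(x_0)$, and then work on a small ball where $w(B(x_0,r))\simeq w(x_0)|B(x_0,r)|$ for $r$ small; this lets the computation reduce to the unweighted one up to constants. A secondary technical point is that $\mathbf{1}_E$ with $|E|<\infty$ need not have $w(E)<\infty$ if $w\notin L^1$, but this is irrelevant because we only need to approximate the truncations $f\mathbf{1}_{B(0,k)}$, whose supports are bounded, so only $w\in L^1_{\loc}$ is used. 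I would also remark that the same reasoning shows $\mathscr{C}_c^{\infty}(\Rn)$ is dense in $L^{p,q}(w)$ whenever $w$ is merely a weight with $w\in L^1_{\loc}$, which is the hypothesis actually needed in the applications of this lemma elsewhere in the paper (e.g. in the proofs of Theorems~\ref{thm:RKB}, \ref{thm:RKW}, and \ref{thm:RKWA}).
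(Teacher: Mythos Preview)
Your plan for the density half matches the paper's: reduce to simple functions with bounded support and then mollify, using $w\in L^1_{\loc}$ only to guarantee $w(K)<\infty$ on compact sets so that dominated convergence applies. The paper organizes this as three facts (finitely $w$-simple functions are dense; these can be truncated to $B(0,N)$; characteristic functions of bounded sets are approximated by mollification), but the content is the same.

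For the nondensity half you have misread the target class. The statement concerns the \emph{union} of $\mathscr{C}(\Rn)$ and the class of (arbitrary measurable) compactly supported functions, not the intersection $\mathscr{C}_c(\Rn)$. The paper exhibits $f=|x|^{-n/p}$ (taking $w\equiv1$) and proves two separate lower bounds: $\|f-\psi\|_{L^{p,\infty}}\ge c$ for every $\psi\in\mathscr{C}(\Rn)$, via boundedness of $\psi$ near the origin (this is your idea), and $\|f-\phi\|_{L^{p,\infty}}\ge c$ for every compactly supported $\phi$, via the inequality $d_{f-\phi}(s)\ge d_f(2s)-d_\phi(s)$ together with $d_f(s)=\nu_n s^{-p}\to\infty$ as $s\to0$ while $d_\phi(s)\le|\supp\phi|$ stays bounded. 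Your localized candidate $|x-x_0|^{-n/p}\mathbf{1}_{B(x_0,r_0)}$ is itself compactly supported, so it lies in the second class and is trivially approximable there; your argument therefore handles only the continuous half. To witness nondensity of the union one needs a single $f\in L^{p,\infty}(w)$ with both a non-removable local singularity and level sets of unbounded $w$-measure near $s=0$, and your localization destroys the latter. (Incidentally, the paper itself only carries out the nondensity for $w\equiv1$, despite the general-$w$ phrasing of the statement; the nondensity assertion is not used elsewhere in the paper.)
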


\begin{proof}
Let $0<p, q<\infty$. The density of $\mathscr{C}_c^{\infty}(\Rn)$ in $L^{p, q}(w)$ is a consequence of the following three facts: 
\begin{align}\label{den-1} 
\text{finitely $w$-simple functions are dense in $L^{p, q}(w)$}, 
\end{align}
\begin{equation}\label{den-2}
\begin{aligned} 
&\text{every finitely $w$-simple function can be approximated}
\\ 
&\text{in $L^{p, q}(w)$ by simple functions with compact supports}, 
\end{aligned}
\end{equation}
\begin{equation}\label{den-3}
\begin{aligned} 
&\text{every characterization function $\mathbf{1}_E$ of a bounded measurable}
\\ 
&\text{set can be approximated in $L^{p, q}(w)$ by $\mathscr{C}_c^{\infty}(\Rn)$ functions}. 
\end{aligned}
\end{equation}
The statement \eqref{den-1} was given by \cite[Theorem 1.4.13]{Gra1}. To show \eqref{den-2}, it suffices to consider  $f=\mathbf{1}_E$, where $E \subset \Rn$ is a measurable set with $w(E)<\infty$. Let $f_N := \mathbf{1}_{E \cap B(0, N)}$ for each $N \ge 1$. Then $\supp (f_N) \subset B(0, N)$, $|E \cap B(0, N)| \le |B(0, N)| < \infty$, and for any $s>0$, 
\begin{align*}
d_{f_N - f}(s) 
&:= w(\{x \in \Rn: |f_N(x) - f(x)| > s\})
\\
&= w(\{x \in E \backslash B(0, N): \mathbf{1}_E(x) > s\})
\\
&= w(E \backslash B(0, N)) \, \mathbf{1}_{\{0<s<1\}}, 
\end{align*}
which in turn gives 
\begin{align*}
\|f_N - f\|_{L^{p, q}(w)}
&= p^{\frac1p} \bigg(\int_0^{\infty} 
\big[s \, d_{f_N-f}(s)^{\frac1p} \big]^q \, \frac{ds}{s} \bigg)^{\frac1q} 
\\
&= p^{\frac1p} \bigg(\int_0^1 
w(E \backslash B(0, N))^{\frac{q}{p}} s^{q-1} \, ds \bigg)^{\frac1q}
\\
&= p^{\frac1p} q^{-1} w(E \backslash B(0, N))
\to 0, \quad\text{ as } N \to \infty. 
\end{align*}
Thus, \eqref{den-2} holds. 

To prove \eqref{den-3}, let $E$ be a bounded measurable set such that $E \subset B(0, A)$ for some $A \ge 1$. Write  $f=\mathbf{1}_E$. Choose a positive function $\varphi \in \mathscr{C}_c^{\infty}(\Rn)$ so that $\supp \varphi \subset B(0, 1)$ and $\int_{\Rn} \varphi \, dx=1$. For any $t \in (0, 1)$, denote $\varphi_t(x) := t^{-n} \varphi(t^{-1} x)$ and $f_t := \varphi_t*f$.  It is easy to check that 
\begin{equation}\label{fft-1}
\begin{aligned}
& f_t \in \mathscr{C}_c^{\infty}(\Rn), \quad 
\supp f_t \subset B(0, 2A), 
\\ 
&\lim_{t \to 0} |f_t(x) - f(x)| =0, \, \, \text{a.e. } x \in \Rn,  
\\
& \sup_{t>0} |f_t(x)| \le \mathbf{1}_{B(0, 2A)}(x), \, \, x \in \Rn, 
\\
&\text{and}\quad  
\mathbf{1}_{B(0, 2A)} \in L^1(w),  
\end{aligned}
\end{equation}  
provided that $w \in L^1_{\loc}(\Rn)$. Then by Lebesgue dominated convergence theorem, 
\begin{align}\label{fft-12}
\lim_{t \to 0} \|f_t-f\|_{L^1(w)} = 0.
\end{align}
Since convergence in measure is a weaker than convergence in either $L^r(w)$ of $L^{r, \infty}(w)$ for any $r \in (0, \infty]$, the estimate \eqref{fft-12} enables us to choose a decreasing sequence $\{t_k\}_{k=1}^{\infty} \subset (0, 1)$ such that 
\begin{align}\label{fft-2}
\lim_{k \to \infty} t_k = 0
\quad\text{ and }\quad 
d_{f_{t_k} -f}(k^{-1}) 
\le 2^{-k}. 
\end{align}
Moreover, by \eqref{fft-1} and \eqref{fft-2}, 
\begin{align*}
d_{f_{t_k} -f}(s) 
&\le w(B(0, 2A)), \quad \,  \, 0<s<1, 
\\
d_{f_{t_k} -f}(s) 
&\le 2^{-k}, \qquad\qquad 1/k \le s \le 1, 
\end{align*}
which implies 
\begin{align*}
\|f_{t_k} - f\|_{L^{p, q}(w)}^q  
&= p^{\frac{q}{p}} \int_0^{\infty} \big[s \, d_{f_{t_k} - f}(s)^{\frac1p} \big]^q \, \frac{ds}{s} 
\\
&\lesssim \int_0^{\frac1k} w(B(0, 2A))^{\frac{q}{p}} s^{q-1} \, ds 
+ \int_{\frac1k}^1 2^{-k \frac{q}{p}} s^{q-1} \, ds
\\
&\lesssim w(B(0, 2A))^{\frac{q}{p}} k^{-q} + 2^{-k \frac{q}{p}}
\to 0, \quad\text{ as } k \to \infty. 
\end{align*}
This completes the proof of \eqref{den-3}. 

Finally, to demonstrate the nondensity,  choose $w \equiv 1$ and $f := |x|^{-\frac{n}{p}} \in L^{p, \infty}(\Rn)$, let $\psi \in \mathscr{C}(\Rn)$ and $\phi$ be a function with $\supp \phi \subset B(0, A)$ for some $A>0$. By the continuity of $\psi$, there exists $\delta>0$ such that 
\begin{align*}
\big| |\psi(x)| - |\psi(0)| \big|
\le |\psi(x) - \psi(0)| 
\le 1, 
\quad\text{ for all } x \in B(0, \delta). 
\end{align*}
Set $s_0 := \max\{\delta^{-n}, |\psi(0)| + 1\}$. Then for any $s>s_0$ and $x \in B(0, s^{-\frac{p}{n}})$, 
\begin{align*}
f(x) \ge s > s_0 
\ge |\psi(0)| + 1 
\ge |\psi(x)|. 
\end{align*}
Thus, $f(x) - |\psi(s)| > s-s_0 =: t$ and 
\begin{align*}
t \, d_{f - \psi}(t)^{\frac1p} 
&:= t |\{x \in \Rn: |f(x) - \psi(x)| > t\}|^{\frac1p}
\\
&\, \, \ge t |\{x \in B(0, s^{-\frac{p}{n}}): \big|f(x) - |\psi(x)|\big| > t\}|^{\frac1p}
\\
&= t |B(0, s^{-\frac{p}{n}})|^{\frac1p} 
= t s^{-1} \nu_n^{\frac1p}, 
\end{align*}
which implies 
\begin{align}\label{nonden-1}
\|f- \psi\|_{L^{p, \infty}}
\ge \sup_{t>0} \frac{t}{t+s_0} \nu_n^{\frac1p} 
= \nu_n^{\frac1p}. 
\end{align}
On the other hand, for any $s>0$, 
\begin{align*}
d_f(s) = \nu_n s^{-p}
\quad\text{ and }\quad 
d_{\phi}(s) \le |B(0, A)| = \nu_n A^n, 
\end{align*}
which implies 
\begin{align*}
d_{f- \phi}(s) 
\ge d_f(2s) - d_{\phi}(s)
\ge s^{-p} \nu_n (2^{-p} - s^p A^n), 
\end{align*}
and hence, 
\begin{align}\label{nonden-2}
\|f - \phi\|_{L^{p, \infty}} 
\ge \sup_{0<s<A^{-\frac{n}{p}}/2} \nu_n^{\frac1p} (2^{-p} - s^p A^n)^{\frac1p} 
= \nu_n^{\frac1p}/2. 
\end{align}
As a consequence of \eqref{nonden-1} and \eqref{nonden-2}, we have already proved that there exist $\varepsilon_0>0$ and $f \in L^{p, \infty}(\Rn)$ so that $\|f - \psi\|_{L^{p, \infty}} \ge \varepsilon_0$ for all $\psi \in \mathscr{C}(\Rn)$, and $\|f - \phi\|_{L^{p, \infty}} \ge \varepsilon_0$ for all functions $\phi$ with compact support. This means that the collection of continuous functions and functions with compact supports is not dense in $L^{p, \infty}(\Rn)$. 
\end{proof}

Recall that $\tau_h f(x) := f(x-h)$ is the translation of a function $f$ on $\Rn$ by $h \in \Rn$. A bilinear operator $T$ from $\mathcal{S}(\Rn) \times \mathcal{S}(\Rn) \to \mathcal{S}'(\Rn)$ is called translation invariant, if 
\begin{align*}
\tau_h \big(T(f_1, f_2)\big)
= T(\tau_h f_1, \tau_h f_2) 
\end{align*}
for all $f_1, f_2 \in \mathcal{S}(\Rn)$ and all $h \in \Rn$. 

The following is a bilinear extension of \cite[Proposition A.1]{BLOT2}. 

\begin{lemma}\label{lem:trans}
Let $0<p, q, p_1, q_1, p_2, q_2 < \infty$. A translation invariant bilinear operator $T$ is not compact from $L^{p_1, q_1}(\Rn) \times L^{p_2, q_2}(\Rn)$ to $L^{p, q}(\Rn)$.
\end{lemma}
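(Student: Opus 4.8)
The plan is to argue by contradiction: suppose $T$ is a translation invariant bilinear operator that is compact from $L^{p_1,q_1}(\Rn)\times L^{p_2,q_2}(\Rn)$ to $L^{p,q}(\Rn)$, and exploit the incompatibility of compactness with a non-vanishing translation orbit. First I would fix any two nonzero functions $f_1\in L^{p_1,q_1}(\Rn)$ and $f_2\in L^{p_2,q_2}(\Rn)$ for which $g:=T(f_1,f_2)$ is not the zero element of $L^{p,q}(\Rn)$ — if no such pair exists then $T\equiv 0$ on a dense set and there is nothing to prove. Then I would consider the sequence $(f_1^k,f_2^k):=(\tau_{h_k}f_1,\tau_{h_k}f_2)$ for a sequence $h_k\in\Rn$ with $|h_k|\to\infty$. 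Since the Lorentz quasi-norm is translation invariant, $\{(f_1^k,f_2^k)\}$ is a bounded sequence in $L^{p_1,q_1}(\Rn)\times L^{p_2,q_2}(\Rn)$, and by translation invariance of $T$ we have $T(f_1^k,f_2^k)=\tau_{h_k}g$.

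The key step is then to show that the sequence $\{\tau_{h_k}g\}$ has no convergent subsequence in $L^{p,q}(\Rn)$, which contradicts compactness of $T$. The mechanism is that mass escapes to infinity: for any fixed $A>0$, since $g\in L^{p,q}(\Rn)$ and $L^{p,q}(\Rn)$ has absolutely continuous quasi-norm (Lemma \ref{lem:ACN}), we have $\|g\mathbf{1}_{B(0,A)^c}\|_{L^{p,q}}$ small once $A$ is large, hence a fortiori $\|\tau_{h_k}g - g\|_{L^{p,q}}$ stays bounded below. More precisely, I would pass to a subsequence so that the balls $B(h_k,R)$ are pairwise disjoint for a suitable $R$ chosen so that $\|g\mathbf{1}_{B(0,R)}\|_{L^{p,q}}\ge \tfrac12\|g\|_{L^{p,q}}=:c_0>0$; then for $k\neq l$ the supports of $(\tau_{h_k}g)\mathbf{1}_{B(h_k,R)}$ and $(\tau_{h_l}g)\mathbf{1}_{B(h_l,R)}$ are disjoint. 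Using the quasi-triangle inequality together with the fact that $\|\tau_{h_k}g\,\mathbf{1}_{B(h_k,R)^c}\|_{L^{p,q}}=\|g\,\mathbf{1}_{B(0,R)^c}\|_{L^{p,q}}$ can be made smaller than any prescribed $\varepsilon$ by enlarging $R$ (absolute continuity again), one sees that $\|\tau_{h_k}g-\tau_{h_l}g\|_{L^{p,q}}$ is bounded below by a positive constant depending only on $c_0$ and the quasi-norm constant of $L^{p,q}$, uniformly in $k\neq l$. Hence $\{\tau_{h_k}g\}$ is not Cauchy along any subsequence, so it cannot have a convergent subsequence.

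This yields the contradiction: $\{(f_1^k,f_2^k)\}$ is bounded but $\{T(f_1^k,f_2^k)\}=\{\tau_{h_k}g\}$ has no convergent subsequence, contradicting the definition of compactness of $T$ from $L^{p_1,q_1}(\Rn)\times L^{p_2,q_2}(\Rn)$ to $L^{p,q}(\Rn)$. Therefore no such compact translation invariant $T$ exists, completing the proof.

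I expect the main obstacle to be the quantitative disjoint-support lower bound for the Lorentz quasi-norm: unlike the $L^p$ case, $\|u+v\|_{L^{p,q}}$ for disjointly supported $u,v$ is not simply $(\|u\|^r+\|v\|^r)^{1/r}$ for the natural exponent, and the quasi-triangle constant degrades as one adds more terms. The clean way around this is to only ever compare two terms $\tau_{h_k}g$ and $\tau_{h_l}g$ at a time (which is all that is needed to rule out a Cauchy subsequence), and to use the elementary fact that if $u,v$ have disjoint supports then $\|u+v\|_{L^{p,q}}\ge \max\{\|u\|_{L^{p,q}},\|v\|_{L^{p,q}}\}$ together with $\|u+v\|_{L^{p,q}}\gtrsim \|u\|_{L^{p,q}}+\|v\|_{L^{p,q}}$ up to a dimensional-type constant — or, even more robustly, to reduce everything to the case of simple functions via the density results in Lemma \ref{lem:dense-Lpq} and argue there, where the distribution functions add exactly on disjoint supports. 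The absolute continuity of the $L^{p,q}$ quasi-norm (Lemma \ref{lem:ACN}) is what makes the ``tail'' contributions negligible and is used in an essential way; this is exactly why the statement is restricted to $q<\infty$ and fails for $L^{p,\infty}$.
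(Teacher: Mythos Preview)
Your argument is correct and complete. The disjoint-support lower bound you worry about is not an obstacle: for disjointly supported $u,v$ one has $d_{u+v}(t)=d_u(t)+d_v(t)\ge d_u(t)$, hence $\|u+v\|_{L^{p,q}}\ge\|u\|_{L^{p,q}}$, and combining this with the quasi-triangle inequality in the form $\|\phi\|\le C(\|u_k-u_l\|+\|u_k\mathbf{1}_{B(h_k,R)^c}\|+\|u_l\mathbf{1}_{B(h_l,R)^c}\|)$ gives exactly the separation you need once $R$ is large. (One cosmetic point: the case $T\equiv 0$ is compact, so the lemma implicitly concerns nonzero $T$; the paper makes the same tacit assumption.)

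The paper's proof is organized differently. Instead of showing directly that $\{\tau_{h_k}g\}$ is separated in norm, it invokes the necessity direction of Theorem~\ref{thm:RKLpq}: compactness forces the uniform decay $\sup_j\|T(f_j,g_j)\mathbf{1}_{B(0,A)^c}\|_{L^{p,q}}\to 0$ as $A\to\infty$. Then, using translation invariance, $\|T(f_j,g_j)\mathbf{1}_{B(0,A)^c}\|_{L^{p,q}}=\|T(f,g)\mathbf{1}_{\{|\cdot+h_j|>A\}}\|_{L^{p,q}}$, and dominated convergence shows this tends to $\|T(f,g)\|_{L^{p,q}}>0$ as $j\to\infty$, a contradiction. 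Your route is more self-contained, relying only on Lemma~\ref{lem:ACN} (absolute continuity of the $L^{p,q}$ quasi-norm) rather than the full Kolmogorov--Riesz characterization; the paper's route is shorter given that Theorem~\ref{thm:RKLpq} is already available. Both arguments are ultimately driven by the same phenomenon---mass of $g$ escaping to infinity under translation---and both use $q<\infty$ in the same essential way.
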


\begin{proof}
Since the compactness is stronger than the boundedness, we may assume that $T$ is bounded from $L^{p_1, q_1}(\Rn) \times L^{p_2, q_2}(\Rn)$ to $L^{p, q}(\Rn)$. Pick $f \in L^{p_1, q_1}(\Rn)$ and $g \in L^{p_2, q_2}(\Rn)$ such that $\|f\|_{L^{p_1, q_1}} = \|g\|_{L^{p_2, q_2}} = 1$ and $0<\|T(f, g)\|_{L^{p, q}} <\infty$. Let $\{h_j\}_{j=1}^{\infty}$ be a sequence in $\Rn$ satisfying $\lim_{j \to \infty} |h_j| = \infty$. Then denote $f_j := \tau_{h_j} f$ and $g_j := \tau_{h_j} g$ for every $j \ge 1$. 

Assume that $T$ is compact from $L^{p_1, q_1}(\Rn) \times L^{p_2, q_2}(\Rn)$ to $L^{p, q}(\Rn)$. Then by Theorem \ref{thm:RKLpq}, given $\varepsilon>0$, there exists $A=A(\varepsilon)>0$ so that 
\begin{align*}
\|T(f_j, g_j) \mathbf{1}_{B(0, A)^c}\|_{L^{p, q}}
\le \varepsilon \|f_j\|_{L^{p_1, q_1}} \|g_j\|_{L^{p_2, q_2}}
= \varepsilon \|f\|_{L^{p_1, q_1}} \|g\|_{L^{p_2, q_2}}
= \varepsilon,  
\end{align*}
for all $j \ge 1$. Choose $0<\varepsilon<\frac12 \|T(f, g)\|_{L^{p, q}}$. Note that for any measurable function $\phi$, 
\begin{align*}
d_{\phi} (t) \ge d_{\phi_j}(t) 
\ge |\{|x|<|h_j| - A: |\phi(x)|>t\}|, 
\quad \forall j: |h_j| > A, 
\end{align*}
where $\phi_j(x) := \phi(x) \mathbf{1}_{\{|x+h_j|>A\}}(x)$. From the estimates above and the translation invariance of $T$, we conclude 
\begin{align*}
\varepsilon
&\ge \lim_{j \to \infty} \|T(f_j, g_j) \mathbf{1}_{B(0, A)^c}\|_{L^{p, q}}
= \lim_{j \to \infty} \|T(f, g)(\cdot-h_j) \mathbf{1}_{B(0, A)^c}\|_{L^{p, q}}
\\
&= \lim_{j \to \infty} \|T(f, g) \mathbf{1}_{\{|\cdot+h_j|>A\}}\|_{L^{p, q}} 
= \|T(f, g)\|_{L^{p, q}}
> 2 \varepsilon,  
\end{align*}
where Lebesgue dominated convergence theorem was used in the last equality. This is a contradiction. Consequently, $T$ is not compact from $L^{p_1, q_1}(\Rn) \times L^{p_2, q_2}(\Rn)$ to $L^{p, q}(\Rn)$.
\end{proof}

\section{Interpolation of multiple weights}\label{sec:A-IP}

\begin{lemma}\label{lem:AA}
Let $\frac1p = \sum_{i=1}^m \frac{1}{p_i}>0$ with $p_1, \ldots, p_m \in (1, \infty]$ and $\frac1s = \sum_{i=1}^m \frac{1}{s_i}$ with $s_1, \ldots, s_m \in [1, \infty]$. Assume that $\vec{w}\in A_{\vec{p}}$ and $\vec{v} \in A_{\vec{s}}$. Then there exists $\theta \in (0, 1)$ such that $\vec{u} \in A_{\vec{r}}$, 
where 
\begin{align}\label{eq:AA}
\frac1r = \sum_{i=1}^m \frac{1}{r_i}, \quad 
u = \prod_{i=1}^m u_i, \quad 
w_i=u_i^{1-\theta} v_i^{\theta},\quad 
\frac{1}{p_i} = \frac{1-\theta}{r_i} + \frac{\theta}{s_i}, \quad i=1, \ldots, m. 
\end{align}
\end{lemma}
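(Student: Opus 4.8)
The plan is to follow the scheme of \cite[Lemma~4.1]{COY}: reduce the multilinear assertion to linear Muckenhoupt classes via Lemma~\ref{lem:weight}, and then run a change-of-weights interpolation in which the room supplied by the openness \eqref{eq:open} and the reverse Hölder inequality \eqref{eq:RH} permits us to take $\theta$ small.

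First I would fix $\theta_0\in(0,1)$ so small that for every $\theta\in(0,\theta_0)$ the exponents $r_i$ defined by $\frac1{p_i}=\frac{1-\theta}{r_i}+\frac{\theta}{s_i}$ satisfy $r_i\in(1,\infty]$ and $\frac1r=\sum_i\frac1{r_i}>0$; this is possible since $r_i\to p_i$ and $\frac1r\to\frac1p$ as $\theta\to0^+$. By Lemma~\ref{lem:weight} it suffices to verify $u^r\in A_{mr}$ and $u_i^{-r_i'}\in A_{mr_i'}$ for $i=1,\dots,m$, with the understood $A_1$-readings at the endpoints. From $w_i=u_i^{1-\theta}v_i^{\theta}$ one solves $u_i=w_i^{1/(1-\theta)}v_i^{-\theta/(1-\theta)}$ and $u=w^{1/(1-\theta)}v^{-\theta/(1-\theta)}$; taking complements of the exponent relation yields $\frac1{p_i'}=\frac{1-\theta}{r_i'}+\frac{\theta}{s_i'}$, and a short computation shows that, with $a:=\frac{r}{(1-\theta)p}$ and $a_i:=\frac{r_i'}{(1-\theta)p_i'}$ (both $>1$, both tending to $1$ as $\theta\to0^+$, with $a-1=\frac{\theta r}{(1-\theta)s}$ and $a_i-1=\frac{\theta r_i'}{(1-\theta)s_i'}$),
\[
u^r=(w^p)^{a}\bigl((v^s)^{-1}\bigr)^{a-1},\qquad u_i^{-r_i'}=(w_i^{-p_i'})^{a_i}\bigl((v_i^{-s_i'})^{-1}\bigr)^{a_i-1}.
\]
By Lemma~\ref{lem:weight}, $w^p\in A_{mp}$, $v^s\in A_{ms}$, $w_i^{-p_i'}\in A_{mp_i'}$ and $v_i^{-s_i'}\in A_{ms_i'}$.

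I would then treat $u^r$ in detail, the factors $u_i^{-r_i'}$ being identical after replacing $(p,s,r)$ by $(p_i',s_i',r_i')$. Since $mp>1$ (indeed $\frac1{mp}=\frac1m\sum_i\frac1{p_i}<1$), \eqref{eq:open} gives $\varepsilon>0$ with $w^p\in A_{mp-\varepsilon}$, and then \eqref{eq:RH} gives $\delta>0$ with $(w^p)^{1+\delta}\in A_{q_0}$, $q_0:=1+(1+\delta)(mp-1-\varepsilon)$; also $(v^s)^{-1}=\bigl((v^s)^{-1/(ms-1)}\bigr)^{ms-1}$ with $(v^s)^{-1/(ms-1)}\in A_{(ms)'}$. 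Setting $\gamma:=\frac{a}{1+\delta}+(ms-1)(a-1)$, which tends to $\frac1{1+\delta}<1$ as $\theta\to0^+$, and writing $1=1^{\,1-\gamma}$ as a third factor, one presents $u^r$ as a product of three Muckenhoupt weights with nonnegative exponents $\frac{a}{1+\delta},\ (ms-1)(a-1),\ 1-\gamma$ summing to $1$ (once $\theta$ is small enough that $a\le1+\delta$ and $\gamma<1$). The elementary Hölder bound (a product of $A_{c_j}$-weights raised to exponents $\lambda_j\ge0$ with $\sum_j\lambda_j=1$ lies in $A_{\sum_j\lambda_jc_j}$) then yields $u^r\in A_c$ with $c=\frac{a}{1+\delta}q_0+(ms-1)(a-1)(ms)'+(1-\gamma)$, and a direct limit computation gives $c\to mp-\varepsilon$ as $\theta\to0^+$. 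Since $mr\to mp$, for $\theta$ small we have $c<mr$, hence $u^r\in A_c\subseteq A_{mr}$. The same argument, gaining room on $w_i^{-p_i'}$ from \eqref{eq:open}, yields $u_i^{-r_i'}\in A_{mr_i'}$; choosing $\theta$ to meet the finitely many smallness requirements at once completes the proof.

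I expect the main obstacle to lie in the endpoint bookkeeping rather than in the main-term estimate. When some $p_i$, $s_i$ or $r_i$ equals $1$ or $\infty$ the identities of Lemma~\ref{lem:weight} degenerate and several of the weights above become merely $A_1$-weights (for instance $w_i^{-p_i'}$ gets replaced by $w_i^{1/m}\in A_1$, $w^p$ by $w^{-1/m}\in A_1$, and the device $(v^s)^{-1/(ms-1)}$ must be replaced, when $\vec s=(1,\dots,1)$, by the relation $v\in A_m$), so \eqref{eq:open} is unavailable there; one then relies on $A_1\subseteq A_{1+\eta}$ together with the fact that, within its reverse-Hölder range, a power $\ge1$ of an $A_1$-weight is again an $A_1$-weight, and must re-check in each case that the interpolation closes inside the precise target class ($A_1$ or $A_m$ as appropriate). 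Carrying this out for every combination of endpoint exponents is exactly the content not covered by \cite[Lemma~4.1]{COY}, and is the delicate part of the argument.
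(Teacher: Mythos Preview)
Your approach is correct and reaches the same conclusion, but the packaging differs from the paper's. The paper works directly at the level of the $A_q$-characteristic: it applies reverse H\"older \eqref{eq:RH} to each of $w^p$, $w^{-p/(mp-1)}$, $v^{-1/m}$, $w_i^{-p_i'}$, $w_i^{p_i'/(mp_i'-1)}$, $v_i^{-1}$, $v_i^{1/(m-1)}$ to obtain exponents $\tau,\tau_i$, introduces explicit $\theta$-dependent parameters $\alpha,\kappa,\alpha_i,\beta_i,\kappa_i,\widetilde\kappa_i$ that tend to $0$ or $1$ as $\theta\to0^+$, and then bounds $\fint_Q u^r$, $\fint_Q u^{-r/(mr-1)}$ and the $u_i$-analogues by direct H\"older splits, choosing $\theta$ small enough that all H\"older exponents stay below $\tau,\tau_i$. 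In particular the paper never uses the openness property \eqref{eq:open}, nor the abstract log-convexity of the Muckenhoupt scale that you invoke as a black box; everything is done by hand in the integral. Your route---openness to gain $\varepsilon$, reverse H\"older to pass to $(w^p)^{1+\delta}\in A_{q_0}$, then the product lemma $\prod_j W_j^{\lambda_j}\in A_{\sum_j\lambda_j c_j}$---is more modular and makes the limiting computation transparent (your $c\to mp-\varepsilon$ versus $mr\to mp$), at the cost of one extra ingredient; note that without the $\varepsilon$ from openness your computation gives $c=a\cdot mp=mr/(1-\theta)>mr$, so the openness really is doing work in your scheme that the paper absorbs into the direct estimate. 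Your reading of the endpoint difficulties is also accurate: the paper in fact only writes out the representative case $p_i\in(1,\infty)$, $s_i=\infty$, and your formula $u^r=(w^p)^a((v^s)^{-1})^{a-1}$ indeed degenerates there (formally $a-1=0$ while $v^{-\theta p}\ne 1$), but the replacement $u^r=w^p\cdot(v^{-1/m})^{m\theta p}$ with $v^{-1/m}\in A_1$ fits your product-lemma scheme just as well.
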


\begin{proof}
We only present the proof in the scenario $p_1, \ldots, p_m \in (1, \infty)$ and $s_1 = \cdots = s_m = \infty$ because it involves all cases of exponents belonging to $[1, \infty]$, which will reveal the general strategy. By Lemma \ref{lem:weight}, we see that 
\begin{align*}
w^p \in A_{mp}, \quad 
v^{-\frac1m} \in A_1, \quad 
w_i^{-p'_i} \in A_{m p'_i}, 
\quad\text{and}\quad   
v_i^{-1} \in A_m, \quad i=1, \ldots, m,   
\end{align*}
which implies 
\begin{align*}
w^{-\frac{p}{mp-1}} \in A_{\frac{mp}{mp-1}}, \quad 
w_i^{\frac{p'_i}{mp'_i -1}} \in A_{\frac{m p'_i}{mp'_i -1}}, 
\quad\text{and}\quad   
v_i^{\frac{1}{m-1}} \in A_{\frac{m}{m-1}}, \quad i=1, \ldots, m.  
\end{align*}
In view of \eqref{eq:RH}, there exists $\tau \in (1, \infty)$ such that 
\begin{align}\label{eq:AA-01}
\bigg(\fint_Q w^{p \tau} \, dx \bigg)^{\frac{1}{\tau}} 
\lesssim \fint_Q w^p \, dx, \quad 
\bigg(\fint_Q w^{-\frac{p \tau}{mp-1}} \, dx \bigg)^{\frac{1}{\tau}} 
\lesssim \fint_Q w^{-\frac{p}{mp-1}} \, dx, \quad 
\end{align}
and 
\begin{align}\label{eq:AA-02}
\bigg(\fint_Q v^{- \frac{\tau}{m}} \, dx \bigg)^{\frac{1}{\tau}} 
\lesssim \fint_Q v^{-\frac1m} \, dx, \quad 
\bigg(\fint_Q v^{- \frac1m} \, dx \bigg) \big(\esssup_Q v^{\frac1m} \big) 
\le [v^{-\frac1m}]_{A_1}, 
\end{align}
for every cube $Q \subset \Rn$. Additionally, for each $i=1, \ldots, m$, there exists $\tau_i \in (1, \infty)$  so that 
\begin{align}\label{eq:AA-1}
\bigg(\fint_Q w_i^{-p'_i \tau_i} dx\bigg)^{\frac{1}{\tau_i}} 
\lesssim \fint_Q w_i^{-p'_i} dx, 
\qquad
\bigg(\fint_Q v_i^{\frac{\tau_i}{m-1}} dx\bigg)^{\frac{1}{\tau_i}} 
\lesssim \fint_Q v_i^{\frac{1}{m-1}} dx, 
\end{align} 
and 
\begin{align}\label{eq:AA-2}
\bigg(\fint_Q w_i^{\frac{p'_i \tau_i}{mp'_i -1}} dx\bigg)^{\frac{1}{\tau_i}} 
\lesssim \fint_Q w_i^{\frac{p'_i}{mp'_i -1}} dx, 
\qquad
\bigg(\fint_Q v_i^{- \tau_i} dx\bigg)^{\frac{1}{\tau_i}} 
\lesssim \fint_Q v_i^{-1} dx, 
\end{align}
for every cube $Q \subset \Rn$. Given $\theta \in (0, 1)$ chosen later, we define $u$, $s$, $u_i$ and $s_i$ as in \eqref{eq:AA}, and  pick 
\begin{align*}
&\alpha = \alpha(\theta) := \theta mp, \quad 
\kappa = \kappa(\theta) := \frac{(mp-1)(1+\theta)}{mp(1-\theta) -1}, 
\\
&
\alpha_i = \alpha_i(\theta) := \theta p'_i (m-1), \quad 
\beta_i = \beta_i(\theta) := \frac{\theta p'_i}{mp'_i -1}, \quad i=1,\ldots,m.  
\end{align*}
Then one can check that 
\begin{align}\label{eq:AA-3} 
\kappa_i = \kappa_i(\theta) 
:= \frac{(p_i -1) (1+\alpha_i)}{p_i(1-\theta) -1}
&= \frac{\theta p_i (m-1)}{p_i(1-\theta) -1} \frac{1+\alpha_i}{\alpha_i}
\\ \nonumber 
&= \frac{(p_i-1)(1+\theta p'_i (m-1))}{p_i(1-\theta) -1}, 
\end{align}
and 
\begin{align}\label{eq:AA-4} 
\widetilde{\kappa}_i = \widetilde{\kappa}_i (\theta)
:= \frac{r'_i (mp'_i -1)(1 + \beta_i)}{p'_i (mr'_i -1) (1-\theta)} 
&= \frac{r'_i \theta (1 + \beta_i)}{(mr'_i -1) (1-\theta) \beta_i}  
\\ \nonumber 
&= \frac{m-1+\theta+1/p_i}{(m-1)(1-\theta)+1/p_i},  
\end{align}
which gives that 
\begin{align*}
\lim_{\theta \to 0^+} \alpha (\theta)  = 0, \quad 
\lim_{\theta \to 0^+} \kappa (\theta) = 1 < \tau,  
\quad\text{and}\quad 
\lim_{\theta \to 0^+} \kappa_i (\theta) 
= \lim_{\theta \to 0^+} \widetilde{\kappa}_i(\theta) = 1 < \tau_i.
\end{align*} 
By continuity, there exists some $\theta \in (0, 1)$ small enough so that 
\begin{align}\label{eq:AA-5}
1 + \alpha < \tau, \quad 
\kappa < \tau, \quad 
\kappa_i <\tau_i, \quad\text{and}\quad 
\widetilde{\kappa}_i < \tau_i, \quad i=1, \ldots, m.  
\end{align}

It follows from \eqref{eq:AA} that $u = w^{\frac{1}{1-\theta}} v^{-\frac{\theta}{1-\theta}}$ and $r=p(1-\theta)$. Along with \eqref{eq:AA-01}, \eqref{eq:AA-02}, and \eqref{eq:AA-5}, this yields   
\begin{align}\label{eq:AA-03}
\fint_Q u^r \, dx 
&= \fint_Q (w^{\frac{1}{1-\theta}} v^{-\frac{\theta}{1-\theta}})^{p(1-\theta)} \, dx 
= \fint_Q w^p v^{-\theta p} \, dx 
\\ \nonumber 
&\le \bigg(\fint_Q w^{p(1+\alpha)} \, dx \bigg)^{\frac{1}{1+\alpha}} 
\bigg(\fint_Q v^{-\theta p \frac{1+\alpha}{\alpha}} \, dx \bigg)^{\frac{\alpha}{1+\alpha}} 
\\ \nonumber 
&= \bigg(\fint_Q w^{p(1+\alpha)} \, dx \bigg)^{\frac{1}{1+\alpha}} 
\bigg(\fint_Q v^{- \frac{1+\alpha}{m}} \, dx \bigg)^{\frac{\alpha}{1+\alpha}} 
\\ \nonumber 
&\le \bigg(\fint_Q w^{p \tau} \, dx \bigg)^{\frac{1}{\tau}} 
\bigg(\fint_Q v^{- \frac{\tau}{m}} \, dx \bigg)^{\frac{\alpha}{\tau}} 
\\ \nonumber 
&\lesssim \bigg(\fint_Q w^p  \, dx \bigg) 
\bigg(\fint_Q v^{- \frac{1}{m}} \, dx \bigg)^{\theta mp},  
\end{align}
and 
\begin{align}\label{eq:AA-04}
&\fint_Q u^{-\frac{r}{mr-1}} \, dx 
= \fint_Q w^{-\frac{p}{mp(1-\theta) -1}} v^{\frac{\theta p}{mr-1}} \, dx 
\\ \nonumber 
&\le \bigg(\fint_Q w^{-\frac{p(1+\theta)}{mp(1-\theta) -1}} \, dx \bigg)^{\frac{1}{1 + \theta}} 
\bigg(\fint_Q v^{\frac{\theta p}{mr -1} \frac{1+\theta}{\theta}} \, dx \bigg)^{\frac{\theta}{1 + \theta}} 
\\ \nonumber 
&= \bigg(\fint_Q w^{- \frac{p \kappa}{mp-1}} dx \bigg)^{\frac{1}{1 + \theta}} 
\bigg(\fint_Q v^{\frac{\theta p}{mr -1} \frac{1+\theta}{\theta}} \, dx \bigg)^{\frac{\theta}{1 + \theta}}  
\\ \nonumber 
&\le \bigg(\fint_Q w^{- \frac{p \tau}{mp-1}} dx \bigg)^{\frac{\kappa}{\tau(1 + \theta)}} 
\big(\esssup_Q v^{\frac1m} \big)^{\frac{\theta mp}{mr-1}} 
\\ \nonumber 
&\lesssim \bigg(\fint_Q w^{- \frac{p}{mp-1}} dx \bigg)^{\frac{mp-1}{mr-1}} 
\big(\esssup_Q v^{\frac1m} \big)^{\frac{\theta mp}{mr-1}}. 
\end{align}
Thus, gathering \eqref{eq:AA-03} and \eqref{eq:AA-04}, we obtain   
\begin{align}\label{uu-1}
[u^r]_{A_{mr}}
\lesssim [w^p]_{A_{mp}} [v^{-\frac1m}]_{A_1}^{\theta mp}. 
\end{align}

On the other hand, from $w_i = u_i^{1-\theta} v_i^{\theta}$, H\"{o}lder's inequality, \eqref{eq:AA-1}, \eqref{eq:AA-3}, and \eqref{eq:AA-5}, we conclude that 
\begin{align}\label{eq:AA-6}
&\fint_Q u_i^{-r'_i}\, dx 
=\fint_Q \big(w_i^{\frac{1}{1 - \theta}} v_i^{-\frac{\theta}{1-\theta}} \big)^{
- \frac{p_i(1-\theta)}{p_i(1 - \theta) -1}} \, dx
\\ \nonumber
&= \fint_Q (w_i^{-p'_i})^{\frac{p_i-1}{p_i(1-\theta) -1}} 
(v_i^{\frac{1}{m-1}})^{\frac{\theta p_i (m-1)}{p_i(1-\theta) -1}} \, dx 
\\ \nonumber
& \le \bigg(\fint_Q (w_i^{-p'_i})^{\frac{(p_i-1)(1+\alpha_i)}{p_i(1-\theta) -1}} \bigg)^{\frac{1}{1+\alpha_i}}
\bigg(\fint_Q (v_i^{\frac{1}{m-1}})^{\frac{\theta p_i (m-1)}{p_i(1-\theta) -1} \frac{1+\alpha_i}{\alpha_i}}  \bigg)^{\frac{\alpha_i}{1+\alpha_i}}
\\ \nonumber
&= \bigg(\fint_Q w_i^{-p'_i \kappa_i} \, dx \bigg)^{\frac{1}{1+\alpha_i}} 
\bigg(\fint_Q v_i^{\frac{\kappa_i}{m-1}} dx \bigg)^{\frac{\alpha_i}{1+\alpha_i}} 
\\ \nonumber
&\le \bigg(\fint_Q w_i^{-p'_i \tau_i} \, dx \bigg)^{\frac{\kappa_i}{\tau_i(1+\alpha_i)}}  
\bigg(\fint_Q v_i^{\frac{\tau_i}{m-1}} \, dx \bigg)^{\frac{\kappa_i \alpha_i}{\tau_i(1+\alpha_i)}}  
\\ \nonumber
&\lesssim \bigg(\fint_Q w_i^{-p'_i} \, dx \bigg)^{\frac{\kappa_i}{1+\alpha_i}} 
\bigg(\fint_Q v_i^{\frac{1}{m-1}} \, dx \bigg)^{\frac{\kappa_i \alpha_i}{1+\alpha_i}} 
\\ \nonumber
&= \bigg(\fint_Q w_i^{-p'_i} \, dx \bigg)^{\frac{p_i -1}{p_i(1-\theta) -1}} 
\bigg(\fint_{Q} v_i^{\frac{1}{m-1}} \, dx \bigg)^{\frac{\theta p_i (m-1)}{p_i(1-\theta) -1}}.  
\end{align}
Similarly, 
\begin{align}\label{eq:AA-7}
&\fint_Q u_i^{\frac{r'_i}{mr'_i -1}} \, dx 
= \fint_Q w_i^{\frac{1}{1 - \theta} \frac{r'_i}{mr'_i -1}} 
v_i^{- \frac{\theta}{1 - \theta} \frac{r'_i}{mr'_i -1}} \, dx  
\\ \nonumber
&= \fint_Q \big(w_i^{\frac{p'_i}{mp'_i -1}} \big)^{\frac{1}{1 - \theta} 
\frac{r'_i (mp'_i -1)}{p'_i(mr'_i -1)}} 
(v_i^{-1})^{\frac{\theta}{1 - \theta} \frac{r'_i}{mr'_i -1}} \, dx 
\\ \nonumber
&\le \bigg(\fint_Q \big(w_i^{\frac{p'_i}{mp'_i -1}} \big)^{ 
\frac{r'_i (mp'_i -1) (1+\beta_i)}{p'_i(mr'_i -1) (1 - \theta)}} \bigg)^{\frac{1}{1+\beta_i}}
\bigg(\fint_Q (v_i^{-1})^{\frac{r'_i \theta (1+\beta_i)}{(mr'_i -1)(1 - \theta) \beta_i}} 
\bigg)^{\frac{\beta_i}{1+\beta_i}}
\\ \nonumber
&= \bigg(\fint_Q w_i^{\frac{p'_i \widetilde{\kappa}_i}{mp'_i -1}} \, dx \bigg)^{\frac{1}{1+\beta_i}} 
\bigg(\fint_Q v_i^{- \widetilde{\kappa}_i} \, dx \bigg)^{\frac{\beta_i}{1+\beta_i}}    
\\ \nonumber
&\le \bigg(\fint_Q w_i^{\frac{p'_i \tau_i}{mp'_i -1}} \, dx \bigg)^{
\frac{\widetilde{\kappa}_i}{\tau_i (1+\beta_i)}} 
\bigg(\fint_Q v_i^{- \tau_i} \, dx \bigg)^{
\frac{\widetilde{\kappa}_i \beta_i}{\tau_i (1+\beta_i)}}    
\\ \nonumber
&\lesssim \bigg(\fint_Q w_i^{\frac{p'_i}{mp'_i -1}} \, dx \bigg)^{\frac{\widetilde{\kappa}_i}{1+\beta_i}} 
\bigg(\fint_Q v_i^{-1} \, dx \bigg)^{\frac{\widetilde{\kappa}_i \beta_i}{1+\beta_i}}   
\\ \nonumber
&= \bigg(\fint_Q w_i^{\frac{p'_i}{mp'_i -1}} \, dx \bigg)^{\frac{r'_i (mp'_i -1)}{(mr'_i -1) p'_i (1-\theta)}} 
\bigg(\fint_Q v_i^{-1} \, dx \bigg)^{\frac{r'_i \theta}{(mr'_i -1)(1 - \theta)}}.    
\end{align}
Note that 
\begin{align*}
\frac{r'_i}{p'_i (1-\theta)} = \frac{p_i-1}{p_i(1-\theta) -1}
\quad\text{ and }\quad 
\frac{r'_i}{1-\theta} = \frac{p_i}{p_i(1-\theta) -1}.
\end{align*}
Hence, \eqref{eq:AA-6} and \eqref{eq:AA-7} lead to  
\begin{align*}
&\bigg(\fint_Q u_i^{-r'_i}\, dx \bigg)
\bigg(\fint_Q u_i^{\frac{r'_i}{mr'_i -1}} \, dx \bigg)^{mr'_i -1}
\\ 
&\le \bigg[\bigg(\fint_Q w_i^{-p'_i} \, dx \bigg) 
\bigg(\fint_Q w_i^{\frac{p'_i}{mp'_i -1}} \, dx \bigg)^{mp'_i -1} \bigg]^{\frac{p_i -1}{p_i(1-\theta) -1}} 
\\ 
&\quad\times 
\bigg[\bigg(\fint_Q v_i^{-1} \, dx \bigg) 
\bigg(\fint_{Q} v_i^{\frac{1}{m-1}} \, dx \bigg)^{m-1} \bigg]^{\frac{\theta p_i}{p_i(1-\theta) -1}},  
\end{align*}
which shows 
\begin{align}\label{uu-2} 
[u_i^{-r'_i}]_{A_{mr'_i}} 
\lesssim [w_i^{-p'_i}]_{A_{mp'_i}}^{\frac{p_i -1}{p_i(1-\theta) -1}}  
[v_i^{-1}]_{A_m}^{\frac{\theta p_i}{p_i(1-\theta) -1}}, 
\qquad i=1, \ldots, m.
\end{align}
Therefore, the conclusion $\vec{u} \in A_{\vec{r}}$ follows from \eqref{uu-1}, \eqref{uu-2}, and Lemma \ref{lem:weight}. 
\end{proof}

\begin{lemma}
$A_{(\infty, \infty)} \subsetneq \bigcup_{1<p_1, p_2<\infty} A_{(p_1, p_2)}$ and 
$\bigcap_{1 \le p_1, p_2<\infty} A_{(p_1, p_2)} \not\subset A_{(\infty, \infty)}$. 
\end{lemma}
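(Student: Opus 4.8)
Proof proposal for the final lemma.

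\medskip

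\noindent\textbf{Plan.} The claim has two parts, each of the form ``one class is strictly contained in (or not contained in) another''. Both inclusions $A_{(\infty,\infty)} \subseteq \bigcup_{1<p_1,p_2<\infty} A_{(p_1,p_2)}$ and $\bigcap_{1\le p_1,p_2<\infty} A_{(p_1,p_2)} \subseteq$ ``something'' reduce, via Lemma \ref{lem:weight}, to classical one-weight $A_p$ facts; the real content is producing the two \emph{separating examples} that witness strictness and non-containment. The plan is: first dispose of the soft inclusion $A_{(\infty,\infty)} \subseteq \bigcup_{p_1,p_2} A_{(p_1,p_2)}$; then build a pair $(w_1,w_2)$ lying in $A_{(p_1,p_2)}$ for some finite $p_1,p_2$ but \emph{not} in $A_{(\infty,\infty)}$; then observe $(1,1)$ (or another bounded pair) lies in $\bigcap_{1\le p_1,p_2<\infty} A_{(p_1,p_2)}$ but a well-chosen pair fails $A_{(\infty,\infty)}$, giving the second non-containment. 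Most of the verification is routine $A_p$ bookkeeping, so I will only sketch the structure.

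\medskip

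\noindent\textbf{Step 1: the easy inclusion.} By Lemma \ref{lem:weight}, $(w_1,w_2)\in A_{(\infty,\infty)}$ iff $w^{-1/2}\in A_1$ and $w_i^{-1}\in A_2$ for $i=1,2$, where $w=w_1w_2$; and $(w_1,w_2)\in A_{(p_1,p_2)}$ iff $w^{p}\in A_{2p}$ and $w_i^{-p_i'}\in A_{2p_i'}$. Assume $(w_1,w_2)\in A_{(\infty,\infty)}$. I would fix $p_1=p_2=p_0$ large and check each required condition: since $w_i^{-1}\in A_2 \subset A_{2p_0'}$ trivially (as $A_2\subset A_q$ for $q\ge 2$, and $w_i^{-p_i'}=(w_i^{-1})^{p_0'}$ needs the reverse-Hölder/power trick), and since $w^{-1/2}\in A_1$ forces, via reverse Hölder \eqref{eq:RH} applied to $w^{-1/2}$, that $w^{\varepsilon}$ has an $A_\infty$ estimate for small $\varepsilon>0$, one can take $p_0$ close enough to $\infty$ (equivalently $p_0'$ close to $1$) so that $w^{p_0}\in A_{2p_0}$ holds. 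The point is that $A_1\subset A_\infty$ and the self-improvement of $A_\infty$/$A_1$ weights give room to absorb a small positive power. This yields $(w_1,w_2)\in A_{(p_0,p_0)}$ for some finite $p_0$, hence the inclusion.

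\medskip

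\noindent\textbf{Step 2: strictness and the second non-containment, via examples on $\mathbb{R}$.} For strictness of Step 1, I would take a pair with $w=w_1w_2$ a genuine $A_{2p}$ weight that is not in $A_\infty^{1/2}$-sense required by $A_{(\infty,\infty)}$ — e.g. $w_1(x)=w_2(x)=|x|^{a}$ on $\mathbb{R}$ with an exponent $a$ chosen so that $w^{p}=|x|^{2pa}\in A_{2p}$ (i.e. $-1<2pa<2p-1$) but $w^{-1/2}=|x|^{-a}\notin A_1$ (which fails as soon as $a>0$, since negative-power monomials are never $A_1$ unless the power is $\le 0$ — here $w^{-1/2}=|x|^{-a}$ with $-a<0$ is not in $A_1$). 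Choosing $a>0$ small (so $2pa<2p-1$ holds for the fixed $p$) gives $(w_1,w_2)\in A_{(p_1,p_2)}$ for suitable finite $p_1,p_2$ but $(w_1,w_2)\notin A_{(\infty,\infty)}$; the same example simultaneously serves for the first strictness assertion. For $\bigcap_{1\le p_1,p_2<\infty} A_{(p_1,p_2)}\not\subset A_{(\infty,\infty)}$, note $(1,1)$: here $w\equiv 1$, $w^{p}=1\in A_{2p}$ and $w_i^{-p_i'}=1\in A_{2p_i'}$ for every finite $p_1,p_2\ge 1$, so $(1,1)\in\bigcap A_{(p_1,p_2)}$; but for $A_{(\infty,\infty)}$ we would need $w^{-1/2}=1\in A_1$, which \emph{is} true, so $(1,1)$ does lie in $A_{(\infty,\infty)}$ and is not the right witness. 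Instead I would pick $(w_1,w_2)=(|x|^{b},|x|^{-b})$ so that $w=w_1w_2\equiv 1$ (hence $w^p\equiv 1\in A_{2p}$ for all finite $p$), and check $w_1^{-p_1'}=|x|^{-bp_1'}$, $w_2^{-p_2'}=|x|^{bp_2'}$: for $(w_1,w_2)$ to be in $A_{(p_1,p_2)}$ for \emph{all} finite $p_1,p_2\ge 1$ we need $|x|^{-bp_1'}\in A_{2p_1'}$ and $|x|^{bp_2'}\in A_{2p_2'}$ for all such exponents, which forces $b=0$ — so this is also not a witness, and in fact $\bigcap_{1\le p_1,p_2<\infty}A_{(p_1,p_2)}$ is quite small. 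The correct approach: take the intersection only demands membership for all finite tuples, and one can still have, e.g., $w_1\in A_1$, $w_2\in A_1$ with $w_2^{-1}\notin A_2$; concretely $w_1\equiv 1$ and $w_2=|x|^{c}$ with $0<c<1$ so $w_2\in A_1$; then $w=|x|^c$, and one checks $w^p=|x|^{cp}\in A_{2p}$ for all finite $p$ (needs $cp<2p-1$, i.e. $c<2-1/p$, true for $c<1$), $w_1^{-p_1'}=1\in A_{2p_1'}$, and $w_2^{-p_2'}=|x|^{-cp_2'}\in A_{2p_2'}$ iff $cp_2'<2p_2'-1$, again fine for $c<1$ — so $(1,|x|^c)\in\bigcap A_{(p_1,p_2)}$; meanwhile $A_{(\infty,\infty)}$ requires $w^{-1/2}=|x|^{-c/2}\in A_1$, which fails since $-c/2<0$. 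This pair is the witness for the second non-containment.

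\medskip

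\noindent\textbf{Main obstacle.} The genuinely delicate point is getting the exponents in the monomial examples to satisfy \emph{all} the simultaneous $A_q$-type inequalities coming from Lemma \ref{lem:weight} across the full finite range of $(p_1,p_2)$ for the intersection statement — i.e. verifying uniformity in $p_1,p_2$ rather than for one fixed tuple. I would handle this by using that monomials $|x|^\gamma\in A_q(\mathbb{R})$ precisely when $-1<\gamma<q-1$, reducing everything to explicit linear inequalities in the parameters, and then choosing the free exponent ($a$, $c$) small enough that the relevant strict inequalities hold for every admissible $p_i$ simultaneously, while the single failing $A_1$ condition ($\gamma<0$) is automatic. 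Once the right monomial exponents are pinned down, each verification is a one-line computation with the $A_q$-monomial criterion, so no heavy machinery beyond Lemma \ref{lem:weight} and \eqref{eq:RH} is needed.
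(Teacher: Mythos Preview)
Your Step~1 has the right idea and matches the paper's approach: the inclusion $A_{(\infty,\infty)}\subset\bigcup A_{(p_1,p_2)}$ follows from Lemma~\ref{lem:weight} together with reverse-H\"older self-improvement, applied to $w^{-1/2}\in A_1$ and $w_i^{-1}\in A_2$ to gain a small extra power and then choosing $p_i'$ close to~$1$. So that part is fine modulo the sketchiness.

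The gap is in Step~2, and it is a sign error in the $A_1$ criterion that breaks both of your examples. You assert that $|x|^{-a}\notin A_1$ for $a>0$, but this is backwards: on $\mathbb{R}^n$ one has $|x|^{\gamma}\in A_1$ exactly when $-n<\gamma\le 0$, so $|x|^{-a}\in A_1$ for every $0<a<n$. Hence in your first example $w_1=w_2=|x|^{a}$ with $a>0$ one gets $w^{-1/2}=|x|^{-a}\in A_1$, and in your final example $w_2=|x|^{c}$ with $c>0$ one gets $w^{-1/2}=|x|^{-c/2}\in A_1$ as well---so neither pair actually violates the $A_{(\infty,\infty)}$ condition you are aiming at. The paper's examples run with the opposite sign: it takes $w_1(x)=|x|^{\alpha}$ with $\alpha<0$ (and $w_2\equiv 1$), so that $w^{-1/2}$ carries a \emph{positive} power of $|x|$ and therefore genuinely lies outside $A_1$. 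There is also a secondary issue in your intersection witness: the pair $(1,|x|^{c})$ with $c>0$ already fails at the endpoint $p_1=p_2=1$, since $A_{(1,1)}$ requires $w^{1/2}=|x|^{c/2}\in A_1$, which is false for $c>0$; your check ``$cp<2p-1$, true for $c<1$'' breaks exactly at $p=1/2$. In short, the monomial bookkeeping you flagged as the main obstacle is precisely where the proposal goes wrong---the fix is to flip the sign of the exponent so that the \emph{inverse} half-power of $w$ is the one with positive exponent.
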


\begin{proof}
Let $(w_1, w_2) \in A_{(\infty, \infty)}$ and $w=w_1 w_2$. Then Lemma \ref{lem:weight} implies $w_1^{-1}, w_2^{-1} \in A_2$ and $w^{-\frac12} \in A_1$. By , there exist $\widetilde{r}_1, \widetilde{r}_2, r \in (1, \infty)$ such that 
\begin{align}\label{wra-1}
w_1^{-\widetilde{r}_1}, w_2^{-\widetilde{r}_2} \in A_2 
\quad\text{ and }\quad 
w^{-\frac{r}{2}} \in A_1. 
\end{align}
Note that 
\begin{align}\label{wra-2}
v \in A_s \quad \Longrightarrow \quad v^{\theta} \in A_s 
\quad\text{ for all } \theta \in (0, 1). 
\end{align}
Take $1<r_i \le \min\{r, \widetilde{r}_i\}$ and $p_i=r'_i$, $i=1, 2$, then $\frac1p := \frac{1}{p_1} + \frac{1}{p_2} = 2-\frac{1}{r_1} - \frac{1}{r_2} \le \frac{2(r-1)}{r}$, which is equivalent to $\frac{p}{2p-1} \le \frac{r}{2}$. Using these and \eqref{wra-1}--\eqref{wra-2}, we obtain 
\begin{align}\label{wra-3}
w_i^{-p'_i} = w_i^{-r_i} \in A_2 \subset A_{2p'_i}, \quad i=1, 2, 
\quad\text{ and }\quad 
w^{-\frac{p}{2p-1}} \in A_1 \subset A_{\frac{2p}{2p-1}}. 
\end{align}
Since $w^{-\frac{p}{2p-1}} \in A_{\frac{2p}{2p-1}}$ is equivalent to $w^p \in A_{2p}$, the estimate \eqref{wra-3} and Lemma \ref{lem:weight} give $(w_1, w_2) \in A_{(p_1, p_2)}$. This shows $A_{(\infty, \infty)} \subset \bigcup_{1<p_1, p_2<\infty} A_{(p_1, p_2)}$.

To justify $\bigcap_{1 \le p_1, p_2<\infty} A_{(p_1, p_2)} \not\subset A_{(\infty, \infty)}$, it suffices to prove 
$(w_1, w_2) \in A_{(p_1, p_2)} \setminus A_{(\infty, \infty)}$ in the following three cases: 
\begin{align*}
&\text{(1) \, \, $w_1(x) = |x|^{-n}$, $w_2 \equiv 1$, if $p_1=p_2=1$};
\\
&\text{(2) \, \, $w_1(x) = |x|^{\alpha} \, (-n/p<\alpha<0)$, $w_2 \equiv 1$, if $p_1 \in (1, \infty)$, $p_2 \in [1, \infty)$};
\\
&\text{(3) \, \, $w_1 \equiv 1$, $w_2(x) = |x|^{\alpha} \, (-n/p<\alpha<0)$,  if $p_1 \in [1, \infty)$, $p_2 \in (1, \infty)$}. 
\end{align*}
Write $w= w_1 w_2$ and $\frac1p = \frac{1}{p_1} + \frac{1}{p_2}$. In case (1), $w_1^{\frac12} = w^{\frac12} \in A_1$ and $w_2^{\frac12} \in A_1$, but $w_1 \not\in L^1_{\loc}(\Rn)$, hence $w_1 \not\in A_2$ (equivalently, $w_1^{-1} \notin A_2$). Thus, it follows from Lemma \ref{lem:weight} that $A_{(1, 1)} \setminus A_{(\infty, \infty)}$. 

In case (2), it is easy to see that $0<-\alpha p'_1<np'_1/p \le n(2p'_1-1)$ and $-n<\alpha p<0$. Then $w_1^{-p'_1} \in A_{2p'_1}$, $w_2^{-p'_2} \in A_{2p'_2}$ (which is interpreted as $w_2^{\frac12} \in A_1$ in the case $p_2=1$), and $w^p \in A_{2p}$, but $w^{-\frac12} = w_1^{-\frac12} \not\in A_1$. Then Lemma \ref{lem:weight} implies $A_{(p_1, p_2)} \setminus A_{(\infty, \infty)}$. A similar argument can be given in case (3). 

Finally, if $A_{(\infty, \infty)} = \bigcup_{1<p_1, p_2<\infty} A_{(p_1, p_2)}$, then $\bigcap_{1 \le p_1, p_2<\infty} A_{(p_1, p_2)} \subset \bigcup_{1<p_1, p_2<\infty} A_{(p_1, p_2)} = A_{(\infty, \infty)}$, which contradicts the preceding conclusion. Hence, $A_{(\infty, \infty)} \subsetneq \bigcup_{1<p_1, p_2<\infty} A_{(p_1, p_2)}$. 
\end{proof}

\end{document}